\theoremstyle{plain}
\newtheorem{theorem}{Theorem}[chapter]
\newtheorem{thm}[theorem]{Theorem}
\newtheorem{lem}[theorem]{Lemma}
\newtheorem{cor}[theorem]{Corollary}
\newtheorem{prop}[theorem]{Proposition}
\newtheorem{question}[theorem]{Question}
\newtheorem{problem}[theorem]{Problem}
\newtheorem{conjecture}[theorem]{Conjecture}
\theoremstyle{definition}
\newtheorem{defn}[theorem]{Definition}
\newtheorem*{defn*}{Definition}
\newtheorem{rem}[theorem]{Remark}
\newtheorem*{rem*}{Remark}
\newtheorem{example}[theorem]{Example}
\newtheorem*{example*}{Example}
\newtheorem{claim}{Claim}
\newtheorem*{claim*}{Claim}
\theoremstyle{plain}
\newtheorem{lemma}[theorem]{Lemma}
\newtheorem{corollary}[theorem]{Corollary}
\newtheorem{proposition}[theorem]{Proposition}
\theoremstyle{definition}
\newtheorem{observation}[theorem]{Observation}
\newtheorem{definition}[theorem]{Definition}
\newtheorem{remark}[theorem]{Remark}
\renewcommand{\mid}{ \ : \ }
\global\long\def\Q{\mathbb{Q}}%
\global\long\def\R{\mathbb{R}}%
\global\long\def\N{\mathbb{N}}%
\global\long\def\NN{\mathbb{N}}%
\global\long\def\ZZ{\Z}%
\global\long\def\CC{\mathbb{C}}%
\global\long\def\RR{\mathbb{R}}%
\global\long\def\Z{\mathbb{Z}}%
\global\long\def\varprojlim{\underset{\longleftarrow}{\lim \ }}%
\global\long\def\G{\Gamma}%
\global\long\def\L{\Lambda}%
\global\long\def\cay{\text{Cay}}%
\global\long\def\path{\operatorname{\textbf{Path}}}%
\global\long\def\F{\mathcal{F}}%
\global\long\def\Dp{\text{Dp}}%
\global\long\def\P{\mathscr{P}}%
\global\long\def\h{\mathcal{I}}%
\global\long\def\action{\curvearrowright}%
\global\long\def\supp{\text{supp}}%
\global\long\def\GL{\text{GL}}%
\global\long\def\ZZ {\mathbb{Z}}
\global\long\def\RR {\mathbb{R}}
\global\long\def\QQ {\mathbb{Q}}
\global\long\def\NN {\mathbb{N}}
\global\long\def\F{\mathcal{F}}%
\newcommand{\define}[1]{\textbf{#1}}
\global\long\def\M{\mathcal{\mathfrak{M}}}
\newcommand{\meff}[1]{\M_{\texttt{EFF}}(#1)}
\newcommand{\msft}[1]{\M_{\texttt{SFT}}(#1)}
\newcommand{\msof}[1]{\M_{\texttt{SOF}}(#1)}
\renewcommand{\varnothing}{\emptyset}
\begin{document}
\pagestyle{plain}
\dominitoc
\frontmatter
\begin{titlepage}
\begin{center} 
	\begin{figure}
    \centering\includegraphics[width=0.4\textwidth]{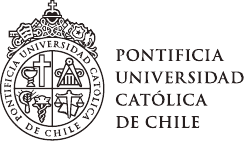}
	\end{figure}
\end{center}

\begin{center}
        \vspace*{1cm}
		\Huge

        \textbf{Subshifts on groups and computable analysis}
            
        \vspace{0.5cm}
        \LARGE      
        \textbf{Nicanor Carrasco-Vargas}
   
        \vspace{1.5cm}

        \vfill
        \Large    
        A thesis presented for the degree of\\
        Doctor of Philosophy
            
        \vspace{0.5cm}
        \Large
        Faculty of Mathematics\\
        Pontifical Catholic University of Chile\\
        Chile\\
        2024\\
        
    \end{center}
        \vfill
        \textit{Advisor}:\\\\ Cristóbal Rojas - Pontifical Catholic University of Chile
     \\\\
        \textit{Co-advisor}:\\\\  Sebastián Barbieri - University of Santiago, Chile\\\\
        \textit{Committee}: \\\\
        Alvaro Daniel Coronel Soto - Pontifical Catholic University of Chile
        \\\\
        Benjamin Hellouin de Menibus - Université 
        Paris Saclay
        \\\\
        Jarkko Kari - University of Turku\\\\
        Tom Meyerovitch - Ben Gurion University of the Negev\\\\
        \\
        \\
        \\
        \tiny{©Nicanor Carrasco-Vargas}

\end{titlepage}

\section*{Abstract}
Subshifts are a fundamental class of topological dynamical systems. The study of subshifts on groups different from $\mathbb{Z}$, such as $\mathbb{Z}^d$, $d\geq 2$, has been a subject of intense research in recent years. These investigations have unveiled a
remarkable connection between dynamics and recursion theory. That is, different questions about the dynamics of these systems have been answered in recursion-theoretical terms. 

In this work we further explore this connection. We use the framework of computable analysis to explore the class of effective dynamical systems on metric spaces, and relate these systems to subshifts of finite type (SFTs) on groups. We prove that every effective dynamical system on a general metric space is the topological factor of an effective dynamical system with topological dimension zero. We combine this result with existing simulation results to obtain new examples of systems that are factors of SFTs

We also study a conjugacy invariant for subshifts on groups called Medvedev degree. This invariant is a complexity measure of algorithmic nature. We develop the basic theory of these degrees for subshifts on arbitrary finitely generated groups. Using these tools we are able to classify the values that this invariant attains for SFTs and other classes of subshifts on several groups. Furthermore,  we establish a connection between these degrees and the distribution of isolated points in the space of all subshifts. 

Motivated by the study of Medvedev degrees of subshifts, we also consider translation-like actions of groups on graphs. We prove that every connected, locally finite, and infinite graph admits a translation by $\mathbb{Z}$, and that this action can be chosen transitive exactly when the graph has one or two ends. This generalizes a result of Seward about translation-like action of $\mathbb{Z}$ on finitely generated groups. Our proof is constructive, and allows us to prove that under natural hypotheses, translation-like actions by $\mathbb{Z}$ on groups and graphs can be effectively computed. 

\tableofcontents{}

\newpage
\section*{Acknowledgements}
I would like to thank my advisors Cristóbal Rojas and Sebastián Barbieri for their  guidance, for their numerous suggestions, and for their generosity with their time. I am also very grateful to Mathieu Sablik for his hospitality during the time I spent in Toulouse. 

This research received support from ANID 21201185 doctorado nacional, ANID/Basal National Center for Artificial Intelligence CENIA FB210017, and the European Union’s Horizon 2020 research and innovation program under the Marie Sklodowska-Curie grant agreement No 731143. I also thank the  organizers of the events ``Postgraduate school UFRO - Lican Ray '',  ``Thematic month at CIRM, France: Discrete Mathematics $\&$ Computer Science: Groups, Dynamics, Complexity, Words'', ``16th International Conference on Computability, Complexity and Randomness. Kochel, Germany '', and ``Journées annuelles SDA2, Toulouse, France''.

\newpage 
\vfill
\null\hfill \textit{To Aurelia.} 
\mainmatter
\chapter{Introduction}
The mathematical notion of dynamical system is motivated by real-world processes that evolve with time. Interesting examples are the evolution of wealth distribution, carbon emissions, the sea level, the internet, or the change of climate in some region. 

The mathematical object is defined as follows. A dynamical system is a pair $(X,T)$ of a set $X$ and a transformation $T\colon X\to X$. The idea is that $X$ represents all possible states, and $T$ represents their evolution after one unit of time. We can enrich the system with different mathematical structures: we can assume that $T$ is continuous for a certain topology on $X$, measurable for a certain sigma-algebra on $X$, measure-preserving for a  certain measure on $X$, computable for a certain computable structure on $X$, etc. We will often consider a finite collection of transformations instead of a single one. When we consider invertible transformations, then the collection of all possible compositions of them forms a group, and it is convenient to instead speak about a group acting on $X$.

This work is mostly concerned with a class of topological dynamical systems called subshifts. These systems were conceived as tools to understand other systems, but their study evolved into a subject on its own right; see the seminal work \cite{morse_symbolic_1938}.  Let $A$ be a finite set of symbols, let $G$ be a group, and let $A^G=\{x\colon G\to A\}$ be endowed with the prodiscrete topology. The group $G$ acts on $A^G$ by left shift translations. A $G$-subshift is a closed subset $X\subset A^G$ that is invariant by shift translations, and it has finite type (SFT) when it can be  defined by a finite collection of local rules expressed as forbidden patterns.  

\begin{figure}[H]
    \centering
    \includegraphics[width=0.6\textwidth]{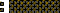}
    \caption{Many decorative tilings can be seen as $\Z^2$-SFTs. Given a finite set of decorated square tiles of the same size, we can define an SFT whose alphabet is the set of tiles, and with the local rule that adjacent tiles preserve the intended decoration. Conversely, every SFT on $\Z^2$ is topologically conjugate to one defined by square decorated tiles. This identification between SFTs and tilings is also valid on finitely generated groups. One just needs to fix a generating set for the group, and then use it to express the adjacency relations. For this purpose it is also convenient to consider Wang tiles, a standard model for decorated tiles proposed by Wang \cite{wang_proving_1961}.}\label{fig:enter-label}
\end{figure}

Subshifts and SFTs were studied first on $\Z$, and this is the case where the theory is best understood; see the monograph \cite{lind_introduction_1995}. In recent years there has been considerable effort to  better understand SFTs on $\Z^d$ for $d\geq 2$, also called multidimensional SFTs. These investigations show that these systems can be radically more complex than in one dimension. Moreover, it has been found that dynamical aspects of these systems are closely connected with recursion-theoretical ones. 

This situation can be illustrated by the characterization of the values of different dynamical invariants. Hochman and Meyerovitch proved that the set of topological entropies of multidimensional SFTs is the set of upper-semicomputable non-negative real numbers \cite{hochman_characterization_2010}. This characterization was later extended to SFTs on other amenable groups \cite{barbieri_entropies_2021,bartholdi_salo_shift_lamplighter_2024}. Moreover, the class of entropy-dimensions of multidimensional SFTs was also characterized by a recursion-theoretical property \cite{meyerovitch_growthtype_2011,gangloff_characterizing_2022}. Some dynamical restrictions reduce the possible values for these invariants, but the characterization is still recursion-theoretical  \cite{gangloff_quantified_2021,gangloff_effect_2019,pavlov_entropies_2015,esnay_parametrization_2023}.

Another example of the interplay between dynamical and recursion-theoretical aspects of SFTs is given by simulation results \cite{hochman_dynamics_2009,aubrun_simulation_2013,durand_effective_2010,durand_fixedpoint_2012,barbieri_generalization_2019,barbieri_geometric_2019,barbieri_groups_2022, bartholdi_shifts_2024}.  These results relate SFTs on different groups to effective dynamical systems on the Cantor set $\{0,1\}^\N$. Roughly speaking, a dynamical system is effective when both the transformations and the collection of states can be described by a Turing machine. These results have been used to produce SFTs with a highly specific behaviour. This behaviour is given by a Turing machine, which is then ``simulated'' by the SFT. 

In this thesis we explore further the interaction between dynamical and recursion-theoretical aspects of SFTs over different groups, with an emphasis on using the framework of computable analysis to formalize and systematize these connections.

\section{Main contributions}

A common feature of simulation results is that they only apply to systems with topological dimension zero. It is therefore natural to ask whether this restriction can be lifted. In \Cref{chap:EDS} we show that under natural hypotheses the answer is ``yes''. This is done using the framework of computable analysis, which allows to define computable functions and related concepts on separable metric spaces.  Using these tools, we introduce the class of \textit{effective dynamical systems} on general metric spaces, as those containing a computable representative in their topological conjugacy class. Many natural systems one can think of are effective in this sense, such as affine actions on the torus, rational maps on the Riemann sphere, and finitely presented algebraic actions of recursively presented groups.

The main result of \Cref{chap:EDS}, \Cref{thm:zero_dim_effective_extension}, states that every effective dynamical system given by a recursively presented group is the topological factor a computable action on an effectively closed subset of $\{0,1\}^\N$. In other words, \Cref{thm:zero_dim_effective_extension} is an effective version of the well-known result that every topological dynamical system admits an extension with dimension zero. The contribution of our result is that, together with simulation results, it can be used to exhibit new examples of systems that are topological factors of SFTs. 
 
In \Cref{chap:Medvedev} we study Medvedev degrees of subshifts. These degrees are a topological conjugacy invariant that measures algorithmic complexity. This invariant is related to a classic result of Hanf and Myers \cite{hanf_nonrecursive_1974,myers_nonrecursive_1974} that shows the existence of a $\Z^2$-SFT where every configuration is uncomputable. Medvedev degrees quantify this behaviour, and in fact a subshift has zero Medvedev degree exactly when it contains computable configurations. Thus Hanf and Myers proved that $\Z^2$ admits SFTs with nonzero Medvedev degrees. This raises the problem of classifying the possible values that this invariant attains for different classes of subshifts. Simpson \cite{simpson_medvedev_2014} proved that Medvedev degrees of $\Z^2$-SFTs are exactly the $\Pi_1^0$ Medvedev   degrees. The goal of  \Cref{chap:Medvedev}  is developing the basic theory of these degrees for subshifts on arbitrary finitely generated groups. These tools allow us to classify the possible Medvedev degrees of SFTs and other classes of subshifts on different groups. Among other results, we extend Simpson's classification to all virtually polycyclic groups that are not virtually cyclic. 

The study of Medvedev degrees of SFTs is closely related to the domino problem on groups. The domino problem for a finitely generated group $G$ is the algorithmic problem of determining whether a $G$-SFT is empty. This problem is decidable for $\Z$, and undecidable for $\Z^d$, $d\geq 2$  \cite{berger_undecidability_1966}. The proof of this result came along with the first aperiodic SFT on $\Z^2$, which was also the first aperiodic tiling of $\Z^2$. But what happens in other groups? this question has been considered in several works \cite{aubrun_domino_2018,aubrun_domino_2019,aubrun_tiling_2013,ballier_domino_2018,bartholdi_domino_2023,bitar_contributions_2023,cohen_large_2017,jeandel_translationlike_2015,jeandel_undecidability_2020,margenstern_domino_2008,aubrun_strongly_2023}. For many groups, the techniques that have allowed to show the undecidability of the domino problem are the same techniques that have allowed to construct aperiodic SFTs. We employ precisely these techniques to construct SFTs with nonzero Medvedev degree on different groups. We also observe that an SFT with nonzero Medvedev degree must be weakly aperiodic, and that the existence of SFTs with nonzero Medvedev degree implies the undecidability of the domino problem for the corresponding group.  

In \Cref{Chap:computable-analysis-on-S(G)} we prove some basic results about the space $S(G)$ of all $G$-subshifts, where $G$ is  a finitely generated group with decidable word problem. We observe a relation between the distribution of isolated points in $S(G)$, and the Medvedev degrees of $G$-SFTs. That is, we show that if $G$ admits SFTs with nonzero Medvedev degree, then isolated points are not dense in $S(G)$. This shall be compared with the recent result of Pavlov and Schmieding \cite{pavlov_structure_2023} asserting that isolated points are dense in $S(\Z)$. 

\Cref{Chap:rice} concerns the algorithmic undecidability of dynamical properties of SFTs. Let us recall that $\Z$-SFTs are very well-understood systems from the algorithmic point of view. There is a number of algorithms that allow to perform computations on presentations of these systems \cite{lind_introduction_1995}. The case of $\Z^2$-SFTs is very different, and many dynamical properties of these systems are known to be undecidable. Lind called this the ``swamp of undecidabiliy'' \cite{lind_multidimensional_2004}.

The goal of  \Cref{Chap:rice} is formalizing the ``swamp of undecidabiliy'', with a result similar to Rice's undecidability theorem for computer programs. Given a group $G$  with undecidable domino problem -such as $\Z^2$- we prove that all nontrivial dynamical properties for sofic $G$-subshifts are undecidable. Although such a result is not possible for SFTs, we still prove a general undecidability result for dynamical properties of $G$-SFTs that can be applied to most properties of common interest. With the extra hypothesis of amenability, we prove that topological entropy is not computable from presentations of SFTs. We also obtain a similar uncomputability result for abstract dynamical invariants taking values in partially ordered sets.

In \Cref{chap:translation-like-actions} we abandon the realm of symbolic dynamics. We consider actions of groups on graphs that are \textit{translation-like} actions, a notion introduced in \cite{whyte_amenability_1999}. Seward \cite{seward_burnside_2014} proved that every infinite and finitely generated group admits a translation-like action by $\Z$, and that this action can be chosen transitive exactly when the group has either one or two ends. The main result of  \Cref{chap:translation-like-actions} is a generalization of this result to locally finite graphs. This result answers partially  a problem left open in \cite[Problem 3.5]{seward_burnside_2014}. 

In \Cref{chap:computable-translation-like-actions} we build on the results from \Cref{chap:translation-like-actions} to prove some computability results for translation-like actions. The main result is an effective version of Seward's theorem for groups with decidable word problem. We prove that in this case the action can be taken computable, and with the property of having decidable \textit{orbit membership problem}. 

Translation-like actions have proven to be a very useful tool in the study of SFTs. Indeed, given two finitely generated grups $H$ and $G$, the collection of translation-like actions $H\curvearrowright G$ with fixed parameters can be described by a subshift. This fact has been used several times to transfer constructions on subshifts from one group to another. Indeed, in \Cref{chap:Medvedev} we use translation-like actions to transfer Medvedev degrees of SFTs from one group to another. For some of our constructions the existence of translation-like actions is not sufficient, we need these translation-like actions to be  computable and have decidable orbit membership problem. This is the main motivation behind the results in \Cref{chap:computable-translation-like-actions}.

The present document is based on the works \cite{barbieri_effective_2024,barbieri_medvedev_2024,carrasco-vargas_translationlike_2024,carrasco-vargas_rice_2024}.
\newpage
\section{Organization of this document}
In \Cref{chap:Background} we review some basic facts and terminology that will be used in this document. Furthermore, we provide a self-contained presentation of the results from computable analysis that will be needed. In \Cref{chap:computable_analysis_subshifts} we apply these notions to shift spaces, and review the class of effective subshifts. The content of Chapters \ref{chap:Background} and \ref{chap:computable_analysis_subshifts} will be used in Chapters \ref{chap:EDS}, \ref{chap:Medvedev} and \ref{Chap:computable-analysis-on-S(G)}. \Cref{Chap:rice} is independent, and \Cref{chap:computable-translation-like-actions} is a continuation of \Cref{chap:translation-like-actions}. This is represented in the following diagram. Dashed arrows mean that a result of one chapter is used in the other, but that they could be read independently.  
\vfill
\begin{tikzcd}
{\ovalbox{\begin{minipage}[t]{0.4\textwidth}
\centering
\textbf{\Cref{chap:Background}:
\\\nameref{chap:Background}}
\\
+
\\
\textbf{\Cref{chap:computable_analysis_subshifts}: 
\\\nameref{chap:computable_analysis_subshifts}}
\end{minipage}}} \arrow[r] \arrow[rd, bend right] \arrow[rdd, bend right=60] &\ovalbox{\begin{minipage}[t]{0.4\textwidth}
\centering
\textbf{\Cref{chap:EDS}: 
\\
\nameref{chap:EDS}} 
\end{minipage}}                          \\
&\ovalbox{\begin{minipage}[t]{0.4\textwidth}
\centering
\textbf{\Cref{chap:Medvedev}: 
\\
\nameref{chap:Medvedev}} 
\end{minipage}} \arrow[d, dashed]        \\
&\ovalbox{\begin{minipage}[t]{0.4\textwidth}
\centering
\textbf{\Cref{Chap:computable-analysis-on-S(G)}: 
\\
\nameref{Chap:computable-analysis-on-S(G)}}
\end{minipage}}              \\
& \ovalbox{\begin{minipage}[t]{0.4\textwidth}
\centering
\textbf{\Cref{Chap:rice}: 
\\
\nameref{Chap:rice}}
\end{minipage}}                                    \\
& \ovalbox{\begin{minipage}[t]{0.4\textwidth}
\centering
\textbf{\Cref{chap:translation-like-actions}: \\
\nameref{chap:translation-like-actions}}
\end{minipage}}  \arrow[d]                \\
& \ovalbox{\begin{minipage}[t]{0.4\textwidth}
\centering
\textbf{\Cref{chap:computable-translation-like-actions}: 
\\
\nameref{chap:computable-translation-like-actions}}
\end{minipage}}  \arrow[uuuu, dashed, bend right=90]
\end{tikzcd}

\chapter{Background}\label{chap:Background}

In this chapter we review some basic terminology and notation that will be used along this document. We review basic facts about groups, topological dynamics, subshifts, and computability theory. We review the basic theory of computable analysis, and prove some results that will be used in other chapters.

\minitoc
\section{Conventions}
The composition of functions is written $f\circ g(x)=f(g(x))$, the cardinality of a set $X$ is denoted $|X|$, set difference is denoted $\smallsetminus$ or $-$, and  $0\in\N$.
\section{Words and finitely generated groups}

Here we review some terminology and notation on words, alphabets, and
finitely generated groups. An \textbf{alphabet }is a set, which in most cases is assumed to be finite. A \textbf{word }on alphabet $A$ of
\textbf{length} $n$ is an element in $A^{\{0,\dots,n-1\}}$. The
\textbf{empty word }is denoted by $\epsilon$. The set of all words
on alphabet $A$ is denoted by $A^{\ast}$.

Let $G$ be a group, and let $S\subset G$ be a finite set. Given a word $w\in S^\ast$, we denote the corresponding group element by $w_G$ or simply $\underline w$. We say that $S\subset G$ is a \textbf{generating set} for $G$ when  every element in $G$ can be written as a word in $S^\ast$. Unless specified, we will always consider generating sets that are symmetric, that is, closed by inverses. A group that admits a finite generating set is called \textbf{finitely generated}. Most of the groups considered here are finitely generated.

Let $G$ be a group finitely generated by $S$. We denote by $|g|_S$ the length of the shortest word in $S^\ast$ with $g=\underline w$. The function $|\cdot |_S\colon G\to \N$ is called a \textbf{word length}.  The set $S$ also induces the metric $d_S$ on $G$ given by $d_S(g,h)=|g^{-1}h|_S$.  We call $d_S$ a \textbf{word metric} on $G$. Let us note that $d_S$ is invariant by multiplication on the left, that is $d_S(g,h)=d_S(tg,th)$ for all $g,h,t\in G$. 

We now recall some algorithmic properties of groups and subgroups, algorithmic concepts will be defined in \Cref{preliminaries-computability}.  Given a group $G$ finitely generated by $S$, the \textbf{word problem} for $G$ is the set $\{w\in S^\ast : \underline{w}=1_G\}$. Thus $G$ has decidable word problem when $\{w\in S^\ast : \underline{w}=1_G\}$ is a decidable set. It is easily seen that this definition does not depend of the chosen generating set. More generally, a subgroup $H\leqslant G$ has \textbf{decidable subgroup membership problem} when $\{w\in S^{*}\mid \underline w \in H\}$ is a decidable subset of $S^\ast$.  

We will also be interested in groups whose word problem is recursively enumerable. It is well known that this is the case exactly when $G$ is a recursively presented group, meaning that it admits a recursive presentation (see \cite{lyndon_combinatorial_2001}).

\section{Topological dynamics}
Here we recall some basic concepts from topological dynamics. Let $G$ be a finitely generated group. A $G$-\textbf{topological dynamical system} is a pair $(X,T)$ of a compact
metrizable space $X$ and a continuous left group action $T\colon$$G\times X\to X$.
We often write $G\curvearrowright X$ for short. A \textbf{morphism} of dynamical
systems $G\curvearrowright X$ and $G\curvearrowright Y$ is a continous
map $\phi\colon X\to Y$ which conmutes with the corresponding group
actions. A morphism of dynamical systems which is surjective (resp.
injective, resp. bijective) is called a topological \textbf{factor} (resp.
\textbf{embedding}, resp. \textbf{conjugacy}). When there exists a topological factor
map $\phi\colon X\to Y$ we also say that $Y$ is a topological factor
of $X$, $X$ is a topological extension of $Y$, and that $X$ factors
over $Y$.%
{} %
%

\section{Subshifts}\label{preliminaries-shift-spaces}

Here we review some basic notions about subshifts, the reader is referred to the book \cite{ceccherini-silberstein_cellular_2018}. Let $G$ a group and let $A$ be a finite alphabet. The set  $A^{G}=\{x\colon G\to A\}$ is called a \textbf{fullshift}, and its elements are called \textbf{configurations}. We endow $A^G$  with the pro-discrete topology. If $G=\{g_i : i\in \N\}$ is countable, we can define a metric for this topology by setting \[d(x,y)=\inf \{2^{-n} : n\in \N ,\text{ and } x(i)=y(i)\text{ for all }i\leq n\}.\]
If  $G$ is finitely generated and endowed with a word length $|\cdot|$, then another metric for the pro-discrete topology on $A^G$ is 

\[d(x,y)=\inf \{2^{-n} : n\in \N ,\text{ and } x(g)=y(g)\text{ for all $g$ with } |g|\leq n\}.\]

A \textbf{pattern }is an element $p\in A^F$, where  $F\subset G$ is finite, and it determines the \textbf{ cylinder } $[p]=\{x\in A^{G}\ : \ x|_{F}=p\}$. Cylinders are a sub-basis for the prodiscrete topology on $A^G$.  If $gx\in[p]$ for some $g\in G$, we say that $p$ \textbf{appears}
on $x$.

We define the \textbf{shift} action $G\curvearrowright A^G$ by $(gx)(h)=x(g^{-1}h)$. A \textbf{subshift
} is a subset $X\subset A^{G}$ which is closed and shift-invariant. The \textbf{language} of $X$ is the set of patterns that appear in $X$. A subshift $X$ is completely determined by its language, or equivalently, by the set of patterns that do \textit{not} appear in $X$. 

It is customary to define a subshift by a set of \textbf{forbidden} patterns. Given a set of forbidden patterns  $\F$, we define the subshift $X_{\mathcal{\F}}$ of all elements $x\in A^{G}$ such that no pattern of $\F$ appears in $x$. If we need to make the alphabet explicit we may write  $X_{(A,\mathcal F)}$. Every subshift is determined by a maximal set of forbidden patterns, but it can have more than one defining set of forbidden patterns. A subshift is\textbf{ of finite
type }(SFT) if it can be defined with a finite set of forbidden patterns.

We always regard subshifts as topological dynamical systems with the shift action. Thus a morphism of subshifts $X\subset A^G$ and $Y\subset B^G$ is a continuous map $\phi\colon X\to Y$ that  commutes with the shift actions. The Curtis-Hedlund-Lyndon theorem states that the map $\phi$ is a morphism if and only if there is a finite set $F\subset G$ and a local function $\mu\colon A^F\to B$ such that $\phi(x)(g)=\mu (g^{-1}x|_F)$. A subshift is \textbf{sofic} when it is the topological factor of an SFT. A third class of interest is that of effective subshifts. We will discuss this class in detail in \Cref{chap:computable_analysis_subshifts}, as we need to introduce some algorithmic notions first.

\section{Computability theory on countable sets}\label{preliminaries-computability}
Here we review some basic facts from computability or recursion theory on countable sets. We use the word \textbf{algorithm} as a synonym of the Turing machine. We will use other common synonyms such as ``effective
procedure''. 

Let $f$ be a function from a subset of $\N$ to $\N$. We say that $f$ is a \textbf{partial} function $\N\to\N$. If the domain of $f$ is equal to $\N$, then we emphasize this fact by calling it a \textbf{total} function. A partial function $f\colon\N\to\N$ with domain $D$ is \textbf{computable} if there is an algorithm that on input $n$ halts if and only if $n\in D$, and which for each $n\in D$ outputs $f(n)$. A set $D\subset\N$ is called \textbf{semi-decidable} when it is the domain of a partial computable function, and \textbf{recursively enumerable} when it is the range of a partial computable function.  A set $D\subset\N$ is \textbf{decidable} when its characteristic function is computable. 

Let us review these concepts in more intuitive terms. A set $D\subset\N$ is decidable when there is an algorithm which for each $n\in\N$, determines correctly whether $n\in D$ or not. $D$ is semi-decidable when there is an algorithm which on input $n\in D$ will detect correctly that $n\in D$, but which on input $n\not\in D$ runs forever. Moreover, $D$ is recursively enumerable when there is an algorithm that enumerates all elements in $D$. It is easily seen that $D\subset\N$ is semi-decidable if and only if it is recursively enumerable, but we will use the first term when we refer to decision processes and the second term to refer to enumerations.

A sequence $(f_n)_{n\in\N}$ of functions is \textbf{uniformly computable} if there is a computable function $f\colon \N^2\to \N$ such that $f(n,m)=f_n(m)$. More generally, when we have a family of objects that is computable in some sense (decidable sets, recursively enumerable sets,...), we say that they are uniformly computable when there is a single algorithm for all of them. 

All these notions extend directly to products $\N^{p}$, $p\geq1$,
and sets of words $A^{\ast}$, as these objects can be represented
by natural numbers in a canonical way. In order to extend these notions
further, we consider numberings:
\begin{defn}
A (bijective)\textbf{ numbering }of a set $X$ is a bijective map
$\nu:N\to X$, where $N$ is a decidable subset of $\N$. We call
$(X,\nu)$ a \textbf{numbered set}. When $\nu(n)=x$, we say that
$n$ is a \textbf{description} or an \textbf{index} for $x$. 
\end{defn}

A numbering of $X$ is used to translate computability notinos from $\N$ to $X$, in the same
manner that charts are used to define continuous or differentiable
function on manifolds. For instance, a function $f\colon X\to X$
is \textbf{computable} on $(X,\nu)$ when the ``function in charts'' $\nu^{-1}\circ f\circ\nu$
is computable. There is a notion of equivalence for numberings: two
\textbf{numberings} $\nu$, $\nu'$ of $X$ are equivalent when the identity
function $(X,\nu)\to(X,\nu')$ is computable. The Cartesian product
$X\times X'$ of two numbered sets $(X,\nu)$, $(X',\nu')$ admits
a unique numbering -up to equivalence- for which the projection functions
to $(X,\nu)$, $(X',\nu')$ are computable. This allows us to speak, without ambiguity, 
about computable functions and relations between numbered sets.

\begin{rem}Of course, any countable set admits a bijection with $\N$. Given a set $X$ and some structure in the form of functions or relations, we are interested in the existence of a numbering that makes these functions or relations computable.  
\end{rem}
\begin{example}\label{numbering-of-partial-computable-functions}
    A numbering $(\varphi_e)_{e\in\N}$ for the set of partial computable functions $\N\to\N$ can be obtained as follows. Each Turing machine is determined by the finite information of a finite alphabet, and a transition rule expressed as a matrix. We just have to code this finite information in a natural number $e$, which becomes the description of the corresponding function $\varphi_e$. One should think that  that the natural number $e$ represents the text of a computer program that computes $\varphi_e$. 
\end{example}
\begin{example}\label{numberinf-of-r.e.-sets}
    A numbering $(W_e)_{e\in \N}$ for the set of 
    recursively enumerable subsets of $\N$ can be obtained from \Cref{numbering-of-partial-computable-functions}. That is, for each $e\in\N$ we let $W_e$ be the domain of $\varphi _e$.
\end{example}
\begin{remark}
    The set of partial computable functions can be given other numberings that are not equivalent to the one in \Cref{numbering-of-partial-computable-functions}, and it is natural to ask what are their relations. This topic goes far beyond this short background chapter. We just mention that  numberings equivalent to the one in \Cref{numbering-of-partial-computable-functions} are called \textit{admissible} numberings, and they can be characterized by abstract properties.   
\end{remark}

Here we will be particularly interested in numberings of countable groups. 
\begin{defn}[\cite{rabin_computable_1960}]
A numbering of a countable group $(G,\cdot)$
is called \textbf{computable } when it makes the group operation $G\times G\to G$
 computable. A group that admits a
computable numbering is called a \textbf{computable group}. 

\end{defn}
For instance, $\mathbb Q$ is a computable group. In the case of finitely generated groups, we have the following well-known fact:
\begin{prop}
\label{prop:computabilitiy-groups-1}Let $G$ be a finitely generated
group. Then:
\begin{enumerate}
\item $G$ admits a computable numbering if and only if it has decidable
word problem.
\item If $G$ admits a computable numbering, then all computable numberings
of $G$ are equivalent.
\item Every group
homomorphism $f\colon G\to H$ between computable groups is computable. 
\end{enumerate}
\end{prop}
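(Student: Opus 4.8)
# Proof Proposal for Proposition on Computable Numberings

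**Claim to prove.** For a finitely generated group $G$:
(1) $G$ admits a computable numbering iff it has decidable word problem;
(2) if $G$ admits a computable numbering, all such numberings are equivalent;
(3) every homomorphism $f\colon G\to H$ between computable groups is computable.

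Let me think through each part carefully.

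## Part (1): Computable numbering ⟺ decidable word problem

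Let $S$ be a finite symmetric generating set for $G$.

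**Direction (⟸): decidable word problem ⟹ computable numbering.**

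The natural approach: use words over $S$ to name group elements, then quotient by the word-problem equivalence to get a bijective numbering.

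Define the length-lexicographic order on $S^*$ (shortlex): order words first by length, then lexicographically within each length. This gives a computable bijection between $S^*$ and $\mathbb{N}$ (or a decidable subset).

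Now I want a bijective numbering $\nu\colon N \to G$. The issue: multiple words represent the same group element. To get a bijection, I should pick canonical representatives.

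Define $N \subseteq S^*$ to be the set of words $w$ such that $w$ is the shortlex-*minimal* word representing $\underline{w}$. In other words:
$$N = \{w \in S^* : \text{for all } v <_{\text{shortlex}} w,\ \underline{v} \neq \underline{w}\}.$$

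**Is $N$ decidable?** Given $w$, I enumerate all $v <_{\text{shortlex}} w$ (finitely many) and check whether $\underline{v} = \underline{w}$, i.e., whether $v^{-1}w$ (formal concatenation with formal inverse) represents the identity. Since word problem is decidable, each check $\underline{v^{-1}w} = 1_G$ is decidable. So $N$ is decidable. ✓

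**Is $\nu\colon N \to G$, $w \mapsto \underline{w}$ a bijection?** Surjective since $S$ generates. Injective since each group element has a unique shortlex-minimal representative. ✓

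**Is group operation computable?** Given $w_1, w_2 \in N$, I want the canonical representative of $\underline{w_1}\cdot\underline{w_2}$. The product is represented by the concatenation $w_1 w_2 \in S^*$. To find its canonical form: search through shortlex-ordered words $v$ until I find the minimal one with $\underline{v} = \underline{w_1 w_2}$. This search *terminates* because the element has *some* representative, and I can check equality $\underline{v} = \underline{w_1 w_2}$ decidably. ✓

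So computable numbering exists.

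**Direction (⟹): computable numbering ⟹ decidable word problem.**

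Suppose $\nu\colon N \to G$ makes the group operation computable. I need to decide, given $w \in S^*$, whether $\underline{w} = 1_G$.

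**Key sub-steps:**
- The identity $1_G$ has a computable index: $1_G = g \cdot g^{-1}$... but wait, I need to locate $1_G$. Since multiplication is computable and $\nu$ is a bijection, the identity is the unique $e \in N$ with $\nu(e)\cdot\nu(e) = \nu(e)$. I can search for it: for each $n \in N$ in order, check if $\text{mult}(n,n) = n$; the first such $n$ is the index of $1_G$. (Inversion is also computable: $\nu(n)^{-1}$ is found by searching for $m$ with $\text{mult}(n,m) = e$.)
- Each generator $s \in S$ has a computable index. **This is the crux.** A priori, given the abstract numbering $\nu$, how do I find the index of a specific generator $s$?

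**Main obstacle identified.** The subtlety is that the numbering $\nu$ names elements abstractly, and I need to connect the *syntactic* generators $S$ to their $\nu$-indices. The resolution: since $S$ is *finite*, I can simply *assume* (or hard-code into the algorithm) the finitely many indices $\nu^{-1}(s)$ for $s \in S$ as constants. This is legitimate — finite data can always be built into an algorithm.

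Given these indices, I compute the index of $\underline{w}$ for any $w = s_1 \cdots s_k$ by iterated multiplication (computable), then check whether the result equals the index of $1_G$. Since $\nu$ is a bijection, $\underline{w} = 1_G$ iff these indices agree. So word problem is decidable. ✓

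## Part (2): Uniqueness up to equivalence

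Let $\nu, \nu'$ be two computable numberings of $G$. I must show the identity map $(G,\nu) \to (G,\nu')$ is computable, i.e., the function $\nu'^{-1} \circ \nu$ is computable.

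By Part (1), $G$ has decidable word problem (since a computable numbering exists).

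**Approach:** Build a "translation table" via the shortlex numbering from Part (1). Actually, more directly:

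For each numbering, the finitely many generators have computable indices (hard-coded, as in Part (1)). Given an index $n$ under $\nu$, I want the corresponding index under $\nu'$.

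**Concrete algorithm:** There's an asymmetry — an arbitrary $\nu$-index $n$ doesn't come with a word. But I can *search*: enumerate words $w \in S^*$ in shortlex order; for each, compute its $\nu$-index by iterated multiplication from the hard-coded generator indices (under $\nu$), and test equality with $n$. Since $\nu$ is surjective, some word $w$ satisfies $\nu(\underline{w}) = \nu(n)$, and equality of indices is decidable. The search halts, yielding a word $w$ with $\underline{w} = \nu(n)$. Then compute the $\nu'$-index of $\underline{w}$ by iterated multiplication from the hard-coded generator indices *under $\nu'$*. This is $\nu'^{-1}(\nu(n))$. ✓

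So $\nu'^{-1} \circ \nu$ is computable; by symmetry so is its inverse, giving equivalence.

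## Part (3): Homomorphisms between computable groups are computable

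Let $f\colon G \to H$ be a homomorphism, $(G, \nu_G)$, $(H, \nu_H)$ computable groups. I must show $\nu_H^{-1} \circ f \circ \nu_G$ is computable.

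**Wait** — the statement says $G$ is finitely generated (that's the hypothesis of the proposition). So $G$ has finite generating set $S$.

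**Approach:** Since $G$ is finitely generated by $S$, and $f$ is a homomorphism, $f$ is determined by its values $f(s)$ for $s \in S$. These are *finitely many* elements of $H$, each with a $\nu_H$-index. **Hard-code these finitely many indices** $\nu_H^{-1}(f(s))$ for $s \in S$.

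Given a $\nu_G$-index $n$:
- Find a word $w = s_1\cdots s_k \in S^*$ with $\underline{w} = \nu_G(n)$ (search, as in Part (2) — uses decidable word problem of $G$, which holds by Part (1)).
- Then $f(\nu_G(n)) = f(\underline{w}) = f(s_1)\cdots f(s_k)$.
- Compute the $\nu_H$-index of this product by iterated multiplication in $H$ (computable), starting from the hard-coded indices $\nu_H^{-1}(f(s_i))$.

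This yields $\nu_H^{-1}(f(\nu_G(n)))$, so $f$ is computable. ✓

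## Summary of the Plan

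The unifying technique across all three parts:
1. **Reduce to generators.** Finite generation means a homomorphism (or the identity map, or word-problem membership) is determined by finitely many values, which can be hard-coded as constants in an algorithm.
2. **Recover words from indices by search.** Given an abstract index, enumerate words and use iterated multiplication to find a word representing the same element — this search halts by surjectivity plus decidable equality of indices.
3. **Iterated multiplication** transfers between numberings using computability of the group operation.

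The main obstacle throughout is the gap between *abstract indices* (which carry no syntactic information) and *words over $S$*. The key insight that resolves it: finiteness of $S$ lets us hard-code generator indices, and surjectivity + decidability lets us search for words realizing any given index.
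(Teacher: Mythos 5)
Your proposal is correct and follows essentially the same route as the paper: for the forward construction in item (1) you build a decidable set of canonical (shortlex-minimal) representatives $N\subset S^\ast$ and transport the numbering through a computable bijection with $\mathbb{N}$, exactly as in the paper's sketch. The remaining parts, which the paper leaves to the reader with the hint that the relevant maps are ``determined by the finite information of letter-to-word substitutions,'' are filled in by your hard-coding of the finitely many generator indices plus search -- which is precisely the intended argument.
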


\begin{proof}[Proof sketch]
We start with the first item. Suppose that $G$ has decidable word problem, let $S\subset G$
 be a finite and symmetric generating set, and let $\pi\colon S^{\ast}\to G$
be the function that sends a word to the corresponding group element.
Using the decidability of the word problem, we can compute a set $N\subset S^{\ast}$
such that the restriction of $\pi$ to $N$ is a bijection. Being
$N$ a decidable subset of $S^{\ast}$, it admits a computable
bijection with $\N$. The composition of these functions gives a bijection
$\nu\colon\N\to G$, and it is easy to verify that it is a computable
numbering. The reverse implication is left to the reader. Items $2$
and $3$ are also left to the reader: the relevant functions are determined
by the finite information of letter-to-word substitutions, and this
allows to prove that they are computable. 
\end{proof}

\begin{rem}
    In many cases we will endow sets with numberings without providing details, as there is only one natural choice (up to equivalence). This is the case for the set of  all finite subsets of $\N$, the collection of all functions $f\colon F \to A$ where $F$ and $A$ are finite subsets of $\N$, or the collection of all eventually constant sequences in $\N^\N$. 
    
\end{rem}

\section{Computability theory on separable metric spaces}\label{preliminaries-computable-analysis}

In this section we review some basic results for computable metric spaces. We refer the reader to \cite{brattka_handbook_2021,weihrauch_computable_2000,brattka_tutorial_2008} for a more detailed exposition. We will also  prove some results that will be used later. 

Turing \cite{turing_computable_1937} defined a real number to be computable when its decimal expansion is  computable as a function  $f\colon\N\to\{0,\dots,9\}$. It is easy to see that then a number $x\in \R$ is computable when there is a computable sequence of rational numbers $(q_n)_{n\in\N}$ such that for all $n$ we have $|x-q_n|\leq 1/n$. In other words, a real number is computable when we can approximate it with arbitrary precision, and for this purpose we can use a countable set where we can apply computability notions from $\N$. The notion of computable metric space is based on this idea. 

\begin{defn}
A \textbf{computable metric space }is a triple $(\mathcal{X},d,\mathcal{S})$,
where $(\mathcal{X},d)$ is a separable metric space, $\mathcal{S}=\{s_{i}: i\geq1\}$
is a countable and dense subset of $\mathcal{X}$, and the distance function
$d$ is uniformly computable on elements from $\mathcal{S}$. That is, there
is a total computable function $f\colon\N^3\to \Q$ such that for every $(i,j,n)\in\N^3$,  $|d(s_i,s_j)-f(i,j,n)|\leq 1/n$. 

We say that $\mathcal S$ is a \textbf{computable metric space structure}, or simply a \textbf{computability structure}, for $(\mathcal X, d)$. 
\end{defn}

\begin{example}
The following are computable metric spaces:
\begin{enumerate}
\item $\R^{n}$ with the Euclidean metric, and $\mathcal{S}=\Q^{n}$.
\item $[0,1]^{n}$ with the Euclidean metric, and $\mathcal{S}=[0,1]^{n}\cap\Q^{n}$. 
\item $\{0,1\}^{\N}$ with the metric $d(x,y)=\inf\{2^{-n} : n\in\N, \ x(i)=y(i) \ \text{for all} \ i\leq n\}$,
and $\mathcal{S}$ as the set of sequences that are eventually constant.
\item A group $G$ with decidable word problem, with a word-length metric, and with $\mathcal S=G=\{g_i:i\in\N\}$ given by a computable numbering of $G$ (see \Cref{prop:computabilitiy-groups-1}).
\end{enumerate}
\end{example}

A computable metric space admits a countable basis for the metric topology given by balls with center in $\mathcal S$, and with rational radius. In this section, we review a number of effective versions of notions from topology. A computable metric space is a particular case of the more general notion of computable $T_0$ topological space (see \cite[Section 2]{amir_strong_2023}).

\subsection{Open, closed, and compact sets}
Here we define effectively open and closed sets, and recursively compact sets. We also show some basic facts about them.

We start introducing some notation. Let $(\mathcal X,d,\mathcal S)$ be a computable metric space,  which will be fixed during this section. We write $B(x,r)=\{y\in \mathcal{X}\mid d(y,x)<r\}$. We denote by $\overline {B(x,r)}$ the topological closure of $B(x,r)$. If $x\in\mathcal{S}$ and $r\in\Q$, then $B(x,r)$ is called a\textbf{ basic ball}. It is easily seen that the collection of basic balls is a basis for the metric topology on $\mathcal X$. 

For the following definitions, we fix an enumeration $(B_{n})_{n\in\N}$ of all basic balls in $\mathcal X$. By this we mean that from the index $n$ we can compute both an index for the center $s_{i}$ and $r\in\Q$, and vice versa. For this purpose, we take a computable bijection $\N\to\N\times\Q$, $n\mapsto(\varphi_{1}(n),\varphi_{2}(n))$, and define $B_{n}=B(s_{\varphi_{1}(n)},r_{\varphi_{2}(n)})$.
Given a subset $I\subset\N,$ we define $U_{I}=\bigcup_{i\in I}B_{i}.$ Furthermore, we consider a computable enumeration $(F_n)_{n\in\N}$ of all finite subsets of $\N$. We are now ready to give the main definitions of this subsection:
\begin{defn}
A set $V\subset \mathcal{X}$ is \textbf{effectively open }if there is a recursively enumerable set $I\subset \N$ such that $V=U_I$.
\end{defn}
\begin{defn}
    A set $C\subset \mathcal X$ is \textbf{effectively closed} if it is the complement of an effectively open set.
\end{defn}
\begin{defn}
A compact set $K\subset \mathcal{X}$ is \textbf{recursively compact} when it is semi-decidable whether a finite set $F\subset\N$ satisfies $K\subset U_F$. That is, the set $\{n\in \N : K\subset U_{F_n}\}$ is recursively enumerable. 
\end{defn}
Observe that if $K$ is recursively compact, then we can also semi-decide whether $K\subset U$ given an effectively open set $U$. To see this, let $I$ be a recursively enumerable set with $U=\bigcup_{i\in I} B_i$. As $K$ is compact we have that $K\subset U_I$ if and only if $K\subset U_F$ for some finite set $F\subset I$. In order to discover whether $K\subset U$, we enumerate all finite sets $F\subset \N$, we run for each $F$ the algorithms to semi-decide $F\subset I$ and $K\subset U_F$, and halt the whole process once we find that some $F$ satisfies both conditions. 

\begin{rem} 
By definition, effectively open and closed sets, and recursively compact sets, can be specified with finite information, which we call a \textbf{description} for them (see \Cref{preliminaries-computability} and \Cref{numberinf-of-r.e.-sets}). In what follows, when say that we compute an effectively open set $U$ or we decide whether $U$ has a certain property (as in the previous paragraph), we mean that such computation occurs at the level of descriptions. The same remark applies to effectively closed sets, recursively compact sets, and other objects that will be defined in this chapter. 
\end{rem}
We will now prove some basic properties about open and closed sets, and recursively compact sets.

\begin{prop}
\label{closure-of-basic-balls-is-effective}The topological closure of a basic ball is effectively closed.  
\end{prop}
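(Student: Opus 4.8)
The statement to prove is that the topological closure of a basic ball $B(s_i, r)$ (with $s_i \in \mathcal{S}$ and $r \in \mathbb{Q}$) is effectively closed. My plan is to show that the complement of $\overline{B(s_i,r)}$ is effectively open, by exhibiting a recursively enumerable set $I \subset \mathbb{N}$ of indices of basic balls whose union is exactly this complement.

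The key geometric observation is that a point $x$ lies outside $\overline{B(s_i,r)}$ precisely when $d(x, s_i) > r$, since $\overline{B(s_i,r)} \subseteq \{x : d(x,s_i) \leq r\}$ and more to the point the open set $\{x : d(x,s_i) > r\}$ is exactly the complement of the closed ball $\{x : d(x,s_i) \leq r\}$, which contains $\overline{B(s_i,r)}$. The subtlety I expect here is that $\overline{B(s_i,r)}$ need not equal the closed metric ball $\{x : d(x,s_i)\le r\}$ in a general metric space, so I must be careful to describe the complement of the \emph{closure} of the open ball, not the complement of the closed ball. The cleanest route is to verify that $\{x : d(x,s_i) > r\}$ is itself the complement of $\overline{B(s_i,r)}$: this set is open and disjoint from $B(s_i,r)$, hence disjoint from its closure, and conversely any point not in the closure has positive distance to $B(s_i,r)$ and one checks $d(x,s_i) \ge r$; achieving the strict inequality on the nose is the delicate point, so I would instead directly argue that the complement of the closure is the union of basic balls entirely avoiding $\overline{B(s_i,r)}$.

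Concretely, I would take $I$ to be the set of all indices $n$ such that the basic ball $B_n = B(s_j, q)$ satisfies $d(s_i, s_j) \geq r + q$, i.e. $B_n$ is ``provably disjoint'' from the open ball by the triangle inequality. Every point $x$ not in $\overline{B(s_i,r)}$ has some open neighborhood disjoint from $B(s_i,r)$, and by density of $\mathcal{S}$ and the fact that basic balls form a basis, $x$ lies in some basic ball $B_n$ with $d(s_i,s_j) \geq r+q$; conversely any such $B_n$ is disjoint from $B(s_i,r)$ and, being open, from $\overline{B(s_i,r)}$ as well. This shows $U_I = \mathcal{X} \smallsetminus \overline{B(s_i,r)}$. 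The main obstacle is the effectivity check: I must show $I$ is recursively enumerable. Since the distance $d(s_i,s_j)$ is only uniformly computable with rational approximations $f(i,j,n)$ rather than exactly, I cannot decide the non-strict inequality $d(s_i,s_j) \geq r+q$ directly.

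To circumvent this, I would replace the closed condition by a semi-decidable open one: enumerate into $I$ those indices $n = (j,q)$ for which there exists a witness precision $m$ with $f(i,j,m) - 1/m > r + q$, which certifies $d(s_i,s_j) > r+q$. This is a recursively enumerable condition because we search over all $m$ and use the computable approximation $f$. I then check that the union of these strictly-disjoint basic balls still covers the entire complement of the closure: given $x \notin \overline{B(s_i,r)}$, I can find a basic ball around $x$ with $d(s_i,s_j) > r+q$ strictly (shrinking the radius slightly if needed, using that $x$ has positive distance to the closed ball's boundary region), which will eventually be witnessed by some approximation $f(i,j,m)$. Thus $I$ is recursively enumerable and $U_I$ equals the complement of $\overline{B(s_i,r)}$, proving the closure is effectively closed. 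The only real care needed is matching the strict versus non-strict inequalities so that the cover is exact while the membership test stays semi-decidable.
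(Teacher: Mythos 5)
You follow the same route as the paper: enumerate exactly those basic balls $B(s_j,q)$ whose disjointness from $B(s_i,r)$ is certified by the triangle inequality, namely those with $d(s_i,s_j)>r+q$, and observe that this strict inequality is semi-decidable from the rational approximations $f(i,j,m)$. Your verification of semi-decidability is correct, and so is the easy inclusion: each enumerated ball is open and disjoint from $B(s_i,r)$, hence disjoint from $\overline{B(s_i,r)}$, so $\overline{B(s_i,r)}\subseteq\mathcal{X}\smallsetminus U_I$.

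The gap is the coverage step, precisely the point you flagged as delicate and then patched with ``shrinking the radius slightly''; that patch does not work. A point $x\notin\overline{B(s_i,r)}$ may satisfy $d(x,s_i)=r$ exactly, and then \emph{every} basic ball $B(s_j,q)$ containing $x$ has $d(s_i,s_j)\le d(s_i,x)+d(x,s_j)<r+q$, so no ball containing $x$ is ever enumerated; no choice of radius can reverse this inequality, and ``positive distance to $B(s_i,r)$'' does not imply $d(x,s_i)>r$. Already the two-point computable metric space $\mathcal{X}=\mathcal{S}=\{0,1\}\subset\R$ defeats the claimed equality: here $B(0,1)=\{0\}=\overline{B(0,1)}$, the point $1$ lies outside the closure at distance exactly $1$ from the center, and $U_I=\emptyset$, so $\mathcal{X}\smallsetminus U_I=\{0,1\}\neq\overline{B(0,1)}$. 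What your enumeration actually establishes is $U_I=\{x \ : \ d(x,s_i)>r\}$, i.e.\ that the \emph{closed metric ball} $\{x \ : \ d(x,s_i)\le r\}$ is effectively closed; this set contains, but in general strictly contains, the topological closure. You should know that the paper's own proof has the same defect: it enumerates the same set $I$ and dismisses the equality $\mathcal X\smallsetminus U_I=\overline{B(x_0,r_0)}$ as ``an easy verification,'' which fails on the example above. In fact the statement for the topological closure is false in full generality: one can build a computable metric space encoding the halting set $H$ in the sphere of radius $1$ (isolated points $y_n$ at distance exactly $1$ from $x_0$, approached from inside $B(x_0,1)$ exactly when $n\in H$), in which $\mathcal{X}\smallsetminus\overline{B(x_0,1)}=\{y_n \ : \ n\notin H\}$ cannot be an effectively open set, since that would make $\N\smallsetminus H$ recursively enumerable. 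So the honest repair is to prove the proposition for the closed ball $\{x \ : \ d(x,s_i)\le r\}$ (which your argument, and the paper's, actually does), or to add a hypothesis forcing every point at distance exactly $r$ from $s_i$ to lie in the closure of $B(s_i,r)$.
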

\begin{proof}
    Let $B(x_0,r_0)$ be a basic ball. Given $i\in \N$ and $r\in \Q$, it is semi-decidable whether $d(s_i,x_0)>r+r_0$. This simply follows from the computability of the metric. It follows that then $I=\{n\in \N : B_n=B(s_i,r) \ \text{and} \ d(s_i,x_0)>r+r_0\}$ is recursively enumerable. It is an easy verification that the set $I$ satisfies  $\mathcal X\smallsetminus U_I=\overline{B(x_0,r_0)}$, so the claim follows.
\end{proof}
\begin{rem}
    In general, the topological closure of an effectively open set is not always an effectively closed set. A counterexample will be given in \Cref{halting-constant}.
\end{rem}

\begin{prop}\label{maximal-set-effectively-closed}
    Suppose that $\mathcal X$ is recursively compact. Then a set $C\subset \mathcal X$ is effectively closed if and only if $I=\{i\in \N : \overline{B_i}\cap C=\emptyset\}$ is recursively enumerable. 
\end{prop}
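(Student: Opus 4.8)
The plan is to prove the two implications separately. The engine for the forward direction will be two auxiliary facts that follow immediately from the material already developed. First, a binary (hence finite) intersection of effectively closed sets is again effectively closed, with a description computable from the descriptions of the factors; this is dual to the fact that $U_1\cup U_2$ is effectively open, whose r.e.\ index is obtained by merging the two given r.e.\ indices. Second, in a recursively compact space one can semi-decide whether an effectively closed set $D$ is empty: writing $D=\mathcal X\smallsetminus U$ with $U$ effectively open, we have $D=\emptyset$ iff $\mathcal X\subseteq U$, which is semi-decidable by the observation recorded just after the definition of recursive compactness, applied with $K=\mathcal X$ (recursively compact by hypothesis). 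Both procedures are uniform, running from descriptions.

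For the \emph{only if} direction, I would assume $C$ is effectively closed. By \Cref{closure-of-basic-balls-is-effective} each $\overline{B_i}$ is effectively closed, and a description of it is computable from $i$. Hence $\overline{B_i}\cap C$ is effectively closed with a description computable from $i$, and applying the emptiness semi-decision procedure uniformly in $i$ shows that $I=\{i : \overline{B_i}\cap C=\emptyset\}$ is recursively enumerable. This is the only place where recursive compactness of $\mathcal X$ enters.

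For the \emph{if} direction I would first establish the purely topological identity $U_I=\mathcal X\smallsetminus\overline C$. The inclusion $U_I\subseteq\mathcal X\smallsetminus\overline C$ is easy: if $x\in B_i$ with $\overline{B_i}\cap C=\emptyset$, then $B_i$ is an open neighbourhood of $x$ disjoint from $C$, so $x\notin\overline C$. The reverse inclusion is the crux: given $x\notin\overline C$, put $\delta=d(x,\overline C)>0$, choose $s_j\in\mathcal S$ with $d(x,s_j)<\delta/4$ by density, and a rational $r$ with $\delta/4<r<\delta/2$; then $x\in B(s_j,r)$, while every $y\in\overline{B(s_j,r)}$ has $d(y,x)\le r+\delta/4<\delta$, so that $\overline{B(s_j,r)}\cap\overline C=\emptyset$ and the corresponding index lies in $I$. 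Once the identity holds, $I$ recursively enumerable makes $U_I$ effectively open, hence $\overline C=\mathcal X\smallsetminus U_I$ effectively closed; since $C$ is closed, $C=\overline C$ is effectively closed.

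The point deserving the most care, and the main obstacle, is exactly this identity: the construction recovers $\overline C$ rather than $C$, so one must use that $C$ is closed — automatic in the intended reading, since ``effectively closed'' presupposes closedness. The genuinely metric step is producing a basic ball whose \emph{closure} (not merely the ball itself) avoids $C$, and the factor-of-$\delta$ bookkeeping above is what guarantees this.
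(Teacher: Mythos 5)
Your proof is correct, and it actually does more than the paper's: the paper proves only the forward implication (its proof opens with ``We only prove the forward implication''), leaving the converse to the reader. On the forward direction your argument and the paper's are the same mechanism in different packaging. You intersect: $\overline{B_i}\cap C$ is effectively closed uniformly in $i$, and emptiness of an effectively closed set is semi-decidable because $\mathcal X$ is recursively compact. The paper instead observes that $\overline{B_i}$ is effectively closed (\Cref{closure-of-basic-balls-is-effective}), hence recursively compact by \Cref{prop:effective-closed-subset-of-compact-is-compact}, and semi-decides the inclusion $\overline{B_i}\subset U_I$, where $U_I=\mathcal X\smallsetminus C$. Both amount to semi-deciding a cover of a recursively compact set by effectively open sets; the only difference is whether $\mathcal X$ or $\overline{B_i}$ plays the compact role. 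Your backward direction --- the identity $U_I=\mathcal X\smallsetminus\overline C$ via the $\delta/4$ argument, then $C=\overline C$ --- is sound, is absent from the paper, and as you observe needs no recursive compactness at all.

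Two small points. First, your metric argument tacitly assumes $C\neq\emptyset$ so that $d(x,\overline C)$ is finite; the case $C=\emptyset$ is immediate (then $I=\N$ and $U_I=\mathcal X$), but it should be mentioned. Second, and more substantively, your justification for $C=\overline C$ (``effectively closed presupposes closedness'') is circular in this direction: closedness of $C$ is part of the conclusion you are proving, not a hypothesis you are handed. The accurate reading is that the proposition quantifies over \emph{closed} sets $C$; for non-closed $C$ the backward implication is genuinely false. For instance, in $\mathcal X=[0,1]$ take $C=\Q\cap[0,1]$: by density, $\overline{B_i}\cap C\neq\emptyset$ for every nonempty basic ball, so $I$ is decidable (hence recursively enumerable), yet $C$, not being closed, cannot be effectively closed. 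Since you identified the closedness hypothesis and used it at exactly the right spot, this is a matter of phrasing rather than a gap in the argument.
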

\begin{proof}
    We only prove the forward implication. As $C$ is effectively closed there is a recursively enumerable set $I\subset \N$ with $\mathcal X\smallsetminus C=U_I$. We show that it is semidecidable whether a basic ball $B$ satisfies $\overline B\cap X=\emptyset$, uniformly on $B$. Indeed, given a basic ball $B$, its topological closure $\overline B$ is effectively closed by \Cref{closure-of-basic-balls-is-effective} and then recursively compact by \Cref{prop:effective-closed-subset-of-compact-is-compact}. Now observe that $\overline B\cap X=\emptyset$ when $\overline B\subset U_I$, which is semi-decidable by recursive compactness of $\overline B$. 
\end{proof}

\begin{prop}\label{recursively-compact-is-closed}
    A recursively compact set   is effectively closed.
\end{prop}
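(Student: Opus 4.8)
The plan is to show that the complement $\mathcal{X}\smallsetminus K$ is effectively open. Since $K$ is compact and $\mathcal X$ is a metric (hence Hausdorff) space, $K$ is closed, so $\mathcal X \smallsetminus K$ is genuinely open; the work is to exhibit a recursively enumerable index set $I$ with $U_I = \mathcal X \smallsetminus K$. The natural candidate is
\[
I=\{\,n\in\N \mid \overline{B_n}\cap K=\emptyset\,\}.
\]
The key point is to use the \emph{closures} of basic balls rather than the balls themselves: the condition $\overline{B_n}\cap K=\emptyset$ is equivalent to $K\subset \mathcal X\smallsetminus \overline{B_n}$, and the set on the right is open, so it is amenable to the recursive compactness of $K$.

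First I would verify that $I$ is recursively enumerable. By \Cref{closure-of-basic-balls-is-effective} the closure $\overline{B_n}$ is effectively closed, and an inspection of that proof shows this holds uniformly in $n$, so that $\mathcal X\smallsetminus\overline{B_n}$ is uniformly effectively open. Since $K$ is recursively compact, we can semi-decide $K\subset U$ for any effectively open set $U$ given by a description (as noted in the paragraph following the definition of recursive compactness), and this procedure is uniform in the description of $U$. Feeding in the descriptions of the sets $\mathcal X\smallsetminus\overline{B_n}$ therefore yields a single algorithm that semi-decides $K\subset \mathcal X\smallsetminus\overline{B_n}$, i.e. $\overline{B_n}\cap K=\emptyset$, uniformly in $n$. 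Hence $I$ is recursively enumerable.

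Next I would check that $U_I=\mathcal X\smallsetminus K$. The inclusion $U_I\subset \mathcal X\smallsetminus K$ is immediate, since $n\in I$ gives $B_n\subset\overline{B_n}\subset \mathcal X\smallsetminus K$. For the reverse inclusion, take $x\notin K$; openness of $\mathcal X\smallsetminus K$ provides $\varepsilon>0$ with $B(x,\varepsilon)\cap K=\emptyset$. Using density of $\mathcal S$, choose $s_i\in\mathcal S$ with $d(x,s_i)<\varepsilon/4$ and a rational $r$ with $d(x,s_i)<r<\varepsilon/2$; then $x\in B(s_i,r)$ and the triangle inequality gives $\overline{B(s_i,r)}\subset B(x,\varepsilon)$, whence $\overline{B(s_i,r)}\cap K=\emptyset$. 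Thus the index of $B(s_i,r)$ lies in $I$ and $x\in U_I$. Combining the two inclusions with the enumerability of $I$ shows $\mathcal X\smallsetminus K$ is effectively open, so $K$ is effectively closed.

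I expect the main subtlety to be the step where one passes to closures of balls: replacing $B_n$ by $\overline{B_n}$ is what turns the complement into an \emph{open} set so that recursive compactness applies, and one must confirm that the resulting procedure is uniform in $n$ and that shrinking an $\varepsilon$-ball down to a basic ball with closed closure inside it still covers every point of the complement. The remaining estimates are routine triangle-inequality bookkeeping.
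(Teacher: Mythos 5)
Your proof is correct and follows essentially the same route as the paper: the same index set $I=\{n : \overline{B_n}\cap K=\emptyset\}$, the same appeal to \Cref{closure-of-basic-balls-is-effective} to get $\mathcal X\smallsetminus\overline{B_n}$ effectively open, and the same use of recursive compactness to semi-decide $K\subset \mathcal X\smallsetminus\overline{B_n}$. The only difference is that you spell out the verification that $U_I=\mathcal X\smallsetminus K$ (which the paper dismisses in one line as a consequence of $K$ being closed), and that bookkeeping is done correctly.
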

\begin{proof}
    Let $C$ be a recursively compact subset of the computable metric space $\mathcal X$. As $C$ is closed, the set $I=\{n\in\N : \overline{B_n}\cap C=\emptyset\}$ satisfies $\mathcal X\smallsetminus U_I=C$. We claim that $I$ is recursively enumerable. In other words, that given a basic ball $B$, it is semi-decidable whether $\overline B\cap C=\emptyset$. This is shown by the following algorithm. On input $B$, we use \Cref{closure-of-basic-balls-is-effective} to compute an effectively open set $U$ with $\mathcal X\smallsetminus \overline B=U$. Observe now that $\overline B\cap C=\emptyset$ exactly when $C\subset U$, and we can semi-decide this condition by recursive compactnes of $C$.
\end{proof}
\begin{prop}\label{prop:effective-closed-subset-of-compact-is-compact}\label{prop.equivalence.comp.closed.subset}
    An effectively closed set contained in a recursively compact set is also recursively compact.
\end{prop}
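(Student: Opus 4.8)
The plan is to work directly from the definition of recursive compactness: to show that $C$ is recursively compact I must produce a semi-algorithm that, given a finite set $F\subset\N$, detects exactly when $C\subseteq U_F$. The available data are a recursively enumerable set $J$ with $\mathcal X\smallsetminus C=U_J$ (this is what it means for $C$ to be effectively closed) together with the recursive compactness of the ambient set, which I will call $K$, satisfying $C\subseteq K$. First I would note that $C$ is genuinely compact: effectively closed sets are closed (being complements of open sets $U_I$), and a closed subset of the compact set $K$ is compact.

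The crux of the argument is the equivalence
\[
C\subseteq U_F \quad\Longleftrightarrow\quad K\subseteq U_F\cup U_J .
\]
For the forward direction, if $C\subseteq U_F$ then $K=C\cup(K\smallsetminus C)\subseteq U_F\cup(\mathcal X\smallsetminus C)=U_F\cup U_J$. For the converse, if $K\subseteq U_F\cup U_J$ then $C\subseteq U_F\cup U_J$ since $C\subseteq K$; but $C\cap U_J=\emptyset$ because $U_J=\mathcal X\smallsetminus C$, and therefore $C\subseteq U_F$. This is the only nontrivial idea in the proof: replacing the condition ``$C$ is covered by $U_F$'' by the condition ``$K$ is covered by $U_F$ together with the complement of $C$'', so that the recursive compactness of $K$ can be brought to bear.

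It then remains to observe that the right-hand side is semi-decidable uniformly in $F$. The set $F\cup J$ is recursively enumerable (a finite set united with an r.e.\ set), so $U_F\cup U_J=U_{F\cup J}$ is an effectively open set, a description of which is computed from $F$ and from the description of $C$. By the observation recorded just after the definition of recursive compactness, the recursive compactness of $K$ allows one to semi-decide $K\subseteq U$ for an arbitrary effectively open $U$ given a description of $U$. Running this semi-decision procedure on $U_{F_n\cup J}$ for each $n$ therefore enumerates precisely the set $\{n\in\N : C\subseteq U_{F_n}\}$, which is exactly the condition required for $C$ to be recursively compact.

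I expect the main (and essentially only) obstacle to be spotting the covering equivalence above; once it is in hand, the computability bookkeeping is routine and simply reuses the coverage-semidecidability already established for recursively compact sets.
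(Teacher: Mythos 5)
Your proof is correct and follows essentially the same route as the paper's: both hinge on the equivalence that $C\subseteq U_F$ if and only if $K$ is covered by $U_F$ together with the effectively open complement of $C$, and then invoke the recursive compactness of $K$ to semi-decide that covering condition. If anything, your write-up is the more careful of the two — the paper's proof contains slips (it writes $\mathcal X\smallsetminus K$ where $\mathcal X\smallsetminus C$ is meant, and concludes that $K$ rather than $C$ is recursively compact), which your version states correctly.
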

\begin{proof}
    Let $\mathcal X$ be a computable metric space, let $K\subset\mathcal X$ be recursively compact, and let $C\subset \mathcal X$ be an effectively closed set. Note that $\mathcal X \smallsetminus C$
    is effectively open, and that a finite open cover $U_{I}$ covers $C$ if and
    only if $(\mathcal X\smallsetminus K)\cup U_{I}$ covers $K$. But this condition is
    semi-decidable, as $K$ is recursively compact and $(X\smallsetminus K)\cup U_{I}$
    is effectively open. It follows that $K$ is recursively compact.
\end{proof}
\begin{prop}\label{union-of-effectively-open-sets}
    Let $(U_n)_{n\in\N}$ be a uniformly computable sequence of effectively open sets. Then their union $\bigcup_{n\in\N} U_n$ is effectively open as well. 
\end{prop}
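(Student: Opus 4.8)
The plan is to realize the union as $U_J$ for a single recursively enumerable index set $J$ obtained by merging the index sets of the individual $U_n$. First I would unpack the hypothesis: saying that $(U_n)_{n\in\N}$ is a uniformly computable sequence of effectively open sets means that there is a uniform procedure producing, for each $n$, a recursively enumerable set $I_n\subset\N$ with $U_n=U_{I_n}=\bigcup_{i\in I_n}B_i$. Concretely, uniformity provides that the relation $R=\{(n,i)\in\N^2 : i\in I_n\}$ is recursively enumerable (equivalently, there is a computable map $n\mapsto e_n$ with $W_{e_n}=I_n$, in the sense of \Cref{numberinf-of-r.e.-sets}).

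Next I would set $J=\bigcup_{n\in\N}I_n=\{i\in\N : \exists n,\ i\in I_n\}$ and verify the set-theoretic identity $U_J=\bigcup_{n\in\N}U_n$. This is immediate from the definition of $U_{(\cdot)}$: an index $i$ lies in $J$ exactly when $i\in I_n$ for some $n$, so $U_J=\bigcup_{i\in J}B_i=\bigcup_n\bigcup_{i\in I_n}B_i=\bigcup_n U_{I_n}=\bigcup_n U_n$. It then remains only to see that $J$ is recursively enumerable, which is the single point requiring an algorithmic argument. Observe that $J$ is the projection of $R$ onto its second coordinate, and the projection of a recursively enumerable set is recursively enumerable. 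Explicitly, one dovetails: enumerate all pairs $(n,i)\in\N^2$ and run the semi-decision procedure for $(n,i)\in R$ on each in an interleaved fashion, outputting $i$ whenever a pair is found to belong to $R$. This enumerates exactly $J$, so by the definition of effectively open set, $\bigcup_n U_n=U_J$ is effectively open, completing the proof.

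I expect no serious obstacle here; the content of the statement is entirely the observation that uniform recursive enumerability of the index sets lets one merge them into one recursively enumerable index set, combined with the fact that $U_{(\cdot)}$ carries unions of index sets to unions of open sets. The only care needed is in stating precisely what uniformity provides, namely the recursive enumerability of the relation $R$, and in recalling that projections preserve recursive enumerability via a dovetailing argument.
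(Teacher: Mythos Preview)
Your proposal is correct and follows essentially the same approach as the paper: take the union $J=\bigcup_n I_n$ of the index sets, note it is recursively enumerable by dovetailing the uniform enumerations, and conclude $\bigcup_n U_n=U_J$. The paper's proof is more terse but identical in substance.
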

\begin{proof}
    Let $(J_n)_{n\in\N}$ be a uniform sequence of recursively enumerable subsets of $\N$ with $U_n=U_{J_n}$. Then the union $J=\bigcup_{n\in\N}J_n$ is recursively enumerable, as we just have to enumerate all $J_n$ simultaneously and with the same algorithm. As $\bigcup_{n\in\N} U_n=U_J$, our claim follows. 
\end{proof}
\subsection{Computable functions}
Here we review some facts about computable functions on computable metric spaces. For this subsection we fix two computable metric spaces $\mathcal X$ and $\mathcal X'$. As before, we let $(B_n')_{n\in\N}$ be a computable enumeration of basic balls in $\mathcal X'$, and write $U_I'=\bigcup_{i\in I}  B_i'$ for $I\subset \N$.

\begin{defn}\label{def:computable-function}
A function $f\colon \mathcal X\to \mathcal X'$ is \textbf{computable} when there is an algorithm which, given an effectively open $U'\subset \mathcal X$, computes an effectively open set $U\subset\mathcal X$ such that \[f^{-1}(U')=U.\]
\end{defn}
That is, the pre-image of an effectively open set is effectively open, uniformly on the descriptions of the sets. For our purposes we will need a sligthly more general definition, where we have no computability assumption on the domain of the function. 

\begin{defn}\label{computable-function-relative}
    Let $X\subset \mathcal X$ and $X'\subset \mathcal X'$ be arbitrary sets. A function $f\colon X\to X'$ is \textbf{computable} if  there is an algorithm which, given an effectively open set $U'\subset \mathcal X'$ computes an effectively open set $U\subset\mathcal X$ such that  
    \[f^{-1}(U'\cap X')=U\cap X.\]
    
    If moreover $f$ admits a computable inverse, then it is a \textbf{computable homeomorphism} between $X$ and $X'$.
\end{defn}

\begin{remark}
    A computable function must be continuous on its domain. Indeed, given $X\subset\mathcal X$, we have that $\{B_i\cap X : i\in\N\}$ is a basis for the subspace topology for $X$. 
\end{remark}
We now prove some basic results about computable functions.

\begin{prop}\label{composition-of-computable-functions-is-computable}
    Let  $f\colon X\to X'$ and $g\colon X'\to X''$ be computable functions, where $X$, $X'$, and $X''$ are subsets of computable metric spaces. Then the composition $g\circ  f \colon X\to X''$ is computable.
\end{prop}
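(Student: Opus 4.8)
The plan is to verify the defining condition of \Cref{computable-function-relative} directly for the composition, by chaining the two algorithms supplied by the hypotheses. The key observation is that the preimage operation behaves well with composition: for any set $W'' \subset \mathcal X''$ we have $(g\circ f)^{-1}(W'') = f^{-1}(g^{-1}(W''))$. We must show that, given a description of an effectively open set $U'' \subset \mathcal X''$, we can compute a description of an effectively open set $U \subset \mathcal X$ satisfying $(g\circ f)^{-1}(U'' \cap X'') = U \cap X$.

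First I would invoke computability of $g$. Since $g \colon X' \to X''$ is computable, there is an algorithm that, on input a description of the effectively open set $U'' \subset \mathcal X''$, outputs a description of an effectively open set $V \subset \mathcal X'$ with $g^{-1}(U'' \cap X'') = V \cap X'$. Next I would feed this $V$ into the algorithm witnessing computability of $f \colon X \to X'$, obtaining an effectively open $U \subset \mathcal X$ with $f^{-1}(V \cap X') = U \cap X$. The composition of these two algorithms is itself an algorithm taking a description of $U''$ to a description of $U$, which is what is required.

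It then remains to check the set-theoretic identity $(g\circ f)^{-1}(U'' \cap X'') = U \cap X$. Starting from the right, $U \cap X = f^{-1}(V \cap X')$. Now for $x \in X$, we have $f(x) \in V \cap X'$ if and only if $f(x) \in V$ (since $f$ maps into $X'$, the condition $f(x) \in X'$ is automatic), which by the identity for $V$ is equivalent to $g(f(x)) \in U'' \cap X''$. Since $g \circ f$ maps $X$ into $X''$, this is in turn equivalent to $g(f(x)) \in U''$, that is, $x \in (g\circ f)^{-1}(U'' \cap X'')$. Thus the two sets coincide, as desired.

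I do not expect a genuine obstacle here; the result is a routine but necessary sanity check that the class of computable functions is closed under composition. The only point demanding a little care is the bookkeeping around the intersections with $X$, $X'$, and $X''$: one must use that $f(X) \subset X'$ and $g(X') \subset X''$ to discard the automatically-satisfied membership conditions, so that the two preimage identities can be chained without leftover terms. Everything else is the observation that composing two algorithms yields an algorithm, which is immediate.
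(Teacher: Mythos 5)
Your proof is correct and follows exactly the paper's own argument: apply the algorithm for $g$ to $U''$ to get $V$, feed $V$ to the algorithm for $f$, and conclude that the composition verifies \Cref{computable-function-relative}. The only difference is that you spell out the set-theoretic bookkeeping (using $f(X)\subset X'$) that the paper leaves implicit, which is a harmless elaboration.
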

\begin{proof}
    Given an effectively open set $U''$, we can use the computability of $g$ to compute an effectively open set $U'$ with with $g^{-1}(U''\cap X'')=U'\cap X'$, and then the computability of $f$ to compute an effectively open set $U$ with $f^{-1}(U'\cap X')=U\cap X$. This shows that $g\circ f$ verifices \Cref{computable-function-relative}.
\end{proof}
\begin{prop}
\label{prop:effective-continuous-image-of-compact-is-compact} Let
$f\colon X\to  X'$ be a computable function, and let $K\subset X'$ be recursively compact. Then $f(K)$ is also recursively compact.
\end{prop}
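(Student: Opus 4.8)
The plan is to reduce the recursive compactness of $f(K)$ to that of $K$ by pulling open covers back through $f$. Since $f(K)$ must be defined, I read the hypothesis as $K\subset X$ recursively compact (the image lives in $X'$). First I would record that $f(K)$ is genuinely compact: by the remark following \Cref{computable-function-relative} the map $f$ is continuous on $X$, and the continuous image of a compact set is compact, so the notion of recursive compactness applies to $f(K)$.

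To establish recursive compactness of $f(K)$ I must show that $\{n\in\N : f(K)\subset U_{F_n}'\}$ is recursively enumerable, that is, that it is semi-decidable whether a finite basic cover $U_{F_n}'$ of $\mathcal X'$ contains $f(K)$, uniformly in $n$. The key is the elementary equivalence
\[ f(K)\subset U_{F_n}' \iff K\subset f^{-1}(U_{F_n}'\cap X'), \]
which holds because $f(K)\subset X'$ and $K\subset X$: covering the image by $U_{F_n}'$ is the same as covering $K$ by the pullback of $U_{F_n}'\cap X'$.

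The algorithm then proceeds as follows. Given $n$, the set $U_{F_n}'$ is effectively open in $\mathcal X'$ and a description of it is computable from $n$. By the computability of $f$ (\Cref{computable-function-relative}), from this description I compute a description of an effectively open set $U\subset\mathcal X$ with $f^{-1}(U_{F_n}'\cap X')=U\cap X$. Since $K\subset X$, the condition $K\subset f^{-1}(U_{F_n}'\cap X')$ is equivalent to $K\subset U$. Finally, by the observation recorded after the definition of recursive compactness, one can semi-decide $K\subset U$ from the recursive compactness of $K$ together with the effective openness of $U$. Carrying this out uniformly in $n$ witnesses that $\{n : f(K)\subset U_{F_n}'\}$ is recursively enumerable.

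The individual steps are routine once the reduction is set up; I expect the only genuine subtlety to be the bookkeeping forced by the \emph{relative} definition of computable function, namely that the preimage identity carries the intersections with $X$ and $X'$ rather than the ambient spaces. One must check that passing to these domains does not break the equivalence between covering $f(K)$ and covering $K$ after pullback, and this is exactly where the hypotheses $K\subset X$ and $f(K)\subset X'$ are used.
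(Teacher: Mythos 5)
Your proof is correct and follows essentially the same route as the paper's: both reduce covering $f(K)$ to covering $K$ via the identity $f^{-1}(U'\cap X')=U\cap X$ supplied by the computability of $f$, then invoke the recursive compactness of $K$ (your reading of the typo in the statement, $K\subset X$ rather than $K\subset X'$, matches the paper's own proof). The only cosmetic difference is that you verify the definition literally on finite basic covers $U_{F_n}'$, while the paper works directly with arbitrary effectively open $U'$, which is equivalent by the observation following the definition of recursive compactness.
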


\begin{proof}
We exhibit an algorithm which, given an effectively open set $U'\subset\mathcal X'$, halts if and only if $f(K)\subset U'$. Given $U'$, we compute an effectively open set $U\subset \mathcal X$ such that 
\[f^{-1}(U'\cap X')=U\cap X.\]
The fact that $K\subset X$ ensures that $f(K)\subset U'$ if and only if $K\subset U$. We can semi-decide the last condition by recursive compactness of $K$.
\end{proof}

\begin{prop}
\label{prop:fixed-points-of-computable-function}Let $f\colon X\to X'$
be a computable function.  Then there is an effectively closed set  $C\subset \mathcal X$ such that 
\[\{x\in X : f(x)=x\}=C\cap X\]
\end{prop}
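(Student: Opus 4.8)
The plan is to reduce to the complementary set of non-fixed points and show that it is effectively open relative to $X$, then pass to complements. Since the condition $f(x)=x$ presupposes that $X$ and $X'$ lie in a common ambient space, I regard $\mathcal X$ and $\mathcal X'$ as the same computable metric space $\mathcal X$. It then suffices to produce an effectively open set $V\subset\mathcal X$ with $V\cap X=\{x\in X : f(x)\neq x\}$; setting $C=\mathcal X\smallsetminus V$ gives the desired effectively closed set, because $C\cap X=X\smallsetminus(V\cap X)=\{x\in X : f(x)=x\}$.

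The key observation is that the inequality $f(x)\neq x$ can be certified by separating $x$ and $f(x)$ with two basic balls. For basic balls $B_i,B_j$ with centers $c_i,c_j\in\mathcal S$ and rational radii $r_i,r_j$, call the pair \emph{well-separated} if $d(c_i,c_j)>r_i+r_j$. By uniform computability of the distance on $\mathcal S$, the set $P=\{(i,j)\in\N^2 : d(c_i,c_j)>r_i+r_j\}$ is recursively enumerable. I claim that for $x\in X$ one has $f(x)\neq x$ if and only if there is a pair $(i,j)\in P$ with $x\in B_i$ and $f(x)\in B_j$. For the forward implication, set $\delta=d(x,f(x))>0$, use density of $\mathcal S$ to choose $c_i$ with $d(c_i,x)<\delta/8$ and $c_j$ with $d(c_j,f(x))<\delta/8$, and a rational radius $r$ with $\delta/8<r<\delta/4$; then $x\in B_i$, $f(x)\in B_j$, and $d(c_i,c_j)\geq\delta-\delta/8-\delta/8=3\delta/4>\delta/2>2r=r_i+r_j$, so $(i,j)\in P$. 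The converse is immediate: if $x\in B_i$, $f(x)\in B_j$ and $x=f(x)$, then $d(c_i,c_j)\leq d(c_i,x)+d(x,c_j)<r_i+r_j$, contradicting $(i,j)\in P$.

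To assemble $V$, enumerate $P$ as $(i_n,j_n)_{n\in\N}$. For each $n$, apply the computability of $f$ (\Cref{computable-function-relative}) to the effectively open set $B_{j_n}$ to obtain an effectively open set $U_n$, computable uniformly in $n$, with $f^{-1}(B_{j_n}\cap X')=U_n\cap X$. Since finite intersections of effectively open sets are again effectively open, uniformly in their descriptions, the sets $W_n:=B_{i_n}\cap U_n$ form a uniformly computable sequence of effectively open sets, and $W_n\cap X=\{x\in X : x\in B_{i_n}\text{ and }f(x)\in B_{j_n}\}$. By \Cref{union-of-effectively-open-sets}, $V:=\bigcup_{n\in\N}W_n$ is effectively open, and by the characterization above $V\cap X=\{x\in X : f(x)\neq x\}$, which is exactly what was needed.

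The main obstacle is the passage from the purely topological statement ``$f(x)\neq x$'' to an effectively open description: the naive predicate $d(x,f(x))>0$ is not directly usable because the points of $X$ are not named, so one must certify the inequality through a semi-decidable condition on basic balls. The separation-by-balls device, together with the careful choice of radii in the converse direction, is the crux; once it is in place the remainder is bookkeeping with the preimage clause of computability (\Cref{computable-function-relative}), the elementary closure of effectively open sets under finite intersection, and the countable-union \Cref{union-of-effectively-open-sets}. A secondary subtlety, easy to overlook, is that every manipulation must be carried out relative to $X$, since the computability of $f$ only controls preimages intersected with $X$.
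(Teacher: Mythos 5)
Your proof is correct, but it takes a different route from the paper's. The paper reduces the statement to a one-line pullback: it defines $F\colon X\to\R$ by $F(x)=d(x,f(x))$, asserts (via \Cref{composition-of-computable-functions-is-computable}) that $F$ is computable, and then takes $C$ to be the complement of the effectively open set $U$ with $F^{-1}(\{t\in\R : t\neq 0\})=U\cap X$. You instead bypass the auxiliary real-valued function entirely and build the open set of non-fixed points directly, by enumerating the semi-decidable set $P$ of well-separated pairs of basic balls and intersecting $B_{i_n}$ with the $f$-preimage of $B_{j_n}$, then applying \Cref{union-of-effectively-open-sets}. The two arguments encode the same idea --- certifying $d(x,f(x))>0$ by a semi-decidable witness --- but your decomposition is more elementary and more self-contained: the paper's appeal to composition silently requires the computability of the metric $d$ as a function on the product space and of the diagonal-type map $x\mapsto(x,f(x))$, neither of which is proved there (the paper says only ``it can be shown''), and your separation-by-balls construction is essentially what a full proof of that claim would contain. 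What the paper's route buys is brevity and modularity (the statement becomes an instance of pulling back an effectively open subset of $\R$); what your route buys is that every uniformity claim is explicit and nothing is deferred to unproven machinery. One point worth keeping as you have it: your explicit remark that $\mathcal X$ and $\mathcal X'$ must be identified for the equation $f(x)=x$ to make sense is a hypothesis the paper's statement leaves implicit (its own proof needs it too, since it forms $d(x,f(x))$), so making it visible is a genuine improvement rather than a detour.
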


\begin{proof}
Let $F\colon X\to\R$ be defined by  $x\mapsto d(x,f(x))$. It can be shown using \Cref{composition-of-computable-functions-is-computable} that $F$ is a computable function. As $\{x\in \R : x\ne 0\}$ is an effectively open set and $f$ is computable, there is an effectively open set $U\subset \mathcal X$ such that 
$F^{-1}(\{x\in \R : x\ne 0\})=U\cap X.$
Thus the efffectively closed set $C=\mathcal X\smallsetminus U$ satisfies
$F^{-1}(\{x\in \R : x=0\})=C\cap X.$
But $F^{-1}(\{x\in \R : x=0\})=\{x\in X : f(x)=x\}$, so the claim follows.
\end{proof}

\begin{prop}\label{prop:computable-functions-are-invertible-on-compact-domains}Let
$X$ be recursively compact, and let $f\colon X\to X'$ be a computable
and injective function. Then the inverse of $f$ is computable. 
\end{prop}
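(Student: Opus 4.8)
The plan is to verify \Cref{computable-function-relative} directly for the inverse map $g=f^{-1}\colon f(X)\to X$. Since $f$ is computable it is continuous, and as $f$ is injective on the compact set $X$ it is a homeomorphism onto $f(X)$; in particular $g$ is well defined and continuous. Unwinding \Cref{computable-function-relative} for $g$ (whose domain $f(X)$ sits in $\mathcal X'$ and whose codomain $X$ sits in $\mathcal X$), what must be produced is an algorithm that, given an effectively open set $U\subset\mathcal X$, computes an effectively open set $V\subset\mathcal X'$ with $g^{-1}(U\cap X)=V\cap f(X)$. Since $g^{-1}(U\cap X)=f(U\cap X)$, the task is to realize the \emph{image} $f(U\cap X)$ as the trace on $f(X)$ of an effectively open set of $\mathcal X'$.

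The main difficulty is that computable functions are only guaranteed to pull effectively open sets back to effectively open sets, never to push them forward; the image of an effectively open set under a computable map need not be effectively open. The key idea is to use injectivity together with recursive compactness to trade the image of an open set for the complement of the image of a compact set. Concretely, because $f$ is injective we have the disjoint decomposition $f(X)=f(U\cap X)\sqcup f(X\smallsetminus U)$, and hence $f(U\cap X)=f(X)\smallsetminus f(X\smallsetminus U)$. This reduces everything to computing $f(X\smallsetminus U)$ as an effectively closed, indeed recursively compact, set.

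I would carry this out in the following steps. First, from a description of the effectively open set $U$ I obtain a description of $X\smallsetminus U=X\cap(\mathcal X\smallsetminus U)$ as an effectively closed set: $X$ is effectively closed by \Cref{recursively-compact-is-closed}, $\mathcal X\smallsetminus U$ is effectively closed by definition, and their intersection is effectively closed because its complement $U\cup(\mathcal X\smallsetminus X)$ is a union of two effectively open sets (\Cref{union-of-effectively-open-sets}), all uniformly in the given data. Second, since $X\smallsetminus U$ is an effectively closed subset of the recursively compact set $X$, it is itself recursively compact by \Cref{prop:effective-closed-subset-of-compact-is-compact}. Third, \Cref{prop:effective-continuous-image-of-compact-is-compact} applied to $f$ with $K=X\smallsetminus U$ produces a description of $f(X\smallsetminus U)$ as a recursively compact set, and \Cref{recursively-compact-is-closed} turns this into an effectively closed set, so that $V=\mathcal X'\smallsetminus f(X\smallsetminus U)$ is effectively open with a computable description. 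Finally I verify the identity $V\cap f(X)=f(X)\smallsetminus f(X\smallsetminus U)=f(U\cap X)=g^{-1}(U\cap X)$, where the first equality uses $f(X\smallsetminus U)\subset f(X)$ and the second the injectivity decomposition above.

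The main obstacle to check carefully is uniformity: each invoked proposition must be applied at the level of descriptions, so that the whole procedure is a single algorithm taking a description of $U$ to a description of $V$, as required by \Cref{computable-function-relative}. Since the proofs of \Cref{recursively-compact-is-closed}, \Cref{prop:effective-closed-subset-of-compact-is-compact}, \Cref{prop:effective-continuous-image-of-compact-is-compact} and \Cref{union-of-effectively-open-sets} are all constructive and uniform in their inputs, composing them yields the desired algorithm, and the proof concludes.
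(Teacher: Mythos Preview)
Your proof is correct and follows essentially the same approach as the paper: the paper phrases the argument in terms of effectively closed sets $C$ (computing $C'=f(C\cap X)$ via \Cref{prop:effective-closed-subset-of-compact-is-compact}, \Cref{prop:effective-continuous-image-of-compact-is-compact}, and \Cref{recursively-compact-is-closed}, then taking complements at the end), whereas you start from the open side and pass to $X\smallsetminus U$, but the chain of lemmas and the underlying idea are identical.
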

\begin{proof}  
We exhibit an algorithm which on input an effectively closed set $C\subset \mathcal X$ computes an effectively closed set $C'\subset\mathcal X'$ with 
\[f(C\cap X)=C\cap X'.\]
This shows that $f^{-1}$ verifies \Cref{computable-function-relative} by taking complements. On input $C$, observe that $C\cap X$ is recursively compact by \Cref{prop:effective-closed-subset-of-compact-is-compact}. Moreover, the image  $f(C\cap X)$ is recursively compact by \Cref{prop:effective-continuous-image-of-compact-is-compact}. A recursively compact set is effectively closed set by \Cref{recursively-compact-is-closed}, so we can compute an effectively closed set $C'\subset\mathcal X'$ with $f(C\cap X)=C'$. But then $C'\subset X'$, so we also have $f(C\cap X)=C'\cap X'$. This is what we wanted.
\end{proof}
\subsection{Totally bounded sets and compact sets}
In this subsection we review the following notion. 
\begin{defn}\label{effectively-totally-bounded}
    A set $X\subset \mathcal X$ is \textbf{effectively totally bounded} if there is a computable function $f\colon\N\to\N$ such that for all $n\in \N$, \[X\subset \bigcup_{i=0}^{f(n)} B(s_i,1/n).\] 
\end{defn}
The following result will be useful to prove the recursive compactness of certain spaces.
\begin{prop}\label{effecttively-compact-vs-totally-bounded}
    A computable metric space is recursively compact if and only if it is complete and  effectively totally bounded.
\end{prop}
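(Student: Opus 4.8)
The plan is to prove both implications directly from the definitions, with the genuinely algorithmic content concentrated in the ``if'' direction. Throughout I write a basic ball as $B(s_{k},R)$ with $k\in\N$ and $R\in\Q$, recalling that from the index of a basic ball one computes its centre index and radius, and vice versa.

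For the forward direction, suppose $\mathcal X$ is recursively compact. Then $\mathcal X$ is compact as a topological space, and a compact metric space is complete, so completeness is immediate. For effective total boundedness in the sense of \Cref{effectively-totally-bounded}, fix $n$. By density of $\mathcal S$, every point of $\mathcal X$ lies in some $B(s_i,1/n)$, so $\{B(s_i,1/n):i\in\N\}$ is an open cover; by compactness finitely many suffice, hence $\mathcal X\subset\bigcup_{i\le N}B(s_i,1/n)$ for some $N$. I would therefore search, for $N=0,1,2,\dots$, computing the finite set $F_N$ of basic-ball indices of $B(s_0,1/n),\dots,B(s_N,1/n)$ and semi-deciding $\mathcal X\subset U_{F_N}$ via recursive compactness; the least successful $N$ exists by the above and defines a computable $f(n)$. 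As this is a single algorithm in $n$, it witnesses effective total boundedness.

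For the backward direction, suppose $\mathcal X$ is complete and effectively totally bounded with witness $f$. Total boundedness together with completeness gives compactness classically, so it remains to show that $\{n:\mathcal X\subset U_{F_n}\}$ is recursively enumerable. Given a finite family $F$ with $U_F=\bigcup_{j\in F}B(s_{k_j},R_j)$, I will semi-decide $\mathcal X\subset U_F$ using the following condition: there is $n$ such that for every $i\le f(n)$ there is $j\in F$ with $d(s_i,s_{k_j})<R_j-2/n$. This condition is semi-decidable, since $f$ is computable, the range of $i$ is finite, and each strict inequality is an r.e.\ predicate because $d$ is computable on $\mathcal S\times\mathcal S$; one simply dovetails the search over $n$ and over the precision used to confirm the inequalities. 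Correctness has two halves. If the condition holds for some $n$, then for any $x\in\mathcal X$ effective total boundedness gives $i\le f(n)$ with $d(x,s_i)<1/n$, and the triangle inequality yields $d(x,s_{k_j})<1/n+(R_j-2/n)<R_j$, so $x\in U_F$; hence the condition implies $\mathcal X\subset U_F$, and the algorithm never halts when coverage fails. Conversely, if $\mathcal X\subset U_F$, consider the depth function $\rho(x)=\max_{j\in F}\bigl(R_j-d(x,s_{k_j})\bigr)$, which is continuous as a maximum of finitely many $1$-Lipschitz functions and strictly positive on $\mathcal X$. By compactness $\rho$ attains a minimum $\delta>0$, and then any $n$ with $2/n<\delta$ satisfies the condition, so the search halts. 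This establishes recursive compactness.

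The main obstacle is the converse half of the backward direction: converting the pointwise fact that each $x$ is interior to some $B_j$ into a \emph{uniform} margin that a finite net can certify. This is precisely where compactness enters, through the positive minimum of the depth function $\rho$ (a Lebesgue-number argument). By contrast, the sufficiency of the margin condition and all the semi-decidability bookkeeping are routine triangle-inequality and dovetailing arguments.
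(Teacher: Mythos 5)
Your proof is correct and takes essentially the same route as the paper's: the forward direction is the same r.e.\ search for a covering prefix of the dense sequence, and your backward direction --- certifying that every net point $s_i$, $i\le f(n)$, lies at depth $>2/n$ inside some ball of $U_F$, with completeness obtained from the positive minimum of the depth function $\rho$ --- is the paper's algorithm $\mathcal A(I,n,m)$ together with its appeal to the Lebesgue covering lemma, just packaged more explicitly (and a bit more cleanly, since you fold the precision parameter $m$ into the semi-decidability of strict inequalities). The only nitpick is in the forward direction: the searches over $N$ must be dovetailed rather than run sequentially, and what one obtains is the first \emph{confirmed} $N$ rather than the least successful one --- which is all that is needed, exactly as in the paper.
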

\begin{proof}
    Let $\mathcal X$ be a computable metric space. We start with the forward implication, so we assume that $\mathcal X$ is recursively compact. Then $\mathcal X$ is compact, and a compact metric space is complete. In order to verify that $\mathcal X$ is effectively totally bounded, observe that for every $n$, the set \[C_n=\{t\in\N : X\subset \bigcup _{i\leq t} B(s_i,1/n)\}\] is recursively enumerable, uniformly on $n$. Moreover, $C_n$ is always nonempty because $\mathcal X$ is compact. Thus we can let $f(n)$ be the first  element enumerated in  $C_n$, and then $n\to f(n)$ is a computable function. This proves the forward implication.

    We now prove the backward implication. We assume that there is a computable function $f$ as in the statement, and we prove that $\mathcal X$ is recursively compact. For this purpose we describe an algorithm $\mathcal A(I,n,m)$ that takes as input a finite set $I\subset \N$, and $n,m\in\N$. This algorithm stops for every input, it may conclude that $\mathcal X\subset U_I$, or it may conclude nothing. 
    
    The algorithm $\mathcal A$ tries to find  whether every ball $B(s_j,1/n)$, $j\leq f(n)$ is contained in  $B_i$ for some $i\in I$. This is done by just computing the distance between the centers of the balls with precision $1/m$, comparing it with the radious of each ball, and using triangle inequality to see if an inclusion is guaranteed. More formally, the process followed by the algorithm $\mathcal A$ on input $(I,n,m)$ is as follows. For each $i\in I$ we compute $\varphi_1(i)$ and $\varphi_2(i)$, so that $B_i=B(s_{\varphi_1(i)},r_{\varphi_2(i)})$. Then we compute $f(n)$, and for each pair $(i,j)$ with $i\in I$ and $j\leq f(n)$, we compute a rational number $r_{i,j}$ so that $|r_{i,j}-d(s_{\varphi_1(i)},s_j)|<1/m$. If for all $j\leq f(n)$ there is $i\in I$ with $r_{i,j}<r_{\varphi_2(i)}+1/n$,  then the algorithm $\mathcal A$ stops and concludes that $\mathcal X\subset U_I$. Otherwise, the algorithm $\mathcal A$ gives no output.

    We now describe an algorithm $\mathcal B$ which takes as input a finite set $I\subset \N$, and halts if and only if $\mathcal X\subset U_I$. In input $I$, the algorithm $B$ runs $\mathcal A(I,n,m)$, for every pair $n,m\in\N^2$ in an ordered manner. If for some $n,m$ the algorithm $\mathcal A$ concludes that  $X\subset U_I$, then $\mathcal B$ stops, and otherwise keeps running. 

    We claim that $\mathcal B(I)$ stops if and only if $\mathcal X\subset U_I$. The forward implication is clear from the definitions of $\mathcal A$ and $\mathcal B$. Suppose now that $\mathcal X\subset U_I$. If we ignore the computability assumptions,  $\mathcal X$ is a complete and totally bounded metric space, which implies that $\mathcal X$ is compact. In particular, Lebesgue covering lemma  ensures that, if $U_I$ is an open cover of $\mathcal X$, there is a number $\delta_I$ such that every subset of $\mathcal X$ with diameter at most $\delta_I$ is contained in an element from the cover $U$. Inspecting the definition of $\mathcal A$ we see that if both $n,m$ are greater than $2/\delta_I$, then $\mathcal A (I,n,m)$ stops, so $\mathcal B (I)$ also stops. 
\end{proof}
\subsection{Computable points}
In this subsection we review computable elements in a computable metric space. 
\begin{defn}
    An element $x\in\mathcal X$ is \textbf{computable} when the set $\{n\in\N : x\in B_n\}$ is recursively enumerable.
\end{defn} 

\begin{prop}\label{characterization-of-computable-points-via-sequences}
    An element $x\in\mathcal X$ is computable if and only if there is a total computable function $f\colon\N\to\N$ such that for all $n\in\N$, \[d(x,s_{f(n)})\leq 1/n. \ \ \]
\end{prop}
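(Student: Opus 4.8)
The plan is to prove the two implications separately, exploiting the computable enumeration $(B_n)_{n\in\N}$ of basic balls together with the uniform computability of the metric on $\mathcal S$. For the forward implication I assume that $I=\{n\in\N : x\in B_n\}$ is recursively enumerable and build the desired $f$. The point is that among the basic balls containing $x$ there are balls of arbitrarily small radius: by density of $\mathcal S$ there is some $s_i$ with $d(x,s_i)<1/n$, so $x\in B(s_i,1/n)$, and this ball equals $B_m$ for some index $m$ with radius $r_{\varphi_2(m)}=1/n$. Hence, for each $n$, the following search terminates: enumerate $I$, and for each enumerated $m$ test the decidable rational condition $r_{\varphi_2(m)}\le 1/n$, outputting $f(n)=\varphi_1(m)$ for the first $m$ that passes. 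Since $x\in B_m=B(s_{\varphi_1(m)},r_{\varphi_2(m)})$ forces $d(x,s_{\varphi_1(m)})<r_{\varphi_2(m)}\le 1/n$, the resulting $f$ is total, computable, and meets the required bound.

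For the backward implication I assume a total computable $f$ with $d(x,s_{f(n)})\le 1/n$ and show that $I$ is recursively enumerable, i.e. that the membership condition $d(x,s_{\varphi_1(n)})<r_{\varphi_2(n)}$ is semi-decidable uniformly in $n$. The key estimate is the triangle inequality $|d(x,s_{\varphi_1(n)})-d(s_{f(k)},s_{\varphi_1(n)})|\le d(x,s_{f(k)})\le 1/k$, which lets me approximate the unknown real $d(x,s_{\varphi_1(n)})$ by $d(s_{f(k)},s_{\varphi_1(n)})$, itself computable to any rational precision. Writing $g(i,j,m)\in\Q$ for a rational with $|g(i,j,m)-d(s_i,s_j)|\le 1/m$ (which exists by the definition of a computable metric space), I claim that $x\in B_n$ holds if and only if there exist $k,m\in\N$ with
\[
g(f(k),\varphi_1(n),m)+\tfrac1m+\tfrac1k < r_{\varphi_2(n)}.
\]
The ``if'' direction follows by bounding $d(x,s_{\varphi_1(n)})\le d(x,s_{f(k)})+d(s_{f(k)},s_{\varphi_1(n)})$ from above by the left-hand side; the ``only if'' direction follows by taking $k=m$ large enough that $4/k$ is smaller than the slack $r_{\varphi_2(n)}-d(x,s_{\varphi_1(n)})>0$. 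Since the displayed condition is a decidable comparison of rationals, dovetailing over all pairs $(k,m)$ semi-decides $x\in B_n$ uniformly in $n$, so $I$ is recursively enumerable.

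The triangle-inequality estimates are routine; the one point needing care is that the metric is only computable up to precision, not exactly, so in the backward direction I cannot test the strict real inequality $d(x,s_{\varphi_1(n)})<r_{\varphi_2(n)}$ directly. This is exactly why I pad the rational test with the two error terms $1/m$ and $1/k$ and dovetail over both the approximation precision $m$ and the sequence index $k$: the padding guarantees soundness, while taking $k,m$ large guarantees completeness whenever $x$ genuinely lies in the open ball. I expect the interplay between these two slackness parameters to be the only genuinely delicate bookkeeping in the argument.
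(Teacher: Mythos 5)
Your proof is correct and takes essentially the same route as the paper: the forward direction enumerates $\{n : x\in B_n\}$ until a ball of radius at most $1/n$ appears and takes its center, and the backward direction semi-decides membership in a basic ball $B_n$ by comparing rational approximations of $d(s_{f(k)},s_{\varphi_1(n)})$ against the radius $r_{\varphi_2(n)}$ with explicit slack, dovetailing over the precision parameters. If anything, your padded test $g(f(k),\varphi_1(n),m)+\tfrac1m+\tfrac1k < r_{\varphi_2(n)}$ is executed more carefully than the paper's, which phrases the test as $d(s_{f(n)},s)<r+1/n$ --- a condition that is necessary but not sufficient (the slack must be subtracted, not added, exactly as your soundness/completeness analysis shows), so your bookkeeping is the correct version of the argument.
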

\begin{proof}
    We start with the forward implication, so assume that $\{n\in\N : x\in B_n\}$ is recursively enumerable. For each $n$, we compute a natural number $f(n)$ as follows. We enumerate the set $\{n\in\N : x\in B_n\}$ until we find a number $k$  such that $B_k$ has radius at most $1/n$. This implies that the center $s$ of $B_k$ verifies $d(x,s)\leq 1/n$, so we let $f(n)$ be the index of $s$. Then $f$ verifies the claim. 

    We now prove the backward direction. Let $f$ be as in the statement, and let $B(s,r)$ be an arbitrary basic ball. We exhibit an algorithm that semi-decides whether $x\in B(s,r)$. Observe that $x\in B(s,r)$ when for some $n\in\N$ we have $d(s_{f(n)},s)<r+1/n$. Also observe that this inequality is semi-decidable. For the algorithm, we just have to semi-decide the inequality for all $n$, and stop the procedure if we find that it is verified. This shows that  $\{n\in\N : x\in B_n\}$ is recursively enumerable.
\end{proof}

\begin{prop}\label{isolated-points-are-computable}
    Let $K\subset \mathcal X$ be recursively compact. If $x$ is an isolated point in $K$, then it is a computable point.
\end{prop}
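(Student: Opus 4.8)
The plan is to exploit recursive compactness to pin $x$ down as the \emph{unique} point of $K$ inside a small ball, and then observe that a recursively compact singleton is automatically a computable point. Throughout, I am proving an existence statement (``$x$ is a computable point''), so the algorithm I produce is allowed to contain finitely much hard-wired information about $x$.

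First I would use isolation together with density of $\mathcal S$ to fix a basic ball around $x$ that continues to isolate $x$ even after taking its closure. Since $x$ is isolated in $K$, there is $r>0$ with $B(x,r)\cap K=\{x\}$. Choosing $s\in\mathcal S$ with $d(s,x)<r/3$ and a rational $\rho$ with $d(s,x)<\rho<r/3$, the basic ball $B_0=B(s,\rho)$ satisfies $x\in B_0$ and $\overline{B_0}\cap K=\{x\}$, because every $y\in\overline{B_0}$ obeys $d(y,x)\le \rho+d(s,x)<r$. This ball is the finite piece of data to be baked into the algorithm; I do not claim to find it effectively, only that it exists.

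Next I would show that $\{x\}=\overline{B_0}\cap K$ is recursively compact. The closure $\overline{B_0}$ is effectively closed by \Cref{closure-of-basic-balls-is-effective}, and $K$ is effectively closed by \Cref{recursively-compact-is-closed}; their intersection is effectively closed because the union of the two complementary effectively open sets is effectively open (a trivial instance of \Cref{union-of-effectively-open-sets}). Being an effectively closed subset of the recursively compact set $K$, this intersection is recursively compact by \Cref{prop:effective-closed-subset-of-compact-is-compact}. Hence $\{x\}$ itself is recursively compact.

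Finally I would read off computability of $x$ directly from the definition of recursive compactness of $\{x\}$: the set $\{n : \{x\}\subset U_{F_n}\}$ is recursively enumerable. For a single basic ball one has $x\in B_i$ precisely when $\{x\}\subset U_{F_n}$ for the index $n$ with $F_n=\{i\}$, and such an $n$ is computable from $i$ since the enumeration $(F_n)_{n\in\N}$ of finite sets is computable; therefore $\{i : x\in B_i\}$ is recursively enumerable, which is exactly the definition of $x$ being computable (alternatively one could extract a fast-converging sequence to $x$ via \Cref{characterization-of-computable-points-via-sequences}). The only genuinely delicate point is the first step---arranging that the isolating ball can be taken basic and closed-isolating---together with the conceptual observation that recursive compactness of a singleton is nothing more than semi-decidability of membership in basic balls, i.e.\ computability of the point.
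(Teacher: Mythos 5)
Your proof is correct, and it reaches the same computational core as the paper's, but by a different decomposition. The paper, after fixing a basic ball $B(s,r)$ with $B(s,r)\cap K=\{x\}$, sets $B=B(s,r/2)$ and semi-decides $x\in B_n$ in one step through the covering condition $K\subset B_n\cup U_I$, where $U_I=\mathcal X\smallsetminus\overline{B}$; this uses only \Cref{closure-of-basic-balls-is-effective} and the recursive compactness of $K$ itself. You instead first establish that the singleton $\{x\}=\overline{B_0}\cap K$ is recursively compact (via \Cref{recursively-compact-is-closed}, closure of effectively closed sets under finite intersection, and \Cref{prop:effective-closed-subset-of-compact-is-compact}), and then observe that recursive compactness of a singleton is literally the statement that the point is computable. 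Unwinding your argument yields essentially the same covering test as the paper's, so both proofs cost the same computationally; yours invokes more lemmas but isolates a clean reusable observation (recursively compact singleton $=$ computable point). One point in your favor: you explicitly arrange $x\in B_0$ and $\overline{B_0}\cap K=\{x\}$ by taking the center within $r/3$ of $x$, whereas the paper's choice $B=B(s,r/2)$ does not guarantee $x\in\overline{B}$ (the center $s$ is only known to satisfy $d(s,x)<r$), and $x\in\overline{B}$ is exactly what the ``only if'' direction of its covering criterion needs; so your handling of this step repairs a small gap in the paper's write-up.
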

\begin{proof}
    As $x$ is isolated in $K$, there is a basic ball $B(s,r)$ with $B(s,r)\cap K=\{x\}$. We let $B=B(s,r/2)$, so $\overline B\cap X=\emptyset$. As $\overline B$ is effectively closed, there is a recursively enumerable set $I\subset \N$ such that $\mathcal X\smallsetminus U_I=\overline B$. But now a basic ball $B_n$ contains $x$ if and only if $K\subset B_n\cup U_I$. This is a semi-decidable condition, so $\{n\in\N : x\in B_n\}$ is recursively enumerable. This proves the claim.
\end{proof}

\begin{defn}
A closed subset $C$ of $\mathcal{X}$ is \textbf{recursively enumerable }(\textbf{r.e.})\textbf{
}if there is a sequence $(x_{i})_{i\in\N}$ of uniformly computable points
that is dense in $C$. If $C$ is both effectively closed and r.e.,
then it is called \textbf{computably closed}.
\end{defn}

\subsection{Semi-computable real numbers and semi-computable functions}\label{preliminaries-semicomputable}
In this section we review semi-computable real numbers and semi-computable functions.
\begin{defn} A real number $c$ is \textbf{upper-semicomputable} when $(c,+\infty)$ is effectively open, and is \textbf{lower-semicomputable} when $(-\infty,c)$ is effectively open.
\end{defn}
The following characterization is well-known, and sometimes used as definition. The proof follows the same idea as \Cref{characterization-of-computable-points-via-sequences}, and is left to the reader.
\begin{prop}\label{characterization-of-semicomputable-numbers}
A real number is upper-semicomputable when it is the infimum of a computable sequence of rational numbers, and lower-semicomputable when it is the supremum of a computable sequence of rational numbers.
\end{prop}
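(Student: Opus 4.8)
The statement to prove is \Cref{characterization-of-semicomputable-numbers}: a real number is upper-semicomputable exactly when it is the infimum of a computable sequence of rationals, and lower-semicomputable exactly when it is the supremum of a computable sequence of rationals.

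\medskip

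The plan is to prove the lower-semicomputable case in full and then obtain the upper-semicomputable case by the symmetry $c \mapsto -c$, which exchanges $(-\infty,c)$ with $(-c,+\infty)$ and turns a supremum into an infimum; so the real content is a single equivalence. Throughout I would work with the computable metric space $\R$ equipped with the dense set $\Q$ and the standard enumeration $(B_n)_{n\in\N}$ of basic balls, i.e.\ open rational intervals $(p,q)$, recalling that a set $V\subset\R$ is effectively open precisely when $V = U_I$ for some recursively enumerable $I\subset\N$.

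\medskip

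First I would prove the forward direction of the lower case: assume $(-\infty,c)$ is effectively open, so $(-\infty,c)=U_I=\bigcup_{i\in I}B_i$ for a recursively enumerable $I$. Enumerate $I$ as $i_0,i_1,\dots$ by a total computable procedure; from each index $i_k$ we can compute the endpoints of the rational interval $B_{i_k}=(p_k,q_k)$. Define $r_k$ to be the maximum of the right endpoints $q_0,\dots,q_k$ seen so far (taking, say, $r_k = p_0$ until the first interval is enumerated, to keep the sequence well-defined and rational). This $(r_k)_k$ is computable and nondecreasing; since every $B_{i_k}\subset(-\infty,c)$ we have $q_k\le c$, hence $r_k\le c$ for all $k$, and since the intervals cover every point below $c$ their right endpoints approach $c$, giving $\sup_k r_k = c$. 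Thus $c$ is the supremum of a computable sequence of rationals. For the converse, suppose $c=\sup_k q_k$ with $(q_k)_k$ a computable sequence of rationals. Then a rational basic ball $B_n=(p,q)$ is a subset of $(-\infty,c)$ if and only if $q\le c$, and one checks directly that $(-\infty,c)=\bigcup\{(-\infty,q_k) : k\in\N\}$; each $(-\infty,q_k)$ is effectively open uniformly in $k$ (its defining r.e.\ set of basic balls is computed from the rational $q_k$), so by \Cref{union-of-effectively-open-sets} the union $(-\infty,c)$ is effectively open. Hence $c$ is lower-semicomputable.

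\medskip

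I do not expect a serious obstacle here; the argument is a direct transcription of the idea behind \Cref{characterization-of-computable-points-via-sequences}, as the statement itself notes. The one point requiring mild care is the bookkeeping in the forward direction: the r.e.\ set $I$ may be empty or its enumeration may be slow, so the approximating sequence must be defined so as to remain total and computable regardless, which is why I pass to running maxima of endpoints rather than attempting to read off a monotone sequence directly. With that handled, the supremum identity $\sup_k r_k = c$ follows from the two inclusions defining $U_I=(-\infty,c)$, and the upper-semicomputable statement is immediate by negation.
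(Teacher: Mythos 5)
The paper gives no proof of \Cref{characterization-of-semicomputable-numbers}: it is explicitly left to the reader, with the hint that it follows the same idea as \Cref{characterization-of-computable-points-via-sequences}. Your write-up is precisely that intended argument — in one direction enumerate the r.e.\ union of basic balls and read off rational approximations, in the other write $(-\infty,c)$ as a uniform union of effectively open rational half-lines and invoke \Cref{union-of-effectively-open-sets} — and your reduction of the upper-semicomputable case to the lower one via $c\mapsto -c$ is legitimate, since negation permutes the basic balls of $\R$ computably. So in approach there is nothing to contrast; you have supplied the details the paper omits.

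There is, however, one step that fails as literally written. Under the paper's conventions a basic ball is $B(s,r)$ with $s\in\Q$ and $r\in\Q$ \emph{arbitrary}: radii $r\leq 0$ are not excluded, and such balls are empty. An r.e.\ set $I$ with $U_I=(-\infty,c)$ may well contain indices of empty balls, and for an empty ball the inclusion $B(s,r)\subset(-\infty,c)$ is vacuous and puts no bound on the nominal right endpoint $s+r$; for instance $B(5,-1)$ has nominal interval $(6,4)$, is contained in $(-\infty,0)$, yet its right endpoint $4$ exceeds $0$. Hence your claim ``since every $B_{i_k}\subset(-\infty,c)$ we have $q_k\le c$'' is false in general, and the running maxima $r_k$ need not have supremum $c$. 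The fix is one line: discard every enumerated index with $q_k\le p_k$ (a decidable test); the surviving balls are the nonempty ones, they still cover $(-\infty,c)$, and for a nonempty interval the implication $(p_k,q_k)\subset(-\infty,c)\Rightarrow q_k\le c$ does hold, after which your argument goes through. A smaller cosmetic point: the fallback ``$r_k=p_0$ until the first interval is enumerated'' is circular, since $p_0$ is only defined once that first interval appears; it is cleaner to note that $(-\infty,c)$ is nonempty, so $I$ contains an index of a nonempty ball, and a nonempty r.e.\ set is the range of a total computable function, letting you take the enumeration total from the start.
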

\begin{example}\label{halting-constant}
    Let $(\varphi_e)_{e\in\N}$ be a computable enumeration of all partial computable functions as in \Cref{numbering-of-partial-computable-functions}. Let $H=\{e\in \N : \varphi_e(e)\text{ halts}\}$ be the Halting set. This set is recursively enumerable, and a classic diagonal argument shows that it is not decidable. Let \[c=\sum_{e\in H}2^{-e}.\]
    We claim that $c$ is lower-semicomputable. Indeed, as $H$ is recursively enumerable, there is a recursively enumerable sequence of finite sets $(H_n)_{n\in\N}$ with $H=\bigcup_{n\in\N} H_n$. For each $H_n$ we define a computable real number $c_n=\sum_{e\in H_n}2^{-e}$. Thus $(c_n)_{n\in\N}$ is a computable sequence of rational numbers with $c=\sup_{n} c_n$, so it follows from \Cref{characterization-of-semicomputable-numbers} that $c$ is lower-semicomputable. A simple argument shows that if $c$ was upper-semicomputable, then $H$ would be a decidable set. This shows that $(-\infty, c)$ is an effectively open set whose topological closure is not effectively closed. 
\end{example}
\begin{defn}
    Let $\mathcal X$ be a computable metric space, and let $X\subset\mathcal X$ be a subset. A function $f\colon X\to \R$ is \textbf{upper-semicomputable} when given an effectively open set $(-\infty,c)$ we can compute an effectively open set $U\subset \mathcal X$ such that $f^{-1}(-\infty,c)=U\cap X$. Similarly, $f$ is \textbf{lower-semicomputable} when given an effectively open set $(c,\infty)$ we can compute an effectively open set $U\subset \mathcal X$ such that $f^{-1}(c,\infty)=U\cap X$.
\end{defn}
\begin{prop}\label{semicomputable-functions-are-closed-by-infimum}
    Let $\mathcal X$ be a computable metric space, let $X\subset \mathcal X$, and let $(f_n)_{n\in\N}$ be a uniform sequence of upper-semicomputable functions from $X$ to $\R$. Then their infimum $\inf_{n\in\N}f_n$ is also upper-semicomputable. 
\end{prop}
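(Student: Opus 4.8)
The plan is to reduce the claim to the set-theoretic identity
\[
\Big(\inf_{n\in\N} f_n\Big)^{-1}(-\infty,c)=\bigcup_{n\in\N} f_n^{-1}(-\infty,c),
\]
and then to invoke \Cref{union-of-effectively-open-sets}. This identity holds because $\inf_{n} f_n(x)<c$ exactly when some term $f_n(x)$ is strictly below $c$: if the infimum lies below $c$ then by definition of the infimum some value $f_n(x)$ falls in the interval $(-\infty,c)$, and conversely $f_n(x)<c$ forces $\inf_{m} f_m(x)\le f_n(x)<c$.

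First I would fix a description of the effectively open set $(-\infty,c)$, which is precisely the input prescribed by the definition of upper-semicomputability. Using that the sequence $(f_n)_{n\in\N}$ is \emph{uniformly} upper-semicomputable, there is a single algorithm that, on input $n$ together with this description, outputs a description of an effectively open set $U_n\subset\mathcal X$ with $f_n^{-1}(-\infty,c)=U_n\cap X$. Thus $(U_n)_{n\in\N}$ is a uniformly computable sequence of effectively open sets.

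Next I would form the union $U=\bigcup_{n\in\N} U_n$, which is effectively open by \Cref{union-of-effectively-open-sets} and whose description is computed from the descriptions of the $U_n$. Since these are obtained uniformly from the single description of $(-\infty,c)$, the whole passage from $(-\infty,c)$ to $U$ is one algorithm. Intersecting with $X$ and using the displayed identity gives
\[
U\cap X=\bigcup_{n\in\N}(U_n\cap X)=\bigcup_{n\in\N} f_n^{-1}(-\infty,c)=\Big(\inf_{n} f_n\Big)^{-1}(-\infty,c),
\]
which is exactly the condition required for $\inf_{n} f_n$ to be upper-semicomputable.

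The argument is essentially routine; the only point that requires attention—rather than being a genuine obstacle—is to keep track of uniformity, so that the per-$n$ computation of $U_n$ from the single description of $(-\infty,c)$ assembles into one algorithm and feeds correctly into the uniform form of \Cref{union-of-effectively-open-sets}.
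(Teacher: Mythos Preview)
Your proof is correct and follows essentially the same route as the paper: use the identity $(\inf_n f_n)^{-1}(-\infty,c)=\bigcup_n f_n^{-1}(-\infty,c)$, pull back uniformly through each $f_n$ to get a uniform sequence $(U_n)$, and apply \Cref{union-of-effectively-open-sets}. The only difference is that you spell out the set-theoretic identity and the uniformity bookkeeping a bit more explicitly than the paper does.
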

\begin{proof}
    Let $f=\inf_{n\in\N}f_n$, and let $(-\infty,c)$ be effectively open. As every $f_n$ is upper-semicomputable, for each $n\in\N$, we can compute an effectively open set $U_n$ with $f_n^{-1}(-\infty,c)=X\cap U_n$. But then $(U_n)_{n\in\N}$ is a uniformly computable sequence of effectively open sets, and we have \[f^{-1}(-\infty,c)=(\bigcup_{n\in\N}U_n)\cap X.\]
    \Cref{union-of-effectively-open-sets} shows that $\bigcup_{n\in\N}U_n$ is effectively open, uniformly on  $(-\infty,c)$. Our claim that $f$ is upper-semicomputable follows. 
\end{proof}
\subsection{New computable metric spaces from known ones}

Here we review how to obtain new computable metric spaces from known ones. We also introduce the notion of equivalence for computability structures. 
\begin{defn}
Let $\mathcal S_1$ and $\mathcal S_2$ be computable metric space structures for the metric space $(\mathcal X,d)$. If the identity function from $(\mathcal X,d,\mathcal S_1)$ to $(\mathcal X,d,\mathcal S_2)$ is computable, then we say that $\mathcal S_1$ and $\mathcal S_2$ are \textbf{equivalent}. 
\end{defn}

For instance, let $\mathcal X$ and $\mathcal X'$ be two computable metric spaces. Then their product $\mathcal X\times \mathcal X'$ inherits a natural computable metric space structure from the ones for $\mathcal X$ and $\mathcal X'$. This will be defined in detail in \Cref{thm:Tychonoffcomputable}. This is the unique structure -up to equivalence- that makes the projection functions $\mathcal X\times \mathcal X'\to \mathcal X$ and $\mathcal X\times \mathcal X'\to \mathcal X'$ computable. Although we do not put much attention to this, we mention that many of the constructions that we describe in this section can be  uniquely characterized in a similar manner. 

\subsubsection*{Direct products}
\begin{thm}
\label{thm:Tychonoffcomputable}\label{thm:Tychonoff_computable} Let $(\mathcal X _{n},d_{n},\mathcal{S}_{n})$
be a uniform sequence of computable metric spaces, such that the diameter of all them is bounded by $M$ for some $M\in\N$.  Let $\mathcal{S}$ be the set of elements
in $\prod \mathcal{S}_{n}$ having eventually constant value. Then the product
space $\prod \mathcal X_{n}$, endowed with $\mathcal{S}$ and $d=\sum_{n}2^{-n}d_{n}$,
is a computable metric space too. Moreover, given a sequence $K_{n}\subset \mathcal X_{n}$
of recursively compact sets, uniformly on $n$, the set $\prod K_{n}$
is recursively compact in $\prod \mathcal X_{n}$.
\end{thm}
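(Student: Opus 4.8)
The plan is to verify two separate claims: first that $(\prod \mathcal X_n, d, \mathcal S)$ is a computable metric space, and second that a uniform product of recursively compact sets is recursively compact. For the first claim, the essential thing to check is that $\mathcal S$ is a well-defined countable dense subset carrying a numbering, and that the metric $d$ is uniformly computable on pairs of elements of $\mathcal S$. I would first confirm that $d=\sum_n 2^{-n} d_n$ converges and defines a metric on $\prod \mathcal X_n$, which uses the uniform bound $M$ on the diameters so that each term is bounded by $2^{-n}M$ and the tail is controlled. I would then observe that $\mathcal S$, the set of sequences in $\prod \mathcal S_n$ that are eventually constant, is countable and dense (density follows because any point can be approximated by truncating and filling the tail with a fixed constant, and the tail contributes at most $\sum_{n>N}2^{-n}M$). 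The numbering of $\mathcal S$ comes from the uniform numberings of the $\mathcal S_n$ together with the natural numbering of eventually constant sequences mentioned in the remark following \Cref{prop:computabilitiy-groups-1}.

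The computability of $d$ on $\mathcal S\times\mathcal S$ is the first genuine step. Given two eventually constant sequences $s,t\in\mathcal S$ and a target precision $1/k$, I would split the sum $d(s,t)=\sum_n 2^{-n}d_n(s_n,t_n)$ into a finite head $n\le N$ and a tail $n>N$. Choosing $N$ so that $2^{-N}M\le 1/(2k)$ bounds the tail uniformly, and for the head I approximate each $d_n(s_n,t_n)$ to precision $1/(2kN)$ using the uniform computability of the $d_n$ on $\mathcal S_n$. Summing the rational approximations with their error bounds yields a rational within $1/k$ of $d(s,t)$, uniformly in the indices and in $k$. This establishes that $\prod \mathcal X_n$ is a computable metric space.

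For the recursive compactness of $\prod K_n$, my plan is to invoke \Cref{effecttively-compact-vs-totally-bounded}, reducing the task to showing that $\prod K_n$ is complete and effectively totally bounded. Completeness is classical, since a product of complete spaces is complete under this metric. For effective total boundedness I must produce, uniformly in $n$, a computable bound on the number of basic balls of radius $1/n$ needed to cover $\prod K_n$. The idea is to cover a finite head of coordinates by finite nets extracted from the recursive compactness of each $K_m$ (each $K_m$ is recursively compact hence effectively totally bounded by the same proposition, uniformly in $m$), and to absorb the tail coordinates into the radius using the $2^{-m}M$ bound: for $m$ beyond a threshold $N(n)$ the coordinates contribute less than half the allowed radius regardless of their values. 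Taking products of the finite nets in the head coordinates and centering balls at elements of $\mathcal S$ (eventually constant, with the tail set to a fixed point) gives a finite cover whose cardinality is computed from the uniform net sizes, yielding the computable function required by \Cref{effectively-totally-bounded}. The main obstacle I anticipate is the bookkeeping that makes this tail-truncation argument genuinely uniform and effective: one must be careful that the threshold $N(n)$, the net sizes, and the resulting centers in $\mathcal S$ are all computed by a single algorithm from $n$, rather than merely existing for each $n$; getting the error budget split cleanly between head and tail, and ensuring the chosen centers actually lie in the dense set $\mathcal S$, is where the care is needed.
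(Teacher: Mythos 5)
The first half of your proposal (computability of $d$ on $\mathcal S$ via a head/tail split of the series) is correct, and it is exactly the part the paper dismisses as straightforward. The gap is in the second half. You reduce recursive compactness of $\prod_n K_n$ to ``complete $+$ effectively totally bounded'' by invoking \Cref{effecttively-compact-vs-totally-bounded}, but that proposition is a statement about a computable metric space as a whole, i.e.\ about the set in which the distinguished dense sequence lives. The set $\prod_n K_n$ is merely a subset of $\prod_n \mathcal X_n$: a recursively compact set need not contain a single computable point (e.g.\ a $\Pi_1^0$ class without computable members), so it cannot be re-equipped as a computable metric space in its own right, and indeed all of your net centers lie in the ambient dense set $\mathcal S$, not in $\prod_n K_n$. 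For subsets, the implication you need is false. Take $\mathcal X=[0,1]$ with the rational points, and $K=[c,1]$ where $c\in(0,1)$ is upper-semicomputable but not lower-semicomputable (such numbers exist by \Cref{halting-constant}). Then $K$ is closed, hence complete, and it is effectively totally bounded in the sense of \Cref{effectively-totally-bounded}, since any initial segment of rational balls covering $[0,1]$ covers $K$; yet $K$ is not recursively compact, because for rationals $q,r$ with $q+r>1$ the inclusion $K\subset B(q,r)$ holds exactly when $q-r<c$, and semi-deciding this for all rationals would make $c$ lower-semicomputable. Note that the direction of \Cref{effecttively-compact-vs-totally-bounded} you use on each coordinate (recursively compact implies effectively totally bounded, uniformly in $m$) does carry over to subsets; it is the converse direction, the one your whole strategy rests on, that breaks.

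The counterexample also explains why this route cannot be patched: effective total boundedness with ambient centers is inherited from the ambient space and records nothing about where $\prod_n K_n$ sits, and the net balls that miss $\prod_n K_n$ cannot be effectively recognized and discarded (disjointness $\overline B\cap K=\emptyset$ is semi-decidable, but non-emptiness of $B\cap K$ is not), which is precisely what the covering algorithm behind \Cref{effecttively-compact-vs-totally-bounded} needs. The paper's proof avoids the characterization altogether and works directly with the definition of recursive compactness for subsets: given a finite union $U$ of basic balls of $\prod_n \mathcal X_n$, it computes an index $N$ and finite unions $U_{F_0},\dots,U_{F_N}$ in the coordinate spaces so that the inclusion $\prod_n K_n\subset U$ reduces to the finitely many coordinatewise inclusions $K_0\subset U_{F_0},\dots,K_N\subset U_{F_N}$, each of which is semi-decidable by the uniform recursive compactness of the $K_n$. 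Your argument needs to be redone in this direct style; the totally-bounded detour does not reach the conclusion.
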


\begin{proof}
It is straightforward that the metric $d$ is uniformly computable
on $\mathcal{S}$, so $(\prod_{n} \mathcal X_n,d,\mathcal{S})$ is a computable
metric space. We now verify the claim about recursively compact sets.
Let $U$ be a finite union of basic balls in $\prod \mathcal X_{n}$. Then
we can compute a number $N$, and finite sets $F_{n}\subset \N$, such
that $U=U_{F_{0}}\times\dots\times U_{F_{N}}\times\prod_{n>N}\mathcal X_{n}$.
Here each $U_{F_{n}}$ is a finite union of basic balls in $\mathcal X_{n}$,
$n\leq N$. In order to check whether $\prod K_{n}$ is contained
in $U$, it suffices to use the uniform recursive compactness of $K_{n}$
to check the finitely many inclusions $K_{0}\subset U_{I_{0}},\dots,K_{N}\subset U_{I_{N}}.$
\end{proof}
\begin{example}
For each $n\in\N$ let $\mathcal X_{n}$ be the computable metric space $\N$, endowed with
the discrete metric and the set of simple points $\N$. Then the Baire space
\[
\mathcal X=\prod_{n}\N=\N^{\N}
\]
is a computable metric space, with the metric $d$ and set $\mathcal S$ as in \Cref{thm:Tychonoffcomputable}. We can alternatively use the metric
\[d'(x,y)=\inf\{2^{-n} : \N\in\N, \ x(i)=y(i) \ \text{for all} \ i\leq n\}.\] As $d'$ is
uniformly computable on the set $\mathcal{S}$, we have that the identity
function $(\mathcal X,d,\mathcal{S})\to(\mathcal X,d',\mathcal{S})$ is computable.
In other words, the metrics $d$ and $d'$ are computably equivalent. 
\end{example}

\begin{example}
\label{exa:computable-infinite-product-of-finite-alphabets}Let $(a_{n})$
be a computable sequence of natural numbers, with $a_{n}\geq2$ for
all $n$, and endow $\{1,\dots,a_{n}\}$ with the discrete metric.
These are uniformly recursively compact computable metric spaces.
It follows from \Cref{thm:Tychonoffcomputable} that 
\[
\mathcal X=\prod_{n}\{1,\dots,a_{n}\}
\]
is a recursively compact metric space. In particular, $\prod_{n}\{1,\dots,a_{n}\}$
is a recursively compact subset of the computable metric space $\N^{\N}$.
\end{example}

\subsubsection*{Inverse limits}
\begin{prop}
Let $(\mathcal X_{n},d_{n},\mathcal{S}_{n})$ be a uniform sequence of computable
metric spaces, where the diameter of all them is bounded by $M$ for some $M\in\N$.
Let $(f_{n})_{n\geq1}$ be a uniformly computable sequence of computable
functions, with $f_{n+1}\colon \mathcal  X_{n+1}\to \mathcal X_{n}$. Then the inverse
limit 
\[
\varprojlim  \mathcal X_{n}=\{x\in\prod \mathcal X_{n}\mid\text{ for all \ensuremath{n\in\N} we have }f_{n+1}(x_{n+1})=x_{n}\}
\]
is an effectively closed subset of the product space $\prod \mathcal X_{n}$.
\end{prop}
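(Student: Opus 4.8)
The plan is to show that the defining condition of $\varprojlim \mathcal X_n$ can be written as a countable intersection of effectively closed sets, arising as preimages under computable coordinate projections and the maps $f_{n+1}$. For each $n$, I would consider the set
\[
C_n=\{x\in\prod \mathcal X_k \ : \ f_{n+1}(x_{n+1})=x_n\},
\]
so that $\varprojlim \mathcal X_n=\bigcap_{n\in\N}C_n$. The strategy is to first prove that each $C_n$ is effectively closed, uniformly in $n$, and then to observe that a uniform countable intersection of effectively closed sets is effectively closed (this dualizes \Cref{union-of-effectively-open-sets} via complements).

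First I would set up the relevant computable maps. The projection $\pi_n\colon \prod \mathcal X_k\to \mathcal X_n$ is computable, as is the map $x\mapsto f_{n+1}(x_{n+1})=f_{n+1}\circ\pi_{n+1}(x)$, by \Cref{composition-of-computable-functions-is-computable} together with the uniform computability of the sequence $(f_n)$. Both maps land in $\mathcal X_n$, so I would package them into a single computable map into the product $\mathcal X_n\times \mathcal X_n$ (using the product structure of \Cref{thm:Tychonoffcomputable}), namely $g_n\colon x\mapsto (\pi_n(x),f_{n+1}(\pi_{n+1}(x)))$. The condition $f_{n+1}(x_{n+1})=x_n$ is then exactly the condition that $g_n(x)$ lies on the diagonal $\Delta_n=\{(y,y) : y\in \mathcal X_n\}$.

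The key point is that the diagonal $\Delta_n$ is effectively closed, uniformly in $n$: its complement is effectively open because $(y,z)\mapsto d_n(y,z)$ is computable and $\{(y,z) : d_n(y,z)>0\}$ is the preimage of the effectively open set $(0,\infty)$. Pulling back through the computable map $g_n$, \Cref{computable-function-relative} gives an effectively open set $U_n$ with $g_n^{-1}(\text{complement of }\Delta_n)=U_n$, so $C_n=\prod \mathcal X_k\smallsetminus U_n$ is effectively closed, uniformly in $n$. Finally, since $\varprojlim \mathcal X_n=\bigcap_n C_n$ has complement $\bigcup_n U_n$, and the $U_n$ form a uniformly computable sequence of effectively open sets, \Cref{union-of-effectively-open-sets} shows this union is effectively open, whence $\varprojlim \mathcal X_n$ is effectively closed.

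The main obstacle I anticipate is purely bookkeeping: ensuring that all the steps are genuinely \emph{uniform} in $n$, so that the countable union $\bigcup_n U_n$ is a uniformly computable sequence rather than merely a sequence of individually effectively open sets. This requires that the descriptions of $\pi_n$, $f_{n+1}$, the product structure on $\mathcal X_n\times \mathcal X_n$, and the diagonal $\Delta_n$ all be computable from $n$, which follows from the uniformity hypotheses in the statement but should be checked carefully. Beyond that, the argument is routine.
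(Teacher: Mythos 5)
Your proof is correct, but it takes a different route from the paper. The paper's proof packages all the compatibility conditions into a single self-map $f\colon \prod \mathcal X_n \to \prod \mathcal X_n$, $(x_n)_n \mapsto (f_{n+1}(x_{n+1}))_n$, checks that $f$ is computable by computing preimages of cylinder open sets $U_{I_0}\times\dots\times U_{I_N}\times \prod_{n>N}\mathcal X_n$, and then observes that $\varprojlim \mathcal X_n$ is exactly the fixed-point set of $f$, which is effectively closed by \Cref{prop:fixed-points-of-computable-function}. You instead keep the conditions separate: each $C_n$ is the pullback of the diagonal $\Delta_n\subset \mathcal X_n\times\mathcal X_n$ under the computable map $g_n=(\pi_n, f_{n+1}\circ\pi_{n+1})$, and you conclude by intersecting uniformly via (the dual of) \Cref{union-of-effectively-open-sets}. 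The two arguments rest on the same underlying trick --- computability of the metric makes equality conditions effectively closed; indeed the paper's fixed-point lemma is proved by pulling back $\{t\neq 0\}$ under $x\mapsto d(x,f(x))$, which in the product metric is $\sum_n 2^{-n} d_n(x_n, f_{n+1}(x_{n+1}))$, i.e.\ the ``summed'' version of your coordinatewise diagonals. What the paper's route buys is economy: the uniformity-in-$n$ bookkeeping you flag as the main obstacle is absorbed into the computability of the single map $f$, and no lemma about countable intersections is needed. What your route buys is modularity: each step (computable projections, pairing into a product, computable metric, uniform unions) is elementary and checked locally, and you never need to verify computability of a map between infinite products. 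Both are valid; just make sure, as you note, that the descriptions of $g_n$ and $\Delta_n$ are computed from $n$ (this does follow from the uniformity hypotheses and from the uniform computability of the metrics $d_n$ on the product spaces $\mathcal X_n\times\mathcal X_n$).
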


\begin{proof}
Let $f\colon\prod \mathcal X_{n}\to\prod \mathcal  X_{n}$ be the function defined by
$(x_{n})_{n}\mapsto(f_{n+1}(x_{n+1}))_{n}$. We will verify that $f$
is computable, so let $U\subset\prod \mathcal X_{n}$ be an open set with the
form 
\[
U=U_{I_{0}}\times U_{I_{1}}\times\dots\times U_{I_{N}}\times\prod_{n>N}\mathcal X_{n}.
\]
The preimage $f^{-1}(U)$ is 
\[
f^{-1}(U)=f_{1}^{-1}(U_{I_{0}})\times f_{2}^{-1}(U_{I_{1}})\times f_{3}^{-1}(U_{I_{2}})\times\dots\times f_{N+1}^{-1}(U_{I_{N}})\times\prod_{n>N}\mathcal X_{n}.
\]
As $(f_{n})_{n\geq1}$ is a uniformly computable sequence of computable
functions, this shows that the preimage of an effectively open subset
$U$ of $\prod \mathcal  X_{n}$ is effectively open, with an algorithm that
is uniform on $U$. It follows from \Cref{prop:fixed-points-of-computable-function}
that the set of fixed points of $f$ in $\prod \mathcal  X_{n}$ is efffectively
closed. Finally, observe that the set of fixed points of $f$ in $\prod \mathcal X_{n}$
is equal to $\varprojlim \mathcal X_{n}$, so the claim follows. 
\end{proof}
\begin{prop}
\label{prop:inverse-limit-of-finite-alphabets-is-Cantor}Let $(A_{n})_{n\geq0}$
be a uniformly computable sequence of finite subsets of $\N$, and
let $(f_{n})_{n\geq1}$ be a uniformly computable sequence of functions
$f_{n+1}\colon A_{n+1}\to A_{n}$ such that every element in $A_{n}$
has at least two preimages by $f_{n+1}$. Then $\varprojlim A_{n}$
is recursively homeomorphic to $\{0,1\}^{\N}$. 
\end{prop}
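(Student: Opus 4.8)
The plan is to combine recursive compactness with an explicit effective binary splitting scheme, and then invert using \Cref{prop:computable-functions-are-invertible-on-compact-domains}. Write $K=\varprojlim A_n\subseteq\prod_n A_n$. (We may assume each $A_n\neq\emptyset$: this is forced by the conclusion, and once $A_0\neq\emptyset$ it holds for all $n$ since the hypothesis makes every $f_{n+1}$ surjective.) First I would record that $K$ is recursively compact. Each $A_n$, as a finite subset of $\N$ with the discrete metric, is recursively compact with diameter at most $1$, uniformly in $n$, so by \Cref{thm:Tychonoffcomputable} the product $\prod_n A_n$ is recursively compact; the preceding proposition shows that $K$ is an effectively closed subset of $\prod_n A_n$, hence recursively compact by \Cref{prop:effective-closed-subset-of-compact-is-compact}. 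For $a\in A_m$ write $[a]_m=\{x\in K : x_m=a\}$. Since every node has at least two children, the subtree below $a$ is infinite and finitely branching, so by K\"onig's lemma $[a]_m\neq\emptyset$, and $[a]_m=\bigsqcup_{b\in f_{m+1}^{-1}(a)}[b]_{m+1}$ is a disjoint union of at least two nonempty clopen cylinders.

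Then I would build the scheme. Uniformly in $s\in\{0,1\}^{*}$ I define a nonempty clopen set $Y_s\subseteq K$, a finite disjoint union of cylinders sharing a common level, with $Y_\emptyset=K$ and $Y_s=Y_{s0}\sqcup Y_{s1}$, as follows: represent $Y_s$ as $[b_0]_m\sqcup\cdots\sqcup[b_{r-1}]_m$; if $r\geq 2$ peel off one cylinder, setting $Y_{s1}=[b_0]_m$ and $Y_{s0}=[b_1]_m\sqcup\cdots\sqcup[b_{r-1}]_m$; if $r=1$, i.e. $Y_s=[b]_m$, use the at-least-two children to write $[b]_m=\bigsqcup_i[c_i]_{m+1}$ and peel one off. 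The at-least-two-preimages hypothesis is exactly what guarantees that this split is always into two nonempty pieces, so every $Y_s$ is nonempty. All these operations are computable from the uniformly computable data $(A_n,f_n)$, so $s\mapsto Y_s$ is uniformly computable at the level of descriptions, each $Y_s$ being effectively open. A short combinatorial check shows that along every branch the common ancestor level of $Y_{y|k}$ tends to infinity (the slowest case is the all-zeros branch, which nevertheless completes each node in finitely many steps), and since $[c]_\ell\cap K$ has diameter at most $2^{-\ell}$ for the product metric, one gets $\sup_{|s|=k}\operatorname{diam}(Y_s)\to 0$.

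The scheme yields a map $\psi\colon\{0,1\}^{\N}\to K$ by letting $\psi(y)$ be the unique point of $\bigcap_k Y_{y|k}$ (nonempty by compactness and nestedness, a single point because diameters vanish). It is a bijection: injectivity follows from the partition property $Y_{s0}\sqcup Y_{s1}=Y_s$ together with vanishing diameters, and surjectivity from the fact that every $x\in K$ lies in exactly one child at each stage, producing a branch $y$ with $\psi(y)=x$. To see that $\psi$ is computable, note that $\psi^{-1}([a]_m)=\bigcup\{[t] : \ell_t\geq m \text{ and } [c_t]_{\ell_t}\subseteq[a]_m\}$, where $[t]$ is the cylinder of $\{0,1\}^{\N}$ determined by $t$ and $[c_t]_{\ell_t}$ is the ambient cylinder containing $Y_t$; this set is effectively open uniformly in $[a]_m$, and preimages of arbitrary effectively open sets follow by taking recursively enumerable unions. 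Finally, since $\{0,1\}^{\N}$ is recursively compact and $\psi$ is computable and injective, \Cref{prop:computable-functions-are-invertible-on-compact-domains} gives that $\psi^{-1}\colon K\to\{0,1\}^{\N}$ is computable; hence $\psi$ is a computable homeomorphism and $\varprojlim A_n$ is recursively homeomorphic to $\{0,1\}^{\N}$.

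I expect the main obstacle to be the bookkeeping around diameters: obtaining $\sup_{|s|=k}\operatorname{diam}(Y_s)\to 0$ effectively, rather than merely along each individual branch, and arranging the scheme so that each $Y_s$ is contained in a single cylinder whose level grows with $k$. The algebraic and computability parts, namely the splitting that relies on the two-children hypothesis, the uniform computability of $s\mapsto Y_s$, and the final inversion on a compact domain, are comparatively routine given the results already established.
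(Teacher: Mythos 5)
Your proof is correct, and its combinatorial core is the mirror image of the paper's: peeling off one cylinder at a time assigns the $i$-th child of a node the binary word $0^{i}1$ (with $0^{k-1}$ for the last child), which is exactly the complete prefix code $\{0,10,110,\dots,1^{m-2}0,1^{m-1}\}$ that the paper uses. The packaging, however, is genuinely different. The paper works in the opposite direction: it defines a computable, prefix-monotone, length-increasing word map $h$ on the tree $\{\epsilon\}\cup\bigcup_{n}A_{0}\times\dots\times A_{n}$ via that code, lets it induce $H\colon\varprojlim A_{n}\to\{0,1\}^{\N}$, and verifies bijectivity and computability of $H$ and $H^{-1}$ directly from the construction, never needing recursive compactness at all. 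You instead build a Cantor scheme $(Y_{s})_{s\in\{0,1\}^{*}}$ of clopen subsets of $K=\varprojlim A_{n}$, obtain $\psi\colon\{0,1\}^{\N}\to K$, and buy the computability of $\psi^{-1}$ from \Cref{prop:computable-functions-are-invertible-on-compact-domains}, at the price of first establishing recursive compactness of $K$ via \Cref{thm:Tychonoffcomputable} and \Cref{prop:effective-closed-subset-of-compact-is-compact}. Your route uses heavier machinery but less hand-verification; the paper's is more elementary and self-contained.

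Two corrections to your own closing assessment. First, your formula for $\psi^{-1}([a]_{m})$ refers to ``the ambient cylinder containing $Y_{t}$'', which does not exist for every $t$ (e.g.\ $Y_{\emptyset}=K$, or any peeled-off remainder consisting of several cylinders). The repair is only a rephrasing: take the union over all $t$ with $Y_{t}\subseteq[a]_{m}$, a decidable condition on descriptions. This union equals $\psi^{-1}([a]_{m})$ because along every branch the scheme passes through single-cylinder states at levels tending to infinity (the level can only increase at a single-cylinder node), and a single cylinder at level $\geq m$ that meets $[a]_{m}$ is contained in it. Second, the ``main obstacle'' you flag is a non-issue: the uniform convergence $\sup_{|s|=k}\operatorname{diam}(Y_{s})\to 0$ is never needed. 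Well-definedness and surjectivity of $\psi$ use only per-branch convergence, injectivity uses only the disjointness $Y_{s0}\cap Y_{s1}=\emptyset$, and the computability argument just described uses neither. (It does hold anyway, by K\"onig's lemma applied to the subtree $\{s : \operatorname{diam}(Y_{s})\geq\varepsilon\}$, but you need not prove it.)
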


\begin{proof}
Let $T\subset\N^{\ast}$ be the decidable subset of $\N^{\ast}$ defined
by $T=\{\epsilon\}\cup\bigcup_{n\in\N}A_{0}\times\dots\times A_{n}$.
We define a function $h\colon T\to\{0,1\}^{\ast}$ recursively. We
first set $h(\epsilon)=\epsilon.$ Now let  $w=w_{0}w_{1}\dots w_{n}$,
with $w_{i}\in A_{i}$ for $i=1,\dots,n$, and assume that $h(w)$ has been defined. We compute
the set $f_{n+1}^{-1}(w_{n})=\{w_{n+1}^{1},\dots,w_{n+1}^{m}\}$,
labeled in such a manner that $i<j$ when $w_{n+1}^{i}<w_{n+1}^{j}$
in lexicographical order. Recall that by hypothesis $m=|f_{n+1}^{-1}(w_{n})|\geq2$
. Now, we compute the set of words $v_{0}=0,\ v_{1}=10,\ ,v_{2}=110,\dots,v_{m-1}=1^{m-1}0,\ v_{m}=1^{m}$.
For each $i$ with $1\leq i\leq m$ we define $h(w_{0}\dots w_{n}w_{n+1}^{i})$
as the concatenation $h(w_{0}w_{1}\dots w_{n})v_{i}$. These conditions
define $h(w)$ for every $w\in T$. From our construction, it is clear
that $h$ is a computable function, it is monotone for the prefix
order, and the length of $h(w)$ tends to infinity with the length
of $w$. Thus $h$ induces a function $H\colon\varprojlim A_{n}\to\{0,1\}^{\N}$
given by $H(y)(k)=h(y_{0}\dots y_{n})(k)$, the $k$-th element in
the word $h(y_{0}\dots y_{n})$, for some $k$ big enough. Then $H$
is computable, bijective, and its inverse is easily seen to be computable.
\end{proof}
\begin{prop}
\label{prop:infinite-product-of-finite-alphabets-is-Cantor}Let $(a_{n})_{n\in\N}$
be a computable sequence of natural numbers, with $a_{n}\geq2$ for
all $n$. Then $\prod_{n}\{1,\dots,a_{n}\}$ is recursively homeomorphic
to $\{0,1\}^{\N}$.
\end{prop}
\begin{proof}
The proof is a simplified version of the proof for \Cref{prop:inverse-limit-of-finite-alphabets-is-Cantor}, and is left
to the reader. %
\end{proof}

\subsubsection*{Hyperspaces}
Let $(\mathcal X,d)$ be a metric space. Given $x\in \mathcal X$ and a compact set $K\subset \mathcal X$, we write \[d(x,K)=\inf_{y\in K} d(x,y).\]
The \textbf{Hausdorff distance} $d_H$ between two compact subsets of $\mathcal X$ is defined by
\[d_H (K,K')=\max\{\sup_{x\in K} d(x,K'), \sup_{x\in K'} d(x,K) \}.\]

\begin{defn}
    The \textbf{hyperspace} associated to a metric space $(\mathcal X,d)$ is the collection $K(\mathcal X)$ of nonempty compact subsets of $\mathcal X$, endowed with the metric $d_H$. 
\end{defn}

The hyperspace associated to a metric space is always a complete metric space. Moreover, it inherits some properties from the original space, such as compactness and separability. Indeed, if $\mathcal S$ is a countable and dense subset of $\mathcal X$, then the collection of finite subsets of $\mathcal S$ is a countable and dense subset of $(K(\mathcal X),d_H)$. We shall prove now that it also inherits a computable metric space structure from the original space.

For the following result, we write $s_F=\{s_i\mid i\in F\}$ for $F\subset \N$ finite. We also recall that $(F_n)_{n\in\N}$ is a computable enumeration of all finite subsets of $\N$.

\begin{prop}\label{Hyperspace-inherits-computable-metric-space-structure}
    Let $(\mathcal X,d,\mathcal{S})$ be a computable metric space.  Define a dense subset of $K(\mathcal X)$ by $\mathcal S'=\{s_{F_n}:n\in\N\}$. Then $(K(\mathcal X),d_H,\mathcal{S}')$ is a computable metric space. If $\mathcal X$ is recursively compact, then the same holds for $K(\mathcal X)$. 
\end{prop}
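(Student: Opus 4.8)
The plan is to verify the two defining conditions of a computable metric space for the triple $(K(\mathcal X),d_H,\mathcal S')$, and then to handle the recursive compactness claim separately. The structure $\mathcal S'=\{s_{F_n}:n\in\N\}$ comes with a natural numbering inherited from the computable enumeration $(F_n)_{n\in\N}$ of finite subsets of $\N$, so the first task is to show that $d_H$ is uniformly computable on pairs of elements of $\mathcal S'$.

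First I would reduce the Hausdorff distance between two finite sets $s_F=\{s_i:i\in F\}$ and $s_{F'}=\{s_j:j\in F'\}$ to a finite combination of the ground distances $d(s_i,s_j)$. Explicitly, for finite sets one has
\[
d_H(s_F,s_{F'})=\max\Bigl\{\max_{i\in F}\min_{j\in F'} d(s_i,s_j),\ \max_{j\in F'}\min_{i\in F} d(s_i,s_j)\Bigr\}.
\]
Since the right-hand side is built from finitely many values $d(s_i,s_j)$ using only $\min$ and $\max$, and each $d(s_i,s_j)$ is uniformly computable (approximable to precision $1/n$ by the computability structure $\mathcal S$ of $\mathcal X$), the composite expression is uniformly computable to precision $1/n$ as well: one approximates each entry finely enough and propagates the error through the finitely many $\min/\max$ operations, which are $1$-Lipschitz and preserve the approximation bound. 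This establishes that $(K(\mathcal X),d_H,\mathcal S')$ is a computable metric space. I would also recall, as noted in the excerpt, that $(K(\mathcal X),d_H)$ is complete and that finite subsets of the dense set $\mathcal S$ form a countable dense subset, so $\mathcal S'$ is genuinely a dense subset and the separability hypothesis is met.

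For the second claim, suppose $\mathcal X$ is recursively compact. By \Cref{effecttively-compact-vs-totally-bounded} it suffices to show that $K(\mathcal X)$ is complete and effectively totally bounded, since completeness of $K(\mathcal X)$ is automatic. The key observation is that $\mathcal X$ being effectively totally bounded yields, for each $n$, a computable $f(n)$ with $\mathcal X\subset\bigcup_{i=0}^{f(n)}B(s_i,1/n)$; then every nonempty compact $K\subset\mathcal X$ lies within Hausdorff distance $1/n$ of some subset of the finite net $\{s_0,\dots,s_{f(n)}\}$. The finitely many nonempty subsets of this net form a finite $1/n$-net for $K(\mathcal X)$ in the $d_H$ metric, and these subsets are all of the form $s_F$ with $F\subseteq\{0,\dots,f(n)\}$, hence appear among the $s_{F_m}$. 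Bounding the index of the last such $s_{F_m}$ by a computable function of $n$ gives a witnessing computable function for effective total boundedness of $K(\mathcal X)$, and \Cref{effecttively-compact-vs-totally-bounded} then delivers recursive compactness.

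The main obstacle I anticipate is bookkeeping the uniformity in the first part: one must ensure that from the index of a pair $(F_n,F_{m})$ one can algorithmically recover the finite sets, enumerate the pairs $(i,j)\in F_n\times F_m$, and drive all the underlying distance approximations to a common precision before combining them, so that the error in the final $\min/\max$ expression is provably at most $1/n$. This is routine but requires care that the $1$-Lipschitz stability of $\min$ and $\max$ is invoked correctly so the accumulated rounding error does not exceed the target bound. The total-boundedness argument is comparatively clean, the only subtlety being to confirm that a $1/n$-net of $\mathcal X$ induces a $1/n$-net of $K(\mathcal X)$ under $d_H$, which follows directly from the definition of the Hausdorff distance.
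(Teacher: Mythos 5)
Your proposal is correct and follows essentially the same route as the paper's proof: uniform computability of $d_H$ on finite subsets of $\mathcal S$ via the $\min/\max$ formula over finitely many computable distances, and then recursive compactness via completeness plus effective total boundedness (\Cref{effecttively-compact-vs-totally-bounded}), taking all finite subsets of the $1/n$-net $\{s_0,\dots,s_{f(n)}\}$ as a Hausdorff net and bounding their indices by a computable function. The only cosmetic difference is that the paper covers $\mathcal X$ by balls of radius $1/(2n)$ to get comfortable margin in the Hausdorff estimate, whereas you use the $1/n$-cover directly, which also works since the relevant suprema are attained on compact sets.
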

\begin{proof}
     First observe that $d_H$ is uniformly computable on elements from $\mathcal S'$. In other words, that we can compute $d_H(s_I,s_J)$ from finite sets $I,J\subset \N$. This follows from the definition of $d_H$, that  \[\max\colon\R^2\to \R,\ \ \ \min\colon\R^2\to\R\]
    are computable functions, that $d\colon \mathcal X\times \mathcal X\to \mathcal  X$ is a computable function, and that the composition of computable functions is computable. This proves that $(K(\mathcal X),d_H,\mathcal{S}')$ is a computable metric space.

    We now prove that if $\mathcal X$ is recursively compact, then the same is true for $K (\mathcal X)$. First observe that the space $K (\mathcal X)$ is complete, so by \Cref{effecttively-compact-vs-totally-bounded} it suffices to prove that $K(\mathcal X)$ is effectively totally bounded. Observe that by \Cref{effecttively-compact-vs-totally-bounded} the set $\mathcal X$ is effectively totally bounded, so there is a computable function $f\colon\N\to\N$ such that for each $n\in\N$, \[\mathcal X\subset \bigcup_{i=0}^{f(n)} B(s_i,\frac{1}{2n}).\] 
    For each $n$, we let $g(n)$ be the least natural number so that all finite subsets of $\{0,\dots,f(n)\}$ appear in $\{I_0,\dots,I_{g(n)}\}$. Thus $g$ is a computable function, and $g$ shows that $K(\mathcal X)$ verifies \Cref{effectively-totally-bounded}. This follows from the fact that for every compact set $K\subset \mathcal X$, there a subset  $I\subset \{0,\dots,f(n)\}$ such that $d_H(K,s_I)<1/n$. This can be seen taking $I$ as  $\{i\in\{0,\dots,f(n)\} : B(s_i,\frac{1}{n})\cap K\ne\emptyset\}$.   
\end{proof}
The following result follows easily from the definition, and its proof is left to the reader.
\begin{prop}\label{hyperfunction-of-a-computable-function-is-computable}
    Let $(\mathcal X,d,\mathcal S)$ be a computable metric space, and let $f\colon \mathcal  X\to \mathcal X$ be a computable function. Then the function  $K(\mathcal X)\to K(\mathcal X)$, $Y\to f
    (Y)$ is computable.
\end{prop}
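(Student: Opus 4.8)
The plan is to verify the defining property of a computable function from \Cref{def:computable-function}: given an effectively open $\mathcal U\subseteq K(\mathcal X)$, produce an effectively open $\mathcal V\subseteq K(\mathcal X)$ with $F^{-1}(\mathcal U)=\mathcal V$, where $F(Y)=f(Y)$. Since $F^{-1}$ commutes with unions and, by \Cref{union-of-effectively-open-sets}, a uniform recursively enumerable union of effectively open sets is effectively open, it suffices to treat a single basic ball $\mathcal B=\{K\in K(\mathcal X):d_H(K,s_I)<r\}$ of $K(\mathcal X)$, and to do this uniformly in the finite set $I\subseteq\N$ and the rational $r>0$. Thus the statement reduces to showing that $F^{-1}(\mathcal B)=\{Y\in K(\mathcal X):d_H(f(Y),s_I)<r\}$ is effectively open, uniformly in $(I,r)$.

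First I would unfold the Hausdorff condition. Because $f(Y)$ is compact and $I$ is finite, $d_H(f(Y),s_I)<r$ holds if and only if both (A) $f(Y)\subseteq\bigcup_{i\in I}B(s_i,r)$ and (B) $f(Y)\cap B(s_i,r)\neq\emptyset$ for every $i\in I$; here (A) encodes $\sup_{x\in f(Y)}d(x,s_I)<r$ (the supremum is attained on the compact set $f(Y)$, so strict inequality is the same as containment in the open cover) and (B) encodes $\max_{i\in I}d(s_i,f(Y))<r$. Writing $V=\bigcup_{i\in I}B(s_i,r)$, which is effectively open and computable from $(I,r)$, and using that $f$ is computable, I would compute effectively open sets $U=f^{-1}(V)$ and $U_i=f^{-1}(B(s_i,r))$ in $\mathcal X$. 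Then (A) becomes $Y\subseteq U$ and (B) becomes $Y\cap U_i\neq\emptyset$, so that
\[F^{-1}(\mathcal B)=\{Y:Y\subseteq U\}\cap\bigcap_{i\in I}\{Y:Y\cap U_i\neq\emptyset\}.\]
It then remains to prove two Vietoris-type lemmas, uniformly in a description of the effectively open set $W\subseteq\mathcal X$ involved: that $\{Y:Y\cap W\neq\emptyset\}$ and $\{Y:Y\subseteq W\}$ are effectively open in $K(\mathcal X)$. The first is straightforward: reducing to a single basic ball $B(c,\rho)$ of $W$ via \Cref{union-of-effectively-open-sets}, one enumerates all Hausdorff basic balls $B_{d_H}(s_L,\eta)$ for which some $l\in L$ satisfies $d(s_l,c)+\eta<\rho$; a short triangle-inequality argument shows that these balls are contained in $\{Y:Y\cap B(c,\rho)\neq\emptyset\}$ and cover it, and the test is semi-decidable since $d(s_l,c)$ is computable.

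The main obstacle is the upper inclusion $\{Y:Y\subseteq W\}$. The natural approach is to enumerate Hausdorff basic balls $B_{d_H}(s_L,\eta)$ such that $\bigcup_{l\in L}\overline{B(s_l,\eta)}\subseteq W$: whenever $d_H(Y,s_L)<\eta$ one has $Y\subseteq\bigcup_{l\in L}B(s_l,\eta)$, so such balls lie in $\{Y:Y\subseteq W\}$, and a compactness argument (each $Y\subseteq W$ has positive distance to $\mathcal X\smallsetminus W$, giving a Lebesgue-number margin) shows that they cover it. The delicate point is certifying the containment $\bigcup_{l\in L}\overline{B(s_l,\eta)}\subseteq W$ effectively: containment of an effectively closed set in an effectively open set is semi-decidable precisely when the closed set is recursively compact, which is supplied by \Cref{closure-of-basic-balls-is-effective} together with \Cref{prop:effective-closed-subset-of-compact-is-compact} in the case where $\mathcal X$ is recursively compact — the situation of interest throughout this work, e.g. $\mathcal X=A^G$. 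Granting these two lemmas, I would finish by recording that a finite intersection of effectively open sets is effectively open, uniformly (a routine consequence of the definitions, enumerating basic balls inside pairwise intersections by the triangle inequality), so the displayed set is effectively open uniformly in $(I,r)$; by the opening reduction this proves that $F$ is computable.
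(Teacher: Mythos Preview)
Your overall strategy is sound and is exactly the kind of Vietoris-preimage argument the paper has in mind (the paper itself leaves the proof to the reader). There is, however, a genuine gap in your treatment of the upper-Vietoris set $\{Y:Y\subseteq W\}$.

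You claim this step needs recursive compactness of $\mathcal X$, and you justify restricting to that case. But the proposition is stated without any compactness hypothesis, and it is \emph{applied} in the paper (see \Cref{S(G)-is-effectively-closed}) with $\mathcal X=\N^G$, which is not even compact. So the restriction is not acceptable. The fix is to replace your condition ``$\bigcup_{l\in L}\overline{B(s_l,\eta)}\subseteq W$'' by a purely formal, ball-by-ball one: writing $W=\bigcup_{j\in J}B(c_j,r_j)$ with $J$ recursively enumerable, enumerate those pairs $(L,\eta)$ for which every $l\in L$ admits some $j\in J$ with $d(s_l,c_j)+\eta<r_j$. This test is semi-decidable from the computable metric alone, with no compactness needed. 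If $(L,\eta)$ passes the test and $d_H(Y,s_L)<\eta$, then $Y\subseteq\bigcup_{l}B(s_l,\eta)\subseteq\bigcup_l B(c_{j(l)},r_{j(l)})\subseteq W$. Conversely, given compact $Y\subseteq W$, take a finite subcover from $\{B(c_j,r_j):j\in J\}$ and apply the Lebesgue number lemma to get $\delta>0$ so that each $B(y,\delta)$ with $y\in Y$ lies in a single $B(c_j,r_j)$; then any $(L,\eta)$ with rational $\eta<\delta/2$ and $d_H(Y,s_L)<\eta$ passes the formal test, and such $(L,\eta)$ exist by density of $\mathcal S$. This closes the gap and proves the proposition as stated.
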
 
\subsection{Zero-dimensional sets in computable metric spaces}

Here we will prove some results regarding subsets of computable metric
spaces whose topological dimension is zero. These results will be used in \Cref{chap:EDS}.

We say that a subset of a topological space is \textbf{clopen} when it is both open and closed. We recall that a topological
space has topological dimension zero if and only if every open cover admits a refinement of clopen sets. 
\begin{prop}
\label{prop:computable-clopens-dimension-zero}\label{prop.clopens}Let $K$ be a zero-dimensional
recursively compact subset of a computable metric space. Then we can
compute a collection $(C_{n})$ of finite unions of basic balls in
$X$, such that $(C_{n}\cap K)$ forms a basis of clopen sets for
$K$ (with the subspace topology).
\end{prop}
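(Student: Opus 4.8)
The plan is to produce the clopen basis by covering $K$ at every dyadic scale with basic balls whose closures meet $K$ only "genuinely," and then to exploit zero-dimensionality to refine each such cover into a clopen cover that can be specified by finite unions of basic balls. The main tool will be \Cref{effecttively-compact-vs-totally-bounded}: since $K$ is recursively compact, it is effectively totally bounded, so there is a computable function giving, for each $n$, a finite list of basic balls of radius $1/n$ covering $K$. The difficulty is that individual basic balls need not be clopen in $K$; their boundaries may cut through $K$. So the heart of the argument is to replace such a cover by a clopen refinement in a way that is effective.

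Concretely, I would proceed as follows. First, fix $n$ and use effective total boundedness to compute a finite cover of $K$ by basic balls $B(s_i,1/n)$, $i \le f(n)$. By zero-dimensionality of $K$, this open cover admits a refinement by clopen subsets of $K$; moreover one can arrange a clopen partition $\{P_1,\dots,P_k\}$ of $K$ refining the cover, with each $P_j$ of diameter less than $2/n$. The key effective step is to \emph{realize} each such clopen piece as $C \cap K$ for a computable finite union of basic balls $C$. For this I would use the following characterization of clopen sets in $K$: a finite union of basic balls $C = U_F$ gives a clopen subset $C \cap K$ precisely when both $\overline{U_F} \cap K \subset U_F$ and the complementary cover $U_{F'}$ (covering $K \smallsetminus C$) together with $U_F$ witness that the two pieces have disjoint closures meeting $K$. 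Because $K$ is recursively compact, one can semi-decide the relevant inclusions (via \Cref{maximal-set-effectively-closed} and the semi-decidability of $K \subset U$ for effectively open $U$ noted after the definition of recursive compactness), so the search for a valid clopen realization terminates.

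The enumeration itself would be organized by dovetailing. I would enumerate all finite sets $I \subset \N$ and, for each, run in parallel the semi-decision procedures checking (i) that $U_I$ covers a portion of $K$ with $\overline{U_I}\cap K \subset U_I$, i.e. that $U_I \cap K$ is clopen in $K$, and (ii) that $U_I \cap K$ has small diameter. Whenever a finite union $C = U_I$ passes these tests, I output it as one of the $C_n$. The verification that the resulting family $(C_n \cap K)$ is a basis of clopen sets for $K$ then rests on zero-dimensionality: for every point $x \in K$ and every neighborhood, there is a clopen set of $K$ separating $x$ inside that neighborhood, and by the previous paragraph such a set (up to the intersection with $K$) arises from some finite union of basic balls that will eventually be enumerated.

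The main obstacle I anticipate is the effective clopen-realization step: although zero-dimensionality guarantees the topological existence of a clopen refinement, it is not a priori clear that such a refinement is \emph{witnessed} by a finite union of basic balls whose clopenness in $K$ is semi-decidable. The crux is therefore to show that one can always shrink and separate the covering balls so that the two sides of a clopen partition have positive Hausdorff gap on $K$, making the inclusion $\overline{U_I}\cap K \subset U_I$ detectable; here recursive compactness is essential, since it converts the compactness-based Lebesgue-number argument (used already in \Cref{effecttively-compact-vs-totally-bounded}) into a halting computation. Once this semi-decidability is in hand, the dovetailing enumeration and the basis verification are routine.
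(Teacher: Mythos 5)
Your proposal is correct and follows essentially the same route as the paper's proof: enumerate all finite unions $V$ of basic balls, semi-decide whether $V\cap K$ is clopen in $K$ via the inclusion $\overline{V}\cap K\subset V$ (semi-decidable because $\overline{V}\cap K$ is recursively compact and $V$ is effectively open), and use zero-dimensionality together with the positive distance between a clopen subset of $K$ and its complement to guarantee that every clopen set is eventually witnessed by such a union. The extra scaffolding you introduce (effective total boundedness and dyadic-scale clopen partitions) is not needed for this statement---the paper dispenses with it and simply dovetails the clopenness test over all finite unions---but it is harmless.
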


\begin{proof}
Since $K$ is zero-dimensional, it has a basis of clopen sets. Let $C$ be clopen set in
$K$. Since $C$ is open in $K$, there is a set $U$ that is open
in $X$ with $C=U\cap K$. Since the set $U$ is open and the sets
$B_{i}$ are a basis for the topology on $X$, we can write $C=U_{I}\cap K$.
Since $C$ is compact and $\{B_{i}\cap K\mid i\in I\}$ is an open
cover for $C$ in the subspace topology, it follows that we can write
$C=U_{I}\cap K$ for a finite set $I$. 

Let $V_{1},V_{2},\dots$ be the collection of all finite unions of
basic balls. We have shown that this collection contains a basis of
clopen sets for $K$. We claim that given a finite union $V$, we
can semi-decide whether $C=V\cap K$ is clopen in $K$.  Indeed, this
occurs exactly when $V\cap K=\overline{V}\cap K$, where $\overline{V}$
is the union of the closures the finitely many balls defining $V$.
But this is the case exactly when $\overline{V}\cap K\subset V$.
As $\overline{V}\cap K$ is recursively compact and $V$ is effectively
open, this inclusion is semi-decidable. 
\end{proof}
\begin{prop}
\label{thm:compacto-y-cero-dimensional-tiene-copia-en-el-cantor}
Let $X$ be a nonempty recursively compact zero-dimensional subset
of a computable metric space. Then $X$ is computably homeomorphic
to an effectively closed subset $E$ of $\{0,1\}^{\N}$. Moreover:
\begin{enumerate}
\item If $X$ is computably closed, then $E$ can be taken to be computably
closed.
\item If $X$ is computably closed and has no isolated points, then $E$
can be taken to be $\{0,1\}^{\N}$. 
\end{enumerate}
\end{prop}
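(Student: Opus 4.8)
The plan is to realize $X$ as an inverse limit of finite sets by building a uniformly computable sequence of finite clopen partitions with vanishing mesh, and then to transport this inverse limit into $\{0,1\}^{\N}$ using the results already established for products and inverse limits of finite alphabets. The heart of the argument is to produce, uniformly in $n$, a finite list $V^{(n)}_1,\dots,V^{(n)}_{k_n}$ of finite unions of basic balls such that the sets $P^{(n)}_a=V^{(n)}_a\cap X$ form a clopen partition of $X$ of mesh $<2^{-n}$, with the level $n+1$ partition refining the level $n$ one. The observation that makes this effective is that a finite union of basic balls $V$ whose intersection with $X$ is clopen in $X$ yields a \emph{recursively compact} set: the clopenness certificate $\overline V\cap X\subseteq V$ used in the proof of \Cref{prop.clopens} gives $V\cap X=\overline V\cap X$, and since $\overline V$ is a finite union of closures of basic balls it is effectively closed by \Cref{closure-of-basic-balls-is-effective}, so $\overline V\cap X$ is an effectively closed subset of the recursively compact $X$ and hence recursively compact by \Cref{prop.equivalence.comp.closed.subset}. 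Consequently the relevant relations become uniformly semi-decidable: clopenness of $V\cap X$ (semi-decide $\overline V\cap X\subseteq V$), pairwise disjointness and refinement (containment of a recursively compact set in an effectively open set), the covering condition $X\subseteq\bigcup_a V^{(n)}_a$, and the mesh bound $V^{(n)}_a\cap X\subseteq B(s,2^{-n-1})$ for some center $s$. Since $X$ is compact and zero-dimensional, \Cref{prop.clopens} guarantees that such certified refining partitions exist at every level, so a systematic search over finite lists of finite unions of basic balls, demanding all of these semi-decidable certificates together with refinement of the previous partition, halts and produces the desired sequence uniformly. I expect this core construction to be the main obstacle, and everything hinges on the remark that a certified-clopen finite union of basic balls intersected with $X$ is recursively compact.

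For the assembly, set $A_n=\{1,\dots,k_n\}$ and let $f_{n+1}\colon A_{n+1}\to A_n$ send $b$ to the least $a$ with $P^{(n+1)}_b\subseteq P^{(n)}_a$ (a halting search, giving the unique such $a$ whenever $P^{(n+1)}_b\neq\emptyset$); this is a uniformly computable family of maps between finite sets. Padding each $A_n$ so that $|A_n|\geq2$, \Cref{thm:Tychonoff_computable} makes $\prod_n A_n$ a computable metric space and \Cref{prop:infinite-product-of-finite-alphabets-is-Cantor} gives a recursive homeomorphism $\prod_n A_n\cong\{0,1\}^{\N}$. Define $H\colon X\to\prod_n A_n$ by letting $H(x)_n$ be the index of the unique level-$n$ piece containing $x$. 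The preimage under $H$ of the cylinder $\{z:z_n=a\}$ is exactly $P^{(n)}_a=V^{(n)}_a\cap X$, an effectively open set relative to $X$ computed uniformly, so $H$ is computable in the sense of \Cref{computable-function-relative}, and it is injective because the mesh tends to $0$. Then $E:=H(X)$ is recursively compact by \Cref{prop:effective-continuous-image-of-compact-is-compact}, hence effectively closed by \Cref{recursively-compact-is-closed}, and $H^{-1}$ is computable by \Cref{prop:computable-functions-are-invertible-on-compact-domains}. Transporting through $\prod_n A_n\cong\{0,1\}^{\N}$ (a recursive homeomorphism, which sends recursively compact sets to recursively compact sets) exhibits $X$ as computably homeomorphic to an effectively closed subset of $\{0,1\}^{\N}$, proving the main statement.

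For item (1), assume $X$ is computably closed, so it carries a uniformly computable dense sequence $(x_i)$. Since a computable function sends computable points to computable points (directly from \Cref{computable-function-relative} and \Cref{characterization-of-computable-points-via-sequences}), the sequence $(H(x_i))$ is uniformly computable, and it is dense in $E=H(X)$ because $H$ is a homeomorphism; thus $E$ is r.e., and being also effectively closed it is computably closed. For item (2), assume in addition that $X$ has no isolated points. Using the dense computable points to witness nonemptiness of pieces (membership of a computable point in an effectively open set is semi-decidable) and the absence of isolated points to guarantee that, at a sufficiently fine level, every nonempty piece splits into at least two nonempty subpieces, I would strengthen the search so that every $P^{(n)}_a$ is nonempty and each has at least two children under $f_{n+1}$.

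Under these extra requirements the nested nonempty compact pieces along any coherent sequence intersect in a single point, so $H$ is a bijection onto $\varprojlim A_n$, giving a computable homeomorphism $X\cong\varprojlim A_n$; and since every element of $A_n$ has at least two $f_{n+1}$-preimages, \Cref{prop:inverse-limit-of-finite-alphabets-is-Cantor} identifies $\varprojlim A_n$ recursively with $\{0,1\}^{\N}$. Composing yields $E=\{0,1\}^{\N}$, as claimed.
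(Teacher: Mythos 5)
Your proposal is correct and follows essentially the same route as the paper's proof: compute a refining sequence of finite clopen partitions of vanishing diameter (the semi-decidability certificates you spell out are exactly what \Cref{prop:computable-clopens-dimension-zero} packages), encode $X$ by the index map into a product of finite alphabets, conclude via \Cref{prop:effective-continuous-image-of-compact-is-compact}, \Cref{recursively-compact-is-closed}, \Cref{prop:computable-functions-are-invertible-on-compact-domains}, and \Cref{prop:infinite-product-of-finite-alphabets-is-Cantor}, and handle the perfect computably closed case by forcing nonempty pieces with at least two children and invoking \Cref{prop:inverse-limit-of-finite-alphabets-is-Cantor}. The only substantive difference is that you explicitly verify item (1) (computable dense sequences push forward to computable dense sequences), a step the paper's proof leaves implicit.
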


\begin{proof}
We start by proving the first item. If $X$ is a singleton then the
claim clearly holds, so we assume from now on that $X$ is not a singleton.
It follows from \Cref{prop:computable-clopens-dimension-zero} that
one can compute a sequence $(\mathcal{P}^{n})$, $n\in\N$, of partitions
of $X$ made by clopen sets, where:
\begin{enumerate}
\item $\mathcal{P}^{0}$ has at least two elements. This is possible because
$X$ is not a singleton. 
\item Each set in $\mathcal{P}^{n}$ has diameter at most $2^{-n}$.
\item The elements in $\mathcal{P}^{n}$ are unions of elements in $\mathcal{P}^{n+1}$.
\end{enumerate}
Indeed, it suffices to observe that given any $n\in\NN$, by using \Cref{prop:computable-clopens-dimension-zero}, one can uniformly compute a basis of clopen sets whose diameter is less than $2^{-n}$ (recall that from the index of a basic ball one can compute its radius). Now, given any clopen and effectively closed set, call it $C$, we can use recursive compactness of $C$ to find a finite collection of basic clopen sets of small diameter that covers $C$. By iteratively applying this observation, we can compute the sequence of partitions  $(\mathcal P^n)_{n\in\N}$ as needed.

We index each $\mathcal{P}^{n}$ as $\{P_{0}^{n},\dots,P_{k_{n}}^{n}\}$,
and for each $n\in\N$ we let $B_{n}=\{0,\dots,k_{n}\}$. Let $Y=\prod B_{n}$.
It follows from \Cref{exa:computable-infinite-product-of-finite-alphabets}
that $Y$ is an effectively closed and recursively compact subset
of $\N^{\N}$. We define a function $f\colon X\to Y$ by defining
$f(x)(n)$ as the index of the element of $\mathcal{P}^{n}$ that
contains $x$. It is clear that $f$ is a computable function. Then
the image of $f$ is effectively compact by \Cref{prop:effective-continuous-image-of-compact-is-compact},
and then effectively closed by \Cref{prop:effective-closed-subset-of-compact-is-compact}.
Moreover, it follows from \Cref{prop:computable-functions-are-invertible-on-compact-domains}
that $f$ is recursively homeomorphic to its image in $Y$. But $Y$
is recursively homeomorphic to $\{0,1\}^{\N}$ by \Cref{prop:infinite-product-of-finite-alphabets-is-Cantor},
so the claim follows by composing with this homeomorphism. 

We now consider the case where $X$ is computably closed and has no
isolated points. Then we need to add two extra conditions to the sequence
$(\mathcal{P}^{n})_{n}$:
\begin{enumerate}
\item Each element in $\mathcal{P}^{n}$ actually intersects $X$. 
\item The collection $\mathcal{P}^{n+1}$ satisfies that every element in
$\mathcal{P}^{n}$ is the union of at least two elements from $\mathcal{P}^{n+1}$. 
\end{enumerate}
Indeed, starting from a sequence $(\mathcal{P}^{n})_{n}$ as before,
the first new condition can be satisfied by removing those elements
whose intersection with $X$ is empty. This is possible because $X$
is assumed to be computably compact, so we can decide whether a clopen
set intersects $X$ or not. Then, we can satisfy the second new condition
by taking a subsequence. Here we are using the fact that $X$ has
no isolated points, so we know that an exhaustive search will stop. 

Having the sequence $(\mathcal{P}^{n})$, with $\mathcal{P}^{n}=\{P_{0}^{n},\dots,P_{k_{n}}^{n}\}$,
we let $B_{n}=\{P_{0}^{n},\dots,P_{k_{n}}^{n}\}$, and define a sequence
of maps $f_{n+1}\colon B_{n+1}\to B_{n}$ by the condition that $P_{i}^{n+1}\subset P_{f_{n+1}(i)}^{n}$,
$i\in\{0,\dots,k_{n+1}\}$. Then we let $Y=\varprojlim B_{n}$. The
rest of the argument is similar to the one in the first case, but
now we use the fact that $Y$ is recursively homeomorphic to $\{0,1\}^{\N}$
by \Cref{prop:inverse-limit-of-finite-alphabets-is-Cantor}.
\end{proof}
\begin{cor}
Let $K\subset X$ be a recursively compact set that is homeomorphic
to (a subset of) $\{0,1\}^{\N}$. Then the homeomorphism can be taken
to be computable. 
\end{cor}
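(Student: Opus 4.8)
The plan is to reduce the statement directly to \Cref{thm:compacto-y-cero-dimensional-tiene-copia-en-el-cantor}, whose only hypotheses are recursive compactness and zero-dimensionality. Recursive compactness of $K$ is given, so the sole task is to extract topological dimension zero from the standing assumption. This is precisely what ``homeomorphic to a subset of $\{0,1\}^\N$'' provides: the Cantor set $\{0,1\}^\N$ has topological dimension zero, zero-dimensionality is inherited by arbitrary subspaces, and it is a topological invariant. Hence any space homeomorphic to a subset of $\{0,1\}^\N$ is itself zero-dimensional. Here $K$, being a subset of a metric space, is automatically separable and metrizable, so embeddability in the Cantor set is exactly equivalent to zero-dimensionality and no genuine topological work is required.

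First I would dispose of the trivial case $K=\emptyset$, where the empty map is a computable homeomorphism onto the empty subset of $\{0,1\}^\N$. Assuming $K\ne\emptyset$, the observation above shows that $K$ is a nonempty recursively compact zero-dimensional subset of a computable metric space, so \Cref{thm:compacto-y-cero-dimensional-tiene-copia-en-el-cantor} applies and produces a computable homeomorphism $h\colon K\to E$, where $E$ is an effectively closed—and in particular a genuine—subset of $\{0,1\}^\N$. Since $E\subset\{0,1\}^\N$, the map $h$ is a computable homeomorphism from $K$ onto a subset of $\{0,1\}^\N$, which is exactly the assertion.

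I do not anticipate a substantive obstacle: the entire computable strength of the conclusion is supplied by \Cref{thm:compacto-y-cero-dimensional-tiene-copia-en-el-cantor}, and the corollary amounts to recognizing that its zero-dimensionality hypothesis is met. The only point requiring a word of care is the phrasing ``can be taken to be computable'': the claim is not that an arbitrary given homeomorphism onto a subspace of $\{0,1\}^\N$ is computable, but that among all such realizations at least one is computable, and this is precisely the map $h$ furnished above.
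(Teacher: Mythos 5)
Your proof is correct and is exactly the argument the paper intends: the corollary is stated without proof because it follows immediately from \Cref{thm:compacto-y-cero-dimensional-tiene-copia-en-el-cantor} once one observes that being homeomorphic to a subset of $\{0,1\}^{\N}$ forces $K$ to be zero-dimensional (a topological invariant inherited by subspaces). Your handling of the empty case and your closing remark on the meaning of ``can be taken to be computable'' are both accurate refinements of the same route.
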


\chapter{Subshifts on groups and computable
analysis}\label{chap:computable_analysis_subshifts}

The goal of this chapter is describing a computable metric space structure for subshifts on finitely generated groups.  This will be used in Chapters \ref{chap:EDS}, \ref{chap:Medvedev} and \ref{Chap:computable-analysis-on-S(G)}. This exposition is an expanded version of \cite[Section 3.3]{barbieri_effective_2024}.

\minitoc
\section{Introduction}
The space $A^\Z$ admits a natural computable metric space structure. With this structure, basic balls are cylinder sets, and a set $X\subset A^\Z$ is effectively closed when the collection of all cylinders having empty intersection with $X$ is recursively enumerable (see \Cref{maximal-set-effectively-closed}). 

It is easy to see that an SFT $X\subset A^\Z$ is effectively closed. A finite set of forbidden words that defines $X$ can be easily used to define an algorithm that enumerates all cylinder sets having empty intersection with $X$.  Using the Curtis Lindon Hedlund Theorem, it is also clear that a sofic subshift is effectively closed. It is natural then to define the class of effective subshifts as those subshifts that are  effectively closed sets. This notion was introduced in \cite{hochman_dynamics_2009}, and since then it has been considered several times in the literature 
\cite{cenzer_computable_2008,hochman_characterization_2010,aubrun_simulation_2013,durand_effective_2010}. Generalizing the  notion of effective subshift from $\Z$ to a finitely generated group is not a straightforward task. One reason is that the group under consideration may have an undecidable word problem, and this is reflected in the algorithmic tools available to handle patterns and cylinder sets in $A^G$. 

The purpose of this chapter is presenting a generalization of the notion of effective subshift from $\Z$ to
an arbitrary finitely generated group form the viewpoint of computable analysis. That is, we will define a subshift as effective when it is an effectively closed subset of a suitable computable metric space. The content of this chapter is closely related to the results in \cite{aubrun_notion_2017}, where the authors consider a generalization based on combinatorial and algorithmic properties of forbidden patterns. As we shall see, we arrive at equivalent notions for recursively presented groups. In the general case, however, the notions are not equivalent.
\section{A computable metric space structure for $A^G$ when $G$ has decidable word problem}\label{computable-metric-space-structure-fullshift-decidable-word-problem}

In this section we endow $A^G$ with a computable metric space structure, when $G$ is a group with decidable word problem and $A$ is a finite alphabet.

\begin{definition}\label{computable-metric-space-structure-on-A^G}
    Let $G$ be a finitely generated group with decidable word problem, and let $\nu\colon \N\to G$ be a computable bijection (see \Cref{prop:computabilitiy-groups-1}). Given a finite alphabet $A$, we define the homeomorphism 
    \[ \psi\colon A^{\N}\to A^{G},\ \ \ \psi(x)(g)=x(\nu^{-1}(g)).\]
    The sets $A^\N$ and $A^G$ are endowed with the prodiscrete topology, and $A^\N$ is given the standard computable metric space structure (see \Cref{preliminaries-computable-analysis}). We use $\psi$ to transport this structure to $A^G$. That is, we define a metric $d$ on $A^G$ by declaring $\psi$ to be an isometry, and we let $\mathcal{S}$ be the set of images by $\psi$ of eventually constant sequences. Thus $\mathcal{S}$ is a dense subset of $A^{G}$ where the distance $d$ is uniformly computable. Then $(A^G,d,\mathcal S)$ is a computable metric space. 
\end{definition}
A basic but important feature of this structure is that it makes the shift action $G\curvearrowright A^G$ computable:
    
    \begin{prop}\label{shift-action-is-computable}
    Let $G$ be a group with decidable word problem. The shift map $G\times A^G\to A^G$,  $(g,x)\mapsto gx$ (where $gx(h)=x(g^{-1}h)$), is computable. 
    \end{prop}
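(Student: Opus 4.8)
The plan is to verify \Cref{def:computable-function} directly for the action map $\alpha(g,x)=gx$: given a description of an effectively open set $U'\subset A^G$, I must compute a description of an effectively open set $W\subset G\times A^G$ with $\alpha^{-1}(U')=W$. Since preimages commute with unions, and an effectively open set is by definition a recursively enumerable union of basic balls, it suffices to handle a single basic ball uniformly and then assemble the answer with \Cref{union-of-effectively-open-sets}. Recall that under the structure of \Cref{computable-metric-space-structure-on-A^G} the basic balls of $A^G$ are exactly the cylinders $[p]$ with $p\in A^F$ and $F=\{\nu(0),\dots,\nu(m)\}$, and that $G\times A^G$ carries the product computable metric space structure, whose basic balls are products of a basic ball of $G$ (a singleton $\{g\}$, since distinct group elements lie at word-distance at least $1$) with a cylinder of $A^G$.

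The first step is to compute the preimage of a cylinder. Fix $p\in A^F$ with $F\subset G$ finite. Since $gx(h)=x(g^{-1}h)$, for fixed $g$ we have $gx\in[p]$ if and only if $x(g^{-1}h)=p(h)$ for all $h\in F$, that is, if and only if $x\in[q_g]$, where $q_g\in A^{g^{-1}F}$ is the shifted pattern defined by $q_g(g^{-1}h)=p(h)$. Hence
\[
\alpha^{-1}([p])=\bigcup_{g\in G}\{g\}\times[q_g].
\]

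The heart of the argument is that a description of $\{g\}\times[q_g]$ can be computed uniformly from $g$ and $p$. Computing $q_g$ amounts to computing the elements $g^{-1}h$ for $h\in F$ together with their indices $\nu^{-1}(g^{-1}h)$; this is possible precisely because $G$ has decidable word problem, which by \Cref{prop:computabilitiy-groups-1} makes the numbering $\nu$, its inverse, and the group operations (multiplication and inversion) computable. The cylinder $[q_g]$ on the finite support $g^{-1}F$ is then a finite union of basic balls, computable from $q_g$, so it is effectively open uniformly in $(g,p)$; multiplying by the basic ball $\{g\}$ of $G$ keeps it effectively open in the product space. Finally, enumerating $G$ via $\nu$ exhibits $\alpha^{-1}([p])$ as a uniformly computable union of effectively open sets, which is effectively open by \Cref{union-of-effectively-open-sets}. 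Running this uniformly over the basic balls enumerated in a description of a general effectively open $U'$ yields the required algorithm.

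The only genuine obstacle is the uniform computability of the shifted pattern $q_g$, and this is exactly where decidability of the word problem enters: without it one could neither compute $g^{-1}h$ nor locate the coordinate $\nu^{-1}(g^{-1}h)$ at which the constraint must be imposed, and the preimage cylinders could fail to be effectively open. Everything else is the routine bookkeeping that preimages distribute over unions and that cylinders are finite unions of basic balls.
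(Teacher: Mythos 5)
Your proof is correct, and it takes a more hands-on route than the paper. The paper's own argument never touches cylinders in $A^G$ directly: it transports everything through the computable bijection $\nu\colon\N\to G$ and the isometry $\psi$ of \Cref{computable-metric-space-structure-on-A^G}, defines the transported group operation $\star$ and inversion $i$ on $\N$ (computable by \Cref{prop:computabilitiy-groups-1}), and then simply asserts that the conjugated map $f(n,x)(m)=x(i(n)\star m)$ on $\N\times A^\N$ is computable, concluding by conjugation. You instead verify \Cref{computable-function-relative} directly on $A^G$: the fiberwise decomposition $\alpha^{-1}([p])=\bigcup_{g\in G}\{g\}\times[q_g]$, the uniform computation of the shifted pattern $q_g$ (indices $\nu^{-1}(g^{-1}h)$, which is exactly where decidability of the word problem enters), and the assembly via \Cref{union-of-effectively-open-sets}. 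In effect, your argument is an unfolding of the step the paper leaves to the reader — the computability of the transported shift map is proved in the literature and in practice by precisely this kind of cylinder-preimage computation — so the two proofs rest on the same ingredients but distribute the work differently: the paper's version is shorter and modular, delegating to standard facts about $\N$ and $A^\N$, while yours exhibits the algorithm explicitly and pinpoints the role of the word problem. One small point worth making precise if you write this up: basic balls of the product $G\times A^G$ are balls for a product metric (say the sum metric), not literally products of balls; your sets $\{g\}\times[q_g]$ are nevertheless (finite unions of) such balls because distinct elements of $G$ are at word distance at least $1$, so the identification you use is harmless but deserves a sentence.
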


    \begin{proof}
    The computability of the shift map reduces to the computability of functions on $\N$ and $A^\N$ as follows. Given a computable bijection $\nu\colon\N\to G$, we can define a group operation $\star$ on $\N$ by declaring $\nu$ to be a group isomorphism. Then $\star\colon\N^2\to \N$ is a computable map. It follows that $i\colon\N\to\N$ defined by $i(n)\star n=\nu ^{-1}(1_G)$ is computable. Then $f\colon \N\times A^\N\curvearrowright A^\N\to A^\N$ given by $f(n,x)(m)=x(i(n)\star m)$ is a computable map. The fact that $f$ is computable, plus the fact that the computable homeomorphism $\psi$ from \Cref{computable-metric-space-structure-on-A^G} conjugates $f$ with the shift map $G\times A^G\to A^G$, shows that the last one is a computable map.  
\end{proof}

Let us now make a few observations about these definitions.
\begin{remark}\label{computable-metric-space-on-A^G-infinite-alfabet}
    Our focus is on finitely generated groups, but \Cref{computable-metric-space-structure-on-A^G} and \Cref{shift-action-is-computable} are valid for the larger class of countable computable groups (such as $\Q$). Similarly, these results do not actually use the finiteness of the alphabet, and we can give a computable metric space structure to the space $\N^G$ following the same procedure.
\end{remark}
\begin{remark}    Recall from \Cref{preliminaries-computable-analysis} that two computable metric space structures $(X,d, \mathcal S)$ and  $(X,d', \mathcal S')$ are called equivalent when the identity function is computable between them. This allows us to express that the structure defined in  \Cref{computable-metric-space-structure-on-A^G} is in a certain way inherent to $G$. For instance, if we take two different numberings for the group, then the structures given by \Cref{computable-metric-space-structure-on-A^G} are equivalent. The elementary proof is left to the reader (see \Cref{prop:computabilitiy-groups-1}). 
\end{remark}
    \begin{remark}
    For finitely generated groups, \textit{all} numberings that make the multiplication action $G\curvearrowright G$ computable are equivalent (see \Cref{prop:computabilitiy-groups-1}). We will prove in \Cref{chap:EDS} that shift spaces enjoy a similar property. That is, it follows from \Cref{prop:EDS_and_effective_subshift_are_equivalent} that \textit{all} computable metric space structures for $A^G$ for which the shift action $G\curvearrowright A^G$ is computable,  are equivalent.  
\end{remark}
\begin{remark}\label{compatibility-of-metrics-for-A^G}
    For a finitely generated group $G$, it is customary to consider a metric for $A^G$ associated to a word metric on $G$ (see \Cref{preliminaries-shift-spaces}). All these metrics are compatible with  \Cref{computable-metric-space-structure-on-A^G}. That is, they define equivalent computable metric space structures on $A^G$. The elementary proof is left to the reader.  
\end{remark}
\section{Effective subshifts}\label{subsec:pullback-subshift}
The goal of this section is defining effective subshifts.  For some finitely generated infinite groups, the corresponding fullshift
$A^{G}$ can \emph{not }be endowed with a computable metric space
structure for which the action $G\action A^{G}$ by translations is
computable. However, it can be naturally identified with a subset
of a computable metric space. More precisely, the set $A^{G}$
can always be identified with a closed subset of $A^{F}$, for a suitable
finitely generated free group $F$, and the space $A^{F}$ can be given a natural computable
metric space structure as described in \Cref{computable-metric-space-structure-on-A^G}. We will need the following notion.

\begin{definition}
    Let $G$ be a finitely generated group, and let $X\subset A^G$ be a subshift. We define a \textbf{pullback subshift} as follows. Let $S$ be a finite set of generators for $G$, let $F(S)$ be the free group generated by $S$, and let $\phi\colon F(S)\to G$ be the group homomorphism that maps every reduced word on $S$  to its corresponding element
    in $G$. Let $\phi^*\colon A^{G}\to A^{F(S)}$ be the
    map given by $\phi^*(x)(w)=x(\phi(w))$ for every $w\in F(S)$. We define the pullback subshift $\widehat{X}\subset A^{F(S)}$  by 
\[
\widehat{X}=\{\phi^*(x)\in A^{F(S)}\mid x\in A^{G}\}.
\]

\end{definition}
From a dynamical point of view, the pullback subshift contains all the information that we need, as the actions $G\curvearrowright X$ and $G\curvearrowright \widehat X$ are topologically conjugate.  The interest of replacing a subshift by its pullback is that the last one is contained in a computable metric space.
\begin{defn}
\label{def:effectivesubshiftthroughfreegroup}A subshift $X\subset A^{G}$
is \textbf{effective }if for some finite symmetric generating set
$S$ for $G$, the pullback subshift $\widehat{X}$ is an effectively
closed subset of $A^{F(S)}$.
\end{defn}

\begin{rem}\label{pullbacks-are-recursively-homeomorphic}
A subshfit is effective if and only if its pullback to \textit{every} generating set is an effectively closed set. Indeed, suppose that $\widehat X\subset A^{F(S)}$ is effectively closed, let $S'$ be another set of generators, and denote by $\widehat{X}'$ the pullback of $X$ in $A^{F(S')}$. For
each $s\in S$, let $\phi(s)$ be a word in $S'$ such that $s$ is
equal to $\psi(s)$ in $G$. Then $\psi$ extends to a computable
injective homomorphism from $F(S)$ to $F(S')$. It follows that the
map $\Psi\colon\widehat{X}\to\widehat{X}'$ given by $\Psi(x)(w)=x(\psi(w))$
is a computable homeomorphism. In particular, $\widehat{X}'$ is also
effectively closed. 
\end{rem}
\begin{rem}If $G$ has decidable word problem, then a subshift $X\subset A^G$ is an effectively closed subset of $A^G$  with the structure for $A^G$, if and only if $\widehat{X}$ is effectively
closed in $A^{F(S)}$. This follows from the fact that $A^G$ is recursively homeomorphic to $\widehat{A^{G}}$ with the corresponding structures.  
\end{rem}

\section{All morphisms of subshifts are computable}
In this section we observe that all morphisms of subshifts are computable. In the case where we have no assumption on the word problem of the group, this is formally expressed through pullbacks. We use this to show that the class of effective subshifts is closed by factor maps. 

\begin{prop}\label{prop:morphisms-of-subshifts-are-computable-wp-dec}
    Let $G$ be a finitely generated group with decidable word problem, and let $\psi\colon X\to Y$ be a morphism of subshifts. Then $\psi$ is a computable function. 
\end{prop}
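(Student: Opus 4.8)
The plan is to use the Curtis-Hedlund-Lyndon theorem to reduce the statement to a purely combinatorial computation on cylinders, which is effective precisely because $G$ has decidable word problem. By the theorem, there is a finite set $F\subset G$ and a local function $\mu\colon A^F\to B$ such that $\psi(x)(g)=\mu((g^{-1}x)|_F)$ for all $x\in X$ and $g\in G$. This same local rule defines a continuous map $\widetilde\psi\colon A^G\to B^G$ on the whole fullshift, with $\widetilde\psi|_X=\psi$. Since $\psi(X)\subseteq Y$, for every effectively open $U'\subset B^G$ we have $\psi^{-1}(U'\cap Y)=\widetilde\psi^{-1}(U')\cap X$. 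Hence it suffices to prove that the full sliding block code $\widetilde\psi\colon A^G\to B^G$ is computable as a map between the computable metric spaces of \Cref{computable-metric-space-structure-on-A^G}; restricting to $X$ then yields \Cref{computable-function-relative} for $\psi$.

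First I would show that the $\widetilde\psi$-preimage of a single cylinder is a computable finite union of cylinders. Recall that $\widetilde\psi(x)(g)=\mu((g^{-1}x)|_F)$ depends only on $x|_{gF}$. Fix a cylinder $[q]$ with $q\in B^D$, $D\subset G$ finite. Then $\widetilde\psi(x)\in[q]$ if and only if for every $g\in D$ the pattern $x|_{gF}$ belongs to the set $P_g=\{p\in A^{gF} : \mu \text{ evaluated on the shift of } p \text{ equals } q(g)\}$. Since $A^F$ is finite and $\mu$ is fixed finite data, each $P_g$ is a computable finite set of patterns, and $\widetilde\psi^{-1}([q])$ is the finite union, over all compatible choices, of cylinders supported on $DF=\{gf : g\in D,\ f\in F\}$. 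The point where decidable word problem enters is that all of these manipulations take place among concrete group elements: using the computable numbering $\nu$ and the computability of the group operation and inversion (\Cref{prop:computabilitiy-groups-1}), we can compute the elements of $D$ and of $DF$ from the description of $[q]$, enumerate the patterns on $DF$, and decide which satisfy the conditions. Translating the resulting cylinders into the basic balls of the structure of \Cref{computable-metric-space-structure-on-A^G} (each cylinder over a finite support is a finite union of basic balls over an initial segment $\{\nu(0),\dots,\nu(n)\}$) gives $\widetilde\psi^{-1}([q])$ as an explicit, and uniformly computable, finite union of basic balls.

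Finally I would assemble the general case. Writing an effectively open set $U'\subset B^G$ as $U'=\bigcup_{i\in I}B'_i$ with $I$ recursively enumerable, we enumerate $I$ and output, for each enumerated $i$, the finitely many basic balls computed for $\widetilde\psi^{-1}(B'_i)$ in the previous step. This produces a recursively enumerable family of basic balls whose union is $\widetilde\psi^{-1}(U')$ (compare \Cref{union-of-effectively-open-sets}), so $\widetilde\psi^{-1}(U')$ is effectively open, uniformly in the description of $U'$. This establishes that $\widetilde\psi$ is computable, and hence that $\psi$ is computable. The only genuine obstacle is bookkeeping: one must verify that passing between cylinders over arbitrary finite supports and the canonical basic balls is effective and uniform, which is exactly what the decidable word problem---through the computable numbering and \Cref{prop:computabilitiy-groups-1}---guarantees; the computability of the shift map (\Cref{shift-action-is-computable}) is the prototype of this kind of reduction.
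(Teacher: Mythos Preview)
Your proposal is correct and follows exactly the same approach as the paper: both invoke the Curtis--Hedlund--Lyndon theorem to obtain a local rule $\mu\colon A^F\to B$ and observe that such a function is computable. The paper's proof is a one-liner that declares this ``clear''; your version carefully unpacks the bookkeeping behind that claim, so there is no genuine difference in strategy.
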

\begin{proof}
    Suppose that $X\subset A^G$ and $Y\subset B^G$. By the Curtis-Lyndon-Hedlund theorem, there is a finite set $F\subset G$ and a local function $\mu\colon A^F\to B$ such that $\psi(x)(g)=\mu(g^{-1}x|_F)$ for all $x$ and $g$. It is clear that such a function is computable. 
\end{proof}
\begin{prop}\label{prop:morphisms-of-subshifts-are-computable-in-pullbacks}
    Let $G$ be a finitely generated group, and let $\psi\colon X\to Y$ be a morphism of subshifts. Then $\psi$  is computable on pullbacks, in the sense that there is a computable function $\widehat \psi$ making the following diagram commute:
    \begin{center}
    \begin{tikzcd}
    X \arrow[r, "\psi"]\arrow[d]  & Y \arrow[d] 
    \\
    \widehat{X} \arrow[r,"\widehat{\psi}"]  & \widehat{Y}
    \end{tikzcd}
    \end{center}
    Vertical arrows represent the conjugacy between a subshift and its pullback from \Cref{def:effectivesubshiftthroughfreegroup}. 
\end{prop}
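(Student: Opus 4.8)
The plan is to realize $\widehat\psi$ as the restriction to $\widehat X$ of a sliding block code defined on the \emph{full} shift $A^{F(S)}$, and then invoke the fact that morphisms over groups with decidable word problem are computable. Since $\widehat\psi$ must make the square commute, it is forced to be $\widehat\psi=\phi^*\circ\psi\circ(\phi^*)^{-1}$ on $\widehat X$, where the two occurrences of $\phi^*$ denote the conjugacies $X\to\widehat X$ and $Y\to\widehat Y$ from \Cref{def:effectivesubshiftthroughfreegroup}; so the real content is exhibiting a computable map on $A^{F(S)}$ whose restriction to $\widehat X$ equals this composite.

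First I would apply the Curtis-Hedlund-Lyndon theorem to $\psi$ to obtain a finite set $F\subset G$ and a local function $\mu\colon A^F\to B$ with $\psi(x)(g)=\mu((x(gf))_{f\in F})$ for all $x\in X$ and $g\in G$. Because $S$ generates $G$, for each $f\in F$ I can choose a word $w_f\in F(S)$ with $\phi(w_f)=f$; this is a finite, fixed piece of combinatorial data. Using it I define $\Phi\colon A^{F(S)}\to B^{F(S)}$ by $\Phi(z)(w)=\mu((z(w\,w_f))_{f\in F})$. A direct check shows that $\Phi$ is $F(S)$-equivariant, so it is a morphism of full shifts over the free group $F(S)$; as $F(S)$ has decidable word problem, \Cref{prop:morphisms-of-subshifts-are-computable-wp-dec} gives that $\Phi$ is computable.

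Next I would verify that $\Phi$ restricts correctly. For $\hat x=\phi^*(x)\in\widehat X$ one has $\hat x(w\,w_f)=x(\phi(w)\phi(w_f))=x(\phi(w)f)$, whence $\Phi(\hat x)(w)=\mu((x(\phi(w)f))_{f\in F})=\psi(x)(\phi(w))=\phi^*(\psi(x))(w)$. Thus $\Phi(\hat x)=\phi^*(\psi(x))\in\widehat Y$ and $\Phi|_{\widehat X}=\widehat\psi$, so the diagram commutes and in particular $\Phi(\widehat X)\subset\widehat Y$. To conclude computability of $\widehat\psi$ in the sense of \Cref{computable-function-relative}, given an effectively open $U'\subset B^{F(S)}$ I set $U=\Phi^{-1}(U')$, which is effectively open since $\Phi$ is computable; using $\Phi(\widehat X)\subset\widehat Y$ one obtains $\widehat\psi^{-1}(U'\cap\widehat Y)=U\cap\widehat X$, exactly as required.

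The step I expect to require the most care is the second one: the homomorphism $\phi$ is typically far from injective, so the formula defining $\Phi$ need not behave like a genuine code away from $\widehat X$. What rescues the argument is that configurations in the pullback are constant along fibers of $\phi$ (that is, $\hat x(w)=\hat x(w')$ whenever $\phi(w)=\phi(w')$), which is precisely what makes the identity $\hat x(w\,w_f)=x(\phi(w)f)$ hold and renders the choice of representatives $w_f$ immaterial on $\widehat X$. One should also note that no decidability assumption on $G$ is used anywhere: all computability is carried out over the free group $F(S)$, where the word problem is trivially decidable.
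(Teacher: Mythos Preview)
Your proof is correct and follows essentially the same route as the paper: both reduce to the fact that morphisms of subshifts over the free group $F(S)$ are computable (\Cref{prop:morphisms-of-subshifts-are-computable-wp-dec}). The paper's proof is terser---it simply observes that $\widehat\psi$ is itself a morphism of $F(S)$-subshifts and applies \Cref{prop:morphisms-of-subshifts-are-computable-wp-dec} directly---whereas you spell out the local rule by choosing representatives $w_f$ and even extend to a global map $\Phi$ on the full shift before restricting; this extra explicitness is fine but not strictly needed, since \Cref{prop:morphisms-of-subshifts-are-computable-wp-dec} already applies to morphisms between arbitrary subshifts, not just full shifts.
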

\begin{proof}
    After we fix bijective maps $X\to \widehat{X}$ and $Y\to\widehat{Y}$  as in \Cref{def:effectivesubshiftthroughfreegroup}, the diagram determines the function $\widehat{\psi}$. But now $\widehat \psi$ is a morphism of subshifts on a free group, so it is computable by \Cref{prop:morphisms-of-subshifts-are-computable-wp-dec}.
\end{proof}

A consequence of this result is the following. 
\begin{prop}\label{effective-subshifts-are-closed-by-factors}
    Let $G$ be a finitely generated group. The class of effective $G$-subshifts is closed by topological factor maps.
\end{prop}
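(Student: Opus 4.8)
The plan is to reduce everything to the pullback picture and then push the effectively-closed condition through the factor map using recursive compactness as the bridge. Let $\psi\colon X\to Y$ be a topological factor map, with $X\subset A^G$ effective and $Y\subset B^G$. Fix a finite symmetric generating set $S$ for $G$ and pass to the pullbacks $\widehat X\subset A^{F(S)}$ and $\widehat Y\subset B^{F(S)}$ of \Cref{def:effectivesubshiftthroughfreegroup}. Since $X$ is effective, $\widehat X$ is effectively closed in $A^{F(S)}$, and our goal is to show that $\widehat Y$ is effectively closed in $B^{F(S)}$, which is exactly the assertion that $Y$ is effective. The free group $F(S)$ has decidable word problem, so both $A^{F(S)}$ and $B^{F(S)}$ carry the computable metric space structure of \Cref{computable-metric-space-structure-on-A^G}, and each is recursively compact as a product of finite alphabets (\Cref{exa:computable-infinite-product-of-finite-alphabets}).

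The key observation is that one cannot argue directly: the computable image of an effectively closed set need not be effectively closed in general. The resolution is to route through recursive compactness, which \emph{is} preserved by computable maps. Concretely, since $\widehat X$ is an effectively closed subset of the recursively compact space $A^{F(S)}$, it is itself recursively compact by \Cref{prop:effective-closed-subset-of-compact-is-compact}. By \Cref{prop:morphisms-of-subshifts-are-computable-in-pullbacks}, the morphism $\psi$ lifts to a computable map $\widehat\psi\colon\widehat X\to\widehat Y$ making the commuting square of that proposition hold. Because $\psi$ is surjective and the vertical conjugacies in that square are bijections, we have $\widehat\psi(\widehat X)=\widehat Y$. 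Applying \Cref{prop:effective-continuous-image-of-compact-is-compact} to the computable map $\widehat\psi$ and the recursively compact set $\widehat X$ yields that $\widehat Y=\widehat\psi(\widehat X)$ is recursively compact.

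Finally, a recursively compact set is effectively closed by \Cref{recursively-compact-is-closed}, so $\widehat Y$ is effectively closed in $B^{F(S)}$, and therefore $Y$ is an effective subshift. I do not expect a genuine obstacle here, since every ingredient is already available; the only subtlety worth flagging is the one noted above, namely that passing through recursive compactness is essential and a naive ``image of effectively closed is effectively closed'' argument would be invalid without it. One should also remark that the conclusion is independent of the chosen generating set by \Cref{pullbacks-are-recursively-homeomorphic}, so the choice of $S$ in the argument is harmless.
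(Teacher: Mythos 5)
Your proof is correct and follows essentially the same route as the paper: pass to pullbacks, note $\widehat X$ is recursively compact as an effectively closed subset of the recursively compact fullshift on the free group, push recursive compactness through the computable map $\widehat\psi$ of \Cref{prop:morphisms-of-subshifts-are-computable-in-pullbacks}, and conclude that $\widehat Y$ is effectively closed. If anything, your citations are more precise than the paper's own (which cites \Cref{prop:effective-continuous-image-of-compact-is-compact} and \Cref{prop:effective-closed-subset-of-compact-is-compact} where \Cref{prop:effective-closed-subset-of-compact-is-compact} and \Cref{recursively-compact-is-closed}, respectively, are the ones actually needed).
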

\begin{proof}
    Let $\psi\colon X\to Y$ be a topological factor map, and let $\widehat \psi$ as in \Cref{prop:morphisms-of-subshifts-are-computable-in-pullbacks}. Let us recall that the fullshift on a free group is recursively compact. Joining this with \Cref{prop:effective-continuous-image-of-compact-is-compact}, we have that  $\widehat X$ is recursively compact. Recursive compactness is preserved by computable images (\Cref{prop:effective-continuous-image-of-compact-is-compact}), so $\widehat Y$ is also recursively compact. But this implies that $\widehat Y$ is effectively closed (\Cref{prop:effective-closed-subset-of-compact-is-compact}), so we conclude that $Y$ is an effective subshift. 
\end{proof}
\begin{remark}
    Some analogies have been observed between finitely generated groups and subshifts, and particularly an analogy between simulation results for subshifts and Highman's embedding theorem for groups \cite{jeandel_enumeration_2017}. This is closely related to the fact that morphisms are automatically computable. Indeed, for finitely generated groups, all morphisms are computable on words, as they are determined by the finite information of a letter-to-word substitution. Analogously, the Curtis-Hedlund-Lyndon theorem shows that all morphisms of subshifts are determined by the finite information of a local function, and it follows from this fact that a morphism of subshifts is always computable on pullbacks. This is the fundamental reason for which algorithmic properties of subshifts are conjugacy invariants. 
\end{remark}
\section{Effective subshifts and patterns}
In \cite{aubrun_notion_2017} there is a notion of effective subshift defined in terms of patterns and pattern codings. In this section we compare them with \Cref{def:orbit-membership-problem}.  

Let $G$ be a finitely generated group, and let $A$ be an alphabet. Let us recall that a pattern is a function $p\colon F\to A$, where $F\subset G$ is finite. Patterns are used to define subshifts and SFTs. If we do not assume $G$ to have decidable word problem then it is natural to represent group elements by words, and then use this to represent patterns. The definition of effective subshift from \cite{aubrun_notion_2017} is based on this idea. Let $S$ be a finite and symmetric generating set for $G$. A \textbf{pattern coding} is a function $c\colon W\to A$, where $W\subset S^\ast$ is a finite set. The support $W$ of $c$ is denoted $\operatorname{supp}(c)$. The cylinder set induced by a pattern coding $c$ is given by \[ [c] = \{ x \in A^{G} : x(\underline{w}) = c(w) \mbox{ for every } w \in \operatorname{supp}(c) \}.      \]

A pattern coding is \define{inconsistent} if there are $u,v \in \operatorname{supp}(c)$ such that $\underline{u}=\underline{v}$ but $c(u)\neq c(v)$. Notice that for inconsistent pattern codings we have $[c] = \emptyset$. The set of all pattern codings with generating set $S$ and alphabet $A$ admits a natural numbering, and thus we can speak about computability of sets of pattern codings. Given a set $\mathcal{C}$ of pattern codings for alphabet $A$, we can define a subshift $X_{\mathcal{C}}\subset A^{G}$ by  \[ X_{\mathcal{C}} = A^{G} \setminus \bigcup_{g \in G, c \in \mathcal{C}} g[c].  \]

\begin{definition}[\cite{aubrun_notion_2017}]
\label{def:ecp}
    A subshift $X \subset A^{G}$ is \define{effectively closed by patterns} (ECP) if there exists a recursively enumerable set of pattern codings $\mathcal{C}$ such that $X = X_{\mathcal{C}}$.
\end{definition}

When we define a subshift $X=X_{\mathcal C}$ by a set of forbidden pattern codings, we must be aware that the set $\mathcal C$ is not unique.  For instance, we can add or remove inconsistent pattern codings without changing the subshift. Moreover, every subshift $X$ admits a defining set of pattern codings that is maximal for inclusion, which is the set of all pattern codings with $[c]\cap X=\emptyset$.

In \cite{aubrun_notion_2017}, the authors observe that given a subshift $X$, there is a difference between the existence of \textit{some} defining set of pattern codings that is recursively enumerable, and having the same property for the maximal one. The second condition clearly implies the first. Moreover, they are equivalent for recursively presented groups:
\begin{proposition}[\cite{aubrun_notion_2017}] 
\label{ecp-and-maximal-set}
    Let $G$ be a recursively presented group, and let $X$ be a $G$-subshift. Then the following two conditions are equivalent:
    \begin{enumerate}
        \item $X$ admits some defining set of pattern codings that is recursively enumerable (i.e. $X$ is ECP).
        \item The maximal defining set of pattern codings for $X$, $\{c : [c]\cap X=\emptyset\}$, is recursively enumerable.
    \end{enumerate}
\end{proposition}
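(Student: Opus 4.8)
The implication (2)$\Rightarrow$(1) is immediate, since the maximal defining set $\{c : [c]\cap X=\emptyset\}$ is itself a defining set of pattern codings for $X$; if it is recursively enumerable then $X$ is ECP by definition. The content of the proposition lies in the converse, so the plan is to assume $X=X_{\mathcal C}$ for some recursively enumerable set of pattern codings $\mathcal C$ and to exhibit a semi-algorithm that, on input a pattern coding $c$, halts if and only if $[c]\cap X=\emptyset$.

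First I would reduce the problem to a finitary inclusion by compactness. Since $[c]$ is a clopen (hence compact) subset of $A^G$ and $A^G\smallsetminus X=\bigcup_{g\in G,\,c'\in\mathcal C}g[c']$ is open, the condition $[c]\cap X=\emptyset$ is equivalent to $[c]\subseteq\bigcup_{g,c'}g[c']$, and by compactness this holds if and only if there is a finite list $(w_1,c_1),\dots,(w_n,c_n)$, with $w_i\in S^\ast$ and $c_i\in\mathcal C$, such that $[c]\subseteq\bigcup_{i=1}^n \underline{w_i}[c_i]$. The semi-algorithm will therefore dovetail over all such finite lists—enumerating the $c_i$ from the given recursive enumeration of $\mathcal C$ and the $w_i$ from $S^\ast$—and attempt to certify the inclusion for each one.

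The heart of the argument, and the step I expect to be the main obstacle, is certifying an inclusion of the form $[c]\subseteq\bigcup_i\underline{w_i}[c_i]$ when $G$ has only a recursively enumerable, not decidable, word problem. The difficulty is that this inclusion is a combinatorial statement about the finitely many group elements named by the words in $P=\operatorname{supp}(c)\cup\bigcup_i \{w_iv : v\in\operatorname{supp}(c_i)\}$, and deciding it requires knowing which of these words represent the same element of $G$. Since $G$ is recursively presented, equalities $\underline u=\underline v$ are semi-decidable, so I would additionally dovetail over finite sets $E$ of discovered equalities among the words of $P$. Given $E$, I form the equivalence relation $\sim_E$ it generates on $P$ and check the purely combinatorial condition: every map from the $\sim_E$-classes to $A$ that agrees with $c$ on $\operatorname{supp}(c)$ assigns, for some $i$, the value $c_i(v)$ to the class of $w_iv$ for all $v\in\operatorname{supp}(c_i)$. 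If this holds for some pair $(\text{list},E)$, the semi-algorithm halts.

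Correctness rests on a monotonicity observation that I would make precise. Every equality in $E$ is genuine, so $\sim_E$ is coarser than the true equality relation on $P$; hence the set of $A$-assignments allowed in the combinatorial model is a superset of the restrictions to $P$ of actual configurations in $[c]$. Consequently, if the combinatorial condition holds for some $E$ then the inclusion holds in $A^G$, giving $[c]\cap X=\emptyset$. Conversely, if $[c]\cap X=\emptyset$, I would fix a finite subcover as above and take $E$ to be the finite set of all true equalities among the words of $P$; this $E$ is eventually enumerated, $\sim_E$ then coincides with true equality on $P$, and the combinatorial condition becomes exactly the statement that the finite subcover covers $[c]$, which holds by hypothesis. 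Inconsistent codings are absorbed automatically: if $c$ conflicts with some discovered equality there is no admissible assignment and the condition holds vacuously. This shows the semi-algorithm halts precisely on the maximal defining set, which is therefore recursively enumerable.
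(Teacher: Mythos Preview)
Your proof is correct and follows the standard compactness-plus-dovetailing argument; the paper does not give its own proof of this proposition but simply cites it from \cite{aubrun_notion_2017}, where the argument is essentially the one you outline. One terminology slip worth fixing: you write that $\sim_E$ is \emph{coarser} than the true equality relation on $P$, but since $E$ contains only genuine equalities, $\sim_E$ is in fact \emph{finer} (it identifies at most as many pairs as true equality, so it has at least as many classes). Your subsequent monotonicity reasoning is nonetheless correct---maps from $\sim_E$-classes to $A$ that agree with $c$ form a superset of the restrictions of actual configurations in $[c]$, so a positive verdict under $\sim_E$ is sound---just swap the word.
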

The relation between the notion of effective subshifts (\Cref{def:effectivesubshiftthroughfreegroup}), and effectively closed by patterns subshifts (\Cref{def:ecp}), is that a subshift is effective exactly when the maximal defining set of pattern codings is recursively enumerable. 

\begin{proposition}\label{prop_effective_implies_max_forbidden_is_re}\label{prop:effective_implies_max_forbidden_is_re}
    Let $G$ be a finitely generated group. A subshift $X\subset A^{G}$ is effective if and only if the maximal defining set of pattern codings for $X$, $\{c : [c]\cap X=\emptyset\}$, is recursively enumerable.
\end{proposition}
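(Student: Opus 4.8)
The plan is to set up an explicit effective correspondence between pattern codings for $X$ and cylinders of the pullback $\widehat X\subset A^{F(S)}$, and then to read off both implications from the characterization of effectively closed sets in the recursively compact space $A^{F(S)}$. Fix a finite symmetric generating set $S$, let $\pi\colon S^\ast\to F(S)$ be word reduction (computable, since $F(S)$ has decidable word problem), and recall that $\phi\circ\pi$ is the evaluation map $w\mapsto\underline w\in G$ and that $\widehat X=\phi^\ast(X)$ with $\phi^\ast$ injective (as $\phi$ is surjective). Call a pattern coding $c\colon W\to A$ \emph{free-consistent} if $\pi(w_1)=\pi(w_2)$ implies $c(w_1)=c(w_2)$ for all $w_1,w_2\in W$; this is a decidable condition, and when it holds $c$ determines a pattern $p_c$ on $A^{F(S)}$ with support $\pi(W)$ via $p_c(\pi(w))=c(w)$, hence a cylinder $[p_c]\subset A^{F(S)}$. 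The computation $\phi^\ast(x)\in[p_c]\iff \forall w\in W,\ x(\underline w)=c(w)\iff x\in[c]$, together with injectivity of $\phi^\ast$ and $\widehat X=\phi^\ast(X)$, yields the key identity $[p_c]\cap\widehat X=\phi^\ast([c]\cap X)$, and in particular
\[
[p_c]\cap\widehat X=\emptyset \iff [c]\cap X=\emptyset .
\]

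For the backward implication, I would assume the maximal set $M=\{c:[c]\cap X=\emptyset\}$ is recursively enumerable. Every cylinder of $A^{F(S)}$ is $[p_c]$ for the free-consistent coding $c$ obtained by reading its defining pattern as a function on reduced words; by the identity such a cylinder is disjoint from $\widehat X$ exactly when $c\in M$. Thus, enumerating $M$, discarding the (decidably recognizable) codings that are not free-consistent, and outputting $[p_c]$ for the rest enumerates precisely the cylinders of $A^{F(S)}$ disjoint from $\widehat X$. By \Cref{maximal-set-effectively-closed}, using that $A^{F(S)}$ is recursively compact, this shows that $\widehat X$ is effectively closed, i.e.\ that $X$ is effective.

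For the forward implication, I would assume $\widehat X$ effectively closed, so its complement $A^{F(S)}\smallsetminus\widehat X$ is effectively open. Given a pattern coding $c$, first decide whether it is free-consistent: if not, then $[c]=\emptyset$ and we accept immediately. If it is, compute the cylinder $[p_c]$; being a clopen, hence effectively closed, subset of the recursively compact space $A^{F(S)}$, it is itself recursively compact by \Cref{prop:effective-closed-subset-of-compact-is-compact}. Recursive compactness lets us semi-decide the inclusion $[p_c]\subseteq A^{F(S)}\smallsetminus\widehat X$, which by the identity is equivalent to $[c]\cap X=\emptyset$. This semi-decides membership in $M$, so $M$ is recursively enumerable.

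The main obstacle is essentially bookkeeping: the homomorphism $\phi$ is far from injective, so distinct words in a support may collapse in $F(S)$ or in $G$, and $\widehat X$ sits inside the proper subshift $\phi^\ast(A^G)\subsetneq A^{F(S)}$, whose extra constraints are themselves forbidden patterns. Decidability of the word problem of the free group is exactly what makes the passage $c\leftrightarrow[p_c]$ effective and lets the argument treat free-consistency, $G$-consistency, and the ambient cylinders of $A^{F(S)}$ uniformly. Once the equivalence $[p_c]\cap\widehat X=\emptyset\iff[c]\cap X=\emptyset$ is established, both directions follow directly from the compactness machinery developed in \Cref{preliminaries-computable-analysis}.
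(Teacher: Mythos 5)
Your proof is correct and takes essentially the same route as the paper's: both rest on the correspondence between free-group-consistent pattern codings and cylinders of $A^{F(S)}$ (with $[p_c]\cap\widehat{X}=\emptyset$ iff $[c]\cap X=\emptyset$), the decidability of free-consistency to absorb the inconsistent codings, and the equivalence between effective closedness of $\widehat{X}$ and recursive enumerability of the set of cylinders disjoint from it (\Cref{maximal-set-effectively-closed}, together with recursive compactness of $A^{F(S)}$). The only difference is that you spell out explicitly the compactness bookkeeping that the paper's shorter proof leaves implicit.
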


\begin{proof}
    Let $\mathcal C_{\mbox{max}}$ be the set of all pattern codings $c$ such that $[c]\cap X = \varnothing$, and let $\mathcal C_{\mbox{max}}'$ be the set of elements in $\mathcal C_{\mbox{max}}$ that are consistent for the free group $F(S)$. As the set of pattern codings which are consistent for the free group is decidable, it follows that  $\mathcal C_{\mbox{max}}(X)$ is recursively enumerable if and only if $\mathcal C_{\mbox{max}}'$ is recursively enumerable. Observe now that $\mathcal C_{\mbox{max}}'$ is recursively enumerable if and only if $\widehat{X}\subset A^{F(S)}$ is effectively closed. This simply follows from the fact that $\mathcal C_{\mbox{max}}'$ corresponds to all cylinders in $A^{F(S)}$ that do not intersect $\widehat X$. 
\end{proof}

Joining \Cref{ecp-and-maximal-set} and \Cref{prop_effective_implies_max_forbidden_is_re}, we have the following.
\begin{corollary}\label{effective-subshifts-and-ecp}\label{cor:ECP_plus_RP_implies_EDS}
    Every effective subshift is effectively closed by patterns. Moreover, the two notions are equivalent for recursively presented groups. 
\end{corollary}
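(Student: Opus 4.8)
The final statement, Corollary \ref{cor:ECP_plus_RP_implies_EDS}, is a direct combination of the two preceding results, so the plan is essentially to chain the established equivalences rather than to construct anything new.

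The plan is to prove the two assertions separately. For the first assertion, that every effective subshift is effectively closed by patterns, I would invoke \Cref{prop:effective_implies_max_forbidden_is_re}: if $X$ is effective, then its maximal defining set of pattern codings $\mathcal{C}_{\mathrm{max}} = \{c : [c]\cap X=\emptyset\}$ is recursively enumerable. Since $\mathcal{C}_{\mathrm{max}}$ is itself a defining set of pattern codings for $X$ (it is the maximal such set, and one checks directly that $X = X_{\mathcal{C}_{\mathrm{max}}}$), the existence of this recursively enumerable defining set witnesses that $X$ is ECP in the sense of \Cref{def:ecp}. No recursive presentation hypothesis is needed here, matching the unqualified statement.

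For the second assertion, the equivalence under the recursive presentation hypothesis, I would first note that the ECP-to-effective direction is the one requiring the hypothesis. Assume $G$ is recursively presented and $X$ is ECP, so by \Cref{def:ecp} there is \emph{some} recursively enumerable defining set of pattern codings. By \Cref{ecp-and-maximal-set}, for recursively presented groups the existence of some recursively enumerable defining set is equivalent to the maximal defining set $\{c : [c]\cap X=\emptyset\}$ being recursively enumerable. Applying \Cref{prop:effective_implies_max_forbidden_is_re} in the reverse direction then yields that $X$ is effective. Combined with the first assertion (effective implies ECP, which holds for all finitely generated groups and hence for recursively presented ones), this gives the claimed equivalence.

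The proof is therefore a short bookkeeping argument, and I do not anticipate a genuine obstacle: the substantive content lives in \Cref{ecp-and-maximal-set} and \Cref{prop:effective_implies_max_forbidden_is_re}, both of which are available. The only point demanding a moment of care is to keep straight which implications hold unconditionally and which require recursive presentation. Specifically, \Cref{prop:effective_implies_max_forbidden_is_re} is an unconditional equivalence between effectiveness and recursive enumerability of the maximal set, valid for every finitely generated group; the recursive presentation hypothesis enters \emph{only} through \Cref{ecp-and-maximal-set}, which is what upgrades ``some recursively enumerable defining set'' to ``the maximal defining set is recursively enumerable.'' Keeping this distinction explicit is what makes the first assertion unqualified while correctly restricting the full equivalence to recursively presented groups.
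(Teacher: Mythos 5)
Your proposal is correct and follows exactly the paper's route: the paper derives this corollary by ``joining'' \Cref{ecp-and-maximal-set} and \Cref{prop_effective_implies_max_forbidden_is_re}, which is precisely the chaining you describe, including the observation that the first assertion needs no recursive presentation hypothesis while the reverse implication does. Your spelled-out bookkeeping (maximal set is r.e.\ $\Rightarrow$ it is an r.e.\ defining set $\Rightarrow$ ECP, and ECP $\Rightarrow$ maximal set r.e.\ under RP $\Rightarrow$ effective) is just the explicit version of what the paper leaves implicit.
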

As all SFTs are ECP, we have the following.
\begin{prop}\label{prop:sft-and-sofic-subshifts-are-effective}
    Let $G$ be recursively presented. Then $G$-SFTs and sofic $G$-subshifts are effective.
\end{prop}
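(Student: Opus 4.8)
The plan is to reduce both claims to results already established, passing through the intermediate notion of effectively closed by patterns (ECP, \Cref{def:ecp}). The crucial input is the remark preceding the statement: every SFT is ECP. I would first make this explicit. An SFT $X\subset A^{G}$ is defined by a finite set of forbidden patterns $\mathcal F$, where each $p\in\mathcal F$ has finite support $F_p\subset G$. Since $S$ generates $G$, for each $g$ in the finite union of these supports I can fix a word $w_g\in S^\ast$ with $\underline{w_g}=g$; replacing each pattern $p$ by the pattern coding $c_p$ with support $\{w_g : g\in F_p\}$ and $c_p(w_g)=p(g)$ produces a pattern coding with $[c_p]=[p]$. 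The resulting set $\mathcal C=\{c_p : p\in\mathcal F\}$ is finite, hence trivially recursively enumerable, and satisfies $X=X_{\mathcal C}$. Thus $X$ is ECP.

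Once SFTs are known to be ECP, the effectiveness of SFTs over recursively presented groups is immediate from \Cref{cor:ECP_plus_RP_implies_EDS}, which asserts that the notions of effective and ECP coincide for this class of groups. This settles the first half of the statement.

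For sofic subshifts I would invoke the definition directly: a sofic subshift is a topological factor of an SFT. Having just shown the source SFT to be effective, I apply \Cref{effective-subshifts-are-closed-by-factors}, which guarantees that the class of effective $G$-subshifts is closed under topological factor maps. Therefore every sofic $G$-subshift is effective as well.

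I do not anticipate any genuine obstacle, since the argument is a direct assembly of previously proved facts. The only point needing a moment of care is the passage from forbidden patterns to pattern codings for SFTs, where one must verify that a finite defining set of patterns yields a finite—and therefore recursively enumerable—set of pattern codings defining the same subshift. Note that recursive presentability of $G$ plays no role in this step; it is used only afterward, when converting ECP into effectiveness via \Cref{cor:ECP_plus_RP_implies_EDS}.
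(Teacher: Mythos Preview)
Your proposal is correct and follows essentially the same route as the paper: show SFTs are ECP, apply \Cref{cor:ECP_plus_RP_implies_EDS} to upgrade ECP to effective using recursive presentability, then use \Cref{effective-subshifts-are-closed-by-factors} for the sofic case. The only difference is that you spell out why a finite set of forbidden patterns yields a finite (hence recursively enumerable) set of pattern codings, whereas the paper simply records ``all SFTs are ECP'' just before the statement and invokes it directly.
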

\begin{proof}
    Let $X$ be a $G$-SFT. Then $X$ is effectively closed by patterns. It follows from \Cref{effective-subshifts-and-ecp} that $X$ is effective. This proves the claim regarding SFTs. Now  the claim for sofic subshifts follows from this and \Cref{effective-subshifts-are-closed-by-factors}.
\end{proof}
The following example shows that for groups that are not recursively presented, the class of ECP is strictly smaller than the class of effective subshifts.  
\begin{prop}\label{rem:fullshiftRPiseffective}\label{example:fullshift_RP_is_effective}
    Let $G$ be a finitely generated group, and let $A$ be an alphabet with $|A|\geq 2$. Then $A^G$ is effectively closed by patterns. However, $A^G$ is effective exactly when $G$ is recursively presented.
\end{prop}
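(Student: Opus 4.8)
The statement has two parts. The assertion that $A^G$ is effectively closed by patterns is immediate from \Cref{def:ecp}: taking $\mathcal C=\emptyset$, which is trivially recursively enumerable, gives $X_{\mathcal C}=A^G$, so $A^G$ is ECP irrespective of $G$.

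For the second part, the plan is to invoke the characterization of effectiveness in terms of pattern codings rather than to argue directly with pullbacks. By \Cref{prop:effective_implies_max_forbidden_is_re}, $A^G$ is effective if and only if its maximal defining set of pattern codings $\{c : [c]\cap A^G=\emptyset\}$ is recursively enumerable. Since every cylinder $[c]$ is contained in $A^G$, we have $[c]\cap A^G=[c]$, and a pattern coding $c$ satisfies $[c]=\emptyset$ exactly when it is inconsistent: the forward implication is the remark already in the text, and conversely a consistent $c$ extends to a configuration by setting $x(\underline{w})=c(w)$ for $w\in\operatorname{supp}(c)$ (well defined precisely by consistency) and an arbitrary value elsewhere, witnessing $[c]\neq\emptyset$. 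Thus the maximal set for $A^G$ is exactly the set of inconsistent pattern codings, and it remains to determine when this set is recursively enumerable.

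I would then show that the set of inconsistent pattern codings is recursively enumerable if and only if the word problem of $G$ is recursively enumerable; combined with the well-known equivalence between having recursively enumerable word problem and being recursively presented, this yields the proposition. For the easy direction, if the word problem is recursively enumerable, then given $c$ one enumerates the finitely many pairs $u,v\in\operatorname{supp}(c)$ with $c(u)\neq c(v)$ and semi-decides $\underline{u^{-1}v}=1_G$ for each in parallel, declaring $c$ inconsistent as soon as one such equality is confirmed. The converse is where the hypothesis $|A|\geq 2$ is essential, and it is the crux of the argument: fixing two distinct symbols $a,b\in A$, for a word $w\in S^\ast$ with $w\neq\epsilon$ one forms the pattern coding $c_w$ with support $\{\epsilon,w\}$ and values $c_w(\epsilon)=a$, $c_w(w)=b$. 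Since $\underline{\epsilon}=1_G$, the coding $c_w$ is inconsistent precisely when $\underline{w}=1_G$, so a semi-decision procedure for inconsistency yields one for the word problem (the trivial case $w=\epsilon$ is handled separately). This reduction, together with the availability of two distinct symbols, is the main point; it explains why the full shift on a non-recursively-presented group fails to be effective even though it is always ECP, and it shows why the argument breaks down when $|A|=1$, in which case no pattern coding is inconsistent and $A^G$ is a single point.
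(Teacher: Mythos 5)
Your proof is correct, and its skeleton is the same as the paper's: both arguments reduce everything to \Cref{prop:effective_implies_max_forbidden_is_re} together with the observation that the maximal defining set of pattern codings for $A^G$ is exactly the set of inconsistent pattern codings. The difference is in how the final step is discharged. The paper treats the two directions asymmetrically: for ``recursively presented $\Rightarrow$ effective'' it invokes \Cref{effective-subshifts-and-ecp}, and for the converse it cites \cite[Proposition 2.2]{aubrun_notion_2017} as a black box for ``inconsistent codings r.e.\ $\Rightarrow$ $G$ recursively presented.'' You instead prove the equivalence ``inconsistent pattern codings are r.e.\ $\iff$ the word problem is r.e.'' directly, in both directions, and run both halves of the proposition through it uniformly. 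Your explicit reduction --- the coding $c_w$ with support $\{\epsilon,w\}$ and two distinct values, inconsistent precisely when $\underline{w}=1_G$ --- is exactly the content hidden inside the cited result, and writing it out has a concrete payoff: it makes visible where the hypothesis $|A|\geq 2$ is actually used, something the paper's citation-based proof leaves implicit (indeed, for $|A|=1$ no coding is inconsistent and the full shift is a point, so the claim genuinely fails). The paper's version buys brevity and consistency with the surrounding machinery; yours buys self-containedness and a sharper account of the hypotheses.
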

\begin{proof}
    Observe that $A^G$ is effectively closed by patterns because because it can be defined by the empty set of pattern codings. Thus for $G$ recursively presented, it follows from \Cref{effective-subshifts-and-ecp} that $A^G$ is effective. 

    For the converse, let us fix a symmetric and generating set $S$, and suppose that $A^G$ is effective. Then  \Cref{prop_effective_implies_max_forbidden_is_re} implies that the maximal defining set of pattern codings for $A^G$ is recursively enumerable. Observe that this is exactly the set of inconsistent pattern codings,  so the set of inconsistent pattern codings with alphabet $A$ and generating set $S$ is recursively enumerable. This implies that $G$ is recursively presented by \cite[Proposition 2.2]{aubrun_notion_2017}.  
\end{proof}

\chapter{Effective dynamical systems and factors of SFTs}\label{chap:EDS}

\renewcommand{\Gamma}{{G}} 
In this chapter we present the results obtained with Sebastián Barbieri
and Cristóbal Rojas in \cite{barbieri_effective_2024}. We develop and study the notion of effective dynamical system on general metric spaces.  Our main result states that under suitable hypotheses, these systems admit effective zero-dimensional topological extensions. The main application of this result is that, combined with simulation results, it can be used to construct new examples of systems that are factors of SFTs. 

%
\minitoc
\section{Results}

Starting with the work of Hadamard~\cite{hadamard_surfaces_1898}
and the highly influential article of Morse and Hedlund~\cite{morse_symbolic_1938},
symbolic dynamical systems have quite often played a pivotal role
in the understanding of more general dynamics. A celebrated instance
of this is the prominent role of subshifts of finite type (SFT) in
the study of Anosov, and more generally of Axiom A diffeomorphisms,
through Markov partitions of their non-wandering sets~\cite{bowen_axiom_1978}.
Another well-known example of this tight relationship is the fact
that the natural action of a word-hyperbolic group on its boundary
is a very well behaved topological factor of an SFT on the same group~\cite{coornaert_symbolic_1993}.

These and similar results raise the question of understanding precisely
which dynamical systems are topological factors of SFTs. An observation
made by Hochman~\cite{hochman_dynamics_2009} is that subactions
of multidimensional SFTs satisfy strong computability constraints. More precisely, they are conjugated to an action by computable homeomorphisms on an effectively closed subset of $\{0,1\}^\N$ (see \Cref{preliminaries-computable-analysis}). The truly remarkable discovery of Hochman
is that, up to a difference in the dimension of the acting group and
a topological factor, this is the only constraint: every computable
homeomorphism on an effectively closed subset of $\{0,1\}^\N$ is the
topological factor of a subaction of a $\ZZ^{3}$-SFT.

Results linking computable maps on effectively closed subsets of $\{0,1\}^\N$ have been called ``simulation results'', as they express
that very explicit and simple models such as SFTs are capable of universally
encoding this considerably larger class of dynamical systems. These
simulation results along with further developments~\cite{aubrun_simulation_2013,durand_effective_2010}
led to a new understanding of classical results in the theory of symbolic dynamics of group actions, such
as the undecidability of the domino problem~\cite{wang_proving_1961,berger_undecidability_1966},
the existence of aperiodic tilesets~\cite{berger_undecidability_1966,robinson_undecidability_1971}
and more generally the existence of two-dimensional tilings without
computable orbits~\cite{hanf_nonrecursive_1974,myers_nonrecursive_1974}.
Moreover, they also provided the tools to obtain new and long sought-after
results, such as the classification of topological entropies of multidimensional
SFTs~\cite{hochman_characterization_2010}.

Further work has extended the initial result of Hochman to the context
of actions of discrete groups on zero-dimensional spaces~\cite{barbieri_generalization_2019,barbieri_geometric_2019,barbieri_soficity_2023}.
Notably, it was shown that for a large class of non-amenable groups
called self-simulable, the class of zero-dimensional topological factors
of SFTs contains every possible computable action on an effectively
closed zero-dimensional set~\cite{barbieri_groups_2022}. These works
have led to new results about the dynamics of such groups. For instance,
they have provided new examples of groups that can act freely, expansively
and with shadowing on a zero-dimensional space.

A common theme among all of the previous simulation results is that
they apply to actions on zero-dimensional spaces. The reason behind
this is rooted in the fact that the Cantor space $\{0,1\}^{\NN}$
admits a natural computable structure, where the cylinders are described
by finite words, and that allows the application of algorithmic techniques.
The main objective of this work is to explore a generalization to
groups acting on non-symbolic spaces, such as compact subsets of $\RR^{n}$,
$\GL_{n}(\CC)$ or compact abelian groups such as $(\RR/\ZZ)^{n}$.
Thus, the goal of this work  is to explore the following question.
\begin{question}
Can the simulation results be extended to group actions on spaces
that are not zero-dimensional? 
\end{question}

The theory of computable analysis offers the means to endow separable
metric spaces with computable structures that allow the notion of
computable map to make sense, and the application of algorithmic techniques
possible. In this chapter we explore this approach and study its connections
with the existing simulation results. In particular, we introduce
a very general notion of effective dynamical system (EDS) for systems on general metric spaces. We show that all known simulation results can be extended to this class.
By an effective dynamical system we mean one that is topologically
conjugate to a computable action over a recursively compact subset
of a computable metric space. We do not require the topological conjugacy
to be computable, so an EDS does not need to be computable itself.
The class of EDS is therefore quite large and encompasses virtually
all natural examples (although artificial non-examples can be constructed,
see \Cref{sec:factors}). Our main tool will be the following
result:
\begin{thm}
\label{thm:zero_dim_effective_extension} Let $G$ be a finitely generated
and recursively presented group. For any effective dynamical system
$G\curvearrowright X$ there exists an effectively closed zero-dimensional
space $\widetilde{X}\subset\{0,1\}^{\N}$ and a computable action
$\Gamma\curvearrowright\widetilde{X}$ such that $\Gamma\curvearrowright X$
is a topological factor of $\Gamma\curvearrowright\widetilde{X}$. 
\end{thm}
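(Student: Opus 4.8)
The plan is to reduce to the case that $G\curvearrowright X$ is itself computable, build a zero-dimensional computable cover of $X$, and then assemble the extension as a pullback, over the free group, of a shift-type system; recursive presentation will be used only to keep the defining constraints effectively closed. Since being a topological factor is preserved under replacing $X$ by a conjugate, and an effective dynamical system has a computable representative by definition, I may assume that $X$ is a recursively compact subset of a computable metric space $\mathcal X$ and that each generator $s\in S$ acts by a computable homeomorphism $T_s\colon X\to X$, uniformly in $s$. The case $X=\emptyset$ is trivial, so assume $X\ne\emptyset$.

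First I would produce a computable surjection $\pi\colon C\to X$, where $C$ is a recursively compact, zero-dimensional subset of $\{0,1\}^{\N}$. Embed $X$ into the Hilbert cube by $\iota(x)=(\min(1,d(x,s_i)))_{i\in\N}$; this is computable, injective, and, being defined on a recursively compact domain, a computable homeomorphism onto its image by \Cref{prop:computable-functions-are-invertible-on-compact-domains}, with $\iota(X)$ recursively compact by \Cref{prop:effective-continuous-image-of-compact-is-compact}. Let $b\colon\{0,1\}^{\N}\to[0,1]^{\N}$ be the computable surjection given by coordinatewise binary expansions (identifying $\{0,1\}^{\N}$ with $(\{0,1\}^{\N})^{\N}$), and set $C=b^{-1}(\iota(X))$. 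Then $C$ is effectively closed, hence recursively compact inside $\{0,1\}^{\N}$ by \Cref{prop:effective-closed-subset-of-compact-is-compact}, zero-dimensional, and $\pi=\iota^{-1}\circ b|_{C}$ is a computable surjection onto $X$ by \Cref{composition-of-computable-functions-is-computable}.

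Next I would realize the extension through the free group. Let $F=F(S)$ and write $\underline w\in G$ for the image of $w\in F$. The product $C^{F}=\prod_{w\in F}C$ is a recursively compact computable metric space carrying a computable shift action of $F$ (by \Cref{thm:Tychonoffcomputable}, since $F$ has decidable word problem and coordinates are permuted according to multiplication in $F$, as in \Cref{shift-action-is-computable}). Inside it I define
\[
\widetilde Y=\Big\{\,\omega\in C^{F}:\ \omega_u=\omega_v\text{ whenever }\underline u=\underline v,\ \text{ and }\ \pi(\omega_w)=\underline w^{\,-1}\!\cdot\pi(\omega_\epsilon)\ \text{for all }w\in F\,\Big\}.
\]
Each set $\{\omega:\omega_u=\omega_v\}$ and each set $\{\omega:\pi(\omega_w)=\underline w^{-1}\pi(\omega_\epsilon)\}$ is the preimage of a diagonal (an effectively closed set) under a computable map, uniformly in its index: the coordinate projections are computable, $\pi$ is computable, and $\underline w^{-1}\cdot(\cdot)$ is computed by composing the maps $T_s$ along the word $w^{-1}$ via \Cref{composition-of-computable-functions-is-computable}. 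The second family is indexed by all of $F$; the first is indexed by the pairs with $\underline u=\underline v$, which form a recursively enumerable set \emph{precisely because $G$ is recursively presented}. Thus $\widetilde Y$ is an intersection of a recursively enumerable family of uniformly effectively closed sets, hence effectively closed by \Cref{union-of-effectively-open-sets} (taking complements), and therefore recursively compact and zero-dimensional.

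Finally I would verify that $\widetilde Y$ is the desired extension and transport it into the Cantor set. The first constraint forces each $\omega\in\widetilde Y$ to factor through $G$, so $\widetilde Y$ is exactly the pullback of a closed shift-invariant set $Z\subset C^{G}$; consequently the shift by each generator $s\in S$ descends to a computable homeomorphism of $\widetilde Y$, and these maps satisfy the relations of $G$, yielding a computable $G$-action. The map $\omega\mapsto\pi(\omega_\epsilon)$ is computable, commutes with the actions by the second constraint, and is onto $X$ (given $x$, choose $c_g\in\pi^{-1}(g^{-1}x)$ for each $g\in G$ and set $\omega_w=c_{\underline w}$). Since $\widetilde Y$ is a recursively compact zero-dimensional subset of a computable metric space, \Cref{thm:compacto-y-cero-dimensional-tiene-copia-en-el-cantor} gives a computable homeomorphism onto an effectively closed $\widetilde X\subset\{0,1\}^{\N}$; conjugating the generator maps and the factor map through it produces the computable $G$-action on $\widetilde X$ and the topological factor onto $X$. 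The main obstacle, and the only point where recursive presentation is essential, is the third step: one cannot put a computable structure on $C^{G}$ directly when the word problem is undecidable, and it is the recursive enumerability of the relation $\underline u=\underline v$ that keeps the pullback constraint effectively closed.
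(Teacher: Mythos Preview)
Your proof is correct and takes a genuinely different route from the paper's. Both arguments share the same two-part architecture---build a zero-dimensional cover of $X$ with computable data, then pass to an ``orbit subshift'' over the free group and impose the condition that configurations factor through $G$ (this is the unique place where recursive presentation enters, in either proof). The difference lies in how the cover is built. The paper constructs it incrementally: it produces a uniform sequence of effective closed covers $(\mathcal P^n)$ of shrinking diameter, forms for each $n$ the finite-alphabet subshift cover $Y(F(S)\curvearrowright X,\mathcal P^n)$, intersects with the pullback of the full $G$-shift (using recursive presentation), and then takes an inverse limit of the resulting sequence with refinement constraints. You instead produce the zero-dimensional cover in one stroke, embedding $X$ in the Hilbert cube and pulling back along binary expansion to obtain a single computable surjection $\pi\colon C\to X$ from an effectively closed $C\subset\{0,1\}^{\N}$; the extension is then the subset of $C^{F}$ of coherent orbit-trackers.

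What each buys: your argument is shorter, avoids the effective-cover and inverse-limit machinery entirely, and makes the final transport into $\{0,1\}^{\N}$ essentially trivial (indeed $C^{F}$ already sits computably in $\{0,1\}^{\N}$, so the appeal to \Cref{thm:compacto-y-cero-dimensional-tiene-copia-en-el-cantor} is not even needed). The paper's approach is more modular---the lemmas on effective covers and subshift covers are of independent use---and it adapts directly to prove the sharper expansive statement (\Cref{prop:expansive-actions-are-factors-of-effective-subshifts}): when $G\curvearrowright X$ has a generating open cover, a single finite effective cover already yields an effective $G$-\emph{subshift} extension (or conjugate, in the zero-dimensional case), with no inverse limit needed. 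Your construction does not immediately recover that refinement, since your cover $C$ is infinite from the outset.
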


We remark that is it a well-known fact that every group action by
homeomorphisms on a compact metrizable space admits a zero-dimensional
extension. This extension, however, carries a priory no computable
structure. Our contribution is that, for recursively presented groups,
the computable structure stemming from the effective nature of the
system is preserved by a well-chosen zero-dimensional extension.

Given a finitely generated group $\Gamma$ and an epimorphism $\psi \colon \Gamma \to H$, we say that $\Gamma$ simulates $H$ if given any computable action of $H$ on an effectively closed subset of the Cantor space, the corresponding action of $\Gamma$ induced through $\psi$, is a factor of a $\Gamma$-SFT. If $\psi$ is an isomorphism, then $\Gamma$ is called self-simulable. The class of self-simulable groups includes several interesting examples, see \Cref{thm:simulation_results}.

The main application of \Cref{thm:zero_dim_effective_extension}
is that for recursively presented groups the simulation results in
the literature also apply to EDS.
\begin{thm}
\label{thm:simulation-by-SFTs-enhanced-version-GOTY} Let $\Gamma,H$
be finitely generated groups and $\psi\colon\Gamma\to H$ be an epimorphism.
Suppose that $H$ is recursively presented, then $\Gamma$ simulates
$H$ if and only if for every effective dynamical system $H\curvearrowright X$
the induced action of $\Gamma$ is a topological factor of a $\Gamma$-SFT. 
\end{thm}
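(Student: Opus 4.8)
The plan is to establish the two implications separately, taking advantage of the fact that \Cref{thm:zero_dim_effective_extension} has already carried out the substantive analytic work; what remains is bookkeeping about induction through $\psi$ and transitivity of factor maps.

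I would begin with the backward implication ($\Leftarrow$), which is essentially a matter of unwinding definitions. Assume that for every effective dynamical system $H\curvearrowright X$ the induced $\Gamma$-action is a factor of a $\Gamma$-SFT. To show that $\Gamma$ simulates $H$, take an arbitrary computable action $H\curvearrowright Y$ on an effectively closed set $Y\subset\{0,1\}^{\N}$. Since $\{0,1\}^{\N}$ is recursively compact (\Cref{exa:computable-infinite-product-of-finite-alphabets}), \Cref{prop:effective-closed-subset-of-compact-is-compact} gives that $Y$ is recursively compact. Thus $H\curvearrowright Y$ is literally a computable action over a recursively compact subset of a computable metric space, hence an effective dynamical system (with the conjugacy taken to be the identity). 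The hypothesis then yields that the induced $\Gamma$-action on $Y$ is a factor of a $\Gamma$-SFT, which is exactly the defining condition for $\Gamma$ to simulate $H$.

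For the forward implication ($\Rightarrow$), assume $\Gamma$ simulates $H$ and let $H\curvearrowright X$ be any effective dynamical system. Since $H$ is recursively presented, \Cref{thm:zero_dim_effective_extension} supplies an effectively closed zero-dimensional set $\widetilde{X}\subset\{0,1\}^{\N}$, a computable action $H\curvearrowright\widetilde{X}$, and an $H$-equivariant topological factor map $\pi\colon\widetilde{X}\to X$. Because $H\curvearrowright\widetilde{X}$ is a computable action on an effectively closed subset of the Cantor space and $\Gamma$ simulates $H$, the induced action $\Gamma\curvearrowright\widetilde{X}$ is a factor of some $\Gamma$-SFT, say via $\rho\colon Z\to\widetilde{X}$ with $Z$ a $\Gamma$-SFT. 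The key remaining observation, which I would record as a short lemma, is that $H$-equivariant maps stay equivariant for the actions induced through $\psi$: writing $\gamma\cdot z:=\psi(\gamma)\cdot z$, one has $\pi(\gamma\cdot\tilde{x})=\pi(\psi(\gamma)\cdot\tilde{x})=\psi(\gamma)\cdot\pi(\tilde{x})=\gamma\cdot\pi(\tilde{x})$, and the induced $\Gamma$-action on $X$ is continuous since $\psi$ is a homomorphism and $H\curvearrowright X$ is continuous. Hence $\pi$ is a $\Gamma$-equivariant factor map for the induced actions, and the composite $\pi\circ\rho\colon Z\to X$ exhibits $\Gamma\curvearrowright X$ as a topological factor of the $\Gamma$-SFT $Z$.

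I do not expect any genuine obstacle in this argument. The entire force of the statement is concentrated in \Cref{thm:zero_dim_effective_extension}; here I only need the transitivity of topological factor maps together with the fact that induction through the epimorphism $\psi$ preserves both equivariance and the relevant computable and effective data. The one point deserving care is the verification that inducing through $\psi$ sends $H$-factor maps to $\Gamma$-factor maps and keeps the systems in the category of topological dynamical systems (compactness and metrizability of $X$ are inherited unchanged, and continuity of the induced action is immediate), but this is routine.
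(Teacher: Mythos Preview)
Your proposal is correct and follows essentially the same approach as the paper: both directions are handled exactly as you describe, with the backward implication reducing to the observation that computable actions on effectively closed subsets of $\{0,1\}^{\N}$ are EDS, and the forward implication obtained by applying \Cref{thm:zero_dim_effective_extension} and then composing the resulting $H$-factor map (which is automatically $\Gamma$-equivariant for the induced actions) with the factor map supplied by simulation. The paper's version is terser, but the content is identical.
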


Combining this result with the simulations result in the literature,
we obtain that many dynamical systems on non zero-dimensional spaces
can be realized as factors of subshifts of finite type. This includes
the natural action of $\GL_{n}(\ZZ)\curvearrowright\RR^{n}/\ZZ^{n}$
by left matrix multiplication for $n\geq5$, actions of $F_{2}\times F_{2}$
on the circle by computable rotations on each generator, actions of
the braid groups $B_{n}$ for $n\geq7$ on compact computable groups,
and more. 

To better illustrate the power of \Cref{thm:simulation-by-SFTs-enhanced-version-GOTY},
we mention a particular application which we believe could
be special interest. An action of a countable group on a compact abelian
topological group by automorphisms is called algebraic. This is a
rich class of group actions which has been the object of much study
in the literature, for instance in \cite{Lind1990,Schmidt1995,chung_2013,lind_homoclinic_1999}.
Using our results we obtain that a large class of algebraic actions
of non-amenable groups can be presented as topological factors of
SFTs.
\begin{thm}
\label{thm:finitely_presented_algebraic_action} Let $\Gamma$ be
a finitely generated and recursively presented self-simulable group.
Every finitely presented algebraic action of $\Gamma$ is the topological
factor of a $\Gamma$-subshift of finite type.
\end{thm}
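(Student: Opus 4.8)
The plan is to reduce the statement to \Cref{thm:simulation-by-SFTs-enhanced-version-GOTY} applied with $H=\Gamma$ and $\psi=\mathrm{id}_{\Gamma}$. Since $\Gamma$ is self-simulable it simulates itself through the identity, and since $\Gamma$ is recursively presented, that theorem asserts that every effective dynamical system $\Gamma\curvearrowright X$ is a topological factor of a $\Gamma$-SFT. Thus the whole theorem reduces to a single claim: \emph{every finitely presented algebraic action of $\Gamma$ is an effective dynamical system.} I would isolate and prove this claim; the invocation of \Cref{thm:simulation-by-SFTs-enhanced-version-GOTY} is then immediate.

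To establish the claim I would first recall the Pontryagin-dual description of algebraic actions. An algebraic action $\Gamma\curvearrowright X$, with $X$ a compact abelian group acted on by continuous automorphisms, is dual to a $\ZZ[\Gamma]$-module structure on the discrete group $\widehat{X}$, and the action is finitely presented precisely when this module is finitely presented, say $\widehat{X}=\ZZ[\Gamma]^{n}/\ZZ[\Gamma]^{k}A$ for a $k\times n$ matrix $A$ with entries in $\ZZ[\Gamma]$. Dualizing the presentation, I would identify $X$ with the closed, shift-invariant subgroup of the torus full shift $((\RR/\ZZ)^{n})^{\Gamma}$ consisting of those configurations annihilated by the adjoint convolution action of the rows of $A$, the $\Gamma$-action being the restriction of the shift. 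This is the standard realization of a finitely presented algebraic action as a linear group subshift over the alphabet $(\RR/\ZZ)^{n}$.

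It then remains to equip this realization with a computable structure, and here I would pass to pullbacks as in \Cref{def:effectivesubshiftthroughfreegroup}. The torus $(\RR/\ZZ)^{n}$ is a recursively compact computable metric space, so by \Cref{thm:Tychonoff_computable} the full shift $((\RR/\ZZ)^{n})^{F(S)}$ over the free group $F(S)$ is a recursively compact computable metric space, and the compact-alphabet analogue of \Cref{shift-action-is-computable} makes the shift of $F(S)$ computable. I would then show that the pullback $\phi^{*}(X)\subseteq((\RR/\ZZ)^{n})^{F(S)}$ is effectively closed, writing it as the intersection of two families of closed conditions: the descent conditions $z(w)=z(w')$ for all words $w,w'$ with $\underline{w}=\underline{w'}$, and the linear equations from $A$ evaluated at each group element. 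Each single condition is the zero set of a computable function (namely $z\mapsto d(z(w),z(w'))$, respectively the computable linear combination dictated by a row of $A$), hence uniformly effectively closed; and because $\Gamma$ is recursively presented, the index set of these conditions is recursively enumerable. Taking complements and invoking \Cref{union-of-effectively-open-sets}, the union of the corresponding effectively open complements is effectively open, so $\phi^{*}(X)$ is effectively closed; by \Cref{prop:effective-closed-subset-of-compact-is-compact} it is recursively compact. Since the generators of $\Gamma$ act on it by computable shift maps, $\Gamma\curvearrowright\phi^{*}(X)$ is a computable action on a recursively compact subset of a computable metric space, topologically conjugate to $\Gamma\curvearrowright X$, which exhibits the algebraic action as an effective dynamical system.

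The main obstacle I anticipate is the effectivity bookkeeping in this last step. On the one hand, one must set up the computable-analysis machinery for the compact, non-finite alphabet $(\RR/\ZZ)^{n}$, extending \Cref{computable-metric-space-structure-on-A^G} and \Cref{shift-action-is-computable} as indicated in \Cref{computable-metric-space-on-A^G-infinite-alfabet}. On the other hand, and more delicately, one must use only the recursive enumerability of the word problem of $\Gamma$, not its decidability, to enumerate the defining relations. Working with the pullback to $F(S)$, whose word problem is decidable, is exactly what lets the base space carry an honest computable metric space structure, while the recursive presentation of $\Gamma$ supplies the recursively enumerable list of constraints cutting out $\phi^{*}(X)$.
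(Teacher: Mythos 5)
Your proposal is correct and follows essentially the same route as the paper's proof: the paper likewise reduces the theorem to showing that every finitely (indeed recursively) presented algebraic action is an EDS, by realizing the Pontryagin-dual presentation as an effectively closed, shift-invariant subset of a torus-valued full shift over the free generators, cut out by the recursively enumerable consistency conditions $\underline{u}=\underline{v}$ supplied by the recursive presentation of $\Gamma$ together with the uniformly computable linear relations, and then invokes \Cref{thm:simulation-by-SFTs-enhanced-version-GOTY} via self-simulability. The only cosmetic differences are that you index configurations by the free group $F(S)$ where the paper uses the word monoid $S^{*}$, and that the paper treats the slightly more general class of recursively presented algebraic actions, of which finitely presented ones are a special case.
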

The details concerning the proof of this result go beyond this thesis chapter, and the reader is referred to \cite{barbieri_effective_2024} for the proof.

We complete our discussion with several observations regarding the class of topological factors of EDS (see \Cref{sec:factors}). While it is true that computable factors of EDS are EDS (\Cref{prop:computable_factor_EDS_is_EDS}), the class of EDS is in general not closed under topological factor maps, even if we restrict ourselves to zero-dimensional systems. For zero dimensional systems on recursively presented groups, we propose a class that we call weak effective
dynamical system (WEDS). This class is closed by topological factors and contains every EDS. However, we do not know whether this equals the class of factors of EDS, or it is a bigger class.

\section{Effective dynamical systems}\label{sec.eds}

Let $X$ be a subset of a computable metric space, let $\Gamma$ be a finitely generated group, and let $\Gamma\curvearrowright X$ be a group action. We say that $\Gamma \curvearrowright X$ is \define{computable action} if for some finite and symmetric set of generators $S$, we have that for each $s\in S$ the group action map $f_s \colon X \to X$ given by $f_s(x) = sx$ is a computable function. Notice that in this case, $f_g\colon X \to X$ is in fact uniformly computable for all $g\in\Gamma.$ In particular, if an action is computable, it will satisfy the definition with respect to every finite set of generators.

We are interested in the behavior of actions as topological dynamical systems, and thus will consider them up to topological conjugacy. Our focus will be on those conjugacy classes that contain some computable representative.   

\begin{definition} 
    Let $X$ be a compact metrizable space and $\Gamma$ a finitely generated group. We say that an action $\Gamma \curvearrowright X$ is an \define{effective dynamical system (EDS)} if it is topologically conjugate to a computable action $\Gamma \curvearrowright \widehat{X}$, where $\widehat{X}$ is a recursively compact subset of a computable metric space.
    \end{definition}

We insist in that our definition of effective dynamical system \emph{does not} require the space $X$ to have any computable structure, or the topological conjugacy to be computable. Given an EDS $\Gamma \curvearrowright X$, we will refer to a topologically conjugate instance of a computable action $\Gamma \curvearrowright \widehat{X}$ over a recursively compact subset $\widehat{X}$ of a computable metric space as a \define{computable representative}.

\begin{example}\label{example:effective_subshift_is_EDS}
      An effective subshift $X\subset A^{\Gamma}$ is an EDS. Indeed, if $A^{\Gamma}$ is an effective subshift, then $\Gamma\curvearrowright X$ is topologically conjugate to the action $\Gamma\curvearrowright \widehat{X}$, where $\widehat{X}$ is the subshift from \Cref{def:effectivesubshiftthroughfreegroup} and the action $G\curvearrowright\widehat{X}$ is computable by \Cref{shift-action-is-computable}. We will see later that, conversely, the only subshifts which are EDS are the effective ones, thus justifying the name (\Cref{prop:EDS_and_effective_subshift_are_equivalent}).    
\end{example}

The following example shows that the conjugacy class of an effective dynamical system can contain uncountably many systems. In particular, it contains representatives that are not computably homeomorphic.

\begin{example}
Fix some computable angle $\alpha \in [0,2\pi]$ and consider $C_r=\{x\in \RR^2: \|x\|=r\}$ to be the circle of radius $r>0$. Note that  the space $C_r$ is a recursively compact subset of $\RR^2$ only for computable $r$. Despite this, the conjugacy class of the action $\ZZ \curvearrowright C_r$ induced by the rotation by $\alpha$ has a computable representative, namely  $\ZZ \curvearrowright C_1$, and is therefore an EDS regardless of the value of $r$. 
\end{example}

\begin{example}\label{example:torus_matrix_multiplication}
The action $\operatorname{GL}_n(\ZZ) \curvearrowright \RR^n/\ZZ^n$ of the general linear group on the $n$-dimensional torus by left matrix multiplication is an EDS.
\end{example}

\begin{example}
    Recall the Thompson's groups $F$ and $T$. These groups are often defined in the literature as the collection of piecewise linear homeomorphisms of $[0,1]$ (in the case of $T$, of $\RR/\ZZ$) that preserve orientation and whose non-differentiable points are dyadic rationals and whose slopes are all powers of 2. These maps are clearly computable and thus it follows that the natural actions $F\curvearrowright [0,1]$ and $T \curvearrowright \RR/\ZZ$ are EDS.
\end{example}

\section{Effective versions of classical constructions on effective dynamical
systems}

In this section we prove effective versions of a few classical constructions
commonly used in dynamical systems, and which may therefore be of
independent interest.

\subsection{Computable topological factors}

Here we show that the class of EDS is closed under computable topological
factor maps. We stress the fact that in this result we need both
actions to be represented as computable actions in computable metric
spaces, and the factor to be computable as well. Later we will show
that the class of EDS is in general not closed under topological factor
maps.\begin{proposition}\label{prop:computable_factor_EDS_is_EDS}
        Let $\Gamma$ be a finitely generated group and $\mathcal{X}$, $\mathcal{Y}$ be computable metric spaces. Let $X\subset \mathcal{X}$ be a recursively compact set and $\Gamma\curvearrowright X$ be a computable action. Let $\Gamma\curvearrowright Y \subset \mathcal Y$ be a topological factor of $\Gamma\curvearrowright X$ by a computable function $f\colon X\to Y$. Then $Y$ is a recursively compact set and $\Gamma\curvearrowright Y$ is a computable group action.    
    \end{proposition}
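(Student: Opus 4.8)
The plan is to handle the two conclusions in sequence, first establishing recursive compactness of $Y$ and then using it to derive computability of the action on $Y$. The first part is immediate: since $f$ is a factor map it is surjective, so $Y=f(X)$, and as $X$ is recursively compact and $f$ is computable, \Cref{prop:effective-continuous-image-of-compact-is-compact} gives at once that $Y=f(X)$ is recursively compact. This is the easy half.

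The substance is showing that each generator acts computably on $Y$. Fix a finite symmetric generating set $S$ witnessing computability of $\Gamma\curvearrowright X$, and for $s\in S$ write $f_s\colon X\to X$, $f_s(x)=sx$, and $g_s\colon Y\to Y$, $g_s(y)=sy$. Since $f$ is a morphism of dynamical systems it intertwines the actions, $f\circ f_s=g_s\circ f$, so the composition $h:=f\circ f_s\colon X\to Y$ is computable by \Cref{composition-of-computable-functions-is-computable} and satisfies $h=g_s\circ f$. The hard part is that $f$ is not assumed injective, so I cannot simply write $g_s=h\circ f^{-1}$; instead I would exploit surjectivity of $f$ to express preimages under $g_s$ as $f$-images of preimages under the computable map $h$.

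Concretely, I would verify computability of $g_s$ in the equivalent ``closed'' formulation used in the proof of \Cref{prop:computable-functions-are-invertible-on-compact-domains}: given an effectively closed $C'\subset\mathcal Y$, produce an effectively closed $C\subset\mathcal Y$ with $g_s^{-1}(C'\cap Y)=C\cap Y$; passing to complements then yields the open-set requirement of \Cref{computable-function-relative}. Using surjectivity of $f$ together with the identity $s f(x)=f(sx)=h(x)$, one checks directly that
\[
g_s^{-1}(C'\cap Y)=f\bigl(h^{-1}(C'\cap Y)\bigr).
\]
Now the right-hand side can be computed effectively. Computability of $h$ yields an effectively closed $D\subset\mathcal X$ with $h^{-1}(C'\cap Y)=D\cap X$. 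Since $X$ is recursively compact, hence effectively closed by \Cref{recursively-compact-is-closed}, and finite unions of effectively open sets are effectively open by \Cref{union-of-effectively-open-sets}, the set $D\cap X$ is effectively closed and contained in the recursively compact set $X$, so it is itself recursively compact by \Cref{prop:effective-closed-subset-of-compact-is-compact}. Applying \Cref{prop:effective-continuous-image-of-compact-is-compact} to the computable map $f$ shows that $f(D\cap X)$ is recursively compact, and \Cref{recursively-compact-is-closed} presents it as an effectively closed set $C$. As $C=f(D\cap X)\subset Y$, we obtain $g_s^{-1}(C'\cap Y)=C=C\cap Y$, with every step uniform in the description of $C'$.

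The single genuine obstacle is thus the failure of injectivity of $f$, which blocks a naive inversion; the resolution is the image identity above, which converts the task into pushing recursively compact sets forward through $f$, after which the chain of preservation lemmas for images and for effectively closed subsets of recursively compact sets finishes the argument. Since $g_s$ is computable for every $s\in S$, the action $\Gamma\curvearrowright Y$ is computable by definition, completing the proof.
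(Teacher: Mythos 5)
Your proof is correct and follows essentially the same route as the paper: both arguments combine equivariance with surjectivity of $f$ to rewrite $g_s^{-1}(\,\cdot\,\cap Y)$ as the $f$-image of a preimage under a computable map on $X$, and then push that set forward using the recursive-compactness lemmas (\Cref{prop:effective-closed-subset-of-compact-is-compact}, \Cref{prop:effective-continuous-image-of-compact-is-compact}, \Cref{recursively-compact-is-closed}). The only difference is bookkeeping: the paper works with basic balls and cites \Cref{prop:computable-functions-are-invertible-on-compact-domains} for the push-forward, whereas you assemble the same push-forward explicitly in the dual, effectively-closed formulation, which if anything makes that step more transparent.
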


    \begin{proof}
         As $X$ is recursively compact and $f$ is computable, it follows that $Y$ is recursively compact. Let $(B^{\mathcal X}_i)_{i \in \NN}$ and $(B^{\mathcal Y}_i)_{i \in \NN}$ be recursive enumerations of the basic balls in $\mathcal X$ and $\mathcal Y$ respectively. Let $s \in \Gamma$ and let $i \in \NN$. As $f$ is computable it follows that, uniformly in $i$, there is a recursively enumerable set $I_i \subset \NN$ such that $f^{-1}(B^{\mathcal Y}_i) = X \cap \bigcup_{j \in I_i} B^{\mathcal X}_j$. As the action is computable, uniformly for each $j \in \NN$ there is a recursively enumerable set $I'_j\subset \NN$ such that \[s^{-1}f^{-1}(B^{\mathcal Y}_i) = X \cap \bigcup_{j \in I_i}\bigcup_{k \in I'_j} B_k^{\mathcal X}.\]
        Finally, as $f$ is computable and $X$ recursively compact, it follows by~\Cref{prop:computable-functions-are-invertible-on-compact-domains} that uniformly for $k \in \NN$ there is a recursively enumerable set $I''_k\subset \NN$ such that \[ s^{-1}(B_i^{\mathcal Y}\cap Y) = f(s^{-1}f^{-1}(B_i^{\mathcal Y})) = \bigcup_{j \in I_i}\bigcup_{k \in I'_j}\bigcup_{m \in I''_k} B_m^{\mathcal Y}.  \]
        Where the first equality holds because $f$ is $\Gamma$-equivariant. It follows the map $y \mapsto sy$ is computable and as $s$ is arbitrary, we obtain that $\Gamma \curvearrowright Y$ is computable.
    \end{proof}

\subsection{Products and inverse limit constructions}

Now we show that the operations of countable product and inverse limits
are computable under mild computability assumptions. For the next
two results, $(\mathcal X_{n})_{n\in\N}$ will be a sequence of uniformly computable
metric spaces as in \Cref{thm:Tychonoffcomputable}, and we will consider
$\prod_{n\in\N}\mathcal X_{n}$ with its product computable metric space structure.
\begin{prop}
\label{prop:countable-product-of-EDS} Let $X_{n}\subset \mathcal X_{n}$ be
a sequence of subsets and $\Gamma\curvearrowright X_{n}$ be a uniform
sequence of computable actions of the finitely generated group $\Gamma$.
Then the component-wise action 
\[
\Gamma\curvearrowright\prod_{n\in\N}X_{n}\subset\prod_{n\in\N}\mathcal X_{n}
\]
is computable. 
\end{prop}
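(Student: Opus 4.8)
The plan is to verify \Cref{computable-function-relative} directly, generator by generator. By the definition of a computable action it suffices to show that for every $s$ in the fixed symmetric generating set $S$ the map $f_s\colon \prod_n X_n \to \prod_n X_n$, $x \mapsto sx$, is computable; since $S$ is finite, there is nothing to say about uniformity in $s$. The crucial structural observation is that the action is component-wise, so $f_s$ factors as the product $\prod_n f_s^{(n)}$ of the maps $f_s^{(n)}\colon X_n \to X_n$, $x_n \mapsto s x_n$, each of which is computable, and in fact \emph{uniformly} computable in $n$ precisely because the sequence $\Gamma\curvearrowright X_n$ is uniformly computable. Note also that each $f_s^{(n)}$ sends $X_n$ into $X_n$, so $f_s$ preserves $\prod_n X_n$.

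First I would reduce to computing preimages of basic cylinders. As in the proof of \Cref{thm:Tychonoffcomputable}, every effectively open subset of $\prod_n \mathcal{X}_n$ is an r.e. union of cylinders of the form $V = U_{I_0}\times\cdots\times U_{I_N}\times\prod_{n>N}\mathcal{X}_n$, where each $U_{I_j}$ is a finite union of basic balls in $\mathcal{X}_j$, and this correspondence is effective. By \Cref{union-of-effectively-open-sets} it therefore suffices to compute, uniformly in the description of such a cylinder $V$, an effectively open set $U \subset \prod_n \mathcal{X}_n$ with $f_s^{-1}(V \cap \prod_n X_n) = U \cap \prod_n X_n$.

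Given such a $V$, for each $j \le N$ I would use the uniform computability of $f_s^{(j)}$ to compute an effectively open set $W_j \subset \mathcal{X}_j$ with $(f_s^{(j)})^{-1}(U_{I_j}\cap X_j) = W_j \cap X_j$, and then set $U = W_0 \times \cdots \times W_N \times \prod_{n>N}\mathcal{X}_n$. This $U$ is again a cylinder with effectively open coordinates and full tail, hence effectively open in the product, uniformly in $V$ (again by the machinery around \Cref{thm:Tychonoffcomputable}). The required identity is then a direct unwinding of the definitions: for $x = (x_n)\in \prod_n X_n$ one has $f_s(x)=(sx_n)_n$, which lies in $\prod_n X_n$, and $f_s(x)\in V$ holds precisely when $s x_j \in U_{I_j}$ for every $j \le N$, i.e. $x_j \in (f_s^{(j)})^{-1}(U_{I_j}\cap X_j) = W_j \cap X_j$ for every $j \le N$, which is exactly the condition $x \in U$.

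I expect the only real friction to be bookkeeping rather than mathematics: carefully passing between the basic-ball basis induced by the metric $d = \sum_n 2^{-n} d_n$ and the cylinder basis used above, and checking that each step is uniform in the given descriptions. Since all of this is already implicit in the constructions surrounding \Cref{thm:Tychonoffcomputable}, it can be dispatched by citing that machinery instead of redoing it. (Note that the statement asserts only computability of the action, not recursive compactness of $\prod_n X_n$, so no completeness or total-boundedness argument is needed here.)
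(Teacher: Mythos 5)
Your proof is correct and takes essentially the same route as the paper's: both compute the preimage of a cylinder set $U_0\times\dots\times U_N\times\prod_{n>N}\mathcal X_n$ coordinate-wise, using the uniform computability of the component actions and the fact that the action is component-wise, and then observe that the whole procedure is uniform in the descriptions. The only differences are cosmetic ones in your favor: you reduce explicitly to generators, you track the relative-computability definition (intersecting with $X_j$) carefully, and you flag the passage between the basic-ball basis and the cylinder basis, all of which the paper leaves implicit.
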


\begin{proof}
Let us fix $g\in \Gamma$. Let $n\in\N$ and let $U_{i}$ be an effectively
open subset of $X_{i}$ for $i\in\{0,\dots,n\}$. We verify that the
preimage by $g$ of 
\[
U=U_{0}\times\dots\times U_{n}\times  X_{n+1}\times  X_{n+2}\times\dots
\]
is effectively open in $\prod_{n\in\N} X_{n}$ and that this process
is uniform in $n$ and the $U_{i}$. Indeed, 
\[
g^{-1}(U)=g^{-1}(U_{0})\times\dots\times g^{-1}(U_{n})\times X_{n+1}\times  X_{n+2}\times\dots
\]
This set is effectively open in $\prod_{n\in\N}X_{n}$, and it can
be uniformly computed from $n\in\N$ and a description of $U_{0},\dots,U_{n}$
because each of the $g^{-1}(U_{i})$ is effectively open in $X_{i}$,
and can be uniformly computed from a description of $U_{i}\subset Y_{i}$. 
\end{proof}
\begin{prop}
\label{prop:inverse-limit-of-EDS} Let $\Gamma\curvearrowright X_{n}$
be as in the previous statement, where each $X_{n}$ is now assumed
to be an effectively closed subset of $\mathcal X_{n}$. Let $(\pi_{n})_{n\geq1}$
be a sequence of uniformly computable functions, where each $\pi_{n+1}\colon X_{n+1}\to X_{n}$
is a topological factor map from $\Gamma\curvearrowright X_{n+1}$ to $\Gamma\curvearrowright X_{n}$.
Then the inverse limit system
\[
\varprojlim X_{n}=\{(x_{n})\in\prod_{n\in\N}X_{n}\mid\pi_{n+1}(x_{n+1})=x_{n},n\geq1\}
\]
is an effectively closed subset of $\prod_{n\in\N}\mathcal X_{n}$. 
\end{prop}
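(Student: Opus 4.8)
The plan is to follow the same strategy as the earlier inverse-limit proposition for the full spaces $\mathcal{X}_n$: realize $\varprojlim X_n$ as the fixed-point set of a shift-type self-map and invoke \Cref{prop:fixed-points-of-computable-function}. The new feature to handle is that each $\pi_{n+1}$ is defined only on the subset $X_{n+1}$ rather than on all of $\mathcal{X}_{n+1}$, so the natural map will only be computable on the product $\prod_{n\in\N} X_n$, and I will therefore have to establish separately that this product is itself effectively closed in $\prod_{n\in\N}\mathcal{X}_n$.

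First I would show that $\prod_{n\in\N} X_n$ is effectively closed in $\prod_{n\in\N}\mathcal{X}_n$. Writing $\pi_n$ for the $n$-th coordinate projection, its complement decomposes as $\bigcup_{n\in\N}\pi_n^{-1}(\mathcal{X}_n\smallsetminus X_n)$, where each $\pi_n^{-1}(\mathcal{X}_n\smallsetminus X_n)$ is the cylinder with the single effectively open coordinate $\mathcal{X}_n\smallsetminus X_n$. Since the $X_n$ are uniformly effectively closed, these cylinders are uniformly effectively open in the product structure of \Cref{thm:Tychonoffcomputable}, and \Cref{union-of-effectively-open-sets} gives that their union is effectively open; hence $\prod_{n\in\N} X_n$ is effectively closed.

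Next I would define $f\colon\prod_{n\in\N} X_n\to\prod_{n\in\N}\mathcal{X}_n$ by $f((x_n)_n)=(\pi_{n+1}(x_{n+1}))_n$ and verify that it is computable in the sense of \Cref{computable-function-relative}. For a basic effectively open cylinder $U'=U'_{I_0}\times\dots\times U'_{I_N}\times\prod_{n>N}\mathcal{X}_n$, the condition $f(x)\in U'$ asks that $x_{n+1}\in\pi_{n+1}^{-1}(U'_{I_n})$ for $n\le N$, a constraint on the coordinates $x_1,\dots,x_{N+1}$. Using the uniform computability of the $\pi_{n+1}$, each $\pi_{n+1}^{-1}(U'_{I_n}\cap X_n)$ equals $V_{n+1}\cap X_{n+1}$ for an effectively open $V_{n+1}$ computed uniformly from $U'$, and assembling these into a cylinder exhibits $f^{-1}(U'\cap\prod_n X_n)$ as $U\cap\prod_n X_n$ for an effectively open $U$ obtained uniformly from $U'$. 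The only point requiring attention here is the mild index shift, namely that coordinate $n$ of $f(x)$ depends on $x_{n+1}$.

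Finally I would apply \Cref{prop:fixed-points-of-computable-function} to $f$, obtaining an effectively closed set $C\subset\prod_{n\in\N}\mathcal{X}_n$ with $\{x\in\prod_n X_n : f(x)=x\}=C\cap\prod_n X_n$. The fixed-point condition $f(x)=x$ is exactly $\pi_{n+1}(x_{n+1})=x_n$ for all $n$, so this set is precisely $\varprojlim X_n$; being the intersection of the two effectively closed sets $C$ and $\prod_n X_n$, it is effectively closed. The main obstacle is exactly the decoupling forced by the $\pi_{n+1}$ living on the subsets $X_{n+1}$: the fixed-point proposition only yields effective closedness relative to $\prod_n X_n$, so it is the separate argument for the effective closedness of the product (the first step) that is doing the essential extra work compared to the full-space version.
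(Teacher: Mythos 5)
Your proposal is correct and follows essentially the same route as the paper: realize $\varprojlim X_n$ as the fixed-point set of the shift-type map $(x_n)_n\mapsto(\pi_{n+1}(x_{n+1}))_n$, check computability on cylinder sets, and apply \Cref{prop:fixed-points-of-computable-function}. The one genuine addition is your first step, proving that $\prod_{n\in\N}X_n$ is effectively closed in $\prod_{n\in\N}\mathcal X_n$; the paper's proof leaves this implicit, even though it is needed, since the fixed-point proposition only yields an effectively closed $C$ with $\varprojlim X_n = C\cap\prod_n X_n$, so your explicit argument via \Cref{union-of-effectively-open-sets} closes that small gap.
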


\begin{proof}
Let $f\colon\prod_{n\in\N}X_{n}\to\prod_{n\in\N}X_{n}$ be the function
defined by $(x_{n})_{n\in\N}\mapsto(\pi_{n+1}(x_{n+1}))_{n\in\N}$.
We claim that $f$ is computable. We verify that the preimage by $f$
of a set of the form 
\[
U=U_{0}\times\dots\times U_{n}\times X_{n+1}\times X_{n+2}\times\dots
\]
is effectively open in $\prod_{n\in\N}Y_{n}$, uniformly on $n$ and
the $U_{i}$. Indeed, the preimage $f^{-1}$ can be written as 
\[
f^{-1}(U)=\pi_{1}^{-1}(U_{0})\times\dots\times\pi_{n+1}^{-1}(U_{n})\times X_{n+1}\times X_{n+2}\times\dots
\]
This set is effectively open uniformly on the $U_{i}$ because $(\pi_{n})_{n\geq1}$
is a sequence of uniformly computable functions. On the other hand,
it is clear that $\varprojlim X_{n}$ equals the set of fixed points
of $f$ in $\prod_{n\in\N}X_{n}$. Then it follows from \Cref{prop:fixed-points-of-computable-function}
that $\varprojlim X_{n}$ is effectively closed. 
\end{proof}

\subsection{Effective covers and partitions, and their associated subshifts}\label{subsec:subshift-covers}

    In this subsection we fix a recursively compact set $X$ which lies in a computable metric space $\mathcal{X}$. We start by reviewing some standard terminology regarding covers and the construction of a subshift from a cover. A \textbf{cover} $\mathcal P$ of $X$ is a finite collection of sets whose union equals $X$. A cover $\mathcal P$ is said to be open (resp.~closed) if it consists on sets which are open in $X$ (resp.~closed in $X$). We say that the cover $\mathcal P'$ \textbf{refines} $\mathcal P$ if every element in $\mathcal P'$ is contained in some element of $\mathcal P$. The join of two covers $\mathcal P \lor \mathcal P'$ is the cover $\{P\cap P'\mid P\in \mathcal P, P'\in\mathcal P'\}$. Given a group action $\Gamma\curvearrowright X$ and a finite subset $F\subset \Gamma$, we write $\bigvee _{g\in F} g^{-1}\mathcal P$ for the join of all $g^{-1}\mathcal{P}$ for $g\in F$. The \textbf{diameter} of the cover $\mathcal P$ is the maximum of the diameters of its elements. A cover $\mathcal P$ of $X$  is said to be \textbf{generating} for the group action $\Gamma\curvearrowright X$ if for each $\varepsilon >0$ there is a finite subset $F\subset \Gamma$ such that $\bigvee _{g\in F} g^{-1}\mathcal P$ has diameter at most $\varepsilon$. 
	
    Given a cover of $X$ labeled as $\mathcal{P}=\{P_0,\dots,P_n\}$, we define the \textbf{subshift cover} $Y(\Gamma\curvearrowright X, \mathcal{P})$ by \[ Y(\Gamma\curvearrowright X, \mathcal{P}) = \{ y \in \{0,\dots,n\}^{\Gamma}  : \mbox{ there exists } x \in X \mbox{ such that for every } g \in \Gamma, g^{-1}x \in \overline{P_{y(g)}}\}.   \] 
    
	The idea behind this definition is that every configuration in the subshift labels the orbit of some element $x\in X$ under the action $\Gamma\curvearrowright X$ by indicating the elements of $\mathcal{P}$ it hits. A compactness argument shows that a configuration $y$ lies in $Y(\Gamma \curvearrowright X, \mathcal P)$ if and only if for every finite $F\subset \Gamma$ and pattern $p\colon F\to \{0,\dots,n\}$ which occurs in $y$, the set
	  \[ D(p):=\bigcap_{g\in F} g^{-1}(\overline{P_{p(g)}}). \]
	is nonempty. In particular, $Y(\Gamma\curvearrowright X, \mathcal{P})$ is indeed a subshift. Now we prove that this well known construction is computable when the cover is made by effectively closed sets.
     \begin{definition}
        A cover $\mathcal P$ of $X$ is called \textbf{effective} if its elements are effectively closed sets.
    \end{definition}
	\begin{proposition}\label{prop:effective-covers-give-effective-subshifts}
        Let $\Gamma \curvearrowright X$ be a computable action of a finitely generated group, $S$ be a finite generating set of $\Gamma$, and let $F(S)\curvearrowright X$ be the induced action. Then the associated subshift $Y(F(S) \curvearrowright X, \mathcal{P})$ is effective uniformly for all effective covers $\mathcal{P}$.
	\end{proposition}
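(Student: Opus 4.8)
The plan is to show that $Y := Y(F(S)\curvearrowright X,\mathcal P)$ is an effectively closed subset of the fullshift $\{0,\dots,n\}^{F(S)}$, uniformly in a description of $\mathcal P$. Since $F(S)$ is free it has decidable word problem, so this fullshift carries the computable metric space structure of \Cref{computable-metric-space-structure-on-A^G}, is recursively compact, and supports a computable shift action by \Cref{shift-action-is-computable}; for an $F(S)$-subshift ``effectively closed'' therefore coincides with ``effective''. Because the fullshift is recursively compact, \Cref{maximal-set-effectively-closed} reduces the goal to semi-deciding, uniformly in the pattern, whether a cylinder $[p]$ is disjoint from $Y$. The key reduction is the identity $[p]\cap Y=\emptyset \iff D(p)=\emptyset$, which follows from the compactness characterization of $Y$ recalled just before the statement: if $x\in D(p)$ one extends $p$ to a configuration of $Y$ by labelling each remaining coordinate $w$ with any index $j$ such that $w^{-1}x\in \overline{P_j}$ (possible since $\mathcal P$ covers $X$), and conversely any $y\in[p]\cap Y$ furnishes a witness $x\in D(p)$. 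Hence the task becomes enumerating the patterns $p$ with $D(p)=\emptyset$.

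First I would check that each $D(p)$ is recursively compact, uniformly in $p$ and in the description of $\mathcal P$. For a reduced word $w\in F(S)$ the map $x\mapsto w^{-1}x$ is a composition of the generator maps $f_s$, which are computable by hypothesis, so it is computable uniformly in $w$ by \Cref{composition-of-computable-functions-is-computable}. Since the cover is effective, each $\overline{P_j}=P_j$ is effectively closed, so by the definition of a computable function the condition set $\{x\in X : w^{-1}x\in P_{p(w)}\}$ equals $C_w\cap X$ for an effectively closed $C_w$ computed uniformly from $w$, $p(w)$, and the description of $\mathcal P$. Therefore $D(p)=\bigl(\bigcap_{w\in F} C_w\bigr)\cap X$, where $F$ is the support of $p$ and the finite intersection is effectively closed, its complement being a finite union of effectively open sets which is effectively open by \Cref{union-of-effectively-open-sets}. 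As $X$ is recursively compact, \Cref{prop:effective-closed-subset-of-compact-is-compact} shows that $D(p)$ is recursively compact, uniformly.

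Finally, emptiness of a recursively compact set is semi-decidable: $K=\emptyset$ holds exactly when $K\subset U_{F_j}$ for the index $j$ with $F_j=\emptyset$, and $\{j : K\subset U_{F_j}\}$ is recursively enumerable by the definition of recursive compactness. Running this procedure for $K=D(p)$ over all patterns $p$ enumerates $\{p:D(p)=\emptyset\}$, uniformly in $\mathcal P$, which is precisely what is needed. I expect the only delicate point to be the first paragraph: pinning down the equivalence $[p]\cap Y=\emptyset\iff D(p)=\emptyset$ together with the bookkeeping that identifies pattern cylinders with the basic balls of the computable metric space on $\{0,\dots,n\}^{F(S)}$, so that \Cref{maximal-set-effectively-closed} genuinely applies. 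Once this is in place, the remaining steps are a direct assembly of the computable-analysis lemmas of \Cref{chap:Background}.
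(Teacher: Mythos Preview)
Your argument is correct and follows essentially the same route as the paper's proof: reduce to semi-deciding $D(p)=\emptyset$, show $D(p)$ is effectively closed (hence recursively compact inside the recursively compact $X$) uniformly in $p$ and $\mathcal P$, and conclude by semi-deciding emptiness. The paper's proof is considerably terser and takes the reduction step for granted, while you spell out explicitly why enumerating the patterns with $D(p)=\emptyset$ suffices (via \Cref{maximal-set-effectively-closed} and the equivalence $[p]\cap Y=\emptyset\iff D(p)=\emptyset$); this extra justification is sound and the bookkeeping you flag about cylinders versus basic balls is indeed routine in the ultrametric setting.
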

    \begin{proof} It suffices to show that we can semi-decide, given an effective cover $\mathcal{P}$ and a pattern $p \colon W \to F(S)$ with $W\subset F(S)$ finite, whether $D(p)=\varnothing$. Observe that $D(p)$ is an effectively closed subset of $X$, uniformly in $p$ and $\mathcal{P}$. Indeed, each $P_i$  is an effectively closed subset of $X$, the preimage of an effectively closed set by a computable function is effectively closed, and the finite intersection of effectively closed sets is effectively closed. Thus, we can uniformly semi-decide whether ${D(p)}$ is empty using the recursive compactness of $X$. 
    \end{proof}    
     
    \begin{remark}
    In what follows we focus on the construction of effective covers. These will be obtained intersecting $X$ with unions of closures of basic balls in $\mathcal{X}$. Note that $X$ is not assumed to be a computable metric space, and this is the reason why we appeal to the space $\mathcal X$.   
    \end{remark}
    \begin{proposition}\label{prop:Covers-from-basic-balls-are-effective}
        A cover of $X$  whose elements are finite unions of elements of the form $\overline{B}\cap X$, where $B$ are basic balls in $\mathcal{X}$, is effective.
    \end{proposition}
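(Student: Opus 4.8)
The plan is to show that each element of such a cover is effectively closed, uniformly from its description, so that the whole cover is effective. Fix an element of the cover; by hypothesis it has the form $E = \bigcup_{j=1}^{k}\left(\overline{B_j}\cap X\right)$ for finitely many basic balls $B_1,\dots,B_k$ of $\mathcal X$. The strategy is to prove that $E$ is recursively compact and then invoke \Cref{recursively-compact-is-closed} to conclude that $E$ is effectively closed. Routing the argument through recursive compactness, rather than directly through complements, keeps everything inside operations that are explicitly shown to be effective earlier in the chapter.

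First I would record that $X$ is recursively compact by assumption, hence effectively closed by \Cref{recursively-compact-is-closed}. Next, for each basic ball $B_j$ the closure $\overline{B_j}$ is effectively closed by \Cref{closure-of-basic-balls-is-effective}, uniformly in the description of $B_j$. Consequently each $\overline{B_j}\cap X$ is an intersection of two effectively closed sets: writing $\overline{B_j}\cap X = \mathcal X \smallsetminus\left((\mathcal X\smallsetminus\overline{B_j})\cup(\mathcal X\smallsetminus X)\right)$ and using that the union of two effectively open sets is effectively open (a special case of \Cref{union-of-effectively-open-sets}), we obtain that $\overline{B_j}\cap X$ is effectively closed, uniformly in $j$. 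Since it is contained in the recursively compact set $X$, \Cref{prop:effective-closed-subset-of-compact-is-compact} shows that $\overline{B_j}\cap X$ is recursively compact, again uniformly.

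It then remains to pass from the pieces to their finite union. Here I would use the elementary observation that a finite union $K=\bigcup_{j=1}^k K_j$ of recursively compact sets is recursively compact: for a finite set $F\subset\N$ one has $K\subset U_F$ if and only if $K_j\subset U_F$ for every $j$, and each of these finitely many inclusions is semi-decidable by recursive compactness of $K_j$, so running these procedures in parallel semi-decides $K\subset U_F$. Applying this to the pieces $\overline{B_j}\cap X$ yields that $E$ is recursively compact, and then \Cref{recursively-compact-is-closed} gives that $E$ is effectively closed. Every step is uniform in the finite list of basic balls defining $E$, so the elements of the cover are effectively closed uniformly, and the cover is effective.

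I do not expect a genuine obstacle here: the only point requiring care is to remain within the finitary operations that preserve recursive compactness and effective closedness, and to keep track of uniformity. The subtlety to watch is that $X$ itself is not assumed to carry a computable metric space structure, so all algorithmic content must be phrased in the ambient space $\mathcal X$ (as stressed in the preceding remark); passing through recursive compactness also lets me avoid appealing to a finite-intersection-of-effectively-open statement that is not isolated as a lemma earlier in the text.
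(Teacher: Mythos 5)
Your proof is correct, but it routes the argument differently from the paper. The paper's proof is a single sentence: since $X$ is effectively closed (it is recursively compact by the standing assumption of the subsection) and each $\overline{B}$ is effectively closed by \Cref{closure-of-basic-balls-is-effective}, the cover elements --- finite unions of intersections of such sets --- are effectively closed; this silently uses that effectively closed sets are stable under finite intersections and finite unions, closure properties the text never isolates as lemmas. You use the same two ingredients for each piece $\overline{B_j}\cap X$, but you replace the Boolean argument for the finite union by a compactness argument: each piece, being effectively closed inside the recursively compact set $X$, is recursively compact (\Cref{prop:effective-closed-subset-of-compact-is-compact}); a finite union of recursively compact sets is recursively compact (your parallel semi-decision observation, which is correct, since $\bigcup_j K_j\subset U_F$ holds if and only if each of the finitely many semi-decidable inclusions $K_j\subset U_F$ holds); and recursively compact sets are effectively closed (\Cref{recursively-compact-is-closed}). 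Your detour is longer, but it has two merits: it invokes only facts explicitly stated in the chapter, and it proves slightly more than required, namely that each cover element is recursively compact rather than merely effectively closed. The trade-off is that your union step genuinely needs recursive compactness of $X$, while the paper's implicit Boolean argument works whenever $X$ is merely effectively closed; under the subsection's standing hypotheses this difference is immaterial.
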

    \begin{proof}
        This follows from the fact that $X$ is effectively closed, and that the topological closure of a basic ball in $\mathcal{X}$ is effectively closed (see \Cref{closure-of-basic-balls-is-effective}). 
    \end{proof}

   The following construction provides a uniform sequence of refined effective covers at any desired resolution. 
   
	\begin{lemma}\label{lem:lemma-cubiertas-efectivas}
        There is an algorithm which on input $n\in\N$ computes an effective cover $\mathcal{P}_n$ of $X$ with diameter at most  $2^{-n}$, and such that for every $n\in \N$, $\mathcal P_ {n+1}$ refines $\mathcal{P}_n$ and the inclusion of elements in $\mathcal P_ {n+1}$ to elements in $\mathcal P _n$ is decidable uniformly on $n$. Moreover, if $X$ is a zero dimensional set, we can add the extra condition that every $\mathcal P_n$ is a partition of $X$.
	\end{lemma}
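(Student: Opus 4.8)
The plan is to produce the covers scale by scale through an exhaustive search justified by recursive compactness, and then to obtain both the refinement and its \emph{decidable} inclusion relation essentially for free by passing to iterated joins. The point is that testing set inclusion between arbitrary effectively closed subsets of a recursively compact space is not decidable, so one cannot build the $\mathcal{P}_n$ independently and then compare them; instead, the join construction makes containment a built-in bookkeeping consequence of the index labels, so no set-inclusion test is ever needed.

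First I would construct, uniformly in $n$, an auxiliary effective cover $\mathcal{Q}_n$ of $X$ with small diameter. Enumerate the finite sets $F\subset\N$ of indices of basic balls all of radius at most $2^{-(n+2)}$, and for each one use the recursive compactness of $X$ to semi-decide whether $X\subset U_F$, halting at the first $F$ that works. Such an $F$ exists because $X$ is compact (indeed effectively totally bounded, \Cref{effecttively-compact-vs-totally-bounded}), so the search terminates uniformly in $n$. Setting $\mathcal{Q}_n=\{\overline{B}\cap X : B \text{ a ball indexed in } F\}$ yields, by \Cref{closure-of-basic-balls-is-effective} and \Cref{prop:Covers-from-basic-balls-are-effective}, an effective cover whose elements have diameter at most $2^{-(n+1)}\le 2^{-n}$, since $\operatorname{diam}\overline{B(s,r)}\le 2r$.

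Next I would define $\mathcal{P}_n=\mathcal{Q}_0\lor\mathcal{Q}_1\lor\dots\lor\mathcal{Q}_n$, labeling each element by the tuple $(i_0,\dots,i_n)$ of ball indices it comes from, so that this element equals $\bigcap_{k=0}^{n}(\overline{B_{i_k}}\cap X)$. Being a finite intersection of effectively closed sets it is effectively closed, so $\mathcal{P}_n$ is an effective cover; since $\mathcal{P}_n$ refines $\mathcal{Q}_n$, its diameter is at most $2^{-n}$; and it is computed uniformly in $n$. Because $\mathcal{P}_{n+1}=\mathcal{P}_n\lor\mathcal{Q}_{n+1}$, every element $\bigcap_{k=0}^{n+1}(\overline{B_{i_k}}\cap X)$ of $\mathcal{P}_{n+1}$ is literally contained in the element $\bigcap_{k=0}^{n}(\overline{B_{i_k}}\cap X)$ of $\mathcal{P}_n$ obtained by deleting the last coordinate. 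Hence $\mathcal{P}_{n+1}$ refines $\mathcal{P}_n$, and the witnessing inclusion relation is precisely truncation of index tuples, which is manifestly decidable uniformly in $n$.

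For the zero-dimensional case I would replace each $\mathcal{Q}_n$ by a finite clopen partition of $X$ with parts of diameter at most $2^{-n}$, produced exactly as in the first part of the proof of \Cref{thm:compacto-y-cero-dimensional-tiene-copia-en-el-cantor} via \Cref{prop:computable-clopens-dimension-zero}; there each clopen part is obtained as $\overline{V}\cap X$ for a finite union $V$ of basic balls with $V\cap X=\overline{V}\cap X$, so it is effectively closed, and $\mathcal{Q}_n$ is an effective cover. The join of clopen partitions is again a clopen partition, since intersections of clopen sets are clopen and the parts $P\cap Q$ are pairwise disjoint with union $X$; thus $\mathcal{P}_n$ is a clopen partition and the same truncation relation is now a genuine refinement \emph{map}, each part of $\mathcal{P}_{n+1}$ lying in a unique part of $\mathcal{P}_n$. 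The main obstacle throughout is the decidable-inclusion requirement, handled by the join trick; a secondary subtlety, which I would address explicitly, is that without assuming $X$ computably closed one cannot decide whether a clopen part is empty, but this is harmless here: empty parts may simply be retained (the parts remain pairwise disjoint with union $X$), and the refinement relation is read off from labels with no emptiness test.
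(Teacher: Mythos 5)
Your proposal is correct, but it takes a genuinely different route from the paper's. The paper constructs the sequence inductively: given $\mathcal{P}_n$, it searches exhaustively for finitely many (closures of) finite unions of basic balls of small diameter which cover $X$ and whose closures are each contained in the \emph{interior} of some element of $\mathcal{P}_n$, with disjointness added in the zero-dimensional case via \Cref{prop.clopens}; that interior condition is exactly what makes containment semi-decidable --- a recursively compact set inside an effectively open set --- so the exhaustive search terminates and the refinement witnesses are recorded as a by-product. You instead build the levels independently as small covers $\mathcal{Q}_n$ and take iterated joins, so that refinement and its computability become a purely combinatorial matter of truncating index tuples, and no containment test against the previous cover is ever performed. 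One point worth making explicit: neither your argument nor the paper's renders the \emph{extensional} inclusion relation between elements decidable (that would require, among other things, deciding emptiness of effectively closed sets, which needs extra hypotheses such as computable closedness); both arguments produce a decidable relation that \emph{witnesses} refinement, which is exactly what the application in \Cref{thm:zero_dim_effective_extension} consumes, and your labelled-tuple formulation makes this reading of the statement transparent. What your route buys is economy of means: no semi-decidable geometric test, no induction on covers, and a zero-dimensional case that reduces directly to \Cref{prop:computable-clopens-dimension-zero} together with the fact that a join of clopen partitions is a clopen partition. The price is bookkeeping: $\mathcal{P}_n$ carries $\prod_{k\le n}|\mathcal{Q}_k|$ labels, most of them naming empty sets that cannot be pruned, whereas the paper's covers consist only of pieces actually found by its search; as you correctly note, this is harmless here, since empty elements are simply alphabet symbols that never occur in the associated subshift covers.
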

	\begin{proof} Using the recursive compactness of $X$, one can compute a cover $\mathcal{P}_0$ of $X$ made by the intersection of $X$ with (the closure of) basic balls of radius one. The algorithm now proceeds inductively on $n \in \NN$. After it has computed $\mathcal P_0,\dots,\mathcal P_n$ with the desired properties, it computes $\mathcal P_{n+1}$ as follows. Let $\{V_i\}_{i \in \NN}$ be a computable enumeration of all finite unions of basic balls. First observe that there is a finite subset $I$ of $\N$ such that the set  $\{V_i : i\in I\}$ satisfies the following conditions:
        \begin{enumerate}
            \item $V_i$ has diameter at most $2^{-(n+1)}$. 
            \item $\overline{V_i}$ is contained in the interior of some of the elements of $\mathcal P_n$
            \item The union of $V_i$ for $i\in I$ contains $X$
            \item If $X$ is zero dimensional, then the sets $\overline{V_i}\cap X$ are disjoint.
        \end{enumerate}
        The existence of such a finite set of basic balls satisfying the first three conditions follows from the fact that $X$ is compact and that the set $\{V_i : i\in \N\}$ is a basis for the topology. If $X$ is zero dimensional, then the last condition can be added because by \Cref{prop.clopens} we can take the basic sets to be clopen. We now observe that it is semi-decidable whether a finite subset $I$ of $\N$ satisfies the conditions above:
        \begin{enumerate}
        \item Finite unions of (the closure of) basic balls are computable sets, so the first condition is routine,
        \item the semi-decidability of the second follows from the fact that $\overline{V}_i\cap X$ is recursively compact and the interior of the elements in the partition are effectively open,
        \item the semi-decidability of the third condition follows directly from the recursive compactness of $X$,
        \item finally, for the fourth condition it suffices to note that in a recursively compact space it is semi-decidable whether two effectively closed sets have empty intersection (recall that in a recursively compact space we can semi-decide whether an effectively closed subset is empty, the intersection of two effectively closed sets is effectively closed, and this is a computable operation on the descriptions of the sets). 
        \end{enumerate}
        Thus we can compute a set $I$ as desired by an exhaustive search, and finally set $\mathcal {P}_{n+1}=\{V_i\cap X : i\in I\}$.  
	\end{proof}
    \begin{proposition}\label{prop:generating-covers-can-be-taken-effective}
        If $\Gamma\curvearrowright X$ admits a generating open cover, then it admits a generating effective cover.
    \end{proposition}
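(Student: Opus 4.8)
The plan is to take the given generating open cover and refine it by an effective cover of sufficiently small diameter, exploiting the elementary fact that any refinement of a generating cover is again generating. First I would record this monotonicity fact. Suppose $\mathcal Q$ refines $\mathcal P$ and $\mathcal P$ is generating. Given $\varepsilon>0$, choose a finite $F\subset\Gamma$ with $\operatorname{diam}\bigl(\bigvee_{g\in F}g^{-1}\mathcal P\bigr)\leq\varepsilon$. An arbitrary element of $\bigvee_{g\in F}g^{-1}\mathcal Q$ has the form $\bigcap_{g\in F}g^{-1}Q_g$ with $Q_g\in\mathcal Q$; picking for each $g$ an element $P_g\in\mathcal P$ with $Q_g\subseteq P_g$ and applying the homeomorphisms $x\mapsto g^{-1}x$ (which preserve inclusions) gives $\bigcap_{g\in F}g^{-1}Q_g\subseteq\bigcap_{g\in F}g^{-1}P_g$, and the latter is an element of $\bigvee_{g\in F}g^{-1}\mathcal P$. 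Since diameters only decrease under passage to subsets, $\operatorname{diam}\bigl(\bigvee_{g\in F}g^{-1}\mathcal Q\bigr)\leq\varepsilon$ with the very same $F$, so $\mathcal Q$ is generating.

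With this in hand it remains to produce an effective cover that refines the generating open cover $\mathcal P$. Here I would invoke the Lebesgue number lemma: since $X$ is compact and $\mathcal P$ is an open cover, there is some $\delta>0$ such that every subset of $X$ of diameter less than $\delta$ lies in a single element of $\mathcal P$. Now I would apply \Cref{lem:lemma-cubiertas-efectivas} with any $n$ satisfying $2^{-n}<\delta$ to obtain an effective cover $\mathcal P_n$ of $X$ of diameter at most $2^{-n}$. Each element of $\mathcal P_n$ then has diameter less than $\delta$ and is therefore contained in some element of $\mathcal P$, which says precisely that $\mathcal P_n$ refines $\mathcal P$. By the monotonicity fact above, $\mathcal P_n$ is generating, and it is effective by construction, which establishes the statement.

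There is essentially no serious obstacle here; the two points that deserve care are exactly the monotonicity fact and the Lebesgue-number refinement, both settled above. The role of effectiveness is confined to the existence of a small-diameter closed cover whose members are effectively closed sets, which is what \Cref{lem:lemma-cubiertas-efectivas} supplies (and which could alternatively be obtained directly from the recursive compactness of $X$ together with \Cref{prop:Covers-from-basic-balls-are-effective}). I would emphasize that the statement asserts only \emph{existence} of a generating effective cover, so no uniformity or computability of the passage from $\mathcal P$ to $\mathcal P_n$ is required; in particular the non-effective constant $\delta$ furnished by the Lebesgue number lemma may be used without scruple.
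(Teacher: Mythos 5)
Your proof is correct, but it follows a genuinely different route from the paper's. The paper argues directly from the basis of basic balls: it collects all sets of the form $B_i\cap X$ that are contained in some element of the generating open cover $\mathcal P$, extracts a finite subcover of these by compactness, observes that this finite open cover refines $\mathcal P$ and hence is generating (the same monotonicity fact you prove), and only then passes to the closures of its elements, invoking \Cref{prop:Covers-from-basic-balls-are-effective} for effectiveness. You instead combine the Lebesgue number lemma with the ready-made small-diameter effective covers of \Cref{lem:lemma-cubiertas-efectivas}: any $\mathcal P_n$ with $2^{-n}<\delta$ refines $\mathcal P$, and monotonicity finishes the argument. What your route buys is that the refinement relation is verified for the \emph{closed} sets themselves (small diameter forces containment in an element of $\mathcal P$), so monotonicity applies directly to the effective cover; in the paper's argument the closure step is dispatched with ``by construction'', even though the closure of a set contained in an open $P\in\mathcal P$ need not be contained in $P$, and an element of a join of closures can be strictly larger than the closure of the corresponding join element --- so preservation of the generating property under taking closures is precisely the kind of point your arrangement of the argument renders moot. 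What the paper's route buys is lighter machinery: it needs only compactness, the basis of basic balls, and \Cref{prop:Covers-from-basic-balls-are-effective}, rather than \Cref{lem:lemma-cubiertas-efectivas}, whose proof is itself a nontrivial construction --- though, as you correctly note, your argument can also be run from recursive compactness and \Cref{prop:Covers-from-basic-balls-are-effective} alone. Your closing remark that the non-effective constant $\delta$ is harmless is also consistent with the paper, which explicitly states after the proposition that only existence, not uniform computability, of the cover is claimed.
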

    \begin{proof}
        Let $\mathcal P$ be a generating open cover of $\Gamma\curvearrowright X$. We construct a new cover $\mathcal P'$ as follows. Let $\mathcal J$ be the collection of all sets of the form $B_i\cap X$ which are contained in some element of $\mathcal P$, where $B_i$ is a basic ball in $\mathcal X$. The fact that basic balls constitute a basis for a topology of $\mathcal X$, together with the compactness of $X$, imply that there is a finite subset $\mathcal J'\subset \mathcal J$ which is an open cover of $X$.  As $\mathcal J'$ refines $\mathcal P$, it follows that it is also a generating cover of $\Gamma\curvearrowright X$.  
        Finally we define $\mathcal P'$ as the closed cover obtained by taking the topological closure of all elements in $\mathcal J'$. By construction, $\mathcal P'$ is a closed cover of $X$ which is generating for the action $\Gamma\curvearrowright X$, and it follows from  \Cref{prop:Covers-from-basic-balls-are-effective} that $\mathcal P'$ is an effective cover. 
    \end{proof}

We remark that in the previous proposition we do not claim the cover $\mathcal P'$ to be computable uniformly from a description of $\Gamma\curvearrowright X$, just its existence. 
    
    \begin{proposition}\label{prop:generating-covers-can-be-taken-effective-and-partition-in-zero-dimension}
        If  $\Gamma\curvearrowright X$ admits a generating open cover and $X$ is zero dimensional, then $\Gamma\curvearrowright X$  admits an effective generating cover which is also a partition. 
    \end{proposition}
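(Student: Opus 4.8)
The plan is to follow the strategy of \Cref{prop:generating-covers-can-be-taken-effective}, but to produce a refining cover made of \emph{clopen} sets and then disjointify it into a partition. I would start with a generating open cover $\mathcal P$ of $X$. Since $X$ is zero-dimensional and recursively compact, \Cref{prop.clopens} provides a basis of clopen sets for $X$ of the form $\overline V\cap X$, where $V$ ranges over finite unions of basic balls of $\mathcal X$ (recall from the proof of \Cref{prop.clopens} that $V\cap X$ being clopen is equivalent to $V\cap X=\overline V\cap X$). For each $x\in X$ I would pick a clopen basis element containing $x$ and contained in some element of $\mathcal P$; this is possible because the clopen sets form a basis and $\mathcal P$ is an open cover. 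By compactness of $X$, finitely many such clopen sets $D_1,\dots,D_k$ cover $X$, and by construction each $D_i$ is contained in some element of $\mathcal P$, so $\{D_1,\dots,D_k\}$ is a finite clopen cover refining $\mathcal P$.

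Next I would disjointify this cover: set $Q_i=D_i\smallsetminus\bigcup_{j<i}D_j$ for $1\leq i\leq k$ and discard the empty sets. Each $Q_i$ is a Boolean combination of clopen subsets of $X$, hence clopen, and the nonempty $Q_i$ form a partition of $X$ refining $\{D_1,\dots,D_k\}$, and therefore refining $\mathcal P$. Since $\{Q_i\}$ refines the generating cover $\mathcal P$, it is itself generating: for any finite $F\subset\Gamma$ the join $\bigvee_{g\in F}g^{-1}\{Q_i\}$ refines $\bigvee_{g\in F}g^{-1}\mathcal P$, so its diameter is no larger, and the generating property is inherited.

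It then remains to check that the partition $\{Q_i\}$ is effective, that is, that each $Q_i$ is an effectively closed subset of $\mathcal X$. The key observation is that every clopen subset $C$ of $X$ is effectively closed. Indeed, each clopen basis element $\overline V\cap X$ is effectively closed, since $\overline V$ is a finite union of closures of basic balls, each effectively closed by \Cref{closure-of-basic-balls-is-effective}, while $X$ is effectively closed by \Cref{recursively-compact-is-closed}, and the intersection of two effectively closed sets is effectively closed. Now, as $C$ is open it is a union of clopen basis elements, and being a closed subset of the compact set $X$ it is compact, so finitely many of them suffice; a finite union of effectively closed sets is again effectively closed. Applying this to each $Q_i$ shows that $\{Q_i\}$ is an effective generating cover which is also a partition, as desired.

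I expect the main obstacle to be precisely this last effectivity verification: the topological steps (refining to a clopen partition, and inheriting the generating property from $\mathcal P$) are routine, but one must be careful that clopenness upgrades the relatively-open description $V\cap X$ to the effectively closed description $\overline V\cap X$, which is exactly the content borrowed from the proof of \Cref{prop.clopens}. As in \Cref{prop:generating-covers-can-be-taken-effective}, I would only claim the existence of such a partition and would not assert that it can be computed uniformly from a description of $\Gamma\curvearrowright X$.
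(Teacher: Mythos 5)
Your proof is correct, but it is organized differently from the paper's, and the difference is instructive. The paper first uses zero-dimensionality purely topologically, refining the given cover to a generating \emph{open partition} $\mathcal P=\{P_0,\dots,P_n\}$ of $X$; it then reruns the construction of \Cref{prop:generating-covers-can-be-taken-effective} to get a finite open cover $\mathcal J'$ by traces $B\cap X$ of basic balls refining $\mathcal P$, and defines $P'_i$ as the union of the elements of $\mathcal J'$ contained in $P_i$. Since $\mathcal P$ is a partition, $\mathcal P'=\{P'_i\}$ is an open partition, hence clopen, and effectivity is read off from the identity $P'_i=\bigcup_{m}\overline{B_m}\cap X$ together with \Cref{prop:Covers-from-basic-balls-are-effective}. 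You instead build the clopen structure first: you cover $X$ by finitely many of the computable clopen basis elements supplied by \Cref{prop.clopens}, each inside an element of $\mathcal P$, and then disjointify by Boolean operations. What your route buys is precisely the step where the paper is thinnest: your basis elements satisfy $V\cap X=\overline V\cap X$ \emph{by construction}, so the passage from the relatively open description $V\cap X$ to the effectively closed description $\overline V\cap X$ is automatic. In the paper's argument the balls $B_m$ are only required to satisfy $B_m\cap X\subset P_i$, and a point of $X$ on the boundary of such a ball could a priori lie in a \emph{different} partition element, so the displayed identity $P'_i=\bigcup_m\overline{B_m}\cap X$ is not automatic for those balls; it genuinely needs the balls to be shrunk or taken clopen, which is exactly what your use of \Cref{prop.clopens} provides. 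What the paper's route buys is economy (it reuses \Cref{prop:generating-covers-can-be-taken-effective} verbatim and keeps the partition indexed by the original cover). Two small remarks on your write-up: the equivalence you quote from the proof of \Cref{prop.clopens} ("$V\cap X$ clopen iff $V\cap X=\overline V\cap X$") is, for an arbitrary $V$, only valid in the direction you actually use (the equality implies clopenness, and the basis from \Cref{prop.clopens} consists of sets with this property); and your appeal to "finite unions of effectively closed sets are effectively closed" is not stated as a lemma in the paper, but you can avoid it entirely by noting that each $Q_i$ is a finite union of sets of the form $\overline B\cap X$ and citing \Cref{prop:Covers-from-basic-balls-are-effective} directly.
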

    \begin{proof}
        
    It follows from the hypotheses that $X$ admits a generating open cover $\mathcal P$  which is a partition of $X$. Indeed, let $\mathcal P_0$ be an open cover for $X$  which is generating for $\Gamma\curvearrowright X$. As $X$ is zero-dimensional, there is an open cover $\mathcal P$ of $X$ which refines $\mathcal P_0$ and constitutes a partition of $X$. Observe that the cover $\mathcal P$ is also generating for $\Gamma\curvearrowright X$ because it refines $\mathcal P_0$.

  Repeating the construction in the proof of~\Cref{prop:generating-covers-can-be-taken-effective} with the cover $\mathcal P=\{P_0,\dots,P_n\}$, we obtain a finite open cover $\mathcal J'$ of $X$ which refines $\mathcal P$, and such that every element in $\mathcal J'$ has the form $B\cap X$, where $B$ is a basic ball in $\mathcal X$.
    We define a new cover $\mathcal P'=\{P'_0,\dots,P'_n\}$ by the following condition. For every $i \in \{0,\dots,n\}$, $P'_i$ is the union of all elements in $\mathcal J'$ which are contained in $P_i$. The fact that $\mathcal P$  is a partition of $X$ implies that the same holds for $\mathcal P'$. It is also clear that $\mathcal P'$ is a generating cover. We now verify that $\mathcal P'$  is an effective cover. Indeed, the fact that $\mathcal P'$ is a partition implies that the elements of $\mathcal P'$ are closed in $X$. In particular we can write \[P'_i=\overline{P'_i}=\bigcup_{m\in M} \overline{B_m}\cap X\] where $M$ is a finite subset of $\N$ and $B_m$  are rational balls. Here topological closures can be taken in $X$ and in $\mathcal X$. Both topological closures coincide because $X$ is a closed subset of $\mathcal X$ and we consider $X$ with the subspace topology. It follows from the previous set equality and~\Cref{prop:Covers-from-basic-balls-are-effective} that $\mathcal P'$ is also an effective cover. 
        \end{proof}

\section{Proof of the main results}\label{sec:resultado}

In this section we prove \Cref{thm:zero_dim_effective_extension}.
Let us briefly recall the standard procedure to build a zero-dimensional
topological extension of a dynamical system $\Gamma\curvearrowright X$.
\begin{enumerate}
\item First we extract a sequence $(\mathcal{P}_{n})_{n\in\N}$ of open
covers of $X$ with diameters tending to zero, and associate to each
one of them a subshift $Y_{n}$. 
\item Define the sequence of subshifts $(Z_{n})_{n\in\N}$ where $Z_{n}=\prod_{k\leq n}Y_{k}$
with the coordinate-wise shift, and consider the projection factor
maps $\pi_{n+1}\colon Z_{n+1}\to Z_{n}$. 
\item Define the inverse limit $Z=\varprojlim Z_{n}$, this is a topological
extension of $\Gamma\curvearrowright X$. 
\end{enumerate}
In our proof we effectivize this construction. For this reason we have proved effective versions of some of the steps, such as infinite products, inverse limits, and subshifts associated to covers. We start with the following observation for computable actions on
zero dimensional spaces.
\begin{prop}
\label{cor:cantor-representative} Let $\Gamma\curvearrowright X$
be a computable action, where $X$ is zero-dimensional subset of a
computable metric space. Then $\Gamma\curvearrowright X$ is topologically
conjugate to a computable action $\Gamma\curvearrowright Y$, where
$Y$ is an effectively closed subset of $\{0,1\}^{\N}$. 
\end{prop}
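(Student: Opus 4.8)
The plan is to reduce the statement to the topological embedding theorem for zero-dimensional recursively compact sets, \Cref{thm:compacto-y-cero-dimensional-tiene-copia-en-el-cantor}, and then transport the action across the resulting computable homeomorphism. First I would invoke that result: since $X$ is a recursively compact zero-dimensional subset of a computable metric space, there is a computable homeomorphism $h\colon X\to Y$ onto an effectively closed subset $Y\subset\{0,1\}^{\N}$, whose inverse $h^{-1}\colon Y\to X$ is also computable. The empty and singleton cases are immediate (the empty set is effectively closed, and the singleton case is handled directly in \Cref{thm:compacto-y-cero-dimensional-tiene-copia-en-el-cantor}), so I may assume $X$ is neither.

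Next I would use $h$ to define the conjugate action $\Gamma\curvearrowright Y$. For each $g\in\Gamma$, let $f_g\colon X\to X$ denote the map $x\mapsto gx$, and define $\tilde f_g\colon Y\to Y$ by $\tilde f_g=h\circ f_g\circ h^{-1}$. Setting $gy:=\tilde f_g(y)$ gives a group action on $Y$, and by construction $h$ intertwines the two actions, so $h$ is a topological conjugacy between $\Gamma\curvearrowright X$ and $\Gamma\curvearrowright Y$.

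It then remains to check that $\Gamma\curvearrowright Y$ is computable. Fixing a finite symmetric generating set $S$, for each $s\in S$ the map $\tilde f_s=h\circ f_s\circ h^{-1}$ is a composition of three computable functions: $h^{-1}$ and $h$ are computable because $h$ is a computable homeomorphism, and $f_s$ is computable because the action $\Gamma\curvearrowright X$ is computable. By \Cref{composition-of-computable-functions-is-computable}, the composition $\tilde f_s$ is computable, so $\Gamma\curvearrowright Y$ is a computable action on the effectively closed set $Y\subset\{0,1\}^{\N}$, as required.

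The only genuine content lies in \Cref{thm:compacto-y-cero-dimensional-tiene-copia-en-el-cantor}, which already produces the computable homeomorphism onto Cantor space; the remaining steps are a routine transport of structure, the essential point being that computability is preserved under composition. The one hypothesis I must make sure of is the recursive compactness of $X$, which is what makes \Cref{thm:compacto-y-cero-dimensional-tiene-copia-en-el-cantor} applicable; this is automatic in the present setting, since the computable representatives of effective dynamical systems are by definition actions on recursively compact subsets of computable metric spaces.
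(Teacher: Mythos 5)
Your proposal is correct and follows exactly the paper's own route: the paper's proof is a one-line invocation of \Cref{thm:compacto-y-cero-dimensional-tiene-copia-en-el-cantor}, conjugating the action through the computable homeomorphism that theorem provides. The extra details you supply — the explicit definition of the conjugated maps $\tilde f_g=h\circ f_g\circ h^{-1}$, the appeal to \Cref{composition-of-computable-functions-is-computable}, and the observation that recursive compactness of $X$ (implicit in the statement, and equally implicit in the paper's proof) is what makes the cited theorem applicable — are precisely the routine verifications the paper leaves to the reader.
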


\begin{proof}
This follows by conjugating the action $\Gamma\curvearrowright X$
with a computable homeomorphism between $X$ and an effectively closed
subset of $\{0,1\}^{\N}$, whose existence is guaranteed by \Cref{thm:compacto-y-cero-dimensional-tiene-copia-en-el-cantor}. 
\end{proof}
We are now ready to present the proof of our main result.

\begin{theorem}[\Cref{thm:zero_dim_effective_extension}]
\label{prop:main-result}
		Let $\Gamma\curvearrowright X$ be an EDS, where $\Gamma$ is a finitely generated and recursively presented group. Then $\Gamma\curvearrowright X$ is the topological factor of a computable action of an effectively closed subset of $\{{0},{1}\}^{\NN}$. 
	\end{theorem}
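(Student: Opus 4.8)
The plan is to effectivize the classical zero-dimensional extension recalled before the statement, while being careful that the resulting extension is a genuine $\Gamma$-system and that the recursive presentation of $\Gamma$ is invoked at exactly the point where the word problem enters. First I would reduce to a computable representative. Since $\Gamma\curvearrowright X$ is an EDS it is topologically conjugate to a computable action $\Gamma\curvearrowright\widehat X$ on a recursively compact set $\widehat X$ of a computable metric space $\mathcal X$, and a topological factor of $\widehat X$ composed with this conjugacy is a topological factor of $X$. Hence I may assume outright that $X$ is recursively compact and the action computable.

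Next I would apply \Cref{lem:lemma-cubiertas-efectivas} to compute a refining sequence of effective covers $(\mathcal P_n)_{n\in\N}$ of $X$ with $\operatorname{diam}\mathcal P_n\le 2^{-n}$ and decidable inclusion relations, and to each associate the genuine $\Gamma$-subshift cover $W_n=Y(\Gamma\curvearrowright X,\mathcal P_n)$. The refinement induces $1$-block factor maps $\pi_{n+1}\colon W_{n+1}\to W_n$, and I set $Z=\varprojlim W_n$. The natural map $Z\to X$ sending a coherent sequence $(z_n)$ to the unique point of $\bigcap_n\overline{P^{(n)}_{z_n(1_\Gamma)}}$, a single point because the nested closures have vanishing diameter, is continuous, $\Gamma$-equivariant and, by a compactness argument, surjective; this exhibits $\Gamma\curvearrowright X$ as a topological factor of $\Gamma\curvearrowright Z$.

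The heart of the proof is to show that $Z$ is computably conjugate to a computable action on an effectively closed subset of $\{0,1\}^\N$, and here the recursive presentation is essential. Passing to pullbacks along $\phi\colon F(S)\to\Gamma$, I would verify the identity $\widehat{W_n}=Y(F(S)\curvearrowright X,\mathcal P_n)\cap E_n$, where the first factor is effectively closed uniformly in $n$ by \Cref{prop:effective-covers-give-effective-subshifts} (the covers being computable in $n$), and $E_n$ is the set of configurations constant on the fibers of $\phi$, i.e. those respecting the relations of $\Gamma$. The set $E_n$ is effectively closed precisely because the word problem of $\Gamma$ is recursively enumerable, equivalently because $\Gamma$ is recursively presented: its complement is the effectively open set of configurations $y$ admitting $u,v\in F(S)$ with $\phi(u)=\phi(v)$ (a semidecidable condition) yet $y(u)\ne y(v)$. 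Thus $\widehat{W_n}$ is effectively closed uniformly in $n$, and on these fiber-constant configurations the $F(S)$-shift descends to $\Gamma$, so the pullback conjugacy turns $\Gamma\curvearrowright W_n$ into a computable $\Gamma$-action via \Cref{shift-action-is-computable}.

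Finally I would assemble the pieces. By \Cref{prop:inverse-limit-of-EDS} the inverse limit of the $\widehat{W_n}$ along the computable maps $\pi_{n+1}$ is effectively closed in $\prod_n\{0,\dots,k_n\}^{F(S)}$, and the diagonal $\Gamma$-action is computable by \Cref{prop:countable-product-of-EDS}. Since this product is recursively homeomorphic to $\{0,1\}^\N$ by \Cref{prop:infinite-product-of-finite-alphabets-is-Cantor}, and $Z$ is effectively closed, hence recursively compact by \Cref{prop:effective-closed-subset-of-compact-is-compact} and zero-dimensional, \Cref{cor:cantor-representative} conjugates $\Gamma\curvearrowright Z$ computably to a computable action on an effectively closed subset of $\{0,1\}^\N$; composing with the factor map $Z\to X$ finishes the argument. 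The main obstacle, and the step I expect to demand the most care, is the effective-closedness of the relation-respecting set $E_n$, as this is the one place where recursive presentation, rather than mere decidability of the word problem, is both used and genuinely needed.
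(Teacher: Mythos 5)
Your proposal is correct and follows essentially the same route as the paper's proof: replace the system by a computable representative, compute a uniform sequence of effective covers via \Cref{lem:lemma-cubiertas-efectivas}, pass to subshift covers over $F(S)$, cut them down by the effectively closed set of fiber-constant configurations (your $E_n$ is exactly the paper's pullback $\widehat{A_n^\Gamma}$, and this is where recursive presentation enters in both arguments), take the inverse limit using \Cref{prop:inverse-limit-of-EDS} and \Cref{prop:countable-product-of-EDS}, and finish with \Cref{cor:cantor-representative}. The only cosmetic difference is that the paper assembles the tower as subshifts $Z_n\subset Y_0\times\cdots\times Y_n$ with an explicit nesting condition rather than via chosen one-block bonding maps $W_{n+1}\to W_n$; those bonding maps need not be surjective (the element of $\mathcal P_n$ containing a given element of $\mathcal P_{n+1}$ is not unique), but as you observe only surjectivity of $Z\to X$ matters, which your compactness argument supplies.
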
 
    \begin{proof}
        We start by replacing $\Gamma \curvearrowright X$ by a computable representative, as defined in~\Cref{sec.eds}.  As explained at the beginning of this section, we will construct a topological extension of $\Gamma\curvearrowright X$ as the inverse limit of a sequence of subshifts. 
        
        Let $S$ be a finite set of generators of $\Gamma$ and $F(S)$ be the free group generated by $S$. Let $F(S)\curvearrowright X$ be the action induced from $\Gamma \curvearrowright X$ and notice it is computable. We apply the tools from~\Cref{subsec:subshift-covers} to the action $F(S)\curvearrowright X$. Let $(\mathcal P ^n)_{n\in\N}$ be a uniform sequence of effective covers as in  \Cref{lem:lemma-cubiertas-efectivas}. For each $\mathcal P^n=\{P^n_0,\dots,P^n _{m_n}\}$ we set $A_n$ to be the alphabet $\{0,\dots,m_n\}$, and we let $Y(F(S)\curvearrowright X, \mathcal P ^n)$ be the subshift cover associated to the action $F(S)\curvearrowright X$ and the cover $\mathcal P ^n$. By \Cref{prop:effective-covers-give-effective-subshifts}, each $Y(F(S)\curvearrowright X, \mathcal P ^n)$ is an effective subshift, and the sequence $(Y(F(S)\curvearrowright X, \mathcal P ^n))_{n\in\N}$ is uniformly effective.

        It is possible that some elements in $F(S)$ which are mapped to the identity element in $\Gamma$ (and thus act trivially on $X$), act non-trivially on $Y(F(S)\curvearrowright X, \mathcal P ^n)$. Indeed, if $w\in F(S)$ is mapped to the identity element in $\Gamma$, and there are distinct $i,j$ such that $P^n_{i} \cap P^n_{j} \neq \varnothing$, we could associate distinct elements to $1_{F(S)}$ and $w$, and thus $w$ acts non-trivially on $Y(F(S)\curvearrowright X, \mathcal P ^n)$.
        This is solved as follows. For each $n$, we define $Y_n$ by  
        \[Y_n = Y(F(S)\curvearrowright X, \mathcal P ^n)\cap \widehat{A_n^\Gamma}\subset A_n^{F(S)},\]
        where $\widehat{A_n^\Gamma}$ is the pullback of $A_n^\Gamma$ to $F(S)$. As $\Gamma$ is recursively presented, the set $\widehat{A_n^\Gamma}$ is effectively closed (\Cref{example:fullshift_RP_is_effective}). As $Y_n$ is intersection of two effectively closed sets, then $Y_n$ is effectively closed. Furthermore, as $Y(F(S)\curvearrowright X, \mathcal P ^n)$ is uniform and the second set in the intersection ($\widehat{A_n^\Gamma}$) is the same for each $n$ it follows that $(Y_n)_{n \in \NN}$ is uniformly effectively closed. 

        We now build a zero-dimensional topological extension of $\Gamma \curvearrowright X$, and then we verify the computability of the construction. We define a $F(S)$-subshift $Z_n$ on alphabet $B_n=A_0\times\dots\times A_n$ by the following two conditions:
        \begin{enumerate}
            \item $Z_n \subset Y_0 \times \dots \times Y_n$. 
            \item For each $z\in Z_n$ and for each $w\in F(S)$ with $z(w)=(e_0,\dots,e_n)$ we have \[P^n_{e_n}\subset\dots\subset P^0_{e_0}.\]
        \end{enumerate}


        For each $n$, let $\pi_{n+1}\colon Z_{n+1}\to Z_n$ be the map which removes the last component of the tuple. Then let $Z$ be the inverse limit of dynamical systems 
                  \[Z=\varprojlim Z_n=\{(z_n)\in\prod_{n \in \NN} Z_n\mid\pi_{n+1}(x_{n+1})=x_n, n\geq 1\}\subset\prod_{n \in \NN} B_n^{F(S)}.\]

        We now go into some computability considerations.  First observe that for each $n\in \NN$, $Z_n$ is an effectively closed subset of $B_n^{F(S)}$. As $(Y_n)$ is a uniform sequence of effectively closed sets, we can semi-decide whether the first condition fails. Moreover the second defining condition of $Z_n$ is decidable, uniformly on $n$, by construction of the sequence $(\mathcal P^n)_{n\in\N}$ (\Cref{lem:lemma-cubiertas-efectivas}). Thus $(Z_n)_{n \in \NN}$ is a uniform sequence of effective $F(S)$-subshifts. It is also clear that $(\pi_n)_{n\geq 1}$ is a uniform sequence of computable functions, which are also factor maps. Now we claim that $Z$ is a recursively compact subset of $\prod_{n \in \NN} B_n^{F(S)}$. This follows from three facts:
        \begin{enumerate}
            \item $Z$ is an effectively closed subset of $\prod_{n \in \NN} B_n^{F(S)}$ by \Cref{prop:inverse-limit-of-EDS}.
            \item The product $\prod_{n \in \NN} B_n^{F(S)}$ is recursively compact  by \Cref{thm:Tychonoff_computable}. 
            \item An effectively closed subset of a recursively compact computable metric space is recursively compact (\Cref{prop.equivalence.comp.closed.subset}).
        \end{enumerate}
        The action $F(S)\curvearrowright \prod_{n \in \NN} B_n^{F(S)}$ is computable by \Cref{prop:countable-product-of-EDS}, and then the same holds for $F(S)\curvearrowright Z$. By construction, every element $w\in F(S)$ which corresponds to the identity element in $\Gamma$  acts trivially on $Z$, and thus we can define an action $\Gamma\curvearrowright Z$ by the expression $gz=wz$, where $w\in F(S)$ is any element with $\underline w = g$. It remains to verify that $\Gamma\curvearrowright Z$ is a topological extension of $\Gamma\curvearrowright X$. This is well known, but we sketch the argument for completeness. 
        
        We define a topological factor $\phi\colon Z\to X$ as follows. First, for each $n\in\N$ we define $\nu_n\colon Z\to Y_n$ as the function which sends $z=(z_n)_{n\in\N}$ to the configuration $\nu_n(z)$ given by $\nu_n(z)(w) = (z_n)(w)(n)$, the $n$-th component of $(z_n)(w)\in B_n$. We now define $\phi(z)$ by the expression
            \[\{\phi(z)\}=\bigcap_{n\in\N} P^n_{\nu_n(z)(1_{F(S)})}.\]

        The function $\phi$ is well defined, as a nested intersection of nonempty compact sets with diameter tending to zero must be a singleton. It is straightforward to verify that $\phi$ is continuous, surjective, and it commutes with the actions $\Gamma\curvearrowright X$ and $\Gamma\curvearrowright Z$.

        We have proved that $\Gamma\curvearrowright Z$ is a computable action on a recursively compact and zero-dimensional subset of a computable metric space, and that it is a topological extension of $\Gamma\curvearrowright X$. Now by   \Cref{cor:cantor-representative}, the action $\Gamma\curvearrowright Z$ is topologically conjugate to a computable action on an effectively closed subset of $\{{0},{1}\}^\N$. This finishes the proof. \end{proof}
        
We end this section by proving that when the action admits a generating
partition, the extension can be taken as an effective subshift. In
the zero-dimensional regime this is the computable version of the
fact that subshifts are precisely the expansive actions on zero-dimensional
sets.
\begin{thm}
\label{prop:expansive-actions-are-factors-of-effective-subshifts}
Let $\Gamma\curvearrowright X$ be an EDS, where $\Gamma$ is a finitely
generated and recursively presented group and the action admits a
generating open cover. Then $\Gamma\curvearrowright X$ is a topological
factor of an effective $\Gamma$-subshift.

Moreover, if $X$ is zero-dimensional, then $\Gamma\curvearrowright X$
is topologically conjugate to an effective $\Gamma$-subshift and
the recursively presented hypothesis on $\Gamma$ can be removed.
\end{thm}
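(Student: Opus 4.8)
The plan is to realize $\Gamma \curvearrowright X$ through a subshift cover built from a generating effective cover, and then to identify exactly when the associated labeling map is merely a factor map and when it is a conjugacy. First I would replace $\Gamma \curvearrowright X$ by a computable representative on a recursively compact set, as permitted by the definition of EDS, fix a finite symmetric generating set $S$, and pass to the induced action $F(S) \curvearrowright X$. By \Cref{prop:generating-covers-can-be-taken-effective}, the hypothesis that the action admits a generating open cover yields a generating \emph{effective} cover $\mathcal P = \{P_0,\dots,P_n\}$, and \Cref{prop:effective-covers-give-effective-subshifts} then guarantees that the subshift cover $Y(F(S)\curvearrowright X, \mathcal P)$ is an effective $F(S)$-subshift.

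The labeling map $\phi$ sends a configuration $y$ to the unique point $x \in X$ with $\underline{w}^{-1}x \in \overline{P_{y(w)}}$ for all $w \in F(S)$; uniqueness holds because $\mathcal P$ is generating, so the nested intersection $\bigcap_{w} \underline{w}^{-1}\overline{P_{y(w)}}$ of nonempty compacta has diameter tending to zero and is a singleton. This $\phi$ is continuous, equivariant and surjective: given $x$, choose for each $h \in \Gamma$ an index $c(h)$ with $h^{-1}x \in P_{c(h)}$ and set $y(w) = c(\underline{w})$. The only remaining issue is that $Y(F(S)\curvearrowright X, \mathcal P)$ is a priori an $F(S)$-subshift, since a word $w$ with $\underline{w}=1_\Gamma$ acts trivially on $X$ but may act nontrivially on the subshift cover. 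To cure this I would intersect with the pullback, setting $Y = Y(F(S)\curvearrowright X, \mathcal P)\cap \widehat{A^\Gamma}$, exactly as in the proof of \Cref{prop:main-result}. Since $\Gamma$ is recursively presented, $\widehat{A^\Gamma}$ is effectively closed (\Cref{example:fullshift_RP_is_effective}), so $Y$ is effectively closed, hence the pullback of an effective $\Gamma$-subshift $X'$; the surjectivity construction above already produces $\Gamma$-equivariant codings, so $\phi$ restricts to a factor map from $X'$ onto $X$. This settles the first claim.

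For the moreover part I would instead invoke \Cref{prop:generating-covers-can-be-taken-effective-and-partition-in-zero-dimension}, which, when $X$ is zero-dimensional, supplies a generating effective cover $\mathcal P$ that is also a partition into clopen sets. Then $\overline{P_i}=P_i$ and each point has a \emph{unique} coding $y_x(w)=$ the index $i$ with $\underline{w}^{-1}x \in P_i$, so $\phi$ becomes a bijection and therefore a topological conjugacy onto the subshift cover. The observation that removes the recursive-presentation hypothesis is that this uniqueness forces the codings to be $\Gamma$-equivariant automatically: if $\underline{w}=1_\Gamma$ then $w$ acts trivially on $X$, hence $y_x(wg)=y_x(g)$ for every configuration in the image, so $Y(F(S)\curvearrowright X, \mathcal P)\subset \widehat{A^\Gamma}$ with no intersection needed. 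Thus $Y(F(S)\curvearrowright X, \mathcal P)$ is already the pullback of a genuine $\Gamma$-subshift, effective by \Cref{prop:effective-covers-give-effective-subshifts}, and no effectiveness assumption on $\widehat{A^\Gamma}$, hence no recursive presentation, is required.

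The main obstacle I anticipate is precisely this equivariance bookkeeping: checking carefully that the subshift cover over the free group descends to a $\Gamma$-subshift, and pinning down that in the partition case the descent is automatic while in the general cover case it genuinely costs the pullback and thus recursive presentation. A secondary technical point is to verify that the generating property of $\mathcal P$, stated for the $\Gamma$-action, transfers to the induced $F(S)$-action so that $\phi$ is well defined; this is routine, since the $F(S)$-action factors through $\Gamma$ and joins over finite subsets of $F(S)$ coincide with joins over their images in $\Gamma$.
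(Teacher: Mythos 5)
Your proposal is correct and follows essentially the same route as the paper's proof: effective generating cover from \Cref{prop:generating-covers-can-be-taken-effective}, the effective subshift cover over $F(S)$ from \Cref{prop:effective-covers-give-effective-subshifts}, intersection with $\widehat{A^\Gamma}$ (using recursive presentation) to descend to a $\Gamma$-subshift, and the singleton-intersection factor map; in the zero-dimensional case, the partition from \Cref{prop:generating-covers-can-be-taken-effective-and-partition-in-zero-dimension} makes the map a conjugacy and the descent automatic. Your direct argument that uniqueness of codings forces $Y(F(S)\curvearrowright X,\mathcal P)\subset\widehat{A^\Gamma}$ is just a slightly more explicit version of the paper's observation that a conjugacy preserves the set of trivially acting elements, so there is no substantive difference.
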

\begin{proof}
 We first prove the result assuming that $\Gamma$ is recursively presented. By \Cref{prop:generating-covers-can-be-taken-effective}, $\Gamma\curvearrowright X$ admits an effective generating cover $\mathcal P$. Now let $S$ be a finite set of generators of $\Gamma$ and let $F(S)$ be the free group generated by $S$. Let $F(S)\curvearrowright X$ be the action induced by $\Gamma\curvearrowright X$, and consider the subshift $Y(F(S) \curvearrowright X, \mathcal P)$ defined in~\Cref{subsec:subshift-covers}. By \Cref{prop:effective-covers-give-effective-subshifts} we have that $Y(F(S) \curvearrowright X, \mathcal P)$ is effective. 
        
        Observe that some elements in $F(S)$ which are mapped to the identity element in $\Gamma$ and thus act trivially on $X$, have a non-trivial action on $Y(F(S) \curvearrowright X, \mathcal P)$. In order to solve this, we define $Y$ by  
        \[Y = Y(F(S) \curvearrowright X, \mathcal P)\cap \widehat{A^\Gamma},\]
        
        where $\widehat{A^\Gamma}$ is the pullback of $A^\Gamma$ in $F(S)$. As $\Gamma$ is recursively presented, the set $\widehat{A^\Gamma}$ is effectively closed (\Cref{example:fullshift_RP_is_effective}), and thus $Y$ is an effective subshift. 
        
        It is a standard fact that $X$ is a topological factor of $Y$ thanks to the fact that $\mathcal P$ is a generating cover. We sketch the argument for completeness.  Let $(F_n)_{n\in\mathbb{N}}$ be an increasing sequence of finite subsets of $F(S)$ whose union is $F(S)$. For each $y\in Y(F(S) \curvearrowright X, \mathcal P)$, we define $D(y)$ as the following subset of $X$: 
		\[ D(y) = \bigcap_{n\in\N } D(y|_{F_n}). \]
        Recall that $D(p)$, for a pattern $p$, was defined in \Cref{subsec:subshift-covers}. For each $y\in Y$ the set $D(y)$ is a singleton, being a nested intersection of nonempty compact sets with diameter tending to zero. Indeed, for each $n$ the set $D(y|_{F_n})$ is contained in the cover $\bigvee _{g\in F_n} g^{-1}\mathcal P$, and the diameters of these elements tend to zero by definition of generating cover. 

        We can now define a topological factor map $\phi\colon Y\to X$   by the expression
		\[\{\phi(y)\}=D(y).\]

        It is straightforward to verify that $\phi$ is continuous, surjective, and it commutes with the actions $\Gamma\curvearrowright X$ and $\Gamma\curvearrowright Y$.  

        We have proved the claim for recursively presented groups. We now prove that if $X$ is zero-dimensional, the hypothesis that $\Gamma$ is recursively presented can be removed. Note that if $X$ is zero-dimensional, then by \Cref{prop:generating-covers-can-be-taken-effective-and-partition-in-zero-dimension} we can assume that $\mathcal P$ is also a partition, which makes $\phi$ injective, and therefore a topological conjugacy. In particular, it preserves the set of group  elements which act trivially. More precisely, recall that $F(S)\curvearrowright X$ is the action induced by $\Gamma\curvearrowright X$. As $F(S)$ has decidable word problem, the previous construction shows that the action $F(S)\curvearrowright X$ is topologically conjugate to an effectively closed $F(S)$-subshift $Y$. Being a topological conjugacy, every element in $F(S)$ which maps to the trivial element of $\Gamma$ must act trivially on $Y$. Thus we can define a $\Gamma$-subshift $Z$ given by \[Z= \{z\in A^\Gamma: \mbox{ there exists } y \in Y \mbox{ where } y(w) = z(\underline{w}) \mbox{ for every } w \in F(S) \}.\] 
        It is straightforward that the pullback $\widehat{Z}$ to $F(S)$ is precisely $Y$, and thus $Z$ is an effective $\Gamma$-subshift as required. 
\end{proof}

\section{Simulation by subshifts of finite type}\label{sec:simu}

The main motivation behind \Cref{thm:zero_dim_effective_extension}
is that it can be used to enhance several results in the literature
that characterize zero-dimensional effective dynamical systems as
topological factors of subshifts of finite type. We shall provide
a general definition that will help us to summarize most of these
results.

Consider two groups $\Gamma,H$ and suppose there exists an epimorphism
$\psi\colon\Gamma\to H$, that is, that $H$ is a quotient of $\Gamma$.
Given an action $H\curvearrowright X$, recall that the \textbf{induced
action} of $H\curvearrowright X$ to $\Gamma$ is the action $\Gamma\curvearrowright X$
which satisfies that $g\cdot x=\psi(g)\cdot x$ for every $g\in\Gamma$.
\begin{defn}
Let $\Gamma$ and $H$ be finitely generated groups and $\psi\colon\Gamma\to H$
be an epimorphism. We say $\Gamma$ \textbf{simulates} $H$ (through
$\psi$) if for every effectively closed set $X\subset\{0,1\}^{\NN}$
and every computable action $H\curvearrowright X$ there exists a
$\Gamma$-SFT $Y$ such that the induced action $\Gamma\curvearrowright X$
is a topological factor of $\Gamma\curvearrowright Y$.
\end{defn}

If the map $\psi$ in the definition above is clear then we don't
mention it (for instance, if $\Gamma$ is a direct product of $H$
with another group we assume that $\psi$ is the projection to $H$).
Furthermore, if $\psi$ is an isomorphism, we say that $\Gamma$ is
a \textbf{self-simulable} group. Let us briefly summarize a few of
the simulation results available in the literature
\begin{thm}
\label{thm:simulation_results} The following simulation results hold
\begin{enumerate}
\item \cite{hochman_dynamics_2009} $\Gamma=\ZZ^{3}$ simulates $\ZZ$. 
\item \cite{barbieri_generalization_2019} Let $H$ be a finitely generated
group, $d\geq2$ and $\varphi\colon H\to\GL_{d}(\ZZ)$. Then the semidirect
product $\Gamma=\ZZ^{d}\rtimes_{\varphi}H$ simulates $H$ (through
the projection epimorphism). 
\item \cite{barbieri_geometric_2019} Let $H,G_{1},G_{2}$ be finitely generated
groups. Then $\Gamma=H\times G_{1}\times G_{2}$ simulates $H$. 
\item \cite{barbieri_soficity_2023} Let $H,K$ be finitely generated groups,
suppose $H$ is nonamenable and that $K$ has decidable word problem,
then $\Gamma=H\times K$ simulates $H$. 
\item \cite{barbieri_groups_2022} The following groups are self-simulable: 
\begin{itemize}
\item The direct product of any two finitely generated nonamenable groups. 
\item The general linear group $\GL_{n}(\ZZ)$ and the special linear group
$\text{SL}_{n}(\ZZ)$ for $n\geq5$. 
\item Thompson's group $V$ and the Brin-Thompson's groups $nV$ for $n\geq1$. 
\item Finitely generated non-amenable Branch groups. 
\item The automorphism group $\text{Aut}(F_{n})$ and the outer automorphism
group $\text{Out}(F_{n})$ of the free group $F_{n}$ on $n$ generators
for $n\geq5$. 
\item The braid groups $B_{n}$ for $n\geq7$ strands. 
\end{itemize}
\end{enumerate}
\end{thm}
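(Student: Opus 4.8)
The plan is to treat Theorem~\ref{thm:simulation_results} as a compilation of established results: each item is the main theorem of the paper cited beside it, so rather than supply independent arguments I would recall the single engine driving all of them and indicate how each instance specializes it. That engine, originating with Hochman and refined since, is to encode the $\Pi_1^0$ description of an effectively closed set $X\subset\{0,1\}^{\N}$ into the finitely many local rules of a $\Gamma$-SFT by means of a self-similar, hierarchical scaffold. An effectively closed set is exactly the set of sequences avoiding a recursively enumerable family of forbidden words; a universal Turing machine enumerating that family is embedded into ``computation zones'' laid out at every scale of an aperiodic substitutive structure, obtained either from Robinson-type tilings~\cite{robinson_undecidability_1971} or from the fixed-point self-simulating constructions of~\cite{durand_fixedpoint_2012}. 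The geometry of $\Gamma$ furnishes the room hosting these zones, and the epimorphism $\psi\colon\Gamma\to H$ dictates how the factor map reads off the encoded symbolic data and thereby recovers the induced action $\Gamma\curvearrowright X$.

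I would then run through the items as variations on where the ``computation room'' comes from. In item~(1) the two auxiliary directions of $\ZZ^{3}$ carry the space-time diagram of the machine that verifies membership in $X$, while the third direction carries the simulated $\ZZ$-shift; the factor map simply forgets the scaffold. Items~(2)--(4) replace the extra copies of $\ZZ$ by algebraic structure: the normal subgroup $\ZZ^{d}$ of the semidirect product in~(2), the two free factors $G_{1},G_{2}$ in~(3), and, for the nonamenable case~(4), the paradoxical geometry supplied by a translation-like action in the sense of~\cite{whyte_amenability_1999}, with the decidable word problem of $K$ ensuring that the verification stays computable. Item~(5) rests on a self-referential closure property: a group is self-simulable precisely when it can host a grid-like structure wide enough to run the scaffold on itself, and each listed example is certified against the structural criteria for self-simulability developed in~\cite{barbieri_groups_2022}, the prototypical sufficient condition being the presence of a product of two finitely generated nonamenable subgroups, from which the cases $\GL_{n}(\ZZ)$ and $\mathrm{SL}_{n}(\ZZ)$ with $n\geq5$, the braid groups $B_{n}$ with $n\geq7$, and the others are deduced.

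The genuine difficulty, common to every item, is the scaffold itself: constructing an SFT whose configurations are forced, by local rules alone, into an infinite hierarchy of nested and mutually synchronized computation zones, and showing that this forcing is simultaneously strong enough to check arbitrarily long membership constraints for $X$ and loose enough to admit the factor map onto the target system. Since exactly this is carried out in the cited references, I would not reproduce the constructions; instead I would phrase Theorem~\ref{thm:simulation_results} as a quotation of \cite{hochman_dynamics_2009,barbieri_generalization_2019,barbieri_geometric_2019,barbieri_soficity_2023,barbieri_groups_2022}, and use this section only to record how these zero-dimensional simulation results combine with \Cref{thm:zero_dim_effective_extension} to yield the factor-of-SFT statements for effective dynamical systems that are not zero-dimensional.
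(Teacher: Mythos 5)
Your proposal matches the paper's treatment exactly: the paper states Theorem~\ref{thm:simulation_results} purely as a summary of the cited literature, with no proof supplied, so quoting \cite{hochman_dynamics_2009,barbieri_generalization_2019,barbieri_geometric_2019,barbieri_soficity_2023,barbieri_groups_2022} is precisely what is done. Your expository sketch of the hierarchical simulation machinery is accurate background but is not needed for the paper's purposes, since the theorem serves only as input to \Cref{thm:simulation-by-SFTs-enhanced-version-GOTY} via \Cref{thm:zero_dim_effective_extension}.
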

We can enhance all of the above results using \Cref{thm:zero_dim_effective_extension}. 
\begin{thm}
Let $\Gamma,H$ be finitely generated groups and $\psi\colon\Gamma\to H$
be an epimorphism. Suppose further that $H$ is recursively presented. Then
$\Gamma$ simulates $H$ if and only if for every effective dynamical
system $H\curvearrowright X$ the induced action of $\Gamma$ is a
topological factor of a $\Gamma$-SFT. 
\end{thm}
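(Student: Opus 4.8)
The plan is to prove the two implications separately, with the substantive direction relying on \Cref{thm:zero_dim_effective_extension} to reduce a general effective dynamical system to a zero-dimensional one living in the Cantor set.

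For the backward implication, I would first observe that every computable action $H \curvearrowright X$ on an effectively closed set $X \subset \{0,1\}^{\NN}$ is \emph{itself} an EDS. Indeed, $\{0,1\}^{\NN}$ is a recursively compact computable metric space, so by \Cref{prop.equivalence.comp.closed.subset} the effectively closed subset $X$ is recursively compact, and $H \curvearrowright X$ is then a computable action on a recursively compact subset of a computable metric space; taking the identity as the topological conjugacy shows it is an EDS (with itself as computable representative). Applying the hypothesis to this EDS produces a $\Gamma$-SFT $Y$ such that the induced action $\Gamma \curvearrowright X$ is a topological factor of $\Gamma \curvearrowright Y$, which is exactly the assertion that $\Gamma$ simulates $H$ through $\psi$.

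For the forward implication, I would start from an arbitrary EDS $H \curvearrowright X$. Since $H$ is finitely generated and recursively presented, \Cref{thm:zero_dim_effective_extension} yields an effectively closed zero-dimensional set $\widetilde{X} \subset \{0,1\}^{\NN}$, a computable action $H \curvearrowright \widetilde{X}$, and a topological factor map $\pi \colon \widetilde{X} \to X$. Because $\Gamma$ simulates $H$, and $H \curvearrowright \widetilde{X}$ is a computable action on an effectively closed subset of the Cantor set, there is a $\Gamma$-SFT $Y$ and a $\Gamma$-factor map $\rho \colon Y \to \widetilde{X}$ for the induced actions. Composing, the map $\pi \circ \rho$ should exhibit the induced action $\Gamma \curvearrowright X$ as a topological factor of the $\Gamma$-SFT $Y$.

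The only point requiring care, and the main obstacle, is verifying that the factor map $\pi$ supplied by \Cref{thm:zero_dim_effective_extension} as an $H$-equivariant map remains equivariant for the \emph{induced} $\Gamma$-actions. This is immediate from the definition of induced action: if $\pi(h z) = h\,\pi(z)$ for all $h \in H$, then for $g \in \Gamma$ one computes $\pi(g \cdot z) = \pi(\psi(g) z) = \psi(g)\,\pi(z) = g \cdot \pi(z)$, so $\pi$ is $\Gamma$-equivariant through $\psi$. The same observation applies to $\rho$, and since a composition of surjective continuous equivariant maps is again a factor map, the chain $Y \to \widetilde{X} \to X$ closes the argument. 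The genuine content of the theorem is thus entirely packaged into \Cref{thm:zero_dim_effective_extension}; what remains is the bookkeeping of transporting equivariance along $\psi$ and chaining factor maps.
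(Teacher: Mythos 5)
Your proof is correct and follows essentially the same route as the paper's: the backward implication observes that a computable action on an effectively closed subset of $\{0,1\}^{\NN}$ is itself an EDS, and the forward implication applies \Cref{thm:zero_dim_effective_extension} to get a zero-dimensional computable extension, then invokes the simulation hypothesis and composes factor maps. The only difference is that you spell out the routine verification that $H$-equivariance transports to $\Gamma$-equivariance for the induced actions, a step the paper leaves implicit.
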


\begin{proof}
Suppose first that for every EDS $H\curvearrowright X$ the induced
action of $\Gamma$ is a topological factor of a $\Gamma$-SFT. As
a computable action on an effectively closed zero-dimensional set
is in particular an EDS, it follows that $\Gamma$ simulates $H$.

Conversely, let $H\curvearrowright X$ be an EDS. As $H$ is recursively
presented, \Cref{thm:zero_dim_effective_extension} implies that
there exists an effectively closed set $\widetilde{X}\subset\{0,1\}^{\NN}$
and a computable action $H\curvearrowright\widetilde{X}$ such that
$H\curvearrowright X$ is a topological factor of $H\curvearrowright\widetilde{X}$.
Taking the induced action of $\Gamma$ of these two actions it follows
that $\Gamma\curvearrowright X$ is a topological factor of $\Gamma\curvearrowright\widetilde{X}$.
As $\Gamma$ simulates $H$, there exists a $\Gamma$-SFT $Y$ which
factors onto $\Gamma\curvearrowright\widetilde{X}$ and thus onto
$\Gamma\curvearrowright X$. 
\end{proof}
Let us illustrate how \Cref{thm:zero_dim_effective_extension}
can be applied with a few examples.
\begin{example}
The action $\GL_{n}(\ZZ)\curvearrowright\RR^{n}/\ZZ^{n}$ by left
matrix multiplication is an EDS. For $n\geq5$ the group $\GL_{n}(\ZZ)$
is both recursively presented and self-simulable, thus it follows
by \Cref{thm:simulation-by-SFTs-enhanced-version-GOTY} that
there exists a $\GL_{n}(\ZZ)$-SFT which topologically factors onto
$\GL_{n}(\ZZ)\curvearrowright\RR^{n}/\ZZ^{n}$. 
\end{example}

\begin{example}
   Let $\{\beta_i\}_{i=1}^4$ be four computable points in $\RR/\ZZ$. Consider $F_2 \times F_2 = \langle a_1,a_2 \rangle \times \langle a_3,a_4 \rangle$ and the action $F_2 \times F_2 \curvearrowright \RR/\ZZ$ given by $a_i\cdot x = x+\beta_i \bmod{\ZZ}$ for $i \in \{1,\dots,4\}$ and remark that this action is in fact the pullback of an action $\ZZ^4 \curvearrowright \RR/\ZZ$.  As $F_2\times F_2$ is recursively presented and self-simulable, it follows by~\Cref{thm:simulation-by-SFTs-enhanced-version-GOTY} that there exists an $(F_2\times F_2)$-SFT which topologically factors onto $F_2 \times F_2 \curvearrowright \RR/\ZZ$. 
\end{example}

\begin{example}
Let $G$ be a group, $n\geq2$ and consider $\Delta_{n}(G)\subset G^{n}$
the space of all $n$-tuples of elements of $G$ whose product is
trivial, that is 
\[
\Delta_{n}(G)=\{(x_{1},\dots,x_{n})\in G^{n}:x_{1}\cdots x_{n}=1_{G}\}
\]
The braid group $B_{n}$ on $n$ strands acts on $\Delta_{n}(G)$
by 
\[
\sigma_{i}(x_{1},\dots,x_{i-1},x_{i},x_{i+1},\dots,x_{n})=(x_{1},\dots,x_{i-1},x_{i+1},x_{i+1}^{-1}x_{i}x_{i+1},x_{i+2},\dots,x_{n}).
\]
Where $1\leq i\leq n-1$ and $\sigma_{i}$ is the element of $B_{n}$
which crosses strands $i$ and $i+1$ (for the definition of braid
group, refer to \cite{Birman_Brendle_2005_braidgroupssurvey}). If
we let $G$ be any compact topological group which admits a computable
structure such that group multiplication is a computable map, for
instance the group of unitary complex matrices $G=U(k)\leqslant\GL_{k}(\CC)$
for some $k\geq1$, then $\Delta_{n}(G)$ is an effectively closed
set of $G^{n}$ and the action $B_{n}\curvearrowright\Delta_{n}(G)$
is computable. It follows that if $n\geq7$ then there exists a $B_{n}$-SFT
which topologically factors onto $B_{n}\curvearrowright\Delta_{n}(G)$. 
\end{example}

In~\cite{barbieri_groups_2022} it was proven that Thompson's group
$F$ %
{} is self-simulable if and only if $F$ is non-amenable, which is a
longstanding open question. We can therefore strengthen the characterization
of the potential amenability of $F$ in the following way:
\begin{cor}
Thompson's group $F$ is amenable if and only if there exists an EDS
$F\curvearrowright X$ which is not the topological factor of an $F$-SFT. 
\end{cor}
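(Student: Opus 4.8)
The plan is to combine the characterization of self-simulability in terms of factors of EDS, provided by \Cref{thm:simulation-by-SFTs-enhanced-version-GOTY}, with the known equivalence between self-simulability and non-amenability for Thompson's group $F$ established in \cite{barbieri_groups_2022}. The corollary is then obtained by chaining these two biconditionals and taking a contrapositive.

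First I would observe that $F$ is finitely presented, and hence recursively presented, so that the standing hypothesis of \Cref{thm:simulation-by-SFTs-enhanced-version-GOTY} is satisfied. Applying that theorem in the special case $\Gamma=H=F$ with $\psi=\mathrm{id}$, the induced action of $\Gamma$ through $\psi$ is literally the original action $F\curvearrowright X$, and the phrase ``$\Gamma$ simulates $H$ through $\psi$'' reduces exactly to the statement that $F$ is self-simulable. The theorem therefore yields the equivalence: $F$ is self-simulable if and only if every EDS $F\curvearrowright X$ is a topological factor of an $F$-SFT.

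Next I would invoke the result of \cite{barbieri_groups_2022} that $F$ is self-simulable if and only if $F$ is non-amenable. Composing the two equivalences gives that $F$ is non-amenable if and only if every EDS $F\curvearrowright X$ is a topological factor of an $F$-SFT. Negating both sides of this biconditional yields that $F$ is amenable if and only if there exists an EDS $F\curvearrowright X$ that is not a topological factor of an $F$-SFT, which is precisely the claimed statement.

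There is no substantial obstacle here: the argument is a direct combination of two cited equivalences. The only point requiring care is correctly identifying the identity-epimorphism instance of \Cref{thm:simulation-by-SFTs-enhanced-version-GOTY} with the definition of a self-simulable group, so that the conclusion ``topological factor of an $F$-SFT'' transfers verbatim to the original action rather than to some genuinely induced action.
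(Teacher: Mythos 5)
Your proposal is correct and follows exactly the argument the paper intends: the corollary is stated right after \Cref{thm:simulation-by-SFTs-enhanced-version-GOTY} precisely as the combination of that theorem (in the case $\Gamma=H=F$, $\psi=\mathrm{id}$, using that $F$ is finitely presented and hence recursively presented) with the result of \cite{barbieri_groups_2022} that $F$ is self-simulable if and only if it is non-amenable, followed by taking contrapositives. Nothing is missing.
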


In the literature, there are a few simulation results which apply
exclusively to expansive maps. More precisely, let $\Gamma$ and $H$
be finitely generated groups and $\psi\colon\Gamma\to H$ be an epimorphism.
We say $\Gamma$ \textbf{simulates expansive actions of} $H$ (through
$\psi$) if for every effective subshift $X\subset A^{H}$ there exists
a $\Gamma$-SFT $Y$ such that the pullback subshift $\Gamma\curvearrowright X$
is a topological factor of $\Gamma\curvearrowright Y$. The most famous
results are due to Aubrun and Sablik \cite{aubrun_simulation_2013}
and Durand, Romaschenko and Shen~ which state that $\ZZ^{2}$ simulates
expansive actions of $\ZZ$ (through the projection epimorphism).

In light of \Cref{prop:expansive-actions-are-factors-of-effective-subshifts},
we can improve the expansive simulation theorems in the following
way:
\begin{thm}
Let $\Gamma,H$ be finitely generated groups and $\psi\colon\Gamma\to H$
be an epimorphism. Suppose that $H$ is recursively presented, then
$\Gamma$ simulates expansive actions of $H$ if and only if for every
effective dynamical system $H\curvearrowright X$ which admits a generating
cover (in particular, every expansive EDS) the induced action of $\Gamma$
is a topological factor of a $\Gamma$-SFT. 
\end{thm}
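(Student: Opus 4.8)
The plan is to follow the exact template of the proof of \Cref{thm:simulation-by-SFTs-enhanced-version-GOTY}, replacing the zero-dimensional extension result \Cref{thm:zero_dim_effective_extension} by its expansive counterpart \Cref{prop:expansive-actions-are-factors-of-effective-subshifts}, and replacing ``simulates'' by ``simulates expansive actions.'' The key bridge I would set up first is the identification between the two formulations of the action appearing in the statement. Given an $H$-subshift $X\subset A^{H}$, the pullback subshift $\Gamma\curvearrowright X$ is the image of $X$ under the map $\psi^{*}\colon A^{H}\to A^{\Gamma}$ defined by $\psi^{*}(z)(g)=z(\psi(g))$. One checks that $\psi^{*}$ intertwines the $\Gamma$-shift on $A^{\Gamma}$ with the induced $\Gamma$-action on $A^{H}$, namely $g\cdot\psi^{*}(z)=\psi^{*}(\psi(g)\cdot z)$, and that $\psi^{*}$ is injective because $\psi$ is surjective. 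Hence $\psi^{*}$ is a topological conjugacy between the induced action $\Gamma\curvearrowright X$ (used in the theorem statement) and the pullback subshift $\psi^{*}(X)\subset A^{\Gamma}$ (used in the definition of simulating expansive actions). I would use this conjugacy freely to pass between the two descriptions.

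For the converse direction (assuming the factoring property, deducing expansive simulation), I would start from an arbitrary effective subshift $X\subset A^{H}$. By \Cref{example:effective_subshift_is_EDS} it is an EDS, and it admits the generating open cover given by the clopen partition $\{[a]\cap X : a\in A\}$ into the cylinders $[a]=\{x : x(1_{H})=a\}$ fixing the symbol at the identity. This cover is generating precisely because subshifts are expansive: the join $\bigvee_{g\in F}g^{-1}\mathcal P$ separates configurations according to their restriction to $F$, so its diameter tends to zero as $F$ exhausts $H$. Thus $X$ is an EDS admitting a generating cover, and by hypothesis the induced action $\Gamma\curvearrowright X$ is a topological factor of some $\Gamma$-SFT $Y$. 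Transporting through the conjugacy $\psi^{*}$, the pullback subshift $\Gamma\curvearrowright X$ is a topological factor of $Y$, which is exactly the assertion that $\Gamma$ simulates expansive actions of $H$.

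For the forward direction, I would take an EDS $H\curvearrowright X$ admitting a generating open cover. Since $H$ is finitely generated and recursively presented, \Cref{prop:expansive-actions-are-factors-of-effective-subshifts} produces an effective $H$-subshift $Z\subset A^{H}$ together with a topological factor map $Z\to X$. Inducing everything through $\psi$ keeps this map equivariant, since an $H$-equivariant surjection is automatically $\Gamma$-equivariant for the induced actions (as $g$ acts through $\psi(g)$); so $\Gamma\curvearrowright Z\twoheadrightarrow\Gamma\curvearrowright X$ is a factor of induced actions. Because $Z$ is an effective $H$-subshift and $\Gamma$ simulates expansive actions of $H$, there is a $\Gamma$-SFT $Y$ with the pullback subshift $\Gamma\curvearrowright Z$ a topological factor of $Y$. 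Composing the chain $Y\twoheadrightarrow\psi^{*}(Z)\cong\Gamma\curvearrowright Z\twoheadrightarrow\Gamma\curvearrowright X$ then exhibits the induced action $\Gamma\curvearrowright X$ as a topological factor of the $\Gamma$-SFT $Y$.

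The argument is essentially a diagram chase, and all the analytic content is imported from \Cref{prop:expansive-actions-are-factors-of-effective-subshifts}. The main point to get right is the bookkeeping between the induced action $\Gamma\curvearrowright X$ and the genuine $\Gamma$-subshift $\psi^{*}(X)\subset A^{\Gamma}$, which is handled once by recording that surjectivity of $\psi$ makes $\psi^{*}$ a conjugacy. The only other thing to confirm is that the generating-cover hypothesis is exactly what upgrades \Cref{prop:expansive-actions-are-factors-of-effective-subshifts} from a zero-dimensional extension to a genuine subshift factor, and conversely that effective subshifts supply such a cover for free via expansiveness. I do not expect any genuinely hard step beyond this.
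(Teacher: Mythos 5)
Your proof is correct and takes essentially the same route the paper intends: the paper states this theorem without an explicit proof, presenting it as the direct analogue of \Cref{thm:simulation-by-SFTs-enhanced-version-GOTY} with \Cref{prop:expansive-actions-are-factors-of-effective-subshifts} replacing \Cref{thm:zero_dim_effective_extension}, and your argument executes exactly that template. The two details you fill in explicitly (that $\psi^{*}$ conjugates the induced action with the pullback subshift, and that the cylinder partition at the identity is a generating clopen cover for any effective subshift) are the right bookkeeping and are handled correctly.
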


\section{Subshifts that are EDS}\label{sec:computability-on-shift-spaces}
In this section we compare the notion of EDS with that of effective subshift, defined in \Cref{chap:computable_analysis_subshifts}. The following result shows that given a subshift $X$, the action $G\curvearrowright A^G$ admits some computable representative exactly when the pullback defined in \Cref{chap:computable_analysis_subshifts} is a computable representative.  
\begin{prop}
\label{prop:EDS_and_effective_subshift_are_equivalent} Let $\Gamma$
be a finitely generated group. A subshift $X\subset A^{\Gamma}$ is
an EDS if and only if it is effective. 
\end{prop}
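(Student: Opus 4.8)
The plan is to prove the two implications separately, relying on the machinery already developed for effective subshifts and on the zero-dimensional expansive case of \Cref{prop:expansive-actions-are-factors-of-effective-subshifts}.

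For the implication ``effective $\Rightarrow$ EDS'' I would essentially reproduce \Cref{example:effective_subshift_is_EDS}. If $X$ is effective then its pullback $\widehat X\subset A^{F(S)}$ is effectively closed (\Cref{def:effectivesubshiftthroughfreegroup}). Since $F(S)$ is a free group, and hence has decidable word problem, $A^{F(S)}$ carries the computable metric space structure of \Cref{computable-metric-space-structure-on-A^G} and is recursively compact, as it is recursively homeomorphic to a product of finite alphabets (\Cref{exa:computable-infinite-product-of-finite-alphabets}). An effectively closed subset of a recursively compact space is recursively compact (\Cref{prop.equivalence.comp.closed.subset}), so $\widehat X$ is recursively compact. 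The shift action of $F(S)$ is computable (\Cref{shift-action-is-computable}) and descends to a computable $\Gamma$-action on $\widehat X$ by acting through representatives of the generators; since $X$ is conjugate to $\widehat X$ as a $\Gamma$-system, this exhibits a computable representative and $X$ is an EDS.

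For the converse ``EDS $\Rightarrow$ effective'', the key observation is that a subshift $X\subset A^\Gamma$ is automatically zero-dimensional and expansive: the partition $\mathcal P=\{[a]\cap X:a\in A\}$ into clopen cylinders is a generating open cover, because the join $\bigvee_{g\in F} g^{-1}\mathcal P$ over larger and larger finite $F\subset\Gamma$ pins down each $x$ on larger and larger portions of $\Gamma$, forcing the diameter to tend to zero. Thus $X$ is an EDS that is zero-dimensional and admits a generating open cover, so the ``moreover'' clause of \Cref{prop:expansive-actions-are-factors-of-effective-subshifts} applies, and crucially it does not require $\Gamma$ to be recursively presented, yielding a topological conjugacy between $X$ and \emph{some} effective $\Gamma$-subshift $Z$. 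Finally, since a conjugacy is in particular a surjective morphism, $X$ is a topological factor of $Z$, and the class of effective subshifts is closed under factor maps (\Cref{effective-subshifts-are-closed-by-factors}); hence $X$ is effective.

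The main obstacle is precisely this converse direction, where the conjugacy furnished by the EDS hypothesis is \emph{not} assumed computable, so one cannot transport effectiveness directly along it. The plan sidesteps this by not transporting computability through the abstract conjugacy at all: one re-extracts an effective symbolic model intrinsically from the computable representative (via the generating-cover construction internal to \Cref{prop:expansive-actions-are-factors-of-effective-subshifts}), and then transfers effectiveness through the resulting conjugacy using only the fact that effectiveness is preserved under factor maps, a property that ultimately holds because morphisms of subshifts are automatically computable on pullbacks. I would double-check that the hypotheses of the zero-dimensional clause are genuinely met; since both zero-dimensionality and the existence of a generating cover are conjugacy invariants, they may be read off from the subshift $X$ itself rather than from its representative, which is exactly what permits dropping the recursive-presentation hypothesis.
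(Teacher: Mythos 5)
Your proof is correct, but for the nontrivial direction it takes a genuinely different route from the paper. The forward direction (effective $\Rightarrow$ EDS) is the same in both: it is exactly \Cref{example:effective_subshift_is_EDS}. For the converse, the paper argues directly: after normalizing the computable representative to an effectively closed $Y\subset\{0,1\}^{\N}$ via \Cref{cor:cantor-representative}, it uses the abstract (non-computable) conjugacy $\phi\colon Y\to\widehat{X}$ only through the observation that each preimage $C_a=\phi^{-1}([a])$ of a letter cylinder is clopen in $Y$, hence a \emph{finite} union of basic cylinders whose description can be hard-coded non-uniformly into the algorithm; then $[p]\cap\widehat{X}=\varnothing$ if and only if $\bigcap_{w\in W}w\cdot C_{p(w)}=\varnothing$, which is semi-decidable by computability of the action together with recursive compactness, so the maximal set of forbidden patterns is recursively enumerable. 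You instead observe that a subshift is automatically zero-dimensional and that the letter partition is a generating open cover, note that both properties are conjugacy invariants (so they pass to the computable representative), and then invoke the ``moreover'' clause of \Cref{prop:expansive-actions-are-factors-of-effective-subshifts} followed by closure of effective subshifts under factor maps (\Cref{effective-subshifts-are-closed-by-factors}). This is legitimate and non-circular, since that theorem is proved earlier in the paper and independently of the present proposition. What your route buys is economy and a conceptual point: the proposition becomes a corollary of the expansive, zero-dimensional case of the simulation machinery, with every transfer of computability handled by invariance statements rather than by hand. What the paper's route buys is self-containment and transparency: it isolates the one real trick (clopen preimages in Cantor space admit finite, hard-codable descriptions, and recursive compactness semi-decides emptiness of effectively closed intersections), which is in fact the same mechanism hidden inside \Cref{prop:effective-covers-give-effective-subshifts} and the effective-partition lemmas that your argument calls upon as black boxes.
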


\begin{proof}
Let us recall that for a finitely generated group $\Gamma$ a subshift
$X\subset A^{\Gamma}$ is effective if its pullback into the full
shift $A^{F(S)}$ is an effectively closed set. We already argued
in \Cref{rem:fullshiftRPiseffective} that every effective subshift
is an EDS. Let us fix a finite set of generators $S$ of $\Gamma$
and denote by $\widehat{X}\subset A^{F(S)}$ the subshift induced
by $X$ on the free group $F(S)$. Suppose that $\Gamma\curvearrowright X$
is topologically conjugate to a computable action on an effectively
closed subset $Y$ of a recursively compact metric space. By \Cref{cor:cantor-representative},
we may assume without loss of generality that $Y$ is an effectively
closed subset of $\{0,1\}^{\NN}$ with the canonical computable structure.
As $\Gamma\curvearrowright X$ is topologically conjugate to $\Gamma\curvearrowright\widehat{X}$,
there is a topological conjugacy $\phi\colon Y\to\widehat{X}$. For
every $a\in A$, let $[a]=\{\widehat{x}\in\widehat{X}:\widehat{x}(1_{F(S)})=a\}$.
As $\phi$ is a homeomorphism, we have that $C_{a}=\phi^{-1}([a])$
is a finite union of cylinder sets in $Y$ and thus an effectively
closed subset of $Y$.

We claim that there exists an algorithm that given $W\Subset F(S)$
and a pattern $p\colon W\to A$, accepts if and only if $[p]\cap\widehat{X}=\varnothing$.
Indeed, given such a $p$, we may compute a description of $C_{p}=\bigcap_{w\in W}w\cdot C_{p(w)}$.
As the action $\Gamma\curvearrowright Y$ is computable, it follows
that each $w\cdot C_{p(w)}$ is an effectively closed set and thus,
as the intersection of finitely many effectively closed sets is effectively
closed, one may algorithmically detect whether $C_{p}=\varnothing$,
which occurs precisely when $[p]\cap\widehat{X}=\varnothing$. It
follows that the collection $\mathcal{F}_{\mathrm{max}}=\{p\mbox{ is a pattern }:[p]\cap\widehat{X}=\varnothing\}$
is recursively enumerable. As $\mathcal{F}_{\mathrm{max}}$ is the
maximal set of patterns which defines $\widehat{X}$, it follows that
$\widehat{X}$ is an effectively closed set. 
\end{proof}

We remark that for an effective cover $\mathcal{P}$ of a recursively
compact metric space $X$, the subshift $Y(\Gamma\curvearrowright X,\mathcal{P})$
from \Cref{prop:effective-covers-give-effective-subshifts} is
ECP but not necessarily effective (see \Cref{def:ecp}). This is fundamentally the reason
why we need to ask that $\Gamma$ is recursively presented in \Cref{thm:zero_dim_effective_extension}.

\section{Topological factors of effective dynamical systems}\label{sec:factors}

We begin by showing that the class of EDS is not closed by topological
factors. This shows that subshifts of finite type on
certain groups may have factors that are not EDS. However, we shall
show that under certain conditions such as having topological zero
dimension, we can still provide computational restrictions that said
factors must satisfy.

Given an action $\Gamma\curvearrowright X$, we define its set of
periods as 
\[
\mathtt{Per}(\Gamma\curvearrowright X)=\{g\in\Gamma:\mbox{ there is }x\in X\mbox{ such that }gx=x\}
\]

The following lemma holds in general for any finitely generated
group with decidable word problem. However, we shall only need it
for the special case of $\ZZ$-actions.
\begin{lem}
Let $\ZZ\curvearrowright X$ be an EDS. Then $\mathtt{Per}(\ZZ\curvearrowright X)$
is a co-recursively enumerable set. 
\end{lem}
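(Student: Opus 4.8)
The plan is to exploit that $\mathtt{Per}(\ZZ\curvearrowright X)$ is a topological conjugacy invariant, so that I may replace the given EDS by a computable representative. Since $\ZZ\curvearrowright X$ is an EDS, it is topologically conjugate to a computable action $\ZZ\curvearrowright \widehat X$ on a recursively compact subset $\widehat X$ of a computable metric space $\mathcal X$. A conjugacy $\phi$ satisfies $\phi(nx)=n\phi(x)$, so $nx=x$ has a solution in $X$ if and only if $ny=y$ has a solution in $\widehat X$; hence the two systems have the same set of periods, and it suffices to prove the claim for the computable representative. From now on I would work with this representative, again denoted $X$, with $T=f_1$ the computable homeomorphism generating the action.

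The key observation is that $n\notin\mathtt{Per}$ exactly when the map $f_n\colon x\mapsto nx$ has no fixed point, and I would show this condition is semi-decidable uniformly in $n$. As the action is computable, the maps $f_n=T^n$ are uniformly computable in $n$. Applying \Cref{prop:fixed-points-of-computable-function} to each $f_n$ yields, uniformly in $n$, an effectively closed set $C_n\subset\mathcal X$ with
\[
\{x\in X : f_n(x)=x\}=C_n\cap X.
\]
The uniformity is immediate from the proof of that proposition, whose steps (forming $x\mapsto d(x,f_n(x))$, pulling back the effectively open set $\{t\in\RR : t\ne 0\}$, and taking the complement) are all carried out algorithmically from a description of $f_n$.

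Next I would argue that $C_n\cap X$ is recursively compact, uniformly in $n$. Indeed $X$ is recursively compact, hence effectively closed by \Cref{recursively-compact-is-closed}, so $C_n\cap X$ is an intersection of two effectively closed sets and therefore effectively closed; being contained in the recursively compact set $X$, it is itself recursively compact by \Cref{prop.equivalence.comp.closed.subset}. The final ingredient is that in a recursively compact space emptiness is semi-decidable: taking the empty finite set $F=\varnothing$ in the definition of recursive compactness, one has $K\subset U_\varnothing=\varnothing$ precisely when $K=\varnothing$, and the defining enumeration detects this. Consequently the condition $n\notin\mathtt{Per}$, i.e. $C_n\cap X=\varnothing$, is semi-decidable uniformly in $n$, which exhibits the complement of $\mathtt{Per}(\ZZ\curvearrowright X)$ as a recursively enumerable subset of $\ZZ$. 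This is exactly the assertion that $\mathtt{Per}(\ZZ\curvearrowright X)$ is co-recursively enumerable.

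I expect the only delicate point to be bookkeeping the uniformity in $n$ across the three steps (uniform computability of $f_n$, uniform production of $C_n$, and uniform semi-decision of emptiness); each individual step is a direct citation, but one must check that the algorithms compose into a single procedure taking $n$ as input. Nothing in the argument uses more than the decidability of the word problem of $\ZZ$, so the same proof yields the statement for an arbitrary finitely generated group with decidable word problem, as remarked before the lemma.
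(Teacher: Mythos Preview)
Your proof is correct and follows essentially the same strategy as the paper: pass to a computable representative, observe that the fixed-point set of $T^n$ is an effectively closed subset of the recursively compact $X$ (hence recursively compact) uniformly in $n$, and then use that emptiness of a recursively compact set is semi-decidable. The only cosmetic difference is that the paper phrases the fixed-point computation via the diagonal $\Delta^2\subset X^2$ and the maps $(x,y)\mapsto (T^n x,y)$, whereas you invoke \Cref{prop:fixed-points-of-computable-function} directly; since that proposition is itself proved via $x\mapsto d(x,f(x))$, the two arguments unwind to the same thing.
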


\begin{proof}
We will prove that the complement of $\mathtt{Per}(\ZZ\curvearrowright X)$
is a recursively enumerable subset of $\ZZ$. Without loss of generality,
replace $\ZZ\curvearrowright X$ by a computable representative. Then
the product space $X^{2}$ is recursively compact and thus the diagonal
$\Delta^{2}=\{(x,x)\in X^{2}:x\in X\}$ is an effectively closed subset
and thus also recursively compact. As $\ZZ\curvearrowright X$ is
computable, it follows that the collection of diagonal maps $f_{n}\colon X^{2}\to X^{2}$
given by $(x,y)\mapsto(T^{n}(x),y)$ for $n\in\ZZ$ is uniformly computable,
and thus the collection of sets $Y_{n}=f_{n}^{-1}(\Delta^{2})$ is
uniformly recursively compact. It follows that we can recursively
enumerate the integers $n$ such that $Y_{n}\cap\Delta^{2}=\varnothing$.
In other words, the set 
\[
\ZZ\setminus\mathtt{Per}(\ZZ\curvearrowright X)=\{n\in\ZZ:Y_{n}\cap\Delta^{2}=\varnothing\},
\]
is recursively enumerable. 
\end{proof}
The set of periods is an invariant of topological conjugacy, therefore
any action $\ZZ\curvearrowright X$ for which the set of periods is
not co-recursively enumerable cannot be topologically conjugate to
an EDS.
\begin{prop}
The class of EDS is not closed under topological factor maps. 
\end{prop}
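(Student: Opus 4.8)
The plan is to exhibit a single EDS $\ZZ\curvearrowright X$ together with a topological factor $\ZZ\curvearrowright Y$ whose set of periods fails to be co-recursively enumerable; by the preceding lemma and the invariance of $\mathtt{Per}$ under topological conjugacy, such a $Y$ cannot be an EDS, and this proves the proposition. The whole point is that a factor map can manufacture periodic points out of aperiodic ones, so the complexity lives entirely in the image.

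First I fix the arithmetic skeleton. Let $A\subseteq\NN$ be the halting set, which is recursively enumerable but whose complement is not, and let $(p_i)_{i\in\NN}$ be the increasing enumeration of the primes. I will arrange that
\[
\mathtt{Per}(\ZZ\curvearrowright Y)=\{0\}\cup\bigcup_{i\in A}p_i\ZZ .
\]
Since $p_j\mid p_i$ only when $i=j$, and since I will ensure no point of $Y$ is fixed (all minimal periods being primes $\geq 2$), this forces $p_i\in\mathtt{Per}(\ZZ\curvearrowright Y)$ exactly when $i\in A$. As $i\mapsto p_i$ is computable, if $\mathtt{Per}(\ZZ\curvearrowright Y)$ were co-recursively enumerable then $\{i:p_i\notin\mathtt{Per}(\ZZ\curvearrowright Y)\}=\NN\setminus A$ would be recursively enumerable, contradicting the choice of $A$. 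Hence $\mathtt{Per}(\ZZ\curvearrowright Y)$ is not co-r.e.

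For the construction I fix a computable dyadic odometer $\ZZ\curvearrowright O$; it is minimal, satisfies $\mathtt{Per}(\ZZ\curvearrowright O)=\{0\}$, and is an EDS. For each $i\in A$ I form the gadget $G_i=(\ZZ/p_i\ZZ)\times O$ with the diagonal action $(a,\omega)\mapsto(a+1,T_O\,\omega)$. Because $T_O^{\,n}\omega=\omega$ has no solution for $n\neq 0$, no point of $G_i$ is periodic in $X$; nevertheless the first-coordinate projection $G_i\to\ZZ/p_i\ZZ$ is an equivariant surjection onto a cycle of length $p_i$. I assemble $X$ as the disjoint union of the $G_i$ for $i\in A$ together with one extra copy of $O$ serving as accumulation set, realized inside a computable metric space by placing $G_i$ at scale $2^{-\tau(i)}$, where $\tau(i)$ is the first stage at which $i$ is enumerated into $A$ (computable, since $A$ is r.e.); the gadgets then accumulate onto the limit copy of $O$ and the space is compact. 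The factor $\phi\colon X\to Y$ forgets the odometer coordinate on each $G_i$ and is the identity on the accumulation copy of $O$, so that $Y=\big(\bigsqcup_{i\in A}\ZZ/p_i\ZZ\big)\sqcup O$ with cycles accumulating onto $O$. Its periods are then exactly $\{0\}\cup\bigcup_{i\in A}p_i\ZZ$, as the limit copy of $O$ contributes only $\{0\}$; crucially, keeping the accumulation set a copy of $O$ rather than a single point avoids creating a global fixed point, which would otherwise force every integer to be a period.

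It remains to check that $X$ is an EDS, and this is where the main difficulty lies. The action on each $G_i$ is computable and $G_i$ is recursively compact uniformly in $i$; using that $A$ is recursively enumerable together with the halting-time function $\tau$, the placement at scale $2^{-\tau(i)}$ is computable, so the product and inverse-limit constructions established above, together with the closure properties of effectively closed and recursively compact sets, present $X$ as an effectively closed, recursively compact subset of a computable metric space on which $\ZZ$ acts computably. The subtle points that constitute the real work are (i) arranging the geometry so the closure of $\bigsqcup_i G_i$ adds only the single aperiodic copy of $O$ and no uncontrolled limit points — guaranteed by the minimality and aperiodicity of $O$ — and (ii) verifying that $\phi$ is genuinely continuous at the accumulation set, which forces the cycles to be inserted at shrinking scales synchronized with $\tau$. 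Once these are in place, $X$ is a computable action on a recursively compact subset of a computable metric space, hence an EDS, while $\mathtt{Per}(\ZZ\curvearrowright X)=\{0\}$ is harmlessly co-r.e., consistent with the lemma; all of the non-co-r.e. complexity is manufactured by the factor $\phi$.
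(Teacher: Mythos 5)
Your target invariant and its non-co-r.e.\ computation are fine: if one could build an EDS $\ZZ\curvearrowright X$ factoring onto a system $Y$ with $\mathtt{Per}(\ZZ\curvearrowright Y)=\{0\}\cup\bigcup_{i\in A}p_i\ZZ$, the proposition would follow from the lemma. The fatal gap is that your factor map $\phi$ is not continuous, and no placement of the cycles inside $Y$ can make it continuous. Concretely: fix $\omega\neq\omega'$ in the limit copy of $O$, and for each $i\in A$ consider $x_i=(0,\omega)$ and $x_i'=(0,\omega')$ in $G_i$. In your embedding the cycle coordinate carries no geometric size (each fiber $\{a\}\times O$ sits within $2^{-\tau(i)}$ of the limit copy, point by point), so $x_i\to\omega$ and $x_i'\to\omega'$ as $i\to\infty$ along $A$, while $\phi(x_i)=\phi(x_i')$ is one and the same cycle point of $Y$ for every $i$. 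Continuity of $\phi$ and uniqueness of limits in the metric space $Y$ force $\phi(\omega)=\phi(\omega')$. Since $\omega,\omega'$ were arbitrary, $\phi$ must be \emph{constant} on the limit copy of $O$; its image is an invariant singleton, hence a fixed point of $Y$. So any continuous equivariant map that collapses the fibers $\{a\}\times O$ automatically recreates exactly what you were trying to avoid: $Y$ acquires a fixed point, $\mathtt{Per}(\ZZ\curvearrowright Y)=\ZZ$ is co-r.e., and the reduction to the halting set evaporates. (Equivalently: the quotient of $X$ by the fiber partition is not Hausdorff unless the limit copy of $O$ is collapsed as well.) This is a structural obstruction, not a matter of synchronizing insertion scales with $\tau$, so point (ii) of your ``remaining work'' cannot be carried out.

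The natural patch---shrink the fibers so that each one converges to a single point of the limit set, which restores continuity of $\phi$---trades this problem for another: continuity of the action on $X$ then forces the $p_i$ fiber-positions to be pseudo-periodic orbits of period $p_i$ with errors tending to $0$, accumulating onto the limit dynamics; and no odometer admits such approximations by odd prime periods (in the dyadic odometer $T^{p}x$ and $x$ already differ in the $\ZZ/2\ZZ$ coordinate when $p$ is odd, so $d(T^px,x)$ is uniformly bounded below). A genuine repair along your lines would need both shrinking \emph{aperiodic} fibers and a limit system that the chosen periodic orbits can actually shadow---e.g.\ tori (rational rotation times a tiny irrational rotation) accumulating onto a circle carrying a computable irrational rotation approximated by its convergents. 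The paper avoids this entire tension with a far more economical example: circles of radius $1/n$ rotating by $2\pi/n$, accumulating onto the origin, with the factor collapsing the circles indexed outside $A$ to the origin. Note that the paper pays a price you correctly sensed: its $Y$ contains the fixed point $0$, so under the literal definition of $\mathtt{Per}$ every integer is a period, and the computation $\mathtt{Per}(\ZZ\curvearrowright Y)=\ZZ A$ is really about periods witnessed by non-fixed points (least periods). Your instinct that fixed points are dangerous here is therefore sound; but the construction you built to eliminate them does not survive the continuity requirement, and as written the proposition remains unproved.
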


\begin{proof}
Let us consider 
\[
X=\{0\}\cup\left\{ z\in\CC:|z|=\frac{1}{n}\mbox{ for some integer }n\geq1\right\} .
\]
And let $T\colon X\to X$ be given by $T(z)=z\exp\left(2\pi i|z|\right)$.
It is clear that $X$ is a recursively compact subset of $\CC$ with
the standard topology and that $T$ is a computable homeomorphism
which thus induces a computable action $\ZZ\curvearrowright X$. The
map $T$ consists of rational rotations on each circle where the period
is given by the inverse of the radius, and thus it is easy to see
that $\mathtt{Per}(\ZZ\curvearrowright X)=\ZZ$.

\begin{figure}[!ht]
\centering \begin{tikzpicture}
    \begin{scope}[shift = {(0,0)}, scale = 3]
        \foreach \ii in {1,...,20}{
            \draw[thick] (0,0) circle (1/\ii);
        }
        \draw[thick, fill = black] (0,0) circle (1/21);
        \node[red] at (0,0.65) {$T$};
        \draw [red,thick,domain=0:180, ->] plot ({0.55*cos(\x)},{0.55*sin(\x)});
        \draw [red,thick,domain=0:120, ->] plot ({0.36*cos(\x)},{0.36*sin(\x)});
        \draw [red,thick,domain=0:90, ->] plot ({0.28*cos(\x)},{0.28*sin(\x)});
        \draw [red,thick,domain=0:72, ->] plot ({0.22*cos(\x)},{0.22*sin(\x)});
        \node at (0,-1.2) {$X$};
    \end{scope}
    \begin{scope}[shift = {(9,0)}, scale = 3]
        \foreach \ii in {1,2,4,8,13,17}{
            \draw[thick] (0,0) circle (1/\ii);
        }
        \draw[thick, fill = black] (0,0) circle (1/21);
        \node[red] at (0,0.65) {$S$};
        \draw [red,thick,domain=0:180, ->] plot ({0.55*cos(\x)},{0.55*sin(\x)});
        \draw [red,thick,domain=0:90, ->] plot ({0.28*cos(\x)},{0.28*sin(\x)});
        \draw [red,thick,domain=0:45, ->] plot ({0.15*cos(\x)},{0.15*sin(\x)});
        \node at (0,-1.2) {$Y$};
    \end{scope}
    \draw[thick, ->] (3.5,0) to (5.5,0);
    \node at (4.5,0.5) {$f$};
    \end{tikzpicture} \caption{A representation of the actions $\protect\ZZ\curvearrowright X$ and
$\protect\ZZ\curvearrowright Y$.}
\label{fig:circulito}
\end{figure}
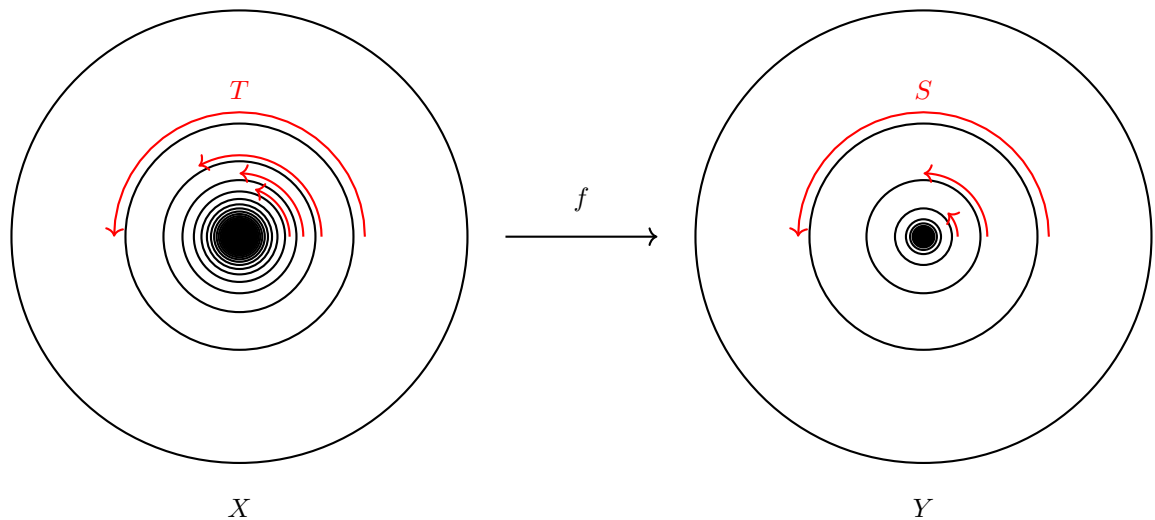

Now let $A\subset\NN\setminus\{0\}$ be some infinite set and let
$Y\subset\CC$ be given by 
\[
Y=\{0\}\cup\left\{ z\in\CC:|z|=\frac{1}{n}\mbox{ for some }n\in A\right\} .
\]
Similarly, the map $S\colon Y\to Y$ given by $S(z)=z\exp\left(2\pi i|z|\right)$
is a homeomorphism which induces an action $\ZZ\curvearrowright Y$.

Let $f\colon X\to Y$ be the map given by 
\[
f(z)=\begin{cases}
z & \mbox{ if }|z|\in A\mbox{ for some }n\in A,\\
0 & \mbox{ if }|z|\notin A.
\end{cases}
\]

It is clear from the definition that $f$ is a continuous, surjective
map such that $f\circ T=S\circ T$ and thus a topological factor map
from $\ZZ\curvearrowright X$ to $\ZZ\curvearrowright Y$. However,
notice that $\mathtt{Per}(\ZZ\curvearrowright Y)=\ZZ A$, that is,
the set of integers which are integer multiples of some element of
$A$.

Notice that as the set of prime numbers is decidable, then a subset $A$ of prime numbers is co-recursively enumerable if and only if $\ZZ A$ is co-recursively enumerable. In particular, if we take $A$ which is not co-recursively enumerable we would have that $\mathtt{Per}(\ZZ\curvearrowright Y)$ is not co-recursively enumerable either and thus $\ZZ \curvearrowright Y$ cannot be an EDS.
\end{proof}
\begin{prop}
\label{prop:factor-of-eds-which-is-not-eds} The class of EDS is not
closed under topological factor maps, even if we restrict to zero-dimensional
spaces. 
\end{prop}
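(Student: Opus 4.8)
The plan is to follow the strategy of the preceding proposition, using the conjugacy invariant $\mathtt{Per}(\ZZ\curvearrowright\cdot)$ together with the lemma that it is co-recursively enumerable for every $\ZZ$-EDS, but now carrying out the construction entirely inside a zero-dimensional space. The guiding idea is to replace each invariant circle $C_{1/n}$ of the previous example by a finite cyclic orbit $O_n$ of $n$ points on which the generator acts as the $n$-cycle. A countable family $(O_n)_n$, arranged as a null sequence together with its limit set inside a compact metric space, is automatically zero-dimensional (a countable compact metric space is totally disconnected), so zero-dimensionality essentially comes for free and the only genuine work is to control the set of periods of the factor.

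First I would fix $A\subseteq\{\text{primes}\}$ that is recursively enumerable but not co-recursively enumerable (for instance $A=\{p_e : e\in K\}$, the $e$-th prime for $e$ in the halting set). Since the primes form a decidable set and a prime $p$ lies in $\ZZ A=\bigcup_{q\in A}q\ZZ$ iff $p\in A$, one checks that $\ZZ A$ is co-recursively enumerable iff $A$ is; hence $\ZZ A$ is not co-r.e. The target is then an EDS $X$ with $\mathtt{Per}(\ZZ\curvearrowright X)$ co-r.e. and a zero-dimensional topological factor $Y$ with $\mathtt{Per}(\ZZ\curvearrowright Y)=\ZZ A$, so that the lemma forbids $Y$ from being an EDS.

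The main obstacle — and the reason the naive ``finite-orbit'' analogue of the circle example fails — is the limit point. If the orbits $O_n$ accumulate onto a single point, that point is fixed by the whole $\ZZ$-action, which forces $\mathtt{Per}(\ZZ\curvearrowright Y)=\ZZ$; this is trivially co-r.e. and destroys the obstruction. Thus the accumulation must be onto a fixed-point-free limit set. The clean way I would arrange this is to let the orbits limit onto an odometer $\Omega$ (minimal, equicontinuous, zero-dimensional, with $\mathtt{Per}(\ZZ\curvearrowright\Omega)=\{0\}$) and to take $X$ itself to be periodic-point-free: concretely, a computable family of odometers accumulating onto a master odometer, for which $\mathtt{Per}(\ZZ\curvearrowright X)=\{0\}$ is co-r.e. and $X$ is a recursively compact computable action, hence an EDS. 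The factor map then does not erase orbits but \emph{creates} them, collapsing the $p$-th odometer piece onto the finite cyclic orbit $\ZZ/p$ through the canonical quotient $\Omega\to\ZZ/p$ exactly when $p\in A$, and acting as the identity otherwise; this is legitimate because in zero dimensions a topological factor of a periodic-point-free system may acquire periodic points (the preimage of a periodic orbit being an odometer). The delicate point, which I expect to be the technical heart of the argument, is continuity of this factor map across the accumulation: the finite orbits must be realised as genuine dynamical approximations of $\Omega$ under a single uniform (adic) map rather than a piecewise-defined rotation, so that no ``wrap-around defect'' appears in the limit. Granting this, $Y$ is a zero-dimensional topological factor of the EDS $X$ with $\mathtt{Per}(\ZZ\curvearrowright Y)=\{0\}\cup\bigcup_{p\in A}p\ZZ=\ZZ A$, which is not co-r.e.; by the lemma $Y$ is not an EDS, proving that the class of EDS is not closed under topological factor maps even among zero-dimensional systems.
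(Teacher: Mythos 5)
Your overall strategy coincides with the paper's (the invariant $\mathtt{Per}$, the lemma that it is co-recursively enumerable for every $\ZZ$-EDS, and a zero-dimensional factor whose period set should be $\ZZ A$), and your diagnosis that accumulation onto a fixed point trivializes the invariant is correct. But the step you explicitly grant --- continuity of the factor map across the accumulation --- is a genuine gap, and in fact it provably cannot be arranged in the construction you describe. Since the master odometer $\Omega$ is minimal, the pieces $\Omega_p$ can only accumulate onto all of $\Omega$: along any subsequence of the (necessarily infinite) set $A$ one has $\Omega_p\to\Omega$ in the Hausdorff metric. If $F$ were continuous, then $F(\Omega_p)\to F(\Omega)$ in the Hausdorff metric as well, and for $p\in A$ the sets $F(\Omega_p)$ are single periodic orbits of prime cardinality $p\to\infty$. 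Now, a Hausdorff limit $L$ of periodic orbits whose prime periods tend to infinity cannot carry any clopen cyclic decomposition $L=C_0\sqcup\dots\sqcup C_{d-1}$ with $S(C_i)=C_{i+1\bmod d}$ and $d\geq 2$: once an orbit lies in a sufficiently small neighborhood of $L$, recording which $C_i$ each of its points is near defines an equivariant map from $\ZZ/p$ to $\ZZ/d$, which forces $d$ to divide $p$ --- impossible for primes $p>d$. On the other hand, $F(\Omega)$ is a factor of the odometer $\Omega$, hence a point, a finite cycle, or an odometer, and every one of these except the singleton admits such a decomposition. Therefore $F(\Omega)$ is a single point. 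So either $F$ is the identity on $\Omega$, in which case it is not continuous, or $F$ collapses $\Omega$ to a fixed point of $Y$, in which case $\mathtt{Per}(Y)=\ZZ$ is co-recursively enumerable and no contradiction with the lemma is obtained. The ``wrap-around defect'' is thus fatal, not technical: with prime orders and an odometer as limit set, the desired factor map does not exist.

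The natural repairs also fail. If you replace the primes by orders $m_k$ for which the quotients really are adic approximations (kernels shrinking, which is exactly the condition for $\ker F$ to be a closed relation, and which amounts to: every fixed integer divides $m_k$ for all large $k$), then each $m_j$ divides all but finitely many $m_k$, so the family $\{m_k\}$ has only finitely many divisibility-minimal elements and $\mathtt{Per}(Y)=\{0\}\cup\bigcup_k m_k\ZZ$ is a finite union of arithmetic progressions --- decidable, so again no obstruction. A genuine repair must abandon the odometer as the limit object altogether (the same argument shows no system with a nontrivial finite cyclic clopen factor can absorb prime cycles; one needs something like a Sturmian-type limit) and must make the collapsed fibers metrically small. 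The paper avoids this geometry entirely: its $Y$ is the countable product of the finite systems $X_n=\{0,\dots,n\}$, in which the symbol $0$ is fixed by $t_n$, and the factor map simply collapses each coordinate $n\notin A$ to the symbol $0$; continuity, equivariance, compactness and zero-dimensionality are then immediate, and no accumulation of orbits has to be controlled. (Your concern about fixed points is in fact pertinent to that construction too: the all-zero configuration is fixed in the paper's $Y_A$, so as literally defined its period set is $\ZZ$ rather than $\ZZ A$, and non-fixed configurations must be invoked to extract the intended invariant --- but that is an issue with the bookkeeping of the invariant, whereas the obstruction to your construction is structural: the factor map itself cannot exist.)
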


\begin{proof}
For each $n\in\N$, let $X_{n}=\{0,\dots,n\}$, and let $t_{n}=(1\ 2\ \dots\ n)$
be the permutation on $X_{n}$ which fixes $0$ and cyclically permutes
$1\mapsto2\mapsto3\mapsto\dots\mapsto n\mapsto1$. Now define $Y_{n}$
as the product $X_{0}\times\dots\times X_{n}$, and $s_{n}$ as the
map on $Y_{n}$ which applies $t_{0},\dots,t_{n}$ component-wise.
We define $\pi_{n+1}$ as the map $Y_{n+1}\to Y_{n}$ which removes
the last component of the tuple, this is a factor map from $Y_{n+1}$
to $Y_{n}$ with their respective actions. We now define $Y$ as the
inverse limit of sequence 
\[
\dots\xrightarrow{\pi_{n+2}}\ Y_{n+1}\xrightarrow{\pi_{n+1}}Y_{n}\xrightarrow{\pi_{n}\ \ }\dots\xrightarrow{\pi_{1}\ \ }Y_{0}.
\]

It is straightforward to verify that the set $Y$ endowed with the
component-wise action is a zero dimensional EDS. Let $A\subset\NN$.
We now define a topological factor of $Y$ called $Y_{A}$.

For each $n$ define $g_{n}\colon X_{n}\to X_{n}$, as follows. For
$n\not\in A$, $g_{n}$ sends everything to $0$. For $n\in A$, $g_{n}$
is the identity function on $X_{n}$. Now define $f_{n}\colon Y_{n}\to Y_{n}$
as the function which on component $n$ applies $g_{n}$. Then $f_{n}$
is an endomorphism of $Y_{n}$ with the action induced by $s_{n}$.
Finally, define $F$ as the endomorphism of $Y$ which in component
$n$ applies $f_{n}$ and let $Y_{A}$ be the image of $F$, which
is a compact subset of $\prod_{n\in\NN}Y_{n}$ by the continuity of
$F$. We endow $Y_{A}$ with the componentwise action inherited as
a subsystem of $Y$.

As in the previous construction, it suffices to take $A$ as a set
of primes which is not co-recursively enumerable. In this case, $Y_{A}$
is a zero-dimensional topological factor of the zero dimensional EDS
$Y$, but it cannot be an EDS because its set of periods is exactly
$\ZZ A$ which is not co-recursively enumerable. 
\end{proof}
Even if the class of EDS is not closed under topological factor maps,
it is reasonable to assume that a factor of an EDS should still satisfy
some sort of computability constraint. With the aim of understanding
this class in the case of zero-dimensional spaces, we introduce a
notion of weak EDS which will turn out to be stable under topological
factor maps.
\begin{defn}
\label{def:WEDS} An action $\Gamma\curvearrowright X$ with $X\subset A^{\NN}$
is a \textbf{weak effective dynamical system} (WEDS) if for every
clopen partition $\mathcal{P}=(P_{i})_{i=1}^{n}$ of $X$ the subshift
\[
Y(\Gamma\curvearrowright X,\mathcal{P})=\{y\in\{1,\dots,n\}^{\Gamma}:\mbox{ there is }x\in X\mbox{ such that for every }g\in\Gamma,\ g^{-1}x\in P_{y(g)}\}.
\]
is effective. 
\end{defn}

Notice that for an action $\Gamma\curvearrowright X$ with $X\subset A^{\NN}$
to be a WEDS, it suffices to check the condition on the clopen partitions
$\mathcal{P}_{n}=\{[w]:w\in A^{n}\mbox{ and }[w]\cap X\neq\varnothing\}$
as this family of clopen partitions refines any other clopen partition.
As $\Gamma\curvearrowright X$ can be obtained as the inverse limit
of the shift action on the $Y(\Gamma\curvearrowright X,\mathcal{P}_{n})$,
we may also define a WEDS as an action that can be obtained as the
inverse limit of a sequence of effective subshifts. Notice that we
do not require the sequence to be uniform in this definition.
\begin{rem}
Notice that any expansive WEDS $\Gamma\curvearrowright X$ is automatically
an EDS, as in this case $\Gamma\curvearrowright X$ is topologically
conjugate to $Y(\Gamma\curvearrowright X,\mathcal{P})$ for any partition
with small enough diameter and thus we have that $\Gamma\curvearrowright X$
is an EDS. 
\end{rem}

\begin{prop}
\label{prop:0d-EDS_is_WEDS} If $\Gamma\curvearrowright X$ is a zero
dimensional EDS, then it is a WEDS.
\end{prop}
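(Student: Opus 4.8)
The plan is to reduce to a computable representative and then run the subshift-cover machinery of \Cref{subsec:subshift-covers}, exploiting the fact that a clopen \emph{partition} (as opposed to a mere cover) forces the kernel of the canonical map $F(S)\to\Gamma$ to act trivially on the associated subshift. First I would unwind the definition: I must show that for every clopen partition $\mathcal{P}=(P_i)_{i=1}^n$ of $X$, the subshift $Y(\Gamma\curvearrowright X,\mathcal{P})$ is effective. Since the cells $P_i$ are clopen we have $\overline{P_i}=P_i$, so the subshift appearing in \Cref{def:WEDS} coincides with the subshift cover of \Cref{subsec:subshift-covers}. Because $\Gamma\curvearrowright X$ is a zero-dimensional EDS, \Cref{cor:cantor-representative} provides a topological conjugacy $\psi$ onto a computable action $\Gamma\curvearrowright\widehat{X}$ on an effectively closed set $\widehat{X}\subset\{0,1\}^{\NN}$; this $\widehat{X}$ is recursively compact, being effectively closed in the recursively compact space $\{0,1\}^{\NN}$ (\Cref{prop.equivalence.comp.closed.subset}). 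The map $\psi$ need not be computable, but this is irrelevant: the subshift cover is a conjugacy invariant of the pair (action, labelled partition). Indeed, setting $\widehat{\mathcal{P}}=(\psi(P_i))_{i=1}^n$, a clopen partition of $\widehat{X}$, the equivalence $g^{-1}x\in P_{y(g)}\iff g^{-1}\psi(x)\in\psi(P_{y(g)})$ shows directly that $Y(\Gamma\curvearrowright X,\mathcal{P})=Y(\Gamma\curvearrowright\widehat{X},\widehat{\mathcal{P}})$. So it suffices to prove $Y(\Gamma\curvearrowright\widehat{X},\widehat{\mathcal{P}})$ is effective.

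Next I would verify that $\widehat{\mathcal{P}}$ is an \emph{effective} cover. Each $\widehat{P}_i$ is clopen in $\widehat{X}$, so $\widehat{P}_i$ and $\widehat{X}\smallsetminus\widehat{P}_i$ are disjoint compact subsets of the Cantor space and can be separated by a finite union of cylinders $V_i$, giving $\widehat{P}_i=V_i\cap\widehat{X}$. Since $V_i$ is clopen (hence effectively closed) and $\widehat{X}$ is effectively closed, the intersection $\widehat{P}_i$ is effectively closed, with a description computable from those of $V_i$ and $\widehat{X}$. Thus $\widehat{\mathcal{P}}$ is an effective cover, and \Cref{prop:effective-covers-give-effective-subshifts}, applied to the induced action $F(S)\curvearrowright\widehat{X}$, shows that the $F(S)$-subshift $Y(F(S)\curvearrowright\widehat{X},\widehat{\mathcal{P}})$ is an effectively closed subset of the fullshift over $F(S)$.

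The remaining step, which I expect to be the only delicate point, is to descend from this $F(S)$-subshift to the $\Gamma$-subshift $Y(\Gamma\curvearrowright\widehat{X},\widehat{\mathcal{P}})$ and conclude that the latter is effective, i.e.\ that its pullback to $F(S)$ is effectively closed. In general $Y(F(S)\curvearrowright\widehat{X},\widehat{\mathcal{P}})$ \emph{strictly} contains that pullback, because a word $w\in F(S)$ with $\underline{w}=1_\Gamma$ may act non-trivially; this is exactly the obstruction that forced the recursively-presented hypothesis in the proof of \Cref{thm:zero_dim_effective_extension}. Here it vanishes precisely because $\widehat{\mathcal{P}}$ is a \emph{partition}: for any witness $x$, the point $w^{-1}x=\underline{w}^{-1}x$ lies in a \emph{unique} cell, so the label $y(w)$ depends only on $\underline{w}$. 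Hence every $y\in Y(F(S)\curvearrowright\widehat{X},\widehat{\mathcal{P}})$ factors through $\Gamma$ and is the pullback of an element of $Y(\Gamma\curvearrowright\widehat{X},\widehat{\mathcal{P}})$; consequently $Y(F(S)\curvearrowright\widehat{X},\widehat{\mathcal{P}})$ equals the pullback of $Y(\Gamma\curvearrowright\widehat{X},\widehat{\mathcal{P}})$, which is therefore effectively closed. By \Cref{def:effectivesubshiftthroughfreegroup} this makes $Y(\Gamma\curvearrowright\widehat{X},\widehat{\mathcal{P}})$ an effective $\Gamma$-subshift, and by the conjugacy invariance established above so is $Y(\Gamma\curvearrowright X,\mathcal{P})$. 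This partition argument is also what lets us dispense with any recursive-presentation hypothesis on $\Gamma$, in contrast with \Cref{thm:zero_dim_effective_extension}.
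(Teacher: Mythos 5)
Your proof is correct and follows essentially the same route as the paper: pass to the induced $F(S)$-action, apply \Cref{prop:effective-covers-give-effective-subshifts} to the clopen partition, and use disjointness of the cells to show the kernel of $F(S)\to\Gamma$ acts trivially (equivalently, that every configuration factors through $\Gamma$), so the $F(S)$-subshift is exactly the pullback of the $\Gamma$-subshift. You are in fact somewhat more careful than the paper, which silently assumes a computable representative and does not spell out why a clopen partition is an effective cover; your extra steps (conjugacy invariance of the subshift cover, writing each cell as a finite union of cylinders intersected with $\widehat{X}$) fill those gaps correctly.
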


\begin{proof}
Let $\mathcal{P}$ be a cover of $X$ formed of disjoint clopen sets
and let $S$ be a finite set of generators for $\Gamma$. Let us consider
the action of the free group $F(S)\curvearrowright X$ induced by
the action $\Gamma\curvearrowright X$. We proved in \Cref{prop:effective-covers-give-effective-subshifts}
that the subshift $Y(F(S)\curvearrowright X,\mathcal{P})$ is effective
and thus an EDS. We now prove that $Y(\Gamma\curvearrowright X,\mathcal{P})$
is an EDS. For this it suffices to prove that every $w\in F(S)$ with
$\underline{w}=1_{\Gamma}$ acts trivially on $Y(\Gamma\curvearrowright X,\mathcal{P})$,
as then we have a topological conjugacy between $\Gamma\curvearrowright Y(\Gamma\curvearrowright X,\mathcal{P})$
and $\Gamma\curvearrowright Y(F(S)\curvearrowright X,\mathcal{P})$.

Assume that $wy\neq y$ for some $y$ in $Y(F(S)\curvearrowright X,\mathcal{P})$
in order to obtain a contradiction. We can assume that $wy$ and $y$
differ in $1_{F(S)}$ by shifting $y$. By definition, this means
that there is some element $x\in X$ such that $x\in P_{y(1_{F(S)})}$
and $x\in P_{wy(1_{F(S)})}$. This is a contradiction, as the computable
map associated to $w$ is the identity and thus the sets $P_{y(1_{F(S)})}$,
$P_{wy(1_{F(S)})}$ are disjoint. 
\end{proof}
Next we show that the class of WEDS is closed under topological factors.
\begin{prop}
\label{prop:factor_of_WEDS_is_WEDS} Let $\Gamma\curvearrowright X$
be a WEDS and $\Gamma\curvearrowright Y$ be a zero-dimensional topological
factor. Then $\Gamma\curvearrowright Y$ is a WEDS. 
\end{prop}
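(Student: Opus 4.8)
The goal is to show that if $\Gamma\curvearrowright X$ is a WEDS and $\Gamma\curvearrowright Y\subset B^{\N}$ is a zero-dimensional topological factor via a factor map $\pi\colon X\to Y$, then $\Gamma\curvearrowright Y$ is itself a WEDS. By \Cref{def:WEDS}, I must show that for every clopen partition $\mathcal Q=(Q_j)_{j=1}^m$ of $Y$, the subshift $Y(\Gamma\curvearrowright Y,\mathcal Q)$ is effective. The natural plan is to pull the partition $\mathcal Q$ back through $\pi$ to a clopen partition $\mathcal P$ of $X$, use the WEDS hypothesis to conclude that $Y(\Gamma\curvearrowright X,\mathcal P)$ is effective, and then show that the subshift associated to $\mathcal Q$ equals the one associated to $\mathcal P$.

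The key observation is that $\pi$ is continuous, so $\mathcal P=(\pi^{-1}(Q_j))_{j=1}^m$ is again a clopen partition of $X$ (each $Q_j$ is clopen in $Y$, hence its preimage is clopen in $X$). First I would verify the set equality
\[
Y(\Gamma\curvearrowright Y,\mathcal Q)=Y(\Gamma\curvearrowright X,\mathcal P).
\]
For the inclusion from right to left, given $y\in Y(\Gamma\curvearrowright X,\mathcal P)$ there is $x\in X$ with $g^{-1}x\in P_{y(g)}=\pi^{-1}(Q_{y(g)})$ for every $g$; applying $\pi$ and using equivariance ($\pi(g^{-1}x)=g^{-1}\pi(x)$) gives $g^{-1}\pi(x)\in Q_{y(g)}$, so $y$ is in $Y(\Gamma\curvearrowright Y,\mathcal Q)$ as witnessed by $\pi(x)\in Y$. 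For the reverse inclusion, given $y\in Y(\Gamma\curvearrowright Y,\mathcal Q)$ there is $z\in Y$ with $g^{-1}z\in Q_{y(g)}$ for all $g$; since $\pi$ is surjective, pick $x\in X$ with $\pi(x)=z$, and then $g^{-1}x\in\pi^{-1}(Q_{y(g)})=P_{y(g)}$, exhibiting $y$ in $Y(\Gamma\curvearrowright X,\mathcal P)$. (Here I use that $\mathcal P$ is a genuine partition, so membership of $g^{-1}x$ in $P_{y(g)}$ is unambiguous.) This equality is the heart of the argument.

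Having established the equality of subshifts, the conclusion is immediate: since $\Gamma\curvearrowright X$ is a WEDS and $\mathcal P$ is a clopen partition of $X$, the subshift $Y(\Gamma\curvearrowright X,\mathcal P)$ is effective by definition; but this is literally the same subshift as $Y(\Gamma\curvearrowright Y,\mathcal Q)$, which is therefore effective as well. As $\mathcal Q$ was an arbitrary clopen partition of $Y$, the action $\Gamma\curvearrowright Y$ satisfies the defining condition of a WEDS.

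The step I expect to require the most care is the set equality, and specifically checking that the partitions being \emph{clopen} (not just closed) is what makes the construction work: in \Cref{def:WEDS} the defining condition for $Y(\Gamma\curvearrowright X,\mathcal P)$ uses membership $g^{-1}x\in P_{y(g)}$ directly rather than in the closure, so the preimage partition must consist of clopen sets for the two descriptions to match, and I should note that $\pi^{-1}$ preserves clopenness precisely because $\pi$ is continuous with $Y$ zero-dimensional. A secondary point worth remarking is that no uniformity in $n$ is needed, since \Cref{def:WEDS} only asks the condition partition-by-partition, so I never have to worry about computing $\mathcal P$ from $\mathcal Q$ effectively—only that each individual pulled-back partition yields an effective subshift.
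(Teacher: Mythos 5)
Your proposal is correct and follows essentially the same route as the paper: pull the clopen partition of $Y$ back through the factor map to a clopen partition of $X$, observe that $Y(\Gamma\curvearrowright X,\mathcal P)=Y(\Gamma\curvearrowright Y,\mathcal Q)$, and invoke the WEDS hypothesis. The only difference is that you spell out the verification of this set equality (via equivariance and surjectivity), which the paper states without proof.
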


\begin{proof}
Denote by $f\colon X\to Y$ the topological factor map and let $\mathcal{P}=(P_{i})_{i=1}^{n}$
be a clopen partition of $Y$. Then $\mathcal{Q}=(f^{-1}(P_{i}))_{i=1}^{n}$
is a clopen partition of $X$. As $X$ is WEDS, we obtain that $Y(\Gamma\curvearrowright X,\mathcal{Q})$
is an effective subshift. As $Y(\Gamma\curvearrowright X,\mathcal{Q})=Y(\Gamma\curvearrowright Y,\mathcal{P})$,
we obtain that $Y$ is a WEDS. 
\end{proof}
As an immediate corollary of and \Cref{prop:factor_of_WEDS_is_WEDS}
we obtain:
\begin{cor}
Every zero-dimensional topological factor of a zero-dimensional EDS
is a WEDS. 
\end{cor}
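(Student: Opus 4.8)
The plan is to observe that the statement is the composition of the two immediately preceding propositions, \Cref{prop:0d-EDS_is_WEDS} and \Cref{prop:factor_of_WEDS_is_WEDS}, applied in sequence. I would begin with a zero-dimensional EDS $\Gamma\curvearrowright X$ together with a zero-dimensional topological factor map $f\colon X\to Y$, so that $\Gamma\curvearrowright Y$ is the factor whose WEDS-ness we wish to establish. By \Cref{prop:0d-EDS_is_WEDS}, the hypothesis that $\Gamma\curvearrowright X$ is a \emph{zero-dimensional} EDS already guarantees that $\Gamma\curvearrowright X$ is a WEDS.

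Having upgraded $X$ from a zero-dimensional EDS to a WEDS, I would then feed this into \Cref{prop:factor_of_WEDS_is_WEDS}, whose hypotheses are exactly: a WEDS $\Gamma\curvearrowright X$ and a zero-dimensional topological factor $\Gamma\curvearrowright Y$. Both are now in place, so the proposition returns that $\Gamma\curvearrowright Y$ is a WEDS, which is precisely the assertion of the corollary. The argument is therefore a two-line chaining, with no new construction required.

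The one point deserving a moment of attention is the ambient representation demanded by \Cref{def:WEDS}, which requires the underlying space to be realized inside some $A^{\NN}$. For $X$ this is legitimate because a zero-dimensional EDS can, by \Cref{cor:cantor-representative}, be replaced by a computable action on an effectively closed subset of $\{0,1\}^{\NN}$ in its conjugacy class; and zero-dimensionality of $Y$ similarly permits a realization inside a Cantor space. I do not expect a genuine obstacle here, since the proof of \Cref{prop:factor_of_WEDS_is_WEDS} operates entirely through pullbacks $f^{-1}(P_i)$ of clopen partitions and the identity $Y(\Gamma\curvearrowright X,\mathcal{Q})=Y(\Gamma\curvearrowright Y,\mathcal{P})$, which is insensitive to the particular embeddings chosen. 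Thus the only ``work'' is to check that the two cited propositions apply verbatim, and they do.
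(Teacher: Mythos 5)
Your proof is correct and is exactly the paper's argument: the corollary is stated there as an immediate consequence of chaining \Cref{prop:0d-EDS_is_WEDS} (a zero-dimensional EDS is a WEDS) with \Cref{prop:factor_of_WEDS_is_WEDS} (a zero-dimensional factor of a WEDS is a WEDS). Your extra remark about the ambient Cantor-space representation is a reasonable sanity check but adds nothing the cited propositions do not already handle.
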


Finally, putting this result together with \Cref{thm:zero_dim_effective_extension}
we obtain the following general result.
\begin{prop}
\label{prop:factor-of-EDS-is-WEDS} Let $\Gamma$ be a finitely generated
and recursively presented group, $\Gamma\curvearrowright X$ be an
EDS and $\Gamma\curvearrowright Y$ be a zero-dimensional topological
factor. Then $\Gamma\curvearrowright Y$ is a WEDS.
\end{prop}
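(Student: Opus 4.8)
The plan is to reduce to the zero-dimensional case so that the structural results on WEDS already established apply. The key difficulty is that the EDS $\Gamma\curvearrowright X$ is \emph{not} assumed to be zero-dimensional, so the corollary that every zero-dimensional factor of a zero-dimensional EDS is a WEDS cannot be applied to $X$ directly. I would circumvent this by first producing a zero-dimensional EDS lying above $X$, using exactly the hypothesis that $\Gamma$ is recursively presented.

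First I would invoke \Cref{thm:zero_dim_effective_extension}: since $\Gamma$ is finitely generated and recursively presented and $\Gamma\curvearrowright X$ is an EDS, there exist an effectively closed zero-dimensional set $\widetilde{X}\subset\{0,1\}^{\N}$ and a computable action $\Gamma\curvearrowright\widetilde{X}$ such that $\Gamma\curvearrowright X$ is a topological factor of $\Gamma\curvearrowright\widetilde{X}$. Let $\pi\colon\widetilde{X}\to X$ denote this factor map. Note that $\widetilde{X}$ is itself a zero-dimensional EDS: being an effectively closed subset of the recursively compact space $\{0,1\}^{\N}$, it is recursively compact by \Cref{prop.equivalence.comp.closed.subset}, and the action on it is computable by construction.

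Next I would use that topological factor maps compose. Writing $f\colon X\to Y$ for the given zero-dimensional topological factor map, the composition $f\circ\pi\colon\widetilde{X}\to Y$ is continuous, surjective, and $\Gamma$-equivariant, hence a topological factor map. Consequently $\Gamma\curvearrowright Y$ is a zero-dimensional topological factor of the zero-dimensional EDS $\Gamma\curvearrowright\widetilde{X}$.

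Finally I would conclude by the two structural lemmas: by \Cref{prop:0d-EDS_is_WEDS} the zero-dimensional EDS $\Gamma\curvearrowright\widetilde{X}$ is a WEDS, and then by \Cref{prop:factor_of_WEDS_is_WEDS} applied to the factor $f\circ\pi$, the zero-dimensional topological factor $\Gamma\curvearrowright Y$ is a WEDS. No additional computability bookkeeping is needed, since all the effectivity content is already packaged inside \Cref{thm:zero_dim_effective_extension} and the two lemmas on WEDS; the only genuinely new move is the passage to $\widetilde{X}$, which is precisely where recursive presentability of $\Gamma$ is consumed.
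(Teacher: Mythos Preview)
Your proof is correct and follows essentially the same route as the paper: invoke \Cref{thm:zero_dim_effective_extension} to obtain a zero-dimensional EDS extension $\widetilde{X}$ of $X$, then note that $Y$ is a zero-dimensional factor of $\widetilde{X}$ and conclude via \Cref{prop:0d-EDS_is_WEDS} and \Cref{prop:factor_of_WEDS_is_WEDS}. Your version is in fact slightly more explicit than the paper's, which compresses the last two steps into a single citation.
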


\begin{proof}
As $\Gamma$ is recursively presented, by \Cref{thm:zero_dim_effective_extension}
there is a zero-dimensional EDS extension of $\Gamma\curvearrowright X$.
Then $\Gamma\curvearrowright Y$ is a topological factor of this zero-dimensional
extension and thus by \Cref{prop:0d-EDS_is_WEDS} we obtain that
it is a WEDS. 
\end{proof}
We have proved that for zero dimensional systems, the class of EDS
is different to the class of WEDS. Indeed, the example constructed
in \Cref{prop:factor-of-eds-which-is-not-eds} is not an EDS,
but it follows from \Cref{prop:factor-of-EDS-is-WEDS} that is
a WEDS.
\begin{cor}
Let $\Gamma\curvearrowright X$ be an EDS and $\Gamma\curvearrowright Y$
be an expansive and zero-dimensional topological factor. If $X$ is
zero-dimensional or $\Gamma$ is recursively presented then $\Gamma\curvearrowright Y$
is an EDS. 
\end{cor}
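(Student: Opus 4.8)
The plan is to reduce the statement entirely to the machinery already developed around weak effective dynamical systems, observing that under either branch of the hypothesis the factor $\Gamma\curvearrowright Y$ is a WEDS, and then using expansiveness to upgrade this to being an EDS. No genuinely new construction is needed; the work is in assembling the correct prior results.

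First I would dispose of the case where $\Gamma$ is recursively presented. Here there is nothing to prove beyond a direct appeal to \Cref{prop:factor-of-EDS-is-WEDS}, which asserts precisely that a zero-dimensional topological factor of an EDS over a finitely generated recursively presented group is a WEDS. Hence $\Gamma\curvearrowright Y$ is a WEDS. Next I would treat the case where $X$ is zero-dimensional, this time making no assumption on the presentation of $\Gamma$. By \Cref{prop:0d-EDS_is_WEDS} a zero-dimensional EDS is a WEDS, so $\Gamma\curvearrowright X$ is a WEDS; since $\Gamma\curvearrowright Y$ is a zero-dimensional topological factor of it, \Cref{prop:factor_of_WEDS_is_WEDS} shows that $\Gamma\curvearrowright Y$ is again a WEDS. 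Thus in both cases we reach the same intermediate conclusion: $\Gamma\curvearrowright Y$ is a WEDS.

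The final step is to invoke the expansiveness hypothesis. As recorded in the remark following \Cref{def:WEDS}, an expansive WEDS is automatically an EDS: by expansiveness one may choose a clopen partition $\mathcal P$ of $Y$ of sufficiently small diameter so that the canonical factor map onto the subshift cover $Y(\Gamma\curvearrowright Y,\mathcal P)$ is injective, hence a topological conjugacy onto an effective subshift, which is an EDS by \Cref{example:effective_subshift_is_EDS}. Applying this to the WEDS $\Gamma\curvearrowright Y$ yields that $\Gamma\curvearrowright Y$ is an EDS, as desired.

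Since every ingredient is a cited result, there is no real obstacle; the one point meriting care is the last step, where expansiveness must be used to guarantee that a fine enough clopen partition makes the associated subshift cover a topological \emph{conjugacy} rather than a mere factor. This is exactly the content of the remark after \Cref{def:WEDS}, and it is what separates the WEDS conclusion (available for arbitrary zero-dimensional factors) from the stronger EDS conclusion available only under expansiveness.
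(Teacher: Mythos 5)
Your proof is correct and is precisely the derivation the paper intends: the corollary is stated without proof because it follows by combining \Cref{prop:factor-of-EDS-is-WEDS} (in the recursively presented case) or \Cref{prop:0d-EDS_is_WEDS} together with \Cref{prop:factor_of_WEDS_is_WEDS} (in the zero-dimensional case) with the remark following \Cref{def:WEDS} that an expansive WEDS is automatically an EDS. Your assembly of these ingredients, including the closing observation that expansiveness is what upgrades the conjugacy onto the subshift cover from a mere factor map, matches the paper's route exactly.
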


\begin{cor}
\label{cor:EDSsubshifts_closed_under_top_factors} The class of effective
subshifts is closed under topological factor maps. 
\end{cor}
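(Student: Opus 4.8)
The plan is to read this corollary as the special case of the preceding theory in which the factor happens to itself be a subshift, and to feed it through the equivalence between effective subshifts and effective dynamical systems. Concretely, let $X\subset A^{\Gamma}$ be an effective subshift and let $\phi\colon X\to Y$ be a topological factor map onto a subshift $Y\subset B^{\Gamma}$. By \Cref{prop:EDS_and_effective_subshift_are_equivalent} the hypothesis that $X$ is effective is the same as saying that $\Gamma\curvearrowright X$ is an EDS, and since $X$ is a closed shift-invariant subset of $A^{\Gamma}$ it is in particular zero-dimensional. Thus $\Gamma\curvearrowright X$ is a zero-dimensional EDS.

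First I would promote $X$ to a WEDS: by \Cref{prop:0d-EDS_is_WEDS} every zero-dimensional EDS is a WEDS, so $\Gamma\curvearrowright X$ is a WEDS. Next, since $Y$ is a subshift it is zero-dimensional, and $\phi$ exhibits $\Gamma\curvearrowright Y$ as a zero-dimensional topological factor of $\Gamma\curvearrowright X$; hence \Cref{prop:factor_of_WEDS_is_WEDS} gives that $\Gamma\curvearrowright Y$ is a WEDS as well. The point of routing the argument through WEDS is precisely that this class, unlike the class of EDS, is stable under zero-dimensional topological factors.

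To finish I would use expansivity to climb back from WEDS to EDS. The shift action on any subshift is expansive (two distinct configurations differ at some $g\in\Gamma$, and translating by $g$ separates them at the identity), so $\Gamma\curvearrowright Y$ is an expansive WEDS. By the remark following \Cref{def:WEDS}, an expansive WEDS is automatically an EDS, so $\Gamma\curvearrowright Y$ is an EDS. Since $Y$ is a subshift, applying \Cref{prop:EDS_and_effective_subshift_are_equivalent} once more converts ``$Y$ is an EDS'' back into ``$Y$ is effective'', which is the desired conclusion.

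The only genuine subtlety, and the step I would be most careful about, is making sure that $Y$ really is a subshift and therefore expansive and zero-dimensional: this is exactly what lets me both apply \Cref{prop:factor_of_WEDS_is_WEDS} and then upgrade the WEDS conclusion back to an EDS. This holds because a topological factor map between $\Gamma$-subshifts is, by the Curtis--Hedlund--Lyndon theorem, a sliding block code, and the image of a subshift under a sliding block code is again a subshift. One could alternatively bypass the WEDS machinery entirely and simply invoke \Cref{effective-subshifts-are-closed-by-factors}, where the same statement was proved directly at the level of pullbacks; the present derivation is worth recording since it shows how the closure property sits naturally inside the EDS/WEDS framework developed in this chapter.
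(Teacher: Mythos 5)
Your proof is correct and follows essentially the same route as the paper: it derives the corollary from the EDS/WEDS machinery (\Cref{prop:EDS_and_effective_subshift_are_equivalent}, \Cref{prop:0d-EDS_is_WEDS}, \Cref{prop:factor_of_WEDS_is_WEDS}, and the remark that expansive WEDS are EDS), which is precisely the chain the paper uses when it states this as a corollary of the preceding result on expansive zero-dimensional factors. Your closing observation that one could instead invoke \Cref{effective-subshifts-are-closed-by-factors} via Curtis--Hedlund--Lyndon matches the paper's own remark to the same effect.
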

We remark that \Cref{cor:EDSsubshifts_closed_under_top_factors}
can also be obtained through the Curtis-Hedlund-Lyndon theorem (\Cref{effective-subshifts-are-closed-by-factors})
We finish this section with the following question which we were unable
to answer.
\begin{question}
Is it true that any WEDS is the topological factor of some EDS? 
\end{question}

\chapter{Medvedev degrees of subshifts}\label{chap:Medvedev}
In this chapter we present the results obtained by the
author and Sebastián Barbieri in \cite{barbieri_medvedev_2024}, with the addition of one result proved by the author in \cite{carrasco-vargas_translationlike_2024}. These results concern a dynamical invariant for subshifts of  recursion-theoretical origin called Medvedev degrees. This invariant can be used to compare the complexity of subshifts that contain only uncomputable configurations. We study how these degrees can be transferred from one group to another through algebraic and geometric relations, such as quotients, subgroups, translation-like actions and quasi-isometries. We use these tools to study the possible values taken by this invariant on subshifts of finite type on some finitely generated groups. We obtain a full classification for some classes, such as virtually polycyclic groups and branch groups with decidable word problem. We also show that all groups which are quasi-isometric to the hyperbolic plane admit SFTs with nonzero Medvedev degree. Furthermore, we provide a classification of the degrees of sofic subshifts for several classes of groups. 

\minitoc

\section{Introduction}
Hanf and Myers proved the existence of a $\mathbb{Z}^2$-SFT for which every configuration is an uncomputable function~\cite{hanf_nonrecursive_1974,myers_nonrecursive_1974}. This result can be further refined using the language of Medvedev degrees. Informally, the Medvedev degree of a set measures the algorithmic complexity of computing one of its elements, and can be used to meaningfully distinguish sets without computable points. A formal definition of Medvedev degrees is given in~\Cref{section:Medvedev}.

In this work we propose a systematic study of Medvedev degrees as a topological conjugacy invariant for subshifts on groups. These degrees form a lattice and capture the algorithmic complexity of a subshift. For instance, the Medvedev degree of a subshift is zero exactly when it has computable configurations. As a measure of complexity, the Medvedev degree of a subshift shares some properties with topological entropy for amenable groups: it does not increase under factor maps, and behaves nicely with direct products and disjoint unions. These properties hold for subshifts on any finitely generated group, with no assumption on the complexity of its word problem or amenability. 

The main goal of this project is to study the following classification problem:

\begin{problem}\label{problem}
    Given a finitely generated group $G$, what is the class of Medvedev degrees of $G$-SFTs?
\end{problem}

Remarkable effort has been put into classifying the possible values of topological entropy for SFTs. Lind~\cite{Lind1984} provided a full classification of the entropies of $\ZZ$-SFTs, Hochman and Meyerovitch provided a classification for $\ZZ^d$-SFTs for $d\geq 2$, the first author extended their classification to several classes of amenable groups~\cite{barbieri_entropies_2021}, while Bartholdi and Salo recently extended it to Baumslag-Solitar and Lamplighter groups~\cite{bartholdi_salo_shift_lamplighter_2024}. This work can be thought of as an analogous effort for Medvedev degrees.

\subsubsection*{State of the art}

There is a complete answer for $\ZZ^d$, $d\geq 1$. Indeed, it is well-known that every nonempty $\mathbb{Z}$-SFT has finite orbits which implies that its Medvedev degree is zero. On the other hand, Simpson proved that the class of Medvedev degrees of nonempty SFTs on $\mathbb{Z}^{d}$, $d\geq2$ is the class of $\Pi_{1}^{0}$ degrees \cite{simpson_medvedev_2014}. This result implies and refines greatly the result of Hanf and Myers~\cite{hanf_nonrecursive_1974,myers_nonrecursive_1974}. In addition, nonempty SFTs with nonzero Medvedev degree have been constructed on Baumslag Solitar groups $\operatorname{BS}(n,m)$, $n,m\geq 1$ \cite{aubrun_tiling_2013}, and the lamplighter group~\cite{bartholdi_salo_shift_lamplighter_2024}. 

We also mention that for the more general class of effective subshifts on $\ZZ$, Medvedev degrees have been characterized as those $\Pi_1^0$ degrees by Miller~\cite{miller_two_2012}, and that Medvedev degrees of subshifts have been considered in \cite{hochman_note_2009,ballier2013universality} in relation to the existence of systems that are universal for factor maps.   

\subsubsection*{Main results}

Given a finitely generated group $G$, we denote by $\msft{G}$ the class of Medvedev degrees of $G$-SFTs. First we observe that if $G$ is recursively presented, then $\msft{G}$ must be contained in the class of $\Pi_1^0$ degrees (see~\Cref{prop:msft-and-msoff-are-contained-in-Pi-1-degrees}). In order to study~\Cref{problem}, we study the behaviour of $\msft{G}$ and some variants with respect to different group theoretical relations, such as subgroups (\Cref{cor:subgroups_decidable_membership}), commensurability (\Cref{prop:commensurable_same_degrees}), quotients (\Cref{prop:quotients_fg}), translation-like actions (\Cref{prop:medvedev-degrees-and-translation-like-actions,prop:translation-like-action-subshift-is-effective} and~\Cref{prop:translation-like-actions-and-medvedev-degrees}), and quasi-isometries (\Cref{lem:QI_medvedev} and~\Cref{coro_QI}).

We provide a full classification of $\msft{G}$ for virtually polycyclic groups (\Cref{thm:polycyclic}), namely, we show that for a virtually polycyclic group $G$ which is virtually cyclic then $\msft{G}$ consists of only the zero degree, and otherwise $\msft{G}$ is the class of all $\Pi_1^0$ Medvedev degrees. We also show that $\msft{G}$ is this latter class for direct products of infinite groups with decidable word problem (\Cref{thm:direct_products_clasification_medvedev_degrees}), and branch groups with decidable word problem (\Cref{cor:branch_groups}). Without hypotheses on the word problem, we are still able to show that all direct products of infinite groups, and infinite branch groups admit SFTs with nonzero Medvedev degree (\Cref{thm:direct_products_nontrivial_medvedev_degrees}). Furthermore, we prove that all groups which are quasi-isometric to the hyperbolic plane admit SFTs with nonzero Medvedev degree (\Cref{thm:hyperbolic_qi_medvedev}). 

For some groups where we are not able to compute $\msft{G}$ we can still prove that the class $\msof{G}$ of Medvedev degrees of sofic subshifts equals the class of $\Pi_1^0$ Medvedev degrees. This is done coupling our results with existing results about Medvedev degrees of effective subshifts, and existing simulation results. Simulation results relate sofic subshifts on one group with effective subshifts on a different group (see \Cref{subsec:simulation} for a precise definition).

We prove that $\msof{G}$ equals the class of $\Pi_1^0$ Medvedev degrees for all infinite finitely generated groups with decidable word problem that simulate some other group verifying the same hypotheses (\Cref{prop:simulation}). This result covers self-simulable groups with decidable word problem, such as Thompson's $V$, $\operatorname{GL}_n(\ZZ)$, $\operatorname{SL}_n(\ZZ)$, $\operatorname{Aut}(F_n)$ and $\operatorname{Out}(F_n)$ for $n\geq 5$, where $F_n$ denotes the free groups on $n$ generators~\cite{barbieri_groups_2022}. By the results in~\cite{bartholdi_shifts_2024}, this also applies to the Baumslag-Solitar groups $\operatorname{BS}(1,n)$, $n\geq 1$, and the lamplighter group.   

\subsubsection*{Open problems}

Let us note that a finitely generated group admits a nonempty SFT with nonzero Medvedev degree if and only if it admits a sofic subshift with nonzero Medvedev degree. However, this does not necessarily imply that $\msft{G}=\msof{G}$. This raises the question of whether every sofic $G$-subshift admits an SFT extension with equal Medvedev degree (\Cref{question:equal_degree_extension}). This is a Medvedev-degree version of the question of whether every sofic subshift on an amenable group admits an equal entropy SFT extension (see \cite[Problem 9.4]{hochman_characterization_2010}).

The classical observation that $\ZZ$-SFTs can only achieve the zero degree can be easily generalized to virtually free groups (\Cref{prop:virtually_free_have_0_degree}). In every other finitely generated and recursively presented group where the set $\msft{G}$ is known, it is the set of all $\Pi_1^0$ degrees. This leads us to conjecture (\Cref{conjecture_allSFTSarePi10completeclasses}) that for every infinite, finitely generated and recursively presented group which is not virtually free, then $\msft{G}$ is the set of all $\Pi_1^0$ Medvedev degrees.

For finitely generated groups which are not recursively presented, the $\Pi_1^0$ bound may not hold. In that context we pose the less ambitious conjecture that is $G$ is finitely generated and not virtually free, then there exists a nonempty $G$-SFT with nontrivial Medvedev degree (\Cref{conj:uncomputableconfig}).

Overall, we expect that positive solutions to any of these two conjectures would be rather distant with the existing techniques. \Cref{conj:uncomputableconfig} would imply a positive solution to Carroll and Penland's conjecture about groups admitting weakly aperiodic SFTs \cite{carroll_periodic_2015}, and to Ballier and Stein's conjecture about groups with undecidable domino problem~\cite{ballier_domino_2018} (see \Cref{Medvedev-conjecture-is-pulent}). 
\section{Medvedev degrees and basic properties}\label{section:Medvedev}
    \subsection{The lattice of Medvedev degrees and the sublattice of $\Pi_1^0$ degrees}
    Here we provide a brief review of the lattice of Medvedev degrees $\M$. These degrees were introduced in \cite{medvedev_degrees_1955} with the purpose of relating propositional formulas with mass problems. Important sources are the surveys \cite{hinman_survey_2012}, \cite{sorbi_medvedev_1996}, see also \cite{lewis_topological_2011}. 

    Intuitively, a mathematical problem $P$ has a higher Medvedev degree than a mathematical problem $Q$ if every solution to the problem $P$ can be used to compute a solution to the problem $Q$. This intuition can be made precise by defining a pre-order relation $\leq$ on subsets of $\{0,1\}^\NN$,  where each set is interpreted as the set of solutions of a fixed mathematical problem. Given two sets $P,Q\subset \{0,1\}^\NN$, we say that $P$ is \define{Medvedev reducible} to $Q$ when there is a partial computable function $\Psi$ on $\{0,1\}^\NN$, defined on $Q$ and such that $\Psi(Q)\subset P$. We abbreviate this relation by $P\leq Q$. We say that $P$ is \define{Medvedev equivalent} to $Q$ if both $P\leq Q$ and $Q\leq P$. The set $\M$ of Medvedev degrees is the set of equivalence classes of $\{0,1\}^\NN$ modulo Medvedev equivalence. Notice that Medvedev reduction induces a partial order on $\M$, and we will use the same symbol $\leq$ to compare Medvedev degrees. We denote the Medvedev degree of a set $P$ by $m(P)$. 

    The partially ordered set $(\M,\leq)$  is indeed a distributive lattice: there is a minimal element denoted $0_\M$ called the trivial degree, an operation $\wedge$ of infimum, an operation $\vee$ of supremum, and a maximal element that here will be denoted $1_\M$. Given two sets $P$ and $Q$, we have the following:
    \begin{itemize}
        \item $m(P)=0_\M$ when $P$ has a computable element. In intuitive terms, a mathematical problem is easy with this complexity measure when it has at least one computable solution.
        \item $m(P)=1_\M$ when $P$ is empty. In intuitive terms, a mathematical problem has maximal complexity with this complexity measure when it has no solution.
        \item $m(P)\vee m(Q)$ equals the degree of the set $\{x\in\{0,1\}^\NN : (x_{2n})_{n\in\NN}\in P\text{ and } (x_{2n+1})_{n\in\NN}\in Q \}$. In intuitive terms, $m(P)\vee m(Q)$ is the difficulty of solving both problems $P$ and $Q$ simultaneously. 
        \item $m(P)\wedge m(Q)$ equals the degree of the set $\{0x :x\in P \}\cup \{1x :x\in Q \}$. Here $0x$ stands for the concatenation of the finite word $0$ with the infinite sequence $x$.   In intuitive terms, $m(P)\wedge m(Q)$ is the difficulty of solving at least one of the problems $P$ and $Q$. 
    \end{itemize}

    In this chapter we will use the following terminology:
    \begin{definition}
        A subset of $\{0,1\}^\NN$ is called $\Pi_1^0$ when it is an effectively closed set in the sense of \Cref{chap:Background}.
    \end{definition}
        
    A sublattice of $(\M,\leq)$ that will be relevant for us is that of $\Pi_1^0$ degrees.  These are the degrees of nonempty $\Pi_1^0$ subsets of $\{0,1\}^\NN$. This lattice contains the minimal element $0_{\M}$, but also contains a maximal element which corresponds to the degree of the class of all PA-complete sets. See~\cite{Cenzer_1999_survey_in_Pi_10_degrees} for a survey on these results.
    

    \subsection{Medvedev degrees as a dynamical invariant for shift spaces}

    The goal of this section is to define Medvedev degrees of subshifts of $A^G$, where $G$ is an arbitrary finitely generated group. This will be done through the pullback subshift, defined in \Cref{chap:computable_analysis_subshifts}.

    \begin{definition}
        Let $G$ be a finitely generated group and let $X\subset A^G$ be a subshift. Let $\widehat{X}$ be the pullback subshift of $X$ defined in \Cref{def:effectivesubshiftthroughfreegroup}.  We define the Medvedev degree $m(X)$ as $m(\widehat{X})$.
    \end{definition}
       
    This definition is independent of the chosen generating set of $G$ by \Cref{pullbacks-are-recursively-homeomorphic}. We now prove some simple facts about $m$ as a dynamical invariant for subshifts. Given two subshifts $X,Y$, denote by $X\times Y$ their direct product as dynamical systems and by $X\sqcup Y$ their disjoint union.
    
    \begin{proposition}\label{prop:basic-properties}
        Let $G$ be a finitely generated group and let $X$, $Y$ be $G$-subshifts. 
        \begin{enumerate}
            \item If there is a topological morphism from $X$ to $Y$, then $m(X)\geq m(Y)$. In particular, the Medvedev degree of a subshift does not increase under factors and is a topological conjugacy invariant. 
            \item $m(X\times Y)=m(X)\vee m(Y)$.
            \item $m(X\sqcup Y)=m(X) \wedge m(Y)$.
        \end{enumerate}
    \end{proposition}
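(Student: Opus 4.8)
The plan is to reduce all three statements to facts about the pullback subshifts $\widehat X,\widehat Y$ living inside fullshifts over a common free group $F(S)$, where the recursion-theoretic structure is transparent. By definition $m(X)=m(\widehat X)$ and $m(Y)=m(\widehat Y)$, and by \Cref{pullbacks-are-recursively-homeomorphic} these degrees are independent of the chosen generating set $S$; fixing $S$, each $A^{F(S)}$ with $|A|\geq 2$ is recursively homeomorphic to $\{0,1\}^\N$ (\Cref{prop:infinite-product-of-finite-alphabets-is-Cantor}), so the relevant Medvedev degrees are well defined (the degenerate case $|A|=1$ gives a one-point subshift of degree $0_\M$ and is trivial). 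For item (1), a morphism $\psi\colon X\to Y$ yields, via \Cref{prop:morphisms-of-subshifts-are-computable-in-pullbacks}, a computable map $\widehat\psi\colon\widehat X\to\widehat Y$ with $\widehat\psi(\widehat X)\subset\widehat Y$. Taking $\Psi=\widehat\psi$ as the partial computable function (defined on all of $\widehat X$ and mapping it into $\widehat Y$) is precisely a witness that $\widehat Y\leq\widehat X$, hence $m(X)=m(\widehat X)\geq m(\widehat Y)=m(Y)$. Conjugacy invariance and monotonicity under factor maps follow immediately.

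For item (2) I would realize $X\times Y$ as a subshift on the alphabet $A\times B$ and observe that the pullback operation commutes with products: projecting the two coordinates of a configuration in $(A\times B)^{F(S)}$ gives a recursive homeomorphism between $\widehat{X\times Y}$ and $\widehat X\times\widehat Y$, with both this map and its inverse computable. Under the identifications $A^{F(S)},B^{F(S)}\cong\{0,1\}^\N$, the product $\widehat X\times\widehat Y$ is in turn recursively homeomorphic to the join $\{x\in\{0,1\}^\N : (x_{2n})_{n}\in\widehat X,\ (x_{2n+1})_{n}\in\widehat Y\}$, which by definition represents $m(\widehat X)\vee m(\widehat Y)$. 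Composing these recursive homeomorphisms gives $m(X\times Y)=m(\widehat X)\vee m(\widehat Y)=m(X)\vee m(Y)$.

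For item (3) I would take $A$ and $B$ disjoint and realize $X\sqcup Y$ as a subshift on $A\sqcup B$, so that $\widehat{X\sqcup Y}=\widehat X\sqcup\widehat Y$ sits inside $(A\sqcup B)^{F(S)}$. The crucial point is that the two pieces are effectively separated: reading the symbol of a configuration at $1_{F(S)}$ decides computably whether it belongs to $\widehat X$ or to $\widehat Y$, since $A\cap B=\varnothing$. This effective tag lets me exhibit mutually inverse computable maps between $\widehat X\sqcup\widehat Y$ and the canonical representative $\{0x:x\in\widehat X\}\cup\{1x:x\in\widehat Y\}$ of $m(\widehat X)\wedge m(\widehat Y)$, yielding $m(X\sqcup Y)=m(X)\wedge m(Y)$.

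The routine-but-delicate part will be the explicit recursive homeomorphisms in items (2) and (3): verifying that de-interleaving coordinates and reading the alphabet tag are computable, uniformly, and that passing through the identifications $A^{F(S)}\cong\{0,1\}^\N$ leaves the degrees undisturbed. These are bookkeeping checks against the computability notions of \Cref{chap:Background}; all the conceptual content lies in the two observations that the pullback commutes with products and disjoint unions, and that in a disjoint union over disjoint alphabets the component of a configuration is an effectively decidable datum.
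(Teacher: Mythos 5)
Your proposal is correct and follows essentially the same route as the paper's proof: item (1) via the computable map $\widehat\psi$ on pullbacks from \Cref{prop:morphisms-of-subshifts-are-computable-in-pullbacks}, and items (2) and (3) by identifying $\widehat{X\times Y}$ and $\widehat{X\sqcup Y}$ (over disjoint alphabets) with the standard join and meet representatives inside $\{0,1\}^{\N}$ after recursively homeomorphically embedding the pullbacks there. The extra care you take with the degenerate one-letter alphabet and the explicit citation of \Cref{prop:infinite-product-of-finite-alphabets-is-Cantor} are harmless elaborations of what the paper leaves as "straightforward."
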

    \begin{proof}
        If there is a topological morphism $\psi\colon X\to Y$, then there is also a topological morphism $\widehat \psi\colon \widehat X\to \widehat  Y$. But then $\widehat \psi$ is computable by \Cref{prop:morphisms-of-subshifts-are-computable-in-pullbacks}, so we have $m(\widehat X)\geq m(\widehat Y)$. It follows that $m(X)\geq m(Y)$. We now prove the second claim. We identify $X\times Y$ with the subshift \[Z = \{(x(g),y(g))_{g\in G} : x,y\in X,Y\}.\]
        By our discussion above, $\widehat X$ and $\widehat Y$ are recursively homeomorphic to sets $P,Q\subset\{0,1\}^\NN$. Using this fact, it is straightforward that $\widehat  Z$ is recursively homeomorphic to $\{x\in\{0,1\}^\NN : (x_{2n})_{n\in\NN}\in P\text{ and } (x_{2n+1})_{n\in\NN}\in Q \}$. As the Medvedev degree of this set equals $m(P)\vee m(Q)$, it follows that $m(Z)=m(X)\vee m(Y)$.

        Again, by taking a topologically conjugate version of $X\sqcup Y$, we can assume that $A$ and $B$ are disjoint alphabets, and that $Z=X\cup Y$ is a subshift on alphabet $A\cup B$. By our discussion above, $\widehat X$ and $\widehat Y$ are recursively homeomorphic to sets $P,Q\subset\{0,1\}^\NN$. Using this fact, it is  straightforward that $\widehat Z$ is recursively homeomorphic to  $\{0x :x\in P \}\cup \{1x :x\in Q \}$. As the Medvedev degree of this set equals $m(P)\wedge m(Q)$, it follows that $m(Z)=m(X)\wedge m(Y)$. 
    \end{proof}

    We recall that effective subshifts are defined in \Cref{def:effectivesubshiftthroughfreegroup}.
    \begin{definition}
        Let $G$ be a finitely generated group. We denote \begin{enumerate}
            \item $\msft{G} = \{ m(X) : X \mbox{ is a nonempty $G$-SFT}\}.$
            \item $\msof{G} = \{ m(X) : X \mbox{ is a nonempty sofic $G$-subshift} \}.$
            \item $\meff{G} = \{ m(X) : X \mbox{ is a nonempty effective $G$-subshift}\}.$
        \end{enumerate}
        \end{definition}
    It is immediate that all three of the above classes are invariants of isomorphism, as subshifts in two isomorphic groups can be lifted through pullbacks to the same free group. Moreover, each of these three sets is a sublattice: it is closed under the operations $\vee$ and $\wedge$ and always contains $0_{\M}$. This follows from \Cref{prop:basic-properties} and the fact that the classes of SFTs, sofic, and effective subshifts are closed by direct products and disjoint unions.

    Next we will review the basic relations between these three sets. As every SFT is sofic, we have $\msft{G}\subset \msof{G}$. If $G$ is recursively presented then sofic subshifts are effective (\Cref{prop:sft-and-sofic-subshifts-are-effective}), so $\msof{G}\subset\meff{G}$.  It follows from the definition that an effective subshift has a $\Pi_1^0$ Medvedev degree. As sofic subshifts and SFTs are effective when $G$ is recursively presented, we have the following:
    \begin{observation}\label{prop:msft-and-msoff-are-contained-in-Pi-1-degrees}
        Let $G$ be a recursively presented group. Then all three classes $\msft{G}$, $\msof{G}$ and $\meff{G}$ are contained in the class of $\Pi_1^0$ degrees. 
    \end{observation}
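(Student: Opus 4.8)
The plan is to reduce all three containments to a single fact: the pullback of an effective subshift is, by \emph{definition}, an effectively closed subset of a fullshift over a free group, and such a fullshift is recursively homeomorphic to $\{0,1\}^\NN$. I would begin with $\meff{G}$, which in fact needs no hypothesis on $G$. Fix a finite symmetric generating set $S$ and let $X\subset A^G$ be a nonempty effective subshift. If $|A|=1$ then $X$ is the one-point system, whose degree is $0_\M$ and hence $\Pi_1^0$, so assume $|A|\geq 2$. By \Cref{def:effectivesubshiftthroughfreegroup} the pullback $\widehat X$ is an effectively closed subset of $A^{F(S)}$, and since $F(S)$ has decidable word problem, $A^{F(S)}$ carries its computable metric space structure and, by \Cref{prop:infinite-product-of-finite-alphabets-is-Cantor}, is recursively homeomorphic to $\{0,1\}^\NN$.

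The next step is to verify that a recursive homeomorphism $h\colon A^{F(S)}\to\{0,1\}^\NN$ carries $\widehat X$ to a set that is simultaneously $\Pi_1^0$ and Medvedev equivalent to $\widehat X$. Medvedev equivalence is immediate: $h$ and $h^{-1}$ are computable and witness $h(\widehat X)\leq \widehat X$ and $\widehat X\leq h(\widehat X)$, so $m(X)=m(\widehat X)$ is computed as $m(h(\widehat X))$. That $h(\widehat X)$ is effectively closed follows from the definition of computable function (\Cref{def:computable-function}): writing $h(\widehat X)=(h^{-1})^{-1}(\widehat X)$ and using that $h^{-1}$ is computable, the preimage of the effectively open complement of $\widehat X$ is effectively open, whence $h(\widehat X)$ is $\Pi_1^0$. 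As $X$ is nonempty, so is $h(\widehat X)$, and therefore $m(X)$ is a $\Pi_1^0$ degree. This establishes $\meff{G}\subset\{\text{$\Pi_1^0$ degrees}\}$ for every finitely generated $G$.

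Finally, to pass from $\meff{G}$ to $\msof{G}$ and $\msft{G}$ I would invoke the recursive presentation hypothesis. By \Cref{prop:sft-and-sofic-subshifts-are-effective}, when $G$ is recursively presented every $G$-SFT and every sofic $G$-subshift is effective. Together with the trivial inclusion of SFTs into sofic subshifts, this yields the chain $\msft{G}\subset\msof{G}\subset\meff{G}$, and the previous paragraph gives $\meff{G}\subset\{\text{$\Pi_1^0$ degrees}\}$, completing the argument.

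I do not expect a genuine obstacle here: the only point demanding care is the stability of effective closedness under a recursive homeomorphism, which is a direct consequence of the definition of computable function applied to $h^{-1}$. The role of recursive presentation is confined to the last step, where it is precisely what \Cref{prop:sft-and-sofic-subshifts-are-effective} requires in order to place SFTs and sofic subshifts inside the effective class; the bound on $\meff{G}$ itself is unconditional.
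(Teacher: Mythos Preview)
Your proposal is correct and follows precisely the paper's approach: the paper states just before the observation that an effective subshift has a $\Pi_1^0$ Medvedev degree by definition (your first two paragraphs spell out why), and then invokes \Cref{prop:sft-and-sofic-subshifts-are-effective} to place SFTs and sofic subshifts inside the effective class when $G$ is recursively presented. Your treatment is simply a more detailed version of the same argument, including the useful remark that the bound on $\meff{G}$ is unconditional.
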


    We remark that some non-recursively presented groups only admit the trivial effective action \cite[Proposition 2.10]{barbieri_groups_2022}). For these groups we have $\meff{G}= \{ 0_{\M}\}$. This shows that the inclusion $\msof{G}\subset \meff{G}$ requires the hypothesis of recursive presentation.

        
\section{Transference results}\label{section:transference}

Next we shall study how the space of Medvedev degrees of subshifts on groups behaves with respect to basic relations in group theory.

\subsection{Elementary constructions}

Let $\rho \colon G \to H$ be a group homomorphism. Then $\rho$ induces a map $\rho^* \colon A^H \to A^G$ through \[ \rho^*(x)(g) = x(\rho(g)) \mbox{ for every } x\in A^H, g\in G. \]
It is clear from the above definition that for any subshift $X\subset A^H$, then $\rho^*(X)\subset A^G$ is also a subshift.

    \begin{lemma}\label{lema:medvedev_degree_of_pullback}
        For any finitely generated group $G$, epimorphism $\rho \colon G \to H$ and subshift $X\subset A^H$, we have \[ m(X) = m(\rho^*(X)).  \]
    \end{lemma}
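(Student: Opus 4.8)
The plan is to reduce everything to the pullback subshifts through which $m$ is defined, and to recognise both $\widehat{X}$ and $\widehat{\rho^*(X)}$ as pullbacks of the \emph{same} subshift $X\subset A^H$ along two different surjections of free groups onto $H$. Fix a finite symmetric generating set $S$ of $G$ with canonical projection $\phi_G\colon F(S)\to G$, and a finite symmetric generating set $T$ of $H$ with canonical projection $\phi_H\colon F(T)\to H$; recall that by \Cref{pullbacks-are-recursively-homeomorphic} the choice of generators is immaterial for the Medvedev degree. By definition $m(X)=m(\widehat{X})$ with $\widehat{X}=\{x\circ\phi_H : x\in X\}\subset A^{F(T)}$. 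Unwinding the definitions shows that $\widehat{\rho^*(X)}=\{x\circ\psi : x\in X\}\subset A^{F(S)}$, where $\psi=\rho\circ\phi_G\colon F(S)\to H$; indeed $\phi_G^*(\rho^*(x))(w)=x(\rho(\phi_G(w)))=x(\psi(w))$. Since $\rho$ is an epimorphism, $\psi$ is surjective, so both $\phi_H$ and $\psi$ are surjective homomorphisms from a finitely generated free group onto $H$.

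First I would build the two reductions by lifting generators. Because $\psi$ is surjective, for each generator $t\in T$ I may choose a word $\theta(t)\in F(S)$ with $\psi(\theta(t))=\phi_H(t)$; extending by freeness gives a homomorphism $\theta\colon F(T)\to F(S)$ with $\psi\circ\theta=\phi_H$. Dually, surjectivity of $\phi_H$ lets me choose $\theta'(s)\in F(T)$ with $\phi_H(\theta'(s))=\psi(s)$ for each $s\in S$, yielding $\theta'\colon F(S)\to F(T)$ with $\phi_H\circ\theta'=\psi$. Define the pullback maps $\Theta\colon A^{F(S)}\to A^{F(T)}$ by $\Theta(y)(v)=y(\theta(v))$, and $\Theta'\colon A^{F(T)}\to A^{F(S)}$ by $\Theta'(z)(w)=z(\theta'(w))$. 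A one-line check gives $\Theta(x\circ\psi)=x\circ(\psi\circ\theta)=x\circ\phi_H$ and $\Theta'(x\circ\phi_H)=x\circ(\phi_H\circ\theta')=x\circ\psi$, so $\Theta$ maps $\widehat{\rho^*(X)}$ onto $\widehat{X}$ and $\Theta'$ maps $\widehat{X}$ onto $\widehat{\rho^*(X)}$. With the convention of \Cref{section:Medvedev}, these two computable maps witness $\widehat{X}\leq\widehat{\rho^*(X)}$ and $\widehat{\rho^*(X)}\leq\widehat{X}$ respectively, hence $m(X)=m(\rho^*(X))$.

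The only point needing care --- and the step I would treat as the crux --- is the computability of $\Theta$ and $\Theta'$. These are not morphisms of $G$-subshifts (they move between shifts over different free groups), so \Cref{prop:morphisms-of-subshifts-are-computable-wp-dec} does not apply verbatim; instead I would argue directly. Since $\theta$ is determined by the finite data of a letter-to-word substitution and $F(S)$ has decidable word problem, the assignment $v\mapsto\theta(v)$ is computable, and the preimage under $\Theta$ of a basic cylinder $\{z : z(v)=a\}$ is exactly the basic cylinder $\{y : y(\theta(v))=a\}$, computed uniformly from $(v,a)$. As cylinders form a computable basis of the standard structure on $A^{F(S)}$ and $A^{F(T)}$ (free groups having decidable word problem, see \Cref{computable-metric-space-structure-on-A^G}), preimages of effectively open sets are effectively open uniformly, so $\Theta$ is computable by \Cref{computable-function-relative}; the same argument applies to $\Theta'$. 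Everything else is bookkeeping, and surjectivity of $\rho$ is used exactly once, to guarantee that the generator lifts defining $\theta$ exist.
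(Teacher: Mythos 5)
Your proof is correct and is essentially the paper's own argument: the paper likewise fixes generating sets for $G$ and $H$, lifts each generator through the relevant surjection to a word (your $\theta$ and $\theta'$ are exactly its $u\mapsto w_u$ and $s\mapsto\bar\rho(s)$), and uses the induced letter-to-word substitution maps on configurations as the two computable Medvedev reductions between $\widehat{X}$ and $\widehat{\rho^*(X)}$. Your presentation is just a more structured phrasing of the same construction (commuting homomorphisms of free groups, plus an explicit cylinder-preimage computability check); the only detail the paper records that you leave implicit is the one-line observation that $H$ is finitely generated because $\rho$ is an epimorphism from a finitely generated group.
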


    \begin{proof}
        Note first that as $G$ is finitely generated and $\rho$ is an epimorphism, then $H$ is also finitely generated. Fix finite symmetric generating sets $S,U$ for $G$ and $H$ respectively. Let $\widehat{X}$ be the pullback of $X$ to $F(U)$. For each $s \in S$, let $\bar{\rho}(s) \in U^*$ be a word which represents $\rho(s)$ in $H$. For $\widehat{x} \in \widehat{X}$, we can define $\widehat{y}\colon F(S) \to A$ by \[\widehat{y}(s_1\dots s_n) = \widehat{x}(\bar{\rho}(s_1) \dots \bar{\rho}(s_n)).\]
        It is clear that $\widehat{y} \in \widehat{\rho^*(X)}$ and that the map which sends $\widehat{x}$ to $\widehat{y}$ is computable. It follows that $m(\rho^*(X)) \leq m(X)$. 

        Conversely, for every $u \in U$, fix $w_u \in S^*$ such that $\rho(\underline{w_u}) = u$. Given as input $\widehat{y} \in \widehat{\rho^*(X)}$ we define $\widehat{x}\colon F(U)\to A$ by \[\widehat{x}(u_1\dots u_n) = \widehat{y}(w_{u_1}\dots w_{u_n}).  \]
        This map again is clearly computable and $x \in \widehat{X}$. It follows that $m(X) \leq m(\rho^*(X))$.
    \end{proof}

    \begin{definition}
        Let $H \leqslant G$ be a subgroup and $X\subset A^H$ a subshift. The \define{free extension} of $X$ to $G$ is the subshift \[  \widetilde{X} = \{ x \in A^{G} : \mbox{ for every } g \in G, (x(gh))_{h \in H} \in X\}.   \]
    \end{definition}

    It is clear from the definition that any set of forbidden patterns that defines $X$ in $H$ also defines $\widetilde{X}$ in $G$. In particular it follows that if $X$ is an SFT (respectively sofic, effectively closed by patterns) then so is $\widetilde{X}$.
    
\begin{lemma}\label{free-extensions}
    Let $H\leqslant G$ be finitely generated groups and let $X\subset A^H$ be a subshift. Let $\widetilde{X}$ be the free extension of $X$ to $G$. Then $m(X) \leq m(\widetilde{X})$. Furthermore, if either
    \begin{enumerate}
        \item[(1)] $G$ is recursively presented and $H$ has decidable membership problem in $G$,
        \item[(2)] $H$ has finite index in $G$,
    \end{enumerate}then $m(X) = m(\widetilde{X})$. 
\end{lemma}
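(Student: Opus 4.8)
The plan is to establish both inequalities at the level of the pullback subshifts, since $m(X)=m(\widehat{X})$ and $m(\widetilde{X})=m(\widehat{\widetilde{X}})$, and Medvedev reductions transfer through the recursive homeomorphisms identifying these with subsets of $\{0,1\}^{\NN}$. Throughout I fix finite symmetric generating sets $T$ for $H$ and $S$ for $G$, and for each $t\in T$ a word $w_t\in S^\ast$ with $\underline{w_t}=t$ in $G$; let $\phi\colon F(T)\to F(S)$ be the homomorphism $t\mapsto w_t$, so that $\underline{\phi(v)}=\underline{v}$ for all $v\in F(T)$.

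For the unconditional inequality $m(X)\le m(\widetilde{X})$ I would use the restriction map. Given $\widehat{x}\in\widehat{\widetilde{X}}$, define $\widehat{y}(v)=\widehat{x}(\phi(v))$ for $v\in F(T)$. This is computable as a purely syntactic substitution, with no hypothesis on the word problem, exactly as in the proof of \Cref{lema:medvedev_degree_of_pullback}. One checks that $\widehat{y}$ is the pullback of $x|_H$, and that $x|_H\in X$ because taking $g=1_G$ in the definition of $\widetilde{X}$ forces $(x(h))_{h\in H}\in X$. Hence $X\le\widetilde{X}$.

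For the reverse inequality I would fix a left transversal $\{g_i\}$ of $H$ with $g_1=1_G$ and, given $y\in X$, define $x\in A^G$ by $x(g_i h)=y(h)$ for $h\in H$. A one-line computation gives $x\in\widetilde{X}$: writing $g=g_ih_0$, one has $(x(gh))_{h\in H}=(y(h_0h))_{h\in H}=h_0^{-1}y\in X$ by $H$-invariance of $X$. So $y\mapsto x$ maps $X$ into $\widetilde{X}$, and it remains to make it computable on pullbacks. To output $\widehat{x}(u)=x(\underline{u})=y\big(r(\underline{u})^{-1}\underline{u}\big)$ for $u\in F(S)$, where $r(\underline u)$ denotes the transversal representative of the coset $\underline u H$, it suffices to (a) compute $r(\underline{u})$, and (b) produce a word $v\in F(T)$ with $\underline{v}=r(\underline{u})^{-1}\underline{u}\in H$; then $\widehat{x}(u):=\widehat{y}(v)$ is well defined, since the pullback $\widehat{y}$ takes the same value on every $T$-word representing a given element of $H$. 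Under hypothesis (1), step (a) is handled by decidable membership: enumerating $S^\ast$ as $w_0,w_1,\dots$, the predicate ``$w_j$ is least in its coset'' is decidable (it asks $w_{j'}^{-1}w_j\notin H$ for all $j'<j$), and searching for the least such $w_j$ with $w_j^{-1}u\in H$ yields a word for $r(\underline{u})$. For step (b), set $\eta=r(\underline{u})^{-1}u$, so $\underline{\eta}\in H$; since the word problem of $G$ is recursively enumerable and $T$ generates $H$, dovetailing over all $v\in F(T)$ while semi-deciding $\underline{\phi(v)}=\underline{\eta}$ (an instance of the word problem) eventually finds a valid $v$.

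The hard part is hypothesis (2), where the word problem of $G$ need not be decidable, so membership in $H$ may be undecidable and neither search above is available. The key observation resolving this is that, $[G:H]=n$ being finite, the permutation representation $G\to\operatorname{Sym}(n)$ of the left action on the $n$ cosets is a finite amount of data, which a Medvedev reduction may legitimately hard-code (a reduction is merely a fixed computable functional and may carry finite non-uniform information). Tracking the hard-coded generator permutations letter by letter along $u$ then computes the coset of $\underline{u}$, hence $r(\underline{u})$, solving (a) with no decision procedure. For (b) I would run Reidemeister--Schreier rewriting on $\eta=r(\underline{u})^{-1}u$: tracking cosets expresses $\underline{\eta}$ as a product of the finitely many Schreier generators $g_i\,s\,g_{\overline{is}}^{-1}\in H$, and since each of these is a fixed element of $H$, I hard-code for each a word in $T$ representing it (such words exist because $T$ generates $H$). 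Substituting and concatenating yields the required $v\in F(T)$. In both cases $\widehat{x}$ is computable from $\widehat{y}$, giving $\widetilde{X}\le X$, and combined with the first paragraph this proves $m(X)=m(\widetilde{X})$.
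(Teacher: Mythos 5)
Your proof is correct and takes essentially the same approach as the paper's: the restriction/substitution map gives $m(X)\leq m(\widetilde{X})$; case (1) picks canonical first-in-coset representatives via decidable membership and then uses the recursively enumerable word problem to rewrite the $H$-part as a word in $H$'s generators; and case (2) hard-codes finite coset data to rewrite arbitrary words into (transversal element)$\cdot$($H$-word) form. Your permutation-representation-plus-Schreier-generators bookkeeping in case (2) is the same mechanism as the paper's stored multiplication table (products $tk$ and $st$ re-expressed in normal form), just phrased as textbook Reidemeister--Schreier rewriting.
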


\begin{proof}
   Let $S,U\subset G$ be two finite symmetric sets with $S\subset U$ such that $S$ generates $H$ and $U$ generates $G$. As $F(S)\subset F(U)$, the map which on input $\widetilde{x} \in \widehat{\widetilde{X}}$ returns its restriction to $F(S)$ is computable and yields an element $x\in \widehat{X}$. Thus $m(X) \leq m(\widetilde{X})$.

    Conversely, suppose (1) holds. Consider the algorithm which on input $x\colon F(S)\to A$ lists all elements of $F(U)$ in lexicographical order $(u_i)_{i \in \NN}$. When it lists $u_n$, it checks in increasing order for every $k < n$ (using the algorithm for the membership problem of $H$ in $G$) whether $\underline{u_k^{-1}u_n} \in H$. If it is the case for some $k$, then it computes (using that $G$ is recursively presented) a word $w_n \in F(S)$ such that $\underline{u_k^{-1}u_nw_n^{-1}} = 1_G$ and sets $\widetilde{x}(u_n) = x(w_n)$; Otherwise, if $\underline{u_k^{-1}u_n} \notin H$ for every $k < n$, it sets $\widetilde{x}(u_n)=x(1_{F(S)})$. This yields a map $\widetilde{x} \colon F(U) \to A$ which satisfies that $\widetilde{x} \in \widehat{\widetilde{X}}$ if and only if $x \in \widehat{X}$, thus $m(\widetilde{X}) \leq m(X)$.

    Finally, suppose (2) holds. Let $T$ be a finite set such that $TH=G$. Without loss of generality, we may suppose $T\subset U$ and that each $u \in U$ is written as $u = t_uw_u$ with $t_u \in T$ and $w_u \in S^*$. Given such a set $U$, there is a computable map which on input $v \in U^*$ returns $t_v \in T$ and $u_v \in S^*$ such that $\underline{v} = \underline{t_v u_v}$ (to do this, we only need to store the values of all multiplications $tk$ and $s t$ for $s \in S, t,k \in T$, of which there are finitely many). Using this map, it follows that the function which on input $x\colon F(S)\to A$ returns $\widetilde{x} \colon F(U) \to A$ given by $\widetilde{x}(v)=x(u_v)$ is computable and $\widetilde{x} \in \widehat{\widetilde{X}}$ if and only if $x \in \widehat{X}$. It follows that $m(\widetilde{X}) \leq m(X)$.
\end{proof}

\begin{corollary}
    Let $H \leqslant G$ be finitely generated groups. If there exists an $H$-SFT with nontrivial Medvedev degree, then there exists a $G$-SFT with nontrivial Medvedev degree.
\end{corollary}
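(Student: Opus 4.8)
The plan is to realize the desired $G$-SFT as the free extension of the given $H$-SFT, and then to read off the degree inequality from \Cref{free-extensions}. Concretely, let $X \subset A^{H}$ be a nonempty $H$-SFT with $m(X) \neq 0_{\M}$, and let $\widetilde{X} \subset A^{G}$ denote its free extension to $G$.

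First I would verify that $\widetilde{X}$ is again a nonempty SFT. That it is an SFT is precisely the observation recorded immediately after the definition of the free extension: any finite set of forbidden patterns defining $X$ over $H$ also defines $\widetilde{X}$ over $G$, so finiteness of the defining set is preserved. For nonemptiness, I would fix a left transversal $T$ of $H$ in $G$ and, using that $X$ is nonempty, assign to each coset $tH$ (with $t \in T$) an arbitrary configuration of $X$, setting $x(th) = x_{t}(h)$ for a chosen $x_{t}\in X$. Because $X$ is shift-invariant, for every $g \in G$ the tuple $(x(gh))_{h\in H}$ is a shift of some $x_{t}$ and hence again lies in $X$; thus $x \in \widetilde{X}$ and $\widetilde{X}\neq\varnothing$.

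The second and final step is purely a matter of degrees. \Cref{free-extensions} gives, with no hypothesis beyond $H \leqslant G$ finitely generated, the inequality $m(X) \leq m(\widetilde{X})$. Since $0_{\M}$ is the minimum of the Medvedev lattice and $m(X) \neq 0_{\M}$, we conclude $m(\widetilde{X}) \geq m(X) > 0_{\M}$, so that $\widetilde{X}$ is a nonempty $G$-SFT of nontrivial Medvedev degree, as required.

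There is essentially no obstacle: the entire content is carried by \Cref{free-extensions}, and only its first, unconditional inequality is needed. The stronger equality $m(X) = m(\widetilde{X})$, which requires either recursive presentation of $G$ or finite index of $H$, plays no role here. The one minor point deserving care is the nonemptiness check above, which guarantees that the object produced genuinely belongs to the class of nonempty $G$-SFTs whose degrees constitute $\msft{G}$.
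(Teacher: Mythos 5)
Your proof is correct and is essentially the paper's own: the corollary is stated immediately after \Cref{free-extensions} and follows from its unconditional inequality $m(X)\leq m(\widetilde{X})$ together with the remark that any finite set of forbidden patterns defining $X$ over $H$ also defines $\widetilde{X}$ over $G$. Your explicit nonemptiness check is a detail the paper leaves implicit, and it is genuinely needed (the inequality alone cannot rule out $\widetilde{X}=\varnothing$, since the empty subshift has maximal degree); your transversal argument settles it correctly.
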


\begin{corollary}\label{cor:subgroups_decidable_membership}
    Let $H\leqslant G$ be finitely generated, recursively presented groups and suppose $H$ has decidable membership problem in $G$, then $\msft{H} \subset \msft{G}$ and $\msof{H} \subset \msof{G}$. 
\end{corollary}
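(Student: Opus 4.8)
The plan is to deduce this directly from the free extension construction together with Lemma~\ref{free-extensions}. Given a nonempty $H$-SFT $X \subset A^H$, I would pass to its free extension $\widetilde{X} \subset A^G$. As noted immediately after the definition of free extension, any finite set of forbidden patterns defining $X$ in $H$ also defines $\widetilde{X}$ in $G$, so $\widetilde{X}$ is a $G$-SFT. I would first check that $\widetilde{X}$ is nonempty: fixing a transversal $T$ for the right cosets $H\backslash G$ and any $x_0 \in X$, one can build a configuration of $\widetilde{X}$ coset by coset using translates of $x_0$, which is legitimate because $X$ is $H$-invariant and the defining condition of $\widetilde X$ only constrains each coset $gH$ separately.

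The hypotheses of the corollary --- $G$ recursively presented and $H$ with decidable membership problem in $G$ --- are exactly condition~(1) of Lemma~\ref{free-extensions}. That lemma therefore yields $m(X) = m(\widetilde{X})$. Since $\widetilde{X}$ is a nonempty $G$-SFT realizing the degree $m(X)$, every element of $\msft{H}$ lies in $\msft{G}$, which gives the first inclusion.

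For the second inclusion I would run the identical argument with sofic subshifts replacing SFTs. The remark following the definition of free extension records that the free extension of a sofic subshift is again sofic, and nonemptiness of $\widetilde{X}$ is verified exactly as above. Applying Lemma~\ref{free-extensions} once more under condition~(1) gives $m(X) = m(\widetilde{X})$ for every nonempty sofic $H$-subshift $X$, whence $\msof{H} \subset \msof{G}$.

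Since the substantive content is already packaged inside Lemma~\ref{free-extensions}, I do not expect a genuine obstacle; the corollary is essentially a bookkeeping consequence. The only points deserving explicit mention are the nonemptiness of $\widetilde{X}$ and the (already established) stability of the SFT and sofic classes under free extension, both of which I would state rather than leave implicit.
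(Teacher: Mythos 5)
Your proposal is correct and takes essentially the same route as the paper, which states this corollary without a separate proof precisely because it is an immediate consequence of \Cref{free-extensions} under condition~(1) together with the remark that free extensions preserve the SFT and sofic classes (your explicit verification that $\widetilde{X}$ is nonempty is a reasonable addition, though it also follows from $m(\widetilde{X})=m(X)\neq 1_\M$). One small slip: the free extension constrains configurations on the left cosets $gH$, so the transversal in your nonemptiness construction should be one for $G/H$ rather than for $H\backslash G$; the coset-by-coset argument is otherwise fine.
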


\subsection{Commensurability and quotients}

    Here we prove that $\msft{G}$ and $\msof{G}$ are commensurablility invariants. For this we apply a construction of Carroll and Penland~\cite{carroll_periodic_2015}. Let $H\leqslant G$ be finitely generated groups such that $H$ has finite index in $G$, and let $T=\{t_1=1_G,t_2,\dots,t_r\}\subset G$ be a finite set such that $HT=G$. Given an alphabet $A$, we let $B=A^T$ and $\Psi\colon A^G\to B^H$ be the map given by $\Psi(x)(h)=(h^{-1}x)|_T $. The image of a $G$-subshift $X$ by $\Psi$ is an $H$-subshift. Moreover, $X$ is a $G$-SFT (respectively sofic subshift) if and only if $\Psi(X)$ is an $H$-SFT (respectively sofic $H$-subshift), see~\cite[Section 3.1]{carroll_periodic_2015} for the proof.

    \begin{lemma}\label{higher-block-preserves-m}
        Given a subshift $X\subset A^G$, $m(X)=m(\Psi(X))$.
    \end{lemma}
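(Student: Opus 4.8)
The plan is to prove the two Medvedev reductions $m(\Psi(X)) \leq m(X)$ and $m(X) \leq m(\Psi(X))$ directly, by exhibiting computable maps between the pullback subshifts $\widehat{X} \subset A^{F(S)}$ and $\widehat{\Psi(X)} \subset B^{F(S')}$, where $S$ is a finite symmetric generating set of $G$ and $S'$ one of $H$ (recall $H$ is finitely generated, being finite index in the finitely generated group $G$). The map $\Psi$ is really a recoding: since $G = \bigcup_j Ht_j$, every $g$ is uniquely $g = ht_j$, and $\Psi(x)(h)(t_j) = x(ht_j)$, so $\Psi$ is an $H$-equivariant homeomorphism $X \to \Psi(X)$. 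I would first fix, once and for all, a word $\tau_j \in S^*$ representing each $t_j \in T$, together with the inclusion substitution $\iota\colon S' \to S^*$ sending each generator of $H$ to a word over $S$ representing the same element of $G$.

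For the reduction $m(\Psi(X)) \leq m(X)$, I would note that for $\widehat{y} = \widehat{\Psi(x)}$ one has $\widehat{y}(v)(t_j) = \Psi(x)(\underline{v})(t_j) = x(\underline{v}\,t_j)$ for every $v \in F(S')$. Applying $\iota$ letterwise to $v$ and appending $\tau_j$ yields a word $w_{v,j} \in F(S)$ with $\underline{w_{v,j}} = \underline{v}\,t_j$, whence $\widehat{y}(v)(t_j) = \widehat{x}(w_{v,j})$. The map $(v,j) \mapsto w_{v,j}$ is a computable letter-to-word substitution, so $\widehat{x} \mapsto \widehat{y}$ is computable and $\widehat{\Psi(X)} \leq \widehat{X}$. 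This direction uses only that $T$ is finite.

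The reverse reduction $m(X) \leq m(\Psi(X))$ is the main obstacle, and is where finite index is essential. Given $\widehat{y}$ encoding $y = \Psi(x)$, I must compute $\widehat{x}(w) = x(\underline{w})$ for $w \in F(S)$. Writing $\underline{w} = h t_j$ with $h \in H$ and $t_j \in T$, one has $x(\underline{w}) = \Psi(x)(h)(t_j) = y(h)(t_j)$, so it suffices to compute, from $w$, the index $j$ and a word $v \in (S')^*$ with $\underline{v} = h$, and then read $\widehat{y}(v)(t_j)$. The index $j$ is found effectively because $G$ acts on the finite set of right cosets $H\backslash G = \{Ht_1,\dots,Ht_r\}$ by right multiplication, a homomorphism into a finite symmetric group determined by the images of the finitely many generators in $S$; tracking the coset of the successive prefixes of $w$ returns the $j$ with $H\underline{w} = Ht_j$. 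The element $h = \underline{w}\,t_j^{-1}$ then lies in $H$, and the remaining task is to rewrite a word over $S$ representing an element of $H$ into a word over $S'$. The key point is that this rewriting is computable \emph{without} assuming that $G$ has decidable word problem: it is carried out exactly by the effective coset decomposition used in the finite-index case of \Cref{free-extensions}, which provides a computable map $w \mapsto (t_j, v)$ with $\underline{w} = \underline{v}\,t_j$ and $v \in (S')^*$ (the only adjustment being the passage from the left-coset bookkeeping there to the right-coset bookkeeping here, which is cosmetic since $S$ is symmetric). Composing these computations gives a computable map $\widehat{y} \mapsto \widehat{x}$, establishing $m(X) \leq m(\Psi(X))$ and hence the desired equality $m(X) = m(\Psi(X))$.
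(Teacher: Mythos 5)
Your proof is correct and takes essentially the same route as the paper: the reduction $m(\Psi(X))\leq m(X)$ is realized by the same letter-to-word substitution $(v,j)\mapsto \iota(v)\tau_j$ defining $\widehat{\Psi}$ on pullbacks, and the reverse reduction rests on the same effective coset-decomposition map (Schreier-type rewriting from a hard-coded finite table) taken from the finite-index case of \Cref{free-extensions}, which is exactly how the paper avoids any word-problem assumption. The only cosmetic difference is your intermediate step of locating the coset index via the permutation action on $H\backslash G$, which is already subsumed by that rewriting map.
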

    \begin{proof}
        Let $U,S \subset G$ be symmetric finite generating sets with $U\subset S$ for $H$ and $G$ respectively. Fix a set of elements $\{h_1,\dots,h_r\}$ in $F(S)$ such that $h_i$ corresponds to $t_i$ in $G$.
        
        For the inequality $m(X)\geq m(\Psi(X))$, we define a computable function $\widehat \Psi\colon A^{F(S)}\to B^{F(U)}$ that represents $\Psi$ on the pullback subshifts. Given  $x\in A^{F(S)}$, $\widehat \Psi (x)\colon F(U)\to B$ is defined by $\widehat{\Psi}(x)(w)(t_i) = x(w h_i)$ for $w\in F(U)$ and $t_i \in T$. It is clear that $\widehat \Psi$ is computable and that every element in $\widehat X$ is mapped by $\widehat \Psi$ to $\widehat{\Psi(X)}$, thus we obtain $m(X)\geq m(\Psi(X))$. 

        For the remaining inequality, we proceed analogously as in~\Cref{free-extensions}. There is an algorithm which on input $v \in S^*$, returns $t_v \in T$ and $u_v\in U^*$ such that $\underline{v} = \underline{u_v t_v}$. Consider the map which on input an element $x\in \widehat{\Psi(X)}$, returns $y\in \widehat X$ defined as follows: for $v\in F(S)$ set $y(v)=x(u_v)(t_i)$. This defines a computable function from $\widehat {\Psi(X)}$ to $\widehat X$, and proves the desired inequality. 
    \end{proof}

Recall that two groups $G$ and $H$ are said to be \define{commensurable} when there exist finite index subgroups $G' \leqslant G$ and $H' \leqslant H$ which are isomorphic. 

    \begin{proposition}\label{prop:commensurable_same_degrees}
        Let $G$ and $H$ be two commensurable finitely generated groups. Then $\msft{G} = \msft{H}$ and $\msof{G} = \msof{H}$.
    \end{proposition}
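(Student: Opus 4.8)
The plan is to reduce the statement to a single clean assertion: for any finite-index subgroup $K \leqslant G$ (which is automatically finitely generated), one has $\msft{G} = \msft{K}$ and $\msof{G} = \msof{K}$. Once this is established, commensurability is handled by a short chain of equalities. Indeed, by hypothesis there are finite-index subgroups $G' \leqslant G$ and $H' \leqslant H$ with $G' \cong H'$, so I would write
\[
\msft{G} = \msft{G'} = \msft{H'} = \msft{H},
\]
where the outer equalities are the finite-index claim and the middle equality is the isomorphism invariance of these classes noted after their definition (subshifts on isomorphic groups lift to the same free group via pullbacks). The identical chain works for $\msof{\,\cdot\,}$.

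For the finite-index claim I would prove two inclusions, and both are already powered by the two lemmas above. To get $\msft{K} \subset \msft{G}$, I would take a $K$-SFT $X$ and pass to its free extension $\widetilde{X}$ to $G$. The free extension of an SFT is an SFT, and since $K$ has finite index in $G$, case $(2)$ of \Cref{free-extensions} gives $m(X) = m(\widetilde{X})$; hence $m(X) = m(\widetilde{X}) \in \msft{G}$. For the reverse inclusion $\msft{G} \subset \msft{K}$, I would take a $G$-SFT $X$ and apply the recoding $\Psi \colon A^G \to B^H$ (with $B = A^T$ for a transversal $T$) from the commensurability construction, which sends $G$-SFTs to $K$-SFTs and satisfies $m(X) = m(\Psi(X))$ by \Cref{higher-block-preserves-m}; thus $m(X) \in \msft{K}$. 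Since both constructions also preserve soficity (free extension of a sofic subshift is sofic, and $\Psi$ sends sofic $G$-subshifts to sofic $K$-subshifts), the same two arguments give $\msof{G} = \msof{K}$ verbatim.

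The only points that require a word of care, rather than genuine difficulty, are bookkeeping: one should note that a finite-index subgroup of a finitely generated group is again finitely generated, so that $\msft{K}$ and $\msof{K}$ are well defined; and one should record that the two degree-preserving maps respect the relevant subshift classes, both of which are facts already stated (for $\Psi$, in the paragraph preceding \Cref{higher-block-preserves-m}; for free extensions, in the remark that any defining set of forbidden patterns for $X$ also defines $\widetilde{X}$). I do not expect a substantive obstacle here: all the analytic content lives in \Cref{free-extensions} and \Cref{higher-block-preserves-m}, and the proposition is essentially their formal combination with isomorphism invariance. The mild subtlety worth flagging is that the equality $m(X)=m(\widetilde X)$ genuinely uses the finite-index hypothesis (case $(2)$ of \Cref{free-extensions}), not the recursive-presentation hypothesis of case $(1)$, so the argument is unconditional on the word problem of $G$ and $H$, as the statement demands.
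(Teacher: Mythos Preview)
Your proposal is correct and matches the paper's proof essentially line for line: reduce to the finite-index case, then use the free extension (via case (2) of \Cref{free-extensions}) for one inclusion and the recoding $\Psi$ (via \Cref{higher-block-preserves-m}) for the other. The only cosmetic difference is that the paper compresses your chain of equalities into the sentence ``we assume without loss of generality that $H$ is a finite index subgroup of $G$.''
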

    \begin{proof}
        As the set of Medvedev degrees is an isomorphism invariant of groups, we assume without loss of generality that $H$ is a finite index subgroup of $G$.
        
        Let $X$ be an SFT on $H$. As $H$ has finite index, it follows by~\Cref{free-extensions} that the free extension of $X$ in $G$ is a $G$-SFT that has the same Medvedev degree as $X$. Therefore $ \msft{H} \subset \msft{G}$. 
        
        Conversely, given a $G$-SFT $X$, we have that $\Psi(X)$ is an $H$-SFT. Moreover, it has the same Medvedev degree as $X$ by~\Cref{higher-block-preserves-m}. This shows that $ \msft{G} \subset \msft{H}$. The proof that $\msof{G} = \msof{H}$ is identical. 
    \end{proof}

    \begin{proposition}\label{prop:quotients_fg}
        Consider a short exact sequence of groups $1 \to N \to G \to H \to 1$. If both $G$ and $N$ are finitely generated then \[ \msft{H} \subset \msft{G} \mbox{ and } \msof{H} \subset \msof{G}.   \]
    \end{proposition}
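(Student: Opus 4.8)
The plan is to use the quotient epimorphism $\rho\colon G\to H$ (with $\ker\rho=N$) together with \Cref{lema:medvedev_degree_of_pullback}. That lemma already gives $m(\rho^*(X))=m(X)$ for every $H$-subshift $X$, so the whole content reduces to showing that $\rho^*$ sends $H$-SFTs to $G$-SFTs and sofic $H$-subshifts to sofic $G$-subshifts. This is exactly where the hypothesis that $N$ is finitely generated enters.

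First I would characterize the pullback combinatorially: a configuration $z\in A^G$ lies in $\rho^*(X)$ if and only if (i) $z$ is constant on left cosets of $N$, that is $z(g)=z(gn)$ for all $g\in G$ and $n\in N$, and (ii) the induced configuration $\bar z\in A^H$ defined by $\bar z(\rho(g))=z(g)$ lies in $X$. The key point is that (i) is a finite-type condition precisely because $N$ is finitely generated: fixing a finite symmetric generating set $\{n_1,\dots,n_k\}$ of $N$, condition (i) is equivalent to the finitely many local constraints $z(g)=z(gn_i)$, since any $n\in N$ is a product of the $n_i$ and the equality propagates along such a product. Thus (i) is cut out by a finite set of forbidden patterns with supports $\{1_G,n_i\}$.

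Next I would handle (ii) by transporting forbidden patterns. If $X=X_{\mathcal F}$ for a finite set $\mathcal F$ of patterns on $H$, then for each $p\colon F\to A$ in $\mathcal F$ I choose once and for all representatives $g_h\in G$ with $\rho(g_h)=h$ for $h\in F$, and form the pattern $\tilde p$ on the finite support $\{g_h:h\in F\}$ with $\tilde p(g_h)=p(h)$. A short computation using $\rho(g_0g_h)=\rho(g_0)h$ shows that, once (i) holds, $p$ occurs in $\bar z$ if and only if $\tilde p$ occurs in $z$. Hence $\rho^*(X)$ is defined by the finite set of forbidden patterns coming from (i) together with the $\tilde p$, so it is a $G$-SFT; combined with \Cref{lema:medvedev_degree_of_pullback} this yields $\msft{H}\subset\msft{G}$. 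For the sofic case I would start from an $H$-SFT $X'$ with a factor map $\phi\colon X'\to X$. By the previous paragraph $\rho^*(X')$ is a $G$-SFT, and I would exhibit a factor map $\psi\colon\rho^*(X')\to\rho^*(X)$. Writing $\phi$ through a local rule $\phi(y)(h)=\Phi((h^{-1}y)|_F)$ given by the Curtis-Hedlund-Lyndon theorem, the same choice of representatives $g_h$ produces the sliding block code $\psi(z)(g)=\Phi((z(gg_h))_{h\in F})$; one checks that it is $G$-equivariant and that it realizes $\rho^*\circ\phi$ under the bijection $\rho^*$, hence it is surjective onto $\rho^*(X)$. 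Therefore $\rho^*(X)$ is sofic and $\msof{H}\subset\msof{G}$.

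The main obstacle, and the only place where any real care is needed, is the finite-type claim for condition (i): without finite generation of $N$ the requirement that $z$ be constant on $N$-cosets need not be expressible by finitely many forbidden patterns. Everything else is bookkeeping with coset representatives and the functoriality of $\rho^*$.
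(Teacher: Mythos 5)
Your proposal is correct and follows essentially the same route as the paper: both reduce the statement via \Cref{lema:medvedev_degree_of_pullback} to showing that $\rho^*$ preserves the SFT and sofic properties, both use a finite generating set of $N$ to express constancy on $N$-cosets as a finite-type condition, and both transport the defining local data of $X$ (forbidden/allowed patterns, and the factor map's local rule in the sofic case) through a choice of coset representatives. The only difference is cosmetic: you phrase the SFT condition with forbidden patterns and spell out the coset-propagation and surjectivity checks, while the paper uses an allowed-patterns window and leaves those verifications implicit.
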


    \begin{proof}
        Denote by $\rho\colon G\to H$ the epimorphism in the short exact sequence and let $X\subset A^H$ be an SFT. By~\Cref{lema:medvedev_degree_of_pullback}, we have that the pullback $\rho^*(X)$ satisfies $m(\rho^*(X))=m(X)$. Therefore in order to show that $\msft{H} \subset \msft{G}$ it suffices to show that $\rho^*(X)$ is an SFT.

        As $X$ is an SFT, there exists a finite set $F\subset H$ and $L \subset A^F$ such that $x \in X$ if and only if $(hx)|_{F} \in L$ for every $h \in H$. For every $f \in F$, choose $g_f \in G$ such that $\rho(g_f) = f$ and let $U =\{g_f : f \in F \}\subset G$.
        Let $S\subset G$ be a finite symmetric generating set of $N$ which contains the identity and consider the set $E = U \cup S$. Let \[ W = \{ w \in A^{E} : (w(g_f))_{f \in F} \in L \mbox{ and } w(s) = w(1_G) \mbox{ for every } s \in S.\}.   \]

        It is immediate from the definition that $\rho^*(X) = \{y \in A^G : (gy)|_{E} \in W \mbox{ for every } g \in G\}$. In particular, this means that $\rho^*(X)$ is an SFT (a set of defining forbidden patterns is $A^{E}\setminus W$.

        Now let $Y\subset B^{H}$ be a sofic shift and let $X$ be some SFT such that $\phi \colon X \to Y$ is a topological factor map given by some local map $\Phi \colon A^{K}\to B$ for some finite $K\subset H$. As before, for every $k \in K$ choose $g_k \in G$ such that $\rho(g_k) = k$ and let $V =\{g_k : k \in K \}\subset G$. By the previous argument, we have that $\rho^*(X)$ is also an SFT. Let $\phi^*\colon \rho^*(X)\to B^{G}$ be given by the local rule $\Phi^* \colon A^V \to B$ defined by \[  \Phi^*( (a_v)_{v \in V}) = \Phi( (a_{g_k})_{k \in K}). \]

        It is clear from the definition that $\phi^*(\rho^*(X)) = \rho^*(Y)$, thus $\rho^*(Y)$ is also a sofic shift and thus $\msof{H} \subset \msof{G}$.
    \end{proof}

    \begin{remark}
        Without the assumption that $N$ is finitely generated,~\Cref{prop:quotients_fg} does not hold. For instance, for the short exact sequence $1 \to [F_2,F_2] \to F_2 \to \ZZ^2 \to 1$ we have that $\msft{F_2} = \{0_\M\}$ (\Cref{prop:virtually_free_have_0_degree}) but $\msft{\ZZ^2}$ is the set of all $\Pi_1^0$ Medvedev degrees.
    \end{remark}

    \subsection{Bounded actions and the orbit membership problem}
    
    Let $G,H$ be two finitely generated groups, endowed with word metrics. A right action $\ast$ of $H$ on $G$ is called \define{bounded} if for every $h\in H$, the map given by $g\mapsto g\ast h$ is at bounded distance from the identity function on $G$. An equivalent condition is that, for some (equivalently, every) finite generating set $S\subset H$, there exist a finite set $F\subset G$ with $g^{-1}(g\ast s)\in F$ for every $g\in G$ and $s\in S$. A right action is called \define{translation-like} if it is bounded and free. 

    The set of all bounded actions of $H$ on $G$ with fixed parameters $S$ and $F$ can be described by a subshift on $G$. Moreover, we can overlay this subshift with configurations coming from a subshift on $H$. This technique was introduced in \cite{jeandel_translationlike_2015}, and then it has been used several times to construct specific subshifts on groups
     \cite{barbieri_entropies_2021, carrasco-vargas_translationlike_2024, cohen_strongly_2021, barbieri_geometric_2019}. In what follows we describe the construction briefly, and refer the reader to \cite{jeandel_translationlike_2015} for details.
    
    Let $S$ be a finite and symmetric generating set for $H$, and $F\subset G$ a finite set. Let $B$ be the set of maps from $S$ to $F$. Given $x\in B^G$, a group element $g\in G$, and a word $w\in S^\ast$, we denote by $\Phi(g,x,w)$ the group element in $G$ obtained by interpreting $x(g)$ as an arrow from $g$ to $gx(g)$ labeled by $S$, and following these arrows as indicated by the word $w$.  More precisely, we define $\Phi(g,x,\epsilon)=g$, and then for $s\in S$ and $w\in S^\ast$ we set $\Phi(g,x,ws)=\Phi(g,x,w)\cdot x(\Phi(g,x,w))(s)$.  
    \begin{definition}
        Given $H,G,S,F$ and $B$ as above, the subshift of bounded actions $\operatorname{T} \subset B^G$ is the set of all configurations $x\in B^G$ which satisfy that $\Phi(g,x,w)=g$, for every $g\in G$ and $w\in S^\ast$ that equals the identity element in $H$.  
    \end{definition}
    Observe that for every $x\in B^G$, the free monoid $S^*$ acts (on the right) on $G$ by $g \cdot w = \Phi(g,x,w)$ for $w \in S^*$. From our definition, it is clear that $\operatorname{T}$ is precisely the subset of configurations which induce an action of $H$. With this in mind, we will extend the notation above so that given $g\in G$, $h\in H$, and $x\in \operatorname{T}$, we write $\Phi(g,x,h)$ to denote $\Phi(g,x,w)$ for any $w \in S^*$ which represents $h$. 

    \begin{definition}
        Let $X\subset A^H$ be a subshift. We define $\operatorname{T}[X]\subset(A\times B)^G$ as the set of all configurations $(x,y)\in A^G \times B^G$ such that $y\in T$ and for every $g\in G$, the configuration $\left(x(\Phi(g,y,h))\right)_{h \in H}$ lies in $X$. 
    \end{definition}

    \begin{proposition}[\cite{jeandel_translationlike_2015}, Section 2]\label{T[X]-es-SFT}
        The sets $\operatorname{T}$ and $\operatorname{T}[X]$ are $G$-subshifts. Moreover, if $H$ is finitely presented and $X$ is an $H$-SFT, then $\operatorname{T}[X]$ is a $G$-SFT. 
    \end{proposition}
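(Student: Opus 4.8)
The claim has two parts. First, that $\operatorname{T}$ and $\operatorname{T}[X]$ are $G$-subshifts; second, that if $H$ is finitely presented and $X$ is an $H$-SFT, then $\operatorname{T}[X]$ is a $G$-SFT. Both parts are established by exhibiting defining sets of forbidden patterns, and the second part by arguing that these sets can be taken finite under the stated hypotheses.

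The plan is as follows. For the first part I would observe that both $\operatorname{T}$ and $\operatorname{T}[X]$ are clearly shift-invariant and closed, since they are defined by conditions that are local in the sense of being expressible over finite windows and then quantified over all $g\in G$. Concretely, a configuration $y\in B^G$ lies in $\operatorname{T}$ precisely when $\Phi(g,y,w)=g$ for every $g\in G$ and every $w\in S^\ast$ with $\underline w=1_H$. For a fixed word $w$, the value $\Phi(1_G,y,w)$ depends only on the restriction of $y$ to the finite ball of radius $|w|$ in $G$ (following the $F$-bounded arrows at most $|w|$ times keeps us within a finite set); hence the condition ``$\Phi(g,y,w)=g$ fails'' for a fixed $w$ is detected by forbidding finitely many patterns, and invariance lets us quantify over $g$ by translation. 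Taking the union of these forbidden-pattern sets over all $w$ in the word problem $\{w\in S^\ast : \underline w=1_H\}$ gives a (generally infinite) defining set of forbidden patterns for $\operatorname{T}$, so $\operatorname{T}$ is a subshift. For $\operatorname{T}[X]$, I would combine the forbidden patterns defining $\operatorname{T}$ in the $B$-layer with those coming from $X$: if $\mathcal F_X$ is a defining set of forbidden patterns for $X\subset A^H$, each such pattern $p\colon E\to A$ with $E\subset H$ finite translates, via the arrow-following map $\Phi$, into a family of forbidden patterns on $(A\times B)^G$ expressing that the $X$-constraint is violated along the induced orbit through some $g$; again this is a local condition once the $B$-layer already forces a genuine action.

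For the second part, the two finiteness inputs are used as follows. If $X$ is an $H$-SFT, then $\mathcal F_X$ may be taken finite, so only finitely many supports $E$ arise, and each contributes finitely many forbidden patterns to $\operatorname{T}[X]$. If $H$ is finitely presented, with a finite set of relators $R\subset S^\ast$, then the condition ``$y$ induces an action of $H$'' — i.e. $\Phi(g,y,w)=g$ for all $w$ representing $1_H$ — is equivalent to requiring it only for the finitely many relators $w\in R$. This is the standard fact that in a finitely presented group every word equal to the identity is a product of conjugates of relators, so closure of the induced monoid action under the relators forces closure under all null-homotopic words. Hence the $B$-layer constraints reduce to finitely many forbidden patterns, and combined with the finitely many from $\mathcal F_X$ we obtain a finite defining set for $\operatorname{T}[X]$, proving it is a $G$-SFT.

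The main obstacle, and the step requiring the most care, is the reduction from ``all null-homotopic words'' to ``finitely many relators'' in the presence of the overlaid $X$-layer. One must verify that once $y\in\operatorname{T}$ (so $S^\ast$ genuinely acts through $\Phi$ and descends to an $H$-action), the map $h\mapsto\Phi(g,y,h)$ is well defined independently of the chosen representative word $w$, so that the statement $\left(x(\Phi(g,y,h))\right)_{h\in H}\in X$ is meaningful and its violation is witnessed on a finite support. The delicate point is that checking a forbidden $X$-pattern on support $E$ requires knowing $\Phi(g,y,h)$ for $h\in E$, and to certify these from a finite window of $G$ one needs a uniform bound on the word length of representatives of elements of $E$; this is available because $E$ is finite and fixed. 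Once this bookkeeping is in place, the finitely-many-forbidden-patterns conclusion follows, and I would refer to \cite{jeandel_translationlike_2015} for the detailed verification that these finite local rules indeed cut out exactly $\operatorname{T}[X]$.
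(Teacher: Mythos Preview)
The paper does not supply its own proof of this proposition; it simply cites \cite[Section 2]{jeandel_translationlike_2015} and moves on. Your argument is correct and is essentially the standard one found in that reference: define $\operatorname{T}$ by forbidding, for each word $w$ in the word problem of $H$, the finitely many local patterns witnessing $\Phi(g,y,w)\neq g$; define $\operatorname{T}[X]$ by additionally forbidding, for each forbidden pattern $p\colon E\to A$ of $X$ and each choice of representative words $(w_h)_{h\in E}$, the local $(A\times B)$-patterns where the $A$-values read along $\Phi(\,\cdot\,,y,w_h)$ match $p$; then observe that finite presentability of $H$ reduces the first family to the finitely many relators (plus the words $ss^{-1}$ ensuring the monoid action is by bijections), and $X$ being an SFT reduces the second family to finitely many supports $E$. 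One small point worth making explicit in your write-up: when $y\notin\operatorname{T}$ the value $\Phi(g,y,h)$ may depend on the chosen representative $w_h$, but this causes no problem for the SFT description because such $y$ are already excluded by the relator constraints, so the $X$-layer patterns can safely be phrased using a fixed choice of representatives.
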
 

    Notice that if $Y$ is a sofic $H$-subshift and $H$ is finitely presented we have that $\operatorname{T}[Y]$ is a sofic $G$-subshift. Indeed, it suffices to take an $H$-SFT $X$ for which there exists a topological factor map $\phi \colon X \to Y$, let $W = \{ (x,y) \in X\times Y : \phi(x) = y\}$ and notice that $W$ is an $H$-SFT. Then $T[W]$ is also a $G$-SFT and $T[Y]$ can be obtained as a projection of $T[W]$.
    
    Next we will study the relation between the degrees $m(X)$ and $m(\operatorname{T}[X])$. In order to do that, we need to introduce the following notion (see also \cite{bogopolski2010orbit,carrasco-vargas_translationlike_2024}).

    \begin{definition}
        Let $G,H$ be finitely generated groups and $R$ a finite set of generators for $G$. A right action $\ast$ of $H$ on $G$ has \define{decidable orbit membership problem} if there is a Turing machine which on input $u,v \in R^*$ decides whether $\underline{u}$ and $\underline{v}$ lie in the same $H$-orbit.
    \end{definition}
    Observe that when $G$ is recursively presented and the action is computable, this notion is equivalent to the existence of a decidable set $I$ and computable sequence $(u_i)_{i \in I}$ with $u_i \in R^*$ such that the corresponding sequence of elements $(\underline{u_i})_{i \in I}$ in $G$ form a collection of representatives for the orbits of $H$. 

    \begin{proposition}\label{prop:medvedev-degrees-and-translation-like-actions}
        Let $G,H$ be finitely generated groups and let $\operatorname{T}$ be the subshift of bounded actions for parameters $S,F$. Then:
        
        \begin{enumerate}
            \item For every subshift $X\subset A^H$ we have $m(\operatorname{T}[X])\geq m(\operatorname{T})\vee m(X)$.
            \item Suppose that $G$ is recursively presented and that there is a translation-like action $H\curvearrowright G$ which is computable and has decidable orbit membership problem (for the parameters $S$ and $F$ already fixed). For every subshift $X\subset A^H$ we have $ m(\operatorname{T}[X]) = m(X)$.      
        \end{enumerate}
    \end{proposition}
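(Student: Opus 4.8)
The plan is to prove part (1) by establishing the two bounds $m(\operatorname{T}[X])\ge m(\operatorname{T})$ and $m(\operatorname{T}[X])\ge m(X)$ separately and combining them via the join, and then to prove part (2) by producing the reverse Medvedev reduction $\operatorname{T}[X]\le X$, the forward reduction being already supplied by part (1). Throughout I fix finite symmetric generating sets $R$ of $G$ and $S$ of $H$, and work with pullbacks $\widehat{\operatorname{T}[X]}\subset (A\times B)^{F(R)}$ and $\widehat X\subset A^{F(S)}$.

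For part (1), the projection onto the second coordinate $(x,y)\mapsto y$ is a morphism of $G$-subshifts $\operatorname{T}[X]\to\operatorname{T}$ (its image lies in $\operatorname{T}$ by definition of $\operatorname{T}[X]$), so \Cref{prop:basic-properties} gives $m(\operatorname{T}[X])\ge m(\operatorname{T})$. For $m(\operatorname{T}[X])\ge m(X)$ I would exhibit a computable reduction directly. Fix for each $f\in F$ a word $\rho(f)\in R^{*}$ representing $f$ in $G$. Given $(\widehat x,\widehat y)\in\widehat{\operatorname{T}[X]}$ and a word $w=s_1\cdots s_n\in S^{*}$, I follow the arrows coded by $\widehat y$ from the empty word: inductively I build a word in $R^{*}$ representing $\Phi(1_G,y,s_1\cdots s_i)$ by appending $\rho\!\big(\widehat y(\text{current word})(s_{i+1})\big)$, using that $\widehat y$ factors through $G$ so that querying it at an arbitrary $R^{*}$-word is legitimate. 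Reading $\widehat x$ at the resulting word yields $x(\Phi(1_G,y,\underline w))$, and since $(x,y)\in\operatorname{T}[X]$ the configuration $(x(\Phi(1_G,y,h)))_{h\in H}$ lies in $X$ (take $g=1_G$); thus this computable map sends $\widehat{\operatorname{T}[X]}$ into $\widehat X$, whence $m(X)\le m(\operatorname{T}[X])$. As $\vee$ is the join in the lattice of Medvedev degrees, the two bounds yield $m(\operatorname{T}[X])\ge m(\operatorname{T})\vee m(X)$.

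For part (2) the inequality $m(\operatorname{T}[X])\ge m(X)$ is immediate from part (1), so the content is the reverse reduction $m(\operatorname{T}[X])\le m(X)$. Let $\ast$ be the fixed translation-like action $H\curvearrowright G$ and encode it as $y_0\in B^{G}$ with $y_0(g)(s)=g^{-1}(g\ast s)\in F$. A short induction gives $\Phi(g,y_0,w)=g\ast\underline w$, so $y_0\in\operatorname{T}$; moreover $\ast$ is computable, hence $y_0$ is a computable point and $\widehat{y_0}$ is computable. Given $p\in X$ I build $x\in A^{G}$ by copying shifted copies of $p$ along orbits: choosing one representative $g_O$ per $H$-orbit and setting $x(g_O\ast h)=p(h)$, freeness of $\ast$ makes this well defined, and for $g=g_O\ast h_0$ one has $(x(g\ast h))_{h\in H}=(p(h_0 h))_{h\in H}=h_0^{-1}p\in X$ by shift-invariance of $X$; hence $(x,y_0)\in\operatorname{T}[X]$.

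It remains to make the assignment $p\mapsto(x,y_0)$ computable on pullbacks, which is the crux and where the hypotheses enter. By the remark following the definition of decidable orbit membership, the assumptions ($G$ recursively presented, $\ast$ computable, decidable orbit membership) furnish a decidable index set $I$ and a computable transversal $(\underline{u_i})_{i\in I}$ of the $H$-orbits. To compute $\widehat x(u)$ for $g=\underline u$ I would: (i) search $i\in I$ for the first one with $\underline{u_i}$ in $g$'s orbit, which is decidable, terminating, and depends only on the orbit of $g$, so it fixes $g_O$ consistently across the orbit; (ii) dovetail over $w\in S^{*}$, semi-deciding the equality $\Phi(g_O,y_0,w)=g$ in $G$ (semi-decidable since $G$ is recursively presented), a search that halts at some $w$ with $\underline w=h_0$ by freeness; (iii) output $\widehat p(w)=p(\underline w)=p(h_0)$. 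Together with the fixed computable $\widehat{y_0}$ this computes an element of $\widehat{\operatorname{T}[X]}$ from $\widehat X$, giving $m(\operatorname{T}[X])\le m(X)$ and hence equality (the degenerate case $X=\varnothing$ forces $\operatorname{T}[X]=\varnothing$ and the reductions hold vacuously). The main obstacle is precisely steps (ii)–(iii): arranging that the orbit representative and the displacement $h_0$ can both be located by a \emph{halting} computation. This is exactly what recursive presentation (to semi-decide equalities in $G$) and decidable orbit membership (to fix a computable transversal) are used for, and it explains why part (2), unlike part (1), requires these hypotheses.
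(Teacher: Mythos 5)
Your proposal is correct and follows essentially the same route as the paper's proof: part (1) via the second-coordinate projection together with the computable ``follow the arrows'' map (the paper's $\widehat{\Phi}$ and $\Psi$), and part (2) via the computable configuration $y_0$ encoding the action, a transversal of orbits obtained from decidable orbit membership, and a dovetailed search using semi-decidability of equality in the recursively presented group $G$ (the paper's map $\Theta$). The only differences are cosmetic (e.g.\ you represent elements of $F$ by fixed words $\rho(f)\in R^{*}$ where the paper simply takes $F\subset R$, and you spell out the well-definedness of $(x,y_0)\in\operatorname{T}[X]$ in more detail).
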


    \begin{proof}
        First observe that it is possible for $\operatorname{T}[X]$ to be empty, but then the inequality holds for the trivial reason that $m(\varnothing)=1_{\M}$ is maximal. Suppose now that $m(\operatorname{T}[X])$ is nonempty. The projection to the second coordinate of the alphabet $A\times B$ yields a function $m(\widehat{\operatorname{T}[X]})\to \widehat{\operatorname{T}}$, which is computable by~\Cref{prop:basic-properties}. It follows that $m(\operatorname{T}[X])\geq m(\operatorname{T})$. We now verify the nontrivial inequality $m(\operatorname{T}[X])\geq m(X)$. 
        
        Let $R$ be a finite generating set for $G$ which contains the range of the alphabet $B$ of $\operatorname{T}$, and let $F(R)$ be the free group on $R$. Given $x\in B^{F(R)}$, $u\in F(R)$, and $w\in S^\ast$, we denote by $\widehat{\Phi}(u,x,w)$ the element in $F(R)$ defined as follows. Recursively, $\widehat{\Phi}(u,x,\epsilon)=u$, and for $s\in S$ and $w\in S^\ast$, $\Phi(u,x,ws)=\Phi(u,x,w)\cdot x(\Phi(u,x,w))(s)$. Then it is clear that $\widehat{\Phi}$ is a computable function. Also note that $\widehat{\Phi}$ and $\Phi$ are compatible, in the sense that for all $x\in \operatorname{T}$ with pullback $\widehat{x}\in \widehat{\operatorname{T}}\subset B^{F(R)}$, for all $g\in G$, $u\in F(R)$ with $\underline{u}=g$, and for all $w\in S^\ast$, we have that $\Phi(g,x,w)=\underline{\widehat \Phi (u,\widehat x,w)}$.  We now define 
        
        \[\Psi\colon (A\times B)^{F(R)}\to A^{F(S)}, \mbox{ given by } \Psi(x,y) = (u\mapsto (x(\widehat{\Phi}(1,y,u)).\]
        
        This function is computable because $\widehat{\Phi}$ is computable. Moreover, it follows from the compatibility of $\widehat{\Phi}$ and $\Phi$,  that $\Psi(\widehat{\operatorname{T}[X]})\subset \widehat{X}$. This shows that $m(\operatorname{T}[X])\geq m(X)$.
        
        We now consider the second item. Suppose there exists a translation-like action $\ast$ as in the statement. We define a configuration $y \in B^{F(R)} $ by $y(u)(s)=(\underline{u}^{-1})(\underline{u}\ast s)$, where $\underline{u}$ is the element of $G$ represented by $u \in F(R)$. As $\ast$ is a computable translation-like action and $G$ is recursively presented, it follows that $y$ is a computable element in $B^{F(R)}$. Moreover, it is clear by its definition that $y \in \widehat{T}$ thus it follows that $m(\operatorname{T})=0_{\M}$. By the first item in the statement, we obtain that $m(\operatorname{T}[X])\geq m(X)$. 
        
        Finally, we prove that $m(X)\geq m(\operatorname{T}[X])$. As the translation-like action $\ast$ has decidable orbit membership problem, we can compute a set of elements $(u_i)_{i\in I}\subset F(R)$, where $I\subset \NN$ is a decidable set, such that the corresponding elements $(\underline{u_i})_{i \in I}$ in $G$ are a collection of representatives for orbits of the action. Let $\Theta\colon \widehat{X} \to \widehat{\operatorname{T}[X]}$ be the map given by $\Theta (z)=(x,y)$, where $y$ is the computable point already defined above, and $x$ is defined by 
        \[
        x(u)=z(w) \mbox{ for } u \in F(R), \mbox{ where } w \in F(S) \mbox{ is such that } \underline{\widehat{\Phi}(u_i,y,w)}=\underline{u} \mbox{ for some } i\in I, .
        \]
        The function $\Theta$ is well defined by the above construction and is computable. Indeed, as $G$ is recursively presented, there is an algorithm which, given $u\in F(R)$, computes $i\in I$ and $w\in F(S)$ such that $\underline{u} =\underline{\widehat \Phi (g_i,y, w)}$. It follows that $m(\operatorname{T}[X]) = m(X)$.\end{proof}

    \begin{corollary}\label{prop:translation-like-actions-and-medvedev-degrees}
        Let $G,H$ be finitely generated groups, where $H$ is finitely presented and admits a translation-like action on $G$.
        \begin{enumerate}
            \item For every sofic $H$-subshift $Y$, there is a $G$-SFT $X$ with $m(X)\geq m(Y)$. In particular, if $H$ admits a sofic subshift with with nonzero Medvedev degree, then so does $G$.
            \item If $G$ is recursively presented and there is a translation-like action of $H$ on $G$ that is computable and has decidable orbit membership problem, then $\msft{H}\subset\msft{G}$ and $\msof{H}\subset\msof{G}$. 
        \end{enumerate}
    \end{corollary}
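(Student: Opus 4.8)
The plan is to deduce both statements directly from \Cref{prop:medvedev-degrees-and-translation-like-actions}, together with the fact that the bounded-action construction preserves the finite type and sofic properties (\Cref{T[X]-es-SFT} and the remark following it). The only genuinely new ingredients are some bookkeeping about nonemptiness and the choice of which subshift to feed into the construction $\operatorname{T}[\cdot]$.

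For item (1), I would fix parameters $S,F$ witnessing a translation-like action $H\curvearrowright G$, so that the associated subshift of bounded actions $\operatorname{T}$ is nonempty. Given a sofic $H$-subshift $Y$, choose an $H$-SFT $X_0$ together with a topological factor map $X_0\to Y$. Since $H$ is finitely presented, \Cref{T[X]-es-SFT} guarantees that $\operatorname{T}[X_0]$ is a $G$-SFT, and it is nonempty because $\operatorname{T}$ and $X_0$ are both nonempty: one places a copy of a configuration of $X_0$ along each orbit of a free action encoded by a point of $\operatorname{T}$. By \Cref{prop:medvedev-degrees-and-translation-like-actions}(1) we have $m(\operatorname{T}[X_0])\geq m(\operatorname{T})\vee m(X_0)\geq m(X_0)$, and since $X_0$ factors onto $Y$, \Cref{prop:basic-properties} gives $m(X_0)\geq m(Y)$. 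Chaining these yields a $G$-SFT, namely $\operatorname{T}[X_0]$, with $m(\operatorname{T}[X_0])\geq m(Y)$. The ``in particular'' clause is then immediate: if $m(Y)\neq 0_\M$ then $\operatorname{T}[X_0]$ is a nonempty $G$-SFT of nonzero degree.

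For item (2), I would invoke the extra hypotheses ($G$ recursively presented, and the translation-like action computable with decidable orbit membership problem), which are exactly those of \Cref{prop:medvedev-degrees-and-translation-like-actions}(2). To see $\msft{H}\subset\msft{G}$, take any nonempty $H$-SFT $X$; then $\operatorname{T}[X]$ is a nonempty $G$-SFT by \Cref{T[X]-es-SFT}, and \Cref{prop:medvedev-degrees-and-translation-like-actions}(2) gives $m(\operatorname{T}[X])=m(X)$, so $m(X)\in\msft{G}$. To see $\msof{H}\subset\msof{G}$, take any nonempty sofic $H$-subshift $Y$; the remark following \Cref{T[X]-es-SFT} shows $\operatorname{T}[Y]$ is a sofic $G$-subshift, and since \Cref{prop:medvedev-degrees-and-translation-like-actions}(2) is stated for \emph{every} subshift on $H$ we again obtain $m(\operatorname{T}[Y])=m(Y)$, whence $m(Y)\in\msof{G}$.

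The mathematical content of the corollary lies entirely in the two cited propositions, so the assembly above presents no real obstacle. The single point that deserves care is keeping track of nonemptiness: one must observe that the existence of a (translation-like) action makes $\operatorname{T}$ nonempty, so that $\operatorname{T}[\cdot]$ applied to a nonempty subshift is again nonempty, and therefore the degrees produced genuinely lie in $\msft{G}$ and $\msof{G}$ rather than being the vacuous degree $1_\M$ of the empty set.
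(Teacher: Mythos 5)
Your proposal is correct and follows essentially the same route as the paper's own proof: take an $H$-SFT extension $X_0$ of the sofic subshift $Y$, apply \Cref{T[X]-es-SFT} and \Cref{prop:medvedev-degrees-and-translation-like-actions}(1) to get $m(\operatorname{T}[X_0])\geq m(X_0)\geq m(Y)$ for item (1), and apply \Cref{prop:medvedev-degrees-and-translation-like-actions}(2) to $\operatorname{T}[X]$ (resp. $\operatorname{T}[Y]$) for item (2). Your explicit bookkeeping of nonemptiness is a point the paper glosses over, but it does not change the argument.
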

    \begin{proof}
    Consider a sofic $H$-subshift $Y$. Let X be an $H$-SFT extension. By~\Cref{prop:basic-properties} we have $m(X)\geq m(Y)$. Let $\operatorname{T}$ be the subshift of bounded actions with parameters such that it is nonempty. By~\Cref{T[X]-es-SFT} we have that $Z= \operatorname{T}[X]$ is a $G$-SFT and by~\Cref{prop:medvedev-degrees-and-translation-like-actions} it follows that $m(Z)\geq m(X)$. We conclude that $m(Z) \geq m(Y)$. 
    
    With the extra hypotheses of the second item of~\Cref{prop:medvedev-degrees-and-translation-like-actions} we have that $m(T[X]) = m(X)$ for any subshift $X$. If we take $X$ an $H$-SFT (respectively a sofic $H$-subshift) then $\operatorname{T}[X]$ is a $G$-SFT (respectively sofic $G$-subshift) and thus we obtain $\msft{H}\subset\msft{G}$ and $\msof{H}\subset\msof{G}$. 
    \end{proof}

    \begin{observation}
        The previous results can be sometimes used as obstructions for the existence of translation-like actions. For instance, the fact that $\ZZ^2$ admits SFTs with nonzero Medvedev degree, and that $F_2$ does not, implies by the previous results that $\ZZ^2$ does not act translation-like on $F_2$. While this fact is rather intuitive, a direct proof of this result is not that straightforward, see \cite[Section 3]{cohen_counterexample_2019}. 
    \end{observation}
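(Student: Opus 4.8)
The plan is to argue by contradiction, producing an $F_2$-SFT with nonzero Medvedev degree, which is impossible by \Cref{prop:virtually_free_have_0_degree}. So suppose $\ZZ^2$ acts translation-like on $F_2$. I would set $H=\ZZ^2$ and $G=F_2$ and verify the hypotheses of the first item of \Cref{prop:translation-like-actions-and-medvedev-degrees}: both groups are finitely generated, $H=\ZZ^2$ is finitely presented, and by assumption $H$ admits a translation-like action on $G$. These are exactly the hypotheses required, and crucially the first item of that corollary makes no computability or recursive-presentation demand on the action or on $G$, so the bare existence of the translation-like action suffices.

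Next I would feed into this transference result a witness coming from the multidimensional theory. By the classical result of Hanf and Myers \cite{hanf_nonrecursive_1974,myers_nonrecursive_1974}, refined by Simpson \cite{simpson_medvedev_2014} (or, within this text, by \Cref{thm:polycyclic}, since $\ZZ^2$ is virtually polycyclic and not virtually cyclic), there is a nonempty $\ZZ^2$-SFT $Y$ with $m(Y)\neq 0_{\M}$. An SFT is in particular sofic, so the first item of \Cref{prop:translation-like-actions-and-medvedev-degrees} yields an $F_2$-SFT $X$ with $m(X)\geq m(Y)$. Since $0_{\M}$ is the least element of the lattice $\M$ and $m(Y)\neq 0_{\M}$, the inequality $m(X)\geq m(Y)$ forces $m(X)\neq 0_{\M}$.

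This contradicts \Cref{prop:virtually_free_have_0_degree}, which asserts that $\msft{F_2}=\{0_{\M}\}$ because $F_2$ is virtually free. Hence no translation-like action of $\ZZ^2$ on $F_2$ can exist. I do not expect a genuine obstacle here: all the difficulty has already been absorbed into the construction of the subshift of bounded actions $\operatorname{T}[X]$ and into \Cref{prop:medvedev-degrees-and-translation-like-actions}, which is what powers \Cref{prop:translation-like-actions-and-medvedev-degrees}. The only point demanding care is the hypothesis that $H=\ZZ^2$ be finitely presented; this is precisely what guarantees, via \Cref{T[X]-es-SFT}, that the transferred object $\operatorname{T}[X]$ is again an SFT rather than merely an effective subshift, so that the witness $X$ is a genuine $F_2$-SFT and the appeal to \Cref{prop:virtually_free_have_0_degree} is legitimate.
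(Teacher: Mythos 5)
Your proof is correct and is exactly the argument the paper intends: the Observation's phrase ``by the previous results'' refers precisely to the chain you spell out, namely item (1) of \Cref{prop:translation-like-actions-and-medvedev-degrees} (with $H=\ZZ^2$ finitely presented, no computability hypotheses needed), a nonzero-degree $\ZZ^2$-SFT from Simpson's theorem, and the contradiction with \Cref{prop:virtually_free_have_0_degree}. Nothing is missing; your remark about finite presentation guaranteeing via \Cref{T[X]-es-SFT} that the transferred subshift is a genuine SFT is the right point of care.
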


    In these results we have shown how to use the subshift of bounded actions to obtain SFTs with certain Medvedev degrees. We now consider the case of effective subshifts. 

    \begin{proposition}\label{prop:translation-like-action-subshift-is-effective}
        Let $G$ and $H$ be finitely generated  and recursively presented groups. Suppose further that there is a translation-like action $\ast$ of $H$ on $G$. Then:
        \begin{enumerate}
            \item The subshift $\operatorname{T}\subset B^G$ is effective. 
            \item If $X\subset A^H$ is effective, then $\operatorname{T}[X]\subset (A\times B)^G$ is effective. 
        \end{enumerate}
    \end{proposition}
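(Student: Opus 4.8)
The plan is to show that each of $\operatorname{T}$ and $\operatorname{T}[X]$ is effectively closed by patterns (ECP, \Cref{def:ecp}) and then to invoke \Cref{cor:ECP_plus_RP_implies_EDS}, which for the recursively presented group $G$ identifies ECP with effective. First I would fix a finite symmetric generating set $R$ of $G$ and, for each $f\in F$, a word $r_f\in R^\ast$ representing $f$; then every point visited by $\Phi$ along a prescribed branch of arrow values is an explicit, computable word in $R^\ast$, and I may speak of pattern codings over $G$ whose supports are such words.

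For item 1 I would define a set of forbidden pattern codings over the alphabet $B$ as follows. For each word $w=s_1\cdots s_n\in S^\ast$ with $\underline{w}=1_H$ and each branch $b_0,\dots,b_{n-1}\in B$, tracing $\Phi$ from the base point produces a finite set of visited words in $R^\ast$ and a terminal word $v$; a configuration carrying these arrows has a path that reads a relation of $H$ and ends at the group element $\underline{v}$. Such a configuration fails to lie in $\operatorname{T}$ exactly when $\underline{v}\neq_G 1_G$, so the codings that must be forbidden are the branches whose terminal word names a nontrivial element of $G$. The relators $w$ can be enumerated because $H$ is recursively presented, and each $w$ admits only finitely many branches; the remaining issue, discussed below, is to recognize the offending branches in a recursively enumerable way.

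For item 2 I would reduce to item 1. Since $X$ is effective and $H$ recursively presented, \Cref{cor:ECP_plus_RP_implies_EDS} yields a recursively enumerable set of pattern codings $\mathcal D$ over $H$ with $X=X_{\mathcal D}$. For $(x,y)$ with $y\in\operatorname{T}$ the read-off configurations $z^{g}=(x(\Phi(g,y,h)))_{h\in H}$ form a shift-invariant family: using the cocycle identity $\Phi(g,y,\underline{uv})=\Phi(\Phi(g,y,\underline{u}),y,\underline{v})$ and the fact that $g\mapsto g\ast h_0$ is a bijection of $G$, one checks $h_0\,z^{g}=z^{g\ast h_0^{-1}}$. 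Hence ``$z^{g}\in X$ for every $g$'' is equivalent to ``no coding of $\mathcal D$ occurs at the base point of any $z^{g}$'', and this is genuinely recursively enumerable because the occurrence of a coding is a matching of finitely many values rather than an inequality test. Concretely, for each $c\in\mathcal D$ and each branch of $y$-arrows realizing the paths that read $\operatorname{supp}(c)$ from the base point, I forbid the coding over $A\times B$ that records those arrows in the $B$-coordinate and the values $c(v)$ in the $A$-coordinate at the terminal positions. The union of this family with the codings enforcing $y\in\operatorname{T}$ from item 1 is recursively enumerable and defines $\operatorname{T}[X]$, so $\operatorname{T}[X]$ is ECP and hence effective; thus item 2 contributes nothing new beyond item 1.

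The main obstacle is therefore entirely contained in item 1 and concerns the word problem of $G$: deciding that a branch ``goes astray'' means certifying $\underline{v}\neq_G 1_G$, and recursive presentation only makes the equalities $\underline{v}=_G 1_G$ semi-decidable. The way I would try to circumvent this is to pass to the pullback $\widehat{\operatorname{T}}\subseteq B^{F(R)}$ and write it as the intersection of the effectively closed fullshift $\widehat{B^{G}}$ --- effectively closed precisely because $G$ is recursively presented (\Cref{example:fullshift_RP_is_effective}) --- with the closing constraints, so that coincidences in $G$ are detected through the consistency relations of $\widehat{B^{G}}$ rather than by testing inequality directly. Making this detection recursively enumerable is the delicate heart of the argument and the step I expect to require the full strength of the hypotheses on $G$; by comparison the bookkeeping around the cocycle identity, the computability of all supports as words in $R^\ast$, and the harmless presence of inconsistent codings (distinct path-words naming the same element of $G$, which define empty cylinders) are routine.
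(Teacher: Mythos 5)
Your item~2 is correct, and it is essentially the paper's argument in different clothing: the paper packages the reduction as $\widehat{\operatorname{T}[X]}=\Omega^{-1}(\widehat{\operatorname{T}})\cap\Psi^{-1}(\widehat{X})$ for the two computable maps $\Psi,\Omega$ from the proof of \Cref{prop:medvedev-degrees-and-translation-like-actions}, while you do the same bookkeeping with pattern codings; the cocycle identity and the enumeration over $\mathcal D$ are the same content. The genuine gap is in item~1, and it is the step you explicitly decline to carry out. Your proposal ends by declaring that ``making this detection recursively enumerable is the delicate heart of the argument'' --- but that detection \emph{is} the statement to be proved, so what you have is an outline whose central step is missing. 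Moreover, the mechanism you gesture at cannot close it by itself: the effectiveness of $\widehat{B^{G}}$ lets you \emph{enumerate} codings that wrongly label two words which coincide in $G$ (a violation of consistency, i.e.\ of a semi-decidable equality), whereas the offending branches are those whose terminal word is \emph{not} trivial in $G$, and no amount of enumerating coincidences semi-decides a non-coincidence. As written, your argument establishes only that the conditions defining $\operatorname{T}$ are closed, not that they are effectively closed.

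For comparison, here is how the paper attempts to close that step, and why your caution was warranted. The paper defines $X_0\subset B^{F(R)}$ by requiring $\widehat{\Phi}(g,x,w)=g$ \emph{in the free group} $F(R)$ for every $g\in F(R)$ and every $w\in S^{\ast}$ representing $1_H$; since relators of $H$ are recursively enumerable and free-group equality is decidable from a finite window of $x$, this $X_0$ is indeed effectively closed, and the paper then asserts $\widehat{\operatorname{T}}=X_0\cap\widehat{B^{G}}$. Only the inclusion $X_0\cap\widehat{B^{G}}\subset\widehat{\operatorname{T}}$ is immediate (equality in $F(R)$ implies equality in $G$); the reverse inclusion fails in general. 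Take $G=H=\mathbb{Z}^2$ with $H$ acting on $G$ by right multiplication, and let $y\in\operatorname{T}$ be the constant configuration whose arrows are $a,b,a^{-1},b^{-1}$: along the relator $w=sts^{-1}t^{-1}$ the arrow labels multiply to $aba^{-1}b^{-1}$, which is nontrivial in $F(R)$, so $\widehat{y}\notin X_0$ although $\widehat{y}\in\widehat{\operatorname{T}}$. So the difficulty you isolated is real, and neither your outline nor the paper's displayed set identity disposes of it for a general recursively presented $H$. It is worth noting that in every application of this proposition in the thesis one has $H=\mathbb{Z}$, which is finitely presented; in that case item~1 is unproblematic, since $\operatorname{T}$ is an SFT by \Cref{T[X]-es-SFT} and SFTs on recursively presented groups are effective by \Cref{prop:sft-and-sofic-subshifts-are-effective}, and your item~2 reduction then finishes the proof.
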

    \begin{proof}
        Let $S$ be a finite and symmetric finite generating set for $H$ and let $F\subset G$ be a finite set such that $g^{-1}(g\ast s)$ for every $s \in S$, $g \in G$. Let $R$ be a finite symmetric generating set for $G$ that contains $F$, and let $\widehat \Phi$ be the function defined in the proof of~\Cref{prop:medvedev-degrees-and-translation-like-actions}. Let $X_0$ be the set of all elements $x\in B^{F(R)}$ such that, for all $g\in F(R)$, and for all $w\in S^\ast$ with $\underline w=1_G$, we have that $\widehat \Phi (g,x,w)$ and $g$ coincide in $F(R)$. As the set $\{w\in S^\ast : \underline{w}=1_G\}$ is recursively enumerable, it follows that $X_0$ is a $\Pi_1^0$ of $A^{F(R)}$. Also note that by~\Cref{prop:sft-and-sofic-subshifts-are-effective} $\widehat{A^G}$ is a $\Pi_1^0$ subset of $A^{F(R)}$. As $\widehat{\operatorname{T}}$ equals the intersection of $X_0$ with $\widehat{A^G}$, it follows that $\widehat{\operatorname{T}}$ is a $\Pi_1^0$ set. Therefore $\operatorname{T}$ is an effective subshift. 

        We now prove that, if $X$ is an effective subshift on $H$, then $\operatorname{T}[X]$ is an effective subshift on $G$. For this purpose, let $\Psi\colon (A\times B)^{F(R)}\to A^{F(S)}$ be the computable function defined in the proof of \Cref{prop:medvedev-degrees-and-translation-like-actions}. We also consider the computable function $\Omega\colon (A\times B)^{F(R)}\to B^{F(R)}$ that removes the $A$ component of the alphabet. It follows from the definitions that 
        \[\widehat{\operatorname{T}[X]}=\Omega^{-1}(\operatorname{T})\cap \Psi^{-1}(X).\]

        As the preimage of an $\Pi_1^0$ set by a computable function is a $\Pi_1^0$ set, this proves that $\widehat{T[X]}$ is $\Pi_1^0$. 
    \end{proof}

    A consequence of these results and a theorem of Seward which ensures the existence of a translation-like action of $\ZZ$ on any infinite and finitely generated group~\cite[Theorem 1.4]{seward_burnside_2014} is the following.
    \begin{corollary}\label{cor:all_infinite_rp_groups_havenonzero_effective_medvedev_degree}
        Every infinite, finitely generated and recursively presented group admits a (nonempty) effective subshift which achieves the maximal $\Pi_1^0$ Medvedev degree.
    \end{corollary}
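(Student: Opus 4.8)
The plan is to realize the witness subshift on $G$ by transporting a maximally complex effective $\ZZ$-subshift along a translation-like action, taking the acting group to be $H=\ZZ$. First I would invoke Seward's theorem \cite[Theorem 1.4]{seward_burnside_2014} to obtain a translation-like action $\ast$ of $\ZZ$ on the infinite, finitely generated group $G$. Since $\ZZ$ is recursively presented (indeed finitely presented, with decidable word problem) and $G$ is recursively presented by hypothesis, both groups meet the standing assumptions of the transference results. Next I would use Miller's characterization \cite{miller_two_2012}, quoted in the state of the art, which asserts that the Medvedev degrees of effective $\ZZ$-subshifts are exactly the $\Pi_1^0$ degrees; in particular there is a nonempty effective $\ZZ$-subshift $X\subset A^{\ZZ}$ with $m(X)$ equal to the maximal $\Pi_1^0$ Medvedev degree. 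The candidate witness for $G$ is then the overlay subshift $\operatorname{T}[X]\subset (A\times B)^G$, where $\operatorname{T}$ is the subshift of bounded actions for a choice of parameters $S$ (a symmetric generating set of $\ZZ$) and a finite set $F\subset G$ large enough to contain all displacements $g^{-1}(g\ast s)$.

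The argument itself is a sandwich. For the lower bound, \Cref{prop:medvedev-degrees-and-translation-like-actions}(1) gives $m(\operatorname{T}[X])\geq m(\operatorname{T})\vee m(X)\geq m(X)$, so $m(\operatorname{T}[X])$ dominates the maximal $\Pi_1^0$ degree. For the upper bound, \Cref{prop:translation-like-action-subshift-is-effective}(2) shows that $\operatorname{T}[X]$ is an effective $G$-subshift (here I only need the \emph{existence} of $\ast$, not its computability), and an effective subshift has $\Pi_1^0$ Medvedev degree; provided $\operatorname{T}[X]$ is nonempty, its degree is therefore at most the maximal $\Pi_1^0$ degree. Combining the two bounds forces $m(\operatorname{T}[X])$ to equal the maximal $\Pi_1^0$ Medvedev degree, which is exactly the claim.

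It remains to secure nonemptiness of $\operatorname{T}[X]$, which I expect to be the sole point requiring genuine care. Choosing $F$ so that the configuration $y\in B^G$ recording the local moves of $\ast$ (namely $y(g)(s)=g^{-1}(g\ast s)$) lies in $\operatorname{T}$, I would use that $\ast$ is free, so its orbits partition $G$ into copies of $\ZZ$; placing a fixed configuration of the nonempty subshift $X$ along each orbit produces an $x\in A^G$ with $(x,y)\in\operatorname{T}[X]$. This step is the main obstacle in the sense that without it the upper bound could fail: an empty subshift has degree $1_{\M}$, which lies strictly above every $\Pi_1^0$ degree, so emptiness would break the sandwich. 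A pleasant feature of this route is that it bypasses the stronger hypotheses of \Cref{prop:medvedev-degrees-and-translation-like-actions}(2): one never needs the translation-like action to be computable or to have decidable orbit membership problem, because the upper bound is extracted abstractly from effectiveness rather than from an explicit reduction.
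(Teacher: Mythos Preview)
Your proposal is correct and follows essentially the same route as the paper: Seward's theorem provides the translation-like $\ZZ$-action, Miller's result supplies the maximal-degree effective $\ZZ$-subshift $X$, and the sandwich argument using \Cref{prop:medvedev-degrees-and-translation-like-actions}(1) for the lower bound and \Cref{prop:translation-like-action-subshift-is-effective}(2) for the upper bound pins down $m(\operatorname{T}[X])$. The paper's proof is terser about nonemptiness (it simply notes that for appropriate parameters $\operatorname{T}[X]$ is nonempty), whereas you spell out the construction via orbit decomposition, but the overall strategy is identical.
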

    \begin{proof}
        Let $G$ be an infinite, finitely generated and recursively presented group, and let $H=\ZZ$. Let $X$ be a nonempty effective $\ZZ$-subshift with maximal $\Pi_1^0$ Medvedev degree, whose existence was proven by Miller in~\cite{miller_two_2012}. By a result of Seward~\cite[Theorem 1.4]{seward_burnside_2014}, $G$ admits a translation-like action by $\ZZ$, so (for appropriate parameters) the subshift $\operatorname{T}[X]$ is nonempty. By~\Cref{prop:translation-like-action-subshift-is-effective}, the subshift $\operatorname{T}[X]$ is effective. Finally, by \Cref{prop:medvedev-degrees-and-translation-like-actions}, we have $m(T[X])\geq m(X)$. As $G$ is recursively presented, it follows that $m(T[X])$ is a $\Pi_1^0$ degree and thus we must have $m(T[X])=m(X)$.
    \end{proof}

    In \Cref{chap:computable-translation-like-actions} we will prove an effective version of Seward's theorem for groups with decidable word problem. This allows us to prove the following.
    \begin{theorem}\label{meff-for-groups-with-decidable-word-problem}\label{teo:nicarrasco_eff_pi01}
        Let $G$ be an infinite and finitely generated group with decidable word problem. Then $\meff{G}$ equals the set of $\Pi_1^0$ Medvedev degrees.
    \end{theorem}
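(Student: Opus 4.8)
The plan is to establish the two inclusions separately. The inclusion $\meff{G}\subseteq\{\Pi_1^0\text{ degrees}\}$ is already available: since $G$ has decidable word problem it is recursively presented, so by \Cref{prop:msft-and-msoff-are-contained-in-Pi-1-degrees} every effective $G$-subshift has a $\Pi_1^0$ Medvedev degree. Thus the whole content of the theorem lies in the reverse inclusion, namely that every $\Pi_1^0$ Medvedev degree is realized by some nonempty effective $G$-subshift.

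For the reverse inclusion, the strategy is to transport the known realization over $\ZZ$ to $G$ via a translation-like action, following the template of \Cref{cor:all_infinite_rp_groups_havenonzero_effective_medvedev_degree}. First I would fix an arbitrary $\Pi_1^0$ Medvedev degree $d$. By Miller's characterization of the Medvedev degrees of effective $\ZZ$-subshifts~\cite{miller_two_2012}, there is a nonempty effective $\ZZ$-subshift $X$ with $m(X)=d$. The goal is then to produce a nonempty effective $G$-subshift with the same degree $d$.

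The mechanism is the subshift of bounded actions $\operatorname{T}[X]$ built from a translation-like action of $H=\ZZ$ on $G$. Here the key input is the effective version of Seward's theorem proved in \Cref{chap:computable-translation-like-actions}: since $G$ is infinite, finitely generated, and has decidable word problem, $G$ admits a translation-like action by $\ZZ$ which is, in addition, computable and has decidable orbit membership problem. Fixing parameters $S,F$ witnessing this action, the configuration encoding the action itself shows that $\operatorname{T}$, and hence $\operatorname{T}[X]$, is nonempty. By \Cref{prop:translation-like-action-subshift-is-effective} the subshift $\operatorname{T}[X]$ is effective, and crucially, because the chosen action is computable with decidable orbit membership problem, the second item of \Cref{prop:medvedev-degrees-and-translation-like-actions} gives the exact equality $m(\operatorname{T}[X])=m(X)=d$. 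Therefore $d\in\meff{G}$, which completes the reverse inclusion.

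I expect the only genuine obstacle to be the availability of a \emph{computable} translation-like action by $\ZZ$ with \emph{decidable orbit membership problem}. The plain existence granted by Seward's theorem suffices only for the inequality $m(\operatorname{T}[X])\ge m(X)$, as exploited in \Cref{cor:all_infinite_rp_groups_havenonzero_effective_medvedev_degree}; but to pin down an arbitrary, possibly non-maximal degree $d$ one also needs the reverse inequality $m(X)\ge m(\operatorname{T}[X])$, which is exactly what the decidable orbit membership hypothesis of \Cref{prop:medvedev-degrees-and-translation-like-actions}(2) supplies. This is precisely why the theorem is deferred until after \Cref{chap:computable-translation-like-actions}, where the effective refinement of Seward's theorem is established; modulo that refinement, the argument is a direct assembly of the transference results of this section with Miller's theorem.
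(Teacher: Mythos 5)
Your proposal is correct and follows essentially the same route as the paper's proof: the upper bound via \Cref{prop:msft-and-msoff-are-contained-in-Pi-1-degrees}, and the lower bound by combining Miller's theorem for $\ZZ$ with the effective Seward theorem (\Cref{thm:computable-translation-like-actions-with-decidable-orbit-problem}), the effectiveness of $\operatorname{T}[X]$ (\Cref{prop:translation-like-action-subshift-is-effective}), and the exact degree equality $m(\operatorname{T}[X])=m(X)$ from \Cref{prop:medvedev-degrees-and-translation-like-actions}. Your closing remark correctly identifies why decidable orbit membership is the essential new ingredient beyond \Cref{cor:all_infinite_rp_groups_havenonzero_effective_medvedev_degree}.
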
 
    \begin{proof}
         Let $X\subset A^\Z$ be an effective subshift. By \Cref{thm:computable-translation-like-actions-with-decidable-orbit-problem}, there is a computable translation-like action $\Z\curvearrowright G$ with decidable orbit membership problem. It follows from \Cref{prop:translation-like-actions-and-medvedev-degrees} that for suitable parameters, the subshift $T[X]$ satisfies $m(X)=m(T[X])$. Moreover, $T[X]$ it is an effective subshift by \Cref{prop:translation-like-action-subshift-is-effective}. We have proved the inclusion  $\meff{\Z}\subset \meff{G}$. Miller \cite{miller_two_2012} proved that $\meff{\Z}$ contains every $\Pi_1^0$ Medvedev degree, so the same is true for $\meff{G}$. Conversely, the degree of every effective subshift is a $\Pi_1^0$ degree, so our claim follows. 
    \end{proof}
    \subsection{Quasi-isometries}

    Let $(X,d_X), (Y,d_Y)$ be two metric spaces. A map $f\colon X\to Y$ is a \define{quasi-isometry} if there exists a constant $C\geq 1$ such that
\begin{enumerate}
    \item $f$ is a \define{quasi-isometric embedding}: for all $x,x'\in X$
    \[\frac{1}{C}d_X(x,x') - C \leq d_Y(f(x), f(x'))\leq C d_X(x,x') + C,\]
    \item $f$ is \define{relatively dense}: for all $y\in Y$ there exits $x\in X$ such that $d_Y(y, f(x))\leq C$.
\end{enumerate}

    Two metric spaces are called \define{quasi-isometric} if there is a quasi-isometry between them. Given a quasi-isometry $f\colon X \to Y$, a map $g\colon Y \to X$ is called a \define{coarse inverse} of $f$ if the composition $g\circ f$ is uniformly close to the identity map $\operatorname{Id}_X$. Coarse inverses always exist, and every coarse inverse is in particular a quasi isometry, thus being quasi-isometric is an equivalence relation (see for instance~\cite{drutu_geometric_2018}).

    Two finitely generated groups $G, H$ are \define{quasi-isometric} if there exists a quasi-isometry between $(G,d_G)$ and $(H,d_H)$ for some choice of word metrics $d_G,d_H$. It is clear that if a map is a quasi-isometry for fixed word metrics, it is also a quasi-isometry for any other choice of word metric (up to modification of the constant $C$). We also remark that if a quasi-isometry between two finitely generated groups exists, then it is necessarily bounded-to-1.

    A well-known construction of Cohen~\cite{cohen_large_2017} shows that if one is given two finitely generated groups $G,H$ and a positive integer $N$, then one can construct a subshift $\operatorname{QI}$ on $G$ whose elements encode all quasi-isometries $f\colon H \to G$ which satisfy that $|\{f^{-1}(g)\}| \leq N$ for every $g \in G$. In particular, if $H$ is quasi-isometric to $G$, this subshift $\operatorname{QI}$ is nonempty for large enough $N$. Furthermore, if one of the groups (equivalently, both\footnote{As being finitely presented is an invariant of quasi-isometry, see~\cite{drutu_geometric_2018}}) are finitely presented then $\operatorname{QI}$ turns out to be an SFT. The main observation is that in this case, for any subshift of finite type $X\subset A^H$, we can enrich $\operatorname{QI}$ with the alphabet of $X$ and add extra local rules to obtain a $G$-SFT $\operatorname{QI}[X]$ which satisfies the property that in every configuration the encoded copy of $H$ is overlaid with a configuration $x \in X$. 

    The explicit construction of Cohen involves several details which are too lengthy to include in this chapter, so we will just provide a quick overview of an equivalent formulation. 
    
    Let $S,T$ be finite symmetric sets of generators which contain the identity for $G$ and $H$ respectively. Denote $d_G$ and $d_H$ the words metrics in $G$ and $H$ with respect to these generators. Let $f\colon H \to G$ be a quasi-isometry with constant $C\in \NN$ and suppose that $\max_{g \in G}|f^{-1}(g)| = N$. Let $I = \{1,\dots,N\}$. 
    
    The subshift $\operatorname{QI}$ is defined over the alphabet $B$ whose symbols have the form $b=\left(b_1, \dots,b_N\right)$ and each $b_i$ is either a map $b_i \colon T \to S^{2C}\times I$ or the symbol $\ast$. For $q\in \operatorname{QI}$, $g \in G$ and $i \in I$, let us denote $q(g,i)$ the $i$-th coordinate of $q(g) \in B$. 

    The intuition is the following: if $q \in \operatorname{QI}$, then for every $g \in G$ the tuple \[q(g) = (q(g,1),\dots,q(g,N)) \in B\] will encode the set $f^{-1}(g)$ for some quasi-isometry $f$. As every $g \in G$ can have between $0$ and $N$ preimages, each $q(g,i)$ will either encode one of them (and be a map $q(g,i) \colon T \to S^{2C}\times I$) or encode no preimage ($q(g,i) = \ast$). 

    The maps encode only local information about $h \in f^{-1}(g)$. Namely, for every $t \in T$, $q(g,i)(t)$ encodes $f(h)^{-1}f(ht)$ (which by the upper bound on the quasi-isometry, can be represented by a word in $S$ of length at most $2C$) and the index $j \in I$ such that $q(f(ht),j)$ is encoding $ht$.

    The forbidden patterns that define $\operatorname{QI}$ can be described informally with rules as follows: 
    \begin{enumerate}
        \item  The relative dense condition in $\operatorname{QI}$ is encoded by imposing that in every ball of length $C$ in $G$ there is at least some element which has a non-$\ast$ coordinate.
        \item The directions from non-$\ast$ elements must lead to non-$\ast$ elements. More precisely, if $q(g,i)\neq \ast$, then for every $t \in T$ if $q(g,i)(t)=(w,j)$, then $q(g\underline{w},j)\neq \ast$.  
        \item The lower bound in the quasi-isometry is encoded by asking that for any two elements of $g$ with a non-$\ast$ coordinate at distance at most $2C$, there is a path of non-$\ast$ elements of bounded length linking them.
        \item The relations of $H$ are encoded by enforcing that paths associated to words $T^*$ which represent the identity are cycles. This last rule is the only one that requires the groups to be finitely presented in order to obtain an SFT.
    \end{enumerate} 

    Recall that for a word $w \in S^*$ we denote by $\underline{w}$ its corresponding element of $G$. Let $q \in \operatorname{QI}$, by the first rule of $\operatorname{QI}$, there exists $u \in S^*$ with $|u|\leq C$ and $i \in I$ such that $q(\underline{u},i)\neq \ast$. Let $(u,i)$ be lexicographically minimal among all such pairs. Consider the map $\kappa \colon T^* \to S^* \times I$ defined inductively follows. Fix $\kappa(\epsilon) = (u,i)$. Suppose for some $w\in T^*$ we have $\kappa(w) = (u',i')$ and $q(\underline{u'},i')\neq \ast$. For every $t \in T$, we compute $q(\underline{u'},i')(t) = (v,j)$ and set $\kappa(wt) = (u'v,j)$. Notice that by the second rule $q(\underline{u'v},j) \neq \ast$. Clearly the map $\kappa$ can be computed from a description of $q$.

    The rules for $\operatorname{QI}$ ensure the following two properties
    \begin{enumerate}
        \item The third rule of $\operatorname{QI}$ ensures that the range of $\kappa$ spans the set of all $(g,i)\in G \times I$ for which $q(g,i) \neq \ast$, namely, \[ \{(g,i)\in G \times I : q(g,i) \neq \ast \} = \{ (\underline{u},i)\in G \times I : \kappa(w) = (u,i) \mbox{ for some } w \in T^*\}.      \]
        \item The fourth rule ensures that the map $\kappa \colon T^* \to S^* \times I$ is well defined on $H$, meaning that if $w,w'$ are two words in $T^*$ which represent the same element in $H$, then $\kappa(w)$ and $\kappa(w')$ represent the same pair of $G \times I$. 
    \end{enumerate}

    Given an alphabet $A$ we can enrich the alphabet $B$ by replacing every non-$\ast$ coordinate by pairs of the form $(a,b_i)$ for some $a \in A$. This way from every configuration $\widetilde{q}$ with this enriched alphabet, we can obtain $\widehat{x}\colon T^*\to A$ by setting for $w \in T^*$ $\widehat{x}(w)$ as the $a$-th coordinate of $\widetilde{q}(\kappa(w))$. Furthermore, by the first property of $\kappa$, the map $\widehat{x}$ induces a configuration $x \in A^H$. 
    
    For a subshift $X\subset A^H$, we denote by $\operatorname{QI}[X]$ the subshift over the enriched alphabet where we forbid the occurrence of forbidden patterns of $X$ in the induced configurations. The fundamental property proven by Cohen in~\cite{cohen_large_2017} is that if the groups are finitely presented and $X$ is an $H$-SFT, then $\operatorname{QI}[X]$ is a $G$-SFT.

    \begin{lemma}\label{lem:QI_medvedev}
        Let $G,H$ finitely generated groups with $G$ recursively presented. Let $X\subset A^H$ be a subshift. We have \[m(\operatorname{QI}[X]) = m(\operatorname{QI})\vee m(X).\]
    \end{lemma}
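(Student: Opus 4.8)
The plan is to establish the two inequalities $m(\operatorname{QI}[X]) \geq m(\operatorname{QI}) \vee m(X)$ and $m(\operatorname{QI}[X]) \leq m(\operatorname{QI}) \vee m(X)$ separately, following the same template as the proof of~\Cref{prop:medvedev-degrees-and-translation-like-actions}. All the Medvedev reductions will be exhibited at the level of the pullback subshifts on the free groups $F(S)$ and $F(T)$, where morphisms are automatically computable and the word length of $G$ never gets in the way. Throughout I will freely use the map $\kappa$ and its two properties (that its range spans all non-$\ast$ pairs, and that it is well defined on $H$) recalled just before the statement.

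For the inequality $m(\operatorname{QI}[X]) \geq m(\operatorname{QI}) \vee m(X)$, I would first note that the coordinatewise projection from the enriched alphabet onto $B$ (forgetting the $A$-decorations) is a topological morphism $\operatorname{QI}[X] \to \operatorname{QI}$, so $m(\operatorname{QI}[X]) \geq m(\operatorname{QI})$ by~\Cref{prop:basic-properties}. To obtain $m(\operatorname{QI}[X]) \geq m(X)$ I would use the decoding map already described in the construction: given $\widetilde q \in \widehat{\operatorname{QI}[X]}$, compute $\kappa$ (which is computable from $q$) and set $\Psi'(\widetilde q)(w)$ to be the $A$-component of $\widehat{\widetilde q}(\kappa(w))$ for $w \in F(T)$. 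This function is computable, and by the very definition of $\operatorname{QI}[X]$ its image lies inside $\widehat X$, whence $m(X) \leq m(\operatorname{QI}[X])$. Since $m(\operatorname{QI}[X])$ is then an upper bound for both $m(\operatorname{QI})$ and $m(X)$, it dominates their join.

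For the reverse inequality $m(\operatorname{QI}[X]) \leq m(\operatorname{QI}) \vee m(X)$, I must produce a computable map which, from a pair $(q,x)$ coded as an element of the join $\{z : (z_{2n})_{n} \in \widehat{\operatorname{QI}},\ (z_{2n+1})_{n} \in \widehat X\}$, builds an element of $\widehat{\operatorname{QI}[X]}$. The recipe is to keep the $B$-layer equal to $\widehat q$ and to attach to each non-$\ast$ coordinate the symbol of $A$ prescribed by $x$ at the element of $H$ that this coordinate encodes. Concretely, to decorate the coordinate $(v,i)$ with $\widehat q(v)(i) \neq \ast$, I dovetail over $w \in F(T)$, compute $\kappa(w) = (u',i')$, and search for some $w$ with $i' = i$ and $\underline{u'} = \underline v$ in $G$, then set the decoration to $\widehat x(w)$. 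The search terminates because the range of $\kappa$ spans all non-$\ast$ pairs, and its output is independent of the chosen $w$ because two words whose $\kappa$-images represent the same pair represent the same element of $H$, so $\widehat x(w) = \widehat x(w')$. Finally I would check that the configuration produced depends only on $\underline v \in G$ (hence is a genuine pullback) and that its induced configuration equals $x$, so that it avoids the forbidden patterns of $X$ and therefore lies in $\operatorname{QI}[X]$. Combining the two inequalities yields the desired equality.

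The main obstacle is the semidecidability step in this second direction: recovering the $H$-element attached to a non-$\ast$ coordinate requires deciding $\underline{u'} = \underline v$ in $G$, and this is only feasible because $G$ is recursively presented, so that its word problem is recursively enumerable and the dovetailed search is guaranteed to halt. This is precisely where the hypothesis is used; everything else is combinatorial bookkeeping about $\kappa$ and the defining rules of $\operatorname{QI}$.
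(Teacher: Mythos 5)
Your proposal is correct and follows essentially the same route as the paper's proof: the forward inequality via the alphabet projection together with the $\kappa$-decoding map, and the reverse inequality by keeping the $B$-layer of $q$ and decorating each non-$\ast$ pair with $x(v)$ for a word $v$ found by dovetailing over $T^*$ and the recursively enumerable word problem of $G$. Your explicit verification that the decoration is independent of the witness $v$ (via the fact that pairs of $G\times I$ encode at most one element of $H$) is a consistency point the paper's proof passes over silently, so it is a welcome addition rather than a deviation.
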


    \begin{proof}
        Let $\widehat{q}\in \widehat{\operatorname{QI}[X]}$. It is clear that one can compute an element of $\widehat{\operatorname{QI}}$ from $\widehat{q}$ by just erasing the alphabet coordinate. As the map $\kappa$ defined above is computable from $\widehat{q}$, we may compute $x \in \widehat{X}$ from this map and $\widehat{q}$. Thus we have that $m(\operatorname{QI}[X]) \geq m(\operatorname{QI}) \vee m(X)$.

        Conversely, let $q \in \widehat{\operatorname{QI}}$ and $x \in \widehat{X}$. We can compute $\kappa$ from $q$ as before. For $u \in S^*$ and $i \in I$, if $q(u,i)= \ast$, set $\widehat{q}(u,i) = \ast$. Otherwise, we know that there must exist $v \in T^*$ such that $\kappa(v) = (u',i)$ with $\underline{u}=\underline{u'}$. Thus if we run the algorithm that enumerates the word problem on $u'u^{-1}$ on every pair $(u',i)$ which occurs in the range of $\kappa$ this algorithm finishes. Set $\widehat{q}(u,i) = (x(v),q(u,i))$. With this it is clear that $\widehat{q}\in \operatorname{QI}[X]$ and thus $m(\operatorname{QI})\vee m(X) \geq m(\operatorname{QI}[X])$.
    \end{proof}

    \begin{remark}\label{admitir-sft-incalculable-es-invariante-de-qi}
        It follows from this result that for finitely presented groups, admitting an SFT with nonzero Medvedev degree is an invariant of quasi-isometry. We also mention that in the previous proof, the direction $m(\operatorname{QI}[X]) \geq m(\operatorname{QI})\vee m(X)$ does not require the groups to be recursively presented. 
    \end{remark}

    \begin{proposition}\label{prop:QI-computableinverse}
        Suppose $G,H$ are finitely generated groups with decidable word problem. If there exists a computable quasi-isometry $f\colon H \to G$, then there exists a computable coarse inverse $g\colon G \to H$.
    \end{proposition}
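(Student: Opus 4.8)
The plan is to realize the coarse inverse by the standard ``nearest preimage'' recipe and then check that every ingredient is effective. Fix finite symmetric generating sets $S$ for $G$ and $T$ for $H$, together with the associated word metrics, and let $C\geq 1$ be a quasi-isometry constant for $f$; since we only need to exhibit \emph{some} computable coarse inverse, we may simply bake this fixed natural number into the algorithm, with no need to compute $C$ from $f$. Define $\psi\colon G\to H$ by letting $\psi(x)$ be a group element $h\in H$ with $d_G(x,f(h))\leq C$. Such an $h$ exists by the relative density clause of the quasi-isometry, so $\psi$ is well defined as a set-theoretic map.

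First I would verify that $\psi$ satisfies the coarse-inverse condition, namely that $\psi\circ f$ is uniformly close to $\operatorname{Id}_H$. Given $h\in H$, put $x=f(h)$ and $h'=\psi(x)$. By construction $d_G(f(h),f(h'))\leq C$, and the lower bound $\tfrac{1}{C}d_H(h,h')-C\leq d_G(f(h),f(h'))$ then forces $\tfrac{1}{C}d_H(h,h')\leq 2C$, i.e. $d_H(h,h')\leq 2C^2$. This uniform bound shows $\psi\circ f$ stays within $2C^2$ of the identity on $H$; as a bonus, $f\circ\psi$ stays within $C$ of $\operatorname{Id}_G$, so $\psi$ is a genuine coarse inverse in the strong sense as well.

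The computability content is the heart of the matter, and it rests on two effective subroutines. The first is that the word metric $d_G$ is computable from the decidable word problem of $G$: to evaluate $|c|_S$ for an element $c$ given by a word, one searches over $n=0,1,2,\dots$ and, for each $n$, tests the finitely many words of length $n$ over $S$ for equality with $c$ using the word-problem algorithm; the search halts because $c$ has finite length. The second is that, since both groups have decidable word problem, each admits a computable choice of canonical representatives (say the length-lexicographically least word for each element), which lets $\psi$ be presented as an honest map $G\to H$ rather than merely a map on words. The algorithm for $\psi$ is then: on input a word $u$ representing $x$, enumerate words $w\in T^{*}$; for each $w$ compute a word $v\in S^{*}$ representing $f(\underline w)$ (possible since $f$ is computable), compute $d_G(\underline u,\underline v)$, and halt, returning the canonical representative of $\underline w$, as soon as this distance is $\leq C$.

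The main obstacle to flag is termination of this search, which is exactly where relative density enters: it guarantees that some $h$, hence some word $w\in T^{*}$, with $d_G(x,f(h))\leq C$ exists, so the enumeration is certain to locate one and halt. A secondary point requiring care is well-definedness: distinct words representing the same $x$ must produce the same value of $\psi$, which is why I reduce each input to its canonical representative before running the search and output the canonical representative of the answer. Assembling these observations yields a computable map $\psi\colon G\to H$ satisfying the coarse-inverse condition, as required.
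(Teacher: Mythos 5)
Your proposal is correct and follows essentially the same route as the paper's proof: reduce the input word to a canonical representative via the word problem (ensuring well-definedness), then search words of $T^{*}$ for one whose $f$-image lies within the hard-coded constant $C$ of the input, with termination guaranteed by relative density. If anything, your writeup is slightly more complete, since you verify the coarse-inverse bound $d_H(h,\psi(f(h)))\leq 2C^2$ directly (matching the paper's definition of coarse inverse via $g\circ f\approx\operatorname{Id}_H$), whereas the paper only records the bound on $f\circ g$.
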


       \begin{proof}
        Fix finite set of generators $S,T$ which induce metrics $d_G$ and $d_H$ for $G$ and $H$ respectively. Let $f \colon H \to G$ be a computable quasi-isometry with constant $C>0$ and $\widehat{f}\colon T^* \to S^*$ its computable representative. We construct $\widehat{g}\colon S^* \to T^*$ through the following algorithm.

        Given $w \in S^*$, run the algorithm which decides the word problem on $uw^{-1}$ for all words $u$ of length at most $|w|$ and replace $w$ by the lexicographically minimal $u$ which is accepted. In increasing lexicographical order, compute $d_G(u,\widehat{f}(v))$ for $v \in S^*$. Return $\widehat{g}(w) = u$ where $u$ is the first word for which $d(u,\widehat{f}(v)) \leq C$.

        As $f$ is relatively dense, this algorithm is guaranteed to stop on every input, and thus $\widehat{g}$ is a total computable map. Furthermore, the first step ensures that for two words $w,w'$ which represent the same element of $G$, then $\widehat{g}(w) = \widehat{g}(w')$ and thus it represents a map $g \colon G \to H$.

        Finally, for any $t\in G$, we have $d_G(f\circ g(t),t) \leq C$ and thus $d_{\infty}(f\circ g,\operatorname{Id}_G)$ is finite. This implies that $g$ is a coarse inverse of $f$ and thus a quasi-isometry. \end{proof}

    \begin{definition}
        We say that two finitely generated groups are \define{computably quasi-isometric} if there exists a computable quasi-isometry between them.
    \end{definition}

    \begin{remark}
        For groups with decidable word problem, we have shown that any computable quasi-isometry $f\colon G \to H$ admits a computable quasi-isometry inverse. With a dovetailing argument, the proof of~\Cref{prop:QI-computableinverse} can be generalized to the case where $G$ is recursively presented and $H$ has decidable word problem. We do not know if the result is still true if both groups are merely recursively presented.
    \end{remark}

    By~\Cref{prop:QI-computableinverse}, it follows that being computably quasi-isometric is an equivalence relation for groups with decidable word problem. Putting~\Cref{lem:QI_medvedev} with the result of Cohen stating that for finitely presented groups and $X$ an SFT then $\operatorname{QI}[X]$ is an SFT, we obtain the following:
    
    \begin{corollary}\label{coro_QI}
        Let $G,H$ be finitely presented and computably quasi-isometric groups with decidable word problem. Then $\msft{G}=\msft{H}$.
    \end{corollary}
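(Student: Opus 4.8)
The plan is to prove the two inclusions $\msft{H}\subset\msft{G}$ and $\msft{G}\subset\msft{H}$ separately. Since Proposition \ref{prop:QI-computableinverse} and the remark following it show that being computably quasi-isometric is a symmetric relation for groups with decidable word problem, it suffices to establish a single inclusion, say $\msft{H}\subset\msft{G}$. So I would fix an $H$-SFT $X$ together with a computable quasi-isometry $f\colon H\to G$ with constant $C$. Recall that a quasi-isometry between finitely generated groups is bounded-to-one, so for a suitable fiber bound $N$ the subshift $\operatorname{QI}$ built with parameters $C,N$ is nonempty and admits a point encoding $f$. The goal is to realize $m(X)$ as the Medvedev degree of the $G$-SFT $\operatorname{QI}[X]$.

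Two ingredients make this work. First, since $G$ and $H$ are finitely presented and $X$ is an $H$-SFT, Cohen's construction guarantees that $\operatorname{QI}[X]$ is a $G$-SFT. Second, since $G$ has decidable word problem it is recursively presented, so Lemma \ref{lem:QI_medvedev} applies and gives
\[
m(\operatorname{QI}[X]) = m(\operatorname{QI}) \vee m(X).
\]
Thus the whole proof reduces to showing that $m(\operatorname{QI}) = 0_{\M}$, that is, that $\operatorname{QI}$ contains a computable configuration; then $m(\operatorname{QI}[X]) = 0_{\M}\vee m(X) = m(X)$, and as $\operatorname{QI}[X]$ is a $G$-SFT we conclude $m(X)\in\msft{G}$.

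The heart of the argument, and the step I expect to be the main obstacle, is producing this computable point of $\operatorname{QI}$ from the computable quasi-isometry $f$. The idea is to let the configuration encode the fiber structure of $f$ directly. Using Proposition \ref{prop:QI-computableinverse} I first fix a computable coarse inverse $g_0\colon G\to H$, so that each $g_0(g)$ lies within bounded distance of a genuine preimage of $g$. For each $g\in G$ I can then compute the whole fiber $f^{-1}(g)$ by a finite search: the quasi-isometric embedding inequality forces any two preimages of $g$ to lie within $H$-distance $C^2$ of one another and forces every preimage to lie within a bounded (computable) distance $r$ of $g_0(g)$, so it suffices to enumerate the ball of radius $r$ around $g_0(g)$ in $H$ (computable since $H$ has decidable word problem) and to test, using the computability of $f$ and the decidable word problem of $G$, which elements $h$ satisfy $f(h)=g$. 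Ordering these finitely many preimages assigns each the index required by the alphabet of $\operatorname{QI}$, the remaining coordinates being set to $\ast$; and for each preimage $h$ of index $i$ and each generator $t\in T$ I compute the local datum $q(g,i)(t)=(w,j)$, where $w\in S^{2C}$ represents $f(h)^{-1}f(ht)$ and $j$ is the index of $ht$ in the fiber $f^{-1}(f(ht))$. Every one of these operations is effective, so the resulting configuration $q$, equivalently its pullback $\widehat q$ to $F(S)$, is a computable point of $\operatorname{QI}$, giving $m(\operatorname{QI})=0_{\M}$.

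Combining the three facts above yields $m(X)\in\msft{G}$ for every $H$-SFT $X$, hence $\msft{H}\subset\msft{G}$; applying the identical reasoning to a computable coarse inverse of $f$ (which exists by Proposition \ref{prop:QI-computableinverse}) gives the reverse inclusion, so $\msft{G}=\msft{H}$.
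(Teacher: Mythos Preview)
Your proof is correct and follows the same approach as the paper: reduce to one inclusion by symmetry via Proposition~\ref{prop:QI-computableinverse}, apply Lemma~\ref{lem:QI_medvedev} and Cohen's result that $\operatorname{QI}[X]$ is an SFT, and conclude by showing $m(\operatorname{QI})=0_{\M}$ via a computable point built from the computable quasi-isometry. The paper's version simply asserts this last step without detail, whereas you spell out the fiber computation; your elaboration is sound.
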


    \begin{proof}
        As the groups have decidable word problem there are computable quasi-isometries on both directions and thus it suffices to show that $\msft{H}\subset \msft{G}$.

        Let $X$ be an $H$-subshift. From~\Cref{lem:QI_medvedev} we obtain that $m(\operatorname{QI}[X]) = m(\operatorname{QI})\vee m(X)$. Furthermore, as there is a computable quasi-isometry, one can use it to construct a computable element of $\operatorname{QI}$ and thus it follows that $m(\operatorname{QI}) = 0_{\M}$. We obtain that for any subshift $X\subset A^H$ we have $m(\operatorname{QI}[X]) = m(X)$. Finally, as $G,H$ are finitely presented, it follows that for any SFT $X\subset A^H$ then $\operatorname{QI}[X]$ is a $G$-SFT with $m(\operatorname{QI}[X]) = m(X)$, therefore $\msft{H}\subset \msft{G}$. 
    \end{proof}

    For the next lemma we will consider quasi-isometries between groups and computable metric spaces that are computable in the sense of \Cref{chap:Background}.

    \begin{observation}
        Contrary to what happens between discrete groups, for computable metric spaces $X,Y$ there may exist a computable quasi-isometry from $X$ to $Y$, but no computable quasi-isometry from $Y$ to $X$. For instance, the identity from $\ZZ$ to $\RR$ is a computable quasi-isometry, but there exist no computable quasi-isometry from $\RR$ to $\ZZ$, because computable maps are continuous and thus constant.
    \end{observation}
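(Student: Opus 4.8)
The plan is to establish the observation by verifying its two halves separately, working throughout with the standard computable metric space structures on $\ZZ$ and $\RR$ (the dense sets being $\ZZ$ and $\QQ$ respectively, both with the Euclidean metric).

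First I would check that the inclusion $\iota\colon\ZZ\to\RR$ is a computable quasi-isometry. It is an isometric embedding, so it trivially satisfies the quasi-isometric embedding condition with $C=1$, and it is relatively dense since every real lies within distance $1/2$ of an integer. For computability, note that given a basic ball $B(q,r)\subset\RR$ with $q,r\in\QQ$, its preimage $\iota^{-1}(B(q,r))=\{n\in\ZZ : |n-q|<r\}$ is a finite set of integers that can be computed uniformly from $(q,r)$, since the inequality $|n-q|<r$ is decidable for rational $q,r$. As every integer is an isolated point of $\ZZ$, this finite set is effectively open, and the preimage of an arbitrary effectively open set is then effectively open by \Cref{union-of-effectively-open-sets}. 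Hence $\iota$ is computable, which establishes the existence of a computable quasi-isometry in one direction.

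For the nonexistence of a computable map in the other direction, the key input is that every computable function between computable metric spaces is continuous on its domain (the remark following \Cref{computable-function-relative}). Suppose $g\colon\RR\to\ZZ$ were computable; then $g$ is continuous. Since $\RR$ is connected while $\ZZ$ with its induced topology is totally disconnected — each singleton is clopen — the image $g(\RR)$ is a connected subset of $\ZZ$, hence a single point. Thus any computable $g\colon\RR\to\ZZ$ is constant.

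Finally I would rule out a constant map being a quasi-isometry. If $g\equiv n_0$, then for any $x,x'\in\RR$ the quasi-isometric embedding inequality forces $\tfrac{1}{C}|x-x'|-C\leq d(g(x),g(x'))=0$, which fails as soon as $|x-x'|>C^{2}$; equivalently, the image $\{n_0\}$ is not relatively dense in the unbounded space $\ZZ$. Either way, no constant map — and hence no computable map — $\RR\to\ZZ$ can be a quasi-isometry. There is essentially no technical obstacle beyond keeping track of the computable structures; the whole content is the contrast with the discrete-group setting, where computability is a combinatorial condition on word maps carrying no continuity constraint, whereas for genuine metric spaces computability forces continuity, and the connectedness of $\RR$ together with the total disconnectedness of $\ZZ$ does the rest.
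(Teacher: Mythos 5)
Your proposal is correct and follows exactly the paper's own (one-line) argument: the inclusion $\ZZ\to\RR$ is a computable quasi-isometry, while any computable map $\RR\to\ZZ$ is continuous, hence constant by connectedness of $\RR$, and a constant map into the unbounded space $\ZZ$ cannot be relatively dense. You have merely filled in the routine details (computability of the inclusion via preimages of basic balls, and the failure of constant maps to be quasi-isometries) that the paper leaves implicit.
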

    
    \begin{lemma}
           Let $(X,d,(x_i)_{i \in \NN})$ be a computable metric space and let $G,H$ be two finitely generated groups with decidable word problem. If there are computable quasi-isometries $f \colon G \to X$ and $h \colon H \to X$, then $G$ and $H$ are computably quasi-isometric.
    \end{lemma}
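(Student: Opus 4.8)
The plan is to produce a computable quasi-isometry $\phi\colon G\to H$ directly, rather than to compose $f$ with a coarse inverse of $h$. The reason we avoid the latter is exactly the phenomenon recorded in the Observation preceding the statement: a computable coarse inverse $X\to H$ need not exist, since a computable (hence continuous) map from a connected part of $X$ into the discrete group $H$ would be forced to be locally constant. The key point that rescues us is that the \emph{domain} $G$ is discrete: we will only ever need to evaluate $h$ at points of $H$ and compare the results with values of $f$, never to invert $h$ as a map out of $X$.

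First I would record the computability inputs. Since $G$ and $H$ have decidable word problem they are computable metric spaces with computability structure the group itself, and each group element is a computable point, uniformly in a word representing it. As $f$ and $h$ are computable functions, it follows that $f(g)$ and $h(\eta)$ are computable points of $X$, uniformly in $g\in G$ and $\eta\in H$ (this is the uniform version of \Cref{characterization-of-computable-points-via-sequences}). Consequently the real number $d\big(h(\eta),f(g)\big)$ is computable uniformly in $(\eta,g)$, so that strict inequalities $d\big(h(\eta),f(g)\big)<c$ for rational $c$ are semi-decidable uniformly.

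Next I would define the algorithm computing a representative $\widehat\phi\colon S_G^{*}\to S_H^{*}$, in the spirit of \Cref{prop:QI-computableinverse}. Let $C$ be a constant valid for both quasi-isometries $f$ and $h$. On input a word $u$, first normalise it using the decidability of the word problem of $G$, replacing $u$ by the lexicographically least word representing the same $g\in G$; this guarantees that the output depends only on $g$, so that $\phi$ is well defined as a map of groups. Then dovetail, over all words $v\in S_H^{*}$, the semi-decision of the condition $d\big(h(\underline v),f(g)\big)<C+1$, and output the first word $v$ whose computation halts. Because $h$ is relatively dense, there is some $\eta\in H$ with $d\big(h(\eta),f(g)\big)\le C$, so the search terminates; thus $\widehat\phi$ is a total computable map and it represents a map $\phi\colon G\to H$ satisfying
\[
d\big(h(\phi(g)),f(g)\big)\le C+1 \qquad\text{for every } g\in G.
\]

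Finally I would verify that $\phi$ is a quasi-isometry, which is now a routine coarse-geometry computation from the displayed inequality together with the fact that $f$ and $h$ are quasi-isometries. For the quasi-isometric embedding bounds one passes through $X$: the quantity $d\big(h(\phi(g)),h(\phi(g'))\big)$ differs from $d\big(f(g),f(g')\big)$ by at most $2(C+1)$ by the triangle inequality, the former controls $d_H(\phi(g),\phi(g'))$ since $h$ is a quasi-isometric embedding, and the latter is controlled by $d_G(g,g')$ since $f$ is one. For relative density, given $\eta\in H$ use relative density of $f$ to find $g\in G$ with $d\big(h(\eta),f(g)\big)\le C$; combined with the displayed bound and the lower quasi-isometry estimate for $h$, this forces $d_H(\eta,\phi(g))$ to be bounded. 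Hence $\phi$ is a computable quasi-isometry from $G$ to $H$, and $G$ and $H$ are computably quasi-isometric. The main obstacle, and the conceptual content of the proof, is precisely the first paragraph: realising that one must not attempt to invert $h$ over the continuous space $X$, and instead organise the construction so that every appeal to $h$ is an evaluation at a discrete point of $H$, turning the required search into a semi-decidable one that terminates by relative density.
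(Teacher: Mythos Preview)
Your proof is correct and follows essentially the same approach as the paper: normalise the input word, then search for a word $v$ in $H$ with $h(\underline v)$ close to $f(g)$, concluding that the resulting map is a quasi-isometry because $h\circ\phi$ is uniformly close to $f$. The only cosmetic difference is that the paper routes the search through an intermediate dense point $x_{i_w}\in X$ (finding first $x_{i_w}$ close to $f(g)$ via the effectively open preimage $\widehat f^{-1}(B(x_i,C))$, then $v$ with $h(\underline v)$ close to $x_{i_w}$), whereas you compute the distance $d(h(\underline v),f(g))$ directly from the uniform computability of the two points; and the paper concludes by invoking an abstract coarse inverse $\overline h$ while you verify the quasi-isometry estimates by hand.
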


    \begin{proof}
        Fix finite set of generators $S,T$ for $G$ and $H$ respectively. Let $\widehat{f} \colon S^* \to X$ and $\widehat{h}\colon T^* \to X$ be computable maps that represent the quasi-isometries $f$ and $g$. Let $C\in \NN$ be a quasi-isometry constant for both $f$ and $g$. We construct $\widehat{\psi}\colon S^* \to T^*$ through the following algorithm.

        Let $w \in S^*$. Using the algorithm which decides the word problem of $G$, replace $w$ for a lexicographically minimal word $u$ that represents the same element of $G$. Through a dovetailing procedure, compute approximations of the recursively open sets $\widehat{f}^{-1}(B(x_i,C))$. Let $i_{w}$ be the first index for which this procedure returns that $u \in \widehat{f}^{-1}(B(x_{i_w},C))$. Next, let $v\in T^*$ be the first word which appears in the effectively open enumeration of $\widehat{h}^{-1}(B(x_{i_w},C))$ and set $\widehat{\psi}(w) = v$.

        From this description it is clear that $\widehat{\psi}$ is total computable and represents a map $\psi\colon G \to H$. Furthermore, it satisfies the property that for any $g \in G$ \[d(f(g),h(\psi(g))\leq d(f(g),x_{i_w}) + d(x_{i_w},h(\psi(g))) \leq 2C. \] 
        Where in the above inequality $w$ is any word in $S^*$ which represents $G$. It follows that if $\overline{h}$ is a coarse inverse for $h$, then \[ d_{\infty}( \overline{h}\circ f , \psi) \mbox{ is finite}.  \]
        And therefore $\psi$ is a quasi-isometry from $G$ to $H$.
    \end{proof}

    \begin{observation}\label{obs:svarcmilnor}A well-known source of quasi-isometries between groups and metric spaces is the \v{S}varc–Milnor Lemma. Let us note that this result can also be used to obtain computable quasi-isometries. We briefly recall the statement, and refer the reader to \cite[Theorem 23]{laharpe_topics_2000} for details. Let $X$ be a metric space which is geodesic and proper, and let $G\curvearrowright X$ be a group action by isometries, such that the action is proper and the quotient $X/G$ is compact. Then $G$ is a finitely generated group, and quasi-isometric to $X$. Indeed, for all $x\in X$ the orbit map $f\colon G \to X$ given by $f(g) = g\cdot x$ is a quasi-isometry.

    Now suppose $G\curvearrowright X$ satisfies the hypothesis in the \v{S}varc–Milnor Lemma, that $(X,d,\mathcal{S})$ is a computable metric space, and that for all $g$ the map which sends $x\in X$ to $g\cdot x$ is computable. Then the orbit map of a computable point $x\in X$ provides a computable quasi-isometry from $G$ to $X$.
    \end{observation}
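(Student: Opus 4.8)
The plan is to take the orbit map $f\colon G\to X$, $f(g)=g\cdot x$, as the candidate and observe that the \v{S}varc--Milnor Lemma recalled above (see \cite[Theorem 23]{laharpe_topics_2000}) already guarantees that $f$ is a quasi-isometry as soon as $G$ is finitely generated. So the only new thing to establish is that $f$ is \emph{computable} in the sense of \Cref{chap:Background}. I would fix a finite symmetric generating set $S$ of $G$ (which exists because the hypotheses of the \v{S}varc--Milnor Lemma force $G$ to be finitely generated) and exhibit a computable representative $\widehat f\colon S^{\ast}\to X$ with $\widehat f(w)=\underline w\cdot x$ for every word $w\in S^{\ast}$; this is the notion of computable quasi-isometry into a computable metric space used elsewhere in this chapter.

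The key observation is that the action map decomposes letter by letter. For each generator $s\in S$ the translation $\lambda_s\colon X\to X$, $y\mapsto s\cdot y$, is computable by hypothesis, and since $S$ is finite this provides a fixed finite family of algorithms. For a word $w=s_1s_2\cdots s_n$ we then have
\[
\widehat f(w)=\underline w\cdot x=\lambda_{s_1}\circ\lambda_{s_2}\circ\cdots\circ\lambda_{s_n}(x),
\]
so $\widehat f(w)$ is obtained by applying a finite composition of the maps $\lambda_s$ to the computable point $x$. By \Cref{composition-of-computable-functions-is-computable} this composition is a computable function, and reading off the letters of $w$ lets us assemble the corresponding algorithm uniformly in $w$. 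It remains to note that a computable function sends a computable point to a computable point, uniformly: if $h$ is computable and $y$ is a computable point, then for each basic ball $B'_m$ of $X$ the set $h^{-1}(B'_m)$ is effectively open uniformly in $m$, and the membership $h(y)\in B'_m$ reduces to the semi-decidable condition that $y$ lie in this effectively open set (using that $\{i:y\in B_i\}$ is recursively enumerable, cf. \Cref{characterization-of-computable-points-via-sequences}). Hence $\underline w\cdot x$ is a computable point uniformly in $w$, which is exactly what it means for $\widehat f$ to be a computable map into $X$.

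I would then check that $\widehat f$ genuinely represents $f$: if two words $w,w'$ represent the same element $g\in G$, then $\widehat f(w)=g\cdot x=\widehat f(w')$ as points of $X$ (the computed Cauchy approximations need not be literally identical, but they converge to the same point), so $\widehat f$ descends to $f$ on $G$. Combining the quasi-isometry property from \v{S}varc--Milnor with the computability of $\widehat f$ yields the claim.

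I do not expect a serious obstacle, since the content is just the uniform propagation of computability through a finite composition of generator translations applied to a computable point. The only mild point requiring care is uniformity in the word $w$, but this is immediate because $S$ is finite and composition of computable maps is uniformly computable by \Cref{composition-of-computable-functions-is-computable}. A secondary point worth stating explicitly is that the resulting notion of computable representative $S^{\ast}\to X$ matches the one feeding into the surrounding lemmas, so that this observation plugs directly into results such as the preceding lemma.
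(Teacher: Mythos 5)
Your proposal is correct and matches the justification the paper intends for this observation (which it states without a separate proof): the orbit map is computable precisely because a word $w=s_1\cdots s_n$ lets you realize $\underline{w}\cdot x$ as a finite composition of the fixed, computable generator translations $\lambda_s$ applied to the computable point $x$, uniformly in $w$. Your care in deriving uniformity from the \emph{finite} generating set (rather than from the pointwise hypothesis ``for all $g$''), and in noting that computable maps send computable points to computable points uniformly, is exactly what makes the observation go through.
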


\section{Medvedev degrees of classes of subshifts on groups}\label{section:classification}

In this section we will employ the machinery developed in this chapter to say as much as we can about the Medvedev degrees of subshifts in groups. 

\subsection{Known results}
        Here we review the state of the art of the classification of $\msft{G}$, $\msof{G}$ and $\meff{G}$ for different groups $G$. We begin by a rephrasing of Simpson's result in our language.
        \begin{theorem}[\cite{simpson_medvedev_2014}]\label{simpson_result}
            Let $d\geq 2$. Then $\msft{\ZZ^d}=\msof{\ZZ^d}=\meff{\ZZ^d}$ equals the class of all $\Pi_1^0$ Medvedev degrees.
        \end{theorem}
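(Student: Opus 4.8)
The plan is to establish the chain of inclusions
\[
\msft{\ZZ^d} \subseteq \msof{\ZZ^d} \subseteq \meff{\ZZ^d} \subseteq \{\Pi_1^0 \text{ degrees}\}
\]
and then close it by proving the single reverse inclusion $\{\Pi_1^0\text{ degrees}\} \subseteq \msft{\ZZ^d}$. The first two inclusions are formal: every SFT is sofic, and since $\ZZ^d$ is recursively presented, every sofic subshift is effective by \Cref{prop:sft-and-sofic-subshifts-are-effective}. The last inclusion is \Cref{prop:msft-and-msoff-are-contained-in-Pi-1-degrees}. Moreover, since $\ZZ^d$ is infinite with decidable word problem, \Cref{teo:nicarrasco_eff_pi01} already gives $\meff{\ZZ^d} = \{\Pi_1^0\text{ degrees}\}$, so the only thing genuinely missing is that each nonempty $\Pi_1^0$ Medvedev degree is attained by some $\ZZ^d$-SFT.

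First I would reduce to the case $d=2$. Since $\ZZ^2$ embeds in $\ZZ^d$ as a subgroup with decidable membership problem and $\ZZ^d$ is recursively presented, \Cref{cor:subgroups_decidable_membership} yields $\msft{\ZZ^2} \subseteq \msft{\ZZ^d}$; combined with the upper bound above, it suffices to prove $\{\Pi_1^0\text{ degrees}\} \subseteq \msft{\ZZ^2}$. Fix a nonempty $\Pi_1^0$ set $P \subseteq \{0,1\}^\NN$. By Miller's theorem \cite{miller_two_2012} there is an effective $\ZZ$-subshift $X$ with $m(X) = m(P)$, and its free extension $\widetilde{X}$ to $\ZZ^2$ satisfies $m(\widetilde X) = m(X)$ by \Cref{free-extensions} (case (1), using that $\ZZ$ has decidable membership problem in $\ZZ^2$). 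The simulation theorems of Aubrun--Sablik and Durand--Romashchenko--Shen \cite{aubrun_simulation_2013, durand_effective_2010}, which say that $\ZZ^2$ simulates expansive actions of $\ZZ$, then produce a $\ZZ^2$-SFT $Y$ that factors onto $\widetilde X$, so \Cref{prop:basic-properties} gives $m(Y) \geq m(\widetilde X) = m(P)$.

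The inequality $m(Y) \geq m(P)$ is only half of what is needed, and the reverse inequality $m(Y) \leq m(P)$ is the main obstacle. In general a sofic subshift need not admit an SFT extension of equal Medvedev degree --- this is exactly the open \Cref{question:equal_degree_extension} --- so one cannot appeal to simulation as a black box. Following Simpson \cite{simpson_medvedev_2014}, the point is to design the simulating tileset so that every auxiliary layer of $Y$ sitting above a valid configuration of $\widetilde X$ (the synchronising grids, the Turing-machine computation histories verifying membership in $P$, and the self-simulation structure) is \emph{computably determined} by that configuration. Concretely, I would exhibit a partial computable map that, given a point of $P$, reconstructs a full configuration of $Y$ by algorithmically filling in all auxiliary data, thereby witnessing $m(Y) \leq m(P)$ and hence $m(Y) = m(P)$. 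Verifying that this reconstruction is genuinely computable, uniformly in the point of $P$, is the technical heart of the argument and the step I expect to be hardest; it is precisely where the combinatorics of the aperiodic/self-simulating tileset must be controlled. Ranging over all nonempty $\Pi_1^0$ sets $P$ then yields $\{\Pi_1^0\text{ degrees}\} \subseteq \msft{\ZZ^2}$, closing the chain of inclusions and giving the three claimed equalities.
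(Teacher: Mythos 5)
Your setup is fine and matches the machinery of this chapter: the chain $\msft{\ZZ^d}\subseteq\msof{\ZZ^d}\subseteq\meff{\ZZ^d}\subseteq\{\Pi_1^0\text{ degrees}\}$ follows from \Cref{prop:sft-and-sofic-subshifts-are-effective} and \Cref{prop:msft-and-msoff-are-contained-in-Pi-1-degrees}, the reduction to $d=2$ via \Cref{cor:subgroups_decidable_membership} is correct, and Miller plus simulation does produce a $\ZZ^2$-SFT $Y$ with $m(Y)\geq m(P)$ (this is essentially \Cref{prop:simulation}). But your final paragraph is a proof obligation, not a proof. The inequality $m(Y)\leq m(P)$ \emph{is} Simpson's theorem: everything before it is formal, and everything after it ("I would exhibit a partial computable map\dots verifying that this reconstruction is genuinely computable\dots is the technical heart") is a description of what a proof would have to do, with no construction of the tileset and no argument that its auxiliary layers (the hierarchical structure, the computation records) are computable from the encoded point of $P$. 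This cannot be patched by citing the existing simulation theorems as you do earlier, because nothing in their statements controls the degree of the extension from above --- this is precisely what the chapter flags as open in \Cref{question:equal_degree_extension} and in the remark following it ("we can only conclude that for every sofic subshift $Y$ there exists a subshift of finite type with Medvedev degree \emph{at least} $m(Y)$"). Note also that the paper itself does not prove this statement: it quotes Simpson's result and points to \cite{durand_fixedpoint_2012} for an alternative proof, exactly because the transference machinery developed here does not yield the exact-degree SFT construction. So a blind proof must actually carry out a Simpson-style (or fixed-point-tiling-style) construction, and yours stops at the point where that construction would begin.

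A secondary, fixable slip: the simulation theorems of Aubrun--Sablik and Durand--Romashchenko--Shen concern the \emph{pullback} $\rho^*(X)$ through the projection $\rho\colon\ZZ^2\to\ZZ$ (configurations constant in the kernel direction), not the free extension $\widetilde{X}$, whose rows are independent; the free extension of an effective $\ZZ$-subshift is not asserted to be sofic by those results. The degree bookkeeping survives, since $m(\rho^*(X))=m(X)$ by \Cref{lema:medvedev_degree_of_pullback} just as $m(\widetilde{X})=m(X)$ by \Cref{free-extensions}, but as written the sentence "produce a $\ZZ^2$-SFT $Y$ that factors onto $\widetilde{X}$" misstates what the cited theorems give, and you should route the argument through $\rho^*(X)$ instead.
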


        See \cite{durand_fixedpoint_2012} for a different proof. On the other hand, it is well known that every SFT and sofic subshift on $\ZZ$ have finite orbits, and thus zero Medvedev degree. While it is not true that every SFT in the free group admits a finite orbit (see~\cite{piantadosi_symbolic_2008}), it still contains computable configurations.
        \begin{proposition}\label{prop:virtually_free_have_0_degree}
            For every finitely generated virtually free group $G$ we have $\msft{G}=\msof{G}= \{0_{\M}\}$.
        \end{proposition}
        \begin{proof}
            By~\Cref{prop:commensurable_same_degrees} it suffices to consider a finitely generated free group. Let $F(S)$ be freely generated by a finite symmetric set $S$, and let $X\subset A^{F(S)}$ be a nonempty SFT. As $m(X)$ is a conjugacy invariant (\Cref{prop:basic-properties}), without loss of generality we can assume that $X$ is determined by patterns of the form $p\colon \{1,s\}\to A$, for $s\in S$ (see for instance~\cite[Proposition 1.6]{barbierilemp_thesis}). Let $A'\subset A$ be the set of all symbols in $A$ that occur in some configuration in $X$. Observe that for every $a\in A'$ and $s\in S$, there exist $b\in A'$ such that $1\mapsto a$, $s\mapsto b$ is not a forbidden pattern.
            
            We define a computable configuration $x\colon F(S)\to A'$ inductively. Fix a total order on $A'$, and let $x(1_{F(S)})$ be the minimal element in $A'$. Now, let $n >1$ and assume that we have defined $x(h)$ for all $h$ with $|h|_S < n$, and that no forbidden pattern occurs there. For $g$ with $|g|_S=n$, there is a unique $h$ with $|h|_S=n-1$ and $s \in S$ with $hs=g$. We define $x(g)$ as the minimal element in $A'$ such that the pattern $1_G\mapsto x(h),s\mapsto x(g)$ is not forbidden. Clearly no forbidden patterns can be created by this procedure because the Cayley graph of $F(S)$ with respect to $S$ is a tree, and thus we obtain that $x \in X$. This procedure is computable, and thus $m(X)=0_{\M}$. By~\Cref{prop:basic-properties} every topological factor of $X$ has at most degree $m(X) = 0_{\M}$ and the claim for sofic subshifts follows.            
        \end{proof}
        
        Let us now consider effective subshifts. Miller proved that $\meff{\ZZ}$ equals the class of all $\Pi_1^0$ Medvedev degrees \cite{miller_two_2012}. This was then generalized to all groups with decidable word problem in \Cref{meff-for-groups-with-decidable-word-problem}.

\subsection{Virtually polycyclic groups}

\begin{theorem}\label{thm:polycyclic}
    Let $G$ be a virtually polycyclic group. We have the following dichotomy
    \begin{enumerate}
        \item If $G$ is virtually cyclic, then $\msft{G} = \{0_\M\}$.
        \item If $G$ is not virtually cyclic, then $\msft{G}$ is the set of all $\Pi_1^0$ Medvedev degrees.
    \end{enumerate}
\end{theorem}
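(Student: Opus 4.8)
The plan is to treat the two items separately, the second being where essentially all of the work lies. For item (1), I would observe that a virtually cyclic group is either finite or virtually $\ZZ$, and in both cases it is virtually free; hence \Cref{prop:virtually_free_have_0_degree} immediately yields $\msft{G}=\{0_\M\}$ with no further argument.

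For item (2) the upper bound comes for free: a virtually polycyclic group is finitely presented, in particular recursively presented, so \Cref{prop:msft-and-msoff-are-contained-in-Pi-1-degrees} shows $\msft{G}$ is contained in the class of $\Pi_1^0$ degrees. The substance is the reverse inclusion, and the strategy is to plant an honest copy of $\ZZ^2$ inside $G$ and pull back Simpson's theorem (\Cref{simpson_result}) through the transference machinery. First I would use that every virtually polycyclic group is virtually poly-$\ZZ$, together with the commensurability invariance established in \Cref{prop:commensurable_same_degrees}, to reduce to the case $G=G_0$ torsion-free polycyclic, equipped with a poly-$\ZZ$ series $1=N_0\trianglelefteq N_1\trianglelefteq\cdots\trianglelefteq N_h=G_0$. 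The hypothesis of not being virtually cyclic translates exactly into Hirsch length $h\geq 2$, so the term $N_2$ exists. This $N_2$ fits into a short exact sequence $1\to\ZZ\to N_2\to\ZZ\to 1$, which splits because $H^2(\ZZ;\ZZ)=0$, giving $N_2\cong\ZZ\rtimes_\phi\ZZ$ with $\phi\in\{\pm 1\}$. In either case $N_2$ contains a finite-index subgroup isomorphic to $\ZZ^2$: all of $N_2$ when $\phi=1$, and the index-two subgroup generated by the normal $\ZZ$ and the square of the stable letter when $\phi=-1$. Thus $\ZZ^2\leqslant G_0$.

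To transfer the degrees I would invoke the classical fact that polycyclic-by-finite groups have decidable subgroup membership problem. This makes \Cref{cor:subgroups_decidable_membership} applicable to the inclusion $\ZZ^2\leqslant G_0$, both groups being finitely, hence recursively, presented, and yields $\msft{\ZZ^2}\subseteq\msft{G_0}$. Since Simpson's theorem identifies $\msft{\ZZ^2}$ with the entire class of $\Pi_1^0$ degrees, it follows that $\msft{G_0}$, and therefore $\msft{G}$, contains every $\Pi_1^0$ degree. Combined with the upper bound from the previous paragraph, this gives the desired equality and closes item (2).

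I expect the main obstacle to be organizational rather than deep. The only genuinely external input is the decidability of the subgroup membership problem for polycyclic groups; without it one cannot feed $\ZZ^2\leqslant G_0$ into \Cref{cor:subgroups_decidable_membership}, and no purely coarse or commensurated copy of $\ZZ^2$ would suffice. The one structural point requiring care is exhibiting $\ZZ^2$ as an \emph{honest} subgroup, which the splitting of the length-two segment of the poly-$\ZZ$ series resolves cleanly. I would also double-check that the commensurability reduction and the subgroup transference compose correctly — in particular, noting that $N_2$ is itself commensurable to $\ZZ^2$ gives an alternative route, and verifying that the finite-index inclusion $\ZZ^2\leqslant N_2$ interacts cleanly with the decidable-membership hypothesis needed to apply \Cref{cor:subgroups_decidable_membership}.
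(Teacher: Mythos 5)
Your proof is correct and follows essentially the same route as the paper: embed $\ZZ^2$ in $G$, use decidable subgroup membership together with \Cref{cor:subgroups_decidable_membership} to obtain $\msft{\ZZ^2}\subseteq\msft{G}$, quote Simpson's theorem (\Cref{simpson_result}) for the lower bound, and recursive presentation (\Cref{prop:msft-and-msoff-are-contained-in-Pi-1-degrees}) for the $\Pi_1^0$ upper bound. The only differences are cosmetic: the paper settles the virtually cyclic case by commensurability with $\ZZ$ rather than via \Cref{prop:virtually_free_have_0_degree}, and it simply cites Segal's book for the fact that a polycyclic group that is not virtually cyclic contains $\ZZ^2$, where you derive that fact by hand from the splitting of the length-two segment of a poly-$\ZZ$ series.
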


\begin{proof}
    Let $G$ be virtually cyclic. If $G$ is finite it is clear that $\msft{G}=0_{\M}$. If $G$ is infinite, then again by~\Cref{prop:commensurable_same_degrees} it follows that $\msft{G}=\msft{\ZZ} = \{0_\M\}$.

    Finally, suppose $G$ is a polycyclic group which is not virtually cyclic. Then it follows that $\ZZ^2 \leqslant G$ (see for instance~\cite{Segal_1983}). It is well known that polycyclic groups are recursively presented and have decidable membership problem (see for instance~\cite[Corollary 3.6]{Baumslag1981}), therefore by~\Cref{cor:subgroups_decidable_membership} we obtain that $\msft{\ZZ^2}\subset \msft{G}$. By Simpson's result (\Cref{simpson_result}) $\msft{\ZZ^2}$ is the set of all $\Pi_1^0$ Medvedev degrees, so $\msft{G}$ contains all $\Pi_1^0$ degrees. Conversely, every SFT on $G$ has a $\Pi_1^0$ Medvedev degree because the word problem of $G$ is decidable (\Cref{prop:msft-and-msoff-are-contained-in-Pi-1-degrees}). \end{proof}

\subsection{Direct products of groups}

We will show that in any direct product of two infinite and finitely generated groups with decidable word problem, every $\Pi_0^1$ Medvedev degree arises as the degree of an SFT.

\begin{theorem}\label{thm:direct_products_clasification_medvedev_degrees}
        Let $H,K$ be two infinite and finitely generated groups with decidable word problem. Then $\msft{H\times K}$ is the set of all $\Pi_1^0$ Medvedev degrees.
\end{theorem}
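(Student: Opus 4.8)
The goal is to show that for two infinite, finitely generated groups $H, K$ with decidable word problem, $\msft{H\times K}$ equals the set of all $\Pi_1^0$ Medvedev degrees. One inclusion is routine: since $H\times K$ has decidable word problem (the word problem of a direct product is decidable whenever both factors are), in particular it is recursively presented, so by \Cref{prop:msft-and-msoff-are-contained-in-Pi-1-degrees} every SFT on $H\times K$ has a $\Pi_1^0$ Medvedev degree. The real content is the reverse inclusion: every $\Pi_1^0$ Medvedev degree is realized by some $H\times K$-SFT.

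For the reverse inclusion, the natural strategy is to transfer degrees from $\ZZ^2$ (or equivalently from $\Z$ via effective subshifts) using the transference machinery developed in \Cref{section:transference}. The key geometric fact is that the direct product $H\times K$ of two infinite finitely generated groups admits a translation-like action of $\ZZ^2$. Indeed, by Seward's theorem each infinite factor admits a translation-like action of $\ZZ$, and these combine coordinate-wise to give a translation-like action of $\ZZ^2 = \ZZ\times\ZZ$ on $H\times K$. I would then want to invoke the effective version: since $H$ and $K$ have decidable word problem, so does $H\times K$, and by the effective Seward theorem (\Cref{thm:computable-translation-like-actions-with-decidable-orbit-problem}, used in \Cref{meff-for-groups-with-decidable-word-problem}) each factor admits a \emph{computable} translation-like action of $\ZZ$ with \emph{decidable orbit membership problem}; these assemble into a computable translation-like action $\ZZ^2\curvearrowright H\times K$ with decidable orbit membership problem. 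The hard part will be verifying that the product action genuinely inherits decidable orbit membership and computability from the factors, and that $\ZZ^2$ (being finitely presented) satisfies the hypotheses needed to apply \Cref{prop:translation-like-actions-and-medvedev-degrees}.

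With such an action in hand, I would apply the second item of \Cref{prop:translation-like-actions-and-medvedev-degrees}: since $\ZZ^2$ is finitely presented and admits a computable translation-like action on the recursively presented group $H\times K$ with decidable orbit membership problem, we obtain $\msft{\ZZ^2}\subset\msft{H\times K}$. By Simpson's result (\Cref{simpson_result}), $\msft{\ZZ^2}$ is exactly the class of all $\Pi_1^0$ Medvedev degrees, so this yields the inclusion of every $\Pi_1^0$ degree into $\msft{H\times K}$. Combined with the upper bound from the first paragraph, the two inclusions give equality.

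I expect the main obstacle to lie in the computability bookkeeping for the product translation-like action. Concretely, given computable translation-like actions $\ast_H$ of $\Z$ on $H$ and $\ast_K$ of $\Z$ on $K$ with decidable orbit membership problems, one defines $(h,k)\ast(m,n) = (h\ast_H m,\, k\ast_K n)$ and must check that this is bounded, free, computable, and has decidable orbit membership. Freeness and boundedness are immediate from the factors; computability follows because the word problem of $H\times K$ is decidable and each coordinate map is computable. For orbit membership, two elements $(h,k)$ and $(h',k')$ lie in the same $\ZZ^2$-orbit precisely when $h,h'$ share a $\ZZ$-orbit in $H$ \emph{and} $k,k'$ share a $\ZZ$-orbit in $K$, which is decidable by deciding each factor separately. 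Once this is established cleanly, the rest is a direct appeal to the cited propositions, so the proof should be short modulo this verification.
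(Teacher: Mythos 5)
Your proposal is correct and takes essentially the same route as the paper: the paper also obtains a computable translation-like action of $\mathbb{Z}^2$ on $H\times K$ with decidable orbit membership problem from the effective Seward theorem (\Cref{seward-calculable}), then applies \Cref{prop:translation-like-actions-and-medvedev-degrees} together with Simpson's theorem for $\mathbb{Z}^2$, and derives the upper bound from decidability of the word problem via \Cref{prop:msft-and-msoff-are-contained-in-Pi-1-degrees}. Your explicit coordinate-wise construction of the product action and verification of decidable orbit membership is precisely the bookkeeping the paper leaves implicit in its one-line appeal to \Cref{seward-calculable}.
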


\begin{proof}
    It follows from ~\Cref{seward-calculable}  that $H\times K$ admits a computable translation-like action by $\ZZ^2$ which has decidable orbit membership problem (in fact, it is transitive, so the orbit membership problem is trivial). By \Cref{prop:translation-like-actions-and-medvedev-degrees}, we have that $\msft{\ZZ^2}$ is contained in $\msft{H\times K}$, and thus all $\Pi_1^0$ Medvedev degrees can be attained by an SFT on $H\times K$. On the other hand, the Medvedev degree of every SFT on $H\times K$ is a $\Pi_1^0$ degree because $H\times K$ has decidable word problem (\Cref{prop:msft-and-msoff-are-contained-in-Pi-1-degrees}).       
\end{proof}

    A class of groups where~\Cref{thm:direct_products_clasification_medvedev_degrees} can be applied meaningfully are branch groups. There is more than one definition in the literature of a branch group, see for instance~\cite{BartholdiGrigorchukSuni_branchgroups_2023}. However, both definitions have the following consequence: every branch group is commensurable to the a direct product of two infinite groups. In particular, if $G$ is an infinite and finitely generated branch group with decidable word problem, then it is commensurable to the direct product of two infinite and finitely generated groups with decidable word problem. Putting together~\Cref{thm:direct_products_clasification_medvedev_degrees} and~\Cref{prop:commensurable_same_degrees} we obtain the following result.

\begin{corollary}\label{cor:branch_groups}
    For every infinite and finitely generated branch group $G$ with decidable word problem $\msft{G}$ is the set of all $\Pi_1^0$ Medvedev degrees. 
\end{corollary}

Without the assumption of decidable word problem, we can only show the existence of SFTs with nonzero Medvedev degree.

\begin{theorem}\label{thm:direct_products_nontrivial_medvedev_degrees}
        For any two infinite and finitely generated groups $H,K$ the set $\msft{H\times K}$ contains a nonzero Medvedev degree. Moreover, if $H,K$ are recursively presented, then $\msft{H\times K}$ contains the maximal $\Pi_1^0$ Medvedev degree.
\end{theorem}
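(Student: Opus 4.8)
The plan is to realize $H\times K$ as the target of a translation-like action by $\ZZ^2$ and then transfer Medvedev degrees from $\ZZ^2$-SFTs via the subshift of bounded actions $\operatorname{T}[X]$. First I would construct the $\ZZ^2$-action. Since $H$ and $K$ are infinite and finitely generated, Seward's theorem \cite[Theorem 1.4]{seward_burnside_2014} provides translation-like actions $\ast_H$ of $\ZZ$ on $H$ and $\ast_K$ of $\ZZ$ on $K$. Define a right action of $\ZZ^2=\ZZ\times\ZZ$ on $H\times K$ by $(h,k)\ast(a,b)=(h\ast_H a,\,k\ast_K b)$. Using a product word metric one checks that this action is bounded (for fixed $(a,b)$ each coordinate displacement $h^{-1}(h\ast_H a)$ and $k^{-1}(k\ast_K b)$ ranges over a finite set, by boundedness of $\ast_H$ and $\ast_K$) and free (if $(h,k)\ast(a,b)=(h,k)$ then $a=b=0$ by freeness of the factor actions). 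Hence $\ZZ^2$ acts translation-like on $H\times K$, and I fix parameters $S,F$ for which the subshift of bounded actions $\operatorname{T}\subset B^{H\times K}$ is nonempty (it contains the configuration encoding $\ast$).

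For the first claim, I would invoke Simpson's theorem (\Cref{simpson_result}) to choose a nonempty $\ZZ^2$-SFT $X$ of nonzero Medvedev degree; such $X$ exists since the maximal $\Pi_1^0$ degree is nonzero. As $\ZZ^2$ is finitely presented, \Cref{T[X]-es-SFT} gives that $\operatorname{T}[X]$ is an $(H\times K)$-SFT, and it is nonempty because the action is free, so a fixed configuration of $X$ can be transported onto each $\ZZ^2$-orbit of $H\times K$. By \Cref{prop:medvedev-degrees-and-translation-like-actions}(1) we have $m(\operatorname{T}[X])\geq m(\operatorname{T})\vee m(X)\geq m(X)>0_{\M}$, so $\msft{H\times K}$ contains a nonzero degree.

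For the moreover part, assume $H$ and $K$ are recursively presented; then the finite direct product $H\times K$ is recursively presented as well. Choose $X$ a $\ZZ^2$-SFT attaining the maximal $\Pi_1^0$ Medvedev degree (\Cref{simpson_result}). The same construction yields a nonempty $(H\times K)$-SFT $\operatorname{T}[X]$ with $m(\operatorname{T}[X])\geq m(X)$, the maximal $\Pi_1^0$ degree. On the other hand, since $H\times K$ is recursively presented, \Cref{prop:msft-and-msoff-are-contained-in-Pi-1-degrees} ensures $m(\operatorname{T}[X])$ is itself a $\Pi_1^0$ degree, hence bounded above by the maximal one. Combining the two inequalities, $m(\operatorname{T}[X])$ equals the maximal $\Pi_1^0$ Medvedev degree, which therefore belongs to $\msft{H\times K}$.

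The only genuinely new ingredient, beyond invoking the transference machinery of \Cref{prop:medvedev-degrees-and-translation-like-actions} and \Cref{T[X]-es-SFT}, is the componentwise $\ZZ^2$-action; the main point to get right is that it is simultaneously bounded and free, and that the parameters are chosen so that $\operatorname{T}$—and hence $\operatorname{T}[X]$—is nonempty, ensuring the degrees produced are degrees of \emph{nonempty} SFTs rather than the top degree $1_{\M}$.
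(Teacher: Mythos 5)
Your proposal is correct and follows essentially the same route as the paper: Seward's theorem gives a translation-like $\ZZ^2$-action on $H\times K$ (as the product of two $\ZZ$-actions), Simpson's classification supplies a $\ZZ^2$-SFT of maximal $\Pi_1^0$ (in particular nonzero) degree, and the transference machinery for the subshift of bounded actions $\operatorname{T}[X]$ yields a nonempty $(H\times K)$-SFT of degree at least $m(X)$, with the $\Pi_1^0$ upper bound from recursive presentation closing the "moreover" part. The only difference is cosmetic: the paper invokes the packaged \Cref{prop:translation-like-actions-and-medvedev-degrees}(1), while you inline its proof via \Cref{T[X]-es-SFT} and \Cref{prop:medvedev-degrees-and-translation-like-actions}(1), spelling out the freeness, boundedness, and nonemptiness checks that the paper leaves implicit.
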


\begin{proof}
    By Seward's result on translation-like actions~\cite[Theorem 1.4]{seward_burnside_2014} we have that $\ZZ^2$ acts translation-like on $H \times K$. As $\ZZ^2$ is finitely presented and $\msft{\ZZ^2}$ is the set of all $\Pi_1^0$ Medvedev degrees,~\Cref{prop:translation-like-actions-and-medvedev-degrees} yields that there is a $(H \times K)$-SFT whose degree is at least the maximal $\Pi_1^0$ Medvedev degree, in particular it's nonzero. If $H,K$ are recursively presented, then so is $H\times K$ and the upper bound also holds. 
\end{proof}

\begin{corollary}
    Every infinite and finitely generated branch group $G$ admits a nonempty SFT with nonzero Medvedev degree. If $G$ is recursively presented, then $\msft{G}$ contains the maximal $\Pi_1^0$ Medvedev degree.
\end{corollary}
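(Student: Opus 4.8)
The plan is to deduce this corollary directly from the direct-product case (\Cref{thm:direct_products_nontrivial_medvedev_degrees}) by transporting Medvedev degrees across a commensurability, in exactly the same way that \Cref{cor:branch_groups} was obtained in the decidable word problem setting. The crucial structural input is the fact, recalled in the paragraph preceding \Cref{cor:branch_groups}, that every branch group is commensurable to a direct product $H\times K$ of two infinite groups.

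First I would fix such a commensurability: there exist finite-index subgroups $G'\leqslant G$ and $L\leqslant H\times K$ with $G'\cong L$. Since $G$ is finitely generated and $G'$ has finite index, $G'$ is finitely generated; hence so is $L$, and therefore $H\times K$ is finitely generated, as it contains the finite-index finitely generated subgroup $L$. Consequently both direct factors $H$ and $K$, being quotients of $H\times K$, are finitely generated, and they are infinite by hypothesis. Thus $H$ and $K$ satisfy the hypotheses of \Cref{thm:direct_products_nontrivial_medvedev_degrees}, which yields that $\msft{H\times K}$ contains a nonzero Medvedev degree. Finally, as $G$ and $H\times K$ are commensurable, \Cref{prop:commensurable_same_degrees} gives $\msft{G}=\msft{H\times K}$, so $\msft{G}$ contains a nonzero degree; by definition of $\msft{G}$ this degree is realized by a nonempty $G$-SFT, proving the first assertion.

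For the second assertion I would additionally track recursive presentability along the same chain. If $G$ is recursively presented, then so is its finite-index subgroup $G'$, hence $L$, hence $H\times K$, using that recursive presentability is inherited both by and from finite-index subgroups. The word problem of each factor $H$ (resp.\ $K$) reduces to that of $H\times K$ through the canonical embedding $H\hookrightarrow H\times K$, so $H$ and $K$ are recursively presented as well. \Cref{thm:direct_products_nontrivial_medvedev_degrees} then gives that $\msft{H\times K}$ contains the maximal $\Pi_1^0$ Medvedev degree, and \Cref{prop:commensurable_same_degrees} transfers this to $\msft{G}$.

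The finite-generation bookkeeping is routine; the point requiring the most care is verifying that recursive presentability genuinely descends to the individual factors $H$ and $K$ and is preserved under the commensurability, since \Cref{thm:direct_products_nontrivial_medvedev_degrees} demands this property of the factors separately rather than of $G$ or of the product $H\times K$. This is where I would take care to invoke the standard facts that recursive presentability is a commensurability invariant and that it passes to direct factors via the splitting of the projection homomorphisms.
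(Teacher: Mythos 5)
Your proof is correct and follows exactly the route the paper intends: the structural fact that branch groups are commensurable to a product of two infinite groups, then \Cref{thm:direct_products_nontrivial_medvedev_degrees}, then transfer via \Cref{prop:commensurable_same_degrees}, mirroring the derivation of \Cref{cor:branch_groups}. The finite-generation and recursive-presentability bookkeeping you spell out (descent to finite-index subgroups, to the product, and to the direct factors) is precisely what the paper leaves implicit, and you handle it correctly.
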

  
\subsection{Groups which are quasi-isometric to the hyperbolic plane}\label{subsec:QI_hyperbolic}

In this section, we consider the hyperbolic plane $\mathbb{H} = \{(x,y) \in \mathbb{R}^2 : y >0 \}$ as a computable metric space given by its Riemannian metric and with dense countable set given by a computable enumeration of $\mathbb{Q}^2 \cap \mathbb{H}$.

\begin{theorem}\label{thm:hyperbolic_qi_medvedev}
    Let $G$ be a finitely generated group that is quasi-isometric to $\mathbb{H}$. Then there exists a $G$-SFT $X$ with $m(X)>0_{\M}$. 
\end{theorem}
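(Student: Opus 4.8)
The plan is to reduce, by quasi-isometry invariance, to the case of a cocompact Fuchsian group, and then to produce directly an SFT of nonzero Medvedev degree there. Fix a closed hyperbolic surface group $H=\pi_1(\Sigma_g)$ with $g\geq 2$. Since $H$ acts properly cocompactly by isometries on $\mathbb{H}$, the \v{S}varc--Milnor lemma (\Cref{obs:svarcmilnor}) shows that $H$ is quasi-isometric to $\mathbb{H}$; combined with the hypothesis that $G$ is quasi-isometric to $\mathbb{H}$, transitivity of the quasi-isometry relation yields that $G$ and $H$ are quasi-isometric. As being finitely presented is a quasi-isometry invariant and $H$ is finitely presented, $G$ is finitely presented as well.

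Granting for the moment an $H$-SFT $Y$ with $m(Y)>0_{\M}$, I would transfer it to $G$ through Cohen's construction. For $N$ large enough the subshift $\operatorname{QI}$ encoding quasi-isometries $H\to G$ is nonempty, and since $G$ and $H$ are both finitely presented and $Y$ is an $H$-SFT, the enriched subshift $\operatorname{QI}[Y]$ is a $G$-SFT (Cohen's theorem, recalled just before \Cref{lem:QI_medvedev}). It is nonempty, since every configuration of $\operatorname{QI}$ carries an encoded copy of $H$ onto which an arbitrary element of $Y$ may be overlaid via the map $\kappa$. Finally, $m(\operatorname{QI}[Y])\geq m(\operatorname{QI})\vee m(Y)\geq m(Y)$ holds by the easy direction of \Cref{lem:QI_medvedev}, which by \Cref{admitir-sft-incalculable-es-invariante-de-qi} requires no recursive presentation. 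Hence $\operatorname{QI}[Y]$ is a nonempty $G$-SFT with $m(\operatorname{QI}[Y])\geq m(Y)>0_{\M}$, as desired.

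The heart of the matter, and the main obstacle, is therefore constructing the $H$-SFT $Y$ of nonzero Medvedev degree on the surface group itself. I emphasize that none of the transfer tools of \Cref{section:transference} is available here: $H$ is word-hyperbolic, so it contains no copy of $\ZZ^2$ (blocking \Cref{cor:subgroups_decidable_membership} with Simpson's \Cref{simpson_result}), and it has no finitely generated infinite normal subgroup of infinite index (blocking \Cref{prop:quotients_fg}); moreover the only finitely generated groups quasi-isometric to $H$ are again virtually surface groups, so the quasi-isometric transfer of \Cref{coro_QI} is circular, and translation-like actions by $\ZZ$ yield only effective subshifts rather than SFTs (\Cref{prop:translation-like-action-subshift-is-effective}, \Cref{prop:translation-like-actions-and-medvedev-degrees}). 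Thus the construction must be intrinsic. The plan is to build a hierarchical, self-simulating SFT on $H$ in the spirit of the Robinson and Hanf--Myers constructions adapted to $\mathbb{H}$ (the same circle of ideas used to establish undecidability of the domino problem on surface groups): one forces every valid configuration to carry an infinite nested hierarchy of computation zones running a fixed universal process, so that each configuration encodes one fixed effectively closed set with no computable point. Taking that set to be Medvedev-equivalent to a nonzero $\Pi^0_1$ mass problem (for instance the PA-complete reals) guarantees $m(Y)>0_{\M}$.

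I expect the entire difficulty to lie in this last step: encoding, through finitely many local rules on the Cayley graph of $H$, a hierarchical scaffolding on $\mathbb{H}$ robust enough that \emph{every} admissible configuration---not merely some configuration---carries the intended computation, so that the Medvedev degree is genuinely forced upward rather than merely attained. Once such a $Y$ is in hand, the quasi-isometric transfer described above is routine.
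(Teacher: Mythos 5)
Your reduction step is sound and is exactly the paper's: quasi-isometric groups to $\mathbb{H}$ are word-hyperbolic, hence finitely presented, and since admitting an SFT with nonzero Medvedev degree is a quasi-isometry invariant among finitely presented groups (\Cref{admitir-sft-incalculable-es-invariante-de-qi}, via Cohen's $\operatorname{QI}[\cdot]$ construction and the inequality $m(\operatorname{QI}[Y])\geq m(\operatorname{QI})\vee m(Y)$, which indeed needs no recursive presentation hypothesis), it suffices to produce one group quasi-isometric to $\mathbb{H}$ carrying an SFT of nonzero degree, and $\pi_1(\Sigma_2)$ is the natural candidate. You also correctly diagnose that none of the transference machinery (subgroups, quotients, translation-like actions) can supply that SFT, so the construction on the surface group is the entire content of the theorem.

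But that construction is precisely where your proposal stops being a proof. You propose to build a Robinson/Hanf--Myers-style hierarchical, self-simulating SFT adapted to $\mathbb{H}$, and you state this as a plan whose difficulty you "expect" to be manageable; nothing is actually constructed or cited. This is a genuine gap, and not a small one: hierarchical constructions do not transport naively to the hyperbolic plane, because the exponential growth of $\mathbb{H}$ destroys the self-similar square-grid structure that Robinson-style space-time diagrams rely on (this is exactly why the known undecidability and aperiodicity results on $\mathbb{H}$ avoid hierarchy). The paper closes this gap by chaining existing results rather than building a hierarchy: Jeandel's Turing machine that halts on every computable starting configuration but is immortal on a nonempty $\Pi_1^0$ set of non-computable configurations; Hooper's encoding of such a machine into a rational piecewise affine map whose immortal points correspond to non-halting configurations; Kari's technique for encoding the immortal points of a piecewise affine map by local rules on the binary tiling of $\mathbb{H}$; and the encoding of such rule-defined colorings into a $\pi_1(\Sigma_2)$-SFT from the proof of undecidability of the domino problem on surface groups. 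The resulting SFT has no computable configuration, hence nonzero Medvedev degree. Note also that the paper itself flags (in its remarks after the theorem) that making a Goodman-Strauss-type hierarchical construction in $\mathbb{H}$ carry controlled computation remains open, so the route you sketch is not merely unwritten but is an unsolved problem; to repair your proof you should replace that step with the Jeandel--Hooper--Kari chain, after which your transfer argument completes the theorem as stated.
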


\begin{proof}
    Any group which is quasi-isometric to a hyperbolic metric space is a word-hyperbolic group and thus is finitely presented and has decidable word problem (see for instance~\cite[Theorem 2.6]{bridson_metric_1999}). In particular, all groups considered in the statement are finitely presented. As we already know that admitting an SFT with nonzero Medvedev degree is a quasi-isometry invariant for finitely presented groups (\Cref{admitir-sft-incalculable-es-invariante-de-qi}), it suffices to exhibit one group $G$ which is quasi-isometric to $\mathbb{H}$ and which admits a $G$-SFT $X$ with $m(X)>0_{\M}$.
    
    For this purpose, we consider the fundamental group $\pi_1(\Sigma_2)$ of a closed orientable surface of genus two. This group is finitely presented and quasi-isometric to the hyperbolic plane. In~\cite{domino-surface-groups}, the authors prove the undecidability of the domino problem for $\pi_1(\Sigma_2)$. The same construction, plus some results that are present in the literature, can be used to exhibit an SFT $X$ with $m(X)>0_{\M}$. 

    Let us proceed more precisely. In~\cite{jeandel_immortal_2012} the author shows that there exists a Turing machine which halts on every computable starting configuration, but loops for some $\Pi_1^0$ set of non-computable starting configurations. By a construction of Hooper~\cite{hooper_undecidability_1966}, this machine can be encoded by a piecewise affine map with rational coeﬃcients whose immortal points correspond to non-halting configurations. Using a beautiful construction of Kari~\cite[Section 2]{kari_theory_2005}, the set of immortal points of this piecewise affine map can be encoded as a set of colorings of a graph which represents a binary tiling of $\mathbb{H}$ given by local rules. Finally, in~\cite[Section 5]{domino-surface-groups} the authors show that every such set of colorings can be encoded in a $\pi_1(\Sigma_2)$-SFT $X$. It follows that $X$ has no computable points, and thus $m(X)>0_{\M}$. 
\end{proof}

A natural class of examples is given by some Fuchsian groups. A group $G$ is \define{Fuchsian} if it is a discrete subgroup of $\operatorname{PSL(2,\ZZ)}$, a good introductory reference is~\cite{Katok1992-nk}. Fuchsian groups act properly discontinuously on $\mathbb{H}$ by isometries (M\"obius transformations). It follows by the Milnor-\v{S}varc lemma that if a Fuchsian group is finitely generated and acts co-compactly, it is quasi-isometric to $\mathbb{H}$.

\begin{corollary}
    Every finitely generated co-compact Fuchsian group has an SFT $X$ with $m(X)>0_{\M}$.
\end{corollary}

Natural examples of finitely generated co-compact Fuchsian groups are given by the hyperbolic triangle groups $\Delta(l,m,n)$ with $l,m,n \in \NN$ and $\frac{1}{l}+\frac{1}{n}+\frac{1}{m}<1$.

Now let us turn our attention to the classification of Medvedev degrees for groups which are quasi-isometric to $\mathbb{H}$. Recall that by~\Cref{coro_QI} all groups which admit a computable quasi-isometry to $\mathbb{H}$ form a computably quasi-isometry class. It is not hard to show that $\pi_1(\Sigma_2)$ belongs to this class, thus we obtain the following result.

\begin{corollary}
    Let $\pi_1(\Sigma_2)$ be the fundamental group of the closed orientable surface of genus two. If $G$ is a finitely generated group which admits a computable quasi-isometry $f\colon G \to \mathbb{H}$, then $\msft{G}=\msft{\pi_1(\Sigma_2)}$.
\end{corollary}

We remark that while we know that $\msft{\pi_1(\Sigma_2)}$ is non-trivial, we do now know what is the precise set of $\Pi_1^0$ degrees that can be attained. A difficulty here is that while it is known that there are Turing machines whose immortal set has nonzero Medvedev degree, it is not known whether all $\Pi_1^0$ Medvedev degrees can be obtained in this manner. On the other hand, it seems reasonable that a hierarchical construction such as the one of~\cite{goodman-strauss_hierarchical_2010} can be coupled with Simpson's construction to produce all $\Pi_1^0$ Medvedev degrees.

\begin{remark}
    We do not know if every finitely generated and co-compact Fuchsian group admits a computable quasi-isometry to $\mathbb{H}$. By~\Cref{obs:svarcmilnor}, a sufficient condition for a Fuchsian group to admit a computable quasi-isometry to $\mathbb{H}$ is that their generators can be represented by matrices in $\operatorname{PSL}_2(\RR)$ with computable coefficients. We do not know if this holds in general, but it certainly holds for well-known examples, such as the co-compact hyperbolic triangle groups.
\end{remark}

\subsection{Groups obtained through simulation results}\label{subsec:simulation}

Let $\rho \colon G \to H$ be an epimorphism. Recall that we denote the pullback of a subshift $X\subset A^{H}$, by $\rho^{*}(X)$. A group $G$ \define{simulates} $H$ (through $\rho$) if the pullback of any effective $H$-subshift is a sofic $G$-subshift\footnote{In other references, for instance~\cite{barbieri_effective_2024,barbieri_groups_2022}, the notion of simulation is more restrictive and asks that the pullback of every computable action of $H$ on any $\Pi_1^0$ set is the topological factor of some $G$-SFT.}. 
\begin{proposition}\label{prop:simulation}
    Let $G,H$ be finitely generated groups such that $G$ simulates $H$. The following hold: 
    \begin{enumerate}
        \item $\msof{G} \supset\meff{H}$.
        \item If $H$ is infinite and recursively presented, then $\msft{G}$ contains a Medvedev degree which is bounded below by the maximal $\Pi_1^0$ Medvedev degree.
        \item If $H$ is infinite and has decidable word problem, $\msof{G}$ contains the set of all $\Pi_1^0$ Medvedev degrees.
    \end{enumerate}
   \end{proposition}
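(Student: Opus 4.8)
The statement to prove is Proposition \ref{prop:simulation}, which has three items relating simulation to Medvedev degrees. Here is my plan.

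\textbf{Item (1): $\msof{G} \supset \meff{H}$.} The plan is to start with an arbitrary nonempty effective $H$-subshift $X \subset A^H$ and show that some sofic $G$-subshift realizes the same Medvedev degree. By the definition of simulation, the pullback $\rho^*(X)$ is a sofic $G$-subshift. The key point is that the Medvedev degree is preserved: by \Cref{lema:medvedev_degree_of_pullback}, $m(\rho^*(X)) = m(X)$, since $\rho\colon G \to H$ is an epimorphism. Therefore every degree in $\meff{H}$ appears in $\msof{G}$, which is exactly the claimed inclusion. This item is essentially immediate from the two cited results and requires no new work.

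\textbf{Item (2): $H$ infinite and recursively presented implies $\msft{G}$ contains a degree bounded below by the maximal $\Pi_1^0$ degree.} The idea is to combine item (1) with the existence of highly complex effective subshifts on infinite recursively presented groups. First I would invoke \Cref{cor:all_infinite_rp_groups_havenonzero_effective_medvedev_degree}, which guarantees that $H$ admits a nonempty effective subshift $X$ achieving the maximal $\Pi_1^0$ Medvedev degree. Then the pullback $\rho^*(X)$ is a sofic $G$-subshift with $m(\rho^*(X)) = m(X)$ equal to this maximal degree. To pass from sofic to SFT, I would take an SFT extension $Y$ of $\rho^*(X)$, that is, an SFT $Y$ together with a factor map $Y \to \rho^*(X)$; by \Cref{prop:basic-properties}(1), $m(Y) \geq m(\rho^*(X))$, so $m(Y)$ is bounded below by the maximal $\Pi_1^0$ degree. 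This yields a member of $\msft{G}$ with the desired lower bound. (Note the statement only claims a lower bound, not equality, precisely because the SFT extension may strictly increase the degree when $G$ is not assumed recursively presented.)

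\textbf{Item (3): $H$ infinite with decidable word problem implies $\msof{G} \supset \{$all $\Pi_1^0$ degrees$\}$.} Here the strengthened hypothesis of decidable word problem lets me control the degrees from both sides. By \Cref{meff-for-groups-with-decidable-word-problem} (equivalently \Cref{teo:nicarrasco_eff_pi01}), since $H$ is infinite and finitely generated with decidable word problem, $\meff{H}$ equals the full set of $\Pi_1^0$ Medvedev degrees. Applying item (1), $\msof{G} \supset \meff{H}$, so $\msof{G}$ contains every $\Pi_1^0$ degree, which is the claim. The main subtlety I would keep in mind across all three items is ensuring that simulation is being used in the sense defined at the start of \Cref{subsec:simulation} (pullback of an effective subshift is sofic), rather than the more restrictive notion referenced in the footnote; but since the definition given here is exactly "the pullback of any effective $H$-subshift is a sofic $G$-subshift," this matches the hypothesis needed. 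The only genuine ingredients are \Cref{lema:medvedev_degree_of_pullback}, \Cref{cor:all_infinite_rp_groups_havenonzero_effective_medvedev_degree}, \Cref{meff-for-groups-with-decidable-word-problem}, and \Cref{prop:basic-properties}, so I expect no serious obstacle — the work is in correctly chaining these transference results.
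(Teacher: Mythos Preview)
Your proposal is correct and follows essentially the same route as the paper's proof: item (1) via \Cref{lema:medvedev_degree_of_pullback} and the definition of simulation; item (2) via \Cref{cor:all_infinite_rp_groups_havenonzero_effective_medvedev_degree}, item (1), and then an SFT extension using \Cref{prop:basic-properties}; item (3) via \Cref{teo:nicarrasco_eff_pi01} and item (1). The ingredients and the chaining are identical to the paper's argument.
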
 
    \begin{proof}
        Let $X\subset A^{H}$ be any effective subshift.  As $G$ simulates $H$, we have that $\rho^*(X)$ is sofic. By~\Cref{lema:medvedev_degree_of_pullback} we have that $m(X) = m(\rho^*(X))$ and thus $\msof{G} \supset\meff{H}$.

        Now, let us suppose that $H$ is infinite and recursively presented. By~\Cref{cor:all_infinite_rp_groups_havenonzero_effective_medvedev_degree} we have that $\meff{H}$ contains the maximal $\Pi_1^0$ Medvedev degree, and thus by the first item, so does $\msof{G}$. If we let $Y$ be a sofic $G$-shift with this degree, any $G$-SFT extension $X$ will satisfy $m(X)\geq m(Y)$ by~\Cref{prop:basic-properties}.

        Finally, suppose that $H$ is infinite and with decidable word problem. By~\Cref{teo:nicarrasco_eff_pi01}, we have that $\meff{H}$ is the set of all $\Pi_1^0$ Medvedev degrees, and thus as $\msof{G} \supset\meff{H}$ we get that $\msof{G}$ also contains the set of all $\Pi_1^0$ Medvedev degrees.\end{proof}

\section{Further remarks and questions}\label{section:questions}

\subsection{Conjectures on the Medvedev degrees of classes of subshifts}

For every finitely generated and recursively presented group $G$, we have that $\meff{G}$ is contained in the set of $\Pi_1^0$ degrees (\Cref{prop:msft-and-msoff-are-contained-in-Pi-1-degrees}).  Furthermore, \Cref{cor:all_infinite_rp_groups_havenonzero_effective_medvedev_degree} shows that it always contains the maximal $\Pi_1^0$ degree. Thus the following question arises naturally:
    \begin{question}\label{question:effective_subshifts_degrees_on_rp}
        Let $G$ be a finitely generated group that is infinite and recursively presented. Is it true that effective subshifts on $G$ attain all $\Pi_1^0$ Medvedev degrees?
    \end{question}

In fact, with our current understanding, in every recursively presented group where $\msft{G}$ is known, it is either trivial or it coincides with the class of all $\Pi_1^0$ Medvedev degrees. This leads us to propose the following conjecture.

    \begin{conjecture}\label{conjecture_allSFTSarePi10completeclasses}
    Let $G$ be an infinite, finitely generated and recursively presented group. $G$ is not virtually free if and only if $\msft{G}$ is the set of all $\Pi_1^0$ Medvedev degrees.
\end{conjecture}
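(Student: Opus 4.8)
The plan is to treat the two implications separately, since one of them is essentially free from the machinery already developed. If $G$ is virtually free then $\msft{G}=\{0_\M\}$ by \Cref{prop:virtually_free_have_0_degree}, and because nonzero $\Pi_1^0$ Medvedev degrees exist (e.g.\ those of $\Z^2$-SFTs, \Cref{simpson_result}), $\msft{G}$ cannot be the class of all $\Pi_1^0$ degrees; this is exactly the contrapositive of the implication that $\msft{G}$ being the whole class forces $G$ to be non-virtually-free. Likewise the upper-bound half of the remaining implication is automatic: since $G$ is recursively presented, every $G$-SFT is effective and hence has a $\Pi_1^0$ degree (\Cref{prop:msft-and-msoff-are-contained-in-Pi-1-degrees}), so $\msft{G}$ is always \emph{contained} in the class of $\Pi_1^0$ degrees. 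The entire content of the conjecture is therefore the lower bound: for $G$ infinite, finitely generated, recursively presented, and \emph{not} virtually free, every $\Pi_1^0$ Medvedev degree must be realized by some $G$-SFT.

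To attack the lower bound I would reduce it, via Simpson's theorem (\Cref{simpson_result}), to the single inclusion $\msft{\Z^2}\subseteq\msft{G}$, and aim to produce a computable translation-like action $\Z^2\curvearrowright G$ with decidable orbit membership problem. Granting such an action, \Cref{prop:translation-like-actions-and-medvedev-degrees} yields $\msft{\Z^2}\subseteq\msft{G}$ directly, and together with the upper bound this gives equality. Before confronting the general case I would dispatch the easy families with cheaper transference results: if $G$ contains a copy of $\Z^2$ with decidable membership problem then \Cref{cor:subgroups_decidable_membership} already gives the inclusion; if $G$ surjects with finitely generated kernel onto a group known to satisfy the conjecture then \Cref{prop:quotients_fg} applies; and commensurability invariance (\Cref{prop:commensurable_same_degrees}) lets one pass to finite-index subgroups. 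A structural reduction via Stallings' theorem on ends and Dunwoody accessibility would, heuristically, localize the planar complexity in a one-ended vertex group of the decomposition of $G$ over finite subgroups, leaving one-ended groups as the essential case — though transferring degrees to or from such vertex groups requires controlling their membership problems, which is itself delicate.

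The main obstacle is precisely the existence of a planar translation-like action on an arbitrary one-ended non-virtually-free group. Seward's theorem \cite{seward_burnside_2014} supplies translation-like $\Z$-actions on every infinite finitely generated group, but the passage from $\Z$ to $\Z^2$ is open in general and is of the same order of difficulty as the Ballier--Stein domino-problem conjecture, which the present conjecture would imply (cf.\ the discussion around \Cref{Medvedev-conjecture-is-pulent} and \Cref{conj:uncomputableconfig}). There is moreover a second, independent effectivity obstacle: \Cref{prop:translation-like-actions-and-medvedev-degrees} requires the action to be \emph{computable} and to have a \emph{decidable} orbit membership problem, whereas the conjecture only assumes $G$ recursively presented, not that it has decidable word problem. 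Without decidability one is reduced to the weaker conclusion $m(\operatorname{T}[X])\ge m(X)$, which produces SFTs of nonzero degree but does not pin the degree down, so it cannot by itself realize \emph{every} $\Pi_1^0$ degree exactly. I therefore expect a full proof to require either a genuinely new construction of planar translation-like actions valid across all non-virtually-free groups, together with an argument handling the non-decidable-word-problem case, or else a completely different realization of all $\Pi_1^0$ degrees that does not route through $\Z^2$.
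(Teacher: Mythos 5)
The statement you were asked to prove is labelled a \emph{conjecture} in the paper, and the paper contains no proof of it: the authors explicitly say they expect a positive solution to be ``rather distant with the existing techniques.'' So there is no paper proof to compare against, and no complete proof was possible here.

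Within that constraint, your analysis is accurate and tracks the paper's own partial progress. The direction you call free --- virtually free implies $\msft{G}=\{0_\M\}$, hence not the full class of $\Pi_1^0$ degrees --- is exactly \Cref{prop:virtually_free_have_0_degree} combined with the existence of nonzero $\Pi_1^0$ degrees from \Cref{simpson_result}, and the containment of $\msft{G}$ in the $\Pi_1^0$ degrees for recursively presented $G$ is \Cref{prop:msft-and-msoff-are-contained-in-Pi-1-degrees}. Your proposed route for the lower bound --- reduce to $\msft{\Z^2}\subseteq\msft{G}$ via a computable translation-like $\Z^2$-action with decidable orbit membership problem, or via subgroups, quotients, and commensurability --- is precisely the transference machinery the paper deploys to settle the special cases it can: virtually polycyclic groups (\Cref{thm:polycyclic}), direct products with decidable word problem (\Cref{thm:direct_products_clasification_medvedev_degrees}), and branch groups (\Cref{cor:branch_groups}). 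The two obstacles you identify are also the right ones. First, no general construction of $\Z^2$ (or otherwise ``planar'') translation-like actions on one-ended non-virtually-free groups is known, and the paper itself notes that its conjectures sit at the same depth as the Ballier--Stein and Carroll--Penland conjectures (\Cref{Medvedev-conjecture-is-pulent}). Second, \Cref{prop:translation-like-actions-and-medvedev-degrees} requires computability and decidable orbit membership, which recursive presentation alone does not supply; without these one only obtains $m(\operatorname{T}[X])\geq m(X)$, enough to produce nonzero degrees (cf.\ \Cref{conj:uncomputableconfig}) but not to realize every $\Pi_1^0$ degree exactly. So your proposal contains no erroneous step: it is an honest reduction of the conjecture to its open core, which is exactly where the paper leaves it.
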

    
We remark that in the case of a recursively presented group, \Cref{prop:simulation} provides us an SFTs which achieves the maximal $\Pi_1^0$ Medvedev degree, but a priori it does not give us a tool to classify $\msft{G}$. Indeed, we can only conclude that for every sofic subshift $Y$ there exists a subshift of finite type with Medvedev degree \textit{at least} $m(Y)$. Although it seems unlikely, a priori it might be the case that the lattice $\msft{G}$ may contain gaps which do not occur in $\msof{G}$.

\begin{question}\label{question:equal_degree_extension}
     Let $G$ be a recursively presented group and $Y$ a sofic $G$-subshift. Does $Y$ admit a $G$-SFT extension of equal Medvedev degree? In particular, is it true that $\msft{G}=\msof{G}$?
\end{question}

A positive answer to~\Cref{question:equal_degree_extension} would provide strong evidence towards~\Cref{conjecture_allSFTSarePi10completeclasses}.

\subsection{Immortal sets of Turing machines and hyperbolic groups}

In~\Cref{subsec:QI_hyperbolic} we mentioned a construction of Jeandel that produces a Turing machine which halts on every computable starting configuration, but loops in a $\Pi_1^0$ set of starting configurations with nonzero Medvedev degree. It is currently unknown if every $\Pi_1^0$ Medvedev degree can be obtained this way.

\begin{question}\label{question:immortality}
    Is it true that immortal sets of Turing machines attain all $\Pi_1^0$ Medvedev degrees?
\end{question}

If such a result were proven, it would imply by the same argument sketched in~\Cref{thm:hyperbolic_qi_medvedev} that the surface group $\pi_1(\Sigma_2)$ admits SFTs which attain all Medvedev degrees. In fact, it may be possible to adapt a construction of Bartholdi~\cite{bartholdi_2023_hyperbolic} to extend said result to all non virtually free hyperbolic groups. This would provide a way to settle~\Cref{conjecture_allSFTSarePi10completeclasses} for hyperbolic groups.





\subsection{Medvedev degrees, aperiodic subshifts, and the domino problem}
We now observe that for a fixed finitely generated group, the existence of SFTs with nonzero Medvedev degree implies both the existence of weakly aperiodic SFTs, and the undecidability of the domino problem.

\begin{proposition}\label{prop:med_implies_WA_conjecture}
        Let $G$ be a finitely generated group, and let $X$ be a nonempty $G$-subshift. If $m(X)>0_{\M}$, then $X$ is weakly aperiodic, that is, $G\curvearrowright X$ has no finite orbits. 
    \end{proposition}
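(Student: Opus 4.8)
The plan is to prove the contrapositive: assuming that $G\curvearrowright X$ admits a finite orbit, I will exhibit a computable configuration inside $X$, which forces $m(X)=0_\M$ by the characterization of the trivial degree. Recall that $m(X)=m(\widehat X)$ by definition, and that a (pullback) subshift has Medvedev degree $0_\M$ exactly when it contains a computable point. Hence it suffices to produce one computable point of $\widehat X\subset A^{F(S)}$, where $A^{F(S)}$ carries the canonical computable metric space structure of \Cref{computable-metric-space-structure-on-A^G}, which is available because the free group $F(S)$ has decidable word problem.

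So fix $x\in X$ whose orbit $O=\{x_1,\dots,x_n\}$ is finite. Since the shift acts by homeomorphisms and $O$ is a finite invariant set, each generator $s\in S$ permutes $O$; write $s\cdot x_i=x_{\sigma_s(i)}$ for a permutation $\sigma_s$ of $\{1,\dots,n\}$. Together with the base index $i_0$ with $x=x_{i_0}$ and the finitely many labels $\pi(x_i):=x_i(1_G)\in A$, this is a finite amount of data. The central observation is that the value $\widehat x(w)=x(\underline w)$ can be recovered from this data for every $w\in F(S)$: using $\widehat x(w)=x(\underline w)=(\underline w^{-1}x)(1_G)$ and the symmetry of $S$, writing $w=s_1\cdots s_k$ gives $\underline w^{-1}x=\sigma_{s_k^{-1}}\circ\cdots\circ\sigma_{s_1^{-1}}(x)$ inside $O$. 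Consequently an algorithm that hard-codes the permutations $\sigma_s$, the index $i_0$, and the labels $\pi(x_i)$, then reads the letters of $w$ left to right while updating an index in $\{1,\dots,n\}$ by the corresponding inverse-generator permutation, and finally outputs the stored label of the resulting element, computes the map $w\mapsto\widehat x(w)$.

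This shows that $\widehat x$ is a computable point of $\widehat X$ in the sense of \Cref{characterization-of-computable-points-via-sequences}, so $m(X)=m(\widehat X)=0_\M$, which proves the contrapositive. The only place where care is needed — and what I expect to be the main conceptual obstacle to state cleanly — is that the algorithm never decides whether two words represent the same element of $G$: the possibly undecidable word problem of $G$ is sidestepped entirely, since equal group elements act identically on the finite orbit and hence words with the same image under $\phi$ automatically receive the same value without our detecting the coincidence. Correspondingly, the argument is non-uniform: I do not claim to compute the orbit data from a description of $X$, only that such finite data exists and can be built into a single machine, which is all that is required to exhibit one computable configuration.
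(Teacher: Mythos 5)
Your proof is correct and is essentially the paper's argument: the paper also proves the contrapositive by showing that a configuration with finite orbit yields a computable point of the pullback $\widehat X$, which forces $m(X)=0_{\M}$. The only cosmetic difference is packaging — the paper factors the configuration through a finite quotient group $K$ via a computable epimorphism $F(S)\to K$, whereas you hard-code the permutation action of the generators on the finite orbit together with the labels at the identity; your permutation data is precisely a concrete implementation of that finite quotient, so the two arguments coincide.
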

    \begin{proof}
        We prove that if $X$ has a configuration with finite orbit, then $m(X)=0_{\M}$. Indeed, if $X$ has a configuration with finite orbit, then the same is true for the pullback $\widehat X$. Thus it suffices to prove that a configuration in $A^{F(S)}$ with finite orbit must be computable. 
        
        Indeed, if $x_0\in A^{F(S)}$ has finite orbit, then there is a finite group $K$, a group epimorphism $\phi\colon F(S)\to K$, and a configuration $y_0\in A^K$ such that the pullback $\phi^\ast (y_0)$ is equal to $x_0$ (where $\phi^\ast (y)(g)=y(\phi(g)),\ g\in F(S)$). As $A^K$ is finite it follows that $y_0$ is computable and thus that $x_0$ is computable: given $g\in F(S)$ we can compute $x_0(g)$ by first computing $\phi(g)\in K$ (recall that every homomorphism of finitely generated groups is computable), and setting $x_0(g)=y_0(\phi(g))$.\end{proof}

The domino problem of a group $G$ is the algorithmic problem of computing, from a finite set of pattern codings, if the associated $G$-SFT is nonempty. 
        
\begin{proposition}\label{prop:med_implies_domino}
    Let $G$ be a finitely generated group which admits a nonempty $G$-SFT $X$ with $m(X)>0_{\M}$. Then the domino problem for $G$ is undecidable. 
\end{proposition}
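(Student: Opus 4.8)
The plan is to prove the contrapositive: assuming the domino problem for $G$ is decidable, I will show that every nonempty $G$-SFT has Medvedev degree $0_\M$, contradicting the existence of the given $X$. The key idea is that decidability of the domino problem lets us algorithmically search through the tree of finite patterns and always extend a partial configuration without getting stuck, thereby building a computable configuration in any nonempty SFT.

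First I would set up the combinatorial framework. Let $S$ be a finite symmetric generating set of $G$, and let $X = X_{\mathcal F}$ be defined by a finite set of forbidden pattern codings $\mathcal F$. Working in the free group $F(S)$, I would consider the pullback $\widehat X \subset A^{F(S)}$, since by definition $m(X) = m(\widehat X)$, and a computable point in $\widehat X$ witnesses $m(X) = 0_\M$. I would enumerate the elements of $F(S)$ in a computable order $g_0 = 1_{F(S)}, g_1, g_2, \dots$ (say by increasing word length, breaking ties lexicographically), and attempt to define a configuration $x$ by specifying its values $x(g_0), x(g_1), \dots$ one coordinate at a time.

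The crucial step is the extension procedure. At stage $n$, having committed to values on $\{g_0,\dots,g_{n-1}\}$, I would choose $x(g_n) \in A$ (in a fixed order on $A$) to be the first symbol such that the resulting finite partial pattern on $\{g_0,\dots,g_n\}$ still extends to a full configuration of $\widehat X$. The question of whether such an extension exists is exactly a non-emptiness question for the SFT obtained by adjoining the finitely many constraints recording the chosen values on $\{g_0,\dots,g_n\}$ to the finite defining rules of $\widehat X$. Since these augmented rules form a finite set of pattern codings, decidability of the domino problem for $G$ (equivalently for $F(S)$, via the pullback and the fact that forbidden pattern codings transfer through the homomorphism $\phi\colon F(S)\to G$) lets me decide each such instance. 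Because $X$ is nonempty, at least one valid choice always exists at every stage, so the search never fails, and a compactness argument guarantees the resulting $x$ lies in $\widehat X$: every finite sub-pattern extends to a genuine configuration, and $\widehat X$ is closed.

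The main obstacle I anticipate is making precise the reduction from ``decidability of the domino problem for $G$'' to ``decidability of the augmented non-emptiness queries on $F(S)$.'' One must verify that augmenting the SFT with finitely many fixed-symbol constraints keeps it expressible as a finite set of pattern codings over $G$, and that asking whether the partial assignment on $\{g_0,\dots,g_n\}$ extends in $\widehat X$ is equivalent to a domino-problem instance for $G$ — here the subtlety is that coordinates $g_i, g_j$ of $F(S)$ may map to the same element of $G$ under $\phi$, so consistency of the assignment with the pullback relation must be folded into the constraints. Once this bookkeeping is handled, the extension procedure is uniformly computable, and since every query is answered by the assumed decision algorithm and a valid extension always exists, the map $n \mapsto x(g_n)$ is computable. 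This shows $\widehat X$ has a computable point, hence $m(X) = m(\widehat X) = 0_\M$, contradicting $m(X) > 0_\M$ and proving the domino problem for $G$ must be undecidable.
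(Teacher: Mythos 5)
Your overall strategy (contrapositive: a decidable domino problem yields a computable configuration in every nonempty SFT, via a greedy search) matches the paper's, but the crucial step of your greedy search fails as stated, and this is exactly the point where the paper has to do something different. At stage $n$ you want to decide whether a partial assignment of symbols to the finitely many positions $g_0,\dots,g_n$ extends to a configuration of $\widehat X$, and you claim this ``is exactly a non-emptiness question for the SFT obtained by adjoining the finitely many constraints recording the chosen values.'' It is not: the condition ``$x(g_i)=a_i$ for $i=0,\dots,n$'' pins values at \emph{specific} group elements, hence is not shift-invariant; the set of configurations of $\widehat X$ satisfying it is a cylinder intersected with $\widehat X$, not a subshift, so it is not an object about which a domino-problem oracle can be queried. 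If you instead try to encode the constraint by forbidden patterns (forbid every pattern on the support $\{g_0,\dots,g_n\}$ other than the chosen one), the resulting SFT consists of configurations all of whose \emph{translates} restricted to that support equal the chosen pattern --- a completely different and much stronger condition. What your step really requires is decidability of global admissibility (``does the pattern $p$ belong to the language of $X$?''), and there is no obvious reduction of this language problem to the emptiness problem; bridging that gap is the actual content of the proposition.

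The paper's proof circumvents this by never asking about a single partial configuration. Assuming, as one may, that $G$ has decidable word problem (undecidable word problem already implies undecidable domino problem), it works with sets $L_n$ of allowed patterns on balls $B_n$ and the honest SFTs $Y(L_n)$ they define, and it queries the domino oracle only on emptiness of such SFTs: it computes $L_n$ \emph{minimal} under inclusion such that $Y(L_n)$ is nonempty and compatible with $L_{n-1}$. Minimality is what replaces your extension test: if some $p\in L_n$ appeared in no configuration of $Y(L_n)$, it could be deleted while keeping nonemptiness, contradicting minimality; hence every pattern in $L_n$ is automatically globally admissible and extends into $L_{n+1}$, and a computable point of $X$ is then extracted exactly as you intended. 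Your proposal is missing this idea (or some substitute for it), so as written it has a genuine gap. A secondary issue: your plan to ``fold the pullback relation into the constraints'' over $F(S)$ requires deciding which elements of $F(S)$ are identified in $G$, i.e.\ the word problem of $G$, which you have not assumed decidable; the paper disposes of that case at the outset by citing that an undecidable word problem forces an undecidable domino problem.
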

\begin{proof}
    It is well known that if $G$ has undecidable word problem, then $G$ has undecidable domino problem~\cite[Theorem 9.3.28]{aubrun_domino_2018}. Therefore we will assume that $G$ has decidable word problem and perform computations directly on patterns instead of pattern codings. We will prove that if the domino problem for $G$ is decidable, and $X$ is a nonempty $G$-SFT on alphabet $A$, then $m(X)=0_{\M}$.

    Fix a word metric on $G$ and let $B_n=\{g\in G : |g|\leq n\}$. Given $L \subset A^{B_n}$, we denote by $Y(L)$ the $G$-SFT given by \[ Y(L) = \{ y \in A^G : (gy)|_{B_n}\in L \mbox{ for every } g \in G\}.\] 

    In other words, $Y(L)$ is the $G$-SFT defined by the forbidden patterns $\mathcal{F} = A^{B_n}\setminus L$. Fix $n_0 \in \NN$ and consider a sequence $(L_n)_{n\geq n_0}$ with the following properties: $L_n\subset A^{B_n}$ is nonempty, every pattern in $L_n$ appears in $X$, and finally, for every pattern $p\in L_n$, there exists a pattern $q\in L_{n+1}$ whose restriction to $B_n$ is $p$. We remark that if a computable sequence $(L_n)_{n \geq n_0}$ as above exists, then we can extract a computable configuration $x$ by letting $x|_{B_{n_0}} = p_{n_0} \in L_{n_0}$ arbitrary, and then inductively for $n>n_0$ we choose $p_n$ as the lexicographically minimal element such that $p_n|_{B_{n-1}} = x|_{B_{n-1}}$ and set $x|_{B_n} = p_n$. It is clear that $x$ is computable and $x \in \bigcap_{n \geq n_0} Y(L_n) \subset X$. It follows that if such a computable sequence exists, then $m(X)=0_{\M}$. 

    As $X$ is a $G$-SFT, there exists a smallest $n_0\in \NN$ for which there is $D\subset A^{B_{n_0}}$ with $X = Y(D)$. Let $L_{n_0}\subset D$ be minimal (for inclusion) such that $Y(L_{n_0})$ is nonempty. Notice that $Y(L_{n_0})\subset X$ and that every pattern in $L_{n_0}$ must appear in $Y(L_{n_0})$.
    
    Now we define the sequence. Set $L_{n_0}$ as above. For $n > n_0$ we perform the following recursive step. Suppose that $L_{n-1}$ has already been computed and that it is minimal such that $Y(L_{n-1})\subset X$ is nonempty. Compute the set $D_n$ of all $p \in A^{B_n}$ whose restriction to $B_{n-1}$ lies in $L_{n-1}$. Then clearly $Y(D_n) = Y(L_{n-1})\subset X$. Using the algorithm for the domino problem of $G$, compute $L_n\subset D_n$ which is minimal such that $Y(L_n)$ is nonempty. 

    This sequence $(L_n)_{n \geq n_0}$ is computable. We claim it satisfies the required properties. By construction each $n \geq n_0$, $L_n\subset A^{B_n}$ is nonempty. Minimality implies that all patterns in $L_{n}$ appear in $Y(L_{n})\subset X$. Finally, for every $p\in L_n$, there is $q\in L_{n+1}$ whose restriction to $B_{n}$ equals $p$. Indeed, if this was not the case, then the set $R_{n}$ defined by restricting all patterns in $L_{n+1}$ to $B_n$ would be properly contained in $L_n$, and $\emptyset \neq Y(L_{n+1})\subset Y(R_n)$, contradicting the minimality of $L_{n}$. \end{proof}

These results lead us to conjecture the following:
\begin{conjecture}\label{conj:uncomputableconfig}
    Let $G$ be a finitely generated group. $G$ is virtually free if and only if every nonempty $G$-SFT has Medvedev degree $0_{\M}$.
\end{conjecture}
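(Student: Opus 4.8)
The plan is to treat the two implications separately. The forward direction — that a virtually free group admits only SFTs of degree $0_\M$ — is already in hand: it is exactly \Cref{prop:virtually_free_have_0_degree}, obtained by exploiting that the Cayley graph of a free group is a tree, so that locally admissible patterns extend greedily to a computable configuration. Hence the whole content of the conjecture lies in the reverse implication, which I would attack in contrapositive form: \emph{if $G$ is finitely generated and not virtually free, then $G$ admits a nonempty SFT $X$ with $m(X)>0_\M$.}

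For this I would organize the argument around the transfer toolkit of \Cref{section:transference}, whose guiding principle is that any group which is algebraically or geometrically ``at least as complex'' as $\ZZ^2$ should inherit a nonzero degree from Simpson's theorem (\Cref{simpson_result}). Three reductions already dispose of large families. First, if $G$ contains a copy of $\ZZ^2$, then the corollary to \Cref{free-extensions} (free extensions never decrease the degree) immediately produces an SFT with nonzero degree, with no hypothesis on membership problems. Second, if $G$ is quasi-isometric to the hyperbolic plane, \Cref{thm:hyperbolic_qi_medvedev} applies. Third, if $G$ admits a translation-like action by a finitely presented group carrying nonzero-degree SFTs — for instance $\ZZ^2$ — then \Cref{prop:translation-like-actions-and-medvedev-degrees} transfers the degree upward. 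Using Stallings' theorem together with Dunwoody accessibility, one can further reduce to one-ended groups: an accessible group built as a finite graph of groups with finite edge and vertex groups is virtually free, so a non-virtually-free accessible group has a one-ended vertex subgroup, to which the subgroup reduction applies.

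This reduces the conjecture to producing nonzero-degree SFTs on one-ended finitely generated groups, and here I expect the genuine obstacle. The most uniform strategy would be to show that every non-virtually-free group admits a translation-like action by $\ZZ^2$; combined with \Cref{prop:translation-like-actions-and-medvedev-degrees} and \Cref{simpson_result} this would finish the proof. But the existence of such actions is open and essentially of the same difficulty as the target, since by \Cref{prop:med_implies_WA_conjecture} and \Cref{prop:med_implies_domino} a positive answer would settle both the Carroll--Penland conjecture on weakly aperiodic SFTs and the Ballier--Stein conjecture on the domino problem. The truly resistant cases are the torsion groups with no $\ZZ^2$ subgroup and no known compatible geometry — Tarski monsters being emblematic — where neither the subgroup reduction, nor the quasi-isometry reduction, nor any presently available translation-like $\ZZ^2$ action applies; the possible failure of accessibility for finitely generated groups is a further gap in the reduction above. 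For the subfamily of non-virtually-free hyperbolic groups a more self-contained route bypasses translation-like actions: following the proof of \Cref{thm:hyperbolic_qi_medvedev}, one encodes the immortal set of Jeandel's Turing machine (whose immortal set has nonzero degree) into a binary-tiling SFT and then into an arbitrary non-virtually-free hyperbolic group via Bartholdi's hierarchical encoding; carrying this out in full generality, and ideally showing that immortal sets realize \emph{all} $\Pi_1^0$ degrees (\Cref{question:immortality}), is the key missing ingredient for that class.
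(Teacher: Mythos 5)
The statement you are addressing is a \emph{conjecture}: the paper offers no proof of it, and explicitly remarks that a positive solution would be ``rather distant with the existing techniques.'' So there is no paper proof to compare against, and your proposal must be judged as an attempt at an open problem. Your forward direction is correct and is exactly the paper's own \Cref{prop:virtually_free_have_0_degree}; your survey of reductions for the converse is also accurate as far as it goes: free extensions (the corollary to \Cref{free-extensions}) handle any $G$ containing $\ZZ^2$, \Cref{thm:hyperbolic_qi_medvedev} handles groups quasi-isometric to $\mathbb{H}$, \Cref{prop:translation-like-actions-and-medvedev-degrees} transfers nonzero degrees along translation-like actions by finitely presented groups, and the Stallings/accessibility argument does reduce the accessible case to one-ended vertex groups.

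But this is not a proof, and you say so yourself: the one-ended case --- in particular finitely generated torsion groups with no $\ZZ^2$ subgroup and no usable geometry, and the possibly inaccessible groups --- is left entirely open, and every route you propose to close it (translation-like $\ZZ^2$-actions on all non-virtually-free groups, realization of all $\Pi_1^0$ degrees by immortal sets of Turing machines as in \Cref{question:immortality}, hierarchical encodings for all non-virtually-free hyperbolic groups) is itself an open problem of at least comparable difficulty. The paper makes the same point from the other side: by \Cref{prop:med_implies_WA_conjecture} and \Cref{prop:med_implies_domino}, this conjecture implies both the Carroll--Penland conjecture and the Ballier--Stein conjecture (\Cref{Medvedev-conjecture-is-pulent}), so any completion of your program would settle those as well. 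In short, your proposal correctly reproduces the known half and correctly maps the frontier, but the reverse implication --- which is the entire content of the conjecture --- remains a genuine gap, as it must.
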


\begin{observation}\label{Medvedev-conjecture-is-pulent}
    Note that \Cref{conj:uncomputableconfig} implies Carroll and Penland's conjecture \cite{carroll_periodic_2015} that all infinite finitely generated groups that are not virtually cyclic admit weakly aperiodic SFTs. This simply follows from \Cref{prop:med_implies_WA_conjecture}, plus the fact that all virtually free groups that are not virtually cyclic admit weakly aperiodic SFTs \cite{piantadosi_symbolic_2008}.

    We also note that \Cref{conj:uncomputableconfig} implies Ballier and Stein's conjecture that a finitely generated infinite group has decidable domino problem if and only if it is virtually free~\cite{ballier_domino_2018}. Indeed, this follows from~\Cref{prop:med_implies_domino} and the fact that the conjecture holds for virtually free groups.
\end{observation}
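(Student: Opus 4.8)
The plan is to derive each of the two named conjectures from \Cref{conj:uncomputableconfig} by a short case analysis, using \Cref{prop:med_implies_WA_conjecture} and \Cref{prop:med_implies_domino} to convert the existence of a nonempty SFT with nonzero Medvedev degree into the desired dynamical or algorithmic conclusion. The key observation throughout is that since $0_{\M}$ is the minimal degree, the negation of ``every nonempty $G$-SFT has degree $0_{\M}$'' is precisely ``there exists a nonempty $G$-SFT $X$ with $m(X)>0_{\M}$.''

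For the Carroll--Penland conjecture I would fix an infinite, finitely generated group $G$ that is not virtually cyclic and split into two cases according to whether $G$ is virtually free. If $G$ is virtually free (and not virtually cyclic), then it admits a weakly aperiodic SFT directly by the cited result of Piantadosi~\cite{piantadosi_symbolic_2008}, so there is nothing further to prove. If $G$ is not virtually free, then the contrapositive of the nontrivial direction of \Cref{conj:uncomputableconfig} (namely, not virtually free implies some nonempty $G$-SFT has nonzero degree) produces a nonempty $G$-SFT $X$ with $m(X)>0_{\M}$, and \Cref{prop:med_implies_WA_conjecture} guarantees that $X$ is weakly aperiodic. Combining the two cases yields a weakly aperiodic SFT on every infinite, finitely generated, non-virtually-cyclic group, which is exactly Carroll and Penland's conjecture.

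For the Ballier--Stein conjecture I would again invoke the contrapositive of the same direction of \Cref{conj:uncomputableconfig}. The easy implication, that virtually free groups have decidable domino problem, is the part of the Ballier--Stein biconditional already known in the literature, so it suffices to prove the reverse: that an infinite, finitely generated group $G$ which is not virtually free has undecidable domino problem. Given such a $G$, \Cref{conj:uncomputableconfig} supplies a nonempty $G$-SFT $X$ with $m(X)>0_{\M}$, and \Cref{prop:med_implies_domino} immediately yields that the domino problem for $G$ is undecidable. Together with the known easy direction this establishes the full biconditional of~\cite{ballier_domino_2018}.

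The argument is entirely formal once the two propositions are available, so I do not expect a genuine obstacle; the only points requiring care are bookkeeping the directions of the biconditionals and ensuring that the case split in the Carroll--Penland argument correctly handles the virtually free (but not virtually cyclic) groups. For those groups \Cref{conj:uncomputableconfig} asserts that \emph{all} SFTs have degree $0_{\M}$ and hence provides no weakly aperiodic SFT on its own, which is precisely why the separate appeal to Piantadosi's construction is indispensable in that case.
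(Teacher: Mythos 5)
Your proposal is correct and follows essentially the same route as the paper: the case split on virtually free groups with the appeal to Piantadosi's construction for Carroll--Penland, and \Cref{prop:med_implies_domino} plus the known decidability for virtually free groups for Ballier--Stein, is exactly the paper's (more tersely stated) argument. Your explicit remark that minimality of $0_{\M}$ turns the negation of the conjecture's conclusion into the existence of an SFT of strictly positive degree is a correct and worthwhile clarification of a step the paper leaves implicit.
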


\subsection{Beyond $\Pi_1^0$ Medvedev degrees}
Most of our results apply to recursively presented groups, where $\msft{G}$ is contained in the set of $\Pi_1^0$ degrees. It is natural to ask what happens beyond recursively presented groups. In this case we can show that for a group whose whose word problem is too complex, then every strongly aperiodic $G$-subshift (one such that the shift action is free) has a Medvedev degree beyond $\Pi_1^0$ degrees:
\begin{proposition}\label{prop:beyondtimeandspace}
Let $G$ be a group whose word problem is not co-recursively enumerable. For every nonempty strongly aperiodic $G$-subshift $X$ we have that $m(X)$ is not a $\Pi_1^0$ Medvedev degree.
\end{proposition}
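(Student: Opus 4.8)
The hypothesis says precisely that the set $\operatorname{coWP}=\{w\in S^\ast : \underline w\neq 1_G\}$ is not recursively enumerable, where $S$ is a fixed finite symmetric generating set for $G$. My plan is to show that if $m(X)$ were a $\Pi_1^0$ degree, then $\operatorname{coWP}$ would be recursively enumerable, contradicting the hypothesis. Fix the pullback $\widehat X\subset A^{F(S)}$, so that $m(X)=m(\widehat X)$, and recall that $\widehat x(u)=x(\underline u)$ for the configuration $x\in X$ associated to $\widehat x$.

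First I would build a single oracle procedure that, given any point $\widehat x\in\widehat X$ as oracle, semi-decides membership in $\operatorname{coWP}$. On input $w$, the procedure enumerates all $u\in F(S)$ (using that $F(S)$ has decidable word problem), and for each $u$ it queries the two coordinates $\widehat x(wu)$ and $\widehat x(u)$, halting as soon as it finds $\widehat x(wu)\neq\widehat x(u)$. Since $\widehat x(wu)=x(\underline w\,\underline u)$ and $\widehat x(u)=x(\underline u)$, if $\underline w=1_G$ then $\widehat x(wu)=\widehat x(u)$ for every $u$ and the search never halts. Conversely, if $g:=\underline w\neq 1_G$, then by strong aperiodicity the element $g^{-1}$ does not fix $x$, i.e. $g^{-1}x\neq x$; as $(g^{-1}x)(h)=x(gh)$ this yields some $h\in G$ with $x(gh)\neq x(h)$, and writing $h=\underline u$ (possible since $S$ generates $G$) gives $\widehat x(wu)\neq\widehat x(u)$, so the search halts. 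Thus a fixed oracle machine enumerates $\operatorname{coWP}$ from every $\widehat x\in\widehat X$, uniformly; composing with the recursive homeomorphism identifying $\widehat X$ with a subset of $\{0,1\}^{\N}$ we may regard this as an oracle machine acting on Cantor space.

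Now I would assume for contradiction that $m(X)=m(P)$ for some nonempty $\Pi_1^0$ set $P\subset\{0,1\}^{\N}$. In particular $\widehat X\leq P$, so there is a partial computable $\Psi$ defined on $P$ with $\Psi(P)\subset\widehat X$. Composing $\Psi$ with the procedure above produces a partial computable oracle machine $N$ such that, for every $y\in P$, the computation $N^y$ enumerates $\operatorname{coWP}$. For each word $w$ let $U_w=\{y\in\{0,1\}^{\N} : N^y \text{ ever outputs } w\}$; this is effectively open, uniformly in $w$, because any such output depends on only finitely much of $y$. Since every $y\in P$ produces the same enumeration, $w\in\operatorname{coWP}$ if and only if $N^y$ outputs $w$ for every $y\in P$, that is, if and only if $P\subset U_w$. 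The set $P$ is an effectively closed subset of the recursively compact space $\{0,1\}^{\N}$ (\Cref{exa:computable-infinite-product-of-finite-alphabets}), hence recursively compact by \Cref{prop:effective-closed-subset-of-compact-is-compact}; therefore $P\subset U_w$ is semi-decidable uniformly in $w$. Consequently $\operatorname{coWP}=\{w : P\subset U_w\}$ is recursively enumerable, the desired contradiction.

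The only delicate point is the first step: strong aperiodicity concerns the \emph{left} action, whereas the constraint coming from $\underline w=1_G$ in the pullback is naturally a right-multiplication identity on the values of $x$. The computation resolves this by probing the coordinate $wu$ (left concatenation), which translates into the left multiplication $x(\underline w\,\underline u)$ on $G$ and so matches the freeness condition $g^{-1}x\neq x$ exactly; getting this matching right is where care is needed. The remaining steps are routine once one invokes the recursive compactness of $\Pi_1^0$ classes and the semi-decidability of inclusion into effectively open sets.
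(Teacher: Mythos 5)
Your proof is correct and follows essentially the same route as the paper: both compose the Medvedev reduction $\Psi$ from the nonempty $\Pi_1^0$ set $P$ to the pullback $\widehat{X}$ with the observation that, by strong aperiodicity, $\underline{w}\neq 1_G$ exactly when $w$ fails to fix every configuration, and both then invoke recursive compactness of $P$ to make this semi-decidable uniformly in $w$. Your covering condition $P\subset U_w$ is precisely the complementary formulation of the paper's emptiness condition $\operatorname{Fix}(w)\cap \Psi(P)=\emptyset$ (the paper forms the image $Y=\Psi(P)$, shows it is $\Pi_1^0$, and semi-decides emptiness of $\operatorname{Fix}(w)\cap Y$), so the two arguments differ only in formalism --- oracle machines and the use principle on your side, set-level operations in computable analysis on the paper's side.
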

\begin{proof}
    Suppose that $G$ admits a strongly aperiodic subshift $X$ such that $m(X)$ is a $\Pi_1^0$ degree. We will prove that then $G$ has a co-recursively enumerable word problem. 

    Let $S$ be a symmetric set of generators for $G$. We start by proving that the pullback subshift $\widehat X\subset A^{F(S)}$ contains a nonempty subset that is $\Pi_1^0$. Indeed, as $m(X)$ is a $\Pi_1^0$ degree, there is a $\Pi_1^0$ set $P\subset \{0,1\}^\NN$ and a computable function $f\colon P\to \widehat X$. As the spaces $A^\NN$ and $A^{F(S)}$ are recursively compact, the computable image of a $\Pi_1^0$ set must be a $\Pi_1^0$ set (see \Cref{prop:effective-continuous-image-of-compact-is-compact}, \Cref{recursively-compact-is-closed}, and \Cref{recursively-compact-is-closed}). It follows that $Y=f(P)$ is a $\Pi_1^0$ subset of $\widehat{X}$. 
    
    For $w \in F(S)$, consider the stabilizer $\operatorname{Fix}(w) = \{ x \in A^{F(S)} : wx = x\}$, and observe that it is a $\Pi_1^0$ set. As $X$ is strongly aperiodic, a word $w\in S^\ast$ satisfies $\underline{w}\ne 1_G$ if and only if $\operatorname{Fix}(w)\cap Y$ is empty. As both $Y$ and $\operatorname{Fix}(w)$ are $\Pi_1^0$ sets, it follows that $\operatorname{Fix}(w)\cap Y$ is $\Pi_1^0$. Finally, as $A^{F(S)}$ is recursively compact, the collection of descriptions of $\Pi_1^0$ sets which are empty is recursively enumerable (see~\cite[Remark 3.13]{barbieri_carrasco_rojas_2024_effective}) and thus this gives an algorithm to enumerate the $w \in S^{*}$ which do not represent $1_{G}$.
\end{proof}

We remark that in the case where $G$ is a finitely generated and recursively presented group which admits a strongly aperiodic effective $G$-subshift, then~\Cref{prop:beyondtimeandspace} implies that $G$ has decidable word problem. This recovers a result by Jeandel~\cite[Corollary 2.7]{jeandel_notes_subshift_groups}.

We also remark that using simulation theorems it is possible to construct groups with non co-recursively enumerable word problem and which admit strongly aperiodic SFTs. This in particular shows that there exist finitely generated groups $G$ for which $\msft{G}$ is not contained in the $\Pi_1^0$ degrees. This result will appear in an upcoming chapter of Barbieri and Salo.

\chapter{Computable analysis on the space of $G$-subshifts}\label{Chap:computable-analysis-on-S(G)}
The goal of this chapter is presenting some results obtained during a research stay with Mathieu Sablik, and in collaboration with Alonso H.Núñez.  These results concern the metric space $S(G)$ of all $G$-subshifts, where $G$ is a finitely generated group.

\minitoc
\section{Introduction}
How does a typical dynamical system look like? This is a classical question in topological and measurable dynamics. It can be made precise by considering a topological space whose points represent dynamical systems. In this setting,   \textit{typical} means topologically dense or generic (a dense $G_\delta$ set). In this chapter we will be interested in the space $S(G)$ of $G$-subshifts:
\begin{defn}\label{def:S(G)}
    Let $G$ be a finitely generated group with a word length $|\cdot |$. We let $S(G)$ be the collection of all $G$-subshifts whose alphabet is a finite subset of $\N$. We define a metric $D$ on $S(G)$ by \[D(X,Y)=\inf\{2^{-n}:n\in\N, \ L_n(X)=L_n(Y)\},\] where $L_n(X)$ denotes set of patterns that appear in $X$ and have support $\{g\in G : |g|\leq n\}$. 
\end{defn}
This space has been considered in several works. For instance, Frisch and Tamuz \cite{frisch_symbolic_2017} proved that for $G$ amenable, a generic subshift in $S(G)$ has zero entropy. More recently, Pavlov and Schmieding \cite{pavlov_structure_2023} gave a detailed description of the space  $S(\Z)$ and some subspaces of interest. Remarkably, they showed that isolated points constitute a dense set in $S(\Z)$. They further provided a dynamical characterization of these isolated points, and used this to prove the genericity of different dynamical properties in $S(\Z)$.

Pavlov and Schmieding mention that it is not clear whether their techniques extend to other groups beyond $\Z$ \cite[Section 1]{pavlov_structure_2023}. Here we show that the density of isolated points do not extend to $\Z^d$, $d\geq 2$, and more generally, any group with decidable word problem and which admits SFTs with nonzero Medvedev degree.
\subsection*{Results}
 Assuming that $G$ has decidable word problem, we will show that the metric space $S(G)$ can be described as an effectively closed \textit{subset} of a computable metric space. This elemental fact uncovers some interactions between the topology of $S(G)$ and computability features of $G$-SFTs:
\begin{enumerate}
    \item  All isolated points of $S(G)$ are algorithmically simple, in the sense that they have decidable language (\Cref{isolated-subshifts-have-decidable-language}). 
    \item All SFTs with nonzero Medvedev degree (algorithmically complex), have a neighborhood in $S(G)$ with no isolated points (\Cref{complex-SFTs-have-no-isolated-element-near}).
\end{enumerate}
Minimal $G$-SFTs are isolated in $S(G)$, so the first item provides a topological proof of the known fact that a minimal $G$-SFT has decidable language (\Cref{minimal-sfts-have-decidable-language}). Regarding the second item, there are many groups known to admit SFTs with nonzero Medvedev degree, including $\Z^d$ for $d\geq 2$ (see \Cref{chap:Medvedev}). It follows that for these groups, the study of generic dynamical properties in $S(G)$ can not rely on the characterization of isolated points. 

\begin{rem*}
Another proof of the non-density of isolated points in $S(\Z^d)$ for $d\geq 2$ can be found in \cite{gangloff_topological_2024}. Moreover, Ville Salo has pointed out to us that the Robinson SFT  \cite{robinson_undecidability_1971} is a simple example of an SFT having a neighborhood in $S(\Z^2)$ without isolated points. However, these observations do not apply to groups beyond $\Z^d$. For groups such as the Baumslag-Solitar group $\operatorname{BS}(2,3)$ or the Grigorchuk group, \Cref{complex-SFTs-have-no-isolated-element-near} seems the easiest way available to show the non-density of isolated points in the space of subshifts. 
\end{rem*}

For $G$ amenable, we also prove some basic results about entropy. We will prove that the function $S(G)\to\R$ that maps a subshift to its topological entropy is upper-semicomputable (\Cref{entropy-function-is-semicomputable}). We derive from this fact that an SFT which is entropy-minimal and whose entropy is a computable real number must have decidable language (\Cref{entropy-minimal-sfts-have-decidable-language}). 

After endowing $S(G)$ with an adequate computable metric space structure, our proofs are direct applications of basic results from computable analysis. We believe that a more thorough study of the space $S(G)$ from the point of view of computable analysis could be fruitful. We also mention that we have restricted our attention to groups with decidable word problem for the sake of clarity, but  these results can be extended to the case of  recursively presented groups using the tools from \Cref{subsec:pullback-subshift}.  

\section{The spaces $S(G)$ and $S_{\geq c}(G)$}
In this section we fix some notation, and review some basic facts about the topology of the metric space $S(G)$ for a finitely generated group $G$. Most of these facts are proved for $\Z$ in \cite{pavlov_structure_2023}, and the generalizations are straightforward.

Let us start by reviewing notation. Let $G$ be a finitely generated group endowed with a word length  $|\cdot|$. Given $n\in\N$ we write $B_n=\{g\in G : |g|\leq n\}$. We define a metric $d$ for $\N^G$ by $d(x,y)=\inf\{2^{-n} : n\in\N,\ x|_{B_n}=y|_{B_n}\}$. 

Now let $X$ be a subshift. We denote by $S(X)$ the collection of subsystems of $X$. Given a finite set $F\subset G$, we write $X|_F=\{x|_F : x\in X\}$, and for $n\in\N$ we denote $L_n(X)=X|_{B_n}$. We define the \textbf{cylinder set} $[X,n]=\{Y\in S(G) :  L_n(X)=L_n(Y)\}$. Thus  $[X,n]$ is equal to the ball $\{Y \in S(G) : D(X,Y)\leq 2^{-n}\}$. It follows that cylinder sets  sets are a basis for the topology on $S(G)$. Also note that cylinder sets are clopen subsets in $S(G)$.  The following result shows that $S(G)$ is a Baire space:
\begin{prop}\label{S(G)-homeomorphic-to-subset-of-baire}
    $S(G)$ is homeomorphic to a closed subset of the Baire space $\N^\N$. 
\end{prop}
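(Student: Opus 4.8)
The plan is to exhibit an explicit homeomorphism from $S(G)$ onto a closed subset of the Baire space, by encoding each subshift through the increasing sequence of its finite pattern languages. Concretely, I would map a subshift $X$ to the sequence $(L_n(X))_{n\in\N}$, where each $L_n(X)=X|_{B_n}$ is a set of patterns with support $B_n$. Since the alphabet is a finite subset of $\N$ and $B_n$ is finite, each $L_n(X)$ is a finite object, so it can be coded by a single natural number via a fixed computable numbering of finite sets of patterns. This gives a map $\Phi\colon S(G)\to\N^\N$, $\Phi(X)=(\,\mathrm{code}(L_n(X))\,)_{n\in\N}$.

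First I would check that $\Phi$ is injective and a homeomorphism onto its image. Injectivity follows because a subshift is determined by its language $\bigcup_n L_n(X)$, and two subshifts with the same $L_n$ for all $n$ are equal. For the topological statement, recall that the cylinders $[X,n]=\{Y : L_n(X)=L_n(Y)\}$ form a clopen basis of $S(G)$, while the basic clopen sets of $\N^\N$ are determined by fixing finitely many coordinates. By the very definition of $\Phi$, the preimage of a coordinate-cylinder in $\N^\N$ fixing the values on $\{0,\dots,n\}$ is exactly a cylinder set $[X,n]$ (or empty), and conversely; hence $\Phi$ and $\Phi^{-1}$ are continuous, so $\Phi$ is a homeomorphism onto its image.

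The remaining step, and the genuine content, is to show that $\Phi(S(G))$ is \emph{closed} in $\N^\N$. The image is characterized by two consistency conditions that a sequence $(c_n)_{n\in\N}$ of codes of finite pattern-sets $(P_n)_{n\in\N}$ must satisfy to arise from a genuine subshift: first, a \textbf{projective consistency} condition, namely that every pattern in $P_n$ restricts (on $B_{n-1}$, or more precisely on any $gB_{n-1}\subset B_n$) to a pattern in $P_{n-1}$, and that every pattern in $P_{n-1}$ extends to some pattern in $P_n$; and second, a \textbf{shift-invariance} condition expressing that the languages are closed under the partial shift action compatible with the group structure. Each of these conditions constrains only finitely many coordinates of the sequence at a time, so the set of sequences satisfying all of them is an intersection of clopen sets, hence closed. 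I would then verify that a sequence satisfying all consistency conditions indeed equals $\Phi(X)$ for the subshift $X=\{x\in A^G : x|_{B_n}\in P_n \text{ for all } n\}$ defined by taking these patterns as the admissible ones; the inverse limit of the consistent system is nonempty and yields exactly such an $X$ with $L_n(X)=P_n$.

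The main obstacle I anticipate is formulating the consistency conditions so that they are simultaneously \emph{local} (each involving only boundedly many coordinates, to guarantee closedness) and \emph{sufficient} (so that any consistent sequence really is realized by a subshift with that exact language, not merely a smaller one). The subtlety is the extension/surjectivity requirement: one must ensure that every pattern recorded in $P_n$ actually appears in the limiting subshift, which requires a compactness argument (König's lemma on the tree of compatible extensions) rather than a purely finitary check. I would handle this by phrasing the condition as ``every $p\in P_{n-1}$ has at least one extension in $P_n$'' for each $n$; this is local, and combined with finiteness of each level it yields, via the standard inverse-limit/compactness argument, that each recorded pattern is genuinely in the language of the realized subshift.
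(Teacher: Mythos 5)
Your proof is correct, but it takes a genuinely different route from the paper's. The paper decomposes $S(G)=\bigcup_{A\subset\N \text{ finite}}S(A^G)$, observes that each $S(A^G)$ is a closed subset of the hyperspace $K(A^G)$ of compact sets with the Hausdorff metric, and invokes Brouwer's characterization of the Cantor set to conclude that each piece embeds as a closed subset of $\{0,1\}^\N$; the pieces are then assembled inside $\N^\N$. You instead work globally: you code each subshift by the sequence of (codes of) its finite languages $L_n(X)$, check that cylinders correspond to cylinders, and cut out the image by countably many conditions (restriction, extension, and shift-compatibility), each of which depends on finitely many coordinates and is therefore clopen. Both arguments are sound. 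Your realization step is right, and in fact slightly easier than you suggest: since the balls $B_n$ exhaust $G$, iterating the extension condition produces an increasing chain of patterns whose union is already a configuration, so no appeal to K\"onig's lemma is strictly needed; shift-invariance of the realized subshift follows from the shift-compatibility condition because $g^{-1}B_n\subset B_{n+|g|}$. What each approach buys: the paper's route reduces to compact pieces and, more importantly, sets up precisely the hyperspace framework $(K(\N^G),d_H)$ on which the rest of the chapter is built — its effective refinement (that $S(G)$ is \emph{effectively} closed) is proved there via computable maps on $K(\N^G)$, so the topological sketch is a shadow of the effective proof. Your route is more elementary and self-contained (no Brouwer theorem, no hyperspace), handles all alphabets in one stroke — thereby avoiding the gluing step that the paper leaves implicit, namely why a countable union of Cantor pieces sits as a closed subset of $\N^\N$ — and has the virtue of exhibiting the image explicitly as a set defined by finitary consistency conditions; if one tracks decidability of those conditions (given decidable word problem), your argument would also effectivize, though to a presentation of $S(G)$ different from the hyperspace one the chapter actually uses.
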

\begin{proof}
    We only sketch a proof. Observe that \[S(G)=\bigcup_{\substack{A\subset\N \\\text{finite}}}S(A^G), \] so it suffices to show that every  $S(A^G)$ in this union is homeomorphic to a closed subset of the Cantor set $\{0,1\}^\N$. Indeed, it follows from Brouwer's theorem that the collection $K(A^\N)$ of compact subsets of $A^\N$ with the corresponding Hausdorff metric is homeomorphic to the Cantor set $\{0,1\}^\N$.  Moreover, $S(A^G)$ is easily seen to be a closed subset of $K(A^G)$. Joining these two facts we obtain our claim that $S(A^G)$ is homeomorphic to a closed subset of $\{0,1\}^\N$. The details that we omitted are given, with further computability considerations, in the proof of \Cref{S(G)-is-effectively-closed}. \end{proof}
We now review some basic properties of subshifts and SFTs in $S(G)$.
\begin{prop}\label{S(X)-is-clopen-iff-X-is-SFT}
    Let $X$ be a subshift in $S(G)$. Then $S(X)$ is a compact subset of $S(G)$. Moreover, $X$ is an SFT if and only if $S(X)$ is a clopen subset of $S(G)$. 
\end{prop}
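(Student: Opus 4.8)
The plan is to handle the two assertions in turn, in each case reducing everything to the finite data $L_n(\cdot)$, which is exactly what the cylinder sets $[X,n]$ record. Throughout I fix a finite alphabet $A\subset\N$ with $X\subset A^G$, so that $S(X)\subset S(A^G)$.

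For compactness, I would first recall that $S(A^G)$ is compact, by the argument already used in \Cref{S(G)-homeomorphic-to-subset-of-baire} (it is homeomorphic to a closed subset of $\{0,1\}^\N$). It then suffices to show that $S(X)$ is closed in $S(G)$. For this I would write $S(X)=\{Y\in S(G): Y\subseteq X\}=\{Y: L_n(Y)\subseteq L_n(X)\text{ for all }n\}$, using the standard fact that an inclusion of subshifts $Y\subseteq X$ is equivalent to the language inclusion $L(Y)\subseteq L(X)$, and that the latter can be tested radius by radius. Each set $\{Y: L_n(Y)\subseteq L_n(X)\}$ is a union of cylinders $[Y,n]$, since membership depends only on $L_n(Y)$; hence each is clopen, and $S(X)$, being their intersection, is closed and therefore compact.

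For the implication ``$X$ an SFT $\Rightarrow S(X)$ clopen'', I would use that an SFT admits a defining set of forbidden patterns supported on a single ball, and that taking $N$ large this set may be taken to be exactly $A^{B_N}\setminus L_N(X)$. With such an $N$ fixed, $y\in X$ iff every translate $(g^{-1}y)|_{B_N}$ lies in $L_N(X)$, so $Y\subseteq X$ is equivalent to $L_N(Y)\subseteq L_N(X)$ (this also forces the alphabet of $Y$ into $A$, since $L_0\subseteq L_N$ pins down the symbols). As this condition depends only on $L_N(Y)$, the set $S(X)=\{Y: L_N(Y)\subseteq L_N(X)\}$ is a union of cylinders $[Y,N]$, hence open; combined with the closedness already established, $S(X)$ is clopen. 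For the converse, assuming $S(X)$ open, since $X\in S(X)$ there is a radius $N$ with $[X,N]\subseteq S(X)$, i.e. every $Y$ with $L_N(Y)=L_N(X)$ satisfies $Y\subseteq X$. I would then let $X'$ be the SFT on alphabet $A$ obtained by forbidding all $B_N$-patterns outside $L_N(X)$, that is, the largest subshift whose $B_N$-patterns lie in $L_N(X)$. One checks $X\subseteq X'$, whence $L_N(X)\subseteq L_N(X')\subseteq L_N(X)$, so $L_N(X')=L_N(X)$ and thus $X'\in[X,N]\subseteq S(X)$, giving $X'\subseteq X$. Therefore $X=X'$ is an SFT.

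The only genuinely delicate points are the two reductions to a single radius: the equivalence $Y\subseteq X\iff L(Y)\subseteq L(X)$ and its refinement $Y\subseteq X\iff L_N(Y)\subseteq L_N(X)$ for an $N$ witnessing the finite type of $X$. Both are standard consequences of the description of subshifts by forbidden patterns, and once they are in place every remaining step is bookkeeping about cylinders, which are clopen and detect exactly the finite information $L_n$. I expect no real obstacle beyond making these language-versus-inclusion translations precise and verifying that the canonical choice of forbidden set $A^{B_N}\setminus L_N(X)$ defines the same SFT $X$.
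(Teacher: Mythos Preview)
Your proposal is correct and follows essentially the same approach as the paper: compactness via $S(X)$ being closed in the compact $S(A^G)$, openness of $S(X)$ for an SFT via a single window $B_N$ so that $S(X)=\{Y:L_N(Y)\subseteq L_N(X)\}$ is a union of cylinders, and the converse by taking $[X,N]\subset S(X)$ and showing $X$ coincides with the SFT defined by forbidding $A^{B_N}\setminus L_N(X)$. Your write-up is in fact a bit more explicit than the paper's (which slightly elides the verification that $L_N(X')=L_N(X)$), but the ideas are the same.
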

\begin{proof}
Let $X\subset A^G$ be a subshift in $S(G)$. It can be easily seen that $S(X)$ is a closed subset of $S(A^G)$. We observed in the proof of \Cref{S(G)-homeomorphic-to-subset-of-baire} that $S(A^G)$ is compact, so it follows  that $S(X)$ is compact. 

Now we prove that $S(X)$ is open if and only if $X$ is an SFT. If $X$ is an SFT then there is $n\in\N$ such that $X$ can be defined by a set of forbidden patterns in $A^{B_n}$. Then we can write $S(X)=\bigcup_{Y\in S(X)}[Y,n]$. This equality proves that $S(X)$ is open. If $S(X)$ is open, then for some $n$ we have $[X,n]\subset S(X)$, as $[X,n]$ is equal to a closed ball of radius $2^{-n}$. But the inclusion  inclusion $S(X)\subset [X,n]$ always holds, so we have $S(X)\subset [X,n]$. This shows that $X$ is the SFT defined by the set of forbidden patterns $A^{B_n}\smallsetminus L_n(X)$.   
\end{proof}

\begin{prop}
    An isolated point in $S(G)$ is an SFT.
\end{prop}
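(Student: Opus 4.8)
The plan is to extract, from the hypothesis that $X$ is isolated, a single radius $n$ at which $X$ is already pinned down by its local language $L_n(X)$, and then to exhibit an SFT sharing that language and lying in the same cylinder, forcing it to coincide with $X$.

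First I would observe that the cylinder sets $[X,n]$ form a decreasing neighborhood basis at $X$. Indeed, since $B_n\subset B_{n+1}$, agreement of $L_{n+1}(X)$ and $L_{n+1}(Y)$ implies agreement of $L_n(X)$ and $L_n(Y)$ by restriction, so $[X,n+1]\subset[X,n]$; and as recalled just before \Cref{S(X)-is-clopen-iff-X-is-SFT}, $[X,n]$ equals the closed ball $\{Y\in S(G):D(X,Y)\leq 2^{-n}\}$. If $X$ is isolated, then $\{X\}$ is open, so it contains some open ball $\{Y:D(X,Y)<\varepsilon\}$; choosing $n$ with $2^{-n}<\varepsilon$ gives $[X,n]\subset\{X\}$, and since $X\in[X,n]$ we conclude $[X,n]=\{X\}$. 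In words, $X$ is the unique element of $S(G)$ whose language up to radius $n$ is $L_n(X)$.

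Next I would produce an SFT in this cylinder. Writing $A$ for the (finite) alphabet of $X$, let $X^{(n)}$ be the $G$-SFT defined by the finite set of forbidden patterns $A^{B_n}\smallsetminus L_n(X)$; concretely, $X^{(n)}$ consists of all configurations all of whose $B_n$-patterns lie in $L_n(X)$. By construction $X\subset X^{(n)}$, whence $L_n(X)\subset L_n(X^{(n)})$, and conversely every $B_n$-pattern occurring in $X^{(n)}$ belongs to $L_n(X)$ by definition, so $L_n(X^{(n)})=L_n(X)$. Therefore $X^{(n)}\in[X,n]$. Combining with $[X,n]=\{X\}$ yields $X^{(n)}=X$, so $X$ is an SFT.

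The argument is short and I do not anticipate a genuine obstacle; the only points requiring care are routine, namely the verification that $L_n(X^{(n)})=L_n(X)$ (where ``a $B_n$-pattern appears'' must be interpreted via all shift-translates, as in \Cref{preliminaries-shift-spaces}) and the degenerate case $X=\varnothing$, which is an SFT trivially. Alternatively, one could phrase the same conclusion through \Cref{S(X)-is-clopen-iff-X-is-SFT}, but the direct construction of $X^{(n)}$ is the cleanest route.
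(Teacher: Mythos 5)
Your proof is correct and follows essentially the same route as the paper: isolate a radius $n$ with $[X,n]=\{X\}$, form the SFT with forbidden patterns $A^{B_n}\smallsetminus L_n(X)$, check it has the same $n$-language, and conclude equality. The only difference is that you spell out the verification $L_n(X^{(n)})=L_n(X)$ that the paper dismisses as clear, which is a reasonable amount of extra care.
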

\begin{proof}
    Let $X\subset A^G$ be isolated in $S(G)$, so there is $n\in \N$ with $[X,n]=\{X\}$. Let $Y$ be the SFT defined by alphabet $A$ and set of forbidden patterns $A^{B_n}\smallsetminus L_n(X)$. It is clear that $L_n(X)=L_n(Y)$, so $Y\in [X,n]$. As $[X,n]=\{X\}$, it follows that $Y=X$. This proves that $X$ is an SFT.
\end{proof}
\begin{prop}\label{minimal-sft-is-isolated}
    A minimal SFT in $S(G)$ is an isolated point.
\end{prop}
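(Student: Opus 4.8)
The plan is to exhibit a single radius $N$ such that the cylinder $[X,N]$ equals $\{X\}$, which by \Cref{def:S(G)} is precisely the statement that $X$ is isolated in $S(G)$. The natural candidate for $N$ is a window defining $X$ as an SFT: since $X\subseteq A^G$ is an SFT, there is $N\in\N$ such that $X$ is exactly the set of configurations in which no pattern from $A^{B_N}\smallsetminus L_N(X)$ appears, i.e. $X=\{x\in A^G: (gx)|_{B_N}\in L_N(X)\text{ for all }g\in G\}$. I will show that $[X,N]=\{X\}$ for this $N$.

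First I would prove that every $Y\in[X,N]$ is a subsystem of $X$. By definition of the metric $D$, membership $Y\in[X,N]$ means $L_N(Y)=L_N(X)$. Hence for any configuration $y\in Y$, each of its patterns with support $B_N$ lies in $L_N(Y)=L_N(X)$, so no forbidden pattern of $X$ appears in $y$, which gives $y\in X$. Therefore $Y\subseteq X$, i.e. $Y\in S(X)$. This is exactly the computation already carried out in the proof of \Cref{S(X)-is-clopen-iff-X-is-SFT} to see that $S(X)$ is open; equivalently, one may simply invoke that proposition to conclude $S(X)$ is clopen and then choose $N$ large enough that the basic clopen neighborhood $[X,N]$ of $X$ satisfies $[X,N]\subseteq S(X)$.

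The second step is to invoke minimality. Since $X$ is minimal, and in particular nonempty, its only closed shift-invariant subsets are $\emptyset$ and $X$, so $S(X)=\{\emptyset,X\}$. Now take any $Y\in[X,N]$. On one hand $Y\in S(X)$ by the previous step; on the other hand $L_N(Y)=L_N(X)$, and $L_N(X)\neq\emptyset$ because $X$ is nonempty, so $Y\neq\emptyset$. Combining $Y\in\{\emptyset,X\}$ with $Y\neq\emptyset$ forces $Y=X$. Thus $[X,N]=\{X\}$ and $X$ is an isolated point of $S(G)$.

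The argument is short precisely because the substantive work is already packaged in \Cref{S(X)-is-clopen-iff-X-is-SFT}. The only point that requires genuine care is the bookkeeping around the empty subshift: one must use the nonemptiness of $L_N(X)$ to rule out $Y=\emptyset$, since the empty subshift is a degenerate subsystem of every subshift and would otherwise also sit in $S(X)\cap[X,N]$. Beyond matching the SFT defining window with the cylinder radius of $D$, no real obstacle arises.
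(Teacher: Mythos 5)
Your proof is correct and follows essentially the same route as the paper's: choose the defining window $n$ of the SFT, observe that any $Y\in[X,n]$ must satisfy $Y\subseteq X$, and conclude $Y=X$ by minimality. The only difference is that you explicitly rule out $Y=\emptyset$ via $L_N(X)\neq\emptyset$, a bookkeeping point the paper's proof leaves implicit.
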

\begin{proof}
    Let $X\subset A^G$ be a minimal SFT in $S(G)$. Let $n\in\N$ so that $X$ can be defined by a set of forbidden patterns in $A^{B_n}$. We claim that $[X,n]=\{X\}$. Indeed, if $Y\in [X,n]$, then $Y\subset X$. But as  $X$ is minimal, we have that $Y=X$. It follows that $X$ is isolated.
\end{proof}

For an amenable group $G$, a relevant subspace of $S(G)$ is given by 
\[S_{\geq{c}}(G)=\{X\in S(G) : h(X)\geq c\}, \ \ \ \  c\geq 0.\]
Here $h(X)$ denotes the topological entropy of $X$. We assume that the reader is familiar with basic properties of topological entropy for amenable groups, see for instance  \cite[Chapter 9]{kerr_ergodic_2016}. The following observation appears without proof in \cite{frisch_symbolic_2017}, we provide an argument for completeness.
\begin{prop}\label{isolated-subshifts-in-S(G)-with-restricted-entropy}
        An isolated point in $S_{\geq c}(G)$ must be an SFT with entropy $c$. 
\end{prop}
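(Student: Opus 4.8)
The plan is to prove the two assertions separately: that an isolated point $X$ of $S_{\geq c}(G)$ is an SFT, and that its entropy equals $c$. Throughout I fix $n_0$ witnessing isolation, so that $[X,n_0]\cap S_{\geq c}(G)=\{X\}$, and write $A$ for the (finite) alphabet of $X$.

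For the first assertion I would run the maximal-SFT argument already used in \Cref{S(X)-is-clopen-iff-X-is-SFT}, now intersected with the entropy constraint. Let $\tilde X$ be the SFT on alphabet $A$ defined by the forbidden patterns $A^{B_{n_0}}\smallsetminus L_{n_0}(X)$, that is, the largest subshift all of whose $B_{n_0}$-windows lie in $L_{n_0}(X)$. Then $X\subseteq\tilde X$, and $L_{n_0}(\tilde X)=L_{n_0}(X)$ (the inclusion $\subseteq$ holds by construction, and $\supseteq$ follows from $X\subseteq\tilde X$), so $\tilde X\in[X,n_0]$. Since topological entropy is monotone under inclusion of subshifts, $h(\tilde X)\geq h(X)\geq c$, hence $\tilde X\in S_{\geq c}(G)$. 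Isolation now forces $\tilde X=X$, so $X$ is an SFT; moreover this shows $X$ is the maximal subshift whose $B_{n_0}$-language is $L_{n_0}(X)$.

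For the second assertion, $X\in S_{\geq c}(G)$ already gives $h(X)\geq c$, so it remains to exclude $h(X)>c$. Assuming $h(X)>c$, the aim is to produce $Y\in\big([X,n_0]\cap S_{\geq c}(G)\big)\smallsetminus\{X\}$, contradicting isolation. Because $X=\tilde X$ is maximal with $B_{n_0}$-language $L_{n_0}(X)$, every $Y\in[X,n_0]$ automatically satisfies $Y\subseteq X$; so I must in fact find a \emph{proper} subsystem $Y\subsetneq X$ with $L_{n_0}(Y)=L_{n_0}(X)$ and $h(Y)\geq c$. I would build such a $Y$ by deleting from $X$ all configurations containing one carefully chosen pattern $p$, supported on a large Følner set $F\supseteq B_{n_0}$, so that $Y=\{x\in X:\ p\text{ does not appear in }x\}$ is a subshift contained in $X$, proper as soon as $p\in L_F(X)$.

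Two points must then be secured. First, $L_{n_0}(Y)=L_{n_0}(X)$: each $B_{n_0}$-pattern of $X$ must survive in some configuration avoiding $p$. Second, $h(Y)\geq c$: here I would use that along a Følner sequence $\tfrac{1}{|F|}\log|L_F(X)|\to h(X)$, so that discarding a single pattern from the exponentially large set $L_F(X)$ does not change the exponential growth rate, giving $h(Y)\to h(X)>c$ and hence $h(Y)\geq c$ once $F$ is large. The main obstacle is the degenerate case in which $X$ is minimal, i.e.\ uniformly recurrent (note that by \Cref{minimal-sft-is-isolated} minimal SFTs are exactly the most rigid isolated points): then $p$ occurs in \emph{every} configuration, so $Y=\varnothing$ and the construction collapses. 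Overcoming this reduces to the structural fact that a positive-entropy SFT admits a proper subsystem carrying its full $B_{n_0}$-window language — in particular is not minimal — which is where the finite-type hypothesis is essential and which I expect to be the substantive core of the proof. A secondary technical point, handled via the negligibility of Følner boundaries, is the lower bound $h(Y)\geq h(X)-o(1)$ when $X$ is not irreducible, since then configurations avoiding $p$ cannot be concatenated freely and the entropy estimate requires buffering across $\partial F$. I would finally remark that upper semicontinuity of $h$ on $S(G)$ is consistent with but does not by itself deliver this lower bound, so the combinatorial construction above is genuinely required.
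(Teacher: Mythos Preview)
Your argument for the SFT part is correct and is exactly the paper's proof: build the maximal SFT $\tilde X$ with the same $B_{n_0}$-language, observe $h(\tilde X)\geq h(X)\geq c$, and use isolation to conclude $\tilde X=X$.

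For the entropy part, however, there is a genuine gap. You need a proper subsystem $Y\subsetneq X$ with $L_{n_0}(Y)=L_{n_0}(X)$ and $h(Y)\geq c$, and you propose to produce it by forbidding a single pattern $p$ supported on a large F{\o}lner set. You correctly identify the minimal case as an obstruction and defer it to a ``structural fact'' about positive-entropy SFTs, but you do not prove that fact, and it is not a routine verification---it is essentially the whole content of the claim. Moreover, even when $X$ is not minimal, you have not argued that a single $p$ can be chosen so that \emph{every} $B_{n_0}$-pattern survives in some configuration avoiding $p$: a $B_{n_0}$-pattern $q$ might only occur inside a minimal subsystem of $X$, in which case forbidding any pattern from that subsystem kills $q$. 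Your entropy estimate (``discarding one pattern from $L_F(X)$ does not change the growth rate'') is also heuristic: forbidding $p$ removes it from every translate, so the drop in $|L_{F'}(Y)|$ for $F'\gg F$ is not simply one pattern.

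The paper sidesteps all of this. It invokes the result of Frisch and Tamuz that subshifts of entropy exactly $c$ are dense in $S_{\geq c}(G)$. Given that density, every neighborhood of $X$ in $S_{\geq c}(G)$ contains a subshift of entropy $c$; since $\{X\}$ is such a neighborhood, $h(X)=c$. What you were attempting is, in effect, to reprove (a consequence of) this density result by hand, and that is where the difficulty actually lies---so citing it is the right move.
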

\begin{proof}
    Let $X\subset A^G$ be isolated in $S_{\geq c}(G)$, and let $n$ be a natural number with $[X,n]\cap S_{\geq c}(G)=\{X\}$. Let $Y$ be the SFT defined by alphabet $A$, and set of forbidden patterns  $A^{B_n}\smallsetminus L_n(X)$. It follows from the definition that $X\subset Y$. As $h(Y)\geq h(X)$ and $h(X)\geq c$, we have that $h(Y)\geq c$. But then $Y\in [X,n]\cap S_{\geq c}(G)=\{X\}$, so we have $Y=X$. This shows that $X$ is an SFT. The fact that $h(X)=c$ follows from the fact that subshifts with entropy $c$ are dense in $S_{\geq c}(G)$, proved in \cite{frisch_symbolic_2017}.
\end{proof}
\begin{remark}
The argument used in the previous result shows a much more general fact: if $f\colon S(G)\to \R$ satisfies $X\subset Y \implies f(X)\leq f(Y)$, then every isolated point in $f^{-1}[c,\infty)$ is an SFT. This could be of interest when considering other dynamical invariants.   
\end{remark}

    A $G$-subshift $X$ with positive topological entropy is called \textbf{entropy-minimal} when every proper subshift $Y\subset X$ satisfies $h(Y)<h(X)$. Entropy-minimal subshifts provide examples of isolated points in $S_{\geq c}(G)$: 
\begin{prop}\label{entropy-minimal-sfts-are-isolated-in}
    An entropy-minimal SFT with topological entropy $c$ is isolated in $S_{\geq c}(G)$.
\end{prop}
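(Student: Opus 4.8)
The plan is to mimic the proofs of \Cref{minimal-sft-is-isolated} and \Cref{isolated-subshifts-in-S(G)-with-restricted-entropy}, combining the SFT hypothesis with entropy-minimality. Let $X\subset A^G$ be an entropy-minimal SFT with $h(X)=c$. Since $X$ has positive entropy (by definition of entropy-minimality), we have $c>0$, and since $X$ is entropy-minimal it satisfies $h(X)=c$, so $X\in S_{\geq c}(G)$ to begin with. The goal is to produce $n\in\N$ with $[X,n]\cap S_{\geq c}(G)=\{X\}$.

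First I would choose $n\in\N$ large enough that $X$ can be defined by a set of forbidden patterns supported on $B_n$; this is possible because $X$ is an SFT. Now suppose $Y\in [X,n]\cap S_{\geq c}(G)$, so $L_n(Y)=L_n(X)$ and $h(Y)\geq c$. The key observation, exactly as in \Cref{S(X)-is-clopen-iff-X-is-SFT} and \Cref{isolated-subshifts-in-S(G)-with-restricted-entropy}, is that since $X$ is defined by forbidden patterns in $A^{B_n}$ and $Y$ has the same patterns of support $B_n$ as $X$ (in particular $Y$ uses only patterns allowed in $X$), every configuration of $Y$ avoids all the forbidden patterns of $X$, hence $Y\subset X$.

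Now I would invoke entropy-minimality: we have $Y\subset X$ and $h(Y)\geq c=h(X)$. Monotonicity of entropy gives $h(Y)\leq h(X)=c$, so in fact $h(Y)=h(X)$. If $Y$ were a \emph{proper} subshift of $X$, entropy-minimality would force $h(Y)<h(X)$, a contradiction; therefore $Y=X$. This shows $[X,n]\cap S_{\geq c}(G)=\{X\}$, so $X$ is isolated in $S_{\geq c}(G)$.

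The argument is essentially routine given the earlier propositions, and I do not anticipate a serious obstacle. The only point requiring mild care is the direction of the entropy comparison: one must verify that $Y\subset X$ yields $h(Y)\leq h(X)$ (monotonicity of topological entropy under inclusion of subshifts on an amenable group), and that the hypothesis $h(Y)\geq c$ together with $h(X)=c$ pins down equality before entropy-minimality is applied. One should also note at the outset that $X$ itself lies in $S_{\geq c}(G)$ so that the statement ``$X$ is isolated in $S_{\geq c}(G)$'' is meaningful, and that the containment $S(X)\subset[X,n]$ used implicitly is the same one appearing in \Cref{S(X)-is-clopen-iff-X-is-SFT}.
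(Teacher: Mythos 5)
Your proposal is correct and follows essentially the same route as the paper: choose $n$ so that $X$ is defined by forbidden patterns supported on $B_n$, deduce that every $Y\in[X,n]$ satisfies $Y\subset X$, and then use entropy-minimality to rule out proper subshifts of entropy $\geq c$. The extra care you take with monotonicity of entropy is implicit in the paper's one-line appeal to entropy-minimality, but the argument is identical in substance.
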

\begin{proof}
    Let $X\in S_{\geq c}(G)$ be an entropy-minimal SFT with entropy $c$. As $X$ is an SFT, there is a natural number $n$ such that every subshift in $[X,n]$ is a subset of $X$. As $X$ is entropy-minimal, every $Y\in [X,n]$ with $Y\ne X$ satisfies $h(Y)<c$. It follows that $[X,n]\cap S_{\geq c}(G)=\{X\}$, so $X$ is isolated in $S_{\geq c}(G)$. 
\end{proof}
Let us observe that the union of two entropy-minimal SFTs with entropy $c$ and with disjoint alphabets is not entropy-minimal, but is easily seen to be isolated in  $S_{\geq c}(G)$. To our knowledge, there is no known characterization of isolated points in $S_{\geq c}(G)$

\section{A computable metric space containing  $S(G)$}\label{computable-metric-space-structure-for-S(G)}

In this section we show that for a finitely generated group $G$ with decidable word problem, the set $S(G)$ can be naturally identified with an effectively closed subset of a computable metric space. 

We start by recalling the computable metric space structure for $\N^G$ described in \Cref{chap:computable_analysis_subshifts} (see \Cref{computable-metric-space-on-A^G-infinite-alfabet} and \Cref{compatibility-of-metrics-for-A^G}). We take $\mathcal S=\{s_i : i\in \N\}$ as the collection of all elements in $\N^G$ that are constant outside a finite subset of $G$. We choose the numbering of $\mathcal S$ so that from the index $i$, we can compute a finite set $F\subset G$, a function $p\colon F\to \N$, and an element $n\in \N$, such that the function $s_i\colon G\to \N$ equals $p$ when restricted to $F$, and equals $n$ outside this set. This numbering exists because $G$ has decidable word problem.  

\begin{prop}\label{fullshift-on-infinite-symbols-is-computable-metric-space}
    $(\N^G,d, \mathcal S)$ is a computable metric space.
\end{prop}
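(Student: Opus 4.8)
The plan is to verify directly the three requirements in the definition of a computable metric space: that $(\N^G,d)$ is a separable metric space, that $\mathcal S$ is countable and dense, and that $d$ is uniformly computable on pairs of elements of $\mathcal S$. That $d$ is a metric inducing the prodiscrete topology is standard. Countability of $\mathcal S$ is clear, since each of its elements is specified by the finite data of a pair $(p,n)$, where $p\colon F\to\N$ is a pattern on a finite set $F\subset G$ and $n\in\N$ is a default value. For density, given any $x\in\N^G$ and any $m\in\N$, the element $s\in\mathcal S$ that agrees with $x$ on $B_m$ and equals $0$ elsewhere satisfies $s|_{B_m}=x|_{B_m}$, hence $2^{-m}$ belongs to the set defining $d(x,s)$ and so $d(x,s)\le 2^{-m}$; letting $m\to\infty$ shows that $\mathcal S$ is dense.

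The heart of the proof is to exhibit a total computable function computing $d(s_i,s_j)$ from the indices $i,j$. First I would decode, from $i$ and $j$, the finite descriptions $(F_i,p_i,n_i)$ and $(F_j,p_j,n_j)$ of $s_i$ and $s_j$; this is possible by the chosen numbering of $\mathcal S$. The decidable word problem of $G$ is what makes everything effective: via a computable numbering $\nu\colon\N\to G$ (see \Cref{prop:computabilitiy-groups-1}) it lets me (a) compute, for each $m$, the finite set $B_m$ as an explicit list of group elements, by enumerating words of length at most $m$ and removing duplicates using the decidability of $\underline u=\underline v$; (b) evaluate $s_i(g)$ at any $g\in G$, by deciding whether $g$ equals some element of the finite set $F_i$ and returning the corresponding value of $p_i$, or else the default $n_i$; and (c) decide whether $s_i=s_j$ as functions $G\to\N$, since (when $G$ is infinite, so the complement of $F_i\cup F_j$ is nonempty) this holds exactly when $n_i=n_j$ and $s_i,s_j$ agree on the finite set $F_i\cup F_j$, the case of finite $G$ being handled by direct comparison on all of $G$.

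Given these routines, the value $d(s_i,s_j)$ is computed as follows. If $s_i=s_j$, output $0$. Otherwise, search over $m=0,1,2,\dots$ for the least $m$ such that $s_i$ and $s_j$ differ at some element of $B_m$; this search halts because $s_i\neq s_j$ forces a difference at some $g$, and that $g$ lies in $B_{|g|}$. For this least $m$ the two configurations agree on $B_{n}$ for $n<m$ but not on $B_m$, so $d(s_i,s_j)$ is determined and equals the rational $2^{-(m-1)}$ (interpreted in the natural way in the degenerate case $m=0$), computed exactly. Setting $f(i,j,n)$ equal to this value, independently of $n$, yields a total computable $f\colon\N^3\to\Q$ with $|d(s_i,s_j)-f(i,j,n)|=0\le 1/n$, which is precisely the uniform computability required.

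The metric and topology verifications are routine; the only point demanding a little care is that, because the alphabet is the infinite set $\N$ rather than a finite set $A$, deciding equality of two eventually-constant configurations requires comparing both the finite exceptional patterns and the default values. No genuinely new difficulty arises relative to \Cref{computable-metric-space-structure-on-A^G}: this proposition is essentially the extension of that construction to an infinite alphabet, exactly as anticipated in \Cref{computable-metric-space-on-A^G-infinite-alfabet}, with every effective step resting on the decidable word problem of $G$.
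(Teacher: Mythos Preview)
Your proof is correct and follows the same approach as the paper, which simply asserts in one line that density of $\mathcal S$ is clear and that the choice of numbering makes $d(s_i,s_j)$ computable. You have unpacked that assertion in detail—decoding the finite descriptions, using the decidable word problem to compute balls $B_m$ and to test equality of eventually-constant configurations, and searching for the first disagreement—which is exactly what the paper leaves implicit.
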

\begin{proof}
    It is clear that $\mathcal S$ is a dense subset of $\N^G$, and it follows from our choice of $\{s_i : i\in \N\}$ that given natural numbers $(i,j)$, we can compute the rational number $d(s_i,s_j)$. 
\end{proof}

Now consider the collection $K(\N^G)$ of compact subsets of $\N^G$, endowed with the Hausdorff metric $d_H$ associated to $d$.  We also let $\mathcal S'$ be the collection of finite subsets of $\mathcal S$, numbered in a canonical manner.  
\begin{prop}\label{compact-subsets-of-fullshift-on-infinite-symbols-is-computable-metric-space}\label{Hyperspace-of-a-fullshift-is-recursively-compact}
    $(K(\N^G),d_{H}, \mathcal S')$ is a computable metric space. Moreover, for every finite $A\subset \N$, $K(A^G)$ is a recursively compact subset of $K(\N^G)$.
\end{prop}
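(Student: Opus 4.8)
The plan is to derive both assertions from the general hyperspace machinery of \Cref{Hyperspace-inherits-computable-metric-space-structure}, fed with the computable metric space $(\N^G,d,\mathcal S)$ furnished by \Cref{fullshift-on-infinite-symbols-is-computable-metric-space}. The first assertion is then immediate: since $(\N^G,d,\mathcal S)$ is a computable metric space, \Cref{Hyperspace-inherits-computable-metric-space-structure} guarantees that $(K(\N^G),d_H,\mathcal S')$ is a computable metric space, and the dense set $\{s_{F_n}:n\in\N\}$ produced there is exactly the canonically numbered collection of finite subsets of $\mathcal S$ appearing in the statement.

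For the recursive compactness of $K(A^G)$ I would first check that $A^G$ is a recursively compact subset of $\N^G$ whenever $A\subset\N$ is finite. Since the computable structure on $\N^G$ is transported from $\N^\N$ along the computable numbering of $G$, the set $A^G$ corresponds to $A^\N=\prod_n A$, which (after identifying $A$ with $\{1,\dots,|A|\}$) is recursively compact in $\N^\N$ by \Cref{exa:computable-infinite-product-of-finite-alphabets}. Because recursive compactness is preserved under computable homeomorphisms (\Cref{prop:effective-continuous-image-of-compact-is-compact}), $A^G$ is recursively compact in $\N^G$, and in particular it is itself a recursively compact computable metric space. Applying the ``moreover'' clause of \Cref{Hyperspace-inherits-computable-metric-space-structure} to $A^G$ then shows that $K(A^G)$ is recursively compact as a computable metric space in its own right.

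It remains to move this intrinsic recursive compactness into the ambient space $K(\N^G)$. I would do this through the inclusion $\iota\colon A^G\hookrightarrow\N^G$, which is computable because $\iota^{-1}(U')=U'\cap A^G$ for every effectively open $U'\subset\N^G$, so a description of an effectively open preimage is read off directly from that of $U'$. The hyperspace construction is functorial for computable maps—this is the between-spaces version of \Cref{hyperfunction-of-a-computable-function-is-computable}, proved verbatim—so $\iota$ induces a computable map $K(\iota)\colon K(A^G)\to K(\N^G)$ acting by $Y\mapsto Y$. Since its domain $K(A^G)$ is recursively compact, \Cref{prop:effective-continuous-image-of-compact-is-compact} shows that the image $K(\iota)(K(A^G))=K(A^G)$, now viewed inside $K(\N^G)$, is recursively compact, which is the desired conclusion.

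The delicate point, and the only place where genuine care is needed, is this last transfer: one must confirm that the intrinsic computable metric space structure on $K(A^G)$ is compatible with the one it inherits as a subset of $K(\N^G)$, and that the hyperspace functor indeed yields a computable map between two distinct computable metric spaces rather than an endomorphism. Both facts follow once the definitions are unwound—the inclusion $\iota$ is an isometric embedding whose dense set sits inside $\mathcal S$—so the argument reduces to routine bookkeeping on descriptions, while every other step is a direct citation of an earlier result.
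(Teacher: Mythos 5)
Your proof is correct and follows essentially the same route as the paper: both assertions are reduced to \Cref{Hyperspace-inherits-computable-metric-space-structure}, with $A^G$ regarded as a recursively compact computable metric space in its own right (the paper restricts $d$ and $\mathcal S$ to $A^G$ directly, while you transport the structure from $A^\N$ via \Cref{exa:computable-infinite-product-of-finite-alphabets}), after which recursive compactness of $K(A^G)$ is transferred into the ambient $K(\N^G)$ along the computable inclusion. The functoriality bookkeeping you carry out for $K(\iota)$ is exactly what the paper compresses into the remark that the inclusion $K(A^G)\to K(\N^G)$ is ``a computable homeomorphism onto its image.''
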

\begin{proof}
The fact that $(K(\N^G),d_{H}, \mathcal S')$ is a computable metric space follows from \Cref{fullshift-on-infinite-symbols-is-computable-metric-space} and \Cref{Hyperspace-inherits-computable-metric-space-structure}. For the second claim we consider $A^G$ as a computable metric space, with a structure that is compatible with $(\N^G,d,\mathcal S)$. Namely, we define a metric $d_A$ as the restriction of $d$ to $A^G$, and we let $\mathcal S_A$ be the restriction of $\mathcal S$ to $A$ indexed in a computable manner. This is possible as clearly $\{n\in \N : s_i\in A^G\}$ is a decidable subset of $\N$. As $A^G$ is recursively compact,  \Cref{Hyperspace-inherits-computable-metric-space-structure} shows that $K(A^G)$ is a recursively compact metric space, and it is clear that the inclusion map $K(A^G)\to K(\N^G)$ is a computable homeomorphism onto its image.  
\end{proof}
We are interested in the space $K(\N^G)$ because it contains the space $S(G)$, as every $G$-subshift with alphabet contained in $\N$ is a compact subset of $\N^G$. Let us note that the metric $D$ defined for $S(G)$ is equal to the restriction of the Hausdorff metric $d_{H}$. 

The following result will be our basic tool to prove computability results about isolated points in $S(G)$.
\begin{prop}\label{S(G)-is-effectively-closed}
    $S(G)$ is an effectively closed subset of $K(\N^G)$. 
\end{prop}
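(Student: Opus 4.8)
The plan is to realize $S(G)$ inside $K(\N^G)$ as the common fixed-point locus of the hyperspace maps induced by a generating set, and then to apply the effective fixed-point result \Cref{prop:fixed-points-of-computable-function}. Fix a finite symmetric generating set $S$ of $G$. For each $s\in S$ the shift map $\sigma_s\colon \N^G\to\N^G$, $x\mapsto sx$, is computable: this is \Cref{shift-action-is-computable}, which as noted in \Cref{computable-metric-space-on-A^G-infinite-alfabet} applies verbatim to $\N^G$. Since $K(\N^G)$ is a computable metric space by \Cref{compact-subsets-of-fullshift-on-infinite-symbols-is-computable-metric-space}, \Cref{hyperfunction-of-a-computable-function-is-computable} shows that the induced map $\widehat\sigma_s\colon K(\N^G)\to K(\N^G)$, $Y\mapsto sY$, is computable. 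Applying \Cref{prop:fixed-points-of-computable-function} with domain and codomain equal to the whole space $K(\N^G)$, the fixed-point set $\operatorname{Fix}(\widehat\sigma_s)=\{Y\in K(\N^G): sY=Y\}$ is effectively closed. Note that no compactness of $K(\N^G)$ is needed here, which is fortunate since $K(\N^G)$ is not even compact.

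The key step is then the set equality
\[ S(G)=\bigcap_{s\in S}\operatorname{Fix}(\widehat\sigma_s). \]
The inclusion $\subseteq$ is immediate, as every $G$-subshift is shift-invariant. For $\supseteq$, suppose $X\in K(\N^G)$ satisfies $sX=X$ for all $s\in S$. Since $S$ is symmetric and generates $G$, writing each $g\in G$ as a word in $S$ gives $gX=X$, so $X$ is invariant under the whole action. It remains to check that $X$ is an honest subshift, i.e.\ that its alphabet $\{x(g): x\in X,\ g\in G\}$ is a finite subset of $\N$. This is where compactness enters: the coordinate projection $x\mapsto x(1)$ is continuous, so $\{x(1): x\in X\}$ is a compact, hence finite, subset of the discrete space $\N$; and for any $g$, invariance gives $g^{-1}x\in X$ with $(g^{-1}x)(1)=x(g)$, so $\{x(g):x\in X\}=\{x(1):x\in X\}$ for every $g$. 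Thus $X\subseteq A^G$ for the finite alphabet $A=\{x(1):x\in X\}\subseteq\N$, and being a closed shift-invariant subset it lies in $S(G)$.

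Finally, a finite intersection of effectively closed sets is effectively closed: its complement is the finite union of the corresponding effectively open sets, and a finite (indeed uniformly recursively enumerable) union of effectively open sets is effectively open by \Cref{union-of-effectively-open-sets}. Intersecting over the finitely many $s\in S$ therefore yields that $S(G)$ is effectively closed in $K(\N^G)$. I expect the only genuinely delicate point to be the set equality above — specifically the compactness-plus-invariance argument guaranteeing that fixed points of the $\widehat\sigma_s$ have finite alphabet and are thus exactly the subshifts; the computability content is then essentially automatic from the cited results.
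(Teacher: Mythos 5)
Your proof is correct and follows essentially the same route as the paper's: both realize $S(G)$ as the intersection, over a finite symmetric generating set, of the fixed-point sets of the computable hyperspace shift maps, invoking \Cref{hyperfunction-of-a-computable-function-is-computable} and \Cref{prop:fixed-points-of-computable-function}, and then use closure of effectively closed sets under finite intersection (the paper phrases the maps as preimage maps $X\mapsto f_s^{-1}(X)$, which for symmetric $S$ is the same family as your image maps $Y\mapsto sY$). Your verification that a compact, shift-invariant fixed point automatically has finite alphabet is a worthwhile detail that the paper's proof leaves implicit.
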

\begin{proof}
    Let $S$ be a finite and symmetric generating set for $G$. For each $s\in S$, we define $f_s\colon \N^G\to \N^G$ by  $f_s(x)(g)=x(s^{-1}g)$, and then define $F_s\colon K(\N^G)\to K(\N^G)$ by
    \[
    X\mapsto f_s^{-1}(X).
    \] \Cref{hyperfunction-of-a-computable-function-is-computable} implies that $F_s$ is a computable map on $K(\N^G)$. \Cref{prop:fixed-points-of-computable-function} shows that the set of fixed points of $F_s$:
    \[\operatorname{Fix}(F_s)=\{X\in K(\N^G) : F_s(X)=X\}\]
    is effectively closed. Moreover, the following holds:
    \[S(G)=\bigcap _{s\in S}\operatorname{Fix}(F_s).\]
    Indeed, a $G$-subshift in $S(G)$ is the same as a compact subset of $\N^G$ which is invariant by shift translations by $G$, which is equivalent to being fixed by $F_s$ for all $s\in S$. As the intersection of finitely many effectively closed sets is also effectively closed, it follows that $S(G)$ is an effectively closed subset of $K(\N^G)$.
\end{proof}
\begin{prop}\label{subshift-with-computable-language-is-computable-point}
    A subshift $X\subset A^G$ is a computable point of the computable metric space $(K(\N^G),d_H,\mathcal S')$ if and only if it has decidable language. That is, there is an algorithm which on input a finite set $F\subset G$ and a pattern $p\colon F\to A$, decides correctly whether $p$ appears in $X$.
\end{prop}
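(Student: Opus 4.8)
The plan is to prove both implications by connecting the notion of a computable point in the hyperspace $(K(\N^G),d_H,\mathcal S')$ with the decidability of the language of $X$. Recall that by \Cref{characterization-of-computable-points-via-sequences}, $X$ is a computable point if and only if there is a computable sequence of elements of $\mathcal S'$ approximating $X$ in the Hausdorff metric $d_H$ with arbitrary precision; equivalently (by the definition of computable point), the set $\{n\in\N : X\in B_n\}$ of basic balls of $K(\N^G)$ containing $X$ is recursively enumerable. I would translate closeness in $d_H$ into agreement of languages on large balls $B_m$ of $G$, which is precisely what the metric $D$ on $S(G)$ encodes. The key bridge is that for a $G$-subshift $X\subset A^G$, the Hausdorff distance $d_H(X, s_{F})$ between $X$ and a finite subset $s_F\in\mathcal S'$ can be made small exactly when the finite set $s_F$ witnesses the patterns of $X$ on a large window, so controlling $d_H$ amounts to controlling $L_m(X)$.

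For the backward implication, I would assume $X$ has decidable language and construct a computable sequence of finite sets $(s_{F_n})_n$ in $\mathcal S'$ with $d_H(X,s_{F_n})\le 1/n$. Given $n$, choose $m$ large enough that $2^{-m}\le 1/n$. Using the decidability of the language, I can compute the finite set $L_m(X)=X|_{B_m}$ of admissible patterns on $B_m$. For each admissible pattern $p\colon B_m\to A$, I would produce a single element $x_p\in\mathcal S$ that agrees with some configuration of $X$ on $B_m$: this requires extending $p$ to a global configuration, and the natural choice is the eventually-constant element of $\N^G$ that equals $p$ on $B_m$ and a fixed constant outside (which lies in $\mathcal S$ by our choice of numbering). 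The finite collection $\{x_p : p\in L_m(X)\}$ is an element of $\mathcal S'$, computable from $n$, and one checks that its Hausdorff distance to $X$ is at most $2^{-m}$: every point of $X$ is approximated on $B_m$ by some $x_p$, and conversely each $x_p$ is within $2^{-m}$ of a genuine point of $X$. By \Cref{characterization-of-computable-points-via-sequences}, $X$ is a computable point.

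For the forward implication, I would assume $X$ is a computable point and deduce that the language is decidable. Since $X\in S(G)$ and, by \Cref{S(G)-is-effectively-closed}, $S(G)$ is effectively closed in $K(\N^G)$, while by \Cref{Hyperspace-of-a-fullshift-is-recursively-compact} the set $K(A^G)$ is recursively compact, the computable point $X$ can be approximated by a computable sequence $(s_{F_n})_n\subset\mathcal S'$ with $d_H(X,s_{F_n})\le 1/n$. Given a pattern $p\colon B_m\to A$, to decide whether $p$ appears in $X$ I would use such an approximation at a sufficiently fine scale: choosing $n$ with $1/n$ much smaller than $2^{-m}$ forces every point of $X$ to be within $1/n$ of the finite set $s_{F_n}$ and vice versa, so that the patterns appearing on $B_m$ in the configurations of $s_{F_n}$ coincide exactly with $L_m(X)$. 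Reading off whether $p$ occurs among the $B_m$-restrictions of the finitely many (computable) elements of $s_{F_n}$ then decides membership of $p$ in the language.

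The main obstacle I anticipate is making the quantitative correspondence between $d_H$-closeness and agreement of languages fully rigorous in both directions, in particular handling the boundary effect that closeness within $1/n$ only guarantees agreement of configurations on the ball $B_m$ with $2^{-m}\ge 1/n$, and ensuring that finite approximating sets in $\mathcal S'$ genuinely realize all and only the patterns of $L_m(X)$ without spurious patterns appearing at the fringe. One must be careful that an element $x_p\in\mathcal S$ extending a pattern $p$ by a constant outside $B_m$ need not itself belong to $X$, so the argument relies on $d_H$ measuring one-sided proximity only up to the window $B_m$; verifying that the constant tails do not corrupt the distance estimate on the scale $2^{-m}$ is the delicate computational bookkeeping, but it is routine given the explicit form of the metric $d$ and the numbering of $\mathcal S$.
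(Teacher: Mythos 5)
Your proof is correct and follows essentially the same approach as the paper: the forward direction reads off $L_m(X)$ from a finite $\mathcal S'$-approximation at Hausdorff scale below $2^{-m}$, exactly as the paper does, and your backward direction (which the paper leaves to the reader) correctly realizes each pattern of $L_m(X)$ as an eventually-constant element of $\mathcal S$ and checks the two-sided $d_H$ estimate. The appeal to \Cref{S(G)-is-effectively-closed} and \Cref{Hyperspace-of-a-fullshift-is-recursively-compact} in the forward direction is superfluous, since \Cref{characterization-of-computable-points-via-sequences} already supplies the computable approximating sequence, but this does not affect correctness.
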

\begin{proof}
    For the forward implication we suppose that $X$ is a computable point, and we exhibit an algorithm which on input $n\in \N$, outputs $L_n(X)$.  

    On input $n$, we compute a finite set $\{x_1,\dots,x_n\}\subset \mathcal S$ such that the Hausdorff distance between $X$ and $\{x_1,\dots,x_k\})$ is at most $ 2^{-n-1}$. This is possible by \Cref{characterization-of-computable-points-via-sequences}. Now it follows from the definition of Hausdorff distance that for every $x\in X$, there is some $x_i$, $1\leq i\leq k$, with $x|_{B_n}=x_i|_{B_n}$, and the same holds conversely. This shows that a pattern $p\colon B_n\to A$ appears in $X$ if and only if $p$ is equal to $x_i|_{B_n}$ for some  $1\leq i\leq k$.  The output of the algorithm is $\{x_1|_{B_n},\dots,x_k|_{B_n}\}$. The backward implication follows the same idea, and is left to the reader.
\end{proof}

\section{Computability and isolated points in $S(G)$}\label{sec:isolated-points-in-S(G)}
In this section we prove some computability results about isolated points in $S(G)$, under the assumption that $G$ has decidable word problem.
\begin{prop}\label{isolated-subshifts-have-decidable-language}
    Let $G$ be a group with decidable word problem. Then a subshift that is isolated in $S(G)$ has decidable language. \end{prop}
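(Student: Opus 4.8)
The plan is to reduce the statement to the recursively compact setting of a fixed alphabet and then invoke the fact that isolated points of recursively compact sets are computable (\Cref{isolated-points-are-computable}), together with the characterization of computable points of $K(\N^G)$ as subshifts with decidable language (\Cref{subshift-with-computable-language-is-computable-point}). The obvious temptation is to apply \Cref{isolated-points-are-computable} directly inside $S(G)$, but this fails because $S(G)$ is only effectively closed in $K(\N^G)$ (\Cref{S(G)-is-effectively-closed}) and is \emph{not} recursively compact: it contains subshifts over arbitrarily large finite alphabets. Overcoming this is the main point, and it is handled by first observing that an isolated point of $S(G)$ is an SFT, hence lives over some fixed finite alphabet $A\subset\N$.

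First I would fix $X$ isolated in $S(G)$. By the proposition above stating that every isolated point of $S(G)$ is an SFT, there is a finite alphabet $A\subset\N$ with $X\subset A^G$, so $X\in S(A^G)$. Since $X$ is isolated in the larger space $S(G)$, there is a cylinder $[X,n]$ with $[X,n]=\{X\}$; intersecting with $S(A^G)\subset S(G)$ shows that $X$ remains isolated in $S(A^G)$. The key structural step is then to verify that $S(A^G)$ is recursively compact. Here I would use that the inclusion $\iota\colon K(A^G)\to K(\N^G)$ is a computable homeomorphism onto its image (\Cref{Hyperspace-of-a-fullshift-is-recursively-compact}), and that $S(A^G)=\iota^{-1}(S(G))$, since the compact subsets of $A^G$ lying in $S(G)$ are precisely the $G$-subshifts contained in $A^G$. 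As the preimage of an effectively closed set under a computable function is effectively closed (by \Cref{computable-function-relative}, taking complements), and $S(G)$ is effectively closed in $K(\N^G)$ by \Cref{S(G)-is-effectively-closed}, it follows that $S(A^G)$ is an effectively closed subset of $K(A^G)$.

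Now $K(A^G)$ is recursively compact by \Cref{Hyperspace-of-a-fullshift-is-recursively-compact}, so by \Cref{prop:effective-closed-subset-of-compact-is-compact} the effectively closed set $S(A^G)$ is itself recursively compact. Having placed $X$ as an isolated point of a recursively compact set, I would apply \Cref{isolated-points-are-computable} to conclude that $X$ is a computable point of $K(A^G)$, and hence of $K(\N^G)$, since $\iota$ is a computable homeomorphism onto its image. Finally, \Cref{subshift-with-computable-language-is-computable-point} translates computability of the point $X$ into decidability of its language, which is exactly the desired conclusion. The only mildly delicate verification in this chain is the identification $S(A^G)=\iota^{-1}(S(G))$ and the claim that $\iota$ being a computable embedding lets us transfer computability of the point back and forth; both are routine given the established computability structures on $K(A^G)$ and $K(\N^G)$.
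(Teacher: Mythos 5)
Your proof is correct and follows essentially the same route as the paper's: pass from $S(G)$ to the clopen set $S(A^G)$ in which $X$ remains isolated, establish that $S(A^G)$ is recursively compact, and then conclude via \Cref{isolated-points-are-computable} and \Cref{subshift-with-computable-language-is-computable-point}. Two minor remarks: invoking the isolated-points-are-SFTs proposition is unnecessary, since every element of $S(G)$ has finite alphabet by \Cref{def:S(G)}; on the other hand, your explicit verification that $S(A^G)$ is an effectively closed subset of the recursively compact $K(A^G)$ (hence recursively compact by \Cref{prop:effective-closed-subset-of-compact-is-compact}) fills in a detail that the paper's proof cites rather tersely.
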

\begin{proof}
    If $X\subset A^G$ is isolated in $S(G)$, then it is also isolated in the clopen set $S(A^G)\subset S(G)$. Recall that $S(A^G)$ is recursively compact by \Cref{Hyperspace-of-a-fullshift-is-recursively-compact}. Now \Cref{isolated-points-are-computable} shows that $X$ is a computable point of the computable metric space $K(\N^G)$. This implies that $X$ has decidable language by \Cref{subshift-with-computable-language-is-computable-point}.
\end{proof}
    This result provides a different proof of the following well-known result:
\begin{cor}\label{minimal-sfts-have-decidable-language}
    Let $G$ be a group with decidable word problem. A minimal $G$-SFT has decidable language.
\end{cor}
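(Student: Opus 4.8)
The statement to prove is \Cref{minimal-sfts-have-decidable-language}: a minimal $G$-SFT has decidable language, when $G$ has decidable word problem. The plan is to derive this as an immediate consequence of the two results just established, namely \Cref{minimal-sft-is-isolated} and \Cref{isolated-subshifts-have-decidable-language}. The strategy is purely to chain these two facts: minimality of an SFT forces it to be an isolated point of $S(G)$, and isolated points of $S(G)$ have decidable language.

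First I would recall that by \Cref{minimal-sft-is-isolated}, any minimal SFT $X$ in $S(G)$ is an isolated point of $S(G)$. The argument there is that if $X\subset A^G$ is defined by forbidden patterns supported in $B_n$, then every $Y\in[X,n]$ satisfies $Y\subset X$, and minimality of $X$ collapses this to $Y=X$, so that $[X,n]=\{X\}$. This is exactly the assertion that $X$ is isolated.

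Next I would invoke \Cref{isolated-subshifts-have-decidable-language}: since $G$ has decidable word problem, any subshift isolated in $S(G)$ has decidable language. The proof of that proposition uses that an isolated point of $S(A^G)$ is a computable point of the recursively compact hyperspace $K(\N^G)$ (via \Cref{isolated-points-are-computable} and the recursive compactness from \Cref{Hyperspace-of-a-fullshift-is-recursively-compact}), and then that a computable point of $K(\N^G)$ has decidable language by \Cref{subshift-with-computable-language-is-computable-point}. Combining the two propositions yields that the minimal SFT $X$ has decidable language, which is the claim.

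There is no genuine obstacle here, as the corollary is a direct composition of the preceding two results; the only point to be careful about is that \Cref{minimal-sft-is-isolated} is stated for minimal SFTs (not merely minimal subshifts), and indeed the hypothesis of being an SFT is exactly what provides the finite support $B_n$ that makes $X$ isolated. The formal proof is therefore a single sentence, as recorded below.

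\begin{proof}
Let $X$ be a minimal $G$-SFT. By \Cref{minimal-sft-is-isolated}, $X$ is an isolated point of $S(G)$. As $G$ has decidable word problem, \Cref{isolated-subshifts-have-decidable-language} implies that $X$ has decidable language.
\end{proof}
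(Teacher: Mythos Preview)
Your proof is correct and follows exactly the same approach as the paper: apply \Cref{minimal-sft-is-isolated} to conclude that a minimal SFT is isolated in $S(G)$, then invoke \Cref{isolated-subshifts-have-decidable-language} to obtain decidable language.
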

\begin{proof}
    Let $X\subset A^G$ be a minimal $G$-SFT with $A\subset\N$. Then $X$ is isolated in $S(G)$ by \Cref{minimal-sft-is-isolated}, and then \Cref{isolated-subshifts-have-decidable-language} shows that $X$ has decidable language.
\end{proof}
\begin{prop}\label{complex-SFTs-have-no-isolated-element-near}
    Let $G$ be a group with decidable word problem, and let $X$ be an SFT in $S(G)$ with nonzero Medvedev degree. Then $X$ has a neighborhood in $S(G)$ with no isolated point.
\end{prop}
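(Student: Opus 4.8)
The plan is to take the clopen set $S(X)$ itself as the desired neighborhood. Since $X$ is an SFT, \Cref{S(X)-is-clopen-iff-X-is-SFT} guarantees that $S(X)$ is a clopen subset of $S(G)$, hence an open neighborhood of $X$. Because $S(X)$ is open, a point of $S(X)$ is isolated in the subspace $S(X)$ if and only if it is isolated in $S(G)$; thus it suffices to show that no $Y\in S(X)$ is isolated in $S(G)$.

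The first ingredient is that the Medvedev degree is monotone under passing to subsystems. Every $Y\in S(X)$ is a subshift with $Y\subset X$, and the inclusion $Y\hookrightarrow X$ is a continuous $G$-equivariant map, i.e.\ a morphism of subshifts. By \Cref{prop:basic-properties} this yields $m(Y)\geq m(X)$, and since $m(X)>0_{\M}$ by hypothesis we conclude $m(Y)>0_{\M}$ for every $Y\in S(X)$.

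The second ingredient is that an isolated point of $S(G)$ cannot have positive Medvedev degree. By \Cref{isolated-subshifts-have-decidable-language} an isolated point of $S(G)$ has decidable language, so the whole argument reduces to the implication that a subshift $Z$ with decidable language has Medvedev degree $0_{\M}$. I would prove this by passing to the pullback $\widehat{Z}\subset A^{F(S)}$: decidability of the word problem of $G$ makes the consistency relation $\underline{w}=\underline{w'}$ decidable, so $\widehat{Z}$ again has decidable language, and it is a nonempty subshift over the free group $F(S)$. Over $F(S)$ one builds a computable configuration by greedy extension, enumerating $F(S)=\{w_0,w_1,\dots\}$ computably and choosing, at stage $k$, the least symbol extending the pattern already fixed on $\{w_0,\dots,w_{k-1}\}$ while remaining in the decidable language. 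A compactness argument shows a valid extension always exists, so the resulting configuration is computable and lies in $\widehat{Z}$; hence $m(Z)=m(\widehat{Z})=0_{\M}$.

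Combining the two ingredients finishes the proof: if some $Y\in S(X)$ were isolated in $S(G)$, it would have decidable language and therefore $m(Y)=0_{\M}$, contradicting $m(Y)>0_{\M}$. Thus no element of the neighborhood $S(X)$ of $X$ is isolated. The only step requiring genuine work is the implication ``decidable language $\Rightarrow$ Medvedev degree $0_{\M}$''; this is where the hypothesis that $G$ has decidable word problem is used, and where one must verify via compactness that the greedy extension never gets stuck. Everything else is a direct application of the cited results.
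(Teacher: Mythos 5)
Your proposal is correct and follows essentially the same route as the paper's proof: you restrict to a neighborhood of $X$ consisting of subsystems of $X$ (the paper uses the cylinder $[X,n]$ for $n$ large, you use the clopen set $S(X)$ from \Cref{S(X)-is-clopen-iff-X-is-SFT}, which amounts to the same thing), and you rule out isolated points by combining \Cref{isolated-subshifts-have-decidable-language} with the fact that a computable configuration of a subsystem $Y\subset X$ is a computable configuration of $X$ itself (which you phrase via \Cref{prop:basic-properties} as $m(Y)\geq m(X)$). The only difference is that you spell out, via the greedy-extension-plus-compactness argument on the pullback, the implication ``decidable language implies a computable configuration, hence Medvedev degree zero,'' which the paper asserts without proof; your sketch of that step is correct.
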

\begin{proof}
    As $X$ is an SFT, there is a natural number $n$ so that for every $Y\in [X,n]$, $Y$ is a subset of $X$. We claim that there is no isolated point in $[X,n]$. Indeed, if there was an isolated point $Y\in[X,n]$, then $Y$ would have decidable language by \Cref{isolated-subshifts-have-decidable-language}. This implies that $Y$ has computable elements. But $Y\subset X$, so $X$ would also have computable elements. As this contradicts our hypothesis, we conclude that $[X,n]$ has no isolated point. 
\end{proof}
In \cite{pavlov_structure_2023} it has been observed that certain dynamical properties of a subshift $X$ imply that $S(X)$ is a Cantor set. The same argument in \Cref{complex-SFTs-have-no-isolated-element-near} shows that this is the case for the property of having nonzero Medvedev degree.  
\begin{cor}
    Let $G$ be a group with decidable word problem. If $X$ is a nonempty SFT and $m(X)>0$, then $S(X)$ is a Cantor set. 
\end{cor}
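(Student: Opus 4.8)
The plan is to verify that $S(X)$ satisfies the standard topological characterization of the Cantor set: a nonempty, compact, perfect, totally disconnected, metrizable space is homeomorphic to $\{0,1\}^{\N}$. Three of these properties come essentially for free from results already established. The space $S(X)$ is nonempty since $X\in S(X)$, and it is compact by \Cref{S(X)-is-clopen-iff-X-is-SFT}. It is metrizable and totally disconnected because, by \Cref{S(G)-homeomorphic-to-subset-of-baire}, $S(G)$ is homeomorphic to a closed subset of the Baire space $\N^\N$, and both of these properties pass to the subspace $S(X)$. Thus the whole content of the corollary reduces to showing that $S(X)$ is perfect, that is, has no isolated points.

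The crucial structural observation is that, since $X$ is an SFT, \Cref{S(X)-is-clopen-iff-X-is-SFT} tells us that $S(X)$ is a \emph{clopen} subset of $S(G)$. For a point $Y$ lying in an open subset, being isolated in the subspace is equivalent to being isolated in the ambient space; hence a subsystem $Y\in S(X)$ is isolated in $S(X)$ if and only if it is isolated in $S(G)$. This equivalence is exactly what allows the local argument of \Cref{complex-SFTs-have-no-isolated-element-near}, phrased there only for the neighborhood $[X,n]$, to be promoted to a statement about all of $S(X)$.

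With this in hand I would rule out isolated points as follows. Suppose some nonempty $Y\in S(X)$ is isolated in $S(X)$, hence isolated in $S(G)$. By \Cref{isolated-subshifts-have-decidable-language}, $Y$ has decidable language. A nonempty subshift with decidable language contains a computable configuration (build it greedily: since $G$ has decidable word problem one computes the exhaustion $B_0\subset B_1\subset\cdots$, and decidability of the language lets one successively choose patterns $p_n\in L_n(Y)$ with $p_{n+1}|_{B_n}=p_n$, such extensions always existing because each $p_n$ is the restriction of some point of $Y$; closedness of $Y$ then gives that the limit configuration lies in $Y$). Consequently $m(Y)=0_\M$. But $Y\subseteq X$ is nonempty, so a computable configuration of $Y$ is also a computable configuration of $X$, forcing $m(X)=0_\M$ and contradicting the hypothesis $m(X)>0_\M$. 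Therefore no nonempty subsystem of $X$ is isolated, and $S(X)$ is perfect; being also nonempty it has more than one point, and the characterization above identifies it with the Cantor set.

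The one remaining point that needs a little care is the empty subshift, which, if admitted into $S(X)$, would be an isolated point since $L_0(\varnothing)=\varnothing$ can never be matched by a nonempty subshift. I would handle this as elsewhere in the paper by working with nonempty subsystems; every nonempty subshift has a nonempty minimal subsystem by Zorn's lemma, so this restriction is harmless. I expect the main conceptual step to be the clopen-ness observation, which converts the local statement of \Cref{complex-SFTs-have-no-isolated-element-near} into the required global one, while the computability steps are routine given the tools of \Cref{computable-metric-space-structure-for-S(G)}.
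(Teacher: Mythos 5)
Your proof is correct and follows essentially the same route as the paper's: use the fact that $S(X)$ is clopen in $S(G)$ (\Cref{S(X)-is-clopen-iff-X-is-SFT}) to promote an isolated point of $S(X)$ to an isolated point of $S(G)$, apply \Cref{isolated-subshifts-have-decidable-language} to get decidable language and hence a computable configuration, and contradict $m(X)>0_\M$ via $Y\subset X$. The extra material you supply — the explicit Cantor-set characterization, the greedy construction showing that decidable language yields a computable point, and the remark on excluding the empty subshift (which the paper handles implicitly by embedding $S(G)$ into the hyperspace $K(\N^G)$ of \emph{nonempty} compact sets) — only fills in steps the paper leaves tacit.
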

\begin{proof}
    It suffices to note that $S(X)$ has no isolated points. If $S(X)$ contains an isolated points $Y$, then $Y$ is also isolated in $S(G)$ by \Cref{S(X)-is-clopen-iff-X-is-SFT}. Then $Y$ has decidable language by \Cref{isolated-subshifts-have-decidable-language}, and in particular, computable elements. As $Y\subset X$, this  contradicts the hypothesis on $X$.
\end{proof}
\section{Computability and isolated points in $S_{\geq c}(G)$}
In this section we prove some computability results about isolated points in $S_{\geq c}(G)$, under the assumption that $G$ is amenable and has decidable word problem.

It is proved in \cite{frisch_symbolic_2017} that the topological entropy map $h\colon S(G)\to \R$, $X\mapsto h(X)$ is upper semi-continuous. Here we prove an effective version of this result (upper-semicomputable functions are defined in \Cref{preliminaries-semicomputable}).
\begin{prop}\label{entropy-function-is-semicomputable}
    Let $G$ be a finitely generated amenable group with decidable word problem. Then the entropy map $h\colon S(G)\to\R$, $X\mapsto h(X)$ is upper-semicomputable.
\end{prop}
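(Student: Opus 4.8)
The plan is to show that the topological entropy of a subshift can be computed as an infimum over a decidable sequence of rational upper bounds, which is exactly the characterization of upper-semicomputability given in \Cref{characterization-of-semicomputable-numbers}. For an amenable group $G$, the topological entropy of a subshift $X$ can be expressed through a Følner sequence $(F_n)_{n\in\N}$ as the limit
\[
h(X)=\lim_{n\to\infty}\frac{\log|L_{F_n}(X)|}{|F_n|},
\]
where $L_{F_n}(X)=X|_{F_n}$ denotes the set of patterns of $X$ with support $F_n$. The key structural fact, which underlies the upper semi-continuity proved in \cite{frisch_symbolic_2017}, is that this limit is actually an infimum of the quantities $\frac{\log|L_{F_n}(X)|}{|F_n|}$ over a suitable subadditive (Følner) sequence. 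I would first fix, using the decidability of the word problem, a computable Følner sequence $(F_n)_{n\in\N}$ for $G$; such a sequence exists for any finitely generated amenable group with decidable word problem, and the values $|F_n|$ are computable.

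Next I would work with the computable metric space structure on $S(G)\subset K(\N^G)$ established in \Cref{computable-metric-space-structure-for-S(G)}. The essential point is that for a fixed finite support $F\subset G$, the cardinality $|L_F(X)|$ of the set of patterns of shape $F$ is upper-semicomputable as a function of $X\in S(G)$. Indeed, from a description of $X$ as a point of the computable metric space we can, for each $\varepsilon>0$, compute a finite $\varepsilon$-approximation of $X$ in the Hausdorff metric (via \Cref{characterization-of-computable-points-via-sequences} in the computable-point case, and uniformly in the general case using that the evaluation is computable on the dense set $\mathcal S'$); crucially, a pattern of shape $F$ that appears in a nearby subshift need not appear in $X$, but every pattern appearing in $X$ is detected from a close enough approximation. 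Hence the map $X\mapsto|L_F(X)|$ can only \emph{decrease} in the limit, which gives that $X\mapsto \log|L_{F_n}(X)|$ is an upper-semicomputable, integer-valued observable, uniformly in $n$. Dividing by the computable constant $|F_n|$ preserves this, so each $f_n\colon S(G)\to\R$, $f_n(X)=\frac{\log|L_{F_n}(X)|}{|F_n|}$, is upper-semicomputable, uniformly in $n$.

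Finally I would combine these pieces. Since $h(X)=\inf_n f_n(X)$ for a Følner sequence (the sequence $\frac{\log|L_{F_n}(X)|}{|F_n|}$ converges to $h(X)$ from above along a subadditive Følner sequence), and since $(f_n)_{n\in\N}$ is a uniform sequence of upper-semicomputable functions, \Cref{semicomputable-functions-are-closed-by-infimum} gives at once that $h=\inf_n f_n$ is upper-semicomputable. This is exactly the claimed conclusion.

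The main obstacle I anticipate is the second step: justifying rigorously that $X\mapsto|L_{F_n}(X)|$ is upper-semicomputable, i.e.\ that from a description of $X$ as a point of $K(\N^G)$ one can semi-decide the strict inequality $|L_{F_n}(X)|<k$ for each $k$. The subtlety is that the number of $F_n$-patterns is only \emph{lower} semi-continuous in the Hausdorff metric for open approximations but \emph{upper} semi-continuous for $S(G)$ in the sense we need, and one must carefully match the direction of semicomputability to the direction of the Hausdorff approximation. The cleanest route is to observe that $\{X\in S(G):|L_{F_n}(X)|\le k\}$ is an effectively closed condition: a subshift $X$ has at most $k$ patterns of shape $F_n$ exactly when $X$ is contained in the (effectively closed) union of cylinders allowing only some fixed set of $k$ patterns, and one quantifies computably over the finitely many such pattern sets. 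Making this uniform in $n$ and $k$, and reconciling it with the choice of the expansive ambient structure, is where the real care is needed; once that is in place the rest follows formally from the cited closure properties.
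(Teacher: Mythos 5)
Your overall architecture matches the paper's: write $h$ as a countable infimum of uniformly upper-semicomputable ``finite-window'' functions and invoke \Cref{semicomputable-functions-are-closed-by-infimum}. But the step on which everything rests, the identity $h(X)=\inf_n \log|L_{F_n}(X)|/|F_n|$, is asserted rather than proved, and this is a genuine gap. The Ornstein--Weiss limit formula along a F\o lner sequence does not by itself tell you that each term bounds $h(X)$ from above; ``converges from above'' is precisely the claim that every F\o lner set gives an upper bound, and for a general finitely generated amenable group this is the nontrivial ``infimum rule'' of Downarowicz--Frej--Romagnoli \cite{downarowicz_shearer_2016}, which states that $h(X)$ equals the infimum of $\log|L_F(X)|/|F|$ over \emph{all} finite $F\subset G$. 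Attributing this to what ``underlies'' \cite{frisch_symbolic_2017} is not a justification (the paper explicitly notes that its argument differs from that of \cite{frisch_symbolic_2017} exactly because it leans on \cite{downarowicz_shearer_2016}), and the alternative you hint at, a ``subadditive F\o lner sequence'', is itself a hard theorem (exact tilings) for general amenable groups, which you would additionally need to be computable. Once the infimum rule is invoked, your computable F\o lner sequence becomes an unnecessary complication: the paper simply takes the infimum over all finite subsets of $G$, computably enumerated using the decidable word problem, which is both simpler and exactly the form in which \cite{downarowicz_shearer_2016} gives the result.

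The ``main obstacle'' you anticipate in your second step is not actually there, and your proposed fix points in the wrong direction. In the Hausdorff metric on $K(\N^G)$ used here, the map $X\mapsto X|_F$ is \emph{locally constant}: if $F\subset B_n$ and $d_H(X,Y)<2^{-n}$, then every configuration of $Y$ agrees on $B_n$ with some configuration of $X$ and vice versa, so $X|_F=Y|_F$ exactly (the same computation as in \Cref{subshift-with-computable-point-is-computable-point} does not exist; see \Cref{subshift-with-computable-language-is-computable-point} for this argument). Hence $X\mapsto \log|L_F(X)|/|F|$ is continuous and computable, uniformly in $F$; no semicontinuity analysis is needed, and your worry that ``a pattern of shape $F$ that appears in a nearby subshift need not appear in $X$'' is false at Hausdorff scale below $2^{-n}$. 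Moreover, proving that $\{X:|L_F(X)|\le k\}$ is effectively \emph{closed} has the wrong polarity for upper-semicomputability: you need the sublevel sets $\{X: f_F(X)<c\}$ to be effectively \emph{open}, and this works out here only because the function is locally constant, i.e.\ its level pieces are clopen. With these two repairs (cite the infimum rule, and replace the semicontinuity discussion by the locally-constant observation) your argument becomes the paper's proof.
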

\begin{proof}
    We will exhibit an upper-semicomputable function $f\colon K(\N^G)\to \R$ whose restriction to $S(G)$ equals $h$. For this we introduce the following notation. Given $F\subset G$ finite and $L\subset \N^F$, we denote $[L]=\{X\in K(\N^G) : X|_F=L\}$.  Now for every finite subset $F\subset G$, we define a function $f_F\colon K(G)\to \R$ by 
    \[X\mapsto \frac{\log (X|_F)}{|F|}.\] 
    The function $f_F$ is continuous. Indeed, we can partition $K(\N^G)=\bigsqcup_{L\subset A^F}[L]$, and for each $L\subset A^F$ the value of $f_F$ is constant on $[L]$. This argument also shows that $f_F$ is computable, uniformly on $F$. We now define a function $f\colon K(G)\to\R$ by
    \[X\mapsto \inf_{\substack{F\subset G \\ \text{finite}}} f_F(X) \]
    Given a familiy $(f_i)_{i\in\N}$ of uniformly upper-semicomputable functions, the infimum $\inf_{i\in\N} f_i$ is also upper-semicomputable (\Cref{semicomputable-functions-are-closed-by-infimum}). This proves that $f$ is upper-semicomputable. Finally, it is proved in \cite{downarowicz_shearer_2016} that the topological entropy of a subshift $X$ can be computed as  
    \[h(X)=\inf _{\substack{F\subset G \\ \text{finite}}} \frac{\log |X_F|}{|F|}.\]
    Thus for every subshift $X\in S(G)$ we have $f(X)=h(X)$. This proves our claim.
\end{proof}
\begin{rem}
    If we ignore the computability in the proof, this provides an alternative proof of the upper-semicontinuity of the topological entropy function $h\colon S(G)\to \R$ for a finitely generated group. This proof is shorter and simpler than the one given in \cite{frisch_symbolic_2017}, but this is thanks to the nontrivial result from \cite{downarowicz_shearer_2016}.  
\end{rem}

We now examine some consequences of \Cref{entropy-function-is-semicomputable}. The following follows from the definition of upper-semicomputable function, and the fact that a set of the form $\{x\in\R : x<c\}$ is effectively open exactly when $c$ is lower-semicomputable. 
\begin{cor}\label{S(G)-with-restricted-entropy-is-effectively-closed}\label{subshifts-with-entorpy-at-least-c-is-effectively-closed}
    Let $G$ be a finitely generated amenable group with decidable word problem. For every lower-semicomputable real number $c$, the set $S_{\geq c}(G)$ is effectively closed. 
\end{cor}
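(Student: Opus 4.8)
The plan is to realize $S_{\geq c}(G)$ as the intersection of two effectively closed subsets of the ambient computable metric space $K(\N^G)$, exploiting that the entropy map is upper-semicomputable. First I would recall from \Cref{entropy-function-is-semicomputable} that the entropy map $h\colon S(G)\to\R$ is upper-semicomputable, where $S(G)$ is regarded as a subset of the computable metric space $(K(\N^G),d_H,\mathcal S')$ from \Cref{computable-metric-space-structure-for-S(G)}. By the definition of upper-semicomputable function, this means that given an effectively open set of the form $(-\infty,c)$ one can compute an effectively open set $U\subset K(\N^G)$ with $h^{-1}((-\infty,c))=U\cap S(G)$.

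Next I would use the hypothesis that $c$ is lower-semicomputable. By definition this says precisely that $(-\infty,c)$ is an effectively open subset of $\R$, so it is a legitimate input to the upper-semicomputability of $h$. Applying this, I obtain an effectively open set $U\subset K(\N^G)$ with $h^{-1}((-\infty,c))=U\cap S(G)$. Taking complements inside $S(G)$ then yields
\[
S_{\geq c}(G)=\{X\in S(G):h(X)\geq c\}=S(G)\setminus\bigl(U\cap S(G)\bigr)=S(G)\cap\bigl(K(\N^G)\setminus U\bigr).
\]

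Finally I would assemble the two effectively closed pieces. The set $S(G)$ is effectively closed in $K(\N^G)$ by \Cref{S(G)-is-effectively-closed}, and $K(\N^G)\setminus U$ is effectively closed by definition, being the complement of the effectively open set $U$. Writing $S(G)=K(\N^G)\setminus V$ for an effectively open $V$, the complement of $S_{\geq c}(G)$ in $K(\N^G)$ equals $V\cup U$, a union of two effectively open sets, which is effectively open by \Cref{union-of-effectively-open-sets}. Hence $S_{\geq c}(G)$ is effectively closed, as desired.

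There is no serious obstacle here; the statement is essentially a formal consequence of the machinery already developed. The only point requiring care is the bookkeeping about the ambient space: $S(G)$ is not itself presented as a computable metric space but only as an effectively closed subset of $K(\N^G)$, so every effectivity claim must be interpreted relative to $K(\N^G)$, and the witness $U$ must be produced in that ambient space. Matching the input convention in the definition of upper-semicomputable function (inputs of the form $(-\infty,c)$) with the definition of lower-semicomputable real number is the single spot where the hypothesis on $c$ is actually invoked.
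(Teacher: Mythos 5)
Your proposal is correct and follows essentially the same route as the paper, which derives the corollary directly from the upper-semicomputability of the entropy map (\Cref{entropy-function-is-semicomputable}) together with the fact that $(-\infty,c)$ is effectively open exactly when $c$ is lower-semicomputable. Your write-up merely spells out the bookkeeping (complementation inside $S(G)$, effective closedness of $S(G)$ in $K(\N^G)$, and closure of effectively open sets under union) that the paper leaves implicit.
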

If $c$ is lower-semicomputable but is not upper-semicomputable, then the space $S_{\geq c}(G)$ has no isolated points. This is because an isolated point of $S_{\geq c}(G)$ must be an SFT with entropy $c$ (\Cref{isolated-subshifts-in-S(G)-with-restricted-entropy}), but the topological entropy of every $G$-SFT is upper-semicomputable \cite{barbieri_entropies_2021}. On the other hand, if $c$ is both lower-semicomputable and upper-semicomputable, we have the following results (compare with  \Cref{sec:isolated-points-in-S(G)}).  

\begin{prop}\label{isolated-points-in-S(G)-with-entropy}
    Let $G$ be a finitely generated amenable group with decidable word problem, and let $c$ be a positive computable real number. Then a subshift that is isolated in $S_{\geq c}(G)$ has decidable language. 
\end{prop}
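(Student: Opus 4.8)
The plan is to mirror the proof of \Cref{isolated-subshifts-have-decidable-language}, replacing the ambient recursively compact space $S(A^G)$ with its entropy-restricted analogue. Specifically, suppose $X\subset A^G$ with $A\subset\N$ is isolated in $S_{\geq c}(G)$. The goal is to show that $X$ is a computable point of the computable metric space $K(\N^G)$, since by \Cref{subshift-with-computable-language-is-computable-point} this is equivalent to $X$ having decidable language.

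First I would observe that since $X$ is isolated in $S_{\geq c}(G)$, there is a cylinder neighborhood $[X,n]$ with $[X,n]\cap S_{\geq c}(G)=\{X\}$. Because $X$ is an SFT (by \Cref{isolated-subshifts-in-S(G)-with-restricted-entropy}) with alphabet $A$, after enlarging $n$ if necessary every $Y\in[X,n]$ satisfies $Y\subset X\subset A^G$. Hence we may work inside the recursively compact space $K(A^G)$ (recursively compact by \Cref{Hyperspace-of-a-fullshift-is-recursively-compact}), and intersect with $S_{\geq c}(G)$. The key point is that $X$ is an \emph{isolated point of a recursively compact set}, so that \Cref{isolated-points-are-computable} can be invoked: this proposition states that an isolated point of a recursively compact set is computable. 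What must be checked is that the relevant recursively compact set, namely $S(A^G)\cap S_{\geq c}(G)$, is indeed recursively compact and contains $X$ as an isolated point.

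The crucial ingredient is \Cref{subshifts-with-entropy-at-least-c-is-effectively-closed}, which asserts that for a computable (in particular, lower-semicomputable) real number $c$, the set $S_{\geq c}(G)$ is effectively closed in $K(\N^G)$. Since $S(A^G)$ is recursively compact and $S_{\geq c}(G)$ is effectively closed, their intersection $S(A^G)\cap S_{\geq c}(G)$ is an effectively closed subset of a recursively compact set, hence recursively compact by \Cref{prop.equivalence.comp.closed.subset}. The isolation of $X$ in $S_{\geq c}(G)$, together with the fact that $S(A^G)$ is clopen in $S(G)$ (\Cref{S(X)-is-clopen-iff-X-is-SFT} applied to $A^G$), shows $X$ is isolated in $S(A^G)\cap S_{\geq c}(G)$ as well. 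Then \Cref{isolated-points-are-computable} yields that $X$ is a computable point of $K(\N^G)$, and \Cref{subshift-with-computable-language-is-computable-point} concludes that $X$ has decidable language.

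The main obstacle I anticipate is verifying that the hypothesis of \Cref{subshifts-with-entropy-at-least-c-is-effectively-closed} is met: it requires $c$ to be lower-semicomputable, which holds since $c$ is assumed computable, but one must be careful that computability of $c$ is genuinely what is needed rather than the stronger two-sided semicomputability only entering through the existence of an entropy-$c$ SFT. Fortunately the effective closedness of $S_{\geq c}(G)$ needs only lower-semicomputability of $c$, so the computable hypothesis on $c$ is more than sufficient. The remaining bookkeeping—passing from isolation in $S_{\geq c}(G)$ to isolation inside the recursively compact set $S(A^G)\cap S_{\geq c}(G)$—is routine once one notes that all these operations occur within the clopen piece $S(A^G)$.
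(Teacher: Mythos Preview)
Your proposal is correct and follows essentially the same approach as the paper: show that $S(A^G)\cap S_{\geq c}(G)$ is recursively compact (effectively closed by \Cref{subshifts-with-entropy-at-least-c-is-effectively-closed} inside the recursively compact $S(A^G)$), observe $X$ is isolated there, and apply \Cref{isolated-points-are-computable} followed by \Cref{subshift-with-computable-language-is-computable-point}. The paper's argument is slightly leaner in that it does not invoke \Cref{isolated-subshifts-in-S(G)-with-restricted-entropy} or the clopenness of $S(A^G)$, since isolation in $S_{\geq c}(G)$ trivially restricts to isolation in any subspace containing $X$.
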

\begin{proof}

    We start observing that $S_{\geq c}(G)\cap S(A^G)$ is recursively compact. Indeed, $S_{\geq c}(G)$ is effectively closed by \Cref{S(G)-with-restricted-entropy-is-effectively-closed}, so the intersection $S_{\geq c}(G)\cap S(A^G)$ is effectively closed. Moreover, $S(A^G)$ is recursively compact by \Cref{Hyperspace-of-a-fullshift-is-recursively-compact}, and then \Cref{prop:effective-closed-subset-of-compact-is-compact} shows that $S_{\geq c}(G)\cap S(A^G)$ is recursively compact.

    If $X\subset A^G$ is isolated in $S_{\geq c}(G)$, then it is also isolated in $S_{\geq c}(G)\cap  S(A^G)$. Then \Cref{isolated-points-are-computable} shows that $X$ is a computable point of the computable metric space $K(\N^G)$. Finally, \Cref{subshift-with-computable-language-is-computable-point} shows that $X$ has decidable language.
\end{proof}
As entropy-minimal SFTs with entropy $c$ are isolated in $S_{\geq c}(G)$ (\Cref{entropy-minimal-sfts-are-isolated-in}), the following is a particular case of \Cref{isolated-points-in-S(G)-with-entropy}.
\begin{cor}\label{entropy-minimal-sfts-have-decidable-language}
    Let $G$ be a finitely generated amenable group with decidable word problem. Every $G$-SFT which is entropy-minimal and whose entropy is a computable real number has decidable language. 
\end{cor}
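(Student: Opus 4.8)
The plan is to deduce this statement directly by chaining together the two preceding results about isolated points in $S_{\geq c}(G)$. Let $X$ be a $G$-SFT that is entropy-minimal with topological entropy $c$, where $c$ is assumed to be a computable real number. The first thing I would observe is that the definition of entropy-minimality requires $X$ to have \emph{positive} topological entropy, so in fact $c>0$. Thus $c$ is a \emph{positive} computable real number, which is precisely the hypothesis needed to invoke \Cref{isolated-points-in-S(G)-with-entropy}.

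Next I would apply \Cref{entropy-minimal-sfts-are-isolated-in}, which states that an entropy-minimal SFT with topological entropy $c$ is isolated in the space $S_{\geq c}(G)$. Combining this with the previous observation, $X$ is an isolated point of $S_{\geq c}(G)$ for a positive computable real $c$. At this point the conclusion follows immediately: \Cref{isolated-points-in-S(G)-with-entropy} asserts that, for a finitely generated amenable group $G$ with decidable word problem and a positive computable $c$, every subshift isolated in $S_{\geq c}(G)$ has decidable language. Hence $X$ has decidable language.

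There is essentially no obstacle here, since the work has already been carried out in the two cited results; the only point requiring a moment of care is the verification that entropy-minimality forces $c>0$, so that the computability hypothesis on $c$ matches the ``positive computable real number'' requirement of \Cref{isolated-points-in-S(G)-with-entropy}. The heavier machinery — that $S_{\geq c}(G)$ is recursively compact when $c$ is computable (via upper-semicomputability of entropy in \Cref{entropy-function-is-semicomputable} and \Cref{S(G)-with-restricted-entropy-is-effectively-closed}), that isolated points of a recursively compact set are computable points (\Cref{isolated-points-are-computable}), and that a computable point of $K(\N^G)$ is exactly a subshift with decidable language (\Cref{subshift-with-computable-language-is-computable-point}) — is all absorbed into \Cref{isolated-points-in-S(G)-with-entropy}, so the corollary is obtained simply as a specialization of that proposition to the entropy-minimal case.
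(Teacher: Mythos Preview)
Your proof is correct and matches the paper's approach exactly: the paper simply notes that entropy-minimal SFTs with entropy $c$ are isolated in $S_{\geq c}(G)$ by \Cref{entropy-minimal-sfts-are-isolated-in}, and states the corollary as a particular case of \Cref{isolated-points-in-S(G)-with-entropy}. Your additional remark that entropy-minimality forces $c>0$ (so that the positivity hypothesis of \Cref{isolated-points-in-S(G)-with-entropy} is met) is a useful clarification that the paper leaves implicit.
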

In general, the fact that an SFT has a computable entropy does not imply that its language is decidable. In $\Z^2$, this easily follows from the existence of SFTs with uncomputable language and zero topological entropy. This question is also considered in  \cite{hertling_shifts_2008}. Our results show that with the extra property of entropy-minimality, the real number $h(X)$ indeed allows to compute the language of $X$. These results raise the problem of classifying the entropies of entropy-minimal SFTs on a finitely generated amenable group with decidable word problem.

\chapter{On a Rice theorem for dynamical properties of SFTs and sofic subshfits on groups}\label{Chap:rice}
The goal of this chapter is presenting the results obtained by the author in \cite{carrasco-vargas_rice_2024}. Let $G$ be a group with undecidable domino problem, such as $\mathbb{Z}^2$. We prove that all nontrivial dynamical properties for sofic $G$-subshifts are undecidable, that this is not true for $G$-SFTs, and an undecidability result for dynamical properties of $G$-SFTs similar to the Adian-Rabin theorem. Furthermore we prove a Rice-like result for dynamical invariants asserting that every computable real-valued invariant for $G$-SFTs that is monotone by disjoint unions and products is constant.

\minitoc
\section{Introduction}\label{sec:Introduction}

There has been a recent interest in extending the study of SFTs on $\Z$ to SFTs on other groups. An important obstruction that has arisen for this project is of an algorithmic nature. Indeed, many dynamical questions become undecidable when we move from $\Z$-SFTs to $\Z^2$-SFTs. Lind \cite{lind_multidimensional_2004} called this the  
``swamp of undecidability''. 
One way to better understand this swamp is to ask whether we
have a result analogous to Rice's theorem. This result states that \textit{every} nontrivial semantic property of computer programs is algorithmically undecidable
\cite{rice_classes_1953}. Nontrivial means that some element satisfies the property and some element does not. This result has been paradigmatic in the
sense that \emph{Rice-like theorems} have been discovered in a variety
of mathematical contexts \cite{kari_rice_1994,delacourt_rice_2011,menibus_characterization_2018,hirst_ricestyle_2009,lafitte_computability_2008,Gamard2021RiceLikeTF,delvenne_quasiperiodic_2004,rice_classes_1953,adian_decision_2000,rabin_recursive_1958,guillon_revisiting_2010,adyan_algorithmic_1955}. In this chapter we consider the following question:

\begin{question}
\label{que:main-question} Is there a Rice theorem for dynamical properties of SFTs on $\Z^2$? What about SFTs on other groups?
\end{question}
That is, the goal of this chapter is converting the metaphor \textit{swamp of undecidability} from \cite{lind_multidimensional_2004} to precise mathematical statements. 

\subsubsection*{Known results}
These results are stated in terms of sets of tilings of $\mathbb{Z}^{2}$. We recall that a \textbf{tile }(or
Wang tile) is a unit square with colored edges, a \textbf{tileset}
$\tau$ is a finite set of tiles, a \textbf{tiling} is a function $\Z^{2}\to\tau$ satisfying the rule that two tiles
that share an edge must have the same color at that edge, and a \textbf{set of tilings} is the collection of all tilings associated to a tileset. Sets of tilings of $\Z^{2}$ and $\Z^{2}$-SFTs are related by the fact that every set of tilings of $\Z^{2}$ is a $\Z^{2}$-SFT and every $\Z^{2}$-SFT
is topologically conjugate to a set of tilings \cite{aubrun_domino_2018}.
\begin{enumerate}
\item In \cite{cervelle_tilings_2004} the authors prove the
undecidability of set equality and set inclusion for sets
of tilings of $\Z^{2}$, and ask for a Rice theorem for tilings.
\item In \cite{lafitte_computability_2008} the authors consider
properties of sets of tilings of $\Z^{2}$ that are preserved by ``zoom''.
Informally two tilesets are equivalent in this sense when the
tiles from each tileset produce unique macro-tiles that satisfy the
same matching rules from the other tileset. The authors prove that
all nontrivial properties of sets of tilings  preserved by ``zoom''
are undecidable.
\item In \cite{delvenne_quasiperiodic_2004} the authors prove
the undecidability of every property of sets of tilings of $\Z^{2}$
that is stable by topological conjugacy, by direct
products among nonempty systems, and is not satisfied by the
empty set. 
\end{enumerate}

\subsubsection*{Results}
In this work we consider dynamical properties of SFTs, that is, properties preserved by topological conjugacy. We investigate the (un)decidability of these properties for SFTs on finitely generated groups. A first observation is that a Rice theorem is not possible in this setting: the property of having some fixed point is nontrivial and  decidable from presentations (\Cref{prop:decidable-fixed-points}). Thus we need to add some hypotheses to the properties considered in order to prove a Rice-like theorem.

Our undecidability results are related to the domino problem, the algorithmic problem  of determining whether an SFT is empty. Berger proved \cite{berger_undecidability_1966} that the domino
problem is undecidable for $\Z^{d}$, $d\geq2$, and this has been
recently extended to a large class of finitely generated groups \cite{aubrun_domino_2019,aubrun_tiling_2013,ballier_domino_2018,bartholdi_domino_2023,bitar_contributions_2023,cohen_large_2017,jeandel_translationlike_2015,jeandel_undecidability_2020,margenstern_domino_2008,aubrun_strongly_2023}.
In \cite{ballier_domino_2018} the authors conjecture that all non virtually free groups have
undecidable domino problem. 

Our main result states that if $G$ has undecidable domino problem, then all dynamical properties of $G$-SFTs that satisfy a mild technical condition are undecidable (\Cref{thm:adian-rabin-for-sfts}). This result can be applied to several dynamical properties of common interest, such as transitivity, minimality, and others (\Cref{sec:examples}). This result can be applied to all nontrivial dynamical properties that
are preserved by topological factor maps or topological extensions (\Cref{cor:rice-for-monotone-properties}). \Cref{thm:adian-rabin-for-sfts} exhibits an analogy with the
Adian-Rabin theorem for group properties \cite{adyan_algorithmic_1955,rabin_recursive_1958}, the ``Rice-like
theorem in group theory''. A number of analogies have been observed between group theory and symbolic dynamics \cite{jeandel_enumeration_2017,jeandel_characterization_2019,jeandel_computability_2016}. 

We also consider dynamical invariants of SFTs taking values in partially ordered
sets. We show that if $G$ has undecidable domino problem then every abstract real-valued dynamical invariant for $G$-SFTs that is nondrecreasing by disjoint unions and direct products must be constant  (\Cref{invariants-monotone-by-products-and-unions-are-constants}). This result covers topological entropy for amenable groups and other related invariants. We mention that for some amenable groups it is known a much stronger fact: the existence of SFTs whose entropy is a non-computable real number \cite{hochman_characterization_2010,barbieri_entropies_2021,bartholdi_shifts_2024}. The conclusion of \Cref{invariants-monotone-by-products-and-unions-are-constants} is weaker, but it is a much more general result. 

We also consider the larger class of sofic subshifts. We prove that if the domino problem for $G$ is undecidable, then a Rice theorem for dynamical properties of sofic subshift holds. That is, all nontrivial
dynamical properties are undecidable (\Cref{thm:Rice-theorem-sofic-subshifts}).

All our undecidability results are proved through a many-one reduction to the domino problem for $G$. In informal words our results show that most dynamical properties are harder than the domino problem. As in the original Rice's theorem our proofs are very short. They are  based on the computability of direct products and disjoint unions at the level of presentations. Despite tiles  can be defined
on Cayley graphs of finitely generated groups, the proofs of the Rice-like
theorems in \cite{lafitte_computability_2008,cervelle_tilings_2004}
are specific to $\Z^{2}$, and it is not clear whether they could
be generalized to a group that is not residually finite. 

\section{Preliminaries} 
A property of subshifts is called a \textbf{dynamical property} when it is invariant by conjugacy. We now review subshifts of finite type (SFTs) and their {presentations}. See also  \cite{aubrun_notion_2017,aubrun_domino_2018}. Let $G$ be a finitely generated group and let $S$ be a finite and symmetric generating set for $G$. 
\begin{defn}
A \textbf{pattern presentation }is a function $p\colon W\to A$, where
$W\subset S^{\ast}$ is a finite set of words and $A\subset\N$ is
a finite alphabet. We say that $p$ \textbf{appears }in $x\in A^{G}$
at $g\in G$ when $x(g\underline{w})=p(w)$ for every $w\in W$. An \textbf{SFT presentation }is a tuple $(A,\mathcal{F})$ of a finite alphabet 
$A\subset \N$ and a finite set $\mathcal{F}$ of pattern
presentations associated to $S$. It determines the subshift $X_{(A,\mathcal{F})}$
of all configurations in $A^{G}$ where no pattern presentation
from $\mathcal{F}$ appears. We call $X_{(A,\mathcal{F})}$ a \textbf{subshift of finite type} (SFT). The \textbf{domino problem }$\Dp(G)$ is the set of all presentations
$(A,\mathcal{F})$ for which the SFT $X_{(A,\mathcal{F})}^ {}$ is
empty. %
\end{defn}
A survey on the domino problem can be found in  \cite{aubrun_domino_2018}. Observe that the empty set is an SFT with our definitions. Some
authors consider the empty set as a subshift \cite{cohen_large_2017},
while others exclude it by definition \cite{aliakbar_set_2018}. Here
we follow the first convention. This is a natural choice in our setting: an algorithm
able to detect a particular property from $G$-SFT presentations should give the same output when given presentations of the empty subshift. Furthermore we can not exclude these presentations without assuming that the domino
problem for $G$ is decidable. A consequence of this convention is
that a dynamical property must assign yes/no value to the empty subshift.

When we consider the decidability of a dynamical property
$\mathscr{P}$ from presentations of $G$-SFTs we formally refer to the set $\{(A,\mathcal{F})\mid X_{(A,\mathcal{F})}\text{ satisfies }\mathscr{P}\}$. Our undecidability results will follow from many-one reductions of the form $\Dp(G)\leq_m\P$, but this notion is not required to understand the proofs.  
\section{Undecidability of dynamical properties of SFTs }

In this section we study the undecidability of dynamical properties
of SFTs. The following example shows that a Rice theorem fails in this setting. 
\begin{prop}
\label{prop:decidable-fixed-points} Let $G$ be a finitely generated
group. Then the property of $G$-SFTs of containing a fixed point is decidable
from presentations. 
\end{prop}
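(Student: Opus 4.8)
The plan is to identify the fixed points of the shift action explicitly and then observe that detecting them reduces to a purely finite combinatorial check on the presentation, one that never invokes the (possibly undecidable) word problem of $G$. Recall that a fixed point of $G\curvearrowright A^G$ is a configuration $x$ with $gx=x$ for all $g\in G$. Since $(gx)(h)=x(g^{-1}h)$, this condition is equivalent to $x(g^{-1}h)=x(h)$ for all $g,h\in G$, and taking $h=1_G$ shows that $x$ is constant. Conversely every constant configuration is fixed by the whole group. Thus the fixed points of any $G$-subshift $X\subset A^G$ are exactly the constant configurations $x\equiv a$ lying in $X$, with $a\in A$.

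The key step is to decide, for each symbol $a\in A$, whether the constant configuration $x\equiv a$ belongs to $X_{(A,\mathcal F)}$. A pattern presentation $p\colon W\to A$ appears in $x\equiv a$ at some $g\in G$ precisely when $x(g\underline w)=p(w)$ for every $w\in W$; but since $x\equiv a$ the left-hand side is $a$ regardless of $g$ and $w$, so this happens if and only if $p$ is the constant function with value $a$ on its support $W$. Consequently $p$ either appears at \emph{every} $g$ (when $p\equiv a$) or at \emph{no} $g$ (when $p\not\equiv a$), and in neither case does deciding this require determining any equality $\underline u=\underline v$ in $G$. It follows that $x\equiv a\in X_{(A,\mathcal F)}$ if and only if no $p\in\mathcal F$ is constantly equal to $a$ on its support.

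Combining these two observations yields the algorithm: on input $(A,\mathcal F)$, iterate over the finitely many symbols $a\in A$; for each $a$, inspect the finitely many pattern presentations $p\in\mathcal F$ and test whether some $p$ is constantly $a$ on $\operatorname{supp}(p)$. Output that $X_{(A,\mathcal F)}$ contains a fixed point exactly when some $a\in A$ survives, i.e.\ when some $a$ is \emph{not} forbidden by any constant pattern of $\mathcal F$; otherwise output that it does not. Every test in this procedure is a comparison of finite data, so the procedure halts on all inputs and decides the property.

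There is no substantial obstacle here; the only subtlety worth flagging explicitly is that appearance of a pattern in a \emph{constant} configuration is insensitive to position, so the test bypasses the word problem of $G$ entirely. (One may further note that an inconsistent pattern presentation, having two words of equal image but distinct labels, is automatically non-constant and hence never forbids a constant configuration, consistent with the fact that such a pattern defines the empty cylinder.) This is exactly the feature that makes the property decidable for arbitrary finitely generated $G$, and it is why the fixed-point property must be excluded from any Rice-type statement for $G$-SFTs.
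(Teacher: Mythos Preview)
Your proof is correct and follows essentially the same approach as the paper: identify fixed points with constant configurations, observe that a pattern presentation appears in a constant configuration $x\equiv a$ iff the pattern is itself constantly $a$, and then check for each $a\in A$ whether $\mathcal F$ contains such a constant pattern. Your write-up is more detailed (you spell out why fixed points are constant and explicitly flag that the word problem is bypassed), but the argument is the same.
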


\begin{proof}
Recall that a fixed point of an SFT is a configuration $x$ such that $gx=x$ for all $g\in G$. Let $(A,\mathcal{F})$ be a $G$-SFT presentation, and for each $a\in A$ 
denote by $x_{a}\colon G\to A$ the configuration with constant value
$a$. Note that a fixed point in $X_{(A,\mathcal{F})}$
is equal to $x_{a}$ for some $a\in A$. Clearly a pattern
presentation $p\colon W\to A$ appears in $x_{a}$ if and only 
if $p$ has constant value $a$, and this is
a decidable property of $p$. Thus the following algorithm proves the statement: on input $(A,\mathcal{F})$ check whether for some $a\in A$ the set $\mathcal{F}$ fails
to contain a pattern presentation with constant value $a$.
\end{proof}
In order to prove our undecidability results we need to verify the computability of direct products and disjoint unions at the level of presentations. 
\begin{lem}
\label{lem:products}Let $G$ be a finitely generated group. There is an effective procedure which, given
two presentations $(A,\mathcal{F})$ and $(B,\mathcal{C})$ of $G$-SFTs,
outputs a presentation of a $G$-SFT that is topologically conjugate
to the direct product $X_{(A,\mathcal{F})}\times X_{(B,\mathcal{C})}$. 
\end{lem}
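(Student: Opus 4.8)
The plan is to realize the product as a subshift on the product alphabet via the standard overlay identification, and then exhibit its defining forbidden patterns explicitly from those of the two factors. First I would fix the finite alphabet of the output presentation to be $A\times B$, which we regard as a finite subset of $\N$ by composing with a fixed computable injection $\N\times\N\to\N$ (so the requirement that alphabets are subsets of $\N$ is respected; this is a purely cosmetic encoding step). Write $\pi_A\colon A\times B\to A$ and $\pi_B\colon A\times B\to B$ for the two projections, and extend them coordinatewise to configurations and to patterns. The idea is that a configuration $z\in (A\times B)^G$ should lie in the output SFT precisely when $\pi_A\circ z\in X_{(A,\mathcal F)}$ and $\pi_B\circ z\in X_{(B,\mathcal C)}$.

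To turn this into a finite set of forbidden pattern presentations, I would form $\mathcal D$ as follows. For every pattern presentation $q\colon W\to A$ in $\mathcal F$ and every function $r\colon W\to B$, include in $\mathcal D$ the pattern presentation $(q,r)\colon W\to A\times B$ defined by $(q,r)(w)=(q(w),r(w))$; symmetrically, for every $c\colon W'\to B$ in $\mathcal C$ and every $s\colon W'\to A$, include $(s,c)\colon W'\to A\times B$. Since $A$, $B$, $\mathcal F$, and $\mathcal C$ are all finite and each support is finite, $\mathcal D$ is a finite set, and it is clearly computable from $(A,\mathcal F)$ and $(B,\mathcal C)$. By construction, a pattern of $\mathcal D$ appears in $z$ at some $g\in G$ if and only if a pattern of $\mathcal F$ appears in $\pi_A\circ z$ at $g$, or a pattern of $\mathcal C$ appears in $\pi_B\circ z$ at $g$. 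Hence $X_{(A\times B,\mathcal D)}=\{z : \pi_A\circ z\in X_{(A,\mathcal F)}\text{ and }\pi_B\circ z\in X_{(B,\mathcal C)}\}$.

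It then remains to check that this set, with the shift action, is topologically conjugate to the direct product. I would introduce the overlay map $\Phi\colon X_{(A,\mathcal F)}\times X_{(B,\mathcal C)}\to (A\times B)^G$ given by $\Phi(x,y)(g)=(x(g),y(g))$. This map is continuous (it is determined by a local function on the singleton support $\{1_G\}$), it is injective with continuous inverse given by the two projections, and it is $G$-equivariant for the diagonal action, since $\Phi(gx,gy)(h)=(x(g^{-1}h),y(g^{-1}h))=(g\,\Phi(x,y))(h)$. By the description of $X_{(A\times B,\mathcal D)}$ above, the image of $\Phi$ is exactly $X_{(A\times B,\mathcal D)}$, so $\Phi$ is a topological conjugacy onto the constructed SFT. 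This yields the desired presentation.

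I do not expect any genuine obstacle here: the statement is essentially a bookkeeping verification, and the only point requiring a little care is confirming that $\mathcal D$ captures the two factor conditions \emph{simultaneously} without introducing spurious constraints, which is precisely why the definition of $\mathcal D$ fixes the support $W$ (resp. $W'$) of each original forbidden pattern and ranges over \emph{all} completions in the other coordinate. Once that is granted, finiteness and effectivity of the procedure, as well as the equivariance and bijectivity of $\Phi$, are immediate.
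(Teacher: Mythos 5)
Your proposal is correct and follows essentially the same route as the paper's proof: encode the product alphabet $A\times B$ in $\N$ via a computable pairing, take as forbidden patterns all completions of each factor's forbidden patterns in the other coordinate, and observe that the overlay/projection map is a topological conjugacy. The paper states the conjugacy in the opposite direction (from the constructed SFT onto the product) and leaves the verification as "clear," whereas you spell it out, but the construction is identical.
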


\begin{proof}
Let $\alpha\colon\N^{2}\to\N$ be a computable bijection, and let
$\pi_{1},\pi_{2}\colon\N\to\N$ be the computable functions defined
by $\pi_{i}(\alpha(n_{1},n_{2}))=n_{i}$, $i=1,2$. On input $(A,\mathcal{F})$
and $(B,\mathcal{C})$, our algorithm starts defining the alphabet of the new SFT as $C=\{\alpha(a,b)\mid a\in A,b\in B\}\subset\N$.
Then we compute a set $\mathcal{G}$ of pattern presentations as
follows. For each $p\colon W\to A$ in $\F$ (resp. $p\colon S\to B$
in $\mathcal{C}$), we add to $\mathcal{G}$ every function $q\colon W\to C$
such that $\pi_{1}\circ q=p$ (resp. $\pi_{2}\circ q=p$). The output is $(C,\mathcal{G})$. It is clear that $X_{(C,\mathcal{G})}$ is conjugate
to the direct product $X_{(A,\mathcal{F})}\times X_{(B,\mathcal{C})}$.
Indeed, the map $\phi\colon X_{(C,\mathcal{G})}\to X_{(A,\mathcal{F})}^ {}\times X_{(B,\mathcal{C})}^ {}$
given by $\phi(x)(g)=(\pi_{1}(x(g)),\pi_{2}(y(g)))$ is a topological
conjugacy. 
\end{proof}
The proof of the following result is very similar, and is left to the reader.
\begin{lem}

\label{lem:disjoint-unions}Let $G$ be a finitely generated group. There is an effective procedure which,
given two SFT presentations $(A,\mathcal{F})$ and $(B,\mathcal{C})$,
outputs a presentation of an SFT that is topologically conjugate to
the disjoint union $X_{(A,\mathcal{F})}\sqcup X_{(B,\mathcal{C})}$.
\end{lem}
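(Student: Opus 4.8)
The plan is to mimic the product construction from \Cref{lem:products}, replacing the alphabet-merging bijection by a \emph{disjointifying} relabeling and adding local rules that prevent the two alphabets from coexisting in a single configuration. First I would compute disjoint copies of the two alphabets: fix the computable injections $A\to\N$, $a\mapsto 2a$, and $B\to\N$, $b\mapsto 2b+1$, let $A'$ and $B'$ be their (disjoint) images, and transport $\mathcal{F}$ and $\mathcal{C}$ to sets $\mathcal{F}'$ and $\mathcal{C}'$ of pattern presentations over $A'$ and $B'$ by relabeling symbols accordingly. These relabelings are computable and induce topological conjugacies $X_{(A,\mathcal{F})}\cong X_{(A',\mathcal{F}')}$ and $X_{(B,\mathcal{C})}\cong X_{(B',\mathcal{C}')}$.

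Next I would set $C=A'\cup B'\subset\N$ and build the forbidden set $\mathcal{G}$ from three computable pieces: the patterns of $\mathcal{F}'$, the patterns of $\mathcal{C}'$ (both now read as pattern presentations over $C$), and, for every generator $s\in S$ and every pair $(a,b)\in A'\times B'$, the pattern presentation $p\colon\{\epsilon,s\}\to C$ with $p(\epsilon)=a$ and $p(s)=b$. The last family forbids an $A'$-symbol from sitting at $S$-distance one from a $B'$-symbol; since $S$ is symmetric this automatically rules out the reverse adjacency as well. The output is $(C,\mathcal{G})$, produced by a finite, uniform computation.

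To verify correctness I would argue that $X_{(C,\mathcal{G})}$ equals $X_{(A',\mathcal{F}')}\sqcup X_{(B',\mathcal{C}')}$ as subsets of $C^{G}$. Take $x\in X_{(C,\mathcal{G})}$ and consider the sets $U=x^{-1}(A')$ and $V=x^{-1}(B')$, which partition $G$. The mixing rules force that no element of $U$ is adjacent (via some $s\in S$) to an element of $V$; since $S$ generates $G$, the Cayley graph is connected, so one of $U$, $V$ is empty and $x$ takes values entirely in $A'$ or entirely in $B'$. A configuration valued in $A'$ avoids $\mathcal{C}'$ and the mixing patterns vacuously, hence lies in $X_{(C,\mathcal{G})}$ precisely when it avoids $\mathcal{F}'$, that is, when it lies in $X_{(A',\mathcal{F}')}$; symmetrically for $B'$. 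As $A'$ and $B'$ are disjoint, the value $x(1_{G})$ determines which piece $x$ belongs to, so each piece is clopen in $X_{(C,\mathcal{G})}$ and the shift action on $X_{(C,\mathcal{G})}$ is topologically conjugate to the disjoint union of the two systems. Composing with the relabeling conjugacies yields a conjugacy with $X_{(A,\mathcal{F})}\sqcup X_{(B,\mathcal{C})}$, as required.

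The construction is genuinely routine and there is no serious obstacle; the only point deserving care is the appeal to connectedness of the Cayley graph to conclude that the two symbol sets cannot coexist. This step also makes the argument behave well on degenerate inputs: if, say, $A'=\varnothing$ or $X_{(A',\mathcal{F}')}=\varnothing$, then no all-$A'$ configuration survives and $X_{(C,\mathcal{G})}\cong X_{(B,\mathcal{C})}$, matching $\varnothing\sqcup X_{(B,\mathcal{C})}$. This is consistent with the convention adopted earlier that the empty subshift is a legitimate SFT, and it is exactly the behaviour needed for the many-one reductions to $\Dp(G)$ in the sequel.
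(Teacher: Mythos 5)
Your construction is correct and is precisely the argument the paper intends: it adapts the product construction of \Cref{lem:products} by relabeling the two alphabets into disjoint copies, transporting the forbidden patterns, and adding nearest-neighbour pattern presentations that forbid symbols of the two copies from coexisting, with connectedness of the Cayley graph forcing every configuration to take values in a single copy. The paper leaves this proof to the reader as ``very similar'' to \Cref{lem:products}, and your write-up (including the symmetry remark for $S$, the clopen splitting according to $x(1_G)$, and the behaviour on empty components) fills it in exactly as expected.
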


\begin{rem}
According to the definitions the direct product of two SFTs is not an SFT, but a topological
dynamical system that is conjugate to an SFT. We will ignore this subtlety for the sake of clarity. 
\end{rem}

In the following definition we propose the term Berger property because of the analogy with Markov properties of finitely presented groups and the Adyan-Rabin undecidability theorem  \cite{lyndon_combinatorial_2001}. 
\begin{defn}\label{def:Berger}
A dynamical property $\mathscr{P}$ of $G$-SFTs is called a \textbf{Berger
}property if there are two $G$-SFTs $X_{-}$ and $X_{+}$ satisfying
the following:
\begin{enumerate}
\item $X_{+}$ satisfies $\mathscr{P}$.
\item Every SFT that factors onto $X_{-}$ fails to satisfy $\mathscr P$.
 
\item There is a morphism from $X_{+}$ to $X_{-}$. 
\end{enumerate}
The subshift $X_{+}$ is allowed to be empty.  
\end{defn}
The main result of this section is the following.
\begin{thm}
\label{thm:adian-rabin-for-sfts} Let $G$ be a finitely generated group with undecidable domino problem. Then every Berger property of $G$-SFTs is undecidable. 
\end{thm}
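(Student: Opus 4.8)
The plan is to establish a many-one reduction $\Dp(G)\leq_m \mathscr{P}$, showing that any algorithm deciding a Berger property $\mathscr{P}$ from presentations could be used to decide the domino problem for $G$, contradicting its undecidability. The core idea is to take an arbitrary $G$-SFT presentation $(A,\mathcal{F})$ and transform it effectively into a new presentation whose associated SFT satisfies $\mathscr{P}$ exactly when $X_{(A,\mathcal{F})}$ is empty.

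First I would fix the two witnessing SFTs $X_+$ and $X_-$ from the definition of a Berger property, together with their (fixed, finite) presentations, which we may hardwire into the reduction algorithm. The key construction is the following: given an input presentation $(A,\mathcal{F})$, I would compute a presentation of the SFT
\[
Z = X_{(A,\mathcal{F})}\times X_+ \ \sqcup\ X_-.
\]
Both operations are available effectively by \Cref{lem:products} and \Cref{lem:disjoint-unions}, so $Z$ can be computed uniformly from $(A,\mathcal{F})$. The claim is that $Z$ satisfies $\mathscr{P}$ if and only if $X_{(A,\mathcal{F})}$ is empty. This is where the three defining conditions of a Berger property are used, and verifying this equivalence is the main (though short) obstacle.

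The verification splits into two cases. If $X_{(A,\mathcal{F})}=\varnothing$, then the product $X_{(A,\mathcal{F})}\times X_+$ is also empty, so $Z$ is conjugate to $X_-$ itself; since $X_-$ trivially factors onto $X_-$ (via the identity), condition (2) of \Cref{def:Berger} would force $X_-$ to fail $\mathscr{P}$ — so here I must instead arrange that $Z$ \emph{satisfies} $\mathscr P$ in the empty case. The cleaner arrangement is to use the disjoint union with $X_+$ rather than $X_-$; more precisely I would set $Z = X_{(A,\mathcal{F})}\times X_- \ \sqcup\ X_+$ and argue as follows. When $X_{(A,\mathcal{F})}=\varnothing$, the product collapses and $Z\cong X_+$, which satisfies $\mathscr P$ by condition (1). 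When $X_{(A,\mathcal{F})}\neq\varnothing$, the product $X_{(A,\mathcal{F})}\times X_-$ is nonempty and admits a factor map onto $X_-$ (projection to the second coordinate, using a point of $X_{(A,\mathcal{F})}$), and moreover the whole of $Z$ factors onto $X_-$ by composing with condition (3), which provides a morphism $X_+\to X_-$ so that the $X_+$ summand can also be mapped into $X_-$; hence $Z$ factors onto $X_-$ and by condition (2) fails $\mathscr P$. The delicate point to check carefully is that disjoint unions interact correctly with factor maps — namely that $Z$ factoring onto $X_-$ really does hold when \emph{both} summands individually factor onto $X_-$, which is immediate since a factor map out of a disjoint union is determined by factor maps out of each piece.

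Assembling these pieces, the map $(A,\mathcal{F})\mapsto (\text{presentation of }Z)$ is a total computable function satisfying $(A,\mathcal{F})\in\Dp(G)$ iff $Z$ satisfies $\mathscr{P}$, which is the desired many-one reduction. If $\mathscr{P}$ were decidable from presentations, composing the decision procedure with this reduction would decide $\Dp(G)$, contradicting the hypothesis that $G$ has undecidable domino problem. I expect the only genuinely subtle step is getting the logical polarity right — matching which witness ($X_+$ or $X_-$) goes into the product and which into the disjoint summand so that emptiness of the input aligns with satisfaction of $\mathscr P$ — and verifying that condition (3) is exactly what is needed to guarantee the nonempty case produces a factor onto $X_-$.
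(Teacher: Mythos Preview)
Your proposal is correct and, after your self-correction to $Z = (X_{(A,\mathcal{F})}\times X_-)\sqcup X_+$, it is exactly the paper's argument: the same construction, the same use of \Cref{lem:products} and \Cref{lem:disjoint-unions}, and the same two-case analysis invoking conditions (1)--(3) of \Cref{def:Berger}. The only cosmetic difference is that your write-up walks through the wrong-polarity version first before landing on the right one; the final reasoning matches the paper's proof line for line.
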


\begin{proof}
Let $\P$ be a Berger property, and let $X_{+}$, $X_{-}$ be as in \Cref{def:Berger}. Given an SFT presentation
$(A,\mathcal{F})$ we define $Z$ as the disjoint union of  $X_{+}$ and $X_{(A,\mathcal{F})}^ {}\times X_{-}$. We claim that $Z$ has property $\P$ if and only if
$X_{(A,\mathcal{F})}^ {}$ is empty. Indeed, if $X_{(A,\mathcal{F})}$
is empty then $Z$ is topologically conjugate to $X_{+}$. If $X_{(A,\mathcal{F})}^ {}$
is nonempty, then $Z$ factors over $X_{-}$. This follows from two
facts: that for $X_{(A,\mathcal{F})}$ nonempty $X_{(A,\mathcal{F})}^ {}\times X_{-}$
factors over $X_{-}$, and that there is a topological morphism from
$X_{+}$ to $X_{-}$. If $\P$ was a decidable property, then we could decide whether $X_{(A,\mathcal{F})}^ {}$
is empty by computing a presentation for $Z$, which is possible thanks to
\Cref{lem:products} and \Cref{lem:disjoint-unions}, and then
checking whether $Z$ satisfies $\P$. This contradicts the undecidability
of $\Dp(G)$.
\end{proof}
In the language of many-one reductions we proved that a Berger property $\P$ satisfies $\P\geq_m\Dp(G)$ and thus it is $\Dp(G)$-hard. Note that \Cref{thm:adian-rabin-for-sfts} applies to every nontrivial property that is preserved to topological
factors and is satisfied by the empty subshift. Since the negation of a property preserved to
topological factors is preserved to topological extensions, we obtain
the following result.
\begin{cor}
\label{cor:rice-for-monotone-properties} Let $G$ be a finitely generated group with undecidable domino problem. Every nontrivial dynamical property
for $G$-SFTs which is preserved to topological factors (resp. extensions),
and which is satisfied (resp. not satisfied) by the empty subshift,
is undecidable. 
\end{cor}
The assumption on the empty subshift is necessary: the decidable property from  \Cref{prop:decidable-fixed-points} is preserved to factors.

\subsection{Examples}\label{sec:examples}
Here we present some examples. We start observing that the decidability of a property could be altered if we include or exclude the empty subshift:

\begin{example}\label{property-that-becomes-undecidable-after-we-allow-empty-subshift}
    Let $\mathscr P$ be the property of having some fixed point. We proved that $\P$ is decidable in \Cref{prop:decidable-fixed-points}. However ``$\P$ or empty'' is a Berger property. Indeed, let $X_{+}=\emptyset$, and let $X_{-}\ne\emptyset$ have no fixed point.   
\end{example}
There is a simple class of properties where this situation is prevented:

\begin{remark}\label{criterion}
    Let $\mathscr P$ be a Berger property, and let $X_{+}$ and $X_{-}$ as in \Cref{def:Berger}. If  $X_{+}$ is nonempty, then both ``$\mathscr P$ or empty'' and ``$\mathscr P$ and nonempty'' are Berger properties. This is shown by the same pair of sets $X_{+}$ and $X_{-}$.
\end{remark}
Many commonly studied dynamical properties can be shown to satisfy this criterion, and thus they are Berger properties regardless of the value assigned to the empty subshift. Now we review a few of them. 
\begin{example}
A $G$-SFT is topologically transitive when it contains a configuration
with dense orbit. Transitivity is a Berger property. Indeed, it suffices to choose $X_{+}$ as an SFT with exactly one fixed point, and choose $X_{-}$ as an SFT with exactly two fixed points. Note that topological extensions of $X_{-}$ are not transitive. 
\end{example}
\begin{example}
A $G$-SFT is minimal if it has no proper nonempty subsystem. This is a Berger property by the same reasoning as in the previous example. 
\end{example}

\begin{example}
    A configuration $X\in A^G$ is called strongly aperiodic when $gx\ne x$ for every $g\in G$ different to the identity. Consider the property $\mathcal{AD}$ of having at least strongly aperiodic configuration. This property is known as the \textit{aperiodic domino problem}. The negation of $\mathcal{AD}$ is a Berger property: it suffices to take $X_{+}$ as an SFT with only one fixed point, and $X_{-}=\{0,1\}^G$.  
    
    The complexity of $\mathcal{AD}$ for the group $\Z^d$ is studied in detail in  \cite{callard_aperiodic_2022}. The authors prove that this problem is $\Pi_1^0$-complete for $d=2$ and $\Sigma_1^1$-complete for $d\geq 4$. Its exact complexity is not known for $d=3$.  The argument presented here only shows that $\mathcal{AD}$ is $\Pi_1^0$-hard for $d\geq 2$.
    \end{example}
\begin{example}
Let $G=\Z^{d}$, $d\geq2$. A $G$-SFT has topologically complete
positive entropy (TCPE) when every topological factor is either a
singleton with the trivial action by $G$, or has positive topological
entropy. We claim that TCPE is a Berger property. Indeed, let $X_{-}$ be the SFT $\{0,1\}^{G}\cup\{2,3\}^{G}$.
This system fails to have TCPE because it factors onto the SFT with
exactly two configurations and zero topological entropy. The same is true for all extensions of $X_{-}$. Now let $X_{+}=\{0,1\}^G$. It is well known that this system has TCPE, and thus we have proved that  TCPE is a Berger property. This argument only shows that TCPE is $\Sigma_1^0$-hard. The complexity of this property is studied in detail in \cite{westrick_topological_2020}, where it is shown that it is $\Pi_1^1$-complete.
\end{example}
\begin{example}\label{ex:conjugacy}
    Let $G$ be amenable. The following are Berger properties for every nonempty SFT $X$: 
    \begin{itemize}
        \item The property  $\mathscr C (X)$ of being conjugate to $X$.
        \item The property $\mathscr F(X)$ of being a factor of $X$.
        \item  The property $\mathscr I(X)$ of embedding into $X$.
    \end{itemize}
    In the three cases it suffices to let  $X_{+}=X$, and let  $X_{-}$ be an SFT whose topological entropy is greater than that of $X$.  Note that $\mathscr C(X)$ and $\mathscr I(X)$ are conjugacy-invariant counterparts of properties studied in \cite[Section 3]{cervelle_tilings_2004}. Admitting an embedding \textit{from} $X$ is not always a Berger property, see \Cref{sec:remarks}. 
\end{example} 
It is natural to ask what is the  complexity of the properties considered here as  \Cref{thm:adian-rabin-for-sfts}  only shows that Berger properties are $\Dp(G)$-hard.
\section{Uncomputability of dynamical invariants of SFTs}
Recall that a real number $x$ is computable when there is an algorithm providing rational approximations to $x$ with any desired precision. We say that a real-valued dynamical invariant for $G$-SFTs $\mathcal I$ is computable from presentations when there is an algorithm which given a presentation for $X$, provides rational approximations to $\mathcal I(X)$ with any desired precision.

A fundamental invariant for $\mathbb{Z}^2$-SFTs is topological entropy. This invariant is not computable from presentations since there are $\Z^{2}$-SFTs whose entropy is
a non-computable real number \cite{hochman_characterization_2010}. A similar result holds for the invariant of entropy dimension of $\Z^{2}$-SFTs  \cite{meyerovitch_growthtype_2011,gangloff_characterizing_2022}. These results have motivated the search of dynamical restrictions that imply the computability of entropy  \cite{gangloff_effect_2019,pavlov_entropies_2015}. 

The main result of this section is a Rice-like theorem for real-valued invariants satisfying mild monotony conditions:
\begin{theorem}\label{invariants-monotone-by-products-and-unions-are-constants}
Let $G$ be a finitely generated group with undecidable domino problem. Every computable dynamical invariant for $G$-SFTs that is nondecreasing by disjoint unions and products with nonempty systems is constant.   
\end{theorem}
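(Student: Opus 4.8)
The plan is to argue by reduction to the domino problem: assuming $\mathcal{I}$ is a computable dynamical invariant that is nondecreasing by disjoint unions and by products with nonempty systems, I will show that if $\mathcal{I}$ were nonconstant then $\Dp(G)$ would be decidable, contradicting the hypothesis on $G$. Two facts are used throughout. First, for any \emph{fixed} SFT $X$ the real number $\mathcal{I}(X)$ is computable, since applying the computability of $\mathcal{I}$ to a fixed presentation of $X$ yields rational approximations to arbitrary precision. Second, direct products and disjoint unions of SFTs can be computed at the level of presentations, by \Cref{lem:products} and \Cref{lem:disjoint-unions}.

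The first case to treat is when $\mathcal{I}$ takes two distinct values on \emph{nonempty} SFTs, say $\mathcal{I}(P) < \mathcal{I}(Q)$ with $P,Q$ nonempty. Given an arbitrary presentation $(A,\mathcal{F})$, I would build, effectively via the two lemmas, a presentation of
\[
Z = P \sqcup \bigl(Q \times X_{(A,\mathcal{F})}\bigr).
\]
If $X_{(A,\mathcal{F})}=\varnothing$ then $Q\times X_{(A,\mathcal{F})}=\varnothing$, so $Z$ is conjugate to $P$ and $\mathcal{I}(Z)=\mathcal{I}(P)$. If $X_{(A,\mathcal{F})}\neq\varnothing$, then $Q\times X_{(A,\mathcal{F})}$ is nonempty, and the two monotonicity hypotheses give $\mathcal{I}(Z)\geq \mathcal{I}(Q\times X_{(A,\mathcal{F})})\geq \mathcal{I}(Q) > \mathcal{I}(P)$. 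Fixing a rational $q$ with $\mathcal{I}(P)<q<\mathcal{I}(Q)$, which can be located effectively since $\mathcal{I}(P),\mathcal{I}(Q)$ are distinct computable reals, the value $\mathcal{I}(Z)$ lies strictly below $q$ exactly when $X_{(A,\mathcal{F})}$ is empty, and otherwise lies at or above $\mathcal{I}(Q)>q$. Hence computing rational approximations to $\mathcal{I}(Z)$ until one is farther from $q$ than its own error bound yields a halting decision procedure for $\Dp(G)$, a contradiction.

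It remains to handle the possibility that $\mathcal{I}$ is constant, equal to some $c$, on all nonempty SFTs, yet $\mathcal{I}(\varnothing)\neq c$. Here $\mathcal{I}$ itself already separates the empty SFT from the nonempty ones: choosing a rational $q$ strictly between $c$ and $\mathcal{I}(\varnothing)$, the sign of $\mathcal{I}(X_{(A,\mathcal{F})})-q$ determines emptiness of $X_{(A,\mathcal{F})}$, and the same approximate-until-separated computation decides $\Dp(G)$. Since a nonconstant $\mathcal{I}$ must fall into one of these two cases, both impossible, $\mathcal{I}$ is constant on nonempty SFTs and must agree there with $\mathcal{I}(\varnothing)$; that is, $\mathcal{I}$ is constant.

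The routine parts are the conjugacies $Q\times\varnothing\cong\varnothing$ and $P\sqcup\varnothing\cong P$ together with the effectivity of the constructions, all supplied by the earlier lemmas. The conceptual heart of the argument, and the only step needing genuine care, is the threshold/gap estimate: one must exploit that $\mathcal{I}(Z)$ never equals the fixed rational $q$ and always lands in one of two intervals separated by a fixed positive (indeed computable) gap, so that the approximation routine is guaranteed to terminate on every input and output the correct bit. This is precisely the step converting the computability of $\mathcal{I}$ into a decision algorithm for $\Dp(G)$, and it is what forces the invariant to be constant.
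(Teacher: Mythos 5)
Your proof is correct and is essentially the paper's argument: the paper likewise takes two SFTs $X_0,Y_0$ with $\mathcal I(X_0)<\mathcal I(Y_0)$, forms $Z=X_0\sqcup(Y_0\times X)$ computably via the product and disjoint-union lemmas, fixes a rational $q$ strictly between the two values, and approximates $\mathcal I(Z)$ until its side of $q$ is determined, thereby deciding the domino problem for $G$, a contradiction. Your explicit second case (where $\mathcal I$ is constant on nonempty SFTs and differs only at $\varnothing$, so that computability of $\mathcal I$ alone decides emptiness with no construction needed) is a small refinement worth having: the paper runs a single case over possibly empty $X_0,Y_0$, and in the configuration $Y_0=\varnothing$ its claim that $\mathcal I(Z)>q$ for nonempty $X$ only goes through because monotonicity under disjoint unions forces $\mathcal I(\varnothing)$ to be minimal, a point the paper leaves implicit and your case split avoids entirely.
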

\begin{proof}
    Let $\mathcal I$ be an invariant as in the statement, and suppose that  there are two SFTs $X_0$ and $Y_0$ with $\mathcal I(X_0)<\mathcal I(Y_0)$. Let $q$ be a rational number with $\mathcal I(X_0)<q<\mathcal I(Y_0)$. Given a possibly empty SFT $X$, we define $Z$ by 
    \[Z=X_0\sqcup Y_0\times X.\]
    By our assumptions on $\mathcal I$ we have $\mathcal I(Z)< q$ when $X=\emptyset$ and $\mathcal I(Z)> q$  when $X\ne\emptyset$. Since we can compute a presentation for $Z$ from a presentation for $X$ (\Cref{lem:products} and \Cref{lem:disjoint-unions}), and thanks to our assumption on the computability of $\mathcal I$, given $X$ we can determine in finite time whether $\mathcal I(Z)>q$ or $\mathcal I(Z)<q$. This amounts to determining whether $X=\emptyset$. This contradicts our hypothesis on $\Dp(G)$. Thus $\mathcal I(X_0)=\mathcal I (Y_0)$ and  $\mathcal I$ is constant. 
\end{proof}
It follows that for every amenable group with undecidable domino problem, topological entropy is not computable from presentations. For some groups it is known a much stronger fact, namely, the existence of SFTs whose entropy is a non-computable real number  \cite{barbieri_entropies_2021,bartholdi_shifts_2024}. Our result has a much weaker conclusion, but its proof is remarkably simple and covers many other invariants. For instance, it covers invariants designed for zero-entropy systems that are similar to entropy dimension (see \cite{kanigowski_survey_2020}). Our result also holds if the invariant is only defined for nonempty subshifts or is only assumed to be computable for nonempty subshifts:
\begin{rem}\label{computability-of-invariants-within-a-class}
    Let $\mathscr C$ be a class of $G$-SFTs such that (1) for every $X\in \mathscr C$ and nonempty SFT $Y$ we have $X\times Y\in\mathscr C$, and (2) $\mathscr C$ is closed by disjoint unions. Then  \Cref{invariants-monotone-by-products-and-unions-are-constants} holds within $\mathscr C$:  every dynamical invariant that is defined on $\mathscr C$, is computable in $\mathscr C$, and is  monotone by products and unions on $\mathscr C$, must have constant value on $\mathscr C$. The proof follows the same argument. 

    It follows that a property for $\Z^2$-SFTs that implies the computability of topological entropy can not verify (1) and (2).  
\end{rem}

We finish this section with a result for invariants taking values on abstract partially ordered sets. This result can be applied to recursion-theoretical invariants such as
the Turing degree of the language of the SFT \cite{jeandel_turing_2013},
 Muchnik degrees, and Medvedev degrees  \cite{simpson_medvedev_2014,barbieri_medvedev_2024}.
\begin{thm}
\label{thm:undecidability=000020of=000020monotone=000020invariants}Let $G$ be a finitely generated group with undecidable domino problem. Let $\h$ be a dynamical
invariant for $G$-SFTs taking values in a partially ordered set $(\mathscr{R},\leq)$,
which is non-increasing by factor maps, and whose value is minimal
at the empty subshift. Then for every $r\in\mathscr{R}$ the following
properties of a $G$-SFT $X$ are either trivial or undecidable: $\mathcal{I}(X)\geq r$, $\mathcal{I}(X)\leq r$, $\mathcal{I}(X)>r$, and $\mathcal{I}(X)<r$. 
\end{thm}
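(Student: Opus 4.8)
The plan is to route each of the four threshold properties through \Cref{cor:rice-for-monotone-properties}, exploiting the monotonicity of $\mathcal{I}$ together with the position of $\mathcal{I}(\varnothing)$. The guiding observation is that since $\mathcal{I}$ is non-increasing under factor maps, the properties $\mathcal{I}(X)\leq r$ and $\mathcal{I}(X)<r$ are preserved by topological factors, whereas $\mathcal{I}(X)\geq r$ and $\mathcal{I}(X)>r$ are preserved by topological extensions. Indeed, if $X$ factors onto $Y$ then $\mathcal{I}(X)\geq\mathcal{I}(Y)$, so $\mathcal{I}(X)\leq r$ forces $\mathcal{I}(Y)\leq\mathcal{I}(X)\leq r$; the strict version and the two dual statements for extensions follow from the same transitivity, using the elementary fact that in any partial order $a\leq b$ and $b<c$ imply $a<c$, and dually $a\geq b$ and $b>c$ imply $a>c$. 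Thus each of the four properties meets exactly one of the two hypothesis patterns appearing in \Cref{cor:rice-for-monotone-properties}.

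First I would treat $\mathcal{I}(X)\leq r$ and $\mathcal{I}(X)<r$, comparing $\mathcal{I}(\varnothing)$ with $r$. If the empty subshift satisfies the property, i.e. $\mathcal{I}(\varnothing)\leq r$ (resp. $\mathcal{I}(\varnothing)<r$), then the property is preserved by factors and holds at the empty subshift, so \Cref{cor:rice-for-monotone-properties} gives that it is either trivial or undecidable, which is the desired conclusion. If instead the empty subshift fails the property, I would invoke minimality: since $\mathcal{I}(\varnothing)$ is the least value of $\mathcal{I}$, any SFT $X$ with $\mathcal{I}(X)\leq r$ (resp. $<r$) would yield $\mathcal{I}(\varnothing)\leq\mathcal{I}(X)\leq r$ (resp. $\mathcal{I}(\varnothing)\leq\mathcal{I}(X)<r$, hence $\mathcal{I}(\varnothing)<r$), contradicting the failure at $\varnothing$; so no SFT satisfies the property and it is trivial. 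The cases $\mathcal{I}(X)\geq r$ and $\mathcal{I}(X)>r$ are dual: these are preserved by extensions, so I would apply the second form of \Cref{cor:rice-for-monotone-properties}. When $\mathcal{I}(\varnothing)\not\geq r$ (resp. $\not> r$) the corollary applies directly; when instead $\mathcal{I}(\varnothing)\geq r$ (resp. $>r$), minimality gives $\mathcal{I}(X)\geq\mathcal{I}(\varnothing)\geq r$ (resp. $>r$) for every $X$, so the property holds for all SFTs and is trivial.

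The main point requiring care, and the only genuine obstacle, is that in a general partially ordered set the four properties are \emph{not} pairwise complementary, because incomparable values may occur: one cannot, for example, deduce the statement for $\mathcal{I}(X)<r$ from the statement for $\mathcal{I}(X)\geq r$ by negation. Consequently each property must be handled through its own monotonicity direction (factors for $\leq,<$ and extensions for $\geq,>$) and the matching branch of \Cref{cor:rice-for-monotone-properties}, and the strict-inequality transitivity in posets must be invoked explicitly rather than read off from the real-valued intuition. Once these pairings are set up, the argument is uniform across the four cases and the triviality-versus-undecidability dichotomy follows immediately.
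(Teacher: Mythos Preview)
Your proof is correct and follows the same route as the paper: show that $\mathcal{I}(X)\leq r$ and $\mathcal{I}(X)<r$ are preserved under factors while $\mathcal{I}(X)\geq r$ and $\mathcal{I}(X)>r$ are preserved under extensions, then split each case according to whether the empty subshift satisfies the property, invoking \Cref{cor:rice-for-monotone-properties} in one branch and minimality of $\mathcal{I}(\varnothing)$ to force triviality in the other. The paper only writes out the $\leq r$ case and declares the remaining three ``similar''; your version is simply a fuller unpacking of the same argument, with the added (and welcome) remark that in a genuine partial order the four threshold properties are not complements of one another, so each really does need its own monotonicity direction.
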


\begin{proof}
We consider in detail the property $\mathcal I(X)\leq r$, the other three cases are similar.  Let $\P$ be the property $\mathcal{I}(X)\leq r$ and observe that this property is preserved to factors. If the empty subshift verifies $\P$ then our claim follows  from \Cref{cor:rice-for-monotone-properties}. If the empty subshift does not verify $\P$ then this property is trivial: this follows from the transitivity of $\leq$, and the hypothesis that the value of $\mathcal{I}$ is minimal at the empty subshift.
\end{proof}
\section{Undecidability of dynamical properties of sofic subshifts}

In this section we prove that if $G$ is a group with undecidable
domino problem, then all nontrivial dynamical properties of sofic
$G$-subshifts are undecidable from presentations. 

We start by defining presentations for sofic subshifts. Let $S$ be a finite and symmetric generating set for $G$, and let $\pi\colon S^\ast\to G$ be defined by $\pi(w)=\underline w$. A \textbf{local function presentation} is a function $\mu\colon A^{W}\to B$,
where $W$ is a finite subset of $\subset S^{\ast}$ and $A$ and $B$
are finite subsets of $\N$. The local function $\mu_{0}$ presented
by $\mu$ is defined as follows. We set $D=\pi(W)$, and define   $\mu_{0}\colon A^{D}\to B$
 by $\mu_{0}(p)=\mu(p\circ\pi)$. A \textbf{sofic $G$-subshift presentation} is a tuple $(A,\mathcal{F},\mu,B)$,
where $A,B\subset\N$ are finite alphabets, $(A,\mathcal{F})$ is
a $G$-SFT presentation, and $\mu\colon A^{W}\to B$, $W\subset S^{*}$
is a local function presentation. The sofic subshift associated to
this presentation $Y_{(A,\mathcal{F},\mu,B)}$ is the
image of $X_{(A,\mathcal{F})}$ under the topological factor map with
local function presented by $\mu$. 
\begin{thm}
\label{thm:Rice-theorem-sofic-subshifts} Let $G$ be a finitely generated group with undecidable domino problem.  Then all nontrivial dynamical properties
of sofic $G$-subshifts are undecidable.
\end{thm}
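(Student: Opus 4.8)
The plan is to establish a many-one reduction from the domino problem $\Dp(G)$ (or its complement) to the given property, mirroring the proof of \Cref{thm:adian-rabin-for-sfts} but now exploiting that the class of sofic subshifts is closed under topological factor maps. This closure is precisely what removes the fixed-point obstruction that prevented a full Rice theorem for SFTs (\Cref{prop:decidable-fixed-points}), and it is what allows us to upgrade ``Berger property'' to ``nontrivial property''.

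First I would fix a nontrivial dynamical property $\mathscr{P}$ of sofic $G$-subshifts. Since the empty subshift is sofic, nontriviality yields a sofic subshift $Y$ satisfying $\mathscr{P}$ and a sofic subshift $Y'$ not satisfying $\mathscr{P}$; moreover the empty subshift itself either satisfies $\mathscr{P}$ or it does not, and at least one of $Y,Y'$ is nonempty. These are the only structural facts about $\mathscr{P}$ that will be used.

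The key construction is as follows. Fix a sofic presentation $(B,\mathcal{C},\nu,B')$ of some sofic subshift $Y$, with SFT cover $X_{(B,\mathcal{C})}$ and factor map $\phi$ presented by $\nu$. Given an arbitrary input SFT presentation $(A,\mathcal{F})$, I would first compute a presentation of the product $X_{(A,\mathcal{F})}\times X_{(B,\mathcal{C})}$ using \Cref{lem:products}, and then compose the projection onto the second coordinate with $\phi$. The projection is itself presented by a projection local function on the product alphabet, and composing two local function presentations is an effective operation, so uniformly in $(A,\mathcal{F})$ I obtain a sofic presentation of the subshift $W:=\phi(\pi_{2}(X_{(A,\mathcal{F})}\times X_{(B,\mathcal{C})}))$. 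If $X_{(A,\mathcal{F})}=\varnothing$ then the product is empty and hence $W=\varnothing$; if $X_{(A,\mathcal{F})}\neq\varnothing$ then $\pi_{2}$ surjects onto $X_{(B,\mathcal{C})}$, so $W=\phi(X_{(B,\mathcal{C})})=Y$. Thus $W$ equals $Y$ exactly when $X_{(A,\mathcal{F})}$ is nonempty, and equals $\varnothing$ otherwise.

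To conclude I would split on whether $\varnothing$ satisfies $\mathscr{P}$. If $\varnothing$ does not satisfy $\mathscr{P}$, take $Y$ to be a sofic subshift satisfying $\mathscr{P}$: then $W$ satisfies $\mathscr{P}$ if and only if $X_{(A,\mathcal{F})}$ is nonempty, which is a reduction of the complement of $\Dp(G)$ to $\mathscr{P}$. If $\varnothing$ satisfies $\mathscr{P}$, take $Y$ to be a sofic subshift not satisfying $\mathscr{P}$: then $W$ satisfies $\mathscr{P}$ if and only if $X_{(A,\mathcal{F})}$ is empty, which is a reduction $\Dp(G)\leq_{m}\mathscr{P}$. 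In either case, decidability of $\mathscr{P}$ from presentations would decide $\Dp(G)$, contradicting the hypothesis. The main obstacle is entirely bookkeeping: verifying that the composition of the projection with the factor map $\phi$ admits a local function presentation computable from the input, which reduces to knowing the alphabet coding produced by \Cref{lem:products} and checking that local function presentations compose effectively. No idea beyond that of \Cref{thm:adian-rabin-for-sfts} is required once closure under factors is invoked.
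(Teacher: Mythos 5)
Your proposal is correct and follows essentially the same route as the paper's proof: product the input SFT with the SFT cover of a fixed sofic witness (chosen so its property status differs from that of the empty subshift), compose the coordinate projection with the cover's factor map to get a sofic presentation uniformly in the input, and conclude that the resulting sofic subshift is empty or equal to the witness according to emptiness of the input. The only cosmetic difference is that the paper normalizes by replacing $\mathscr{P}$ with its negation so the empty subshift fails $\mathscr{P}$, whereas you handle the two cases explicitly; these are interchangeable.
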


\begin{proof}
Let $\P$ be a nontrivial dynamical property of sofic subshifts. Replacing
$\mathscr{P}$ by its negation if necessary, we can assume that the
empty subshift does not satisfy the property. As $\mathscr{P}$ is
nontrivial, there is a sofic subshift $Y_{+}$ satisfying $\P$. We
fix this subshift for the rest of the proof. We also fix a presentation
$(A_{+},\mathcal{F}_{+},\mu_{+},B_{+})$ for $Y_+$, and also $W_{+}\subset A^{\ast}$
with $\mu_{+}\colon A_{+}^{W_{+}}\to B_{+}$. We define a computable function $f$ whose input is a presentation
$(A,\mathcal{F})$ of a $G$-SFT, and whose output $f(A,\mathcal{F})$
is the presentation of a sofic $G$-subshif such that $Y_{f(A,\mathcal{F})}$ has property
$\P$ if and only if $X_{(A,\mathcal{F})}$ is nonempty. The existence of this function proves that $\mathscr{P}$ is undecidable, as otherwise it could be used to solve the domino problem for $G$.

On input $(A,\mathcal{F})$, the function $f$ starts by computing a presentation $(A',\mathcal{F}')$,
such that $X_{(A',\mathcal{F}')}$ is topologically conjugate to the
direct product $X_{(A_{+},\mathcal{F}_{+})}\times X_{(A,\mathcal{F})}^ {}$. For this we use \Cref{lem:products}. Note that the alphabet $A'$ is equal to $\{\alpha(a_{+},a)\mid a_{+}\in A_{+},\ a\in A\}$, where $\alpha$ was defined in the proof of \Cref{lem:products}, and where we also fixed
computable functions $\pi_{1},\pi_{2}$ satisfying $\pi_{i}(\alpha(n_{1},n_{2}))=n_{i}$,
$i=1,2$. Next, we define a local function presentation $\mu\colon A'{}^{W_{+}}\to B_{+}$
by $p\mapsto\mu(p)=\mu_{0}(\pi_{2}\circ p)$. Finally, the output
of the computable function $f$ is $(A',\mathcal{F}',\mu',B_{+})$. It is clear that $f$ has the mentioned properties.
\end{proof}
In the language of many one reductions we proved that every nontrivial property of sofic subshifts $\P$ that is not verified by the empty subshift satisfies $\Dp(G)\leq_m \P$.
\section{Further remarks}\label{sec:remarks}
The results presented here can be used to show the undecidability of a many dynamical properties of SFTs of common interest in the case of a group with undecidable domino problem. However, we consider that the frontier between decidability and undecidability is rather unclear as we know very little about the decidable region in the ``swamp of undecidability''. That is, we know very few decidable dynamical properties. 

A natural generalization of \Cref{prop:decidable-fixed-points} is as follows. Given an SFT $X$, let $\mathscr E(X)$ be the property of admitting an embedding from $X$.  \Cref{prop:decidable-fixed-points} shows that this property is decidable when $|X|=1$, and the proof can be easily generalized to the case where $X$ is finite. The following question raises naturally:  
\begin{question}
    Is there an infinite SFT $X$ such that $\mathscr E (X)$ is decidable?
\end{question} 
In this work we have focused on properties preserved by conjugacy, but some of the arguments presented here can be adapted to set properties of SFTs. In this context an analogous of $\mathscr E(X)$ is the property $\mathcal C(X)$ of containing $X$ for a fixed SFT $X$. It turns out that the decidability of $\mathcal C(X)$ admits a characterization. The following argument was communicated to us by J.Kari. 
\begin{prop}
    $\mathcal C (X)$ is decidable if and only if the set 
    \[L(X)=\{p\colon W\to A \ \mid \text{$W\subset S^\ast$ is finite and $p$ appears in some $x\in X$}\}\]
    is decidable.
\end{prop}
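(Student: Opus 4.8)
The claim is a biconditional: $\mathcal{C}(X)$ (the property of an SFT containing $X$ as a subset) is decidable from presentations if and only if the set $L(X)$ of pattern presentations appearing in $X$ is decidable.

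The plan is to prove both directions by relating containment $X \subset Y_{(A,\mathcal{F})}$ to a condition on languages, and exploiting the fact that for SFTs one has effective control over which patterns appear. First I would observe that for any two subshifts, $X \subset Y$ holds if and only if every pattern appearing in $X$ also appears in $Y$, equivalently $L(X) \subset L(Y)$; this is immediate from the fact that a subshift is determined by its language and that containment is detected on finite windows by compactness.

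For the forward direction ($\mathcal{C}(X)$ decidable $\Rightarrow$ $L(X)$ decidable), the idea is to feed into the containment-decision procedure suitably chosen SFT presentations built around a candidate pattern. Given a pattern presentation $p \colon W \to A$, I would construct from it an SFT presentation $(A, \mathcal{F}_p)$ whose subshift is the largest subshift in which $p$ does \emph{not} appear — concretely, $\mathcal{F}_p$ forbids $p$ (and its translates) together with the forbidden patterns already defining $X$. Then $X \subset X_{(A, \mathcal{F}_p)}$ holds precisely when $p \notin L(X)$: if $p$ appeared in some $x \in X$ then $x$ would violate the forbidden-pattern rule, and conversely if $p$ does not appear in $X$ then $X$ satisfies all the constraints. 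This reduces deciding membership in $L(X)$ to a single query of the decision procedure for $\mathcal{C}(X)$, using the computability of building presentations (in the spirit of \Cref{lem:products} and \Cref{lem:disjoint-unions}), giving decidability of $L(X)$.

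For the reverse direction ($L(X)$ decidable $\Rightarrow$ $\mathcal{C}(X)$ decidable), I would use the language characterization of containment together with a finiteness reduction coming from the fact that $X$ is an SFT. Given a presentation $(A, \mathcal{F})$, containment $X \subset X_{(A,\mathcal{F})}$ is equivalent to asking that no forbidden pattern of $\mathcal{F}$ appears in $X$, which is a finite conjunction of membership tests $p \notin L(X)$ over the finitely many $p \in \mathcal{F}$ (after accounting for alphabet compatibility, i.e.\ first checking that the alphabet of $X$ is contained in $A$). Each such test is decidable by hypothesis, so the whole condition is decidable. The main subtlety — and the step I expect to require the most care — is the forward direction: one must ensure the constructed SFT $X_{(A,\mathcal{F}_p)}$ genuinely equals the maximal subshift avoiding $p$ over the alphabet of $X$, so that containment faithfully encodes non-appearance of $p$; handling the interaction between the alphabet of $X$, the defining forbidden set of $X$, and the added forbidden pattern $p$ is where the argument must be stated precisely. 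Once this encoding is set up correctly, both implications follow by routine appeals to the effectivity of the presentation-level constructions.
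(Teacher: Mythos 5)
Your proposal is correct and takes essentially the same route as the paper: the forward direction queries the containment oracle against an SFT presentation obtained by forbidding $p$, and the backward direction reduces containment to finitely many membership tests in $L(X)$ over the given forbidden patterns. The only cosmetic difference is that the paper queries $X_{(A,\{p\})}$ (forbidding $p$ alone) while you also include the defining forbidden patterns of $X$ — this is harmless, since $X \subset X_{(A,\mathcal{F}\cup\{p\})}$ iff $X \subset X_{(A,\{p\})}$ iff $p \notin L(X)$, so your worry about it being "the maximal subshift avoiding $p$" is immaterial — and you additionally make the alphabet-compatibility check explicit (note it should be on the symbols actually occurring in $X$, which is itself decidable from $L(X)$), a point the paper's proof leaves implicit.
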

\begin{proof}
    Let $(A,\mathcal F)$ be a presentation for $X$, and suppose that $\mathcal C(X)$ is decidable. Then for every pattern presentation $p$ we have $p\not\in L(X)$ if and only if $X$ is contained in $X_{(A,\{p\})}$. This proves the forward implication. For the backward implication suppose that $L(X)$ is decidable, and let $(B,\mathcal G)$ be an SFT presentation.  It follows from the definitions that $(B,\mathcal G)$ verifies $\mathcal C(X)$ if and only if no element from $\mathcal G$ appears in $L(X)$. This is decidable by hypothesis and thus the backward implication is proved.    
\end{proof}

\chapter{Translation-like actions by $\Z$ on locally finite graphs}\label{chap:translation-like-actions}

\renewcommand{\Gamma}{\ensuremath{\Upgamma}}
In this chapter we present some of the results obtained by the author
in \cite{carrasco-vargas_translationlike_2024}. Seward \cite{seward_burnside_2014} proved that all
infinite and finitely generated groups admit a translation-like action
by $\Z$, and that it can be chosen transitive exactly when the group
has either one or two ends. Here we generalize this result to all locally finite graphs, answering
a question left in \cite[Problem 3.5]{seward_burnside_2014}. The
translation-like actions that we construct satisfy the bound $d(v,v\ast1)\leq3$, which
is optimal with these hypotheses. 

\minitoc
\section{Introduction}

A right action $\ast$ of a group $H$ on a metric space $(X,d)$
is called a \textbf{translation-like} \textbf{action} if it is \textbf{free}\footnote{That is, $x\ast h=x$ implies $h=1_{H}$, for $x\in X$, $h\in H$.},
and for each $h\in H$, the set $\{d(x,x\ast h)|\ x\in X\}\subset\mathbb{R}$
is bounded. If $G$ is a finitely generated group endowed with the
left-invariant word metric associated to some finite set of generators,
then the action of any subgroup $H$ on $G$ by right multiplication 
$(g,h)\mapsto gh$ is a translation-like action. On the other hand,
observe that despite the action $H\curvearrowright G$ by left multiplication
is usually referred to as an action by translations, in general it
is not translation-like for a left-invariant word metric. 

Whyte \cite{whyte_amenability_1999}  proposed to consider translation-like actions as a generalization of subgroup
containment, and to replace subgroups by translation-like actions
in different questions or conjectures about groups and subgroups.
This was called a \textit{geometric reformulation}. For example, the von Neumann
Conjecture asserted that a group is nonamenable if and only if it
contains a nonabelian free subgroup. Its geometric reformulation asserts
then that a group is nonamenable if and only if it admits a translation-like
action by a nonabelian free group. While the conjecture was proven
to be false \cite{olshanskij_question_1980}, Kevin Whyte proved that
its geometric reformulation holds \cite{whyte_amenability_1999}. Similarly, Burnside's problem asked if every finitely
generated infinite group contains $\Z$ as a subgroup. While this problem was answered negatively \cite{golod_class_1964}, Seward proved that the geometric reformulation of Burnside's problem holds.
\begin{thm}[Geometric Burnside's problem, \cite{seward_burnside_2014}]
\label{thm:seward} Every finitely generated infinite group admits
a translation-like action by $\Z$. 
\end{thm}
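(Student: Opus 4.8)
The plan is to reformulate the statement entirely in terms of the Cayley graph and then build the action by a constructive, combinatorial argument. Fix a finite symmetric generating set $S$ and pass to the Cayley graph $\Gamma = \mathrm{Cay}(G,S)$, an infinite, connected, locally finite graph whose vertex set is $G$ and whose graph metric agrees with the word metric $d_S$. A right action $\ast$ of $\mathbb{Z}$ on $G$ is completely determined by the bijection $\phi\colon G\to G$ given by $\phi(g)=g\ast 1$. Since $\mathbb{Z}$ is torsion-free, the action is \emph{free} exactly when $\phi$ has no finite orbit, and it is \emph{translation-like} exactly when $C:=\sup_{g}d_S(g,\phi(g))$ is finite, for then $d_S(g,g\ast n)\le |n|\,C$ by the triangle inequality. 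Hence the theorem is equivalent to producing a bijection $\phi$ of $G$ all of whose orbits are bi-infinite and which moves each vertex a bounded $d_S$-distance. Equivalently, I must partition the vertices of $\Gamma$ into \emph{double rays} $\dots,v_{-1},v_0,v_1,\dots$ in which consecutive vertices satisfy $d_S(v_i,v_{i+1})\le C$; the successor map along these rays is the desired $\phi$.

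It therefore suffices to establish the following purely graph-theoretic fact: every infinite, connected, locally finite graph admits a partition of its vertex set into double rays with a uniform bound on the distance between consecutive vertices. I would prove this by an exhaustion argument. Enumerate $V(\Gamma)=\{u_0,u_1,\dots\}$ and, using connectivity and local finiteness, choose an increasing sequence of finite connected subgraphs $F_0\subset F_1\subset\cdots$ with $u_n\in F_n$ and $\bigcup_n F_n = V(\Gamma)$. Inductively I would maintain a finite family of pairwise disjoint finite paths covering exactly $V(F_n)$, recording the successor relation as a set of bounded-length ``jumps'', together with the standing promise that every current path endpoint will be extended later. In the limit every vertex is covered, and if each path is forced to grow at both of its ends infinitely often, the limiting object is a disjoint union of double rays.

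The main obstacle is keeping the jumps bounded while guaranteeing that no path stays finite or becomes merely one-sided. The naive move of extending an endpoint $u$ to the nearest uncovered vertex can fail, because $u$ may be entirely surrounded by already-covered vertices, placing the nearest free vertex arbitrarily far away. The remedy, and the origin of the explicit bound $d(v,v\ast 1)\le 3$ announced for this chapter, is a \emph{local rerouting}: rather than making a long jump, one cuts an existing path at a vertex adjacent to the free region and splices the pieces together, reconnecting the severed ends through vertices lying within distance $3$. Making this precise is the technical heart of the proof: a case analysis must show that such a splice is always available, that it strictly increases coverage, and that it only ever introduces steps of length at most $3$; a fairness (round-robin) schedule over the two ends of every path then ensures each ray grows unboundedly in both directions, ruling out finite and singly-infinite components.

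Once this finite-stage combinatorics is isolated as a lemma, the passage to the limit can also be phrased as a compactness (König's lemma) argument inside the space of spanning subgraphs of the bounded-distance graph $\Gamma^{(3)}$ (vertices joined when $d_S\le 3$) that are $2$-regular and contain no finite cycle; this set of constraints is closed in the product topology, so the genuine content remains the finite rerouting lemma. Specializing the graph statement to $\Gamma=\mathrm{Cay}(G,S)$ produces the bijection $\phi$ and hence a translation-like action of $\mathbb{Z}$ on $G$ with displacement at most $3$, which proves the theorem. I note that transitivity of the action (a single orbit) is \emph{not} required here and is instead controlled by the number of ends of $G$, to be treated separately.
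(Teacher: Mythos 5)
Your reduction is exactly the one the paper uses: pass to a Cayley graph, observe that a translation-like action by $\Z$ is the same thing as a partition of the vertex set into double rays whose consecutive vertices lie at uniformly bounded distance, and aim for the bound $3$; this is \Cref{thm:t1-translation-like-action-transitivas} (via \Cref{prop:existencia-3-paths}), from which the paper deduces \Cref{thm:seward}. Your limit step is also sound --- the compactness formulation inside the space of $2$-regular, finite-cycle-free spanning subgraphs of $\Gamma^{(3)}$ is a clean alternative to the paper's pointwise limit of monotonically extended $3$-paths. The problem is that everything you actually establish stops there. The finite-stage lemma --- that one can always enlarge the covered region while keeping every jump of length at most $3$ --- is the entire mathematical content of the theorem, and you explicitly defer it (``a case analysis must show that such a splice is always available\dots''). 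No such analysis is given, and it is not a routine verification: it is essentially the theorem of Sekanina and Karaganis that the cube of a finite connected graph is Hamiltonian-connected \cite{sekanina1960ordering,karaganis_cube_1968}, which the paper must prove in a refined form (\Cref{lem:lema-tecnico-finite-3-paths}: Hamiltonian $3$-paths with prescribed endpoints, first and last jumps of length at most $2$, and no two consecutive jumps of length $3$).

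Beyond the unproved lemma, your bookkeeping lacks the invariant that makes bounded-jump extension possible at all. The paper never reroutes previously built segments; instead it maintains \emph{bi-extensibility} (\Cref{def:bi-right-extensible}): the complement of the current $3$-path has no finite connected component, and both endpoints see uncovered vertices within distance $3$. Each extension then swallows an entire finite connected ``buffer'' subgraph, chosen so that its removal again leaves no finite components, and traverses it by a Hamiltonian $3$-path from the finite lemma (\Cref{lem:extend-3-paths}); consequently the construction is monotone and the limit is trivially well defined. Covering ``exactly $V(F_n)$'' for an arbitrary exhaustion $(F_n)$ does not give you this: nothing prevents the uncovered region from acquiring finite components, which is precisely the situation where an endpoint gets sealed off and where your proposed local splice has no evident target; the remedy is asserted, not shown. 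If you want to close the gap with minimal changes, quote Hamiltonian-connectedness of cubes of finite graphs: given finite $V_0\subset V(\Gamma)$, choose a finite connected $F\supset V_0$ together with two vertices $a,b\notin V_0$, and take a Hamiltonian path of the cube of $F$ from $a$ to $b$ (its jumps also have length at most $3$ in $\Gamma$, since distances in $F$ dominate distances in $\Gamma$); then every vertex of $V_0$ is interior, finite satisfiability holds, and your compactness argument finishes the proof. As written, however, the key step is a genuine gap.
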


A finitely generated infinite group with two or more ends has a subgroup
isomorphic to $\Z$, by Stalling's structure theorem. Thus, it is
the one ended case that makes necessary the use of translation-like actions. For these groups,  \Cref{thm:seward} was derived from more general result about graphs:
\begin{thm}[{\cite[Theorem 1.6]{seward_burnside_2014}}]
\label{thm:sewardgrafos} Let $\G$ be a connected and infinite graph
whose vertices have uniformly bounded degree. Then $\G$ admits a
transitive translation-like action by $\Z$ if and only if it is connected
and has either one or two ends. 
\end{thm}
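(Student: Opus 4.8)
The plan is to reduce the existence of a transitive translation-like action to a purely combinatorial statement and then treat the two implications separately. Since a free and transitive right action of $\Z$ on $V(\Gamma)$ is the same data as a bijection $\phi\colon\Z\to V(\Gamma)$, with $\phi(n)\ast m=\phi(n+m)$, the boundedness condition $\sup_v d(v,v\ast 1)\le C$ becomes $d(\phi(n),\phi(n+1))\le C$ for all $n$. Thus the theorem is equivalent to: $\Gamma$ admits a \emph{bi-infinite Hamiltonian path with bounded jumps}, i.e.\ a spanning double ray of the power graph $\Gamma^{C}$, for some $C$, if and only if $\Gamma$ has one or two ends. I would first record this reduction explicitly, and aim for $C=3$ in the construction so as to match the optimal bound.

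For the \emph{only if} direction I would argue directly that a bijection $\phi\colon\Z\to V(\Gamma)$ with $d(\phi(n),\phi(n+1))\le C$ forces at most two ends. Fix a geodesic filler of length $\le C$ realizing each jump $\phi(n)\to\phi(n+1)$, obtaining a connected bi-infinite walk $W$ through every vertex. Given any finite set $B$, local finiteness makes the $C$-neighbourhood $N_C(B)$ finite; a filler can meet $B$ only if one of its endpoints lies in $N_C(B)$, and since $\phi$ is injective only finitely many fillers meet $B$. Hence $W\setminus B$ has only finitely many components, exactly one infinite one coming from the positive tail and one from the negative tail. As $W$ covers $V(\Gamma)$, every infinite component of $\Gamma\setminus B$ must contain one of these two infinite pieces, so $\Gamma\setminus B$ has at most two infinite components; letting $B$ range over balls gives at most two ends, and connectedness gives at least one.

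The substantial part is the \emph{if} direction, where I would build the spanning double ray of $\Gamma^{3}$ by an exhaustion-and-gluing scheme whose finite engine is the classical theorem of Sekanina that the cube of any finite connected graph on at least three vertices is Hamiltonian-connected. Using the end hypothesis I would choose an exhaustion $K_1\subset K_2\subset\cdots$ of $V(\Gamma)$ by finite connected sets with connected complements, so the annuli $A_n=K_{n+1}\smallsetminus K_n$ are controlled. Then I maintain a finite path $\phi(-k_n)\,\cdots\,\phi(k_n)$ in $\Gamma^{3}$, with distinct vertices, that covers $K_n$ exactly; the inductive \textbf{extension step} pushes both endpoints across $A_n$ into the frontier of $K_{n+1}$, covering every new vertex exactly once, by applying Sekanina's theorem to (connected enlargements of) the annulus to produce Hamiltonian paths of the cube with prescribed entry and exit vertices. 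Since $K_n$ is never revisited, distinctness is preserved across all stages, the union is a bijection with $\Z$, and both tails escape every finite set, yielding a genuine double ray. The number of ends dictates how the two tails are routed: with two ends each tail is steered to its own end, while with one end both tails are fed into the single infinite complementary component (splitting each annulus between the two tails).

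I expect the main obstacle to be precisely this extension step: guaranteeing simultaneously that (i) every annulus vertex is covered exactly once, (ii) the endpoints can always be prescribed on the outer frontier so the process continues, and (iii) both tails can be kept strictly outward-moving and escaping to ends. The delicate point is that an annulus may be disconnected, so I would need to enlarge annuli to connected sets (using connectedness of $K_{n}$ and of the complements) before invoking Sekanina, and to verify that the end count is exactly the combinatorial obstruction to routing two escaping tails — fewer than three ends being what allows two tails to cover all escaping components, in perfect duality with the only-if argument above.
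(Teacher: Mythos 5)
Your reduction of the statement to the existence of a spanning double ray of a bounded power of $\G$ is correct, and your ``only if'' direction is sound: it is essentially the argument in the paper (after enlarging the finite set so that distinct complementary components are farther apart than the maximal jump, the two tails of the orbit can serve at most two infinite components). The problem lies in the ``if'' direction.

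The gap is in your extension step, and it is not merely technical. Your invariant is that the finite path covers an exhaustion term $K_n$ exactly, and the step pushes the two tails through the annulus $A_n$ with prescribed entry vertices (forced, within distance $3$ of the current endpoints) and prescribed exit vertices (on the outer frontier, so the induction can continue). To apply Hamiltonian-connectedness of cubes to each tail separately, you must first partition a connected enlargement of $A_n$ into \emph{two disjoint connected pieces, each containing its prescribed entry--exit pair}. Such a partition need not exist: if the relevant chunk is a star $K_{1,4}$ whose four leaves are the two entries and two exits, then any connected set containing one entry--exit pair must contain the center, and so must the other; they cannot be disjoint. Nothing in your scheme rules this out, and this two-linkage obstruction is exactly the hard part of the theorem. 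Moreover, your fix for disconnected annuli --- enlarging them to connected sets --- conflicts with your own invariant: the enlarging vertices must come either from $K_n$ (already covered, and revisiting is forbidden) or from outside $K_{n+1}$ (destroying exact coverage, with the repair cascading to later stages).

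The paper avoids both problems by changing the invariant and, crucially, by never prescribing the exits. It maintains a \emph{bi-extensible} 3-path (the complement has no finite component, and each endpoint has an unvisited vertex within distance $3$), and in the extension step (\Cref{lem:extend-3-paths}) it covers a finite connected chunk $\Lambda_0$ of a single complementary component --- chosen to contain the next target vertex and to swallow any would-be finite leftover components --- as follows: take \emph{one} Hamiltonian 3-path of $\Lambda_0$ joining the two entry points $u,v$ of the two tails, then cut it at a vertex adjacent to the unexplored region; the two halves, one reversed, extend the two tails. The pieces are automatically connected (they are subpaths of a path) and cover $\Lambda_0$ exactly, so no partition with prescribed pairs is ever needed. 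Note also that the cut requires more than Sekanina's theorem: one needs Hamiltonian 3-paths with prescribed endpoints whose first and last jumps have length at most $2$ and with no two consecutive jumps of length $3$ (\Cref{lem:lema-tecnico-finite-3-paths}), so that cutting produces valid 3-paths with extensible endpoints. If you replace your partition-then-cover step by this cut-a-single-path device and your exact-coverage invariant by bi-extensibility, your outline becomes the paper's proof; as written, the induction can get stuck.
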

The proof of this result relies strongly on
a uniform bound for the degrees of the vertices of the graph. Indeed, the actions constructed in \cite{seward_burnside_2014} have a bound on $d_{\G}(v,v\ast1)$ that depends linearly on a uniform bound for
the degree of the vertices of the graph. Here we strengthen Seward's
results by weakening the hypothesis to the locally finite case, and
improving the bound on $d_{\G}(v,v\ast1)$ to 3. 

\begin{thm}
\label{thm:t1-translation-like-action-transitivas}\label{thm:t2-translation-like-actions}
Every connected, locally finite, and infinite graph admits a translation-like action by $\Z$. Moreover, this action can be chosen transitive exactly when the graph has one or two ends. The actions
constructed satisfy that the distance between a vertex $v$ and $v\ast 1$ is at most 3.
\end{thm}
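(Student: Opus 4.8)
The goal is to prove Theorem~\ref{thm:t1-translation-like-action-transitivas}: every connected, locally finite, infinite graph $\G$ admits a translation-like action by $\Z$, which can be made transitive precisely when $\G$ has one or two ends, and with $d_\G(v, v\ast 1) \leq 3$ for all vertices $v$. Let me sketch how I would approach this.

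\textbf{Overall strategy.} A free action of $\Z$ on the vertex set $V(\G)$ is the same as a bijection $T\colon V(\G) \to V(\G)$ with no finite orbits (setting $v \ast n = T^n(v)$); the boundedness condition $\sup_v d_\G(v, T(v)) < \infty$ makes it translation-like, and the action is transitive exactly when $T$ has a single orbit. So the entire problem reduces to constructing a bijection $T$ of $V(\G)$ such that every orbit is infinite and $d_\G(v, T(v)) \leq 3$ for every $v$; for the transitive case, $T$ must be a single bi-infinite orbit, i.e.\ a Hamiltonian bi-infinite path (a spanning double ray, or a suitable union of one-way spanning rays). The bound $3$ rather than $1$ reflects that $\G$ itself need not contain a spanning double ray, so we must allow small ``jumps'' of graph-distance up to $3$.

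\textbf{Key combinatorial tool.} The plan is to exhaust $\G$ by an increasing sequence of finite connected subgraphs $\G_0 \subset \G_1 \subset \cdots$ with $\bigcup_n V(\G_n) = V(\G)$, and to build the orbit structure of $T$ by patching together finite paths through successive annuli $\G_{n+1} \setminus \G_n$. The core technical lemma I would isolate is a statement of the form: in a finite connected graph, or in an annular region between two nested finite connected subgraphs, one can route a collection of vertex-disjoint paths covering all vertices, where consecutive vertices along a path are at graph-distance at most $3$ in $\G$. This is where local finiteness (rather than uniform bounded degree) must be used: since each $\G_n$ is finite, each has only finitely many vertices and hence only finitely many edges, so within each finite stage all degrees are automatically bounded and one can greedily construct covering walks. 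The bound $3$ will come from the fact that even if two vertices in a finite piece are not adjacent, connectivity of $\G$ lets us reach a nearby ``already-used'' vertex within a short detour; I would prove the distance-$3$ bound by a careful local analysis of how a covering path enters and leaves each annulus.

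\textbf{Ends and transitivity.} The number of ends enters precisely in deciding whether the pieces can be assembled into a \emph{single} bi-infinite orbit. If $\G$ has one or two ends, I would argue that the covering paths of the successive annuli can be concatenated into one (if two ends) or folded into one via a ``there-and-back'' doubling (if one end) bi-infinite path, giving transitivity. If $\G$ has $\geq 3$ ends (including infinitely many), a counting/separation obstruction prevents a single orbit: a spanning double ray would give a partition of the space-at-infinity into at most two pieces, contradicting having three or more ends. So for $\geq 3$ ends I would only claim a free (non-transitive) action, built from several rays, and separately prove the transitive action is impossible — this ``only if'' direction is really an ends-theoretic fact and I would handle it by showing a transitive $\Z$-action forces the graph to have at most two ends.

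\textbf{Main obstacle.} I expect the hardest part to be the inductive assembly maintaining all three constraints simultaneously: keeping $T$ a global bijection with no finite orbits, keeping every jump of length $\leq 3$, and (in the transitive case) keeping everything in a single orbit while committing to finite decisions at each stage that can never be undone. The danger is creating a short finite cycle or being forced into a jump longer than $3$ when an annulus has an awkward ``dangling'' vertex whose only short connections are already consumed. Managing this will require a back-and-forth bookkeeping: at stage $n$ I would leave certain path-endpoints ``open'' in the annulus to be connected at stage $n+1$, and prove an invariant guaranteeing that the open endpoints can always be extended into $\G_{n+1}\setminus\G_n$ with jumps of length at most $3$. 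Verifying that this invariant is preserved, and that the limit map $T$ is well-defined, bijective, and orbit-infinite, is the crux of the argument.
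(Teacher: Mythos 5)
Your overall strategy --- grow finite partial orbits and pass to a limit, with transitivity governed by the number of ends and the bound $3$ coming from a Hamiltonicity-type fact for finite pieces --- has the right shape, and your reduction to a bijection of $V(\G)$ with bi-infinite orbits and jumps of length at most $3$ is correct. (One slip: orbits of a free $\Z$-action are necessarily bi-infinite, so ``unions of one-way spanning rays'' cannot arise; in both the transitive and the non-transitive case you must cover $V(\G)$ by \emph{double} rays of the cube, never by one-way rays.) Your sketch of the ``only if'' direction for three or more ends is essentially the argument the paper gives and is fine.

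The genuine gap is that the crux of the proof --- the invariant guaranteeing that every finite stage can be extended forever --- is exactly the step you defer (``prove an invariant guaranteeing that the open endpoints can always be extended''), and your fixed-exhaustion/annulus set-up makes it hard to supply. Concretely: with an exhaustion $\G_0 \subset \G_1 \subset \cdots$ chosen in advance, the ``annuli'' need not be connected, and a path that has committed to $\G_n$ can strand a finite component of $\G - \G_n$ whose only neighbours are already-consumed vertices; no bookkeeping of open endpoints in later annuli can repair this, since the stranded vertices have no short connection to anything unused. The paper's solution is to abandon a fixed exhaustion and grow the finite pieces \emph{adaptively}: the invariant (``bi-extensibility'') is that the current finite $3$-path $f$ satisfies (i) $\G - f$ has \emph{no finite connected component} and (ii) there remain unused vertices within distance $3$ of both endpoints of $f$. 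The extension step then absorbs the next target vertex, paths reaching it, and \emph{all finite components thereby created} into a single finite connected subgraph, and traverses that subgraph entirely before exiting. Doing so requires a quantitative finite-graph lemma you only gesture at: every finite connected graph admits a Hamiltonian $3$-path with \emph{prescribed distinct endpoints}, with first and last jumps of length at most $2$ and no two consecutive jumps of length $3$ (a refinement, proved by induction on the number of vertices, of the classical theorem that the cube of a finite connected graph is Hamiltonian-connected). This endpoint and jump control is what allows consecutive pieces to be concatenated while preserving the global bound $3$; without that lemma and without the no-finite-components invariant, the limit construction cannot be carried out.
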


This result implies both \Cref{thm:seward} and \Cref{thm:sewardgrafos}. Moreover, this result provides a partial answer to \cite[Problem 3.5]{seward_burnside_2014}, which asks for a characterization of graphs admitting a transitive translation-like action
by $\Z$. Thus we have solved the case of locally finite graphs, and
it only remains the case of graphs with vertices of infinite degree. 

These statements about translation-like actions can also be stated
in terms of powers of graphs. Given a graph $\G$, its $n$-th power
$\G^{n}$ the graph with the same set of vertices, and
where two vertices $u,v$ are joined if their distance in $\G$ is
at most $n$. It is well-known that the cube of every finite and connected
graph is Hamiltonian \cite{zbMATH03278210,sekanina1960ordering,karaganis_cube_1968}.
 \Cref{thm:t1-translation-like-action-transitivas} generalizes
this to infinite and locally finite graphs. That is, it shows that
the cube of a locally finite and connected graph with one or two ends
admits a bi-infinite Hamiltonian path. We mention that the part of \Cref{thm:t2-translation-like-actions} that concerns non-transitive actions has been proved in \cite[Section 4]{cohen_strongly_2021}, using the same fact about cubes of finite graphs. 

\section{Preliminaries}\label{sec:Preliminaries}
Here we fix the terminology that will be used along this chapter. In what follows, all graphs are assumed to be undirected and unlabeled. Loops and
multiple edges are allowed. The vertex set of a graph $\G$ will be
denoted by $V(\G)$, and its edge set by $E(\G)$. Each edge \textbf{joins
}a pair of vertices, and is said to be \textbf{incident }to them.
Two vertices joined by an edge are called \textbf{adjacent}. The \textbf{degree}
$\deg_{\G}(v)$ of the vertex $v$ is the number of incident edges
to $v$, where loops are counted twice. A graph is said to be \textbf{finite}
when its edge set is finite, and \textbf{locally finite} when every
vertex has finite degree. 

In our constructions we will constantly consider induced subgraphs.
Given a set of vertices $V\subset V(\G)$, the \textbf{induced subgraph
}$\G[V]$ is the subgraph of $\G$ whose vertex set equals $V$, and
whose edge set is that of all edges in $E(\G)$ whose incident vertices
lie in $V$. On the other hand, $\G-V$ stands for the subgraph of
$\G$ obtained by removing from $\G$ all vertices in $V$, and all
edges incident to vertices in $V$. That is, $\G-V$ equals the induced
subgraph $\G[V(\G)-V]$. If $\L$ is a subgraph of $\G$, we denote
by $\G-\L$ the subgraph $\G-V(\L)$. 

A \textbf{path }on $\G$\textbf{ }is an injective function $f\colon[a,b]\to V(\G)$
that sends consecutive integers to adjacent vertices, where $[a,b]\subset\Z$.
We introduce now some useful terminology for dealing with paths. We
say that $f$ \textbf{joins $f(a)$ }to $f(b)$, and define its \textbf{length
}as $b-a$. We say that $f$ \textbf{visits }the vertices in its image,
and we denote this set by $V(f)$. We denote by $\G-f$ the subgraph
$\G-V(f)$. The vertices $f(a)$ and $f(b)$ are called the \textbf{initial}
and \textbf{final} vertices of $f$, respectively. When every pair
of vertices in the graph $\G$ can be joined by a path, then we say
that $\G$ is \textbf{connected}. In this case we define the \textbf{distance}
between two vertices as the length of the shortest path joining them.
This distance induces the \textbf{path-length }metric on $V(\G)$,
which we denote by $d_{\G}$. 

A \textbf{connected component }of $\G$ is a connected subgraph of
$\G$ which is maximal for the subgraph relation. The \textbf{number
of ends }of $\G$ is the supremum of the number of infinite connected
components of $\G-V$, where $V$ ranges over all finite sets of vertices
in $\G$.

\section{Proof of \Cref{thm:t2-translation-like-actions}}\label{sec:Translation-like-actions-by}

In this section we prove \Cref{thm:t1-translation-like-action-transitivas}. That is, that every
connected, locally finite, and infinite graph admits a translation
by $\Z$, and that this action can be chosen transitive exactly when
the graph has one or two ends. The actions that we construct satisfy
that the distance between a vertex $v$ and $v\ast1$ is at most $3$. Our proof goes by constructing these actions locally, and in terms
of 3-paths: 
\begin{defn}
Let $\G$ be a graph. A \textbf{3-path} on $\G$ is an injective function
$f\colon[a,b]\subset \Z\to V(\G)$ such that consecutive integers in $[a,b]$
are mapped to vertices whose distance is at most 3. A \textbf{bi-infinite
}3-path on $\G$ is an injective function $f\colon\Z\to V(\G)$ satisfying
the same condition on the vertices. A 3-path or bi-infinite 3-path
is called \textbf{Hamiltonian }when it is also a surjective function. 
\end{defn}

It is well known that every finite and connected graph admits a Hamiltonian
3-path, where we can choose its initial and final vertex \cite{zbMATH03278210,sekanina1960ordering,karaganis_cube_1968}.
Here we will need a slight refinement of this fact:
\begin{lem}
\label{lem:lema-tecnico-finite-3-paths} Let $\G$ be a graph that
is connected and finite. For every pair of different vertices $u$
and $v$, $\G$ admits a Hamiltonian $3$-path $f$ which starts at
$u$, ends at $v$, and moreover satisfies the following two conditions:
\begin{enumerate}
\item The first and last ``jump'' have length at most $2$. That is, if
$f$ visits $w$ immediately after the initial vertex $u$, then $d_{\G}(u,w)\leq2$.
Moreover, if $f$ visits $w$ immediately before the final vertex
$v$, then $d_{\G}(w,v)\leq2$. , 
\item There are no consecutive ``jumps'' of length $3$. That is, if $f$
visits $w_{1}$, $w_{2}$ and $w_{3}$ consecutively, then $d_{\G}(w_{1},w_{2})\leq2$
or $d_{\G}(w_{2},w_{3})\leq2$. 
\end{enumerate}
\end{lem}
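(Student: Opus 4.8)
The plan is to establish Lemma~\ref{lem:lema-tecnico-finite-3-paths} by reducing it to the classical theorem that the cube of a finite connected graph is Hamiltonian, together with its known strengthenings giving control over the endpoints. The cleanest route is to prove a sufficiently strong statement by induction on the number of vertices, since the extra conditions (1) and (2) are local constraints on consecutive jumps that an inductive construction can maintain, whereas invoking an off-the-shelf theorem as a black box would not obviously yield them.

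\textbf{Setup and base cases.} First I would reduce to the case where $\G$ has no loops or multiple edges, since these do not affect distances in $d_\G$; so I work with the simple graph on the same vertex set. The base cases $|V(\G)|=2$ and $|V(\G)|=3$ are checked directly: with two vertices $u,v$ the path $u\mapsto v$ trivially satisfies (1) and (2) vacuously (here $d_\G(u,v)=1$); with three vertices connectedness gives a spanning path or star, and in every configuration one verifies the required jump bounds by hand. These small cases matter because the inductive step will peel off a bounded piece and land in them.

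\textbf{Inductive step.} Assuming the statement for all connected graphs with fewer vertices, I would fix distinct endpoints $u,v$ in $\G$. The idea is to choose a vertex or a small set of vertices to remove so that the remainder stays connected, apply the inductive hypothesis there, and then splice $u$ (and possibly $v$) back in using jumps of length at most $2$. Concretely: pick a spanning tree $T$ of $\G$ rooted appropriately, and use a leaf $\ell \neq u,v$ of $T$. Removing $\ell$ keeps $\G$ connected; by induction there is a Hamiltonian $3$-path of $\G-\ell$ from $u$ to $v$ meeting (1) and (2), and I reinsert $\ell$ next to its tree-neighbor (distance $1$), being careful that the insertion does not create two consecutive length-$3$ jumps and does not spoil the endpoint condition. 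The delicate bookkeeping is exactly condition (2): inserting a vertex between two others can turn one long jump into two, so I would insert $\ell$ adjacent to a neighbor via a distance-$1$ edge, which guarantees at least one of each newly-created pair of consecutive jumps has length $\le 1 \le 2$. Condition (1) is preserved by choosing the removed leaf to be neither $u$, $v$, nor the vertices immediately following $u$ or preceding $v$ in the inductive path, which is possible once $|V(\G)|$ is large enough (the small remaining cases being the base cases).

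\textbf{Main obstacle.} The hard part will be guaranteeing condition (2), the absence of two consecutive length-$3$ jumps, simultaneously with the endpoint condition (1) and Hamiltonicity. The standard proofs that $\G^3$ is Hamiltonian (via an Eulerian/DFS traversal of a spanning tree, as in \cite{karaganis_cube_1968,sekanina1960ordering}) naturally produce jumps of length at most $3$ but do not automatically rule out consecutive maximal jumps or control both endpoints. I would therefore prefer the DFS-traversal proof as a template and argue that whenever the traversal forces a length-$3$ jump, it corresponds to ``climbing back up'' the spanning tree after finishing a subtree, and such climbs are always immediately followed (or preceded) by a descent along a tree edge, which has length $1$; making this alternation precise, and handling the two ends of the traversal to secure (1), is the core technical content. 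As a fallback, if the direct traversal argument becomes unwieldy, the induction above isolates the difficulty into a single splicing step, which I expect to be more manageable.
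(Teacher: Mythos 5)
Your inductive strategy has a genuine gap at the reinsertion step, and it sits exactly where the difficulty of the lemma lives. Suppose you remove a spanning-tree leaf $\ell$ with tree-neighbor $t$, and the inductive hypothesis gives a Hamiltonian $3$-path $f'$ of $\G-\ell$ from $u$ to $v$ that visits $\dots, w', t, w, \dots$ consecutively. Inserting $\ell$ next to $t$ creates the jump from $t$ to $\ell$ of length $1$, but the other new jump goes from $\ell$ to a former path-neighbor of $t$, and all you can guarantee about it is $d_{\G}(\ell,w)\leq d_{\G}(\ell,t)+d_{\G}(t,w)\leq 1+3=4$. So the resulting sequence need not be a $3$-path at all; the problem is not merely condition (2). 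Your parenthetical claim that the distance-$1$ insertion ``guarantees at least one of each newly-created pair of consecutive jumps has length $\leq 1$'' only covers the pair consisting of the two new jumps; it says nothing about the pair formed by the new jump $\ell\to w$ and the old jump $w\to x$ that follows it, and nothing about the length of $\ell\to w$ itself. Even the natural patch --- use condition (2) of the inductive hypothesis to insert $\ell$ on the side of $t$ whose jump has length $\leq 2$, so that the new jump has length $\leq 3$ --- fails, because that new length-$3$ jump can now sit next to an old length-$3$ jump that was previously legal precisely because its neighbor had length $\leq 2$. There is also a circularity in your endpoint bookkeeping: you propose to choose $\ell$ avoiding ``the vertices immediately following $u$ or preceding $v$ in the inductive path'', but that path only exists after $\ell$ has been chosen and removed.

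The paper's proof avoids insertion altogether, and this is worth internalizing. It splits $\G$ at the vertex $v$: let $\G_u$ be the connected component of $\G-\{v\}$ containing $u$, and $\G_v$ the subgraph induced on the remaining vertices (which contains $v$); both are connected. Apply the inductive hypothesis on $\G_u$ from $u$ to a vertex $u'$ at distance at most $2$ from $v$, and on $\G_v$ from a neighbor $v'$ of $v$ to $v$, then concatenate the two paths (singleton pieces are handled separately). The single new jump, from $u'$ to $v'$, has length at most $2+1=3$, and --- this is the reason condition (1) is in the statement at all --- it is flanked by the last jump of the first path and the first jump of the second, both of length at most $2$ by condition (1) of the inductive hypothesis, so condition (2) survives the splice and condition (1) is inherited from the two pieces. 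End-to-end concatenation of paths whose endpoints you control inductively is robust in a way that mid-path insertion is not; to salvage your approach you would need an insertion lemma strong enough to exclude length-$4$ jumps and protect condition (2) on both sides of the insertion point, and no choice of $\ell$ based on the spanning tree alone provides that.
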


Let us review some terminology on 3-paths before proving this result.
When dealing with 3-paths, we will use the same terms introduced for
paths in the preliminaries, such as initial vertex, final vertex,
visited vertex, etc. Let $f$ and $g$ be 3-paths. We say that $f$
extends $g$ if its restriction to the domain of $g$ equals $g$.
We will extend 3-paths by concatenation, which we define as follows.
Suppose that final vertex of $f$ is at distance at most 3 from the
initial vertex of $g$, and such that $V(f)\cap V(g)=\emptyset$.
The \textbf{concatenation of $f$, $g$ }is the 3-path that extends
$f$, and after the final vertex of $f$ visits all vertices visited
by $g$ in the same order. Finally, the \textbf{inverse }of the 3-path
$f$, denoted by $-f$, is defined by $(-f)(n)=f(-n)$. The domain of $-f$ is also determined by this expression. 

\begin{proof}[Proof of \Cref{lem:lema-tecnico-finite-3-paths}]
The proof is by induction of the cardinality of $V(\G)$. The claim
clearly holds if $|V(\G)|\leq 2$. Now assume that $\G$ is a connected
finite graph with $|V(\G)|\geq 3$, and let $u$ and $v$ be two different vertices. We
consider the connected components of the graph $\G-\{v\}$ obtained
by removing the vertex $v$ from $\G$. Let $\G_{u}$ be the finite
connected component of $\G-\{v\}$ that contains $u$, and let $\G_{v}$
be the subgraph of $\G$ induced by the set of vertices $V(\G)-V(\G_{u})$.
Thus $u\in\G_{u}$, $v\in\G_{v}$, and both $\G_{u}$ and $\G_{v}$
are connected. Let us first assume that both $\G_{u}$ and $\G_{v}$
are graphs with at least two vertices. Then we can apply the inductive
hypothesis on each one of them. Let $f_{u}$ be a Hamiltonian 3-path
on $\G_{u}$ as in the statement, whose initial vertex is $u$, and
whose final vertex $u'$ is at distance to $v$ at most 2. 
Let $f_{v}$ be a Hamiltonian
3-path on $\G_{v}$ as in the statement, whose initial vertex $v'$ is adjacent
to $v$, and whose final vertex is $v$.  
We claim that the 3-path $f$ obtained by concatenating $f_{u}$,
$f_{v}$ verifies the required conditions. It is clear that  $d_{\Gamma}(u',v')\leq 3$, and thus $f$ is a 3-path. It is also clear that $f$ verifies the first condition in the statement. Regarding the second condition, it suffices to show that the ``jump'' from $u'$ to $v'$ is between two ``jumps'' with length at most two. That is, that the vertex visited by $f$ before $u'$ is at distance at most 2 from $u'$, and that the vertex visited by $f$ after $v'$ is at distance at most $2$ from $v'$. Indeed, this follows from the fact that both $f_u$ and $f_v$ verify the first condition in the statement. This finishes the argument in the case that both $\G_u$ and $\G_v$ have at least two vertices. If $\G_u$ has one vertex and $\G_v$ has at least two vertices then we modify the previous procedure by redefining $f_u$ as the 3-path that only visits $u$. It is easy to verify that then the concatenation of $f_u$ and $f_v$ is a 3-path in $\G$ verifying the two numbered conditions. The case where $\G_v$ has one vertex and $\G_u$ has at least two vertices is symmetric, and the case where both $\G_u$ and $\G_v$ have one vertex is excluded since we assumed $|V(G)|\geq 3$.
\end{proof}
We will define bi-infinite 3-paths by extending finite ones iteratively.
The following definition will be key for this purpose:
\begin{defn}
\label{def:bi-right-extensible}Let $f$ be a 3-path on a graph $\G$.
We say that $f$ is \textbf{bi-extensible} if the following conditions
are satisfied:
\begin{enumerate}
\item $\G-f$ has no finite connected component.
\item There is a vertex $u$ in $\G-f$ at distance at most 3 from the final
vertex of $f$. 
\item There is a vertex $v\ne u$ in $\G-f$ at distance at most 3 from
the initial vertex of $f$. 
\end{enumerate}
If only the first two conditions are satisfied, we say that $f$ is
\textbf{right-extensible}.
\end{defn}

We will now prove some elementary facts about the existence of 3-paths
that are bi-extensible and right-extensible. 
\begin{lem}
\label{lem:existencia-right-extensible-1}Let $\G$ be a graph that
is infinite, connected, and locally finite. Then for every pair of
vertices $u$ and $v$ in $\G$, there is a right-extensible 3-path
whose initial vertex is $u$, and which visits $v$. 
\end{lem}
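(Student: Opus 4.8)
The plan is to reduce the construction to the finite case handled by \Cref{lem:lema-tecnico-finite-3-paths}, after first carving out a finite connected region of $\G$ whose complement has no finite pieces. The core geometric fact I will exploit is that removing a finite vertex set from a connected, locally finite graph leaves only finitely many connected components, each of which touches the removed set.

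First I would produce the finite region. Since $\G$ is connected, choose a finite connected subgraph $B$ containing both $u$ and $v$ (for instance the image of a path from $u$ to $v$). Because $B$ is finite and $\G$ is locally finite, only finitely many vertices of $\G-B$ are adjacent to $B$; since $\G$ is connected, every component of $\G-B$ must contain such a vertex, so $\G-B$ has only finitely many components. Let $B'$ be the union of $B$ with all \emph{finite} components of $\G-B$. Then $B'$ is finite (a finite union of finite sets), and it is connected because each finite component of $\G-B$ is adjacent to $B$. Moreover, the components of $\G-B'$ are exactly the infinite components of $\G-B$, so $\G-B'$ has \textbf{no} finite component. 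Note $\G-B'\neq\varnothing$ because $\G$ is infinite and $B'$ is finite.

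Next I would select a good terminal vertex $w$ and invoke the finite lemma. If $B'=\{u\}$ (forced only when $u=v$ and $u$'s removal already leaves all components infinite), take $f$ to be the one-vertex $3$-path at $u$; it trivially visits $v=u$, its complement $\G-B'$ has only infinite components, and $u$ has a neighbor in $\G-B'$, so $f$ is right-extensible. Otherwise $|B'|\geq 2$, and I distinguish two cases according to the boundary of $B'$. There is at least one edge from $B'$ to $\G-B'$; if some such edge has its $B'$-endpoint $w\neq u$, then $d_\G(w,\G-B')=1\leq 3$, and I apply \Cref{lem:lema-tecnico-finite-3-paths} to the connected finite graph $B'$ to obtain a Hamiltonian $3$-path $f$ from $u$ to $w$. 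If instead $u$ is the unique $B'$-endpoint of every boundary edge, pick a neighbor $w\neq u$ of $u$ inside $B'$ (which exists since $B'$ is connected with $|B'|\geq 2$) and a neighbor $x$ of $u$ in $\G-B'$; then $d_\G(w,x)\leq d_\G(w,u)+d_\G(u,x)\leq 2\leq 3$, and again \Cref{lem:lema-tecnico-finite-3-paths} gives a Hamiltonian $3$-path $f$ of $B'$ from $u$ to $w$. In either case $f$ is a $3$-path of $\G$ as well, since distances in the induced subgraph $B'$ dominate distances in $\G$.

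Finally I would verify right-extensibility of $f$. Because $f$ is Hamiltonian on $B'$ we have $\G-f=\G-B'$, which has no finite connected component, giving the first condition of \Cref{def:bi-right-extensible}; and by the choice of $w$ the final vertex of $f$ lies within distance $3$ of a vertex of $\G-f$, giving the second. Since $v\in B'=V(f)$, the path visits $v$, and its initial vertex is $u$ by construction, completing the proof. The only genuinely delicate point is the bookkeeping around the terminal vertex $w$ in the degenerate situation where $u$ is a cut vertex that is the sole boundary of $B'$; the neighbor-of-$u$ trick above resolves it while keeping the jump length within the bound $3$, which is exactly what makes the resulting bi-infinite $3$-paths (assembled in the subsequent construction) satisfy $d_\G(v,v\ast 1)\leq 3$.
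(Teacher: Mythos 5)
Your proof is correct and follows essentially the same route as the paper's: take a path from $u$ to $v$, absorb the finite components of its complement to obtain a finite connected graph $B'$ whose complement has only infinite components, and apply \Cref{lem:lema-tecnico-finite-3-paths} to get a Hamiltonian $3$-path on $B'$ ending at a vertex close to $\G-B'$. Your explicit treatment of the degenerate cases (where $B'=\{u\}$, or where $u$ is the only endpoint in $B'$ of every boundary edge, which would force $w=u$) is in fact more careful than the paper's own proof, which applies the finite lemma to the pair $u,w$ without ruling out $w=u$; your neighbor-of-$u$ trick closes that small gap while keeping the final jump within the required bound.
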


\begin{proof}
As $\G$ is connected, there is a path $f$ joining $u$ to $v$.
Now define $\L$ as the graph induced in $\G$ by the set of vertices
that are visited by $f$, or that lie in a finite connected component
of $\G-f$. Notice that as $\G$ is locally finite, there are finitely
many such connected components, and thus $\L$ is a finite and connected
graph. By construction, $\G-\L$ has no finite connected component. 

The desired 3-path will be obtained as a Hamiltonian 3-path on $\L$.
Indeed, as $\G$ is connected, there is a vertex $w$ in $\L$ that
is adjacent to some vertex in $\G-\L$. By \Cref{lem:lema-tecnico-finite-3-paths}
there is a 3-path $f'$ which is Hamiltonian on $\L$, starts at $u$
and ends in $w$. We claim that $f'$ is right-extensible. Indeed,
our choice of $\L$ ensures that $\G-f'$ has no finite connected
component, and our choice of $w$ ensures that the final vertex of
$f'$ is adjacent to a vertex in $\G-f'$. 
\end{proof}
\begin{lem}
\label{lem:existencia-bi-extensible} Let $\G$ be a graph that is
infinite, connected, and locally finite. Then for every vertex $u$
in $\G$, there is a bi-extensible 3-path in $\G$ that visits $u$. 
\end{lem}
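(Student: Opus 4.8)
The plan is to reuse the finite ``core'' construction from the proof of \Cref{lem:existencia-right-extensible-1} and then run a Hamiltonian $3$-path across it whose two endpoints are each close to the (infinite) exterior. First I would fix any vertex $v\neq u$ (which exists since $\Gamma$ is infinite), choose a path $P$ joining $u$ to $v$, and let $\mathcal{L}$ be the subgraph induced by $V(P)$ together with all vertices lying in finite connected components of $\Gamma-V(P)$. Exactly as argued in \Cref{lem:existencia-right-extensible-1}, the graph $\mathcal{L}$ is finite and connected, contains $u$, has at least two vertices, and $\Gamma-\mathcal{L}$ has no finite connected component. The point is that a Hamiltonian $3$-path on $\mathcal{L}$ automatically visits $u$ and automatically satisfies the first condition of \Cref{def:bi-right-extensible}, since removing its image from $\Gamma$ removes exactly $V(\mathcal{L})$.

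The heart of the argument, and the step I expect to be the main obstacle, is producing two \emph{distinct} endpoints for this $3$-path, each within distance $3$ of two \emph{distinct} vertices of $\Gamma-\mathcal{L}$; this is precisely what upgrades right-extensibility (one free end) to bi-extensibility (two free ends). Here I would exploit the slack afforded by $3$-paths rather than ordinary paths. Since $\Gamma$ is connected and $\mathcal{L}$ is a proper finite subgraph, there is a vertex $w\in\mathcal{L}$ adjacent to some $x\in\Gamma-\mathcal{L}$. Because $\Gamma-\mathcal{L}$ has no finite component, the connected component of $x$ is infinite, so $x$ has a neighbour $x'\neq x$ which also lies in $\Gamma-\mathcal{L}$. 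Finally, as $\mathcal{L}$ is connected with at least two vertices, $w$ has a neighbour $w'\neq w$ inside $\mathcal{L}$. Taking $w_1=w$ paired with $x'$ and $w_2=w'$ paired with $x$, the triangle inequality yields $d_{\Gamma}(w_1,x')\leq 2$ and $d_{\Gamma}(w_2,x)\leq 2$, while $w_1\neq w_2$ and $x\neq x'$.

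To finish, I would invoke \Cref{lem:lema-tecnico-finite-3-paths} on the finite connected graph $\mathcal{L}$ with the distinct vertices $w_1,w_2$ to obtain a $3$-path $f$ that is Hamiltonian on $\mathcal{L}$, starts at $w_1$, and ends at $w_2$; since distances in the subgraph $\mathcal{L}$ dominate those in $\Gamma$, the function $f$ is also a $3$-path in $\Gamma$, and being Hamiltonian on $\mathcal{L}\ni u$ it visits $u$. It then remains only to check the three conditions of \Cref{def:bi-right-extensible}: condition (1) holds because $\Gamma-f=\Gamma-\mathcal{L}$ has no finite component; condition (2) holds because the final vertex $w_2$ satisfies $d_{\Gamma}(w_2,x)\leq 2\leq 3$ with $x\in\Gamma-f$; and condition (3) holds because the initial vertex $w_1$ satisfies $d_{\Gamma}(w_1,x')\leq 2\leq 3$ with $x'\in\Gamma-f$ and $x'\neq x$. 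Hence $f$ is a bi-extensible $3$-path visiting $u$, which is what we wanted.
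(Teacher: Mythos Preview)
Your proof is correct and follows essentially the same approach as the paper's. The only differences are cosmetic: the paper builds $\mathcal{L}$ from $\{u,v\}$ with $v$ adjacent to $u$ (rather than from a path to an arbitrary $v$) and then runs the Hamiltonian $3$-path from a boundary vertex $w'$ to a neighbour of $w'$, declaring the bi-extensibility check ``clear'' where you carefully spell out the two distinct exterior witnesses $x,x'$.
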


\begin{proof}
Let $v$ be a vertex in $\G$ that is adjacent to $u$, with $v\neq u$.
Let $\L$ be the subgraph of $\G$ induced by the set of vertices
that lie in a finite connected component of $\G-\{u,v\}$, or in $\{u,v\}$.
As $\G$ is locally finite, there are finitely many such connected
components, and thus $\L$ is a finite and connected subgraph of $\G$.
By construction, $\G-\L$ has no finite connected component. 

The desired 3-path will be obtained as a Hamiltonian 3-path on $\L$.
Indeed, as $\G$ is connected there are two vertices $w\in V(\G-\L)$
and $w'\in V(\L)$, with $w$ adjacent to $w'$ in $\G$. As $\L$
has at least two vertices, we can invoke \Cref{lem:lema-tecnico-finite-3-paths}
to obtain a 3-path $f$ that is Hamiltonian on $\L$, whose initial
vertex is $w'$, and whose final vertex is adjacent to $w'$. It is
clear that then $f$ is a bi-extensible 3-path in $\G$. 
\end{proof}
Our main tool to construct bi-infinite 3-paths is the following result,
which shows that bi-extensible 3-paths can be extended to larger bi-extensible
3-paths.
\begin{lem}
\label{lem:extend-3-paths}Let $\G$ be a graph that is infinite,
connected, and locally finite. Let $f$ be a bi-extensible 3-path
on $\G$, and let $u$ and $v$ be two different vertices in $\G-f$
whose distance to the initial and final vertex of $f$ is at most
3, respectively. Let $w$ be a vertex in the same connected component
of $\G-f$ that some of $u$ or $v$. Then there is a 3-path $f'$
which extends $f$, is bi-extensible on $\G$, and visits $w$. Moreover,
we can assume that the domain of $f'$ extends that of $f$ in both
directions.
\end{lem}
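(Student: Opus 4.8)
The plan is to extend the bi-extensible 3-path $f$ by adding finitely many vertices on the appropriate side so as to reach $w$, while carefully preserving bi-extensibility. First I would reduce to a one-sided problem. Since $u$ lies within distance $3$ of the initial vertex and $v$ within distance $3$ of the final vertex, and $w$ is in the same connected component of $\Gamma - f$ as one of them, I may assume without loss of generality that $w$ is in the component of $v$; the case of $u$ is symmetric by replacing $f$ with $-f$. Thus I only need to extend $f$ \emph{forward} (past its final vertex) to reach $w$, while making sure the resulting 3-path is still bi-extensible.

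The key step is to isolate a finite connected subgraph of $\Gamma - f$ containing $v$ and $w$ that I can traverse by a Hamiltonian 3-path using \Cref{lem:lema-tecnico-finite-3-paths}. Let $C$ be the connected component of $\Gamma - f$ containing $v$ (hence $w$); by bi-extensibility $C$ is infinite. I would take a path inside $C$ joining $v$ to $w$, then let $\mathcal L$ be the subgraph induced by the vertices on this path together with all vertices lying in finite connected components of $C$ minus that path. As in the proof of \Cref{lem:existencia-right-extensible-1}, local finiteness guarantees $\mathcal L$ is finite and connected, and $C - \mathcal L$ has no finite connected component. Since $C$ is infinite and connected there is a vertex $w' \in \mathcal L$ adjacent to some vertex of $C - \mathcal L$. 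Applying \Cref{lem:lema-tecnico-finite-3-paths} to $\mathcal L$, I obtain a Hamiltonian 3-path $g$ on $\mathcal L$ starting at $v$, passing through $w$, and ending at $w'$. I then define $f'$ as the concatenation of $f$ with $g$: this is legitimate because $d_\Gamma(\text{final vertex of }f, v) \leq 3$ and $V(f) \cap V(g) = \varnothing$.

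It remains to check that $f'$ is bi-extensible, which is the main obstacle. For the initial side, the first two conditions of \Cref{def:bi-right-extensible} are inherited from $f$ since the initial vertex is unchanged and the component structure of $\Gamma - f'$ near the start is unaffected: the vertex $u$ still witnesses condition (3) provided $u \notin V(g)$, and I would choose $\mathcal L$ inside the component $C$ of $v$, which is disjoint from the component of $u$ unless $u,v$ share a component. In the delicate case where $u$ and $v$ lie in the same component of $\Gamma - f$, I must ensure $u$ is not consumed by $g$; this is arranged by including $u$ among the vertices I protect, i.e. by choosing the path from $v$ to $w$ and the set $\mathcal L$ to avoid $u$ (possible since $C$ is infinite and I only need a \emph{single} alternative witness vertex near the initial end, which exists because removing the finite set $\mathcal L$ from the infinite component still leaves infinitely many vertices, some within distance $3$ of the initial vertex by connectedness). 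For the final side, condition (1) that $\Gamma - f'$ has no finite component follows from the construction of $\mathcal L$ (the only vertices removed beyond $V(f)$ lie in $\mathcal L$, and $C - \mathcal L$ has no finite component, while the other components of $\Gamma - f$ are untouched), and conditions (2)–(3) follow from the choice of $w'$ adjacent to $C - \mathcal L$.

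Finally, I would verify the ``moreover'' clause. The extension just described enlarges the domain of $f$ only on the forward (final) side. To also extend the domain backward, I would perform the symmetric construction at the initial end: after reaching $w$, apply the same argument to $-f'$ to prepend at least one vertex, using the witness $u$ (now the ``final'' vertex of $-f'$). Since each application of the lemma strictly enlarges the domain in the chosen direction and preserves bi-extensibility, composing the forward and backward extensions yields a bi-extensible 3-path $f'$ visiting $w$ whose domain extends that of $f$ in both directions, as required.
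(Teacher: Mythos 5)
Your proof is sound in the case where $u$ and $v$ lie in different connected components of $\G-f$: there the forward extension leaves $u$'s component untouched, and this matches the paper's first case (handled by two applications of \Cref{lem:existencia-right-extensible-1}). The gap is in the same-component case, which is precisely the hard case. A forward-only extension can be forced to consume \emph{every} vertex of $\G-f$ within distance $3$ of the initial vertex of $f$, after which condition (3) of \Cref{def:bi-right-extensible} fails with no way to recover. Neither of your repairs works: choosing the $v$--$w$ path to avoid $u$ is impossible when $u$ is a cut vertex of the common component $C$ separating $v$ from $w$ (and ``protecting'' $u$ also conflicts with condition (1), since $u$ may then lie in a finite component of $C$ minus your path, which $\mathcal L$ is obliged to absorb); and your fallback claim that some surviving vertex lies within distance $3$ of the initial vertex ``by connectedness'' is false, since connectedness gives no distance bound. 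Concretely: let $f$ be the path $f_0,f_1$, and let $\G$ consist of $f_0,f_1$, a ray $v,u,a,b,w,x_1,x_2,\dots$, and the additional edges $f_0u$ and $f_1v$. Then $f$ is bi-extensible with witnesses $u$ (distance $1$ from $f_0$) and $v$ (distance $1$ from $f_1$), and $u,v,w$ all lie in the unique component of $\G-f$. Any forward extension of $f$ visiting $w$ either skips a vertex of $\{v,u,a,b\}$, creating a finite component and violating condition (1), or swallows all of $\{v,u,a,b,w\}$, after which the nearest surviving vertex $x_1$ is at distance $5$ from $f_0$ and condition (3) fails. So no forward-only extension visiting $w$ is bi-extensible, and your subsequent backward step can never be started.

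This is why the paper's argument in the same-component case is irreducibly two-sided. It chooses a finite connected $\L_0\subset C$ containing $u$, $v$ and $w$ with $C-\L_0$ free of finite components, takes a Hamiltonian 3-path $p$ on $\L_0$ \emph{from $u$ to $v$} (\Cref{lem:lema-tecnico-finite-3-paths}), and splits $p$ at a vertex $u_0$ chosen adjacent to $C-\L_0$; the extra clauses of \Cref{lem:lema-tecnico-finite-3-paths} (short first and last jumps, no two consecutive jumps of length $3$) guarantee that the vertex $p$ visits next to $u_0$ is within distance $2$ of $u_0$, so \emph{both} halves of the split terminate near $C-\L_0$. One half is prepended to $f$ through $u$ and the other appended through $v$, so both endpoints of the resulting 3-path sit next to the infinite remainder; this is what preserves bi-extensibility, and it yields the ``moreover'' clause for free. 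In the example above this produces, e.g., the 3-path $w,b,u,f_0,f_1,v,a$, which is bi-extensible, whereas no one-sided extension is. Your reduction to a one-sided problem cannot be patched; the simultaneous split is the essential idea of the proof.
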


\begin{proof}
If $u$ and $v$ lie in different connected components of $\G-f$,
then then the claim is easily obtained by applying \Cref{lem:existencia-right-extensible-1}
on each of these components. Indeed, by \Cref{lem:existencia-right-extensible-1}
there are two right-extensible 3-paths $g$ and $h$ in the corresponding
connected components of $\G-f$, such that the initial vertex of $g$
is $u$, the initial vertex of $h$ is $v$, and some of them visits
$w$. Then the concatenation of $-g$, $f$ and $h$ satisfies the
desired conditions. 

We now consider the case where $u$ and $v$ lie in the same connected
component of $\G-f$. This graph will be denoted $\L$. Note that
$\L$ is infinite because $f$ is bi-extensible. We claim that there
are two right-extensible 3-paths on $\L$, $g$ and $h$, satisfying
the following list of conditions: the initial vertex of $g$ is $u$,
the initial vertex of $h$ is $v$, some of them visits $w$, and
$V(g)\cap V(h)=\emptyset$. In addition, $(\L-g)-h$ has no finite
connected component, and has two different vertices $u'$ and $v'$
such that $u'$ is at distance at most 3 from the last vertex of $g$,
and $v'$ is at distance at most 3 from the last vertex of $h$. Suppose
that we have $g,h$ as before. Then we can define a 3-path $f'$ by
concatenating $-g$, $f$ and then $h$. It is clear that then $f'$
satisfies the conditions in the statement. 

We now construct $g$ and $h$. We start by taking a connected finite
subgraph $\L_{0}$ of $\L$ which contains $u,v,w$ and such that
$\L-\L_{0}$ has no finite connected component. The graph $\L_{0}$
can be obtained, for instance, as follows. As $\Lambda$ is connected,
we can take a path $f_{u}$ from $u$ to $w$, and a path $f_{v}$
from $v$ to $w$. Then define $\L_{0}$ as the graph induced by the
vertices in $V(f_{v})$, $V(f_{u})$, and all vertices in the finite
connected components of $(\L-f_{v})-f_{u}$. 

Let $p$ be a Hamiltonian 3-path on $\L_{0}$ from $u$ to $v$, as
in \Cref{lem:lema-tecnico-finite-3-paths}. The desired 3-paths
$f$ and $g$ will be obtained by ``splitting'' $p$ in two. As
$\L$ is connected, there are two vertices $u_{0}\in V(\L_{0})$,
$v'\in V(\L-\L_{0})$ such that $u_{0}$ and $v$ are adjacent in
$\L$. By the conditions in \Cref{lem:lema-tecnico-finite-3-paths},
there is a vertex $v_{0}$ in $V(\L_{0})$ whose distance from $u_{0}$
is at most 2, and such that $p$ visits consecutively $\{u_{0},v_{0}\}$.
We will assume that $p$ visits $v_{0}$ after visiting $u_{0}$,
the other case being symmetric. As $\L-\L_{0}$ has no finite connected
component, there is a vertex $u'$ in $\L-\L_{0}$ that is adjacent
to $v'$. Thus, $u_{0}$ is at distance at most 2 from $u'$, and
$v_{0}$ is at distance at most 3 from $v'$. Now we define $g$ and
$h$ by splitting $p$ at the vertex $u_{0}$. More precisely, let
$[a,c]$ be the domain of $p$, and let $b$ be such that $p(b)=u_{0}$.
Then $h$ is defined as the restriction of $p$ to $[a,b]$, and we
define $g$ by requiring $-g$ to be the restriction of $p$ to $[b+1,c].$
Thus $h$ is a 3-path from $v$ to $v_{0}$, and $g$ is a 3-path
from $u$ to $u_{0}$. By our choice of $\L_{0}$ and $p$, the 3-paths
$h$ and $g$ satisfy the mentioned list of conditions, and thus the
proof is finished. 
\end{proof}
When the graph has one or two ends, the hypotheses of \Cref{lem:extend-3-paths}
on $u,v$ and $w$ are trivially satisfied. We obtain a very simple
and convenient statement: we can extend a bi-extensible 3-path so
that it visits a vertex of our choice. 
\begin{cor}
\label{cor:one-two-ends}Let $\G$ be a graph that is infinite, connected,
locally finite, and whose number of ends is either $1$ or $2$. Then
for every bi-extensible 3-path $f$ and vertex $w$, there is a bi-extensible
3-path on $\G$ that extends $f$ and visits $w$. We can assume that
the domain of the new 3-path extends that of $f$ in both directions. 
\end{cor}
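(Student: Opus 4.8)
\textbf{Proof plan for \Cref{cor:one-two-ends}.}

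The plan is to deduce this corollary directly from \Cref{lem:extend-3-paths} by checking that, under the one-or-two-ends hypothesis, the hypotheses of that lemma are automatically satisfiable for the given $f$ and the target vertex $w$. Recall that \Cref{lem:extend-3-paths} requires, besides the bi-extensible 3-path $f$, two \emph{distinct} vertices $u,v\in\G-f$ lying within distance $3$ of the final and initial vertices of $f$ respectively, together with a vertex $w$ lying in the same connected component of $\G-f$ as one of $u$ or $v$. Since $f$ is bi-extensible, conditions (2) and (3) of \Cref{def:bi-right-extensible} already furnish exactly such a pair $u,v$: there is $u\in\G-f$ within distance $3$ of the final vertex and $v\neq u$ within distance $3$ of the initial vertex. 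The only gap between what bi-extensibility hands us and what \Cref{lem:extend-3-paths} demands is the requirement that the chosen $w$ lie in the same component of $\G-f$ as $u$ or as $v$.

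The main point, then, is to arrange that the target vertex $w$ lies in the same component of $\G-f$ as one of these two anchor vertices. First I would dispose of the trivial case $w\in V(f)$: then $f$ already visits $w$, and $f$ itself is the desired extension (extending its domain in both directions is harmless, e.g.\ by one further application of bi-extensibility). So assume $w\in V(\G-f)$. Here is where the hypothesis on ends does the work. The 3-path $f$ is bi-extensible, so $\G-f$ has no finite connected component by condition (1) of \Cref{def:bi-right-extensible}; every component of $\G-f$ is therefore infinite. Because $V(f)$ is a finite set of vertices (a 3-path has finite domain) and $\G$ has at most two ends, the graph $\G-f$ can have at most two infinite components, and all of its components are infinite. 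Thus $\G-f$ has at most two components in total.

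The key step is to show that $u$ and $v$ can be taken to exhaust these components, so that whichever component contains $w$ also contains $u$ or $v$. If $\G-f$ is connected, then $u$, $v$ and $w$ all lie in that single component and the hypothesis of \Cref{lem:extend-3-paths} holds immediately. If $\G-f$ has exactly two components $C_1,C_2$, I would argue that the anchors can be chosen one in each: since $f$ is a 3-path on a connected graph, both its initial and final vertices are within distance $3$ of each infinite component adjacent to the ends of $f$; more robustly, one may simply observe that whatever component contains $w$, say $C_i$, must contain a vertex within distance $3$ of the initial or final vertex of $f$ (otherwise $C_i$ would be separated from $f$ in $\G$, contradicting connectedness of $\G$ together with the fact that $f$ borders $C_i$). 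Re-selecting $u$ or $v$ inside $C_i$ as that vertex, while keeping the other anchor in the remaining component, preserves distinctness and the distance bounds, and now $w$ shares a component with an anchor. With the hypotheses of \Cref{lem:extend-3-paths} verified, that lemma yields a bi-extensible 3-path $f'$ extending $f$, visiting $w$, and extending the domain of $f$ in both directions, which is exactly the claim. I expect the only delicate point to be the component-counting and the re-selection of anchors in the two-ended case, ensuring that the chosen $u,v$ remain at distance at most $3$ from the respective endpoints of $f$; this is where the finiteness of $V(f)$ and the at-most-two-ends hypothesis must be combined carefully.
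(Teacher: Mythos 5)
Your reduction to \Cref{lem:extend-3-paths}, and your handling of both the trivial case $w\in V(f)$ and the one-ended case, are correct and match what the paper intends: bi-extensibility excludes finite components of $\G-f$, one-endedness excludes a second infinite component, so $\G-f$ is connected and the pair $u,v$ supplied by \Cref{def:bi-right-extensible} works for every $w$. The gap is in your two-component case, precisely at the claim that the component $C_i$ of $\G-f$ containing $w$ \emph{must contain a vertex within distance $3$ of the initial or final vertex of $f$}. Your justification --- that otherwise $C_i$ would be separated from $f$, contradicting connectedness of $\G$ --- only yields a vertex of $C_i$ adjacent to \emph{some} vertex visited by $f$, and that vertex may lie in the middle of $f$, far from both endpoints. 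No re-selection of anchors can get around this, because the claim itself is false. Concretely, let $\G$ be the two-ended ladder with vertex set $\Z\times\{0,1\}$ and edges $(n,i)\sim(n+1,i)$ and $(n,0)\sim(n,1)$, and let $f$ be the path $(0,0),(1,0),\dots,(10,0),(10,1),(9,1),\dots,(1,1)$. Then $\G-f$ has exactly the two infinite components $C_1=\{(n,i) : n\le -1\}\cup\{(0,1)\}$ and $C_2=\{(n,i) : n\ge 11\}$, and $f$ is bi-extensible (take $(0,1)$, at distance $1$ from the final vertex $(1,1)$, and $(-1,0)$, at distance $1$ from the initial vertex $(0,0)$). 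But every vertex of $C_2$ lies at distance at least $10$ from both endpoints of $f$, and at distance at least $11$ from every vertex of $C_1$; so, by induction on the appended vertices, every 3-path extending $f$ stays inside $V(f)\cup C_1$. Hence no extension of $f$ --- bi-extensible or not --- visits $(15,0)\in C_2$.

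This is worth comparing with the paper, whose entire proof of \Cref{cor:one-two-ends} is the preceding sentence asserting that the hypotheses of \Cref{lem:extend-3-paths} on $u$, $v$ and $w$ are ``trivially satisfied'' when $\G$ has one or two ends. For one end this is right, and your argument proves it; for two ends it is not, and the example above shows the corollary is false as stated. The statement can only be saved by strengthening the hypothesis on $f$ --- for instance, requiring that when $\G-f$ is disconnected, its two components each contain a vertex within distance $3$ of a distinct endpoint of $f$ --- and by checking that the extension procedure of \Cref{lem:extend-3-paths} and the iteration in \Cref{prop:Hamiltonian-infinite-3paths} preserve this invariant. (The computable treatment in \Cref{chap:computable-translation-like-actions} already imposes extra two-ended hypotheses on the base path $f_0$ in \Cref{prop:bi-extensible-decidible-2-end} and \Cref{prop:hamiltonian-3-path-calculable}, but those alone do not exclude the configuration above.) So the step you flagged as ``the only delicate point'' is indeed the crux --- but the failure there is not in your bookkeeping; it is that the two-ended half of the statement cannot be proved as written.
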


We are now in position to prove some results about bi-infinite 3-paths.
We start with the Hamiltonian case, which is obtained by iterating
\Cref{cor:one-two-ends}. When we deal with bi-infinite 3-paths,
we use the same notation and abbreviations introduced before for 3-paths,
as long as they are well defined. 
\begin{prop}
\label{prop:Hamiltonian-infinite-3paths}Let $\G$ be a graph that
is infinite, connected, locally finite, and whose number of ends is
either $1$ or $2$. Then $\G$ admits a bi-infinite Hamiltonian 3-path. 
\end{prop}

\begin{proof}
Let $(v_{n})_{n\in\N}$ be a numbering of the vertex set of $\G$.
We define a sequence of bi-extensible 3-paths $(f_{n})_{n\in\N}$
on $\G$ recursively. We define $f_{0}$ as a bi-extensible 3-path
which visits $v_{0}$. The existence of $f_{0}$ is guaranteed by
\Cref{lem:existencia-bi-extensible}. Now let $n\geq0$, and
assume that we have defined a 3-path $f_{n}$ that visits $v_{n}$.
We define $f_{n+1}$ as a bi-extensible 3-path on $\G$ which extends
$f_{n}$, visits $v_{n+1}$, and whose domain extends the domain of
$f_{n}$ in both directions. The existence such a 3-path is guaranteed
by \Cref{cor:one-two-ends}. We have obtained a sequence $(f_{n})_{n\in\N}$
such that for all $n$, $f_{n}$ visits $v_{n}$, and $f_{n+1}$ extends
$f_{n}$. With this sequence we define a bi-infinite 3-path $f\colon\Z\to V(\G)$
by setting $f(k)=f_{n}(k)$, for $n$ big enough. Note that $f$ is
well defined because $f_{n+1}$ extends $f_{n}$ as a function, and
the domains of $f_{n}$ exhaust $\Z$. By construction, $f$ visits
every vertex exactly once, and thus it is Hamiltonian. 
\end{proof}
We now proceed with the non Hamiltonian case, where there are no restrictions
on ends. We first prove that we can take a bi-infinite 3-path whose
deletion leaves no finite connected component. 

\begin{lem}
\label{lem:bi-infinte-3-path-removable}Let $\G$ be a graph that
is infinite, connected, and locally finite. Then for every vertex
$v$, there is a bi-infinite 3-path $f$ that visits $v$, and such
that $\G-f$ has no finite connected component. 
\end{lem}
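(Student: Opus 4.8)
The goal is to construct a bi-infinite 3-path $f$ that visits a prescribed vertex $v$ and such that $\G - f$ has no finite connected component. The plan is to mimic the proof of \Cref{prop:Hamiltonian-infinite-3paths}, replacing the use of \Cref{cor:one-two-ends} (which required one or two ends) with the more general extension result \Cref{lem:extend-3-paths}. The price we pay for dropping the hypothesis on ends is that we can no longer demand that the 3-path visit an arbitrary target vertex $v_n$ at each stage; instead, the extension lemma only lets us grow the 3-path into a chosen connected component of its complement while preserving bi-extensibility. Since our aim here is not Hamiltonicity but merely the property that $\G - f$ leaves no finite connected component, this weaker control will suffice.

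First I would invoke \Cref{lem:existencia-bi-extensible} to obtain an initial bi-extensible 3-path $f_0$ that visits $v$. Recall that by \Cref{def:bi-right-extensible}, bi-extensibility already guarantees that $\G - f_0$ has no finite connected component, and provides vertices $u_0 \neq v_0$ in $\G - f_0$ at distance at most $3$ from the final and initial vertices of $f_0$, respectively. The idea is then to build a nested sequence $(f_n)_{n\in\N}$ of bi-extensible 3-paths, each extending the previous one in both directions, and to arrange a bookkeeping (diagonalization) scheme ensuring that every vertex of $\G$ is eventually either visited by some $f_n$ or permanently separated into an infinite component. Concretely, at stage $n+1$ I would fix a numbering $(w_n)_{n\in\N}$ of $V(\G)$, pick the least-indexed vertex $w$ not yet visited by $f_n$, and apply \Cref{lem:extend-3-paths} with the vertices $u,v$ supplied by the bi-extensibility of $f_n$ and with target $w$ chosen in the same connected component of $\G - f_n$ as one of $u,v$. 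When $w$ happens to lie in a different component, the extension lemma still applies (its proof handles that case directly), so we obtain $f_{n+1}$ bi-extensible, extending $f_n$ in both directions.

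The bi-infinite 3-path $f$ is then defined as the pointwise limit $f(k) = f_n(k)$ for $n$ large enough; this is well-defined because each $f_{n+1}$ extends $f_n$ as a function and the domains exhaust $\Z$. The remaining and main point is to verify that $\G - f$ has no finite connected component. Here I expect the central obstacle to lie: each $\G - f_n$ has no finite component, but this is a property of the finite stages, and one must argue it passes to the limit. The key observation is that a finite connected component $K$ of $\G - f$ is a finite set of vertices; since the domains of the $f_n$ exhaust $\Z$ and $\G$ is locally finite, the finitely many vertices of $K$ together with their (finitely many) neighbours are all visited or classified at some finite stage $N$, after which $f$ and $f_N$ agree on the relevant finite region. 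Then $K$ would already be a finite connected component of $\G - f_N$, contradicting bi-extensibility of $f_N$. Making this "eventual stabilization on finite regions" argument precise — ensuring that the two-directional extension does not repeatedly re-enter and reshape a would-be finite component infinitely often — is the delicate step, and I would handle it by exploiting that each extension strictly enlarges the domain in both directions so that, past stage $N$, no newly added vertex of $f$ can be adjacent to $K$ without $K$ having been absorbed earlier.
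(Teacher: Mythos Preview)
Your approach is essentially the paper's: build a nested sequence of bi-extensible 3-paths starting from \Cref{lem:existencia-bi-extensible} and extending via \Cref{lem:extend-3-paths} so that the domains exhaust $\Z$, take the limit $f$, and then argue by contradiction that $\G-f$ has no finite component. Two simplifications worth noting: the bookkeeping to visit prescribed vertices $w_n$ is unnecessary here (you only need the domains to grow in both directions, so at each stage you may simply take $w$ to be the vertex $u$ or $v$ supplied by bi-extensibility), and the verification is cleaner than you suggest---if $K$ were a finite component of $\G-f$, its boundary $V_1=\{x\notin K: x\text{ adjacent to }K\}$ is finite, nonempty, and contained in $V(f)$, hence in $V(f_N)$ for some $N$, whence $K$ is already a finite component of $\G-f_N$, contradicting bi-extensibility; there is no ``re-entering'' issue since $K\cap V(f)=\emptyset$ by definition.
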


\begin{proof}
By \Cref{lem:existencia-bi-extensible} and \Cref{lem:extend-3-paths},
$\G$ admits a sequence $(f_{n})_{n\in\N}$ of bi-extensible 3-paths
such that $f_{0}$ visits $v$, $f_{n+1}$ extends $f_{n}$ for all
$n\geq0$, and such that their domains exhaust $\Z$. We define a
bi-infinite 3-path $f\colon\Z\to\G$ by setting $f(k)=f_{n}(k)$,
for $n$ big enough. We claim that $f$ satisfies the condition in
the statement, that is, that $\G-f$ has no finite connected component.
We argue by contradiction. Suppose that $\G_{0}$ is a nonempty and
finite connected component of $\G-f$. Define $V_{1}$ as the set
of vertices in $\G$ that are adjacent to some vertex in $\G_{0}$,
but which are not in $\G_{0}$. Then $V_{1}$ is nonempty as otherwise
$\G$ would not be connected, and it is finite because $\G$ is locally
finite. Moreover, $f$ visits all vertices in $V_{1}$, for otherwise
$\G_{0}$ would not be a connected component of $\G-f$. As $V_{1}$
is finite, there is a natural number $n_{1}$ such that $f_{n_{1}}$
has visited all vertices in $V_{1}$. By our choice of $V_{1}$ and
$n_{1}$, $\G_{0}$ is a nonempty and finite connected component of
$\G-f_{n_{1}}$, and this contradicts the fact that $f_{n_{1}}$ is
bi-extensible.
\end{proof}
Now the proof of the following result is by iteration of \Cref{lem:bi-infinte-3-path-removable}. 
\begin{prop}
\label{prop:existencia-3-paths}Let $\G$ be a graph that is infinite,
connected, and locally finite. Then there is a collection of bi-infinite
3-paths $f_{i}\colon\Z\to\G$, $i\in I$, such that $V(\G)$ is the
disjoint union of $V(f_{i})$, $i\in I$. 
\end{prop}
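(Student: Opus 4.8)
The goal is to partition the vertex set of an infinite, connected, locally finite graph $\G$ into the images of countably many bi-infinite $3$-paths. The plan is to proceed by a greedy exhaustion argument that repeatedly invokes \Cref{lem:bi-infinte-3-path-removable}. The key point, which makes this work, is that \Cref{lem:bi-infinte-3-path-removable} not only produces a bi-infinite $3$-path through a prescribed vertex, but guarantees that deleting that path leaves no finite connected component. This latter property is exactly what is needed to keep the hypotheses of the lemma alive after each deletion: every connected component of the remainder is again infinite, connected, and locally finite, so the lemma can be applied again inside it.

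First I would fix an enumeration $(v_n)_{n\in\N}$ of $V(\G)$, which exists since $\G$ is locally finite and connected, hence countable. I would then construct the paths recursively, maintaining at each stage a finite collection of disjoint bi-infinite $3$-paths already chosen, together with the ``remaining'' graph $\G'$ obtained by deleting all vertices visited so far. The invariant to preserve is that every connected component of $\G'$ is infinite (equivalently, $\G'$ has no finite connected component). At step $n$, I would look for the first vertex in the enumeration that has not yet been visited; call it $w$. This $w$ lies in some connected component $\mathcal C$ of $\G'$, and by the invariant $\mathcal C$ is infinite, connected, and locally finite. Applying \Cref{lem:bi-infinte-3-path-removable} to the graph $\mathcal C$ and the vertex $w$ yields a bi-infinite $3$-path $f$ inside $\mathcal C$ that visits $w$ and such that $\mathcal C - f$ has no finite connected component. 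I would add $f$ to the collection and update $\G'$ by removing $V(f)$.

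The main thing to check is that the invariant is genuinely preserved. The graph $\G'$ after removing $V(f)$ differs from the previous $\G'$ only inside the component $\mathcal C$; the other components are untouched and remain infinite. Within $\mathcal C$, the new components are exactly the components of $\mathcal C - f$, and the conclusion of \Cref{lem:bi-infinte-3-path-removable} says precisely that none of these is finite. Hence no finite component is ever created, and the invariant holds at every stage. Since at step $n$ the chosen path visits the first unvisited vertex of the enumeration, the vertex $v_n$ is visited no later than step $n$ (once all of $v_0,\dots,v_{n-1}$ have been consumed), so every vertex is eventually visited; thus the union of the $V(f_i)$ is all of $V(\G)$. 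The paths are pairwise disjoint by construction, since each is chosen inside the current remaining graph $\G'$, which excludes all previously visited vertices. This gives the desired disjoint union decomposition $V(\G)=\bigsqcup_{i\in I} V(f_i)$, with $I$ either finite or countably infinite.

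The step I expect to require the most care is the bookkeeping that guarantees \emph{every} vertex is covered: because each application of the lemma produces an \emph{infinite} path, a naive greedy choice could conceivably keep feeding on one region of the graph and never reach some fixed $v_n$. This is why it is essential to always target the least-indexed unvisited vertex rather than an arbitrary one; combined with the fact that the first few enumerated vertices get removed from contention at each stage, this ``first unvisited vertex'' rule forces progress through the enumeration and ensures coverage. The geometric content — extending, splitting, and re-routing $3$-paths so that deletion leaves no finite component — has already been fully absorbed into \Cref{lem:bi-infinte-3-path-removable}, so the remaining argument is essentially the combinatorial organization of an exhaustion, and no further path-surgery is needed here.
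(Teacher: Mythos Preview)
Your proof is correct and follows essentially the same approach as the paper: iterated application of \Cref{lem:bi-infinte-3-path-removable} to the infinite connected components of the remaining graph, using the vertex-targeting feature of the lemma to guarantee exhaustion. The paper phrases the iteration more loosely (``iterating this process in a tree-like manner'') and leaves the coverage argument implicit, whereas you spell out the enumeration and least-unvisited-vertex rule explicitly; your added care about why a naive greedy choice could fail is a nice clarification.
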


\begin{proof}
By \Cref{lem:bi-infinte-3-path-removable}, $\G$ admits a bi-infinite
3-path $f_{0}$ such that $\G-f_{0}$ has no finite connected component.
Each connected component of $\G-f_{0}$ is infinite, and satisfies
the hypotheses of \Cref{lem:bi-infinte-3-path-removable}. Thus
we can apply \Cref{lem:bi-infinte-3-path-removable} on each
of these connected components. Iterating this process in a tree-like
manner, we obtain a family of 3-paths $f_{i}\colon\Z\to\G$, $i\in I$
whose vertex sets $V(f_{i})$ are disjoint. As \Cref{lem:bi-infinte-3-path-removable}
allows us to choose a vertex to be visited by the bi-infinite 3-path,
we can choose $f_{i}$ ensuring that every vertex of $\G$ is visited
by some $f_{i}$. In this manner, $V(\G)$ is the disjoint union of
$V(f_{i})$, ranging $i\in I$.
\end{proof}
We are now ready to prove \Cref{thm:t1-translation-like-action-transitivas}.

\begin{proof}[Proof of \Cref{thm:t1-translation-like-action-transitivas}]
Let $\G$ be a connected, infinite, and locally finite graph. By \Cref{prop:existencia-3-paths}, there is a collection of bi-infinite 3-paths $f_i\colon\Z\to\Gamma$, $i\in I$, such that $V(\G)$ is the disjoint union of $V(f_i)$, $i\in I$. We define a translation-like
$\ast\colon V(\G)\times\Z\to V(\G)$ by setting $v\ast n=f_i(f_i^{-1}(v)+n),\ n\in\Z$, where $v\in V(f_i)$.  This proves that $\G$ admits a translation-like action by $\Z$. Furthermore, if $\G$ has either one or two ends, then we can choose a Hamiltonian bi-infinite 3-path  $f\colon\Z\to\Gamma$ by \Cref{prop:Hamiltonian-infinite-3paths}, and define a translation-like action in the same manner. This action is transitive because the 3-path is Hamiltonian.  

It only remains to prove that a connected and locally finite graph which admits a transitive translation-like
action by $\Z$ must have either one or two ends. This is stated in
\cite[Theorem 3.3]{seward_burnside_2014} for graphs with uniformly
bounded vertex degree, but the same proof can be applied to locally
finite graphs. For completeness, we provide an alternative argument.
Let $\G$ be a connected and locally finite graph which admits a transitive
translation-like action by $\Z$, denoted $\ast$. As the action is
free, $V(\G)$ must be infinite, and thus $\G$ has at least $1$
end. Suppose now that it has at least 3 ends to obtain a contradiction.
Let $J=\max\{d_{\G}(v,v\ast1)\mid v\in V(\G)\}$. As $\G$ has at
least $3$ ends, there is a finite set of vertices $V_{0}$ such that
$\G-V_{0}$ has at least three infinite connected components, which
we denote by $\G_{1},$ $\G_{2}$ and $\G_{3}$. By enlarging $V_{0}$
if necessary, we can assume that any pair of vertices $u$ and $v$
that lie in different connected components in $\G-V_{0}$, are at
distance $d_{\G}$ at least $J+1$. Now as $V_{0}$ is finite, there
are two integers $n\leq m$ such that $V_{0}$ is contained in $\{v\ast k\mid n\leq k\leq m\}$.
By our choice of $V_{0}$, it follows that the set $\{v\ast k\mid k\geq m+1\}$
is completely contained in one of $\G_{1}$, $\G_{2}$, or $\G_{3}$.
The same holds for $\{v\ast k\mid k\leq n-1\}$, and thus one of $\G_{1}$,
$\G_{2}$, or $\G_{3}$ must be empty, a contradiction. 
\end{proof}
\begin{rem}
The proof given in this section is closely related to the characterization
of those infinite graphs that admit infinite Eulerian paths. This
is a theorem of Erdős, Grünwald, and Weiszfeld \cite{erdos_eulerian_1936}.
In the recent work \cite{carrasco-vargas_infinite_2024}, the author
of this work gave a different proof of the Erd\H os, Grünwald, and
Weiszfeld theorem, that complements the original result by also characterizing
those finite paths that can be extended to infinite Eulerian ones.
This characterization is very similar to the notion of bi-extensible
defined here. Indeed, the proofs of \Cref{prop:Hamiltonian-infinite-3paths}
and the proof of the mentioned result about Eulerian paths follow
the same iterative construction. 
\end{rem}

\begin{rem}
As we mentioned before, it is known that the cube of every finite
and connected graph is Hamiltonian \cite{zbMATH03278210,sekanina1960ordering,karaganis_cube_1968}.
\Cref{prop:Hamiltonian-infinite-3paths} can be considered as
a generalization of this fact to locally finite graphs. That is, \Cref{prop:Hamiltonian-infinite-3paths}
shows that the cube of every locally finite and connected graph with
either $1$ or $2$ ends admits a bi-infinite Hamiltonian path. 
\end{rem}

We end this section by rephrasing a problem left in \cite[Problem 3.5]{seward_burnside_2014}.
\begin{problem}
Find necessary and sufficient conditions for a connected graph to
admit a transitive translation-like action by $\Z$. 
\end{problem}

We have shown that for locally finite graphs, the answer to this problem
is as simple as possible, involving only the number of ends of the
graph. The problem is now open for graphs that are not locally finite.
We observe that beyond locally finite graphs there are different and
non-equivalent notions of ends \cite{diestel_graphtheoretical_2003},
and thus answering the problem above also requires to determine which
is the appropriate notion of ends.

\chapter{Computable translation-like actions by $\Z$, and the orbit membership problem}\label{chap:computable-translation-like-actions}

In this chapter we consider the problem of effectively computing translation-like actions by $\Z$ on groups and graphs. This chapter is a continuation of \Cref{chap:translation-like-actions}, and is based on the work \cite{carrasco-vargas_translationlike_2024}. 

\minitoc
\section{Introduction}
A graph is computable
if there exists an algorithm which given two vertices, determines
whether they are adjacent or not. If moreover the graph is locally
finite, and the function that maps a vertex to its degree is computable,
then the graph is said to be highly computable. This extra condition
is necessary to compute the neighborhood of a vertex. The notion of highly computable graph corresponds to the intuition that we can compute neighborhoods in the graph of any desired size.  An important example comes from group theory: if $G$ is a finitely
generated group with decidable word problem and $S$ is a finite set
of generators, then its Cayley graph with respect to $S$ is highly
computable.

Many basic results about infinite graphs are not \textit{effective} for highly computable graphs. For instance, König's infinity lemma asserts that every
infinite, connected, and locally finite graph admits an infinite path.
However, there are highly computable graphs which admit paths, but none of them is computable \cite{jockusch_pi_1972}. Another example
comes from Hall's matching theorem. There are highly computable graphs
satisfying the hypotheses in the theorem, but which admit no computable
right perfect matching \cite{manaster_effective_1972}. These two
results are used in the proof \cite[Theorem 1.4]{seward_burnside_2014}, so the translation-like
actions from this proof are not clearly computable. We say that a
translation-like action by $\Z$ on a graph is computable when there
is an algorithm which given a vertex $v$ and $n\in\Z$, computes
the vertex $v\ast n$. %

Our interest in the computability of translation-like actions is motivated by the results in \Cref{chap:Medvedev}, where we need computable translation-like actions where in addition it is
possible to distinguish in a computable manner between different orbits. 
We introduce here a general definition, though in this chapter we will only treat
the case where the acting group is $\Z$. 
\begin{defn}
\label{def:orbit-membership-problem}Let $G$ be a group, and let
$S\subset G$ be a finite set of generators. A group action of $H$
on $G$ is said to have \textbf{decidable} \textbf{orbit membership
problem} if there exists an algorithm which given two words $u$ and
$v$ in $(S\cup S^{-1})^{*}$, decides whether the corresponding group
elements $u_{G},v_{G}$ lie in the same orbit under the action. 
\end{defn}

Note that if $H$ is a subgroup of $G$, then the action $H\curvearrowright G$
by right translations has decidable orbit membership problem if and
only if $H$ has decidable subgroup membership problem (\Cref{prop:obvio}).
Thus this property can be regarded as the geometric reformulation,
in the sense of Whyte \cite{whyte_amenability_1999}, of the subgroup
property of having decidable membership problem. The orbit membership
problem has been studied for some actions by conjugacy and by group
automorphisms, see \cite{bogopolski_orbit_2009,burillo_conjugacy_2016,ventura_grouptheoretic_2014}
and references therein.

{} 

Our main result associated to computable translation-like actions
on groups is the following. 

\begin{thm}
\label{thm:computable-translation-like-actions-with-decidable-orbit-problem}\label{seward-calculable}Let
$G$ be a finitely generated infinite group with decidable word problem.
Then $G$ admits a translation-like action by $\Z$ that is computable
and has decidable orbit membership problem.
\end{thm}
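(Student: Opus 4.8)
The plan is to build on the proof of \Cref{thm:t1-translation-like-action-transitivas} from \Cref{chap:translation-like-actions}, but to carry it out effectively. The key observation is that the Cayley graph $\G=\cay(G,S)$ of a finitely generated infinite group $G$ with decidable word problem is a highly computable graph: from a vertex $g$ (represented by a word) and a generator $s\in S$ we can decide adjacency using the word problem, and we can list the neighbours of $g$ since the degree is bounded by $|S|$. Thus the whole construction of bi-infinite $3$-paths from \Cref{sec:Translation-like-actions-by} should be made algorithmic. I would treat the two-ended/one-ended case and the general case separately, as only the latter needs the orbit membership problem to be nontrivial.

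First I would prove an effective version of \Cref{prop:existencia-3-paths}: given a highly computable, connected, locally finite graph $\G$, one can compute a collection of bi-infinite $3$-paths $f_i\colon\Z\to V(\G)$, $i\in I$, partitioning $V(\G)$, in a sufficiently uniform manner. The engine is \Cref{lem:bi-infinte-3-path-removable}, whose proof iteratively extends a bi-extensible $3$-path so that its complement has no finite connected component. The steps inside that lemma --- taking a path between two vertices, forming the induced subgraph $\L$ of visited vertices together with the finite components of its complement, and running the finite Hamiltonian $3$-path algorithm of \Cref{lem:lema-tecnico-finite-3-paths} --- are all effective provided we can \emph{decide} whether a given finite component of $\G-\L$ is finite. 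This is precisely the main obstacle: deciding finiteness of a connected component of the complement of a finite vertex set is not decidable for a general highly computable graph. I would resolve it using decidability of the word problem together with a standard fact about groups: a finitely generated group with decidable word problem has decidable "cofinite ball" structure in the sense that, after fixing a dovetailing search, one can semi-decide both that a component is infinite (by finding arbitrarily distant vertices reachable while avoiding the removed set) and, when the relevant components are finite, certify their finiteness by exhausting the ball they occupy. The precise device is to maintain, along the recursion, an explicit finite "frontier" and to use the fact that in a Cayley graph the number of ends is computable data; concretely I would first run the growth of balls $B_n$ and use them as the finite sets whose complement components we analyse, so that each component is monitored and declared finite once it stops growing and infinite once it escapes a prescribed radius. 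This turns the otherwise $\Pi^0_2$ finiteness question into something we can manage uniformly by building the paths simultaneously with the exhaustion of $G$ by balls.

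Having computed the partition $\{f_i\}_{i\in I}$ effectively, the induced action $v\ast n=f_i(f_i^{-1}(v)+n)$ is computable: to evaluate it on a vertex $v=\underline w$ we run the enumeration of the $3$-paths until $v$ appears as some $f_i(k)$, then read off $f_i(k+n)$, which is legitimate because each vertex is visited exactly once and the construction commits to path values irrevocably. For the orbit membership problem I would arrange the construction so that a computable set of orbit representatives is produced alongside: whenever \Cref{lem:bi-infinte-3-path-removable} is invoked to start a new path $f_i$, record its seed vertex $r_i$, and maintain a decidable index set $I$. Then two words $u,v$ lie in the same $\Z$-orbit iff they are visited by the same $f_i$, which can be decided by enumerating the paths until both $\underline u$ and $\underline v$ have appeared and comparing their indices --- this halts because the partition is computed uniformly and every vertex eventually appears. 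In the one- or two-ended case \Cref{prop:Hamiltonian-infinite-3paths} gives a single Hamiltonian bi-infinite $3$-path, so the action is transitive and the orbit membership problem is trivially decidable; here I would only need to check that the iterative extension via \Cref{cor:one-two-ends} is effective, which again reduces to deciding finiteness of complement components, but now the hypothesis of one or two ends makes this trivial since every complement of a finite set has at most two infinite components and all others are finite and detectable by ball exhaustion.

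The hardest part, then, is genuinely the effectivization of \Cref{lem:bi-infinte-3-path-removable}: ensuring that the choice of the finite connected subgraph $\L$ (so that $\G-\L$ has no finite component) can be made algorithmically and uniformly, despite finiteness of components being undecidable in general graphs. My intended route around this is to exploit that $\G$ is a Cayley graph of a group with decidable word problem, where one can compute balls $B_n$ exactly and hence monitor the complement components against a growing radius, converting the construction into a single dovetailed process that simultaneously (i) exhausts $V(G)$, (ii) commits path values, and (iii) maintains decidable orbit labels. I expect that once this bookkeeping is set up carefully, computability of the action and decidability of the orbit membership problem follow without further difficulty, and \Cref{thm:computable-translation-like-actions-with-decidable-orbit-problem} is obtained by applying the effective construction to $\G=\cay(G,S)$.
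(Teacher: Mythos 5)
Your treatment of the one-or-two-ended case is essentially sound and matches the paper: there, deciding whether a $3$-path $f$ is bi-extensible (\Cref{def:bi-right-extensible}) is possible precisely because the number of infinite components of $\G-f$ is a priori bounded by the number of ends, so one can semi-decide ``$\G-f$ has at most one (resp.\ two) components'' by searching for connecting paths among frontier vertices, and semi-decide ``$\G-f$ has a finite component'' by exhaustion; running both gives decidability (\Cref{prop:bi-extensible-decidible-1-end}, \Cref{prop:bi-extensible-decidible-2-end}). The genuine gap is in your plan for the remaining case, where you try to effectivize \Cref{lem:bi-infinte-3-path-removable} and \Cref{prop:existencia-3-paths} on an arbitrary Cayley graph. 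Your device rests on the claim that one ``can semi-decide that a component is infinite (by finding arbitrarily distant vertices)'' and may declare a component ``infinite once it escapes a prescribed radius.'' This is false: finiteness of a component is semi-decidable (exhaust it), but infiniteness is only co-recursively enumerable; finding vertices at distance $n$ for any finite $n$ never certifies infiniteness, and there is no computable radius $R(n)$ such that a component of $\G - B_n$ meeting the complement of $B_{R(n)}$ must be infinite (components can be finite but arbitrarily deep, and decidability of the word problem gives no bound on their size). Since verifying bi-extensibility requires certifying that $\G-f$ has \emph{no} finite component, your dovetailed process cannot irrevocably commit path values, and with it both the computability of the action and your orbit-labelling scheme collapse; the latter was only correct conditionally on a committed, uniform partition.

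The paper sidesteps exactly this obstruction by changing strategy for groups with more than two ends: it never effectivizes the many-ended graph construction at all. Instead it invokes Stallings' structure theorem to write $G$ as an HNN extension or amalgamated product over finite subgroups, proves that the associated normal forms are computable when the word problem is decidable (\Cref{prop:stallings-descomposicion-calculable}), extracts from them a subgroup isomorphic to $\Z$ with decidable subgroup membership problem (\Cref{prop:containment-Z-decidable-membership-problem}), and then uses the elementary equivalence between decidable subgroup membership and decidable orbit membership for the right-translation action (\Cref{prop:obvio}). For that action computability is immediate, since it is just multiplication by powers of a fixed element. So your proposal diverges from the paper precisely where it fails; to repair it you would either need to prove that finiteness of complement components is decidable in the Cayley graphs at hand (which is not known and is exactly what the restriction to one or two ends buys in \Cref{thm:computable-t1}), or fall back on an algebraic argument such as the paper's Stallings route.
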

The proof of \Cref{thm:computable-translation-like-actions-with-decidable-orbit-problem}
is as follows. For groups with at most two ends, we prove the existence
of a computable and transitive translation-like action. For this, we prove that the part of \Cref{thm:t1-translation-like-action-transitivas} that concerns transitive translation-like actions is effective for highly computable graphs. For groups with more than two ends, we prove the existence of a subgroup
isomorphic to $\Z$ and with decidable subgroup membership problem.
Thus for groups with two ends we provide two different proofs for
\Cref{thm:computable-translation-like-actions-with-decidable-orbit-problem}.
A group with two ends is virtually $\Z$, and it would be easy to
give a direct proof, but the intermediate statements may have independent
interest (\Cref{thm:computable-t1} and \Cref{prop:stallings-descomposicion-calculable}). 

\section{Preliminaries}
In this chapter we assume that the reader is familiar with \Cref{chap:translation-like-actions}. Moreover, we refer the reader to \Cref{chap:Background} for details about numberings and computability. We will consider the following computability notions for graphs: 

\begin{defn}
An undirected and simple graph $\G$ is \textbf{computable} if we can endow $V(\G)$ and $E(\G)$
with numberings, in such a manner that the relation of adjacency,
and the relation $\{(e,u,v)\mid e\text{ joins }u\text{ and }v\}$
are decidable. We say that $\G$ is also \textbf{highly computable}
when it is locally finite, and the vertex degree function $V(\G)\to\N$,
$v\mapsto\deg_{\G}(v)$ is computable. 
\end{defn}

We now review Cayley graphs of finitely generated groups. Let $G$ be finitely generated by $S$. We denote by $\cay(G,S)$
the (undirected, and right)\textbf{ Cayley graph} of $G$ relative
to $S$. The vertex set of $\cay(G,S)$ is $G$, and two vertices $g$ and $h$ are joined by an edge when $g=hs$ for some $s\in S\cup S^{-1}$. Note that
the distance that this graph assigns to a pair of elements in $G$
equals their distance in the left-invariant word metric associated to the same generating
set. The \textbf{number of ends }of a finitely generated group is
the number of ends of its Cayley graph, for any generating set. This
definition does not depend on the chosen generating set, and can only
be among the numbers $\{0,1,2,\infty\}$ \cite{freudenthal_ueber_1945,hopf_enden_1944}. 
We will need the following well-known fact, whose proof is left to the reader (see \Cref{prop:computabilitiy-groups-1}).
\begin{prop}
\label{prop:cayley-graph-of-computable-group-is-computable} 
Let $G$
be a finitely generated group with decidable word problem, and let
$S$ be a finite generating set. Then $\cay(G,S)$ is a highly computable
graph. 
\end{prop}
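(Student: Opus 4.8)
The plan is to transfer the computable structure of the group $G$ to its Cayley graph. By \Cref{prop:computabilitiy-groups-1}, the decidability of the word problem ensures that $G$ admits a computable numbering $\nu\colon\N\to G$ for which the multiplication map $G\times G\to G$ is computable. Moreover, since $\nu$ is a bijection onto $G$, the equality relation on $G$ is decidable: two elements are equal precisely when they receive the same index. I would take $\nu$ as the numbering of the vertex set $V(\cay(G,S))=G$. The first step is to decide adjacency. Two vertices $g,h$ are joined by an edge exactly when $h\in\{gs : s\in S\cup S^{-1}\}$. Since $S\cup S^{-1}$ is a fixed finite set of group elements, each with a known index, and multiplication is computable, one computes the finitely many products $gs$ and tests, using the decidability of equality in $G$, whether any equals $h$. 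This decides adjacency uniformly in $(g,h)$.

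Next I would number the edge set. As $\cay(G,S)$ is simple and undirected, its edges are identified with unordered pairs $\{u,v\}$ of adjacent vertices. Using the canonical numbering of unordered pairs of natural numbers (coordinatewise via $\nu$) together with the decidability of adjacency just established, the edge set becomes a decidable subset of this numbered set and hence inherits a numbering in the sense of \Cref{chap:Background}. The incidence relation $\{(e,u,v) : e \text{ joins } u \text{ and } v\}$ is then decidable: from the index of $e$ one recovers the pair $\{a,b\}$ it codes and checks whether $\{a,b\}=\{u,v\}$, which is decidable because equality of vertices is. This establishes that $\cay(G,S)$ is a computable graph.

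For high computability, I would first note that every vertex $g$ has at most $|S\cup S^{-1}|$ neighbors, so $\cay(G,S)$ is locally finite. To see that the degree function is computable, on input $g$ I would compute the finite list of products $gs$ for $s\in S\cup S^{-1}$, discard $g$ itself (to account for any $s$ acting trivially, and since the graph is simple), and count the number of distinct remaining elements, using the decidability of equality to eliminate repetitions; this yields $\deg_{\cay(G,S)}(g)$. I do not anticipate a genuine obstacle, as the argument is routine once \Cref{prop:computabilitiy-groups-1} supplies a computable numbering with computable multiplication. The only points requiring a little care are the bookkeeping for the edge numbering and the incidence relation, and the observation — immediate from the bijectivity of $\nu$ — that equality of group elements is decidable, which is what makes both the adjacency test and the counting of distinct neighbors effective.
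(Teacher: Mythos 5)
Your proof is correct and takes exactly the route the paper intends: the paper leaves this proposition to the reader with a pointer to \Cref{prop:computabilitiy-groups-1}, and your argument is the routine filling-in of that hint, transferring the computable numbering (with computable multiplication and decidable equality) to decidable adjacency, a numbering of edges, decidable incidence, and the computable degree function. The care you take with discarding $g$ itself when counting distinct neighbors is exactly the right bookkeeping for the simple-graph convention, and there are no gaps.
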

\section{Computable and transitive translation-like actions by $\protect\Z$}\label{sec:Computable-translation-like-acti}
In this section we prove that the part of \Cref{thm:t1-translation-like-action-transitivas} that concerns graphs with one or two ends, is effective for highly computable graphs:

\begin{thm}\label{thm:computable-t1} Let $\G$ be a graph that is highly computable,
connected, and has either $1$ or $2$ ends. Then $\G$ admits a computable
and transitive translation-like action by $\Z$, where the distance
between a vertex $v$ and $v\ast1$ is uniformly bounded by 3.
\end{thm}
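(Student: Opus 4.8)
The plan is to make the construction from \Cref{chap:translation-like-actions} effective, taking as input the non-effective argument behind \Cref{prop:Hamiltonian-infinite-3paths}. Recall that in the one- or two-ended case, a transitive translation-like action by $\Z$ is exactly a bi-infinite Hamiltonian $3$-path, built by iterating \Cref{cor:one-two-ends}: one starts with a bi-extensible $3$-path $f_0$ visiting $v_0$, and at each stage extends $f_n$ in both directions to a bi-extensible $3$-path $f_{n+1}$ that also visits the next vertex $v_{n+1}$. The whole construction is combinatorial and finitary at each stage, so the main task is to verify that each step can be carried out algorithmically on a highly computable graph, and that the necessary existential searches terminate.

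\textbf{Key steps, in order.}
First I would fix a computable enumeration $(v_n)_{n\in\N}$ of $V(\G)$, available because $\G$ is computable. Next I would check that the finite auxiliary graphs arising in the proofs of \Cref{lem:existencia-bi-extensible} and \Cref{lem:extend-3-paths} can be \emph{computed}: given a finite set of vertices, high computability lets us enumerate neighborhoods of any radius and hence, using connectedness, identify the finite connected components of $\G$ minus a finite set by a bounded breadth-first search. The delicate point is that ``$\G-f$ has no finite connected component'' is a condition about an infinite graph; but in the inductive construction we never verify it directly—rather, we \emph{build} the subgraph $\L$ as the union of the current path and all finite components hanging off it, and by local finiteness plus connectedness this $\L$ is finite and computable. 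I would then invoke the effective content of \Cref{lem:lema-tecnico-finite-3-paths}: a Hamiltonian $3$-path in a finite connected graph with prescribed endpoints and the stated jump conditions can be found by exhaustive search, which is clearly computable once the finite graph is presented. Threading these together, the map $n\mapsto f_{n+1}$ is computable uniformly from $n$ and (a description of) $f_n$, so the limit bi-infinite $3$-path $f\colon\Z\to V(\G)$ is computable, meaning $f(k)$ is computable uniformly in $k$. Finally, defining $v\ast m=f(f^{-1}(v)+m)$ gives the action; since $f$ is a computable bijection with computable inverse (we can compute $f^{-1}(v)$ by searching the enumeration of $f$-values until $v$ appears, which halts because $f$ is surjective), the action $\ast\colon V(\G)\times\Z\to V(\G)$ is computable, transitive, and satisfies $d_\G(v,v\ast 1)\le 3$ by construction.

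\textbf{Main obstacle.}
The hard part will be establishing that every finite connected-component computation actually terminates and is correct, i.e.\ that the ``no finite connected component'' invariant of bi-extensibility can be maintained \emph{computably}. High computability gives us neighborhoods, but detecting that a region of $\G$ is a genuine finite connected component requires knowing we have closed it off; this is where local finiteness is essential, since it bounds the search for the boundary set $V_1$ of any candidate finite component (as in the proof of \Cref{lem:bi-infinte-3-path-removable}). I would argue that at each inductive stage the relevant finite subgraph $\L$ is found by a terminating search because its boundary is finite and computable, and that the key existence guarantees from \Cref{cor:one-two-ends} ensure the extension step always succeeds, so no search runs forever. The restriction to one or two ends is what makes this clean: it removes the branching over multiple infinite components (needed in the general non-transitive case) and lets every extension target a single prescribed vertex, keeping the procedure a single deterministic loop rather than a tree of searches whose global coordination might not be effective.
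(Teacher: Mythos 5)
Your overall strategy---effectivizing the iterative bi-extensible $3$-path construction from \Cref{chap:translation-like-actions}---is the same as the paper's, but there is a genuine gap at the central computational step: the claim that ``high computability lets us \ldots identify the finite connected components of $\G$ minus a finite set by a bounded breadth-first search.'' No such bounded search exists. Local finiteness gives no computable bound on the size of a finite component of $\G-V$; what is true is only that \emph{verifying} that a given finite set is a union of components is decidable (check that no edge leaves it), so ``the component of $u$ is finite'' is semi-decidable. The hard direction, which your construction of $\L$ (the path together with all finite components hanging off it) requires, is to certify that you have found \emph{all} finite components, i.e.\ that every remaining component is infinite---and this is not semi-decidable from high computability alone. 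This is precisely where the hypothesis on ends must enter, and it enters differently in the two cases. With one end, $\G-V$ has exactly one infinite component, so a dovetailed search (BFS from each boundary vertex of $V$, in parallel with searches for paths in $\G-V$ joining pairs of boundary vertices) terminates correctly: once the uncertified boundary vertices are pairwise connected, their common component must be infinite, since otherwise $\G$ itself would be finite. With two ends this argument breaks down: $\G-f$ may have one or two infinite components, and the configuration ``remaining boundary vertices split into two pairwise-connected blocks'' is equally consistent with one infinite component plus one large, not-yet-detected finite component, so no stopping rule is correct without further information. Your ``main obstacle'' paragraph conflates the easy direction (recognizing a candidate finite component, where local finiteness indeed bounds the boundary $V_1$) with this hard direction, and attributes the role of the ends hypothesis to avoiding ``branching,'' which misses its actual computational content.

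The paper resolves the two-ended case by an invariant your proposal never establishes: the initial path $f_0$ is chosen so that $\G-f_0$ has \emph{two infinite connected components}; every $3$-path extending $f_0$ then automatically leaves exactly two infinite components (each infinite component of $\G-f_0$ minus finitely many vertices still contains an infinite component, and two ends give the upper bound), and with this knowledge bi-extensibility of all paths arising in the induction becomes decidable (\Cref{prop:bi-extensible-decidible-1-end} and \Cref{prop:bi-extensible-decidible-2-end}); the extension step is then a brute-force search over candidate $3$-paths, guaranteed to halt by \Cref{cor:one-two-ends}. Note also that $f_0$ (and, in the two-ended case, the fact that it leaves two infinite components) cannot be computed from a description of $\G$; it is fixed non-uniformly as finite hard-coded data, a point the paper makes explicitly in \Cref{prop:hamiltonian-3-path-calculable} and which your ``single deterministic loop'' should acknowledge. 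With the dovetailing argument replacing the ``bounded BFS'' and the two-infinite-components invariant threaded through the induction, your plan does go through, but as written both ingredients are missing.
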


We start by proving that the bi-extensible property (\Cref{def:bi-right-extensible})
is algorithmically decidable on highly computable graphs with one
end.
\begin{prop}
\label{prop:bi-extensible-decidible-1-end}Let $\G$ be a graph that
is highly computable, connected, and has one end. Then it is algorithmically
decidable whether a 3-path $f$ is bi-extensible. 
\end{prop}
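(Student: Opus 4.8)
The plan is to decide each of the three defining conditions of bi-extensibility (\Cref{def:bi-right-extensible}) separately. Throughout, the input is a finite $3$-path $f$, given as a finite sequence of vertices, so its vertex set $V(f)$ is a finite set whose membership is decidable. Since $\G$ is highly computable, for any vertex $x$ the ball $B(x,3)=\{y\in V(\G): d_{\G}(x,y)\leq 3\}$ is finite and can be computed by a breadth-first search of depth $3$ (the degree function lets us enumerate neighbourhoods and guarantees termination). Conditions $(2)$ and $(3)$ are then immediately decidable: I would compute the finite sets $U=B(v_{\mathrm{fin}},3)\smallsetminus V(f)$ and $W=B(v_{\mathrm{in}},3)\smallsetminus V(f)$, where $v_{\mathrm{in}},v_{\mathrm{fin}}$ are the initial and final vertices of $f$, and check that one can select distinct $u\in U$ and $v\in W$; this is possible precisely when $U\neq\varnothing$, $W\neq\varnothing$ and $|U\cup W|\geq 2$, all decidable since $U,W$ are finite and explicitly computable.

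The substantive issue is condition $(1)$: that $\G-f$ has no finite connected component. The first observation I would establish is that every connected component of $\G-f$, finite or infinite, must contain a vertex adjacent to $V(f)$. Indeed, since $\G$ is connected and $V(f)\neq\varnothing$, a component $C$ of $\G-f$ with no neighbour in $V(f)$ would be a union of connected components of $\G$ itself, forcing $C=V(\G)$, which is impossible because $\G$ is infinite while $C$ would then equal $V(\G)\smallsetminus V(f)$ only if $V(f)$ is empty. Consequently the finite set $N$ of \emph{seeds} — neighbours of vertices of $V(f)$ that lie outside $V(f)$ — meets every component of $\G-f$, and $N$ is computable because $\G$ is highly computable. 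In particular $\G-f$ has at most $|N|$ components, and each component is represented by at least one seed.

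The key step, where one-endedness enters, is the following dichotomy. Because $\G$ has one end and $\G-f$ is infinite (as $\G$ is infinite and $V(f)$ finite), the graph $\G-f$ has exactly one infinite component; hence $\G-f$ has no finite component if and only if it is connected, which in turn holds if and only if all seeds in $N$ lie in a single component. So I would argue that exactly one of the two statements ``$\G-f$ is connected'' and ``$\G-f$ has a finite component'' is true. Both are semi-decidable: to semi-decide connectedness I run, for every pair of seeds, a search in $\G-f$ for a connecting path, halting once all pairs have been linked; to semi-decide the existence of a finite component I run, for each seed, a breadth-first exploration of its component in $\G-f$, halting as soon as any such exploration closes up into a finite set. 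Dovetailing these two semi-algorithms yields a decision procedure for condition $(1)$, and combined with the decidability of $(2)$ and $(3)$ this decides bi-extensibility.

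I expect the main obstacle to be the precise use of the one-ended hypothesis to turn a pair of semi-decision procedures into a genuine decision procedure: without one-endedness, ``$\G-f$ has no finite component'' need not coincide with ``$\G-f$ is connected'' (there could be several infinite components together with finite ones), and the clean mutual exclusivity underlying the dovetailing argument would break down. Verifying carefully that every component of $\G-f$ is seeded by $N$, and that exactly one of the two semi-decidable alternatives holds, are the points requiring attention; the remaining computations are routine applications of high computability.
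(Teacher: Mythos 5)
Your proof is correct, and its central idea is exactly the paper's: one-endedness lets you replace condition (1) of \Cref{def:bi-right-extensible} by connectedness of $\G-f$, since a connected, locally finite, one-ended infinite graph minus a finite vertex set has exactly one infinite component, so ``no finite component'' and ``connected'' coincide. Where you genuinely diverge is in how that connectivity test gets decided. The paper disposes of it by citation: it invokes \cite[Lemma 5.6]{carrasco-vargas_infinite_2024} (stated there for deletion of a finite \emph{edge} set, and adapted to vertex deletion by passing to the edges incident to $V(f)$). You instead prove decidability from scratch: every component of $\G-f$ meets the finite, computable seed set $N$ of outside neighbours of $V(f)$; connectedness of $\G-f$ is then semi-decidable (exhaustively search for paths in $\G-f$ linking all pairs of seeds), existence of a finite component is semi-decidable (dovetailed breadth-first exploration of the seeds' components), and one-endedness guarantees that exactly one of these two alternatives holds, so running both procedures in parallel decides condition (1). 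This buys self-containedness at the cost of length, and it makes visible where one-endedness is used twice (once for the equivalence, once for the mutual exclusivity of the two semi-decidable alternatives) --- something the paper's citation hides. Your handling of conditions (2) and (3) via radius-3 balls, including the $|U\cup W|\geq 2$ check for distinctness, is a correct elaboration of what the paper dismisses as clear; the only blemish is that your justification of the seed claim is worded confusingly (the contradiction is simply that $C=V(\G)$ is incompatible with $C\cap V(f)=\varnothing$ and $V(f)\neq\varnothing$), but the intended argument is sound.
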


\begin{proof}
It is clear that the second and third conditions in the definition
of bi-extensible are algorithmically decidable. For the first condition,
note that as $\G$ has one end, we can equivalently check whether
$\G-f$ is connected. This is proved to be a decidable problem in
\cite{carrasco-vargas_infinite_2024}, Lemma 5.6. Note that the mentioned
result concerns the remotion of edges instead of vertices, but indeed
this is stronger: given $f$, we compute the set $E$ of all edges
incident to a vertex in $V(f)$, and then use \cite[Lemma 5.6]{carrasco-vargas_infinite_2024}
with input $E$.
\end{proof}
For graphs with two ends we prove a similar result, but we need the extra assumption that the input 3-path leaves two infinite connected components. 
\begin{prop}
\label{prop:bi-extensible-decidible-2-end}Let $\G$ be a graph that
is highly computable, connected and has two ends. Let $f_{0}$ be
a bi-extensible 3-path on $\G$, such that $\G-f_{0}$ has two infinite
connected components. Then there is an algorithm that on input a 3-path
$f$ that extends $f_{0}$, decides whether $f$ is bi-extensible. 
\end{prop}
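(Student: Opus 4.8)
The plan is to reduce the two-ended situation to the one-ended case already handled in \Cref{prop:bi-extensible-decidible-1-end}, by decomposing $\G$ along the fixed path $f_0$. Write $\G - f_0 = C_1 \sqcup C_2$, where $C_1, C_2$ are the two infinite connected components (since $f_0$ is bi-extensible, $\G - f_0$ has no finite component, so these are \emph{all} the components). The first observation is that each $C_i$ is one-ended: if some $C_i$ had two ends, a finite vertex set inside $C_i$ would split it into two infinite pieces, and together with the other component this would exhibit three infinite components of $\G$ minus a finite set, contradicting that $\G$ has exactly two ends. Moreover each $C_i$ is itself highly computable, with adjacency induced from $\G$ and with degree computed by enumerating the $\G$-neighbours of a vertex and testing membership in $C_i$ — provided, of course, that such membership is decidable.

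The crucial point, and the main obstacle, is precisely to decide, for a vertex $w \in \G - f_0$, whether $w \in C_1$ or $w \in C_2$. In general ``lying in different components'' is not semi-decidable for locally finite graphs, so there is no uniform finite certificate of separation. This is resolved by exploiting that the algorithm we are constructing is allowed to depend on the fixed path $f_0$: since $\G - f_0$ has two infinite components, we may fix once and for all two representatives $a_1 \in C_1$ and $a_2 \in C_2$ and build them into the algorithm as constants. Membership ``$w \in C_1$'' is then semi-decided by searching for a path from $w$ to $a_1$ inside $\G - f_0$, and ``$w \in C_2$'' by searching for a path from $w$ to $a_2$; because every vertex of $\G - f_0$ lies in exactly one of $C_1, C_2$, running both searches in parallel always halts and decides the membership. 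Hence the vertex sets of $C_1$ and $C_2$ are decidable, confirming that each $C_i$ is a highly computable, connected, one-ended graph.

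With this in hand the algorithm is straightforward. Given $f$ extending $f_0$, conditions (2) and (3) of \Cref{def:bi-right-extensible} are decided directly by inspecting the finite balls of radius $3$ around the initial and final vertices of $f$ and testing membership in $\G - f$ (whose vertex set is decidable, $V(f)$ being a finite computable set). For condition (1), set $\mathrm{extra}_i = (V(f) \setminus V(f_0)) \cap C_i$, a finite set computable by the membership test above; since $\G - f_0$ has no edges between $C_1$ and $C_2$, we obtain the disjoint decomposition $\G - f = (C_1 - \mathrm{extra}_1) \sqcup (C_2 - \mathrm{extra}_2)$, so $\G - f$ has no finite component if and only if neither $C_1 - \mathrm{extra}_1$ nor $C_2 - \mathrm{extra}_2$ does. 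As each $C_i$ is one-ended, ``$C_i - \mathrm{extra}_i$ has no finite component'' is equivalent to ``$C_i - \mathrm{extra}_i$ is connected'', which is decidable by the same argument used in \Cref{prop:bi-extensible-decidible-1-end} (computing the edges incident to $\mathrm{extra}_i$ and invoking \cite[Lemma 5.6]{carrasco-vargas_infinite_2024} applied to the highly computable one-ended graph $C_i$). Combining the three decidable conditions yields the desired algorithm.
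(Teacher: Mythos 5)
Your proof is correct, but it follows a genuinely different route from the paper's. The paper works directly with $\G-f$: since $f$ extends $f_0$, the graph $\G-f$ has exactly two infinite components, so ``$\G-f$ has no finite component'' is equivalent to ``$\G-f$ has at most two components''; the paper then runs two semi-decision procedures in parallel, namely \cite[Lemma 5.5]{carrasco-vargas_infinite_2024}, which halts if and only if $\G-f$ has a finite component, and an exhaustive path search that halts if and only if the finite set $V_0$ of vertices of $\G-f$ adjacent to $V(f)$ can be partitioned into two classes, each contained in a single component of $\G-f$. Note that the paper's algorithm itself does not use $f_0$ at all; $f_0$ enters only in the correctness proof. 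You instead reduce to the one-ended case: you split $\G-f_0$ into its two components $C_1, C_2$, show each is one-ended, make each a highly computable graph by hardcoding representatives $a_1, a_2$ so that membership in $C_i$ becomes decidable (the parallel path search always halts precisely because $C_1, C_2$ exhaust $\G-f_0$), and then decide ``no finite component'' in each $C_i-\mathrm{extra}_i$ via the connectivity criterion underlying \Cref{prop:bi-extensible-decidible-1-end}, i.e.\ \cite[Lemma 5.6]{carrasco-vargas_infinite_2024} applied to the one-ended graph $C_i$. Your approach buys modularity -- it reuses the one-ended machinery and makes explicit exactly where the permitted non-uniformity in $f_0$ is exploited (the constants $a_1,a_2$) -- at the cost of the extra verifications that each $C_i$ is one-ended and highly computable; the paper's argument is leaner, needing neither decidable membership in the components nor any hardcoded data, but rests on the slightly subtler counting equivalence between ``no finite component'' and ``at most two components''.
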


\begin{proof}
It is clear that the second and third conditions in the definition
of bi-extensible are algorithmically decidable. We address the first
condition. We prove the existence of a procedure that, given a 3-path
$f$ as in the statement, decides whether $\G-f$ has no finite connected
component.  In \cite[Lemma 5.5]{carrasco-vargas_infinite_2024}
there is an effective procedure that halts if and only if $\G-f$
has some finite connected component (the mentioned result mentions
the remotion of edges instead of vertices, but indeed this is stronger:
given $f$, we compute the set $E$ of all edges incident to a vertex
in $V(f)$, and then use \cite[Lemma 5.5]{carrasco-vargas_infinite_2024}
with input $E$).

Thus we need an effective procedure that halts if and only if $\G-f$
has no finite connected component. As $f$ extends $f_{0}$, this
is equivalent to ask whether $\G-f$ has at most two connected components.
The procedure is as follows: given $f$, we start by computing the
set $V_{0}$ of vertices in $\G-f$ that are adjacent to a vertex
visited by $f$. Then for every pair of vertices in $u,v\in V_{0}$,
we search exhaustively for a path that that joins them, and that never
visits vertices in $V(f)$. That is, a path in $\G-f$. Such a path
will be found if and only if the connected component of $\G-f$ that
contains $u$ equals the one that contain $v.$ We stop the procedure
once we have found enough paths to write $V_{0}$ as the disjoint
union $V_{1}\sqcup V_{2}$, where every pair of vertices in $V_{1}$
(resp. $V_{2}$) is joined by a path as described. 
\end{proof}
We can now show a computable version of \Cref{prop:Hamiltonian-infinite-3paths}. 
\begin{prop}[Computable \Cref{prop:Hamiltonian-infinite-3paths}]
\label{prop:hamiltonian-3-path-calculable}Let $\G$ be a graph that
is highly computable, connected, and has either $1$ or $2$ ends.
Then it admits a bi-infinite Hamiltonian 3-path which is computable.
\end{prop}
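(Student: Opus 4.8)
The plan is to make the inductive construction in the proof of \Cref{prop:Hamiltonian-infinite-3paths} effective. Recall that there one fixes an enumeration $(v_n)_{n\in\N}$ of $V(\G)$ and builds a nested sequence $(f_n)_{n\in\N}$ of bi-extensible $3$-paths with $f_n$ visiting $v_n$ and the domain of $f_{n+1}$ extending that of $f_n$ in both directions, the Hamiltonian bi-infinite $3$-path $f$ being the pointwise limit. Since $\G$ is highly computable, the numbering of $V(\G)$ supplies such an enumeration for free, and we can compute the ball of radius $3$ around any vertex, hence decide whether a finite sequence of vertices is a $3$-path and enumerate all $3$-paths. So the only thing needed to turn the construction into an algorithm is the ability to \emph{decide bi-extensibility} among the $3$-paths that arise: each inductive step then reduces to an exhaustive search. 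To pass from $f_n$ to $f_{n+1}$ I would enumerate all $3$-paths $g$ extending $f_n$ whose domain strictly extends that of $f_n$ on both sides and which visit $v_{n+1}$, and output the first such $g$ that is bi-extensible. This search halts because \Cref{cor:one-two-ends} guarantees that a suitable $g$ exists.

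When $\G$ has one end the decision of bi-extensibility is immediate from \Cref{prop:bi-extensible-decidible-1-end}, so the search is literally executable; the base case $f_0$ is found by searching for a bi-extensible $3$-path visiting $v_0$, whose existence is guaranteed by \Cref{lem:existencia-bi-extensible}. Thus in the one-ended case the whole construction goes through verbatim once phrased as a sequence of terminating searches.

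The delicate case, which I expect to be the main obstacle, is two ends. Here \Cref{prop:bi-extensible-decidible-2-end} provides a decision procedure only \emph{relative to} a fixed base: one must first exhibit a bi-extensible $3$-path $f_0$ such that $\G-f_0$ has exactly two infinite connected components, and the procedure then decides bi-extensibility for $3$-paths extending $f_0$. Since every $f_n$ in the construction extends $f_0$, once such a base is in hand the inductive search proceeds exactly as before. The real work is therefore computing the base. I would first argue existence: because $\G$ is two-ended and locally finite, some finite vertex set has complement with two infinite components; absorbing the finitely many finite components and joining the remaining pieces by finite paths yields a finite \emph{connected} set $W$ with $\G-W$ having exactly two infinite components, and a Hamiltonian $3$-path on $\G[W]$ produced by \Cref{lem:lema-tecnico-finite-3-paths} (begun and ended at vertices adjacent to the two infinite sides) is such an $f_0$. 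To locate it computably I would search over finite connected induced subgraphs, exploiting that \cite[Lemma 5.5]{carrasco-vargas_infinite_2024} makes ``$\G-f$ has a finite component'' semi-decidable and that the finite splitting test inside \Cref{prop:bi-extensible-decidible-2-end} certifies ``at most two components meet $f$''; combined with the guaranteed existence of a base, these give termination. Making this bootstrapping precise, namely certifying that a candidate cuts $\G$ into exactly \emph{two} infinite pieces with no finite debris starting only from the semi-decidability of the presence of finite components, is the crux of the argument.

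Finally, granting the computable sequence $(f_n)_{n\in\N}$, the limit $f$ is computable: to evaluate $f(k)$ I run the construction until the domain of some $f_n$ contains $k$ (which occurs since the domains exhaust $\Z$) and return $f_n(k)$, which is well defined because the $f_n$ are nested. By construction $f$ is injective, visits every vertex, and maps consecutive integers to vertices at distance at most $3$, so it is the desired computable bi-infinite Hamiltonian $3$-path. This is exactly the ingredient feeding \Cref{thm:computable-t1}, via the translation-like action $v\ast n = f(f^{-1}(v)+n)$ as in the proof of \Cref{thm:t1-translation-like-action-transitivas}.
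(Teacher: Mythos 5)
Your overall strategy---effectivize the inductive construction of \Cref{prop:Hamiltonian-infinite-3paths} by exhaustive search, using \Cref{prop:bi-extensible-decidible-1-end} and \Cref{prop:bi-extensible-decidible-2-end} to make each step a terminating search, then evaluate $f(k)$ by running the construction until $k$ enters the domain of some $f_n$---is exactly the paper's, and your one-ended case is complete. The gap is in the two-ended case, precisely at the point you flag as the crux: the bootstrapping procedure for the base $f_0$ does not work, because neither property a valid base must have is certifiable in finite time from the tools you invoke. The condition ``$\G-f$ has no finite connected component'' is only co-semi-decidable (\cite[Lemma 5.5]{carrasco-vargas_infinite_2024} halts exactly when a finite component \emph{exists}); the trick that upgrades it to a decidable condition in \Cref{prop:bi-extensible-decidible-2-end} is its equivalence with ``$\G-f$ has at most two components'', and that equivalence is proved \emph{under the hypothesis} that $f$ extends an already-valid base, so using it to find the base is circular. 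Worse, ``$\G-f$ splits $\G$ into two infinite pieces'' is not semi-decidable at all: a candidate $f$ for which $\G-f$ is connected (one infinite component, no finite debris) passes your ``at most two components'' test, never triggers the finite-component test, and is therefore indistinguishable in finite time from a valid base. Your search can thus terminate on such a bad candidate, and with a bad base the relative decision procedure of \Cref{prop:bi-extensible-decidible-2-end} becomes unsound: for an extension whose complement has two components, one finite, it will report ``no finite component''. Guaranteed existence of a valid base gives termination of your search, but not correctness of what it returns.

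The repair is not to compute $f_0$ at all. The statement only asserts that $\G$ \emph{admits} a computable bi-infinite Hamiltonian 3-path, so the witnessing algorithm may carry any finite amount of hard-coded information; the paper simply fixes a valid $f_0$ (which exists, and in the two-ended case can be chosen so that $\G-f_0$ has two infinite connected components) as a constant of the algorithm, remarking explicitly that $f_0$ is \emph{not} claimed to be computable from a description of $\G$. With $f_0$ given non-uniformly, your inductive search coincides with the paper's proof. Your difficulty is real if one insists on uniformity: an algorithm that queries only finitely much of a highly computable two-ended graph and outputs a separating set can be defeated by placing a bridge beyond its queries, which is exactly why the non-uniform formulation is the right one here.
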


\begin{proof}
Let $(v_{i})_{i\in\N}$ be a numbering of the vertex set of the highly
computable graph $\G$. Now let $f_{0}$ be a 3-path which is bi-extensible
and visits $v_{0}$. If $\G$ has two ends, then we also require that
$\G-f_{0}$ has two infinite connected components. Observe that we are not claiming that  $f_{0}$ can be computed from a description
of the graph, but it exists and can be specified with finite information.
After fixing $f_{0}$, we just follow the proof of \Cref{prop:Hamiltonian-infinite-3paths},
and observe that a sequence of 3-paths $(f_{n})_{n\in\N}$ as in this
proof can be uniformly computed. That is, there is an algorithm which
given $n$, computes $f_{n}$. The algorithm proceeds recursively:
assuming that $(f_{i})_{i\leq n}$ have been computed, we can compute
$f_{n+1}$ by an exhaustive search. The search is guaranteed to stop,
and the conditions that we impose on $f_{n+1}$ are decidable thanks
to Propositions \ref{prop:bi-extensible-decidible-1-end} and \ref{prop:bi-extensible-decidible-2-end}.
Finally, let $f\colon\Z\to V(\G)$ be the Hamiltonian 3-path on $\G$
defined by $f(k)=f_{n}(k)$, for $n$ big enough. Then it is clear
that the computability of $(f_{n})_{n\in\N}$ implies that $f$ is
computable. 
\end{proof}
Now, we are ready to prove \Cref{thm:computable-t1}.

\begin{proof}[Proof of \Cref{thm:computable-t1}]
 Let $\G$ be as in the statement. By \Cref{prop:hamiltonian-3-path-calculable},
$\G$ admits a bi-infinite Hamiltonian 3-path $f\colon\Z\to V(\G)$
that is computable. Then it is clear that the translation-like action
$\ast\colon V(\G)\times\Z\to V(\G)$ defined by $v\ast n=f(f^{-1}(v)+n),\ n\in\Z$,
is computable.
\end{proof}
This readily implies \Cref{thm:computable-translation-like-actions-with-decidable-orbit-problem}
for groups with one or two ends. 
\begin{proof}[Proof of \Cref{thm:computable-translation-like-actions-with-decidable-orbit-problem}
for groups with one or two ends]
 Let $G$ be a finitely generated infinite group with one or two
ends, and with decidable word problem. Let $S\subset G$ be a finite
set of generators, and let $\G=\cay(G,S)$ be the associated Cayley
graph. As $G$ has decidable word problem, this is a highly computable
graph (\Cref{prop:cayley-graph-of-computable-group-is-computable}).
Then by \Cref{thm:computable-t1}, $\G$ admits a computable
and transitive translation-like action by $\Z$. As the vertex set
of $\G$ is $G$, this is also a computable and transitive translation-like
action on $G$. This action has decidable orbit membership problem
for the trivial reason that it has only one orbit. 
\end{proof}
\begin{rem}
As mentioned in the introduction, there is a number of results in
the theory of infinite graphs that can not have an effective counterpart
for highly computable graphs. In contrast, we have the following consequences
of Theorems \ref{thm:t1-translation-like-action-transitivas} and
\ref{thm:computable-t1}: 
\begin{enumerate}
\item A highly computable graph admits a transitive translation-like action
by $\Z$ if and only if it admits a computable one.
\item A group with decidable word problem has a Cayley graph with a bi-infinite
Hamiltonian path if and only if it has a Cayley graph with a computable
bi-infinite Hamiltonian path. 
\item The cube of a highly computable graph admits a bi-infinite Hamiltonian
path if and only if it admits a computable one.
\end{enumerate}
The third item should be compared with the following result of D.Bean:
there is a graph that is highly computable and admits infinite Hamiltonian
paths, but only uncomputable ones \cite{bean_recursive_1976}. Thus,
the third item shows that for graphs that are cubes, it is algorithmically
easier to compute infinite Hamiltonian paths. 

It follows from our results that the problem of \emph{deciding} whether
a graph admits a bi-infinite Hamiltonian path is also algorithmically
easier when we restrict ourselves to graphs that are cubes. D.Harel
proved that the problem of Hamiltonicity is analytic-complete for
highly computable graphs \cite[Theorem 2]{harel_hamiltonian_1991}.
On the other hand, it follows from \Cref{thm:t1-translation-like-action-transitivas}
that for graphs that are cubes, it suffices to check that the graph
is connected, and has either $1$ or $2$ ends. These conditions are
undecidable, but are easily seen to be arithmetical \cite{inproceedings}.
In view of these results, it is natural to ask if these problems are
easier when we restrict ourselves to graphs that are squares:
\end{rem}

\begin{question}
The problem of computing infinite Hamiltonian paths (resp. deciding
whether an infinite graph is Hamiltonian) on highly computable graphs,
is easier when we restrict to graphs that are squares?
\end{question}

\section{Computable normal forms and Stalling's theorem}

Here we prove \Cref{thm:computable-translation-like-actions-with-decidable-orbit-problem}
for groups with two or more ends. It follows from Stalling's structure
therem on ends of groups that a group with two or more ends has a
subgroup isomorphic to $\Z$. We will prove that, if the group has
solvable word problem, then this subgroup has decidable membership
problem. This will be obtained from normal forms associated to HNN
extensions and amalgamated products. We now recall well known facts
about these constructions, the reader is referred to \cite[Chapter IV]{lyndon_combinatorial_2001}. 

HNN extensions are defined from a group $H=\langle S_{H}\mid R_{H}\rangle$,
a symbol $t$ not in $S_{H}$, and an isomorphism $\phi\colon A\to B$
between subgroups of $H$. The \textbf{HNN extension} relative to
$H$ and $\phi$ is the group with presentation $H\ast_{\phi}=\langle S_{H},t\mid\ R_{H},\ tat^{-1}=\phi(a),\ \forall a\in A\rangle.$
Now let $T_{A}\subset H$ and $T_{B}\subset H$ be sets of representatives
for equivalence classes of $H$ modulo $A$ and $B$, respectively.
The group $H\ast_{\phi}$ admits a normal form associated to the sets
$T_{A}$ and $T_{B}$. The sequence of group elements $h_{0},t^{\epsilon_{1}},h_{1},\dots,t^{\epsilon_{n}},h_{n}$,
$\epsilon_{i}\in\{1,-1\}$, is in \textbf{normal form} if (1) $h_{0}\in H$,
(2) if $\epsilon_{i}=-1$, then $h_{i}\in T_{A}$, (3) if $\epsilon_{i}=1$,
then $h_{i}\in T_{B}$, and (4) there is no subsequence of the form
$t^{\epsilon},1_{H},t^{-\epsilon}$. For every $g\in H\ast_{\phi}$
there exists a unique sequence in normal form whose product equals
$g$ in $H\ast_{\phi}$. 

Amalgamated products are defined from two groups $H=\langle S_{H}\mid R_{H}\rangle$
and $K=\langle S_{K}\mid R_{K}\rangle$, and a group isomorphism $\phi\colon A\to B$,
with $A\leqslant H$ and $B\leqslant K$. The \textbf{amalgamated
product} of $H$ and $K$ relative to $\phi$, is the group with presentation
$H\ast_{\phi}K=\langle S_{H},S_{K}\mid R_{H},R_{K},\ a=\phi(a),\ \forall a\in A\rangle.$
Now let $T_{A}\subset H$ be a set of representatives for $H$ modulo
$A$, and let $T_{B}\subset K$ be a set of representatives for $K$
modulo $B$. The group $H\ast_{\phi}K$ admits a normal form associated
to the sets $T_{A}$ and $T_{B}$. A sequence of group elements $c_{0},c_{1},\dots,c_{n}$
is in \textbf{normal form }if (1) $c_{0}$ lies in $A$ or $B$, (2)
$c_{i}$ is in $T_{A}$ or $T_{B}$ for $i\geq1$, (3) $c_{i}\neq1$
for $i\geq1$, and (4) successive $c_{i}$ alternate between $T_{A}$
and $T_{B}$. For each element $g\in H\ast_{\phi}K$, there exist
a unique sequence in normal form whose product equals $g$ in $H\ast_{\phi}K$. 

Stalling's structure theorem relates ends of groups with HNN extensions
and amalgamated products \cite{dunwoody_cutting_1982}. This result
asserts that every finitely generated group $G$ with two or more
ends is either isomorphic to an HNN extension $H\ast_{\phi}$, or
isomorphic to an amalgamated product $H\ast_{\phi}K$. In both cases,
the corresponding isomorphism $\phi$ is between finite and proper
subgroups, and the groups $H$, or $H$ and $K$, are finitely generated
(see \cite[pages 34 and 43]{cohen_combinatorial_1989}). We will now
prove that when $G$ has decidable word problem, then the associated
normal forms are computable. This means that there is an algorithm
which given a word representing a group element $g$, computes a sequence
of words such that the corresponding sequence of group elements, is
a normal form for $g$. The proof is direct, but we were unable to
find this statement in the literature.
\begin{prop}
\label{prop:stallings-descomposicion-calculable} Let $G$ be a finitely
generated group with two or more ends and decidable word problem.
Then the normal form associated to the decomposition of $G$ as HNN
extension or amalgamated product is computable.
\end{prop}

\begin{proof}
Let us assume first that we are in the first case, so there is a finitely
generated group $H=\langle S_{H}\mid R_{H}\rangle$, and an isomorphism
$\phi\colon A\to B$ between finite subgroups of $H$, such that $G$
is isomorphic to the HNN extension $H\ast_{\phi}=\langle S_{H},t\mid\ R_{H},\ tat^{-1}=\phi(a),\ a\in A\rangle$.
A preliminary observation is that $H$ has decidable word problem.
Indeed, this property is inherited by finitely generated subgroups,
and $G$ has decidable word problem by hypothesis. The computability
of the normal form will follow from two simple facts:

First, observe that the finite group $A=\{a_{1},\dots,a_{n}\}$ has
decidable membership problem in $H$. Indeed, given a word $w\in(S_{H}\cup S_{H}^{-1})^{\ast}$,
we can decide if $\underline{w}\in A$ by checking if $\underline w =\underline{a_{i}}$ for $i=1,\dots,m$.
This is an effective procedure as the word problem of $H$ is decidable,
and is guaranteed to stop as $A$ is a finite set. As a consequence
of this, we can also decide if $\underline{u}\in A\underline{v}$ for any pair of
words $u,v\in(S_{H}\cup S_{H}^{-1})^{*}$, as this is equivalent to
decide if $(\underline{uv^{-1}})$ lies in $A$. Observe that $v^{-1}$ denotes the formal inverse to $v$. The same is true for $B$.

Second, there is a computably enumerable set $W_{A}\subset(S_{H}\cup S_{H}^{-1})^{*}$
such that the corresponding set $T_{A}$ of group elements in $H$
constitutes a collection of representatives for $H$ modulo $A$. We
sketch an algorithm that computably enumerates $W_{A}$ as a computable
sequence of words. Set $u_{0}$ to be the empty word. Now assume that
words $u_{0},\dots,u_{n}$ have been selected, and search for a word
$u_{n+1}\in(S_{H}\cup S_{H}^{-1})^{*}$ such that $\underline{u_{n+1}}$
does not lie in $A\underline{u_{0}},\dots,A\underline{u_{n}}$. The condition
that we impose to $u_{n+1}$ is decidable by the observation in the
previous paragraph, and thus an exhaustive search is guaranteed to
find a word as required. It is clear that the set $W_{A}$ that we
obtain is computably enumerable, and that the set $T_{A}$ of the
group elements of $H$ corresponding to these words is a set of representatives
for $H$ modulo $A$. A set $W_{B}$ corresponding to $T_{B}$ can
be enumerated analogously. 

Finally, we note that we can computably enumerate sequences of words
$w_{0},\dots,w_{n}$ that represent normal forms (with respect to
$T_{A}$ and $T_{B})$ for all group elements. Indeed, using the fact
$W_{A}$ and $W_{B}$ are computably enumerable sets, we just have
to enumerate sequences of words $w_{0},\dots,w_{n}$ such that $w_{0}$
is an arbitrary element of $(S_{H}\cup S_{H}^{-1})^{*}$, and the
rest are words from $W_{A},$ $W_{B}$, or $\{t,t^{-1}\}$ that alternate
as in the definition of normal form. In order to compute the normal
form of a group element $\underline{w}$ given by a word $w$, we just enumerate
these sequences $w_{1},\dots,w_{n}$ until we find one satisfying
$\underline{w}=\underline{w_{1}\dots w_{n}}$, which is a decidable question as $G$ has
decidable word problem. We have proved the computability of normal
forms as in the statement, in the case where $G$ is a (isomorphic
to) HNN extension. 

If $G$ is not isomorphic to an HNN extension, then it must be isomorphic
to an amalgamated product. Then there are two finitely generated groups
$H=\langle S_{H}\mid \ R_{H}\rangle$ and $K=\langle S_{K}\mid R_{K}\rangle$,
and a group isomorphism $\phi\colon A\to B$, with $A\leqslant H$
and $B\leqslant K$ finite groups, such that $G$ is isomorphic to
$\langle S_{H},S_{K}\mid R_{H},R_{K},\ a=\phi(a),\ \forall a\in A\rangle$.
Now the argument is the same as the one given for HNN extensions.
That is, $A$ and $B$ have decidable membership problem because they
are finite, and there are two computably enumerable sets of words
$W_{A}$ and $W_{B}$ corresponding to sets $T_{A}$ and $T_{B}$
as in the definition of normal form for amalgamated products. This,
plus the decidability of the word problem, is sufficient to compute
the normal form of a group element given as a word, by an exhaustive
search. 
\end{proof}
We obtain the following result from the computability of these normal
forms.

\begin{prop}
\label{prop:containment-Z-decidable-membership-problem}Let $G$ be
a finitely generated group with two or more ends and decidable word
problem. Then it has a subgroup isomorphic to $\Z$ with decidable
subgroup membership problem.
\end{prop}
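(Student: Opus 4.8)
The plan is to treat separately the two cases provided by Stalling's structure theorem, using \Cref{prop:stallings-descomposicion-calculable} to access the normal form algorithmically. Recall that $G$ is either an HNN extension $H\ast_{\phi}$ or an amalgamated product $H\ast_{\phi}K$, with $\phi$ an isomorphism between finite proper subgroups $A$ and $B$, and that in both cases the ambient group $G$ has decidable word problem.

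First, the HNN case. I would take the cyclic subgroup $\langle t\rangle$ generated by the stable letter. By the normal form description, the reduced form of $t^{n}$ (for $n>0$) is $1_{H},t,1_{H},\dots,t,1_{H}$ with exactly $n$ occurrences of $t$ (using that $1_{H}$ is the chosen coset representative in $T_{A}$ and $T_{B}$, and that no pinch $t^{\epsilon},1_{H},t^{-\epsilon}$ occurs since all the $t$-syllables have equal sign), and for $n<0$ the analogous form with $|n|$ occurrences of $t^{-1}$. In particular $t^{n}\neq 1_{G}$ for $n\neq 0$, so $\langle t\rangle\cong\Z$. The decisive remark is that the number of $t^{\pm 1}$-syllables in the unique normal form is a well-defined invariant $\ell_{t}(g)$, computable from $g$ by \Cref{prop:stallings-descomposicion-calculable}, and that $\ell_{t}(t^{n})=|n|$. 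Hence, to decide whether a given $g$ lies in $\langle t\rangle$, I would compute $m=\ell_{t}(g)$ and then test the two equalities $g=t^{m}$ and $g=t^{-m}$, which are decidable because $G$ has decidable word problem. If $g\in\langle t\rangle$, say $g=t^{n}$, then $|n|=m$ forces $n\in\{m,-m\}$, so one of the two tests succeeds; otherwise both fail. This converts an a priori unbounded search over all exponents into two word-problem queries.

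The amalgamated case is analogous once I fix suitable elements. Since $A$ is finite with decidable membership in $H$ (as established in the proof of \Cref{prop:stallings-descomposicion-calculable}) and $A\neq H$, an exhaustive search over words in the generators of $H$ halts and produces $a\in H\smallsetminus A$; symmetrically I obtain $b\in K\smallsetminus B$. I would take the subgroup $\langle ab\rangle$. Because $a\notin A$ and $b\notin B$ (and hence $a^{-1}\notin A$, $b^{-1}\notin B$), the alternating product $a\,b\,a\,b\cdots a\,b$ representing $(ab)^{n}$ is already reduced, so its normal form has syllable length exactly $2|n|$; in particular $(ab)^{n}\neq 1_{G}$ for $n\neq 0$ and $\langle ab\rangle\cong\Z$. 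As before, the syllable length $\ell(g)$ is a computable invariant with $\ell((ab)^{n})=2|n|$, so membership of $g$ in $\langle ab\rangle$ reduces to: compute $m=\ell(g)$, handle the trivial case $g=1_{G}$ when $m=0$, and when $m$ is even test $g=(ab)^{m/2}$ and $g=(ab)^{-m/2}$, rejecting in all other cases.

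The only real obstacle is the structural input that powers of the chosen generators are already in reduced form, so that the computable length invariant pins down the possible exponent. This is exactly where the hypotheses $a\notin A$, $b\notin B$ (respectively, the fact that $t$ is a stable letter) enter, through Britton's lemma and the normal form theorem for amalgamated products; everything else is bookkeeping on top of \Cref{prop:stallings-descomposicion-calculable} and the decidability of the word problem.
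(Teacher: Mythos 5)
Your proof is correct and follows essentially the same route as the paper: Stallings' decomposition combined with the computable normal forms of \Cref{prop:stallings-descomposicion-calculable}, taking $\langle t\rangle$ in the HNN case and a cyclic subgroup generated by a product of two elements lying outside the amalgamated subgroups in the other case. The only cosmetic differences are that the paper picks its generators $u\in T_{A}$, $v\in T_{B}$ among the nontrivial coset representatives and decides membership by pattern-matching the normal form directly, whereas you take arbitrary $a\in H\smallsetminus A$, $b\in K\smallsetminus B$ and reduce membership to two word-problem queries via the syllable-length invariant.
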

\begin{proof}
By Stalling's structure theorem, either $G$ is isomorphic to an HNN
extensions, or $G$ is isomorphic to an amalgamated product. We first
suppose that $G$ is isomorphic to an HNN extension $H\ast_{\phi}=\langle S_{H},t\mid \ R_{H},tat^{-1}=\phi(a),a\in A\rangle$.
Without loss of generality, we will assume that $G$ is equal to this
group instead of isomorphic, as the decidability of the membership
problem of an infinite cyclic subgroup is preserved by group isomorphisms.
We claim that the subgroup of $G$ generated by $t$ has decidable
membership problem. Indeed, a group element $g$ lies in this subgroup
if and only if the normal form of $g$ or $g^{-1}$ is $1,t,1\dots,t,1$.
By \Cref{prop:stallings-descomposicion-calculable}, this normal
form is computable, and thus we obtain a procedure to decide membership
in the subgroup of $G$ generated by $t$.

We now consider the case where $G$ is isomorphic to an amalgamated
product. Then there are two finitely generated groups $H=\langle S_{H}\mid \ R_{H}\rangle$
and $K=\langle S_{K}\mid \ R_{K}\rangle$, and a group isomorphism $\phi:A\to B$,
with $A\leqslant H$ and $B\leqslant K$ finite groups, such that
$G$ is isomorphic to $\langle S_{H},S_{K}\mid R_{H},R_{K},\ a=\phi(a),\ \forall a\in A\rangle$.
As before, we will assume without loss of generality that $G$ is
indeed equal to this group. Now let $T_{A}$ and $T_{B}$ be the sets
defined in \Cref{prop:stallings-descomposicion-calculable} that
are associated to the computable normal form, and let $u\in T_{A}$,
$v\in T_{B}$ be both non trivial elements. We claim that the subgroup
subgroup of $G$ generated by $uv$ is isomorphic to $\Z$, and has
decidable membership problem. Indeed, a group element $g$ lies in
this subgroup if and only if the normal form of $g$ or $g^{-1}$
is $u,v,\dots,u,v$. This is a decidable question by \Cref{prop:stallings-descomposicion-calculable}.
\end{proof}
We now verify the fact that for translation-like actions coming from
subgroups, the properties of decidable orbit membership problem and
decidable subgroup membership problem are equivalent. 

\begin{prop}
\label{prop:obvio} Let $H\leqslant G$ be finitely generated groups.
Then $H$ has decidable membership problem in $G$ if and only if
the action of $H$ on $G$ by right translations has decidable orbit
membership problem. 
\end{prop}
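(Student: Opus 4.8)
The statement to prove is \Cref{prop:obvio}: for finitely generated groups $H\leqslant G$, the subgroup $H$ has decidable membership problem in $G$ if and only if the right-translation action of $H$ on $G$ has decidable orbit membership problem.

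The plan is to unwind both definitions and observe that they coincide almost verbatim, the only subtlety being the translation between group elements and the words representing them. First I would fix a finite symmetric generating set $S$ for $G$ (note $H$ is finitely generated, but for the action we work with the generators of $G$, as in \Cref{def:orbit-membership-problem}). Recall that the right-translation action sends $(g,h)\mapsto gh$, so two elements $u_G$ and $v_G$ lie in the same $H$-orbit precisely when $v_G = u_G h$ for some $h\in H$, equivalently when $u_G^{-1}v_G \in H$. This is the key algebraic identity that links the two notions.

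The forward direction assumes $H$ has decidable membership problem, meaning $\{w\in S^\ast : \underline w\in H\}$ is decidable. Given two words $u,v\in S^\ast$, I would form the word $u^{-1}v$ (the concatenation of the formal inverse of $u$ with $v$, which is again a word in $S^\ast$ since $S$ is symmetric), and then apply the membership decision procedure to it. Since $\underline{u^{-1}v}=u_G^{-1}v_G$, the procedure accepts exactly when $u_G^{-1}v_G\in H$, which by the identity above happens exactly when $u_G$ and $v_G$ are in the same $H$-orbit. This decides the orbit membership problem. For the converse, assume the orbit membership problem is decidable. Given a word $w\in S^\ast$, I would decide whether $\underline w\in H$ by testing whether $w$ and the empty word $\epsilon$ represent elements in the same $H$-orbit: indeed $\underline\epsilon=1_G$, and $1_G$ and $\underline w$ are in the same orbit iff $1_G^{-1}\underline w=\underline w\in H$. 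So the orbit-membership algorithm applied to the pair $(\epsilon,w)$ decides membership of $\underline w$ in $H$.

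Both implications are therefore immediate reductions, and there is no serious obstacle. The only point requiring a sentence of care is that the notion of ``decidable membership problem'' for $H$ (phrased via words over generators of $G$) and the notion of ``decidable orbit membership problem'' (also phrased via words over generators of $G$) use the same combinatorial encoding, so the formal inverse and concatenation operations on words are themselves computable; this guarantees the reductions are effective. I would state these two reductions explicitly and note that each is visibly an algorithm, completing the equivalence.
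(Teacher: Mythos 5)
Your proof is correct and takes essentially the same route as the paper: the same key algebraic identity (two elements lie in the same right-translation orbit iff $u_G^{-1}v_G\in H$) and the same two word-level reductions, including deciding membership of $\underline{w}$ in $H$ by testing whether $\underline{w}$ and $1_G$ share an orbit. If anything, your orbit criterion is stated more carefully than the paper's, which writes $g_1g_2^{-1}\in H$ (the criterion for \emph{left} translations) — a harmless transposition since decidability of the two conditions is equivalent, but your version is the literally correct one for the action $(g,h)\mapsto gh$.
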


\begin{proof}
Let $\ast$ be the action defined by $G\times H\to G$, $(g,h)\mapsto gh$.
The claim follows from the fact that two elements $g_{1},g_{2}\in G$
lie in the same $\ast$ orbit if and only if $g_{1}g_{2}{}^{-1}\in H$,
and an element $g\in G$ lies in $H$ if and only if it lies in the
same $\ast$ orbit as $1_{G}$.

It is clear how to rewrite this in terms of words, but we fill the
details for completeness. For the forward implication, let $u,v\in(S\cup S^{-1})^{*}$
be two words, for which we want to decide whether $u_{G}$, $v_{G}$
lie in the same orbit. We start by computing the formal inverse of
$v$, denoted $v^{-1}$, and then check whether the word $uv^{-1}$
lies in $\{w\in(S\cup S^{-1})^{*}\mid \ \underline{w}\in H\}$. This set is decidable
by hypothesis. For the reverse implication, assume that the action
has decidable orbit membership problem. The set $\{w\in(S\cup S^{-1})^{*}\mid \ \underline{w}\in H\}$
equals the set of words $w\in(S\cup S^{-1})^{*}$ such that $\underline{w}$
and $1_{G}$ lie in the same orbit, which is a decidable set by hypothesis.
It follows that $H$ has decidable subgroup membership problem in
$G$. 
\end{proof}
We can now finish the proof of \Cref{thm:computable-translation-like-actions-with-decidable-orbit-problem}. 
\begin{proof}[Proof of \Cref{thm:computable-translation-like-actions-with-decidable-orbit-problem}
for groups with two or more ends]
Let $G$ be a finitely generated infinite group with decidable word
problem and at least two ends. By \Cref{prop:containment-Z-decidable-membership-problem}
there is an element $c\in G$ such that $\langle c\rangle$ is isomorphic
to $\Z$, and has decidable subgroup membership problem in $G$. The
right action $\Z\curvearrowright G$ defined by $g\ast n=gc^{n}$
has decidable orbit membership problem by \Cref{prop:obvio}. 

It only remains to verify that the function $G\times\Z\to G$, $(g,n)\mapsto g\ast n$
is computable in the sense of 
\Cref{preliminaries-computability}. This is clear, but we write the details for completeness. The group
operation $f_{1}\colon G\times G\to G$ is computable by \Cref{prop:computabilitiy-groups-1}.
Moreover, it is clear that the function $f_{2}\colon\Z\to G$, $n\mapsto c^{n}$
is computable. Then it follows that the function $f_{3}\colon G\times\Z\to G$,
$(g,n)\mapsto f_{1}(g,f_{2}(n))$ is computable, being the composition
of computable functions. But $f_{3}(g,n)=g\ast n$, and thus $\ast$
is a computable group action. 
\end{proof}

\chapter{Conclusion}
This thesis work is mainly about subshifts on groups and their recursive properties. In relation to the interplay between dynamics and recursion theory, let me recall here the following important feature of subshifts: all morphisms of subshifts are computable. This essentially follows from Curtis Hedlund Lindon theorem, which shows that these morphisms are determined by the finite information of a local rule.  Since all morphisms are computable we have in particular that if two subshifts are topologically conjugate, they are also recursively homeomorphic. This fact is behind the robustness of different recursive properties for subshifts. In other words, many computability properties of subshifts are in particular dynamical properties, as they are preserved by topological conjugacy. It is surprising that conversely,  fundamental questions about subshifts on groups that are posed solely in dynamical terms have recursion theoretical answers. One instance is the classification of topological entropies of $\Z^2$-SFTs, but there are many other examples (see the introduction). The investigation of recursive properties of subshifts on groups has intensified after these findings, and this research trend is the mathematical context for this thesis.

In this work we have taken an approach based on the tools from computable analysis, which provides the basis of recursion theory on separable metric spaces like $\{0,1\}^{\N}$, $\R$, or $S^1$. Concepts such as continuity and compactness have effective counterparts, and we have considered topological dynamical systems at the light of these concepts. This approach has precedents in the case of $\Z$-actions \cite{galatolo_dynamics_2011}. When we consider actions of finitely generated groups new phenomena appear, and algorithmic aspects of finitely generated groups interact with those of their topological dynamical systems  (see for instance \Cref{example:fullshift_RP_is_effective}).

In \Cref{chap:computable_analysis_subshifts}  we have revisited the notion of effective subshift. This class of subshifts was previously defined by a computability property of a defining set of forbidden patterns. Here we have considered an alternative definition that refers to the recursive compactness of the space. The two notions coincide for recursively presented groups, and only for these groups. 

In \Cref{chap:EDS} we have studied the class of effective dynamical systems, an effective counterpart of the classical topological definition. For instance, subshifts of finite type on a recursively presented groups and circle rotations with a computable angle are effective dynamical systems. Our main result is that within the class of effective dynamical systems given by a recursively presented group, every element admits an extension with topological dimension zero. A surprising consequence of this result and existing simulation results is that for some groups, one can show that a system is the factor of an SFT by simply showing that it is effective. 

In \Cref{chap:EDS}  we constantly consider factors and extensions of effective dynamical systems. We recall here that if $X$ is an effective subshift, then this property is transferred to every topological factor of $X$ that is also a subshift. This robustness property is exclusive of subshifts, it is not true in the general case. A problem left in this work is that of characterizing topological factors of effective dynamical systems (see \Cref{sec:factors}).

In \Cref{chap:Medvedev}  we have studied Medvedev degrees of subshifts. For several classes of groups, such as virtually poly-cyclic groups, we have proved a full classification of the degrees of SFTs. This classification problem is closely related to the well known domino problem for groups, and is also related to the well known problem of classifying those groups with weakly and strongly aperiodic SFTs. These problems are related in the sense that they share proof techniques. That is, a proof technique introduced to solve one of them for some fixed group have often lead to a solution of the other two problems. We have proved in \Cref{prop:med_implies_domino} that the existence of $G$-SFTs with nonzero Medvedev degree implies the undecidability of the domino problem for $G$. We hope that this result can be useful in the future to prove the undecidability of the domino problem for groups. Current  proofs typically involve the construction of family of SFTs with Turing machines and related constructions embedded, and uniformity considerations can be quite involved.  \Cref{prop:med_implies_domino} shows that it is sufficient to embed computation on a single SFT to conclude that the domino problem is undecidable.

The problem of classifying the values of Medvedev degrees of subshifts poses extra challenges, in the sense that some infinite constructions need to be effectivized. For instance, it was proved in \cite{cohen_large_2017} that quasi-isometries between finitely presented groups preserve strong aperiodicity, and preserve  the undecidability of the domino problem. However, one needs a \textit{computable quasi-isometry} of finitely presented groups to preserve Medvedev degrees. Thus the problem of classifying Medvedev degrees appears as a motivation to effectivize different results about infinite graphs. With this application in mind we proved in \Cref{chap:computable-translation-like-actions} an effective version of Seward's theorem about translation-like actions $\Z$ on finitely generated groups.   

In \Cref{Chap:computable-analysis-on-S(G)}  we have observed a basic interaction between Medvedev degrees of subshifts, and the topology of the space of subshifts $S(G)$ for a group $G$ with decidable word problem. It was proved in \cite{pavlov_structure_2023} that isolated points are dense in $S(\mathbb{Z})$, and thus that genericity results for $\mathbb{Z}$-subshifts can be proved by studying these isolated points. In contrast, we observed that that property of having an SFT with nonzero Medvedev degree creates a neighborhood in $S(G)$ without isolated points. The proof of these results are direct after one embeds $S(G)$ in a computable metric space. 

In \Cref{Chap:rice} we investigated the existence of a Rice theorem for dynamical properties of SFTs. The goal of this chapter was converting the metaphor \textit{swamp of undecidability} from \cite{lind_multidimensional_2004} to precise mathematical statements. We considered SFTs on a group with undecidable domino problem. We found that although a Rice theorem for dynamical properties of SFTs does not hold in a literal sense, it does for the larger class of sofic subshifts. For SFTs we proved a result very similar to the Adian-Rabin undecidability theorem for finitely presented groups. This result covers many dynamical properties of common interest, such as minimality, transitivity, and having zero entropy. The frontier of decidability for dynamical properties of SFTs remains unclear. The property of having a fixed point is decidable, and we do not know other examples besides trivial modifications. We have the hope that further research might clarify the boundary of the decidable zone in the {swamp of undecidability}.

In \Cref{chap:translation-like-actions} we presented a different proof of Seward's result asserting that every finitely generated infinite group admits a translation-like action by $\Z$. We generalize this result to all locally finite graphs, and prove that the action can be taken transitive exactly when the graph has either 1 or 2 ends. The proof of this result is closely related to the characterization of infinite Eulerian graphs, and the effective proof of this result presented by the author in  \cite{carrasco-vargas_infinite_2024,}. 

In \Cref{chap:computable-translation-like-actions} we proved an effective version Seward's result. Under the extra assumption that the group has decidable word problem, we proved that the action can be taken computable and with decidable orbit membership problem. This result was required to prove our results about Medvedev degrees of subshifts. Some of the ideas from this chapter have been latter refined and improved in the work \cite{carrasco-vargas_complexity_2024} about computability aspects of ends of graphs.

Many problems considered in this thesis seem far from being solved. We only mention the problem of characterizing topological factors of effective dynamical systems, and the problem of understanding Medvedev degrees of SFTs. I hope that the results presented in this text will be useful to those interested in computability theory on uncountable spaces and its interactions with other subjects.

\printbibliography[notkeyword={carrasco-vargas}]
\printbibliography[keyword={carrasco-vargas},title={Personal Bibliography}]
\end{document}